\documentclass[fleqn,11pt,a4paper]{article}
\usepackage{pdflscape}
\usepackage{amsfonts}
\usepackage{amssymb}
\usepackage{amsmath}
\usepackage{graphicx}
\usepackage{pdflscape}
\allowdisplaybreaks
\usepackage{amsthm}
\usepackage{rotating}
\usepackage{bbm}
\usepackage{longtable}
\usepackage{array}
\usepackage{enumitem}
\usepackage{tabularx}
\usepackage{csquotes}
\usepackage{multirow}
\usepackage{float}
\usepackage{graphicx}
\usepackage{epstopdf}
\usepackage{color}
\usepackage{amsmath,amsfonts,amssymb}
\usepackage{mathtools, nccmath}
\usepackage{algorithm}
\usepackage{algpseudocode}
\usepackage{geometry}
\definecolor{darkred}{rgb}{0.5,0.2,0.2}
\usepackage[pdftitle={RV CL},pdfauthor={Mikkel Bennedsen},colorlinks=TRUE,allcolors=darkred]{hyperref}
\usepackage{booktabs}
\usepackage{pdflscape}
\usepackage[round]{natbib}
\usepackage{subcaption}
\usepackage{mwe}
\usepackage{adjustbox}
\usepackage{placeins}

\newcommand{\dd}{\mathrm{d}}

\newcommand{\vect}{\operatorname{vec}}
\usepackage{dsfont}
\usepackage{cleveref}
\usepackage{float}
\floatstyle{plaintop}
\restylefloat{table}

\newcommand{\interior}[1]{%
  {\kern0pt#1}^{\mathrm{o}}%
}

\restylefloat{table}
\usepackage[T1]{fontenc}
\theoremstyle{plain}
\newtheorem{theorem}{Theorem}[section]

\newtheorem{assumption}{Assumption}[section]

\newtheorem{corollary}{Corollary}[section]

\newtheorem{lemma}{Lemma}[section]
\newtheorem{proposition}{Proposition}[section]
\newtheorem{definition}{Definition}
\theoremstyle{definition}
\newtheorem{example}{Example}[section]
\theoremstyle{remark}
\newtheorem{remark}{Remark}[section]
\Crefname{assumption}{assumption}{assumptions}
\Crefname{assumption}{Assumption}{Assumptions}

\DeclarePairedDelimiter{\nint}\lceil\rceil

\def\bi{\begin{itemize}}
\def\ei{\end{itemize}}

\allowdisplaybreaks
\graphicspath{{Figures/}}
\numberwithin{equation}{section}
\DeclareMathOperator*{\argmin}{arg\,min}

\DeclareMathOperator{\Tr}{Tr}

\newif\ifi

\usepackage{caption}

\definecolor{darkgreen}{rgb}{0.0,0.5,0.0}

\begin{document}

\renewcommand{\thefootnote}{\fnsymbol{footnote}}

\title{
Consistent community recovery in stochastic block
Ornstein--Uhlenbeck processes}%
% \thanks{
% We benefited from comments by and discussions with Stefán Guðmundsson and Bent Jesper Christensen, as well as from comments by conference participants at the 12th Bachelier World Congress in Rio de Janeiro, and the Bernoulli--IMS 11th World Congress in Probability and Statistics in Bochum. Anders Norlyk gratefully acknowledges financial support from the Center for Research in Energy: Economics and Markets (CoRE).
% \par\vspace{0.8\baselineskip}
% }
% }

\author{
Anders Norlyk\thanks{
%(Corresponding author)
Department of Economics and Business Economics and Center for Research in Energy: Economics and Markets (CoRE),
Aarhus University,
Fuglesangs All\'e 4,
8210 Aarhus V, Denmark.
E-mail:
\href{mailto:amn@econ.au.dk}{\nolinkurl{amn@econ.au.dk}}.
}
\and
Almut E.~D.~Veraart\thanks{
%(Corresponding author)
Department of Mathematics,
Imperial College London,
South Kensington Campus,
London SW7 2AZ, UK.
E-mail:
\href{mailto:a.veraart@imperial.ac.uk}{\nolinkurl{a.veraart@imperial.ac.uk}}.
}
}

\maketitle
\begin{abstract}
We propose the stochastic block Ornstein-Uhlenbeck (SBOU) process, a continuous-time multivariate model in which the drift matrix encodes a latent group structure among its components. Our main contribution is a community-detection algorithm whose misclassification proportion converges to zero in a regime combining infill, long-span, and high-dimensional asymptotics. To our knowledge, this is the first consistency result of this kind for latent group recovery in a discretely observed continuous-time multivariate model. As a key intermediate result, we establish consistency of the discretely observed maximum likelihood estimator of the drift matrix in the same regime, thereby extending the high-dimensional L\'evy-driven Ornstein-Uhlenbeck literature. For practical implementation, we develop a feasible model-selection procedure for estimating the support of the drift matrix, which enables data-driven selection of the number of latent groups. The SBOU framework can be viewed as a continuous-time generalisation of the discrete-time stochastic-block VAR model, allowing for both positive and negative dynamic interactions between groups as opposed to only positive. We illustrate the methodology on the RE-Europe wind-capacity dataset and recover a country-level grouping consistent with the geographic benchmark.
\end{abstract}

\bigskip

\noindent {\bf Keywords}:  Graphical model, random graphs, clustering, community detection, Ornstein-Uhlenbeck processes, high-frequency inference, L\'evy-driven processes.\\

\baselineskip=17pt
\newpage
%\tableofcontents

\section{Introduction}

Many multivariate economic systems are high-dimensional, making it difficult to model each interaction separately. This is a manifestation of the curse of dimensionality: without additional structure, a dynamic model for $d$ variables may require estimating on the order of $d^2$ interaction parameters. Classical approaches address this problem by reducing the dimension of the observed data, for example through factor methods such as principal component analysis, or by applying clustering and network-filtering methods to empirical dependence measures such as correlations \citep{Mantegna1999}.

A more recent strand of literature instead reduces dimensionality by imposing structure directly on the dynamic model. In network time-series models, an observed graph is used to restrict the set of possible interactions between variables. For example, in a network VAR for social interactions or electricity prices, the coefficient linking unit $j$ to unit $i$ may be set to zero whenever the corresponding edge is absent from the observed network. The model therefore avoids estimating all $d^2$ possible pairwise effects and instead estimates only those interactions allowed by the graph. This idea is developed in, among others, \citet{Zhu2017,Zhu2020,Knight2016,Knight2019}.

In many applications, however, the relevant network or grouping structure is not observed. One may still expect variables to exhibit similarities in how they interact with the rest of the system, but the groups themselves must then be inferred from the data. This leads to latent-group approaches, where variables are assigned to unobserved classes and the dynamic interaction structure is assumed to depend on these class memberships \citep{Gudhmundsson2021,Brownlees2022,Gudhmundsson2025,Fang2024}. This paper concerns this latent case.

Our approach is related in spirit to PCA and to the correlation-based hierarchical clustering method of \citet{Mantegna1999}, in the sense that all three seek lower-dimensional structure in a high-dimensional system. The difference is in the object being structured. PCA and correlation-based clustering operate on reduced-form summaries of variation or dependence. By contrast, we impose the latent group structure directly on a dynamic interaction matrix. In our setting, the groups are therefore not defined by which variables merely move together, but by how variables dynamically affect one another.

Many systems of economic interest evolve naturally in continuous time and are observed on fine or irregular grids. Continuous-time formulations are especially natural when temporal aggregation is undesirable, when the observation scheme is non-uniform, or when one wants the underlying interaction model to remain meaningful as the mesh of the sampling grid shrinks. However, model-based clustering for discretely observed continuous-time multivariate dynamics remains comparatively underdeveloped. A recent exception is \citet{Fang2024}, who introduce latent group structure into the intensity measure of a Hawkes process. Their work shows how latent communities can be incorporated into continuous-time models for event data. The present paper is complementary, but focuses instead on continuously evolving state variables and places the latent group structure in the drift matrix governing their dynamic interactions.

This paper contributes to this strand of literature by introducing the stochastic block Ornstein-Uhlenbeck (SBOU) process, a multivariate continuous-time model in which the drift matrix encodes latent group structure. Specifically, we study a L{\'e}vy-driven system of the form
\begin{equation}\label{eqn.Intro:1}
dY_t = -QY_{t-} dt + dL_t,
\end{equation}
where the drift matrix $Q$ is assumed to contain a stochastic-block structure. The key modeling idea is simple: if the drift governs who affects whom over time, then clustering should target the structural object that drives the dynamics rather than a reduced-form dependence summary. This is the main contribution of the paper. It moves latent-group recovery from a discrete-time or static-correlation setting to a genuinely high-dimensional, continuous-time environment.

The SBOU framework can be viewed as a continuous-time counterpart to the stochastic-block VAR of \citet{Gudhmundsson2021}, tailored to mean-reverting multivariate systems. This shift is substantively important. A continuous-time specification is natural for irregularly sampled or high-frequency data, and it allows the underlying interaction model to remain meaningful as the sampling mesh shrinks. By placing the latent block structure in the drift of a multivariate Ornstein-Uhlenbeck process, the model also connects directly to the literature on continuous-time inference for mean-reverting systems \citep{Mai2014,Gaiffas2019,Courgeau2022a,Dexheimer2024,MehtaVeraart2026}. A further difference is that the block-level interaction parameters in \citet{Gudhmundsson2021} are restricted to be nonnegative, whereas the SBOU drift specification allows for signed dynamic interactions. This is useful in applications where reinforcement and substitution effects may coexist.

The paper makes four main contributions. First, it proposes a continuous-time latent-group model in which stochastic-block structure is built into the drift matrix of a multivariate L{\'e}vy-driven Ornstein-Uhlenbeck process. Second, it establishes consistency of a feasible drift estimator based on discrete observations in a high-dimensional regime combining increasing dimension, infill asymptotics, and long-span asymptotics. This provides the link from observed sample paths to the structural object used for clustering, and connects the paper to the literature on drift estimation for high-dimensional and L{\'e}vy-driven Ornstein-Uhlenbeck processes \citep{Mai2014,Ciolek2020,Gaiffas2019,Courgeau2022b,Dexheimer2024}. Third, given the number of latent groups, the paper develops a spectral clustering procedure based on the estimated drift matrix and proves consistency of community recovery. Treating the number of groups as known at this stage is standard in spectral-clustering consistency analyses \citep{Qin2013,Lei2015,Gudhmundsson2021}. Fourth, it develops implementation tools for the case where key inputs are unknown. In particular, we propose a support-recovery procedure for the drift matrix, which turns the estimated continuous-time interaction matrix into the network object required for selecting the number of latent groups. We then use the estimator of \citet{Ma2021} to choose the number of groups used as input to the spectral clustering algorithm. These steps make the proposed framework feasible when neither the sparsity pattern of the drift matrix nor the number of communities is known which is arguably always the case in any interesting use-case.

Conceptually, the paper connects two branches of  literature that have largely evolved separately: high-dimensional continuous-time stochastic-process inference and model-based community detection. In the former, the main object of interest is typically estimation of a drift or interaction matrix \citep{Gaiffas2019,Ciolek2020,Courgeau2022a,Courgeau2022b,Dexheimer2024}; in the latter, the graph is often observed and the goal is to recover its latent partition \citep{Qin2013,Lei2015,Ma2021}. Here, by contrast, the graph-like interaction structure is itself latent and must be recovered from discretely observed continuous-time dynamics. In this sense, the paper also contributes to the network time-series literature by developing a continuous-time latent-group framework for community recovery from an unobserved drift structure \citep{Courgeau2022a,Courgeau2022b,Lucchese2023}.

The remainder of the paper is organized as follows. Section~2 introduces the SBOU model and its stochastic-block drift structure. Section~3 studies estimation of the drift matrix from discrete observations. Section~4 develops the clustering algorithm and proves its consistency. Section~5 discusses practical selection of the support and the number of groups. Section~6 presents the empirical application, and Section~7 concludes.

\section{The model, notations and key definitions}\label{Sec:Model}
In this section we introduce our stochastic block Ornstein-Uhlenbeck process. Before doing so, we introduce a few definitions and some notation.
\subsection{Notations}

Throughout this paper, we will use several different norms. The spectral or operator norm is denoted by $\Vert \cdot \Vert$, the Frobenius norm by $\Vert \cdot \Vert_F$, and the Euclidean norm by $\Vert \cdot \Vert_2$. For any matrix \(A\), the notation \(A_{i\bullet}\) and \(A_{\bullet j}\) will refer to the \(i\)-th row and \(j\)-th column of \(A\), respectively. \(A_{ij}\) refers to the \((i,j)\)-th entry of a matrix \(A\). If a matrix is indexed, such as \(A_0\), the \((i,j)\)-th entry will be denoted as \([A_0]_{ij}\). For a real symmetric matrix \(A\), \(\lambda_i(A)\) denotes its eigenvalues, with specific orderings stated where used. Let $\mathcal{M}_{(d,k)}(\mathbb{R})$ denote the set of all $d \times k$ matrices with real-valued entries. If $d=k$, we refer to it simply as $\mathcal{M}_{d}(\mathbb{R})$.

We use standard Landau notation: for sequences \(\left(f(d)\right)_{d \in \mathbb{N}}\) and \(\left(g(d)\right)_{d \in \mathbb{N}}\) of positive reals, \(f(d) = O(g(d))\) means \(\limsup_{d\to\infty} f(d)/g(d) < \infty\), \(f(d) = o(g(d))\) means \(\lim_{d\to\infty} f(d)/g(d) = 0\), and we also adopt the somewhat atypical notation (but useful) \(f(d) = \Omega(g(d))\) as \(d \to \infty\) means there exist a constant \(C > 0\) and \(d' \in \mathbb{N}\) such that \(f(d) \geq Cg(d)\) for all \(d \geq d'\). Finally, \(f(d) = \omega(g(d))\) means \(\lim_{d\to\infty} f(d)/g(d) = \infty\).

The notation \(\vert \cdot \vert\) will depend on context. When applied to scalars, it refers to the absolute value; when applied to sets, it denotes the cardinality. The symbol \(\widehat{\cdot}\) will generally be used for estimated quantities. For a random variable $\bold{X}$, $\mathcal{L}\left(\bold{X}\right)$ refers to the law of $\bold{X}$.

Lastly, we introduce several definitions that will be used throughout the paper.

\begin{definition}[An event occurring with high probability (w.h.p.)] \label{Def:WithHigProb}
Let \(\left( E_d \right)_{d \in \mathbb{N}}\) be a sequence of events (sets). We say that \(E_d\) occurs with high probability if
\begin{equation*}
    \lim_{d \to \infty} \mathbb{P}(E_d) = 1.
\end{equation*}
\end{definition}

\begin{remark}
We note that \Cref{Def:WithHigProb} is sometimes defined more stringently, requiring that there exists some \(c > 0\), independent of \(d\), such that \(\mathbb{P}(E_d) \ge 1 - O(d^{-c})\), meaning there is a rate assumption on the convergence of the probability of the events. All our proofs remain valid under this assumption with slight modifications.
\end{remark}

We also introduce the notion of a random variable being bounded by some fixed but unknown constant w.h.p., which we will use frequently throughout.

\begin{definition}
For a sequence of random variables \(\left( X_d \right)_{d \in \mathbb{N}}\) and a sequence \(\left( a_d \right)_{d \in \mathbb{N}}\), we say \(X_d = O(a_d)\) w.h.p. if there exists a constant \(C\) such that \(\vert X_d \vert \leq C a_d\) w.h.p.
\end{definition}

\begin{remark}
The notion \(X_d = O(a_d)\) w.h.p. implies that \(X_d = O_{\mathbb{P}}(a_d)\), but the converse does not necessarily hold.
\end{remark}

  \subsection{The stochastic block Ornstein-Uhlenbeck process}\label{Sec:ModelSpecific}
  We will now introduce the SBOU-process. As mentioned, this process should encode a \emph{latent} group structure into \eqref{eqn.Intro:1} such that entries in the same group are more likely to affect the dynamics of each other than entries in different groups. We construct such a process by assuming that the drift matrix in \eqref{eqn.Intro:1} is a specific type of random matrix.
\begin{definition}[Generalised stochastic block Model of type $2$]\label{GSBM2}
 The generalised stochastic block model without self-loops, denoted GSBM2, is a directed random graph \(\mathcal{G} = (\mathcal{V}, \mathcal{E}, \mathcal{W})\), where:
\begin{itemize}
    \item \(\mathcal{V} = \{1, 2, \dots, d\}\) is the set of vertices,
    \item \(\mathcal{E} \subseteq \mathcal{V} \times \mathcal{V}\) is the set of directed edges, and
    \item \(\mathcal{W} = \{w_{(i,j)} \in \mathbb{R} \setminus \{0\} : (i,j) \in \mathcal{E}\}\) is the set of edge weights.
\end{itemize}

The edge weights \(w_{ij}\) are drawn from a distribution \(W\) with support on \([\underline{w}, \overline{w}] \setminus \{0\}\), where \(\underline{w}, \overline{w} \in \mathbb{R}\) and \(\underline{w} \leq \overline{w}\), with mean \(\mu\) and where $\bar{\mu} \equiv \mathbb{E}\left(\vert W\vert \right)$. For any pair of vertices \(i\) and \(j\), the edge weight \(w_{ij} \sim W\).

The vertices are fixed, but the existence of an edge between vertices is random. In \( \text{GSBM}2\), the set of \(d\) vertices \(\mathcal{V}\) is partitioned into \(k\) non-empty communities, \(\mathcal{V}_1, \mathcal{V}_2, \dots, \mathcal{V}_k\). The probability of an edge between vertex \(i\) in community \(l\) and vertex \(j\) in community \(m\) is given by:
\[
\mathbb{P}((i,j) \in \mathcal{E}) = \theta_i \theta_j b_{lm}, \quad \text{for } i \neq j,
\]
where \(\theta_i, \theta_j \in (0,1]\) are vertex-specific probability components, and \(b_{lm} \in [0,1]\) is a block-specific probability component for communities \(l\) and \(m\).

For \(i = j\), we set \(\mathbb{P}((i,j) \in \mathcal{E}) = 0\), meaning self-loops are not allowed.
\end{definition}
\begin{remark}\label{Remark:sMapping}
    We underline that the GSBM2 is simply the GSBM introduced \cite{Gudhmundsson2021}, with no self-loops and where edge weights are allowed to be negative.
\end{remark}
\begin{remark}\label{Remark.1}
We introduce the same notation as in \cite{Gudhmundsson2021} and let $\bold{Z}$ be the $d\times k$ matrix such that the matrix entry $\bold{Z}_{il} = 1$ if vertex $i$ belongs to community $l$ and $0$ otherwise. Similarly, we let $\bold{B}$ be the $k\times k$ community pair-specific probability component matrix with entries given by $\bold{B}_{lm}=b_{lm}$. We let $\Theta$ be the $d\times d$ diagonal vertex specific probability matrix with entries given by $\Theta_{ii} = \theta_i$.  Then, we may denote the GSBM2 in short as $\mathcal{G}\sim GSBM2
(\bold{Z},\bold{B},\Theta,W)$. Finally, we introduce the function $s: \mathcal{V} \mapsto \mathcal{K}$ where $\mathcal{K} = \{1,\dots,k\}$ denotes the community set. So that $s$ maps the nodes to their respective communities.
\end{remark}
\begin{remark}
The literature on the (degree) corrected stochastic block model is divided between two approaches: one that explicitly models the distribution of the membership matrix $\bold{Z}$, often assuming a multinomial distribution \citep{Wang2017,yan2014}, and another that treats $\bold{Z}$ as a fixed but unknown matrix \citep{Qin2013,Ma2021,Gudhmundsson2021}. The distinction in how $\bold{Z}$ is interpreted is subtle and largely depends on the type of research questions being addressed. When $\bold{Z}$ is treated as a random matrix, it allows for probabilistic inquiries, such as determining the likelihood that an element belongs to group $j \leq k$. However, this type of question is not the focus of our investigation. Accordingly, we adopt the latter approach, treating $\bold{Z}$ as a fixed but unknown matrix.

\end{remark}
\begin{remark}\label{Rem:IdentificationUpToScale}
    It is somewhat nontraditional to restrict the node-specific probabilities to be less than $1$, i.e. interpret them as probabilities (\citealp{Su2019,Ma2021}). However, we like it for various reason. First of all, since $\theta_i\theta_j b_{s(i)s(j)}$ represents a probability, the parameters in GSBM2 are only identified up to scale. As such, there is no cost of making this restriction in the sense that GSBM2 becomes less general. To see this, let $\mathcal{G}\sim GSBM2
(\bold{Z},\bold{B},\Theta,W)$ and for some $\tau>0$, let $\tilde{\mathcal{G}}\sim GSBM2
(\bold{Z}, \tau\bold{B},\Theta/\sqrt{\tau},W)$. Let $g$ be a possible realisation of the $GSBM2
(\bold{Z},\bold{B},\Theta,W)$. Here $g_{ij}\in\{0,1\}$ records whether an edge is present between nodes $i$ and $j$. Since the likelihood of $\mathcal{G}$ is $d^2$ independent Bernoulli trials, we have that 
    \begin{equation*}
        \begin{aligned}
        \mathbb{P}\left(\mathcal{G} = g\right) &= \prod_{i,j =1}^d \left(\theta_i\theta_j b_{s(i)s(j)}\right)^{g_{ij}}\left(1-\theta_i\theta_j b_{s(i)s(j)}\right)^{1-g_{ij}}\\
        &=\prod_{i,j =1}^d \left(\frac{\theta_i}{\sqrt{\tau}}\frac{\theta_j}{\sqrt{\tau}} \tau b_{s(i)s(j)}\right)^{g_{ij}}\left(1-\frac{\theta_i}{\sqrt{\tau}}\frac{\theta_j}{\sqrt{\tau}} \tau b_{s(i)s(j)}\right)^{1-g_{ij}}\\
        &=  \mathbb{P}\left(\tilde{\mathcal{G}} = g\right).
        \end{aligned}
    \end{equation*}
    Secondly, knowing that $1$ is an upper bound to any $\theta_j$ does simplify some of the proof in \Cref{Sec:Cons}. Thirdly, allowing $\Theta$ to depend on $d$ lets us capture different sparsity regimes while keeping $\bold{B}$ independent of $d$. This differs from much of the existing literature, where the block-probability matrix is allowed to vary with $d$ \citep{Su2019,Ma2021,Gudhmundsson2021}. The fixed-$\bold{B}$ formulation is useful for our analysis: \Cref{prop:eigenvaluesnindependence} shows that an eigenvalue condition imposed in, for example, \citet{Gudhmundsson2021} holds automatically in our setting.

\end{remark}
The matrices \(\bold{B}\) and \(\bold{\theta}\) play a crucial role in determining the strength of the group structure in the \( \text{GSBM}_2 \). The matrix \(\bold{B}\) essentially carries the signal indicating which nodes belong to the same communities. Intuitively, for any \(v \in \{1, \dots, k\}\), we expect \(b_{vv} > b_{vl}\) for \(l \neq v\), meaning that, all else being equal, edges are more likely to form between nodes within the same community than between nodes in different communities. The larger the difference between the probabilities of within-community and between-community links, the stronger the group signal.

While \(\bold{\theta}\) does not contain direct information about the group structure, it significantly influences how clearly the structure is observed. This is because the node-specific probabilities in \(\bold{\theta}\) predominantly determine whether a link is established. Recall that the probability of an edge between nodes \(i\) and \(j\) is given by \(\mathbb{P}((i,j) \in \mathcal{E}) = \theta_i \theta_j b_{s(i)s(j)}\) for \(i \neq j\). If the node-specific probabilities are very small, few links are created. Even if there is a strong distinction between within-community and between-community link probabilities, the lack of edges can obscure this signal, making it difficult for an algorithm to detect the communities.

Moreover, even a single erroneous link between nodes from different communities can disrupt clustering. Conversely, as the node-specific probabilities \(\bold{\theta}\) increase, the signal carried by \(\mathbf{B}\) becomes more prominent. In this way, \(\bold{\theta}\) influences the overall clarity of the group signal, effectively modulating its strength.

This relationship is illustrated in \Cref{fig:combined_signal_illustration}, where we plot several realizations of a \( \text{GSBM}_2 \) using a force-directed graph. The figure demonstrates that both \(\bold{B}\) and \(\bold{\theta}\) play a role in determining the ease of group detection. Although this is only a visual test, we will later see that our clustering algorithm struggles in scenarios where the signal is weak.

\begin{figure}[htbp]
    \centering
    \caption{Illustrations of different signal strength.}
    % First subfigure (top)
    \begin{subfigure}[b]{\textwidth}
        \centering
        \includegraphics[scale=0.35]{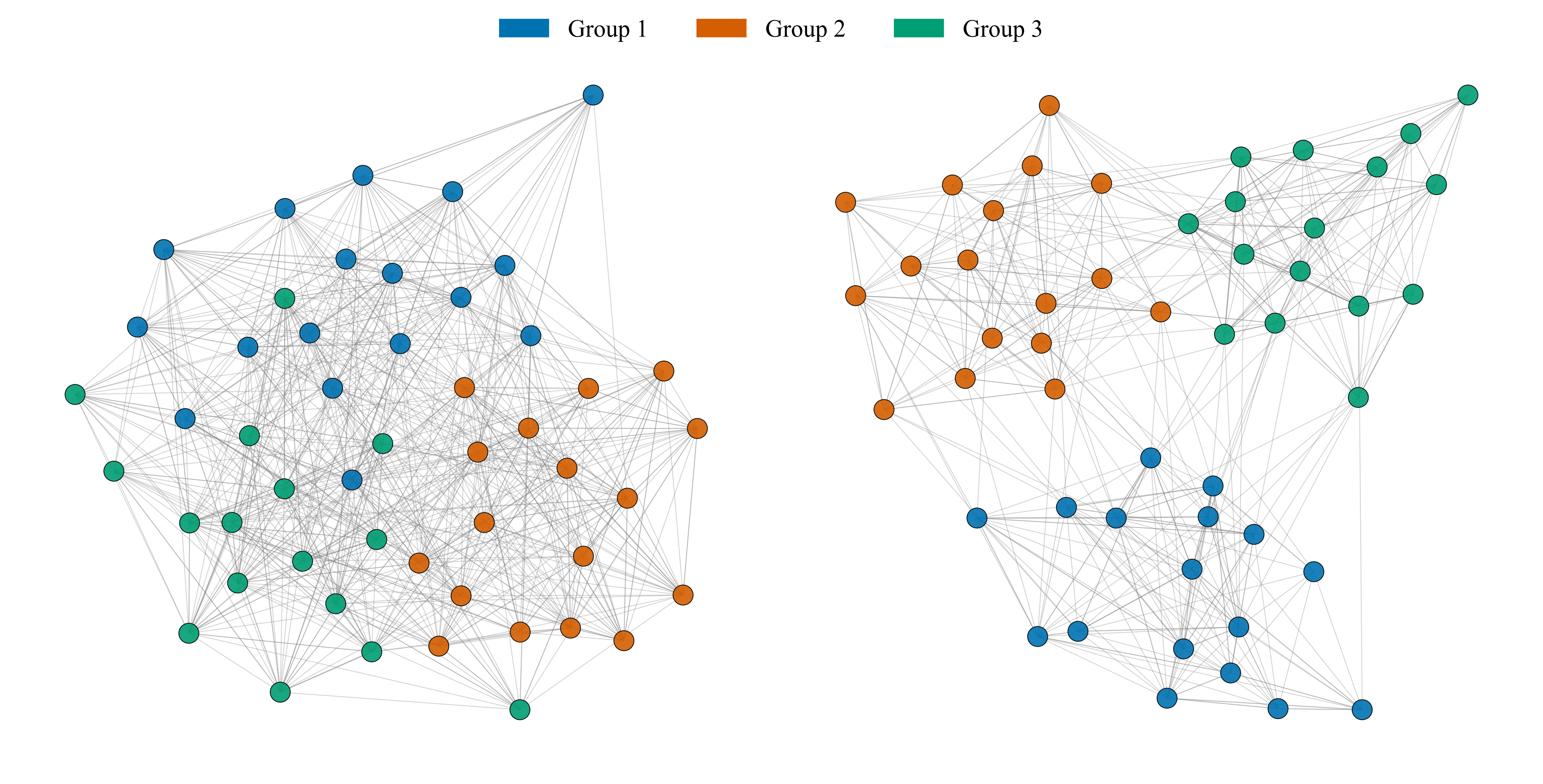}
        \caption{Illustration of consequences of a weak vs. strong signal (carried in $\bold{B}$)}
        \label{fig:signal_vs_nosignal}
    \end{subfigure}

    % Second subfigure (bottom)
    \begin{subfigure}[b]{\textwidth}
        \centering
        \includegraphics[scale=0.35]{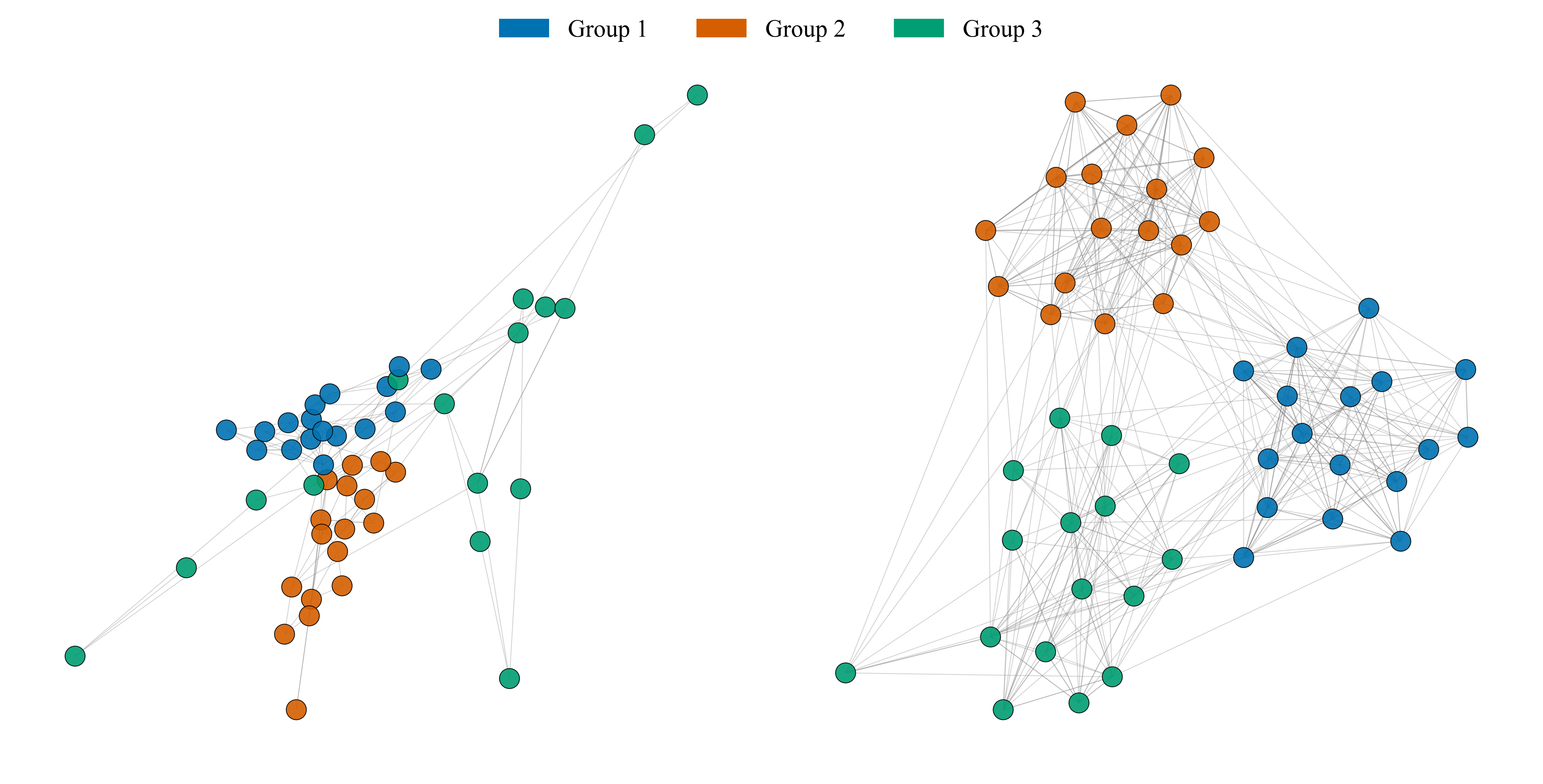}
        \caption{Illustration of a strong signal in conjunction with few links as a consequence of low values in $\bold{\Theta}$}
        \label{fig:weak_vs_strong}
    \end{subfigure}
    \begin{scriptsize}
    \parbox{0.98\textwidth}{\emph{Note: }Figure (a) displays two realisation of a GSBM$2$ with $d=50$ and $k=3$. In both figures $\Theta_{ii}=\theta_i = 1$ for any $i$. In the left hand side figure, $b_{s(i)s(j)}$ = $0.5$ when $s(i)=s(j)$ and $0.2$ otherwise. In the right hand side figure, $b_{s(i)s(j)}$ = $0.5$ when $s(i)=s(j)$ and $0.05$ otherwise. Figure (b) displays two realisations of a GSBM$2$ with $d=50$ and $k=3$. In both figures $b_{s(i)s(j)}$ = $0.5$ when $s(i)=s(j)$ and $0.05$. In the left hand side figure $\Theta_{ii}=\theta_i = 0.5$ for any $i$. In the right hand side figure, $\Theta_{ii}=\theta_i = 1$ for any $i$. The plots are so-called force-directed graphs. Meaning that nodes repulse each-other, but links between nodes pull them together. If there a many links between nodes in the same group this in turn highlights the group structure.} \end{scriptsize}
    \label{fig:combined_signal_illustration}
\end{figure}

With the above in mind, we are now ready to introduce our SBOU process. We let $\mathbb{L}=\left\{\mathbf{L}_t, t \geq 0\right\}$ denote a two-sided, $d$-dimensional Lévy process with characteristics $(\mathbf{b}, \Sigma, F)$, i.e. $\mathbf{b} \in \mathbb{R}^d, \Sigma$ a symmetric positive semi-definite $d \times d$ matrix and $F$ a Lévy measure on $\mathbb{R}^d$, that is $F(\{0\})=0$ and $\int_{\mathbb{R}^d}\left(\|x\|^2 \wedge 1\right) F(d x)<\infty$.  We consider a $d$-dimensional Ornstein-Uhlenbeck (OU) process, $\mathbb{Y}=\{\bold{Y}_t, t \ge 0\}$ where $\bold{Y}_t = \left(\bold{Y}_t^{(1)}\dots ,\bold{Y}_t^{(d)}\right)$ driven by $\mathbb{L}$ with drift matrix $Q \in \mathcal{M}_d(\mathbb{R})$ satisfying the stochastic differential equation (SDE)
\begin{equation}\label{eqn:1}
\mathrm{d} \mathbf{Y}_t=-Q \mathbf{Y}_{t-} \mathrm{d} t+\mathrm{d} \mathbf{L}_t, \quad t \geq 0.
\end{equation}
\begin{definition}[SBOU Process]\label{SBOUProc}
Let \(\mathcal{G}\) be a draw from \(GSBM2(\bold{Z}, \bold{B}, \Theta, W)\), and let \(A\) be the corresponding adjacency matrix, where the \((i,j)\)-th entry is \(w_{ij}\) if there is a directed edge from \(i\) to \(j\). Define \(D\) as the \(d\)-dimensional diagonal matrix with the \(i\)-th diagonal entry given by
\[
D_{ii} = \sum_{j=1}^d |A_{ij}| + \sum_{j=1}^d |A_{ji}|,
\]
which represents the sum of the absolute in-degrees and out-degrees of the \(i\)-th vertex in \(A\).

The SBOU process is defined as the unique strong, \emph{strictly stationary} solution (when it exists) to \eqref{eqn:1} with drift matrix \(Q(\psi) = \psi_1 I + \Psi\), where \(\Psi = \psi_2 D^{-1/2} A^\top D^{-1/2}\), and \(\psi = (\psi_1, \psi_2) \in \mathbb{R}^2\). Throughout, \(\psi_1\) and \(\psi_2\) are fixed constants that do not depend on \(d\).
\end{definition}

\begin{remark}
    Since $D^{-1/2}$ is not guaranteed to be well-defined (for finite $d$, some rows of $D$ may have all $0$ entries), by $D^{-1/2}$ we mean the Moore--Penrose pseudo-inverse of $D^{1/2}$, i.e.\ the diagonal matrix whose $i$th diagonal entry equals $D_{ii}^{-1/2}$ if $D_{ii}>0$ and $0$ otherwise.
\end{remark}
The close relationship between the SBOU process and the graph Ornstein-Uhlenbeck (OU) process from \cite{Courgeau2022a} is evident. As in that model, the components of \(\mathbb{Y}\) represent the nodes of a graph, linked through a collection of edges specified by the matrix \(Q(\psi)\). The dynamics of the \(i\)-th component of the SBOU process are governed by the following equation:
\begin{equation}\label{eqn.Intro:2}
    d \bold{Y}_t^{(i)} = -\psi_1 \bold{Y}_{t-}^{(i)} \, dt -  \sum_{j \neq i} \Psi_{ij} \bold{Y}_{t-}^{(j)} \, dt + d \bold{L}_t^{(i)}, \quad t \ge 0.
\end{equation}
In this equation, the first term is referred to as the momentum effect, while the second term represents the network effect. The dynamics of \(\bold{Y}_t^{(i)}\) are influenced by \(\bold{Y}_t^{(j)}\) if and only if \(\Psi_{ij} \neq 0\), i.e., when there is a directed edge from node \(j\) to node \(i\). The \((i,j)\)-th entry of \(\Psi\) is given by:
\begin{equation}\label{eqn:2}
\Psi_{ij} = \begin{cases}
\psi_2 \dfrac{A_{ji}}{\sqrt{D_{ii}D_{jj}}}, & \text{if there is an edge from } j \text{ to } i, \\
0, & \text{otherwise},
\end{cases}
\end{equation}
where \(A_{ji}\) is the \((j,i)\)-th entry of a realization of a GSBM2. This shows that a group structure has been incorporated into the OU process since the likelihood of \(A_{ji}\) being non-zero is precisely given by \(\theta_i \theta_j b_{s(j)s(i)}\), which is higher when \(s(j) = s(i)\), meaning the nodes belong to the same community.

Since \(A\) is the adjacency matrix corresponding to a realization of a random graph generated by GSBM2, and this realization is unobserved, the group structure itself remains unknown.

\begin{remark}\label{Remark.3}
    We highlight a few things with respect to the construction of the network effect in our SBOU process. The standardisation is important for three reasons. Firstly, it will be important to achieve stationarity of our process. Secondly, as argued in \cite{Gudhmundsson2021}, it plays an important role in establishing the convergence properties of $\Psi +\Psi^\top$ as $d$ tends to infinity. Thirdly, it also has the interpretation that nodes with few edge-connections affect the few nodes they affect more than nodes with many edges all else equal.They also affect the few edges they do affect more. In a similar vein, nodes with many edges are affected less by each edge-connection and also affect each edge-connection less. Depending on the context, this may or may not be an appealing feature. Indeed, one could imagine that high-degree nodes exert less influence on each individual node they are connected to, simply because their influence or resources are distributed among many connections. On the other hand, high-degree nodes can have a significant influence because they act as hubs through which information or influence flows which does not diminish with the number of connections.
\end{remark}
\begin{remark}
    We also underline that we only require the support of $W$ to be compact, not also positive as in \cite{Gudhmundsson2021}. This has the more realistic interpretation of us allowing nodes to affect the value of each other in a positive as well as negative manner. This is aligned with the assumptions in the literature on the drift matrix of a multivariate Ornstein-Uhlenbeck process. See e.g. \citealp{Gaiffas2019,Fasen2013} who use it for pairs-trading. Us working with this somewhat unusual degree matrix as the sum of absolute values of the nodes is a direct consequence of allowing for this. Henceforth, we call it the absolute degree matrix.
\end{remark}
In the following, we give easy to check conditions for when the SBOU-process is well-defined.
\begin{theorem}\label{Thm.1}
Let \( A \) be as defined in \Cref{SBOUProc}, and let \(\bar{D}\) be the diagonal matrix where the \((i,i)\)-th entry is given by \(\bar{D}_{ii} = \sum_{j=1}^d |A_{ij}|\), and \(\tilde{D}\) be the diagonal matrix with \(\tilde{D}_{ii} = \sum_{j=1}^d |A_{ji}|\). Define \(\tau = \min_{i} \left(\dfrac{\bar{D}_{ii}}{\tilde{D}_{ii}} \right)\). Assume that \(\mathcal{L}(\boldsymbol{Y}_0)\) is the invariant distribution of \(\mathbb{Y}\), and that \(\psi = (\psi_1, \psi_2) \in \mathbb{R}^{+} \times \mathbb{R}^{+}\) satisfies \(\psi_1 > \psi_2 / (1 + \tau)\). Further, assume the Lévy measure \(F(\cdot)\) satisfies the moment condition \(\int_{\Vert z \Vert > 1} \ln \Vert z \Vert \, F(dz) < \infty\).

Then, for \(Q = Q(\psi)\) as defined in \Cref{SBOUProc}, there exists a unique, strictly stationary solution to \eqref{eqn:1}.
\end{theorem}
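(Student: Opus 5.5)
The plan is to reduce the statement to the classical existence and uniqueness theorem for stationary Lévy-driven Ornstein--Uhlenbeck processes (Sato and Yamazato, 1984; see also Masuda, 2004). For a $d$-dimensional Lévy process with $\int_{\Vert z\Vert>1}\ln\Vert z\Vert\,F(dz)<\infty$ and a drift matrix $Q$ whose eigenvalues all have strictly positive real part, that theorem gives the following. The SDE \eqref{eqn:1} has a unique invariant distribution, and the two-sided representation
\begin{equation*}
\mathbf{Y}_t=\int_{-\infty}^t e^{-Q(t-s)}\,\mathrm{d}\mathbf{L}_s
\end{equation*}
converges almost surely and is the unique strictly stationary solution. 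Since the log-moment condition is assumed and $\mathbb{L}$ is two-sided, the only work is a spectral condition. It suffices to show that every eigenvalue of $Q(\psi)=\psi_1 I+\Psi$ has real part at least $\psi_1-\rho(\Psi)$, where $\rho(\cdot)$ is the spectral radius, and then to prove $\rho(\Psi)\le \psi_2/(1+\tau)$.

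To bound $\rho(\Psi)$ I would use a similarity transform and then row sums. Write $M=D^{-1/2}A^\top D^{-1/2}$ with the pseudo-inverse convention. Let $\mathcal{I}=\{i: D_{ii}>0\}$. If $D_{ii}=0$, then row $i$ and column $i$ of $A$ vanish, so the corresponding row and column of $M$ are zero. Hence the spectrum of $M$ equals the spectrum of its principal submatrix on $\mathcal{I}$, together with zeros. On $\mathcal{I}$ the matrix $D$ is invertible, and
\begin{equation*}
M_{\mathcal{I}}=D_{\mathcal{I}}^{1/2}\bigl(D_{\mathcal{I}}^{-1}A_{\mathcal{I}}^\top\bigr)D_{\mathcal{I}}^{-1/2},
\end{equation*}
so $M_{\mathcal{I}}$ is similar to $N=D_{\mathcal{I}}^{-1}A^\top_{\mathcal{I}}$. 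By the bound $\rho(N)\le\Vert N\Vert_\infty$ (max absolute row sum), it is enough to control the row sums of $N$. The $i$-th absolute row sum is
\begin{equation*}
\frac{\sum_j |A_{ji}|}{D_{ii}}=\frac{\tilde D_{ii}}{\bar D_{ii}+\tilde D_{ii}}=\frac{1}{1+\bar D_{ii}/\tilde D_{ii}}\le\frac{1}{1+\tau}.
\end{equation*}
The bound holds trivially when $\tilde D_{ii}=0$, where the row sum is $0$ and the ratio is read as $+\infty$. Therefore $\rho(\Psi)=\psi_2\rho(M)\le\psi_2/(1+\tau)$.

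If $\lambda$ is an eigenvalue of $Q$, then $\lambda-\psi_1$ is an eigenvalue of $\Psi$. Hence $\operatorname{Re}\lambda\ge\psi_1-|\lambda-\psi_1|\ge \psi_1-\psi_2/(1+\tau)>0$ by the hypothesis on $\psi$. Invoking the Sato--Yamazato theorem then gives existence and uniqueness of the stationary solution. Taking $\mathcal{L}(\mathbf{Y}_0)$ to be the invariant law makes the solution started at time $0$ strictly stationary, and pathwise uniqueness of the linear SDE with Lipschitz drift gives that it is the unique strong solution.

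The main obstacle is the bookkeeping in the similarity step, which must correctly handle nodes with zero absolute degree and nodes with $\tilde D_{ii}=0$ (where $\tau$ may be $+\infty$). The reduction to the submatrix on $\mathcal{I}$ is meant to dispose of both cases. A second subtlety is that the classical theorem is usually stated for a fixed drift matrix. Here $Q$ is random through the graph, so I would apply the result conditionally on the realisation of $\mathcal{G}$, with $\mathcal{G}$ independent of $\mathbb{L}$.
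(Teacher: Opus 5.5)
Your proposal is correct and follows essentially the same route as the paper: reduce to the Masuda/Sato--Yamazato stationarity result, pass through the similarity $D^{-1/2}A^\top D^{-1/2}\sim D^{-1}A^\top$, and bound the spectrum by the absolute row sums $\tilde D_{ii}/(\bar D_{ii}+\tilde D_{ii})\le 1/(1+\tau)$. The paper phrases this bound as Gershgorin discs centred at $\psi_1$, which is equivalent here since $A_{ii}=0$, and your restriction to $\mathcal{I}=\{i:D_{ii}>0\}$ closes a small gap in its substitution $y=D^{-1/2}x$, which with the pseudo-inverse silently assumes $y\neq 0$ (the omitted case only yields the eigenvalue $\psi_1$, so the conclusion is unaffected).
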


\begin{proof}
    See \Cref{ProofsModelClusteringSelection}.
\end{proof}
    \begin{remark}
        We note that, although $\psi_2>0$, it does not imply that the network effects are only positive since the edge-weights are allowed to be negative.
    \end{remark}

\section{Auxiliary drift estimation result}\label{Sec:EstShort}

Our community detection procedure requires an estimator of the drift matrix \(Q_0\) whose estimation error vanishes sufficiently quickly as \(d \to \infty\). Since drift estimation is auxiliary to the main contribution of the paper, we defer the full setup and most assumptions to \Cref{App:Est}. Here we only sketch the main ideas underlying the proof.

A substantial literature already exists on drift estimation for multivariate OU processes. In the Gaussian setting, that is, when \(\mathbb{L}=\mathbb{W}\), \citet{Ciolek2020,Gaiffas2019} study estimation under continuous-time observation schemes and under the additional assumption that the continuous \(\mathbb{P}^0\)-martingale component \(\mathbb{X}^c\) is observed. The estimators considered are the Dantzig and Lasso estimators in \citet{Ciolek2020}, and the Lasso and adaptive Lasso estimators in \citet{Gaiffas2019}. These results were extended by \citet{Dexheimer2024} to the Lévy-driven case, allowing \(\mathbb{L}\) to be an arbitrary Lévy process, for the Dantzig and Slope estimators. However, all of these results rely on continuous observation schemes and are therefore, in essence, infeasible in practice.

In \Cref{App:Est}, we develop a feasible estimator that can be used in the community detection procedure of \Cref{Sec:Cons}. For our purposes, the maximum likelihood estimator is sufficient, and we therefore focus on this case. The argument proceeds in three steps. We introduce three estimators based on three likelihood functions that become progressively more feasible: we begin with an estimator based on continuous observations and end with one based only on discrete observations. The proof strategy is to show that the relevant consistency property, formulated appropriately, is transferred from the least feasible estimator to the next, and finally to a fully feasible estimator.

Let \(Q_0\) denote the true drift matrix of the OU process defined in \Cref{eqn:1}. It is well known that varying \(Q\) in \Cref{eqn:1} induces a family of probability measures on the canonical path space; see \Cref{Prelim:EstSec}. On this path space, we represent the OU process by \(\mathbb{X}\), and write \(\mathbb{P}^{Q}\) for the probability measure induced by the matrix  \(Q\). We denote by \(\mathbb{X}^c\) the \(\mathbb{P}^0\)-continuous martingale part of \(\mathbb{X}\), i,.e. the case with no drift.

As a first step, we show that the Frobenius norm error of the maximum likelihood estimator \(\widehat Q\) can be controlled with high probability. The arguments follow closely those in \citet{Ciolek2020,Dexheimer2024}, where analogous bounds are derived for related estimators and can be adapted to the MLE with only minor modifications.

A technical contribution of this step is to provide an alternative to the diagonalizability assumption on \(Q_0\) used in both \citet{Ciolek2020} and \citet{Dexheimer2024}. In the cited work, diagonalizability is invoked solely to bound the operator norm of the matrix exponential of \(Q_0\) in terms of the smallest real part of its eigenvalues, with a prefactor equal to the condition number of the eigenvector matrix. We show that the same exponential operator-norm decay, without any condition-number prefactor, holds under the requirement that the smallest eigenvalue of the symmetrized matrix \(Q_0 + Q_0^\top\) is strictly positive. The two conditions are independent: neither implies the other, as we illustrate with explicit two-dimensional counterexamples in \Cref{App:Est}. Our condition is, however, the natural one for the SBOU process introduced in \Cref{SBOUProc}, whose drift matrix is generically non-diagonalizable but for which the symmetrized-positivity condition is easily verified whenever the momentum effect dominates the network effect; see \Cref{App:Est}. 

Having established this first result, we next move to discrete-time observations of \(\mathbb{X}\), while still assuming (for now) that the continuous \(\mathbb{P}^0\)-martingale component \(\mathbb{X}^c\) is separately observed. We then construct an estimator \(\widehat Q^{\mathcal P_T}\) based on possibly non-equidistant discrete observations and show that it essentially inherits the Frobenius norm bound established for \(\widehat Q\). In this step, one additional assumption is needed: the mesh of the observation grid must shrink sufficiently fast relative to \(d\). This ensures that the restricted eigenvalue condition holds, a standard requirement in the theory of \(\ell_1\)-penalized estimators; see \citet{Bickel2009,Buhlmann2011}. In the non-Gaussian case, \citet{Dexheimer2024} similarly require \(T\) to grow sufficiently quickly in order to guarantee this condition.

Finally, we drop the assumption that the continuous \(\mathbb{P}^0\)-martingale component is observed, which is the practically relevant setting. The main difficulty is that, from discrete observations alone, one cannot separate the contribution of the Brownian part from that of the jump part. The standard approach is therefore to filter out jumps in order to recover a good proxy for the continuous martingale part. We adopt this strategy and define \(\bar Q^{\mathcal P_T}\) as the corresponding estimator. This is the first fully feasible estimator in our sequence. The assumptions required at this stage concern the thresholding scheme and the jump behavior of \(\mathbb{L}\). Since these are close to standard conditions in the literature, see \citet{Mai2014,Lucchese2023,Courgeau2022b}, we do not state them here. The main difference is that, in our setting, the dimension \(d\) also tends to infinity, whereas the cited papers keep \(d\) fixed. Accordingly, both the observation mesh and the threshold sequence must be chosen so as to vanish sufficiently quickly with \(d\). Under these conditions, we show that the convergence properties of \(\widehat Q^{\mathcal P_T}\) are transferred to \(\bar Q^{\mathcal P_T}\).

We summarize the result as follows.

\begin{theorem}\label{Thm:AuxiliaryEstimation}
Under the assumptions stated in \Cref{App:Est}, the estimator \(\bar Q^{\mathcal P_T}\) satisfies
\[
\|\bar Q^{\mathcal P_T} - Q_0\|_F \le h(d,\mathcal P_T)
\]
with high probability, where \(h(d,\mathcal P_T)=o(1)\) as \(d\to\infty\). The explicit form of \(h\) is given in \Cref{Rem:functionalFormf}; see \Cref{DiscreteEstimatorConsistencyBothCases} for the full statement.
\end{theorem}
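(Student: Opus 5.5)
The plan follows the three-step transfer sketched above. We chain three estimators, \(\widehat Q \to \widehat Q^{\mathcal P_T} \to \bar Q^{\mathcal P_T}\), and bound the total error by the triangle inequality:
\[
\|\bar Q^{\mathcal P_T}-Q_0\|_F \le \|\widehat Q-Q_0\|_F + \|\widehat Q^{\mathcal P_T}-\widehat Q\|_F + \|\bar Q^{\mathcal P_T}-\widehat Q^{\mathcal P_T}\|_F .
\]
We then show that each of the three terms is \(o(1)\) with high probability. Each term contributes one summand to \(h(d,\mathcal P_T)\).

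\textbf{Step 1: continuous-time MLE.} The MLE solves the normal equations \(\widehat Q\, \widehat C_T = -\widehat{S}_T\). Here \(\widehat C_T=\frac1T\int_0^T X_tX_t^\top dt\) is the empirical covariance, and \(\widehat S_T = \frac1T\int_0^T dX^c_t X_t^\top\) (suitably oriented). The error is therefore \((\widehat Q-Q_0) = -\frac1T\int_0^T dM_t X_t^\top\, \widehat C_T^{-1}\), where \(M\) is the continuous martingale part under \(\mathbb P^{Q_0}\). Two ingredients are needed.

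First, a concentration bound for the martingale term: each entry of \(\frac1T\int dM X^\top\) is \(O(\sqrt{\log d/T})\) with high probability, by a Bernstein-type inequality for continuous martingales with quadratic variation \(\Sigma_{ii}\int X_j^2\).

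Second, a lower bound \(\lambda_{\min}(\widehat C_T)\ge c\) with high probability. This follows from concentration of \(\widehat C_T\) around the stationary covariance \(C_\infty=\int_0^\infty e^{-sQ_0}\Sigma e^{-sQ_0^\top}ds\). That concentration in turn uses mixing bounds driven by \(\|e^{-sQ_0}\|\). Here we replace the diagonalizability argument of Ciolek et al. and Dexheimer et al. with the observation that \(\frac{d}{ds}\|e^{-sQ_0}x\|_2^2 = -x_s^\top(Q_0+Q_0^\top)x_s\). With \(\lambda_{\min}(Q_0+Q_0^\top)=2\kappa>0\), Grönwall's inequality gives \(\|e^{-sQ_0}\|\le e^{-\kappa s}\) with no condition-number prefactor. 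Combining the two ingredients gives \(\|\widehat Q-Q_0\|_F\lesssim d\sqrt{\log d/T}/c\), which is \(o(1)\) under the growth assumption on \(T\).

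\textbf{Step 2: discretisation.} Replace the integrals by Riemann–Itô sums on \(\mathcal P_T\), keeping \(X^c\) observed. The differences \(\widehat C^{\mathcal P_T}-\widehat C_T\) and \(\widehat S^{\mathcal P_T}-\widehat S_T\) are controlled entrywise by the increments of \(X\) over mesh intervals. Using the stationary moments and a union bound over the \(d^2\) entries, this gives an operator-norm error of order \(d\,\Delta^{1/2}\) up to logarithmic factors, where \(\Delta=\max|t_{i+1}-t_i|\). Because \(\Delta\) shrinks fast relative to \(d\), \(\lambda_{\min}(\widehat C^{\mathcal P_T})\ge c/2\) still holds with high probability; this is the restricted-eigenvalue-type requirement. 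A perturbation bound \(\|A^{-1}-B^{-1}\|\le\|A^{-1}\|\|B^{-1}\|\|A-B\|\) then transfers Step 1.

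\textbf{Step 3: jump filtering (expected main obstacle).} Replace \(\Delta X^c\) by the truncated increments \(\Delta X\,\mathbf 1\{\|\Delta X\|\le v_{\Delta}\}\). The error has two sources. One is small jumps kept below the threshold. The other is Brownian increments wrongly discarded or large jumps wrongly kept. Following Mai (2014), we would bound both per coordinate and per interval, using a Blumenthal–Getoor-type index for the small-jump activity. We would then take union bounds over \(d\) coordinates and \(T/\Delta\) intervals, which is where \(d\to\infty\) bites. The hard part is making the misclassification probabilities summable over \(dT/\Delta\) events. My first attempt would choose \(v_\Delta=\Delta^{\varpi}\) with \(\varpi<1/2\) and require \(d\,T\,\Delta^{-1}\exp(-c\Delta^{2\varpi-1})\to0\), then control the retained small-jump contribution in \(L^1\) via Markov. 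The resulting perturbation of \(\widehat S^{\mathcal P_T}\) enters linearly through \(\widehat C^{\mathcal P_T\,-1}\), whose norm is bounded with high probability by Step 2. Summing the three bounds yields \(h(d,\mathcal P_T)=o(1)\).
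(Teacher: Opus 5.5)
Your architecture (a triangle inequality over three successively more feasible estimators, with the thresholding error entering linearly through $\widehat C_{\mathcal P_T}^{-1}$) is the paper's. Your Step~3 is essentially \Cref{LikelihoodEqualityFeasibleCase,LikelihoodEqualityFeasibleCaseInfiniteActCase}.

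The first two links, however, take a different route. The paper proves the continuous-time bound (\Cref{CorrollaryBoundMaxLike}) but does not chain through it. The chain in \Cref{DiscreteEstimatorConsistencyBothCases} is $\tilde Q\to\widehat Q\to\bar Q$. Here $\tilde Q$ minimises the discrete pseudo-likelihood \eqref{egn:PseudoLikelihood}, i.e.\ the discrete likelihood in which the drift part $-\int_{s_n}^{s_{n+1}}Q_0X_s\,ds$ of $\Delta^n_{\mathcal P_T}X^c$ is replaced by its Euler value $-Q_0X_{s_n}(s_{n+1}-s_n)$. The oracle inequality is re-proved directly for $\tilde Q$ (\Cref{PropertiesDiscretizedEstimator}):
\begin{itemize}
\item the discrete martingale transform $\sum_n X_{s_n}^\top\Delta^n_{\mathcal P_T}\tilde W$ is itself the It\^o integral $\int_0^T X_{\lfloor s\rfloor}^\top d\tilde W_s$, so Bernstein's inequality applies unchanged;
\item the restricted-eigenvalue event is transferred direction by direction through $H_2$ and a covering net (\Cref{Prop:LevyH_discrete,Prop3.1OfDexheimerDiscrete}).
\end{itemize}
The link $\tilde Q\to\widehat Q$ then only involves the drift mismatch $Q_0(\widehat C_{\mathcal P_T}-\tilde C_{\mathcal P_T}^\top)\widehat C_{\mathcal P_T}^{-1}$ (\Cref{LikelihoodEquality}).

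You instead bound the continuous MLE by an entrywise Bernstein plus union bound. This is more elementary than the slope-norm argument, at the price of a $\sqrt{\log d}$ factor. You then compare the discrete and continuous normal equations. That costs one extra term, $\frac1T\int_0^T dM_s\,(X_{\lfloor s\rfloor}-X_s)^\top$, which It\^o's isometry handles easily. In return you get something real: $\lambda_{\min}(\widehat C_{\mathcal P_T})\ge\kappa_{\min}/4$ follows from $\lambda_{\min}(\widehat C_T)\ge\kappa_{\min}/2$ by Weyl's inequality once $\Vert\widehat C_{\mathcal P_T}-\widehat C_T\Vert_F=O_{\mathbb P}(d^{7/2}\Delta_{\mathcal P_T}^{1/2})$ is small. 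That needs only polynomial decay of the mesh. By contrast, the paper's union over a net of size $(21(d\wedge e))^d$ is what forces the (at least) exponentially small mesh of \Cref{Ass:WHPDelta}.

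Both routes work structurally under the stated assumptions. Yours yields a bound of the same shape (extra $\sqrt{\log d}$, an additional discretisation term), but not literally the $h$ of \Cref{Rem:functionalFormf}. Also, in the L\'evy case the concentration of $\widehat C_T$ is not derived from mixing but assumed (\Cref{Ass:LevyH}). The paper uses the decay of $\Vert e^{-sQ_0}\Vert$ (\Cref{ExponentialBound1}) for the moment bounds \Cref{BoundExpectationNormInvariant,BoundOnIncrements}.

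Several details need correcting.
\begin{itemize}
\item The moment assumptions only give polynomial-in-$d$ discretisation bounds with Markov-type tails (e.g.\ $O(d^{7/2}\Delta_{\mathcal P_T}^{1/2})$ in $L^1$), not $d\Delta_{\mathcal P_T}^{1/2}$ up to logarithms. This is harmless given the mesh assumptions.
\item In Step~3, the probability that a jump-free increment crosses the threshold is governed not by the Gaussian tail but by the drift increment $-\sum_l[Q_0]_{kl}\int_{s_n}^{s_{n+1}}Y^{(l)}_s\,ds$, which has only polynomial moments. Summed over $dN_T$ events this gives the polynomial condition $Td^8\Delta_{\mathcal P_T}^{1-2\beta^*}\to0$ of \Cref{Ass.AssFiniteAct}(4) (via \Cref{Lemma:CountBound}), not $dT\Delta^{-1}\exp(-c\Delta^{2\varpi-1})\to0$.
\item You must bound the continuous increment discarded on correctly detected jump intervals; this is the paper's leading $O(d^8\Delta_{\mathcal P_T}^{1/2})$ term.
\item You must use that $\mathbb L$ has no deterministic drift ($\tilde{\mathbf b}=0$, resp.\ $\mathbf b=0$), since no threshold can remove it.
\item You must truncate coordinatewise, $\Delta X\odot\vec{\mathbf 1}(\Delta X)$ with $\nu^{(i)}=\Delta_{\mathcal P_T}^{\beta^{(i)}}$, rather than on $\Vert\Delta X\Vert$, since that is the estimator in the statement.
\item $\varpi<1/2$ is too permissive for the infinite-activity terms of order $\Delta_{\mathcal P_T}^{1/2-2\beta^*-\delta}$.
\end{itemize}
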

\begin{proof}
    See \Cref{DiscreteEstimatorConsistencyBothCases}.
\end{proof}

\section{Community detection algorithm: Methodology and consistency}\label{Sec:Cons}
The next step is, given an estimate of the drift component in the SBOU process, to cluster the nodes of the underlying GSBM2. We shall assume that we observe an SBOU process (\Cref{SBOUProc}) on a discrete grid $\mathcal{P}_T:=\left\{0=s_0^{N_T}<s_1^{N_T}<\ldots<s_{{N_T}}^{N_T}=T\right\}$.  In the following, we shall introduce our clustering algorithm and prove that the share of correctly clustered nodes will tend to $1$ in what we shall call a high-dimensional, in-fill and long-span setup. That is, as the grid-size tends to $0$, the time-horizon $T$ tends to infinity and as the dimension of our observed SBOU process tends to infinity.  
\subsection{Methodology}
In this section, we introduce our clustering algorithm. Let \(\Psi^s := \Psi + \Psi^\top\) denote the symmetrized version of the network effect in \Cref{SBOUProc}, and let \(\widehat{\Psi}^s\) be an estimate of \(\Psi^s\). We construct \(\widehat{\Psi}^s\) from the MLE \(\widehat{Q}\) of the drift matrix described in \Cref{FeasibleEstimator} by setting \(\widehat{\Psi} := \widehat{Q} - \mathrm{diag}(\widehat{Q})\) and \(\widehat{\Psi}^s := \widehat{\Psi} + \widehat{\Psi}^\top\). This construction matches the structural property that the true \(\Psi\) has zero diagonal, since the underlying GSBM2 has no self-loops, and avoids the need for a separate estimator of \(\psi_1\): the dependence on \(\psi_1\) is absorbed into the discarded diagonal. Define the \(d \times k\) matrix \(\widehat{\mathcal{X}}\), which contains the eigenvectors corresponding to the \(k\) largest eigenvalues (in absolute value) of \(\widehat{\Psi}^s\). Furthermore, let \(\widehat{\mathcal{X}^*}\) be the row-normalized version of \(\widehat{\mathcal{X}}\), with respect to the Euclidean norm. This matrix contains the necessary information for performing clustering.

Following \cite{Gudhmundsson2021}, we present Algorithm \ref{alg:VARBlockbuster} for community detection.

\begin{algorithm}
\caption{The Ornstein-Uhlenbeck clustering algorithm}
\label{alg:VARBlockbuster}
\textbf{Input:} Sample \(\bold{Y}_{s_n^{N_T}}\) for \(n = 1, \dots, N_T\) from SBOU processes, with number of groups \(k\). \\
\textbf{Procedure:}
\begin{algorithmic}[1]
    \State Estimate the matrix \(\widehat{\Psi}\) using the method described in \Cref{FeasibleEstimator}.
    \State Compute the symmetrized matrix \(\widehat{\Psi}^s = \widehat{\Psi} + \widehat{\Psi}^\top\).
    \State Construct the eigenvector matrix \(\widehat{\mathcal{X}} \in \mathbb{R}^{d \times k}\), where the columns correspond to the eigenvectors associated with the \(k\) largest eigenvalues (in absolute value) of \(\widehat{\Psi}^s\).
    \State Normalize each row of \(\widehat{\mathcal{X}}\) by its Euclidean norm to obtain the row-normalized matrix \(\widehat{\mathcal{X}^*}\), i.e., \(\widehat{\mathcal{X}^*}_{ij} = \frac{\widehat{\mathcal{X}}_{ij}}{\Vert \widehat{\mathcal{X}}_{i\bullet} \Vert_2}\).
    \State Apply \(k\)-means clustering to the rows of \(\widehat{\mathcal{X}^*}\), taking a global minimizer of the within-cluster sum of squares.
\end{algorithmic}
\textbf{Output:} The \(k\)-means partition \(\widehat{\mathcal{V}} = \{ \widehat{\mathcal{V}}_1, \dots, \widehat{\mathcal{V}}_k \}\), which serves as an estimate of the group partition.
\end{algorithm}
\begin{remark}
    In general $\Vert \widehat{\mathcal{X}}_{i\bullet} \Vert_2$ is not guaranteed to be positive, so we adopt the convention $0/0 := 0$ to ensure that $\widehat{\mathcal{X}^*}$ is well-defined. We will later introduce its population counterpart $\mathcal{X}^*$, for which no such convention is needed: as shown in the proof of \Cref{Lemma:2}, none of its rows are equal to zero.
\end{remark}
At first glance, it may seem unclear why Algorithm \ref{alg:VARBlockbuster} is effective. However, we will provide the underlying motivation later in this section, where we will also formally demonstrate that the proportion of mis-clustered nodes tends to zero as \(T\) and \(d\) tend to infinity, while the grid size tends to zero in a specific manner.

\subsection{Theory}
  The proof of consistency for our community-detection algorithm requires distinguishing carefully between graphs sampled from a GSBM2 (no self-loops allowed, the model used in this paper) and a regular GSBM (self-loops allowed, as in \cite{Gudhmundsson2021}). Throughout, we use bars to denote quantities derived from the GSBM and no bar for the GSBM2 analogues; calligraphic letters denote expectations of their non-calligraphic counterparts (e.g.\ $\mathcal{A} = \mathbb{E}[A]$):
  \begin{center}
  \begin{tabular}{lll}
  \toprule
  Object & GSBM2 (no self-loops) & GSBM (self-loops) \\
  \midrule
  Adjacency matrix                       & $A$              & $\bar A$ \\
  Absolute degree matrix                 & $D$              & $\bar D$ \\
  Expected adjacency                     & $\mathcal{A}$    & $\bar{\mathcal{A}}$ \\
  Expected absolute degree               & $\mathcal{D}$    & $\bar{\mathcal{D}}$ \\
  Network effect                         & $\Psi$           & $\bar\Psi$ \\
  Symmetrized network effect             & $\Psi^s$         & $\bar\Psi^s$ \\
  Population symmetrized network effect  & $\Psi^s_p$       & $\bar\Psi^s_p$ \\
  \bottomrule
  \end{tabular}
  \end{center}
  The network effect $\Psi$ is defined as in \Cref{SBOUProc}; $\Psi^s := \Psi + \Psi^\top$ is its symmetrized version (also see Remark \ref{Remark.3}); and $\Psi^s_p$ replaces the realisations $A, D$ in $\Psi^s$ with their expectations $\mathcal A, \mathcal D$. The barred versions are defined analogously for the GSBM.

With this established, we impose the following assumption on the sum of node specific probabilities within each community
\begin{assumption}\label{Ass.NodeSum}
    For any $l\in \{1,2,\dots,k\}$, it holds that 
    \begin{equation*}
        \sum_{i\in \mathcal{V}_l}\theta_i = \sigma_d,
    \end{equation*}
    where $\left(\sigma_d\right)_{d\in \mathbb{N}}$ is some sequence.
\end{assumption}
\begin{remark}
    For $\sigma_d=1$, we are in the 'standard' GSBM framework. Further, since we are working with probabilities, clearly, $\sigma_d \le d$ with a strict inequality for $k>1$.
\end{remark}
The assumptions on $\sigma_d$ will be crucial for our consistency result. Indeed, the sparsest regime in which exact recovery is possible for the stochastic block model is when the minimal expected degree grows as $\Omega(\log d)$ as $d \to \infty$ \citep{Abbe2015}. The following propositions finds the proper rate of $\sigma_d$ to ensure we are in this regime.
\begin{proposition}\label{Prop:LBMinNode}
 For a GSBM2, satisfying Assumption \ref{Ass.NodeSum}, it holds that 
 \begin{equation*}
     \min_i \theta_i  \le \dfrac{k\sigma_d}{d}.
 \end{equation*}
\end{proposition}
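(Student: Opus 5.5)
This is an averaging (pigeonhole) argument: the minimum of a collection is at most its average. I apply it to the $\theta_i$ over the whole vertex set, using Assumption~\ref{Ass.NodeSum} to evaluate the total.

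First, sum the node-specific probabilities over all vertices. The communities $\mathcal{V}_1,\dots,\mathcal{V}_k$ partition $\mathcal{V}=\{1,\dots,d\}$, and by Assumption~\ref{Ass.NodeSum} each community contributes exactly $\sigma_d$. Hence
\begin{equation*}
\sum_{i=1}^d \theta_i=\sum_{l=1}^k\sum_{i\in\mathcal{V}_l}\theta_i = k\sigma_d .
\end{equation*}

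Second, bound the minimum by the average. Since $\min_j\theta_j\le\theta_i$ for every $i$, summing over $i$ gives
\begin{equation*}
d\min_j\theta_j\le\sum_{i=1}^d\theta_i=k\sigma_d ,
\end{equation*}
and dividing by $d>0$ yields the claim $\min_i\theta_i\le k\sigma_d/d$.

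There is no real obstacle. The only point to check is that the communities form a genuine partition of $\mathcal{V}$, which Definition~\ref{GSBM2} provides: they are disjoint and non-empty, so no vertex is counted twice. One could instead apply the averaging within the smallest community, which has size at most $d/k$; that route gives the sharper bound $\min_i\theta_i\le\sigma_d/|\mathcal{V}_l|$, which is of the same order. The global argument above already gives exactly the stated bound.
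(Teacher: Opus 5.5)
Your argument is correct and gives the stated bound directly. The paper averages differently. It first uses pigeonhole, via $d=\sum_{l}|\mathcal{V}_l|$, to get $\max_l|\mathcal{V}_l|\ge d/k$. It then averages $\theta_i$ only within that largest community, obtaining
\[
\min_i\theta_i\le \frac{\sigma_d}{\max_l|\mathcal{V}_l|}\le\frac{k\sigma_d}{d}.
\]
Your global average is the more economical route. Assumption~\ref{Ass.NodeSum} immediately gives $\sum_i\theta_i=k\sigma_d$, so no step about community sizes is needed. The paper's route yields the sharper intermediate bound $\sigma_d/\max_l|\mathcal{V}_l|$, which is strictly better than $k\sigma_d/d$ whenever the communities are not all of size $d/k$.

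Your closing aside has the direction reversed. Averaging within the \emph{smallest} community gives $\min_i\theta_i\le\sigma_d/\min_l|\mathcal{V}_l|$. Since $\min_l|\mathcal{V}_l|\le d/k$, this bound is \emph{weaker} than $k\sigma_d/d$ and does not imply the proposition. It also need not be of the same order when community sizes are very unbalanced. The sharpening comes from the \emph{largest} community, which is exactly the paper's argument. This does not affect your main proof, but drop or correct the remark.
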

\begin{proof}
    See \Cref{ProofsModelClusteringSelection}.
\end{proof}
\begin{remark}
   Another implication of \Cref{Prop:LBMinNode} is that the cardinality of each community, $\vert \mathcal{V}_l \vert$, is at least $\nint{\sigma_d}$, since each $\theta_i$ is a probability.
\end{remark}
\begin{proposition}\label{Prop:SumReg}
     Let $\mathcal{\bar{D}}$, $\mathcal{D}$ denote the absolute expected degree matrices associated to respectively the GSBM and GSBM2. Grant \Cref{Ass.NodeSum}. If $\sigma_d = o\left((d \log d)^{1/2} \right)$, then
\begin{equation*}
    \begin{aligned}
        \min_{i\in \{1,2,\dots,d\}}\mathcal{D}_{ii} &= o(\log d), \\
        \min_{i\in \{1,2,\dots,d\}}\bar{\mathcal{D}}_{ii} &= o(\log d), \quad \text{as d} \rightarrow \infty.
    \end{aligned}
\end{equation*}
 \end{proposition}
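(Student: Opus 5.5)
We bound the expected absolute degree of the node with the smallest $\theta_i$; this gives an upper bound on the minimum expected degree, and we then check that the bound is $o(\log d)$ under the stated rate.

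\emph{Computing the expected degree.} Since $|A_{ij}| = |w_{ij}|\mathbbm{1}\{(i,j)\in\mathcal{E}\}$ with the weight independent of the edge indicator, we get $\mathbb{E}|A_{ij}| = \bar\mu\,\theta_i\theta_j b_{s(i)s(j)}$ for $i\neq j$, and $\mathbb{E}|A_{ii}|=0$ in the GSBM2. Hence
\begin{equation*}
\mathcal{D}_{ii} = \bar\mu\,\theta_i \sum_{j\neq i}\theta_j\bigl(b_{s(i)s(j)}+b_{s(j)s(i)}\bigr) \le 2\bar\mu\,\theta_i \sum_{j=1}^d \theta_j = 2\bar\mu\, k\,\theta_i\,\sigma_d ,
\end{equation*}
using $b_{lm}\le 1$ and $\sum_j\theta_j = k\sigma_d$ from \Cref{Ass.NodeSum}. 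For the GSBM the diagonal term is added back, which only adds $2\bar\mu\theta_i^2 b_{s(i)s(i)} \le 2\bar\mu\theta_i$. So the same bound holds with $\sum_j$ including $j=i$, and this sum is already covered above. In both cases, then, $\bar{\mathcal D}_{ii}\le 2\bar\mu k\theta_i\sigma_d$.

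\emph{Choosing the node.} Take $i^*$ to be a minimizer of $\theta_i$. By \Cref{Prop:LBMinNode}, $\theta_{i^*}\le k\sigma_d/d$. Therefore
\begin{equation*}
\min_i \mathcal{D}_{ii} \le \mathcal{D}_{i^*i^*} \le 2\bar\mu k^2 \frac{\sigma_d^2}{d},
\end{equation*}
and the same bound holds for $\bar{\mathcal D}$. Here $k$ is fixed and $\bar\mu \le \max(|\underline w|,|\overline w|)<\infty$ is a constant.

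\emph{Applying the rate.} If $\sigma_d = o((d\log d)^{1/2})$, then $\sigma_d^2/d = o(\log d)$, and the claim follows for both degree matrices.

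The only delicate point is the bookkeeping of the degree definition. Each degree counts both in-edges and out-edges, which produces the factor $2$ and the symmetric sum $b_{lm}+b_{ml}$. In addition, the GSBM/GSBM2 difference must not affect the order of the bound. Bounding each $b$ by $1$ and summing over all $j$ handles both issues at once. I also assume $k$ is fixed and does not depend on $d$, as is implicit in the setup.
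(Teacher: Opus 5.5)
Your proposal is correct and matches the paper's proof. The paper first reduces to the GSBM via $\mathcal{D}_{ii}\le\bar{\mathcal{D}}_{ii}$, bounds $\bar{\mathcal{D}}_{ii}\le 2k\bar{\mu}\sigma_d\theta_i$, and then (implicitly) evaluates at the node with the smallest $\theta_i$ using \Cref{Prop:LBMinNode} to obtain $O(\sigma_d^2/d)=o(\log d)$. Your write-up states that final appeal to \Cref{Prop:LBMinNode} explicitly, which the paper's proof omits.
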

 \begin{proof}
See \Cref{ProofsModelClusteringSelection}.     
 \end{proof}
\begin{remark}
    The above result tells us that, in the classical setting with $\sigma_d = 1$, we cannot hope to derive any concentration results.
\end{remark}
 With the above in mind, we now impose the following assumptions:
\begin{assumption}\label{Ass.2}
    \begin{itemize}
        \item We assume that $\bold{B}+\bold{B}^\top$ is positive definite and $\bold{B}$ is independent of $d$. 
         \item We assume that  $\sigma_d= \omega\left( (d\log d)^{1/2 }\right)$ as $d \rightarrow \infty$; that is, $\sigma_d/(d\log d)^{1/2}\to\infty$.
        \item We assume that $\min_i \theta_i = \Omega \left(\dfrac{\sigma_d}{d}\right)$ as  $d \rightarrow \infty$.
        \item We assume that $\mu \ne 0$.
    \end{itemize}
\end{assumption}
\begin{remark}
  The first assumption is very intuitive and essentially establishes that nodes in the same group of the GSBM2 should have a higher probability of links between them than nodes in different groups. Indeed, following the Example in \cite{Gudhmundsson2021}, for $k=2$ the assumption is satisfied if $b_{ii}>b_{ij}$ for $i,j \in \{1,2\}$, $i\ne j$. The aware reader will also note that, since $\bold{B}$ is independent of $d$, the number of communities $k$ is held fixed. The third assumption on the behaviour of the minimal node-specific in Assumption \ref{Ass.2} will be key to not only get a dense enough degree matrix to derive concentration results (see e.g. \Cref{Prop:MinimalDegree,Prop:SumReg}), but also to establish Proposition \ref{EigenvectorBound}. It will, however, have important consequences for the behaviour of all other node-specific probabilities. Clearly a consequence of this assumption is that, for any $j$, $\theta_j=\Omega \left(\dfrac{\sigma_d}{d}\right)$. The last assumption simply states that the mean edge-weight should not be $0$.
\end{remark}

\begin{proposition}[Minimal expected degree]\label{Prop:MinimalDegree}
    Under Assumptions \ref{Ass.NodeSum} and \ref{Ass.2}, it holds that 
    \begin{equation*}
        \begin{aligned}
    \bar{\mathcal{D}}_{\min}&= \min_{i\in \{1,2,\dots,d\}} \sum_{j=1}^d\mathbb{E}\vert\bar{A}_{ij} \vert + \mathbb{E}\vert\bar{A}_{ji} \vert = \Omega\left( \dfrac{\sigma_d^2}{d}\right), \\
    \mathcal{D}_{\min}&= \min_{i\in \{1,2,\dots,d\}} \sum_{j=1}^d\mathbb{E}\vert A_{ij} \vert + \mathbb{E}\vert A_{ji} \vert = \Omega\left( \dfrac{\sigma_d^2}{d}\right), \quad \text{ as } d\rightarrow \infty.
    \end{aligned}
    \end{equation*}
\end{proposition}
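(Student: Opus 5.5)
The plan is to compute $\mathbb{E}|A_{ij}|$ explicitly and bound the resulting sum from below, using the fixed matrix $\bold{B}$, the community-sum normalisation of \Cref{Ass.NodeSum}, and the lower bound on $\min_i\theta_i$ from \Cref{Ass.2}.

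\emph{Step 1 (expected entries).} Edge existence and edge weight are independent, and the weight has law $W$. Hence for $i\neq j$,
\[
\mathbb{E}|A_{ij}| = \theta_i\theta_j b_{s(i)s(j)}\bar\mu .
\]
For the GSBM the same formula also holds when $i=j$. In the GSBM2 the diagonal term is zero. Here $\bar\mu=\mathbb{E}|W|>0$: the support of $W$ excludes $0$, so $|W|>0$ almost surely. Summing over $j$ and grouping the indices by community gives
\[
\bar{\mathcal{D}}_{ii} = \bar\mu\,\theta_i\sum_{m=1}^k \bigl(b_{s(i)m}+b_{m s(i)}\bigr)\sum_{j\in\mathcal{V}_m}\theta_j = \bar\mu\,\theta_i\,\sigma_d\sum_{m=1}^k\bigl(b_{s(i)m}+b_{ms(i)}\bigr).
\]
This uses \Cref{Ass.NodeSum}. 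For the GSBM2 one subtracts the self term $2\bar\mu\theta_i^2 b_{s(i)s(i)}$.

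\emph{Step 2 (lower bound).} Since $\bold{B}+\bold{B}^\top$ is positive definite, its diagonal entries $2b_{ll}$ are strictly positive. So $c_B:=\min_l\sum_m(b_{lm}+b_{ml})\ge \min_l 2b_{ll}>0$. Because $\bold{B}$ does not depend on $d$, neither does $c_B$. By \Cref{Ass.2}, $\theta_i\ge C\sigma_d/d$ for all $i$ and all large $d$. Together these give $\bar{\mathcal{D}}_{ii}\ge \bar\mu c_B C\sigma_d^2/d$ uniformly in $i$, which proves the first claim.

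\emph{Step 3 (removing self-loops).} This is the only point needing care. Lower-bounding the subtracted self term naively fails: $\theta_i$ can be as large as $1$, while $\theta_i\sigma_d$ must dominate $\theta_i^2$. I would factor out $\theta_i$ and write
\[
\mathcal{D}_{ii} = \bar\mu\,\theta_i\Bigl(\sigma_d\sum_m(b_{s(i)m}+b_{ms(i)}) - 2\theta_i b_{s(i)s(i)}\Bigr)\ge \bar\mu\,\theta_i\bigl(c_B\sigma_d - 2\bigr),
\]
using $\theta_i\le1$ and $b_{ll}\le1$. Since $\sigma_d=\omega((d\log d)^{1/2})\to\infty$, the bracket is at least $c_B\sigma_d/2$ for large $d$. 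Combining this with $\theta_i\ge C\sigma_d/d$ gives $\mathcal{D}_{\min}=\Omega(\sigma_d^2/d)$, and all constants are uniform in $i$.
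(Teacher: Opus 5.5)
Your proof is correct and follows the paper's argument. You compute $\bar{\mathcal D}_{ii}=\bar\mu\,\theta_i\sigma_d\sum_l(b_{s(i)l}+b_{ls(i)})$ via \Cref{Ass.NodeSum}, lower-bound it using $\min_i\theta_i=\Omega(\sigma_d/d)$, and then remove the self-loop term $2\bar\mu\theta_i^2b_{s(i)s(i)}$.

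The only difference is that last step. The paper simply bounds the self-loop term by the constant $2\bar\mu$ and absorbs it because $\sigma_d^2/d=\omega(\log d)\to\infty$, so the ``naive'' route you dismiss does in fact work. Your factoring out of $\theta_i$ is an equally valid variant that needs only $\sigma_d\to\infty$. Your justification of $c_B>0$ through the positive diagonal of $\mathbf{B}+\mathbf{B}^\top$ also makes explicit a positivity that the paper only asserts.
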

\begin{proof}
    See \Cref{ProofsModelClusteringSelection}.
\end{proof}
\begin{remark}\label{RemarkNodeProb}
    Proposition \ref{Prop:MinimalDegree} heavily relies on \Cref{Ass.2} and, hence, the assumption on $\min_i \theta_i$ is needed to get a dense enough graph to get any concentration results. The above also clearly implies that $\bar{\mathcal{D}}_{\min} = \Omega\left( \log d\right)$ by choice of $\sigma_d$.
\end{remark}
Knowing that, under our assumptions, there is hope of bounding the terms in \eqref{eqn:1}, we introduce the following lemmas, analogue to Lemmas $3.2$ and $3.3$ in \cite{Qin2013}, slightly adapted to our setting, which motivates our clustering strategy.

\begin{lemma}[Explicit form of $\bar{\Psi}^s_p$]\label{Lemma:1}
Grant \Cref{Ass.NodeSum}. Let $\bar{\Psi}^s_p$ be from a GSBM with parameters $\left(\bold{Z},\bold{B}, \Theta, \bold{W} \right)$. Introduce the matrix $D_B \in \mathcal{M}_k(\mathbb{R})$ defined as a diagonal matrix with $(i,i)$ element given by $[D_B]_{ii} = \sum_{j=1}^k (b_{ij}+b_{ji})$. Here $b_{ij}$ denotes the $(i,j)$th entry of $\bold{B}$. Lastly, define $B_L = D_B^{-1/2}\left( \bold{B}+ \bold{B}^\top\right)D_B^{-1/2} $ and $\Theta_{\sigma_d}=\Theta/\sigma_d$. Then we can write $\bar{\Psi}^s_p$ as 
\begin{equation*}
    \begin{aligned}
    \bar{\Psi}^s_p &=\dfrac{\mu}{\bar{\mu}} \psi_2 \Theta_{\sigma_d}^{1/2}ZB_LZ^\top \Theta_{\sigma_d}^{1/2},
    \end{aligned}
\end{equation*}
where $\bar{\mu}= \mathbb{E}\left(\vert W\vert \right)$ and $\mu= \mathbb{E}\left( W \right)$.
\end{lemma}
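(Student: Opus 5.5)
The plan is a direct computation of the population quantities for the GSBM (self-loops allowed), followed by assembling them into the claimed factorisation. Throughout, write $\bar\Psi^s_p = \psi_2\big(\bar{\mathcal D}^{-1/2}\bar{\mathcal A}^\top\bar{\mathcal D}^{-1/2} + \bar{\mathcal D}^{-1/2}\bar{\mathcal A}\bar{\mathcal D}^{-1/2}\big)$.

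\emph{Step 1: expected adjacency.} Because edge presence and edge weight are independent, $\mathbb E[\bar A_{ij}] = \mu\,\theta_i\theta_j b_{s(i)s(j)}$ for all $i,j$, including $i=j$ since self-loops are allowed. In matrix form this reads $\bar{\mathcal A} = \mu\,\Theta Z\bold B Z^\top\Theta$. The same argument gives $\mathbb E|\bar A_{ij}| = \bar\mu\,\theta_i\theta_j b_{s(i)s(j)}$.

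\emph{Step 2: expected absolute degree.} For node $i$ with $s(i)=l$, Assumption~\ref{Ass.NodeSum} gives
\begin{equation*}
\bar{\mathcal D}_{ii} = \bar\mu\,\theta_i\sum_{m=1}^k\sum_{j\in\mathcal V_m}\theta_j\,(b_{lm}+b_{ml}) = \bar\mu\,\theta_i\,\sigma_d\,[D_B]_{ll}.
\end{equation*}
This is the one place where Assumption~\ref{Ass.NodeSum} is essential: the common community sum $\sigma_d$ is exactly what makes the degree factor through $D_B$. It follows that $\bar{\mathcal D} = \bar\mu\,\sigma_d\,\Theta\, Z D_B Z^\top$ restricted to the diagonal. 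Since $Z$ is a membership matrix, this equals $\bar\mu\sigma_d\,\Theta\,\mathrm{diag}(Z D_B \mathbf 1)$, and hence
\begin{equation*}
\bar{\mathcal D}^{-1/2} = (\bar\mu\sigma_d)^{-1/2}\,\Theta^{-1/2} \tilde D,
\end{equation*}
where $\tilde D$ is the diagonal matrix with entries $[D_B]_{s(i)s(i)}^{-1/2}$. We note that $\tilde D Z = Z D_B^{-1/2}$.

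\emph{Step 3: assemble.} Substituting into $\bar{\mathcal D}^{-1/2}\bar{\mathcal A}\bar{\mathcal D}^{-1/2}$ gives
\begin{equation*}
\frac{\mu}{\bar\mu\sigma_d}\,\Theta^{1/2}\tilde D Z\bold B Z^\top\tilde D\Theta^{1/2} = \frac{\mu}{\bar\mu}\,\Theta_{\sigma_d}^{1/2} Z D_B^{-1/2}\bold B D_B^{-1/2}Z^\top\Theta_{\sigma_d}^{1/2}.
\end{equation*}
The transposed term has the same form with $\bold B^\top$ in place of $\bold B$. Summing the two terms and multiplying by $\psi_2$ yields $\tfrac{\mu}{\bar\mu}\psi_2\Theta_{\sigma_d}^{1/2}ZB_LZ^\top\Theta_{\sigma_d}^{1/2}$, as claimed.

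There is no real obstacle in this argument; the only points needing care are bookkeeping. First, the diagonal scalings must commute past $Z$ via $\tilde D Z = ZD_B^{-1/2}$. Second, $\bar{\mathcal D}_{ii}>0$ must hold so that the pseudo-inverse is a true inverse. This holds because $\theta_i>0$, $\sigma_d>0$, and $[D_B]_{ll}>0$, the last following from positive definiteness of $\bold B+\bold B^\top$, whose diagonal entries $2b_{ll}$ are positive, together with $\bold B\ge 0$.
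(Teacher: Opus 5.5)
Your proof is correct and is essentially the paper's argument. Both compute $\bar{\mathcal A}=\mu\,\Theta Z\mathbf B Z^\top\Theta$ and $\bar{\mathcal D}_{ii}=\bar\mu\,\sigma_d\,\theta_i[D_B]_{s(i)s(i)}$ via \Cref{Ass.NodeSum} and then substitute; the paper checks the identity entrywise, while you do it in matrix form through $\tilde D Z = Z D_B^{-1/2}$. You use positive definiteness of $\mathbf B+\mathbf B^\top$ (\Cref{Ass.2}, not among the lemma's hypotheses) to get $[D_B]_{ll}>0$, but this is not a gap, since the definition of $B_L$ in the statement already presupposes that $D_B$ is invertible.
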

\begin{proof}
    See Supplementary material \ref{ProofsModelClusteringSelection}.
\end{proof}
  We let $\widehat{\mathcal{X}}$ and $\mathcal{X}$ be the $d\times k$ matrices containing the eigenvectors corresponding to the $k$ largest eigenvalues (in absolute size) of $\widehat{\Psi}^s$ and $\bar{\Psi}^s_p$, respectively. Further, we let $\mathcal{X}^*$ be the corresponding row-normalised matrices, i.e.~$\mathcal{X}^*=\mathcal{N}\mathcal{X}$, where  $\mathcal{N}$ is a diagonal matrix with $i$th diagonal entry equal to $1/\Vert \mathcal{X}_{i\bullet }\Vert_2$. The next lemma provides the key insight to our clustering algorithm. Indeed, it establishes that $\mathcal{X}^*$ contains all information regarding the community structure of our SBOU process.

\begin{lemma}\label{Lemma:2}
    Grant \Cref{Ass.NodeSum,Ass.2}. Consider the symmetrized population network matrix associated to the GSBM, $\bar{\Psi}^s_p$. Let $\mathcal{X}\in\mathcal{M}_{(d,k)}\left( \mathbb{R}\right)$ be the matrix containing the $k$ eigenvectors associated with the $k$ largest eigenvalues of $\bar{\Psi}^s_p$ in absolute size. Let $\mathcal{X}^*$ be the population row-normalised eigenvector matrix associated with the symmetrized population network matrix, i.e.~$\mathcal{X}^*$ is defined to be the matrix with $(i,j)$th entry equal to $\dfrac{\mathcal{X}_{ij}}{\Vert\mathcal{X}_{i\bullet} \Vert_2}$. Finally, let $\vert\lambda_1(\bar{\Psi}^s_p)\vert\ge \vert\lambda_2(\bar{\Psi}^s_p)\vert \ge \dots \ge \vert\lambda_d(\bar{\Psi}^s_p)\vert$ denote the absolute values of the eigenvalues of $\bar{\Psi}^s_p$. Then, $(i)$ we can factorize $\mathcal{X}^*$ as
    \begin{equation*}
        \mathcal{X}^* =\bold{Z}U,
    \end{equation*}
    where $\bold{Z}$ is the community membership matrix of the GSBM and $U$ is a $k\times k$ orthonormal matrix; $(ii)$ $\lambda_i(\bar{\Psi}^s_p) = 0$ for $i=k+1,\dots, d$ and $\vert \lambda_k(\bar{\Psi}^s_p)\vert>0$. In other words, the eigenvectors in $\mathcal{X}$ are the eigenvectors associated to the only non-zero eigenvalues of $\bar{\Psi}^s_p$.
\end{lemma}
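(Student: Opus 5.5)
We start from the explicit representation in \Cref{Lemma:1},
\[
\bar{\Psi}^s_p = c\,\Theta_{\sigma_d}^{1/2}\bold{Z}B_L\bold{Z}^\top\Theta_{\sigma_d}^{1/2}, \qquad c = \frac{\mu}{\bar\mu}\psi_2 ,
\]
and follow the strategy of Lemma 3.3 in \cite{Qin2013}, as adapted in \cite{Gudhmundsson2021}. The constant $c$ is nonzero because $\mu\neq 0$ by \Cref{Ass.2} and $\psi_2>0$.

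\textbf{Factorisation.} We set $H := \Theta_{\sigma_d}^{1/2}\bold{Z}\in\mathcal{M}_{(d,k)}(\mathbb{R})$. Its columns have disjoint supports, so $H^\top H$ is diagonal with $l$th entry $\sum_{i\in\mathcal{V}_l}\theta_i/\sigma_d$. By \Cref{Ass.NodeSum} this equals $1$, so $H^\top H=I_k$ and $H$ has orthonormal columns. Hence $\bar{\Psi}^s_p = cHB_LH^\top$.

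\textbf{Spectrum.} Since $B_L$ is real symmetric, we diagonalise it as $B_L = V\Lambda V^\top$ with $V\in\mathcal{M}_k(\mathbb{R})$ orthogonal. Then
\[
\bar{\Psi}^s_p = (HV)(c\Lambda)(HV)^\top,
\]
and $HV$ has orthonormal columns. The nonzero spectrum of $\bar{\Psi}^s_p$ is therefore that of $c\Lambda$, with eigenvectors given by the columns of $HV$. Every vector orthogonal to the column space of $H$ lies in the kernel, so $\lambda_i(\bar\Psi^s_p)=0$ for $i>k$.

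For $|\lambda_k|>0$ we need $B_L$ to be nonsingular. By \Cref{Ass.2}, $\bold{B}+\bold{B}^\top$ is positive definite, so $[D_B]_{ll}=\sum_j (b_{lj}+b_{jl}) \ge e_l^\top(\bold{B}+\bold{B}^\top)e_l>0$. Thus $D_B^{-1/2}$ is well defined, and $B_L$ is congruent to $\bold{B}+\bold{B}^\top$, hence positive definite. All $k$ entries of $\Lambda$ are then positive, and $|\lambda_k(\bar\Psi^s_p)| = |c|\lambda_{\min}(B_L)>0$. This proves (ii): the $k$ eigenvalues largest in absolute value are exactly the nonzero ones.

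\textbf{Row normalisation.} If there are ties among the eigenvalues, we may take $\mathcal{X}=HV\,O$ for some orthogonal $O$. Without ties, $\mathcal{X}=HV$ up to column signs and ordering, which we also absorb into an orthogonal matrix. In either case $\mathcal{X} = \Theta_{\sigma_d}^{1/2}\bold{Z}\tilde V$ with $\tilde V$ orthogonal. Row $i$ of $\mathcal{X}$ is then $\sqrt{\theta_i/\sigma_d}\,\tilde V_{s(i)\bullet}$, whose Euclidean norm is $\sqrt{\theta_i/\sigma_d}>0$ because $\theta_i>0$ and the rows of $\tilde V$ are unit vectors. Dividing each row by its norm gives $\mathcal{X}^*_{i\bullet}=\tilde V_{s(i)\bullet}$, that is, $\mathcal{X}^*=\bold{Z}U$ with $U=\tilde V$ orthonormal. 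This proves (i) and also shows that no row of $\mathcal{X}$ vanishes.

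\textbf{Main obstacle.} The only delicate point is the eigenvector choice. We must make sure that any orthonormal basis of the $k$-dimensional nonzero eigenspace has the form $HV O$, and that the sign of $\mu$, which fixes the sign of $c$, does not affect which eigenvalues are selected. Both are handled by selecting eigenvalues by absolute value and by the congruence argument above.
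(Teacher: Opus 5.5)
Your proof is correct and is essentially the paper's argument (Lemma~3.3 of \cite{Qin2013}): using $\mathbf{Z}^\top\Theta_{\sigma_d}\mathbf{Z}=I_k$ you write $\bar{\Psi}^s_p=(\Theta_{\sigma_d}^{1/2}\mathbf{Z}V)(c\Lambda)(\Theta_{\sigma_d}^{1/2}\mathbf{Z}V)^\top$, read off the $k$ nonzero eigenvalues, and note that row $i$ of the eigenvector matrix has norm $\sqrt{\theta_i/\sigma_d}$, exactly as the paper does with $C=\frac{\mu}{\bar{\mu}}\psi_2 B_L$ in place of your $B_L$. Your version is slightly more complete, since you verify two points the paper takes for granted: that $B_L$ is positive definite (via $[D_B]_{ll}\ge 2b_{ll}>0$ and congruence with $\mathbf{B}+\mathbf{B}^\top$), and that every orthonormal choice of eigenvectors for the nonzero eigenvalues has the form $\Theta_{\sigma_d}^{1/2}\mathbf{Z}\tilde V$ with $\tilde V$ orthogonal.
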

\begin{proof}
    See Supplementary material \ref{ProofsModelClusteringSelection}.
\end{proof}
We now state the result alluded to in the introduction. When using spectral clustering, particularly on the graph Laplacian, it is crucial to bound the $k$-th largest absolute eigenvalue of this Laplacian away from zero. This is a common assumption in the literature \citep{Ma2021,Lei2015,Joseph2016,Gudhmundsson2021,Brownlees2022} and plays an important role in controlling the proportion of misclustered nodes in the proof of \Cref{Thm:ConsistencyCommunitytCluster}. However, under the chosen asymptotic setup, where all $d$ -dependence is captured by $\Theta$, this condition naturally follows from the model setup.
\begin{proposition}\label{prop:eigenvaluesnindependence}
    Grant \Cref{Ass.NodeSum,Ass.2}. The $k$ non-zero eigenvalues of $\bar{\Psi}^s_p$ do not depend on $d$.
\end{proposition}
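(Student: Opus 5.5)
The plan is to start from the explicit factorisation in \Cref{Lemma:1},
\[
\bar{\Psi}^s_p=\frac{\mu}{\bar\mu}\psi_2\,\Theta_{\sigma_d}^{1/2}\bold{Z}B_L\bold{Z}^\top\Theta_{\sigma_d}^{1/2},
\]
and reduce the $d\times d$ eigenproblem to a $k\times k$ one. The tool is the elementary fact that for matrices $M\in\mathcal{M}_{(d,k)}(\mathbb{R})$ and $N\in\mathcal{M}_{(k,d)}(\mathbb{R})$, the nonzero eigenvalues of $MN$ and $NM$ coincide, with multiplicities. I will take $M=\Theta_{\sigma_d}^{1/2}\bold{Z}$ and $N=B_L\bold{Z}^\top\Theta_{\sigma_d}^{1/2}$. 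Then the nonzero spectrum of $\bar{\Psi}^s_p$, up to the scalar factor $\frac{\mu}{\bar\mu}\psi_2$, equals the nonzero spectrum of the $k\times k$ matrix $B_L\,\bold{Z}^\top\Theta_{\sigma_d}\bold{Z}$.

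The key computation is $\bold{Z}^\top\Theta_{\sigma_d}\bold{Z}$. Its $(l,m)$ entry is $\sum_i \bold{Z}_{il}\bold{Z}_{im}\theta_i/\sigma_d$. Each node lies in exactly one community, so this entry vanishes for $l\neq m$. For $l=m$ it equals $\sum_{i\in\mathcal{V}_l}\theta_i/\sigma_d$, which is $1$ by \Cref{Ass.NodeSum}. Hence $\bold{Z}^\top\Theta_{\sigma_d}\bold{Z}=I_k$, and the nonzero eigenvalues of $\bar{\Psi}^s_p$ are exactly $\frac{\mu}{\bar\mu}\psi_2\lambda_j(B_L)$ for $j=1,\dots,k$.

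It then remains to check that every ingredient is $d$-independent. The matrices $B_L=D_B^{-1/2}(\bold{B}+\bold{B}^\top)D_B^{-1/2}$ and $D_B$ are built only from $\bold{B}$, which is independent of $d$ by \Cref{Ass.2}. The weight law $W$ is fixed, so $\mu$ and $\bar\mu$ do not depend on $d$, and $\psi_2$ is fixed by \Cref{SBOUProc}.

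For consistency with \Cref{Lemma:2}(ii), I will also note that $B_L$ is congruent to the positive definite matrix $\bold{B}+\bold{B}^\top$. This requires $D_B$ to be invertible, which holds because $[D_B]_{ll}\ge 2b_{ll}>0$ by positive definiteness. So all $k$ eigenvalues of $B_L$ are positive. Together with $\mu\neq0$ and $\psi_2>0$, this shows there are exactly $k$ nonzero eigenvalues, with none lost to cancellation.

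I expect no step to be a real obstacle. The only delicate point is invoking the $MN$/$NM$ spectral identity with multiplicities, rather than relying only on the $\bold{Z}U$ eigenvector structure of \Cref{Lemma:2}. The normalisation $\sum_{i\in\mathcal{V}_l}\theta_i=\sigma_d$ is exactly what absorbs all of the $d$-dependence carried by $\Theta$.
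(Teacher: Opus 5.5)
Your argument is correct and is essentially the paper's. The paper, via the proof of \Cref{Lemma:2}, uses the same identity $Z^\top\Theta_{\sigma_d}Z=I_k$ to write $\bar{\Psi}^s_p=\mathcal{X}\Lambda\mathcal{X}^\top$, where $\mathcal{X}=\Theta_{\sigma_d}^{1/2}ZU$ has orthonormal columns and $U\Lambda U^\top=\frac{\mu}{\bar\mu}\psi_2B_L$, so the nonzero spectrum is that of the $d$-free matrix $\frac{\mu}{\bar\mu}\psi_2B_L$. Your $MN$/$NM$ identity is an equivalent way of transferring the spectrum that yields multiplicities directly instead of the paper's separate rank bound, and your signed conclusion $\frac{\mu}{\bar\mu}\psi_2\lambda_j(B_L)$ is more precise than the paper's phrasing in terms of absolute values.
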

\begin{proof}
    Let the notation be as in \Cref{Lemma:1,Lemma:2}. In the proof of Lemma \ref{Lemma:2}, we show that for any $d\in \mathbb{N}$, the eigenvalues of $C=\psi_2 \dfrac{\mu}{\bar{\mu}}\mathbf{B}_L$ are equal to the absolute value of the $k$ non-zero eigenvalues of $\bar{\Psi}_p^s$, where $B_L$ is defined as in Lemma \ref{Lemma:1}. Since $B_L$ does not depend on $d$, its eigenvalues will also not depend on $d$ thus concluding the proof.
\end{proof}

   \Cref{Lemma:2} motivates our estimation strategy. Indeed, following this lemma, we see that $\mathcal{X}^*_{i\bullet} = \mathcal{X}^*_{j\bullet}$ if and only if $Z_{i\bullet} = Z_{j\bullet}$. Thus, if we were to observe $\mathcal{X}^*$, applying the $K$-means algorithm on it would perfectly identify all nodes to their correct communities. Unfortunately, we do not observe $\mathcal{X}^*$, so our clustering strategy hinges on showing that our estimate $\widehat{\mathcal{X}^*}$ is 'close' to $\mathcal{X}^*$. Consequently, applying the $K$-means algorithm to $\widehat{\mathcal{X}^*}$, instead of $\mathcal{X}^*$, should yield a large proportion of correctly classified nodes. By 'close', we mean that the spectral norm of their differences becomes negligible. To establish this, we apply a version of the Davis-Kahan theorem, which, combined with \Cref{prop:eigenvaluesnindependence}, bounds this difference essentially in terms of $\Vert \widehat{\Psi}^s - \bar{\Psi}^s_p \Vert_F$. Our consistency result relies on showing that $\Vert \widehat{\Psi}^s - \bar{\Psi}^s_p \Vert$ tends to zero as $d$ and $T$ tend to infinity while $\Delta_{\mathcal{P}_T}$ tends to zero in a certain manner. We will bound this by controlling the four terms below. Define $\bar{\Psi}^s_{p-} = \bar{\Psi}^s_p - \mathrm{diag}(\bar{\Psi}^s_p)$. Then,
\begin{equation*}
    \Vert \widehat{\Psi}^s - \bar{\Psi}^s_p  \Vert \le\Vert \Psi_p^s-\bar{\Psi}^s_{p-} \Vert+\Vert \mathrm{diag}(\bar{\Psi}^s_p) \Vert+ \Vert\Psi^s -\Psi^s_p\Vert  +\Vert \widehat{\Psi}^s - \Psi^s \Vert.
\end{equation*}
The above highlights the sources of error that contribute to the magnitude of $\Vert \widehat{\Psi}^s - \bar{\Psi}^s_p \Vert$. The first term addresses the fact that \Cref{Lemma:2} concerns the population version of $\bar{\Psi}^s$, the symmetrized network effect associated to the adjacency matrix of a realisation of a GSBM, whereas \Cref{SBOUProc} has a network effect defined in terms of the realisation of a GSBM2. The first two terms thus capture the difference in terms of the spectral norm between the population symmetrized network effect of a GSBM and the population symmetrized network effect of a GSBM2 with the same parametrization. The third term captures the difference between a given realisation of a GSBM2 and its population version. Proving that this term is negligible thus amount to showing a concentration result. The final term essentially captures the estimation error and is determined by how good an estimator we have at hand. In order to control the last term, $\Vert \widehat{\Psi}^s - \Psi^s \Vert$, we apply our estimation result, which requires conditions $(1)$ and $(4)$ of \Cref{Ass.H} to hold for $Q_0$. Condition $(1)$ is trivially satisfied. Condition $(4)$ is more substantive; it requires the smallest eigenvalue of $(Q_0 + Q_0^\top)/2$ to be bounded away from zero. We impose the additional restriction $\psi_1 > \psi_2$ on the SBOU process parameters, which guarantees condition $(4)$ via \Cref{Prop.5}.
We now aim to use the bound on the spectral norm from Proposition \ref{Prop.4} to derive a bound on the Frobenius norm of the difference between \( \widehat{\mathcal{X}^*} \) and \( \mathcal{X}^* \), up to a rotation. A crucial aspect of this proof is that the smallest nonzero eigenvalue of \( \bar{\Psi}_p^s \) is independent of \( d \), as established in \Cref{prop:eigenvaluesnindependence}. To achieve the result below, \cite{Gudhmundsson2021, Brownlees2022} implicitly assume that 
$
\min_{i}\Vert \widehat{\mathcal{X}}_{i\bullet}\Vert^{-1}$
does not approach zero too rapidly. \cite{Rohe2011} assumes this explicitly. By contrast, \cite{Qin2013} avoids such assumptions, but in return, their upper bound relies on
$
\min_{i}\Vert \widehat{\mathcal{X}}_{i\bullet}\Vert,
$
an empirical term that remains poorly understood. This lack of control over the empirical term introduces uncertainty into their upper bound on the share of misclustered nodes, which is undesirable.

In contrast, using \Cref{RowNormalisedBound}, we obtain an upper bound that depends solely on population values, which are more tractable and controllable. We do so without imposing any additional assumption. This approach also carries over to our consistency result in \Cref{Thm:ConsistencyCommunitytCluster} which does not depend on any empirical terms or assumptions regarding the decay rate on the minimal eigenvalue. This stands in contrast to most of the existing literature.

To control the estimation term $\Vert \widehat{\Psi}^s - \Psi^s\Vert$ in the four-term decomposition above, we impose the following high-level condition on the estimator.

\begin{assumption}\label{Ass.EstimationBound}
There exists a deterministic function $f(d,\mathcal{P}_T)$ with $f(d,\mathcal{P}_T) = o(1)$ as $d \to \infty$ such that, with high probability,
\begin{equation*}
\Vert \widehat{\Psi}^s - \Psi^s \Vert \le f(d,\mathcal{P}_T).
\end{equation*}
\end{assumption}

\begin{remark}\label{Rem:AssEstSatisfiedByMLE}
\Cref{Ass.EstimationBound} is satisfied by the feasible estimator constructed from the MLE 
$\bar{Q}^{\mathcal{P}_T}$ in \Cref{FeasibleEstimator}. Indeed, under the assumptions of 
\Cref{DiscreteEstimatorConsistencyBothCases}, there exists a deterministic function 
$h(d,\mathcal{P}_T)=o(1)$ such that, with high probability,
\[
\Vert \bar{Q}^{\mathcal{P}_T}-Q_0\Vert_F \le h(d,\mathcal{P}_T).
\]
Since $\widehat{\Psi}=\widehat{Q}-\mathrm{diag}(\widehat{Q})$, this implies
\[
\Vert \widehat{\Psi}^s-\Psi^s\Vert
\le 
\Vert \widehat{\Psi}^s-\Psi^s\Vert_F
\le 
2\Vert \bar{Q}^{\mathcal{P}_T}-Q_0\Vert_F
\le 
2h(d,\mathcal{P}_T).
\]
Thus \Cref{Ass.EstimationBound} holds with 
$f(d,\mathcal{P}_T)=2h(d,\mathcal{P}_T)$.
\end{remark}

\begin{proposition}\label{EigenvectorBound}
Grant \Cref{Ass.NodeSum,Ass.2,Ass.EstimationBound} and suppose $\psi_1 > \psi_2$. Let $\widehat{\mathcal{X}^*}$ and $\mathcal{X}^*$ be the row-normalised matrices corresponding to the $k$ eigenvectors associated with the $k$ largest eigenvalues of $\widehat{\Psi}^s$ and $\bar{\Psi}^s_p$, respectively. Then, with high probability,
\begin{equation*}
\Vert \widehat{\mathcal{X}^*} - \mathcal{X}^*\mathcal{O}_d \Vert_F = O\!\left( \sqrt{d}\,f(d,\mathcal{P}_T) + d\sqrt{\dfrac{\log d}{\sigma_d^2}} \right),
\end{equation*}
where $\mathcal{O}_d$ is a sequence of orthonormal rotation matrices.
\end{proposition}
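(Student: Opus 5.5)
The proof combines three ingredients: a spectral-norm bound on $\Vert \widehat{\Psi}^s - \bar{\Psi}^s_p\Vert$, a Davis--Kahan argument that converts it into a bound on the unnormalised eigenvector matrices, and a row-normalisation step (\Cref{RowNormalisedBound}) that costs only population quantities.

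\emph{Step 1: spectral-norm bound.} Use the four-term decomposition displayed before \Cref{Ass.EstimationBound}:
\[
\Vert \widehat{\Psi}^s - \bar{\Psi}^s_p \Vert \le\Vert \Psi_p^s-\bar{\Psi}^s_{p-} \Vert+\Vert \mathrm{diag}(\bar{\Psi}^s_p) \Vert+ \Vert\Psi^s -\Psi^s_p\Vert  +\Vert \widehat{\Psi}^s - \Psi^s \Vert .
\]
The last term is at most $f(d,\mathcal P_T)$ w.h.p. by \Cref{Ass.EstimationBound}. The condition $\psi_1>\psi_2$ enters only to make this assumption attainable via \Cref{Prop.5} and \Cref{Rem:AssEstSatisfiedByMLE}.

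For the diagonal term, \Cref{Lemma:1} gives $[\bar{\Psi}^s_p]_{ii} = \psi_2(\mu/\bar\mu)\,[B_L]_{s(i)s(i)}\,\theta_i/\sigma_d \le C/\sigma_d$, since $\theta_i\le 1$. Hence this term is $O(\sigma_d^{-1})$.

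For the first term, the only difference between $\mathcal{D}$ and $\bar{\mathcal{D}}$ is the removed self-loop mass. That mass is $O(1)$, whereas the expected degrees are $\Omega(\sigma_d^2/d)$ by \Cref{Prop:MinimalDegree}. A first-order expansion of $\mathcal D^{-1/2}$ around $\bar{\mathcal D}^{-1/2}$, followed by the bound $\Vert M\Vert\le d\max_{ij}|M_{ij}|$, should therefore give an $O(d/\sigma_d^2)$ bound for $\Vert \Psi_p^s-\bar{\Psi}^s_{p-} \Vert$. Both of these terms are dominated by $\sqrt{\log d}/\sigma_d$-type rates.

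For the concentration term $\Vert\Psi^s-\Psi^s_p\Vert$, follow \cite{Qin2013} and \cite{Gudhmundsson2021}. Split it as
\[
D^{-1/2}(A-\mathcal A)D^{-1/2} + \bigl(D^{-1/2}\mathcal A D^{-1/2}-\mathcal D^{-1/2}\mathcal A\mathcal D^{-1/2}\bigr)
\]
plus the corresponding transposes. Bound $\Vert A-\mathcal A\Vert$ with a matrix Bernstein inequality; this is valid because the weights are bounded on $[\underline w,\overline w]$ and the entries are independent. Then use Bernstein plus a union bound to get $\max_i|D_{ii}/\mathcal D_{ii}-1| = O(\sqrt{\log d/\mathcal D_{\min}})$ w.h.p., where $\mathcal D_{\min}=\Omega(\sigma_d^2/d)=\Omega(\log d)$ by \Cref{Ass.2}. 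Together these give $\Vert\Psi^s-\Psi^s_p\Vert = O(\sqrt{d\log d/\sigma_d^2})$ w.h.p.

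\emph{Step 2: Davis--Kahan.} By \Cref{Lemma:2}(ii) and \Cref{prop:eigenvaluesnindependence}, the eigengap between the $k$ nonzero eigenvalues and $0$ is a constant $\delta>0$ independent of $d$. A Davis--Kahan $\sin\Theta$ variant (e.g.\ Yu--Wang--Samworth) then gives an orthonormal $\mathcal O_d$ with
\[
\Vert\widehat{\mathcal X}-\mathcal X\mathcal O_d\Vert_F\le 2^{3/2}\sqrt k\,\Vert \widehat{\Psi}^s - \bar{\Psi}^s_p \Vert/\delta .
\]
The absolute-value ordering of eigenvalues requires a small remark here, but Weyl's inequality handles it.

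\emph{Step 3: row normalisation.} Apply \Cref{RowNormalisedBound}, which bounds $\Vert\widehat{\mathcal X^*}-\mathcal X^*\mathcal O_d\Vert_F$ by $2\max_i\Vert\mathcal X_{i\bullet}\Vert_2^{-1}\,\Vert\widehat{\mathcal X}-\mathcal X\mathcal O_d\Vert_F$. It therefore suffices to show $\max_i\Vert\mathcal X_{i\bullet}\Vert_2^{-1}=O(\sqrt d)$. From the proof of \Cref{Lemma:2}, $\mathcal X=\Theta_{\sigma_d}^{1/2}Z(\text{invertible }k\times k)$, so
\[
\Vert\mathcal X_{i\bullet}\Vert_2\asymp \sqrt{\theta_i/\sigma_d}\ge c\sqrt{1/d}
\]
by the assumption $\min_i\theta_i=\Omega(\sigma_d/d)$. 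Multiplying the bounds from Steps 1--3 gives $O(\sqrt d f + d\sqrt{\log d/\sigma_d^2})$.

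\emph{Main obstacle.} The hardest step is the concentration term. Getting $\sqrt{d\log d}/\sigma_d$, rather than something worse, requires care with the random normalisation $D^{-1/2}$ when weights are signed. The absolute degree keeps $D$ positive and concentrated, but the matrix Bernstein variance proxy must be computed against $\mathcal D_{\min}$ rather than the maximal degree. I would first try the route of \cite{Qin2013}: bound $\Vert\mathcal D^{-1/2}(A-\mathcal A)\mathcal D^{-1/2}\Vert$ directly via matrix Bernstein, then transfer from $\mathcal D$ to $D$ using the uniform ratio bound.
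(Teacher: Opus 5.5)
Your proposal is correct and follows the paper's proof essentially step for step. It uses the same four-term decomposition of $\Vert \widehat{\Psi}^s - \bar{\Psi}^s_p\Vert$; the paper packages the three non-estimation terms as \Cref{Prop.2,Prop.3,Prop.4}. For $\Vert\Psi^s-\Psi^s_p\Vert$ it uses the normalised Oliveira-type Bernstein argument of \Cref{ConcentrationLemma}, which is the route in your final paragraph, not the unnormalised bound on $\Vert A-\mathcal A\Vert$ in your Step 1, since the latter in general only yields $\sqrt{\bar d_{\max}\log d}/\bar d_{\min}$. The paper then applies the Yu--Wang--Samworth Davis--Kahan bound with the $d$-independent gap from \Cref{prop:eigenvaluesnindependence}, followed by \Cref{RowNormalisedBound} with $\Vert\mathcal X_{i\bullet}\Vert_2=\sqrt{\theta_i/\sigma_d}\ge c/\sqrt d$.

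Your Weyl remark fills in a step the paper leaves implicit. The paper applies Davis--Kahan to the top (or, if $\mu<0$, bottom) $k$ eigenvalues, using that all nonzero eigenvalues of $\bar\Psi^s_p$ share the sign of $\mu$, but never says why the algorithm's absolute-value selection on $\widehat\Psi^s$ picks out exactly that block.
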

\begin{proof}
See \Cref{ProofsModelClusteringSelection}.    
\end{proof}
 \begin{remark}
     Henceforth, we drop the $d$ dependence of the orthonormal matrix above and simply write it as $\mathcal{O}$.
 \end{remark}

Building on the previous result, we now proceed to establish the consistency of our approach. Recall that the $K$-means algorithm aims to partition a given set of observations $\left(\mathbf{x}_1, \mathbf{x}_2, \ldots, \mathbf{x}_n\right)$, where each $\mathbf{x}_i$ is a $p$-dimensional vector (with both the sample size and dimensionality allowed to vary), into $k\le n$ distinct clusters, $\mathbf{S} = { S_1, S_2, \ldots, S_k }$. The goal is to achieve this partitioning in a way that minimizes the within-cluster sum of squares (WCSS). Formally, this optimization problem can be expressed as:
\begin{equation*}
    \underset{\mathbf{S}}{\arg \min} \sum_{i=1}^{k} \sum_{\mathbf{x} \in S_i} \left\| \mathbf{x} - \boldsymbol{\mu}_i \right\|^2,\quad \boldsymbol{\mu}_i = \frac{1}{\left| S_i \right|} \sum_{\mathbf{x} \in S_i} \mathbf{x},
\end{equation*}
where $\boldsymbol{\mu}_i$ is the centroid (or mean) of cluster $S_i$.

In \Cref{alg:VARBlockbuster}, we apply $K$-means to the rows of $\widehat{\mathcal{X}^*}$, hence in our case $n=d$ and the dimension of the vectors is of size $p=k$, the same size as the number of clusters. Let $\mathcal{O}$ be the orthonormal matrix from \Cref{EigenvectorBound}. With this objective function in hand, it becomes clear why \Cref{Lemma:2} indicates that, if we were to observe $\mathcal{X}^*\mathcal{O}$, \Cref{alg:VARBlockbuster} would yield perfect clustering. Indeed, for $i,j$ in the same group, it holds by the lemma that $\mathcal{X}^*_{i\bullet}\mathcal{O} = \mathcal{X}^*_{j\bullet}\mathcal{O}$. By partitioning the nodes into their true clusters, i.e, $\bold{S} = \{\mathcal{V}_1,\mathcal{V}_2,\dots,\mathcal{V}_k\}$, the $l$-th centroid, denoted $\mathcal{C}_{l \bullet }$, when computed from $K$-means on $\mathcal{X}^*\mathcal{O}$, would equal $\boldsymbol{Z}_{i\bullet} U \mathcal{O}$ for any $i$ such that $s(i) = l$. Therefore, this partition would lead to a WCSS of $0$. Let $\mathcal{C}:= \left(\mathcal{C}_{1\bullet}^\top, \dots, \mathcal{C}_{k\bullet}^\top\right)^\top$.
The slight complication of working with $\mathcal{X}^*\mathcal{O}$, as opposed to $\mathcal{X}^*$, arises from the fact that $\widehat{\mathcal{X}^*}$ only converges to $\mathcal{X}^*$ up to a rotation, following \Cref{EigenvectorBound}. Similarly, let $\widehat{\mathbf{C}} := \left(\widehat{\mathbf{C}}_{1\bullet}^\top, \dots, \widehat{\mathbf{C}}_{k\bullet}^\top\right)^\top$ be the matrix of estimated centroids from applying \Cref{alg:VARBlockbuster} to $\widehat{\mathcal{X}^*}$, and let $\hat{\bold{V}} = \{\widehat{\mathcal{V}_1}, \dots, \widehat{\mathcal{V}_k}\}$ be the estimated groups.

To show that our algorithm consistently clusters the observations, we need to define mis-clustering in a mathematically rigorous way. Before giving a precise definition, we introduce a map $\hat{s}: \mathcal{V} \to \mathcal{K}$, where $\mathcal{K} = \{1, \dots, k\}$, that assigns each node to its estimated group. With this in mind, we now present the following definition.
\begin{definition}\label{Def:Miscluster}
    We say that a node $i \in \{1,2,\dots,d\}$ is correctly specified, if $\widehat{\mathbf{C}}_{\hat{s}(i) \bullet}$ is closer to the centroid $\mathcal{C}_{s(i)\bullet}$ than it is to any other centroid $\mathcal{C}_{s(j)\bullet}$ of  any other $j$ such that $s(i)\ne s(j)$. With this in mind, we let the set of mis-clustered nodes, denoted by $\mathcal{M}$ be defined as 
\begin{equation*}
    \mathcal{M}=\left\{i: \exists j  \text { s.t. } s(i)\ne s(j), \text { and }\left\|\widehat{\mathbf{C}}_{\hat{s}(i) \bullet} -\mathcal{C}_{s(i)\bullet}\right\|_2 \ge \left\|\widehat{\mathbf{C}}_{\hat{s}(i) \bullet}-\mathcal{C}_{s(j)\bullet}\right\|_2\right\}.
\end{equation*}
\end{definition}
The above definition is quite intuitive. Indeed, as $\widehat{\mathcal{X}^*}$ approaches $\mathcal{X}^*\mathcal{O}$, we would expect to cluster more and more correctly, as previously argued. Consequently, for any $l = 1, \dots, k$, $\widehat{\mathbf{C}}_{l \bullet}$ should become increasingly closer to $\mathcal{C}_{l\bullet}$. Thus, if node $i$ in reality belongs to group $l^\prime$, but is estimated to be in group $l$, then the distance from $\widehat{\mathbf{C}}_{\hat{s}(i) \bullet} = \widehat{\mathbf{C}}_{l \bullet}$ to $\mathcal{C}_{l\bullet}$ should be smaller than the distance to $\mathcal{C}_{s(i)\bullet} = \mathcal{C}_{l^\prime\bullet}$, leading to misclustering.

Finally, Theorem \ref{Thm:ConsistencyCommunitytCluster} establishes the consistency of our community detection algorithm:
\begin{theorem}\label{Thm:ConsistencyCommunitytCluster}
    Consider an SBOU process as in \Cref{SBOUProc} with the set of misclustered nodes, $\mathcal{M}$ defined as in \Cref{Def:Miscluster}. Grant \Cref{Ass.NodeSum,Ass.2,Ass.EstimationBound} and suppose $\psi_1 > \psi_2$. Then, with high probability,
    \begin{equation*}
        \dfrac{\vert \mathcal{M}\vert}{d} =  O \left( f(d,\mathcal{P}_T) ^2+\dfrac{d\log d}{\sigma_d^2} \right).
    \end{equation*}
    In particular, since $f(d,\mathcal{P}_T)\to 0$ (by \Cref{DiscreteEstimatorConsistencyBothCases}) and $d\log d/\sigma_d^2\to 0$ under \Cref{Ass.2}, the algorithm is consistent:
    \begin{equation*}
        \dfrac{\vert \mathcal{M}\vert}{d} \overset{\mathbb{P}}{\longrightarrow} 0, \quad \text{as } d\to\infty.
    \end{equation*}
\end{theorem}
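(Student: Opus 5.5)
The plan follows the standard spectral-clustering route of \cite{Rohe2011,Qin2013,Gudhmundsson2021}. We transfer the Frobenius bound of \Cref{EigenvectorBound} into a bound on the number of misclustered nodes, using the exact block structure from \Cref{Lemma:2}. Throughout we work on the high-probability event where \Cref{EigenvectorBound} holds. On this event
\[
\Vert \widehat{\mathcal{X}^*} - \mathcal{X}^*\mathcal{O}\Vert_F^2 \le C\Big(d f(d,\mathcal{P}_T)^2 + \frac{d^2\log d}{\sigma_d^2}\Big),
\]
using $(a+b)^2\le 2a^2+2b^2$. Dividing by $d$ gives exactly the rate claimed in the theorem, so it suffices to show $\vert\mathcal{M}\vert \le c\,\Vert \widehat{\mathcal{X}^*} - \mathcal{X}^*\mathcal{O}\Vert_F^2$ for a constant $c$ not depending on $d$.

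\textbf{Step 1: geometry of the population centroids.} By \Cref{Lemma:2}, $\mathcal{X}^*\mathcal{O}=\mathbf{Z}U\mathcal{O}$ with $U\mathcal{O}$ orthonormal. The rows of $\mathcal{X}^*\mathcal{O}$ therefore take exactly $k$ distinct values $\mathcal{C}_{l\bullet}=(U\mathcal{O})_{l\bullet}$. These rows are mutually orthogonal unit vectors, so $\Vert \mathcal{C}_{l\bullet}-\mathcal{C}_{m\bullet}\Vert_2=\sqrt2$ for $l\ne m$. This separation is independent of $d$, which is crucial.

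\textbf{Step 2: $k$-means is near-optimal in Frobenius norm.} Let $\widehat{\mathbf{C}}_{\hat s}$ be the $d\times k$ matrix with $i$th row $\widehat{\mathbf{C}}_{\hat s(i)\bullet}$. Since $k$-means is solved globally and the matrix $\mathcal{X}^*\mathcal{O}$ itself has at most $k$ distinct rows, hence is feasible, we get
\[
\Vert \widehat{\mathbf{C}}_{\hat s}-\widehat{\mathcal{X}^*}\Vert_F\le \Vert \mathcal{X}^*\mathcal{O}-\widehat{\mathcal{X}^*}\Vert_F .
\]
The triangle inequality then gives $\Vert \widehat{\mathbf{C}}_{\hat s}-\mathcal{X}^*\mathcal{O}\Vert_F\le 2\Vert \widehat{\mathcal{X}^*}-\mathcal{X}^*\mathcal{O}\Vert_F$.

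\textbf{Step 3: counting misclustered nodes.} Take $i\in\mathcal{M}$. By \Cref{Def:Miscluster} there is a centroid $\mathcal{C}_{s(j)\bullet}\ne\mathcal{C}_{s(i)\bullet}$ that is at least as close to $\widehat{\mathbf{C}}_{\hat s(i)\bullet}$ as $\mathcal{C}_{s(i)\bullet}$ is. Combining this with the triangle inequality and Step 1,
\[
\sqrt2\le \Vert \widehat{\mathbf{C}}_{\hat s(i)\bullet}-\mathcal{C}_{s(i)\bullet}\Vert_2+\Vert \widehat{\mathbf{C}}_{\hat s(i)\bullet}-\mathcal{C}_{s(j)\bullet}\Vert_2\le 2\Vert \widehat{\mathbf{C}}_{\hat s(i)\bullet}-\mathcal{C}_{s(i)\bullet}\Vert_2,
\]
so $\Vert \widehat{\mathbf{C}}_{\hat s(i)\bullet}-(\mathcal{X}^*\mathcal{O})_{i\bullet}\Vert_2\ge 1/\sqrt2$. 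Summing over $i\in\mathcal{M}$ and applying Step 2,
\[
\frac{\vert\mathcal{M}\vert}{2}\le \Vert \widehat{\mathbf{C}}_{\hat s}-\mathcal{X}^*\mathcal{O}\Vert_F^2\le 4\Vert \widehat{\mathcal{X}^*}-\mathcal{X}^*\mathcal{O}\Vert_F^2 .
\]
Dividing by $d$ and inserting the bound from the first paragraph yields the stated rate with high probability.

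\textbf{Step 4: convergence in probability.} By \Cref{Ass.EstimationBound} (verified for the MLE in \Cref{Rem:AssEstSatisfiedByMLE}), $f(d,\mathcal{P}_T)\to0$. By \Cref{Ass.2}, $\sigma_d=\omega((d\log d)^{1/2})$, so $d\log d/\sigma_d^2\to0$. An event of probability tending to one on which $\vert\mathcal{M}\vert/d$ is bounded by a vanishing deterministic sequence gives $\vert\mathcal{M}\vert/d\to0$ in probability.

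The main obstacle is entirely upstream, in \Cref{EigenvectorBound}. There the row normalisation must be handled without any empirical lower bound on $\min_i\Vert\widehat{\mathcal{X}}_{i\bullet}\Vert_2$, and the one step in this proof that needs care is the bookkeeping of the rotation $\mathcal{O}$. This is harmless, because $\mathcal{O}$ is orthonormal and therefore preserves both the block structure and the $\sqrt2$ separation.
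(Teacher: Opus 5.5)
Your proposal is correct and is essentially the paper's own argument. The paper likewise shows $\mathcal{M}\subseteq\{i:\Vert\widehat{\mathbf{C}}_{\hat{s}(i)\bullet}-\mathcal{C}_{s(i)\bullet}\Vert_2\ge\sqrt{1/2}\}$ (\Cref{SetInclusion}, using the same $\sqrt{2}$-separation and triangle inequality as your Step 3), invokes the $k$-means optimality argument of \cite{Rohe2011} to get $\vert\mathcal{M}\vert\le 8\Vert\widehat{\mathcal{X}^*}-\mathcal{X}^*\mathcal{O}\Vert_F^2$ with exactly your constant, and concludes via \Cref{EigenvectorBound}; the only difference is that you write out the $k$-means feasibility step explicitly instead of citing it.
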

\begin{proof}
See \Cref{ProofsModelClusteringSelection}.    
\end{proof}
\begin{remark}
    The condition $\sigma_d=\omega((d\log d)^{1/2})$ in \Cref{Ass.2} is precisely what forces $d\log d/\sigma_d^2\to 0$ and hence, together with $f(d,\mathcal{P}_T)\to 0$, delivers the consistency stated above. The rate $(d\log d)^{1/2}$ is the sparsest regime in which exact recovery is possible for the stochastic block model \citep{Abbe2015}. Since $\sum_{i=1}^d\theta_i = k\sigma_d \le d$ forces $\sigma_d\le d/k$, the admissible range for the community scale is $\omega((d\log d)^{1/2}) \le \sigma_d \le d/k$.
\end{remark}

In \Cref{App:SimStudyClusteringAlgo}, we test the algorithm in an extensive finite-sample simulation study and show that it performs very well in detecting the underlying groups. The study clearly shows that the clustering algorithm improves as both $T$ and $d$ grow, as the theory would suggest. It also highlights the somewhat expected finding that the clustering algorithm performs best when the group signal is strong, as determined by $\boldsymbol{B}$ and $\Theta$, as discussed in \Cref{Sec:ModelSpecific}. One might ask why we do not take the arguably simpler approach of clustering directly on the correlation matrix of $\mathbb{Y}$, as proposed by \cite{Mantegna1999} and followed by much of the subsequent literature, given that $Q$ enters its covariance. In \Cref{SimCorrelation}, we show that the correlation-based matrix is much more susceptible to noise than our proposed method, which is considerably better at distinguishing between a \emph{true underlying group signal} and \emph{noise}.

\section{Choosing the number of communities and assessing signal strength}\label{SecModelSelection}

In the previous section, we established consistency of the community detection algorithm under one key requirement: the true number of communities \(k\) is known. Also it was pointed out that in finite samples, for the algorithm to work, the group signal should be sufficiently strong; see also \Cref{fig:signal_vs_nosignal,fig:combined_signal_illustration,table:SimTable1}. We now address both issues.

We begin with estimation of \(k\). A common approach is to use model selection procedures such as cross-validation, as in \citet{Gudhmundsson2021}. As noted by \citet{Ma2021}, however, adapting such methods to stochastic block models and providing rigorous theoretical guarantees is difficult because of the dependence structure inherent in network data. One could instead pursue a likelihood-based approach, in the spirit of \citet{Wang2017}; see also \citet{Gudhmundsson2025}. In the present paper, however, we work with the estimator of \citet{Ma2021}\footnote{\cite{Ma2021} strictly speaking work with $2$ estimators $K_1$ and $K_2$. They can only show that $K_1$ does not underestimate the true number of groups asymptotically, whereas they show that $K_2$ is consistent at the expense of introducing a tuning parameter. In our simulation studies the two always agree}, which consistently estimates the number of communities in a degree-corrected stochastic block model. The procedure evaluates a pseudo-likelihood ratio over a finite candidate set \(\{1,2,\dots,K_{\max}\}\) and selects the maximizer. Naturally, \(K_{\max}\) must satisfy \(K_{\max}\geq k\).

The procedure in \citet{Ma2021} assumes that the realized stochastic block model is observed. In our framework, this is equivalent to observing the support of the drift matrix of the SBOU process, which is not realistic in applications. We therefore estimate the drift matrix; see \Cref{Sec:EstShort}. Since the maximum likelihood estimator does not set entries exactly equal to zero, its support cannot be recovered directly from the estimated matrix, as this would lead to a fully dense support estimate. Although one could instead use an estimator with support recovery guarantees, such as the adaptive Lasso, we work with the MLE because it is the only estimator for which we establish consistency in an infill, long-span, high-dimensional regime. We therefore recover the support through a separate model selection step; see \Cref{SecSupportRec}. The resulting novel procedure performs well in the simulation study reported in \Cref{tab:SupportRecovery}.

We next turn to the strength of the community signal. In practice, this is best assessed visually: sorting the MLE estimate $\widehat{Q}$ of the drift matrix by the estimated groups and inspecting the resulting heat map for clear block structure is the standard diagnostic in the SB-VAR literature \citep{Gudhmundsson2021}, and we adopt it here. Obtaining better, quantitative diagnostics appears difficult. Although we show in \Cref{EstimatorsBAndTheta} that consistent plug-in estimators of the edge-probability matrix $\mathcal{A}$ are available, they require knowledge of $k$ and the partition, which renders any quantitative signal-strength assessment circular: it would presuppose the very structure we aim to assess. We demonstrate the strong-signal regime in which the plug-in estimators recover $\mathcal{A}$ accurately in \Cref{Sim2}.

\section{Empirical application}\label{Sec:Empirical}

To illustrate the performance of the clustering algorithm in \Cref{alg:VARBlockbuster} and the group-number estimator developed in \Cref{SecModelSelection}, we apply both procedures to an empirical dataset. Our objective is to assess whether these tools can provide reliable insight in a real-world setting. Ideally, we would like to evaluate the methods on a dataset with a known group structure, so that we can examine whether the clustering algorithm and the estimator for the number of groups are capable of recovering the underlying ground truth.

In practice, however, it is difficult to find a dataset that both satisfies the modeling assumptions of a multivariate Ornstein-Uhlenbeck process and comes with a genuinely known group structure. We therefore instead consider a dataset for which there is a strong and natural benchmark hypothesis regarding both the number of groups and the partition itself, even though the true grouping is not formally observed. This still provides an informative empirical exercise: if the proposed procedure recovers the benchmark structure, this supports its usefulness in practice, while systematic deviations may reveal additional latent structure in the data.

\subsection{Data description}

Our empirical analysis is based on the RE-Europe dataset of \citet{Jensen2017}. The dataset provides hourly wind-capacity factors, measured as fractions of full capacity, for approximately 1,500 locations in Europe corresponding to the node locations (buses) in \citet{Hutcheon2013}. The average distance between neighboring nodes is approximately 50 km. Wind speeds from the COSMO-REA6 dataset on a \(7\times 7\) km\(^2\) spatial grid over the period 2012--2014 \citep{Bollmeyer2015} are converted into wind-capacity factors on the same grid. These capacity factors are then assigned to nearby nodes, and if a grid cell is associated with multiple nodes, its contribution is divided evenly across them. At the node level, the resulting values are averaged to obtain wind capacity as a fraction of full capacity.

We focus on a subset consisting of 125 nodes located in France, Germany, Spain, Italy, and Poland, with 25 nodes selected from each country; see \Cref{fig:Clustering5countries}. The resulting panel is observed on an equidistant hourly grid with \(N_T=26{,}304\). We interpret \(T=1\) as one day, so that the sample covers \(T=1{,}096\) days with \(\Delta_{\mathcal P_T}=1/24\).

Inspection of the raw series reveals pronounced daily and yearly seasonality. We therefore deseasonalize the data using a LOESS-based MSTL procedure implemented in the Python library \texttt{statsmodels}. The resulting series have mean close to zero and display no evidence of a unit root. In \Cref{fig:EmpiricalFitOU}, we show a representative cleaned series together with its sample autocorrelation function. The series exhibits mean reversion and an approximately exponentially decaying autocorrelation pattern, which supports the use of a multivariate OU process as a working model for the cleaned data.

\begin{figure}[htbp]
    \caption{Illustration of a cleaned time series and its sample autocorrelation function for bus node F-99, following the ENTSO-E naming convention.}
    \label{fig:EmpiricalFitOU}
    \centering
    \includegraphics[scale=0.35]{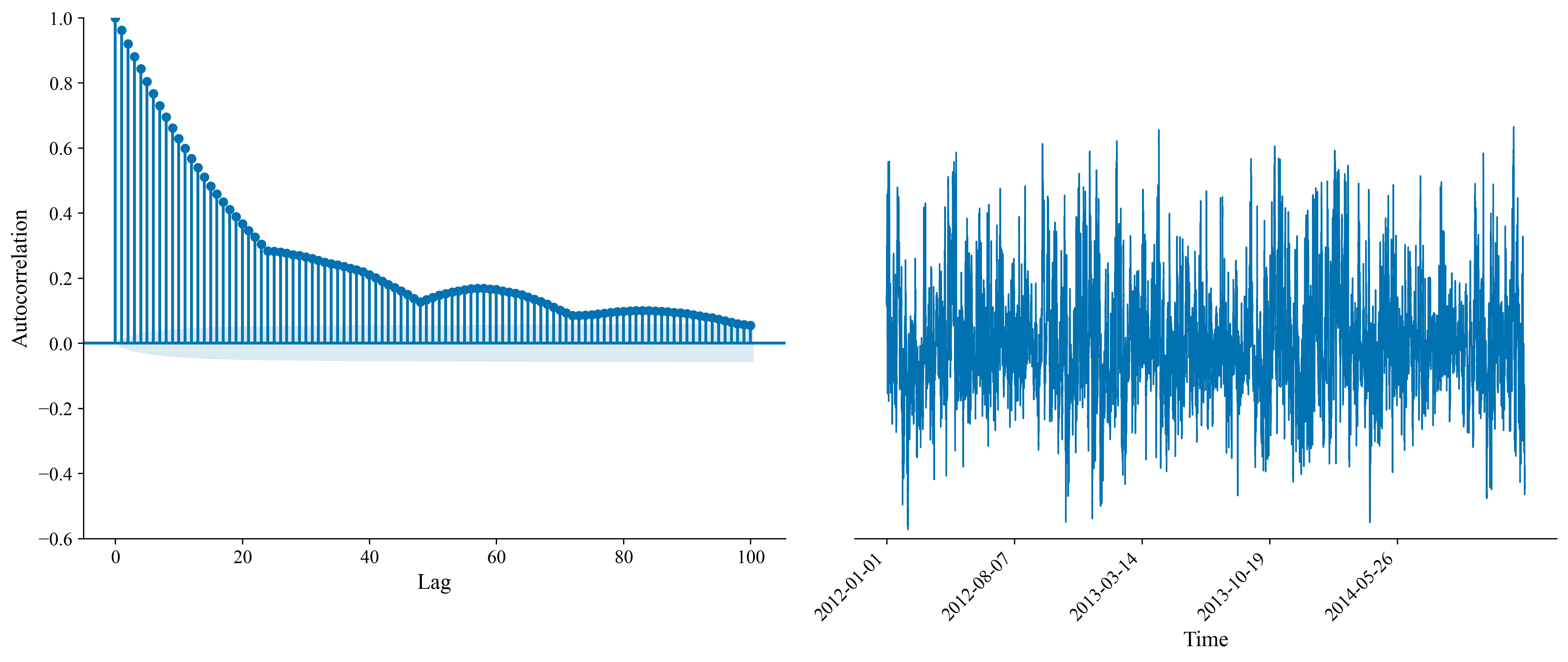}
\end{figure}

\subsection{Estimated group structure and benchmark partition}

In \Cref{fig:HeatMapDrift}, we display the estimated drift matrix after ordering the nodes by country. The figure reveals a somewhat pronounced block structure aligned with national boundaries. This is consistent with the construction of the sample: within each country, the selected nodes are geographically close to one another and are relatively far from nodes in neighboring countries. Since wind speeds evolve spatially and nearby locations are subject to similar meteorological conditions, it is natural to expect stronger dependence among geographically proximate nodes. The estimated drift matrix reflects exactly this pattern.

Accordingly, the country decomposition provides a natural benchmark partition for the empirical application. For the full dataset, this benchmark suggests five groups. At the same time, one may expect finer local structure within countries, since geographically close nodes within the same country may still form smaller subgroups and so the nationalities may not reflect all apparent group structure. To examine this, we therefore study both the full sample of \(d=125\) nodes and all ten decompositions of the five countries into a two-country subset and the complementary three-country subset. For the two-country subsets, the benchmark number of groups is two, while for the three-country subsets it is three.

\begin{figure}[htbp]
\caption{Heatmap of estimated drift matrix}
    \label{fig:HeatMapDrift}
    \centering
    \includegraphics[scale=0.35]{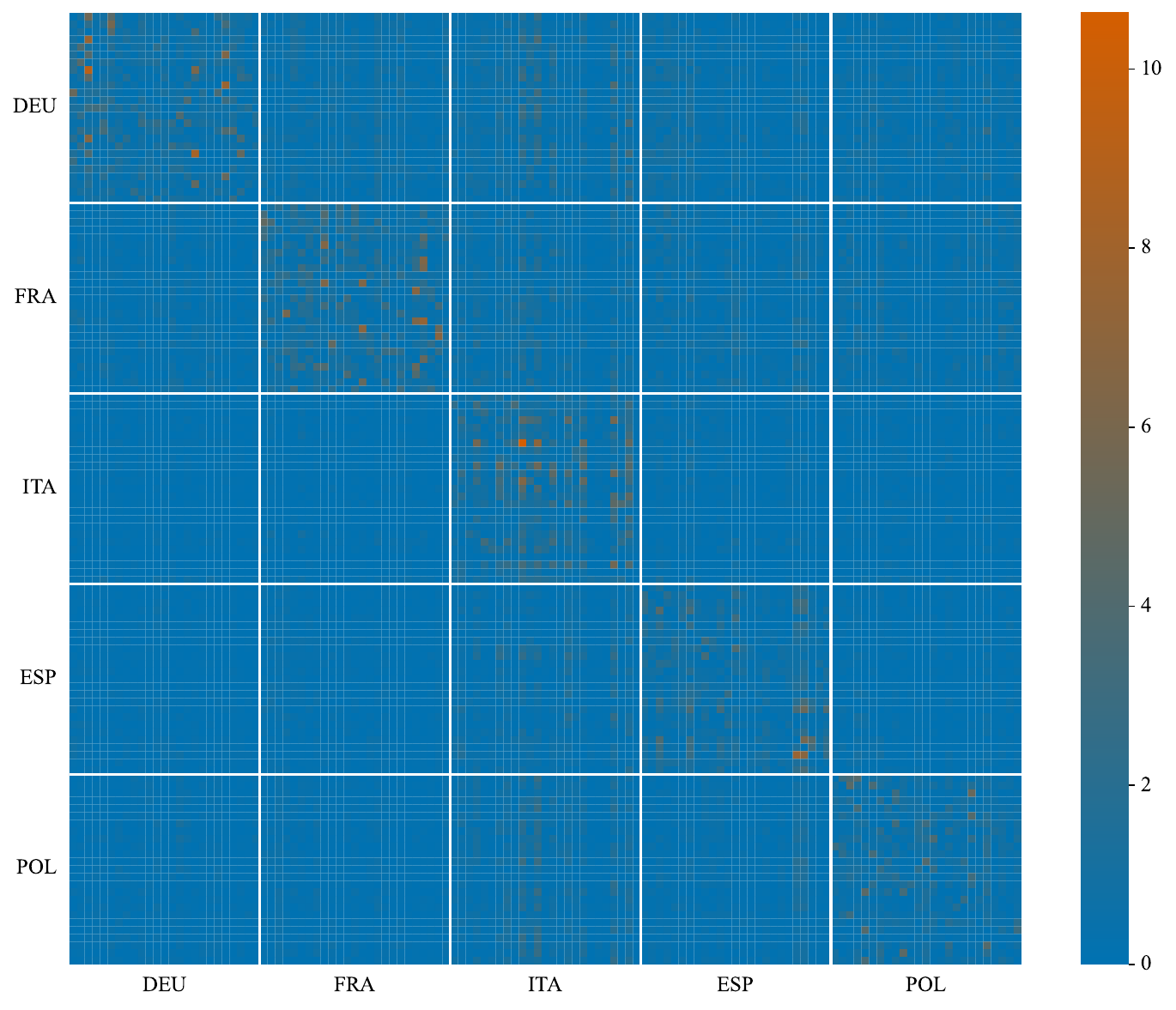}
    \begin{scriptsize}
    \parbox{0.98\textwidth}{\emph{Note:} The figure shows, the heat map of the maximum-likelihood estimate of the drift matrix with time series sorted by country.}
    \end{scriptsize}
\end{figure}

\subsection{Results}\label{Sec:EmpiricalResults}

Before presenting the empirical results, we briefly summarize the implementation choices, deferring full detail to \Cref{App:EmpiricalImpl}. We use the \(K_1\) estimator of \citet{Ma2021} with two modifications. First, we restrict the candidate set to \(k\geq 2\) to address its tendency to favour a single community in our setting. Second, we estimate $k$ over a grid of admissible minimum-group-size requirements and report the value of $k$ that remains selected over the longest contiguous range of the grid; ties are broken in favour of the smallest $k$. The minimum-size requirement prevents the procedure from selecting partitions with very small groups, such as singleton or pairwise clusters, which are difficult to interpret as genuine latent communities. Varying this requirement over a grid therefore allows us to choose a value of $k$ that is stable across reasonable lower bounds on group size. The choice of \(K_{\max}\) and a comparison with the \(K_2\) estimator of \cite{Ma2021} are discussed in \Cref{App:EmpiricalImpl}.

Applying this framework to the full dataset, we estimate five groups. The resulting partition aligns closely with national boundaries, which serve as our benchmark. To examine whether the procedure continues to recover meaningful community structures in smaller samples, we additionally consider twenty country-based subsets: ten containing observations from two countries and ten containing observations from three countries. For example, one of the two-country subsets contains only the observations from France and Germany, resulting in a dataset of $50$ nodes. For each subset, we re-estimate the number of communities using the procedure described above which is based on the content of \Cref{SecModelSelection} and then partition the nodes by applying the clustering algorithm of \Cref{Sec:Cons}.

\begin{figure}[htbp]
\caption{Estimated group partition for the full RE-Europe-$125$ sample based on \Cref{alg:VARBlockbuster}.}
    \label{fig:Clustering5countries}
    \centering
    \includegraphics[scale=0.5]{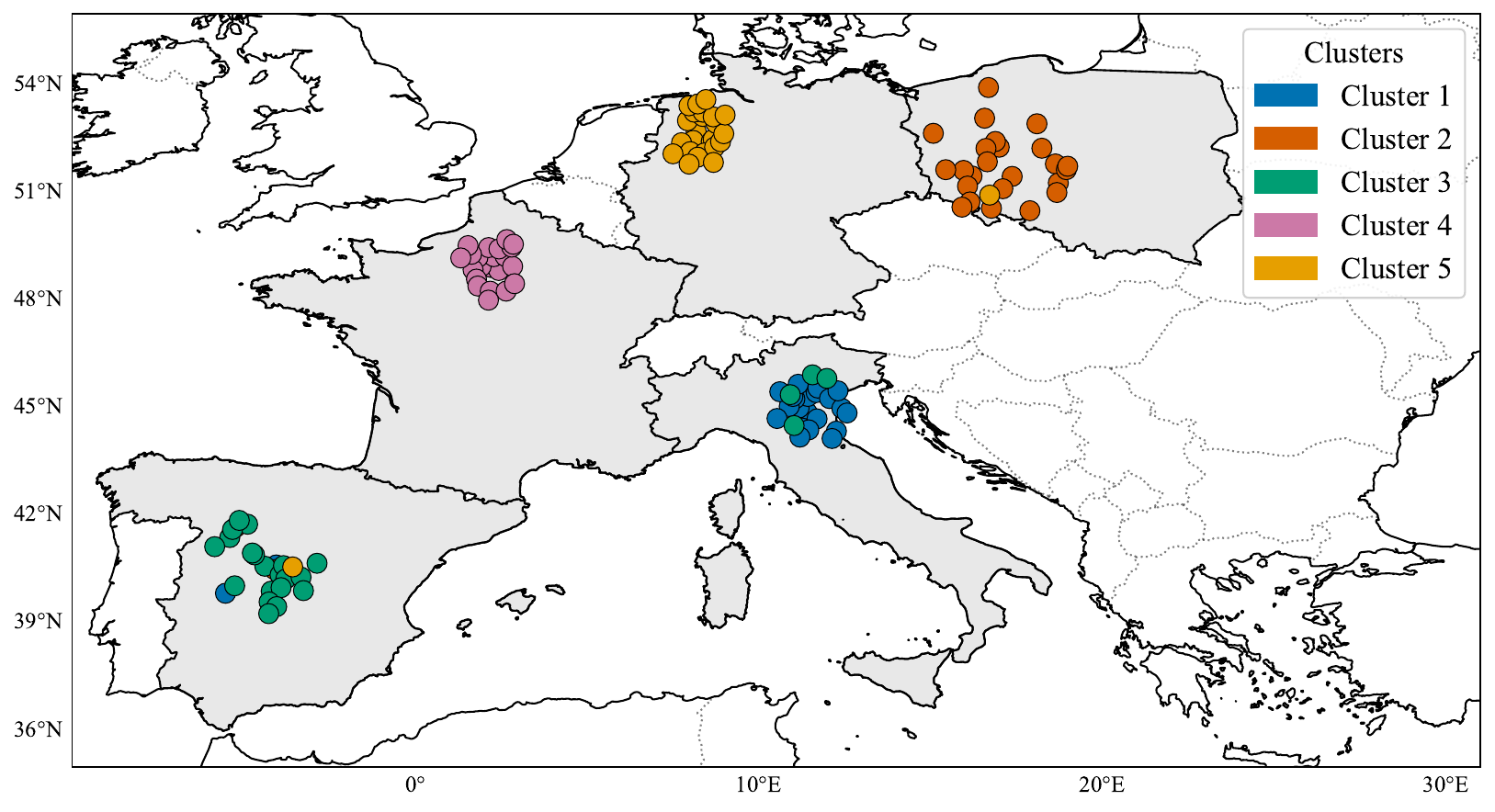}
\begin{scriptsize}
\parbox{0.98\textwidth}{\emph{Note:} The figure shows the estimated group partition for the full RE-Europe sample with \(d=125\) nodes when \(k=5\). The resulting clusters align closely with national boundaries, with one estimated group for each of France, Germany, Spain, Italy, and Poland.}
\end{scriptsize}
\end{figure}
\begin{table}[ht]
\centering
\small
\caption{Estimated number of groups for two-country and three-country subsets.}
\label{tab:EstimatedGroupsSubsets}
\begin{tabular}{lccc|lccc}
\toprule
2-subset & $\widehat K$ & switches & plateau share & 3-subset & $\widehat K$ & switches & plateau share \\
\midrule
(DEU, FRA) & 2 & 1 & 0.67 & (DEU, FRA, ITA) & 3 & 0 & 1.00 \\
(DEU, ITA) & 2 & 0 & 1.00 & (DEU, FRA, ESP) & 2 & 0 & 1.00 \\
(DEU, ESP) & 2 & 1 & 0.67 & (DEU, FRA, POL) & 3 & 0 & 1.00 \\
(DEU, POL) & 3 & 0 & 1.00 & (DEU, ITA, ESP) & 2 & 2 & 0.50 \\
(FRA, ITA) & 2 & 0 & 1.00 & (DEU, ITA, POL) & 3 & 2 & 0.50 \\
(FRA, ESP) & 3 & 1 & 0.83 & (DEU, ESP, POL) & 2 & 0 & 1.00 \\
(FRA, POL) & 3 & 0 & 1.00 & (FRA, ITA, ESP) & 3 & 1 & 0.83 \\
(ITA, ESP) & 2 & 1 & 0.67 & (FRA, ITA, POL) & 3 & 1 & 0.83 \\
(ITA, POL) & 2 & 1 & 0.67 & (FRA, ESP, POL) & 3 & 0 & 1.00 \\
(ESP, POL) & 2 & 2 & 0.50 & (ITA, ESP, POL) & 4 & 1 & 0.50 \\
\bottomrule
\end{tabular}
\vspace{0.3em}
\begin{minipage}{0.97\linewidth}
\footnotesize
\textit{Notes:} \(\widehat K\) is the value of \(k\) that remains selected over the longest contiguous range of admissible minimum-group-size requirements, with ties broken in favour of the smallest \(k\); see \Cref{App:EmpiricalImpl} for the formal definition. The switch count is the number of times \(\widehat K(g_i)\) changes as \(g_i\) varies over the grid; the plateau share is the longest plateau length normalised by the grid size. Lower switch counts and higher plateau shares indicate robustness to the choice of minimum-group-size requirement.
\end{minipage}
\end{table}

The results for the subset analysis are reported in \Cref{tab:EstimatedGroupsSubsets} as well as two additional measures of the robustness of the estimated value of $k$. For the two-country subsets, the estimated number of groups is often equal to two, while for the three-country subsets it is often equal to three. Moreover, these findings are robust to the minimal group size requirement. There are a few subsets for which the estimated number of groups deviates from the benchmark suggested by the country decomposition. Such deviations could be informative rather than merely problematic: they may reveal finer spatial heterogeneity within countries, or regional dependence patterns that cut across national classifications. Taken together, the empirical results provide encouraging evidence for the proposed methodology. The full-sample partition is closely aligned with the country-level benchmark, and the subset analyses generally select the expected number of groups. Moreover, the partitions reported in \Cref{app:EmpiricalClusteringSubGroups2and3} indicate that the method also recovers geographically meaningful structure when applied to smaller two-country and three-country systems.

\FloatBarrier
\section{Conclusion}

This paper develops a continuous-time model for detecting latent group structure in high-dimensional multivariate time series. A large strand of the literature focuses on model-free clustering methods based on different measures of dependence, often taken to be correlations between components. We take a different approach. We introduce the stochastic block Ornstein-Uhlenbeck (SBOU) process, a L{\'e}vy-driven multivariate Ornstein-Uhlenbeck process in which the drift matrix depends on unobserved group memberships. In doing so, we further develop the emerging literature on continuous-time network time series and graph-based continuous-time models by introducing a latent interaction graph with a group structure to be inferred, as opposed to models where the graph structure is known \citep{Courgeau2022a,Courgeau2022b,Lucchese2023}. The model can also be viewed as a continuous-time analogue of the stochastic-block VAR framework of \citet{Gudhmundsson2021}, adapted to mean-reverting systems and extended to allow for both positive and negative dynamic interactions between components.

The paper establishes three main findings in addition to introducing the model. First, it establishes consistency of a feasible drift estimator based on discrete observations in a joint asymptotic regime combining increasing dimension, infill asymptotics, and long-span asymptotics. This step is essential because the latent groups are not observed directly; they must be recovered from the estimated drift matrix. Second, taking the number of latent groups as known, the paper adapts spectral clustering to the estimated drift matrix and proves consistency of community recovery in the SBOU setting. Third, since the number of groups is not known in practice, we propose a support-recovery procedure for the drift matrix and use the resulting estimated network structure to apply the group-number selection estimator of \citet{Ma2021} to the continuous-time setting. This allows us to estimate the number of groups, $k$, and makes the clustering algorithm practically usable.

The theoretical findings are complemented by finite-sample simulation studies, reported in the supplementary material, and empirical evidence. The simulation results support the asymptotic theory and show that the clustering procedure improves as the dimension, time span, and signal strength increase. Similarly, the ability to recover the correct number of groups, $k$, is demonstrated in extensive simulation studies.

The empirical application to the RE-Europe wind-capacity data of \citet{Jensen2017} further illustrates the usefulness of the method beyond the simulated setting. We construct a five-country panel consisting of $125$ nodes, with $25$ nodes selected from each of France, Germany, Spain, Italy, and Poland. For each node, we observe hourly wind-capacity factors over the period 2012--2014, measured as fractions of full capacity. This gives a high-dimensional time-series dataset for which the interaction network is unobserved, but where the country labels provide a natural geographic benchmark.

For the full sample, the group-number selection procedure estimates $k=5$, and the resulting partition aligns closely with the country-level benchmark. This provides evidence that the procedure can recover meaningful groups outside controlled simulation studies, in a real high-dimensional dataset where the interaction network is unobserved. To further assess the stability of the method, we also consider all decompositions of the five-country dataset into two-country and three-country subsets. In these subset analyses, the estimated number of groups is often consistent with the corresponding geographic decomposition, and the recovered partitions remain close to the benchmark classification. These findings provide evidence that the SBOU framework can recover interpretable latent structure in settings where the interaction network is not observed directly, but must instead be inferred from high-dimensional time-series dynamics.

The paper also points to several directions for further work. On the theoretical side, the rate conditions used for feasible drift estimation are sufficient but likely conservative. In particular, the current proof requires the observation mesh $\Delta_{\mathcal P_T}$ to shrink sufficiently fast relative to the dimension and the time span. Sharper bounds for the discretization error in discretely observed L{\'e}vy-driven systems could weaken this mesh requirement and allow the theory to cover less frequent sampling schemes or faster-growing dimensions. It would also be natural to develop a full theory for the support-recovery and group-number selection steps, which are used here as implementation tools. On the modeling side, an interesting direction is to allow for more general forms of group structure. The present framework is based on communities in which nodes in the same group are more likely to interact with one another. In some applications, however, group structure is directional rather than purely community-based. Examples include trade flows, migration patterns, or input-output relations, where one group of units may systematically affect another group without the reverse effect being of the same form. Extending the SBOU framework to such sender-receiver or directional block structures would be a natural next step for future research, in a similar spirit to \citet{Gudhmundsson2025}.

%\pagebreak 
{\small 
\section*{Acknowledgement}
The authors benefited from comments by and discussions with Stefán Guðmundsson and Bent Jesper Christensen, as well as from comments by conference participants at the 12th Bachelier World Congress in Rio de Janeiro, and the Bernoulli--IMS 11th World Congress in Probability and Statistics in Bochum. AN gratefully acknowledges financial support from the Center for Research in Energy: Economics and Markets (CoRE).
AEDV's work has been supported through the 
EPSRC NeST Programme grant EP/X002195/1. 
\bibliographystyle{chicago}
\bibliography{mybib-v3}
}
\pagebreak
\appendix
\section*{Supplementary material}
\section{Estimation theory}\label{App:Est}
In this section, we provide a rigorous treatment of the estimator introduced in \Cref{Sec:EstShort}. We begin by specifying the estimation framework and notation used throughout.

\subsection{Estimation setup and notation}\label{PDynOfYandX}

For each \(Q \in \mathbb{R}^{d\times d}\), let \(\mathbb{Y}^Q\) denote the unique strong solution to \eqref{eqn:1} with drift matrix \(Q\). We write \(\Omega\) for the space of \(\mathbb{R}^d\)-valued càdlàg functions on \([0,\infty)\), equipped with the Borel \(\sigma\)-algebra \(\mathcal{F}\) induced by the Skorokhod topology, and let \((\mathcal{F}_t)_{t\ge 0}\) denote the canonical filtration. The law of \(\mathbb{Y}^Q\) on \((\Omega,\mathcal{F})\) is denoted by \(\mathbb{P}^Q\). Varying \(Q\) in \eqref{eqn:1} therefore induces a family of probability measures \(\{\mathbb{P}^Q : Q \in \mathbb{R}^{d\times d}\}\) on \((\Omega,(\mathcal{F}_t)_{t\ge 0},\mathcal{F})\). A comprehensive treatment of this framework can be found in \cite[§16]{Billingsley2013}; for our purposes, however, the material in \Cref{Prelim:EstSec} suffices. We write \(\mathcal{M}^+(\mathbb{R}^d)\) for the set of real \(d\times d\) matrices whose eigenvalues have strictly positive real parts; equivalently, \(A\in\mathcal{M}^+(\mathbb{R}^d)\) if and only if \(\Vert e^{-tA}\Vert\to 0\) as \(t\to\infty\). For a real symmetric matrix \(A\), \(\lambda_{\max}(A)\) and \(\lambda_{\min}(A)\) denote its largest and smallest eigenvalues.

We now consider estimation of the true drift matrix \(Q_0\) in \eqref{eqn:1}. Throughout this section, we impose the following assumption.

\begin{assumption}\label{Ass:Estimation}
We assume that \(Q_0 \in \mathcal{M}^+(\mathbb{R}^d)\), that \(\Sigma\) is strictly positive definite, that \(F\) admits a second moment and satisfies the moment condition in \Cref{Thm.1}, and that \(\mathbb{Y}\) is stationary. That is, the law of \(\boldsymbol{Y}_0\) coincides with the invariant distribution, which exists by \citet[Proposition 2.2]{Masuda2004}.
\end{assumption}

Let \(\mathbb{X}=(\boldsymbol{X}_t)_{t\ge 0}\) denote the canonical process on \(\Omega\). Under \Cref{Ass:Estimation}, the log-likelihood is well defined for each \(T>0\); see, for example, \citealp{Courgeau2022a,Dexheimer2024}. It is given by
\begin{equation}\label{eqn:LikelihoodCont}
\frac{d\mathbb{P}_T^Q}{d\mathbb{P}_T^{\mathbf 0}}
=
\exp\left(
-\int_0^T (\Sigma^{-1}Q\boldsymbol{X}_{s-})^\top\, d\boldsymbol{X}_s^{\mathrm c}
-\frac12\int_0^T
(\Sigma^{-1/2}Q\boldsymbol{X}_{s-})^\top \Sigma^{-1/2}Q\boldsymbol{X}_{s-}\, ds
\right).
\end{equation}
where \(\mathbb{P}_T^Q\) and \(\mathbb{P}_T^{\mathbf 0}\) denote the restrictions of \(\mathbb{P}^Q\) and \(\mathbb{P}^{\mathbf 0}\) to \(\mathcal{F}_T\), respectively, and \(\boldsymbol{X}^{\mathrm c}\) denotes the continuous \(\mathbb{P}^{\mathbf 0}\)-martingale part of the canonical process \(\mathbb{X}\); see \Cref{Prelim:EstSec}. We define the normalized negative log-likelihood by
\[
\mathcal{L}_T(Q)
=
-\frac{1}{T}\log\left(\frac{d\mathbb{P}_T^Q}{d\mathbb{P}_T^{\mathbf 0}}\right),
\qquad T>0,
\]
and let
\[
\widehat Q_T \in \argmin_{Q\in\mathbb{R}^{d\times d}} \mathcal{L}_T(Q).
\]

Under \Cref{Ass:Estimation}, \citealp{Courgeau2022a} show that the maximum-likelihood estimator \(\widehat Q_T\) exists and is unique \(\mathbb{P}_T^{Q_0}\)-almost surely, and that
\begin{equation}\label{eqn:TAsymEst}
T^{1/2}\bigl(\operatorname{vec}(\widehat Q_T)-\operatorname{vec}(Q_0)\bigr)
\stackrel{\mathcal D}{\longrightarrow}
\mathcal N\bigl(\mathbf 0_{d^2},\, C_\infty^{-1}\otimes \Sigma\bigr),
\qquad \text{as } T\to\infty,
\end{equation}
where
\[
C_\infty
=
\int xx^\top\,\mu(dx)
=
\mathbb E\!\left[\mathbb{Y}_\infty\mathbb{Y}_\infty^\top\right]
=
\int_0^\infty e^{-sQ_0}\Sigma e^{-sQ_0^\top}\,ds.
\]
Here, \(\mu\) denotes the invariant distribution. We refer to Theorems 3.4.4 and 4.2.3 of \cite{Courgeau2022a} for the proofs.

A consequence of \eqref{eqn:TAsymEst} is that
\[
\|\operatorname{vec}(\widehat Q_T)-\operatorname{vec}(Q_0)\|_2
=
\|\widehat Q_T-Q_0\|_F
=
O_{\mathbb{P}_T^{Q_0}}(T^{-1/2}),
\]
and hence, for every \(\varepsilon>0\),
\[
\|\widehat Q_T-Q_0\|_F
=
o_{\mathbb{P}_T^{Q_0}}(T^{-1/2+\varepsilon}),
\qquad \text{as } T\to\infty.
\]

Our focus differs from that of \cite{Courgeau2022a} and related work. Rather than studying \(\|\widehat Q_T-Q_0\|_F\) in the classical regime where \(T\to\infty\) with fixed \(d\), we investigate its behaviour in a joint asymptotic setting in which both \(T\) and \(d\) diverge. To this end, we rely on ideas from \cite{Dexheimer2024,Ciolek2020,Gaiffas2019}, and now introduce notation consistent with theirs.

Throughout this section, \(\otimes\) denotes the Kronecker product, while \(\odot\) denotes the Hadamard, or elementwise, product. The operator \(\operatorname{vec}\) denotes the vectorization map, which stacks the columns of a matrix into a single vector.

The Frobenius norm is associated with the scalar product
\begin{equation*}
    \left\langle A_1, A_2 \right\rangle_F := \operatorname{Tr}(A_1^\top A_2), \quad A_1, A_2 \in \mathbb{R}^{d_1 \times d_2},
\end{equation*}
where \(\operatorname{Tr}(\cdot)\) denotes the trace. Additionally, for \(r > 0\), we define the set
\begin{equation*}
    \mathbb{B}(r) := \left\{ \mathbf{B} \in \mathbb{R}^{d \times d} : \|\mathbf{B}\|_F \leq r \right\}.
\end{equation*}

Given \(\beta = (\beta_1, \ldots, \beta_d) \in \mathbb{R}^d\), let \((\beta_1^{\#}, \ldots, \beta_d^{\#})\) be the non-increasing rearrangement of \(|\beta_1|, \ldots, |\beta_d|\). For a vector of tuning parameters \(\lambda = (\lambda_1, \ldots, \lambda_d) \in \mathbb{R}^d\) with \(\lambda_1 \geq \lambda_2 \geq \ldots \geq \lambda_d \geq 0\), we define
\begin{equation*}
    \|\beta\|_* := \sum_{j=1}^d \lambda_j \beta_j^{\#}, \quad \beta \in \mathbb{R}^d.
\end{equation*}
It is known that \(\|\cdot\|_*\) defines a norm on \(\mathbb{R}^d\) (see Proposition 1.2 in \cite{Bogdan2015}). In the following, the weights will always be given by
\begin{equation*}
    \lambda_j = \sqrt{\log\left(\frac{2d}{j}\right)}, \quad j \in \{1, \ldots, d\}.
\end{equation*}
For \(A \in \mathbb{R}^{d_1 \times d_2}\), we set (with slight abuse of notation)
\begin{equation*}
    \|A\|_* := \|\operatorname{vec}(A)\|_*,
\end{equation*}
which gives
\begin{equation*}
    \|A\|_* = \sum_{j=1}^{d_1 d_2} \operatorname{vec}(A)_j^{\#} \sqrt{\log\left(\frac{2d_1 d_2}{j}\right)}.
\end{equation*}

For stochastic processes \(\left( \bold{X}_t \right)_{t \in [0, T]}, \left( \bold{Y}_t \right)_{t \in [0, T]} \in L^2([0, T], \mathrm{d}t)\), we introduce the scalar product
\begin{equation*}
    \langle X, Y \rangle_{L^2} := \frac{1}{T} \int_0^T X_s^\top Y_s \, \mathrm{d}s.
\end{equation*}
When working with a family of probability measures $\left\{\mathbb{P}^\theta: \theta \in \Theta\right\}$, defined on a suitable space, we denote taking expectations under $\mathbb{P}^{\theta}$ by $\mathbb{E}_{\mathbb{P}^{\theta}}\left( \cdot \right)$. $\mathbb{E}\left(\cdot \right)$ always refers to expectations under $\mathbb{P}$, i.e. the baseline probability measure.

\subsection{Estimation in the continuous setting}
In the following, we assume that we observe a continuous record of observations up to time $T>0$ of the Ornstein Uhlenbeck process from \eqref{eqn:1} with drift matrix $Q_0$. Additionally, we assume that the path of the continuous $\mathbb{P}^0$ martingale part $\mathbb{X}^c = \left(\bold{X}_{t}^c\right)_{t\ge0}$ is also available and that $\Sigma$ is known. Under these assumptions, it is feasible to calculate \eqref{eqn:LikelihoodCont}. We are interested in bounding $\Vert \widehat{Q} -Q_0 \Vert_F^2$ as $T$ \emph{and} $d$ tend to infinity. 
To achieve this, we will need to impose the following assumptions.
\begin{assumption}\label{Ass.H}
    \begin{enumerate}
    \item $\Vert Q_0 \Vert_F,\Vert Q^{-1}_0 \Vert_F = O(d)$ as $d\rightarrow \infty$
        \item $\Vert \Sigma \Vert_F = O(d)$, $\max_{k\in\{1,\dots,d\}}\int \left(z^{(k)}\right)^2 F(dz) = O(1)$ and $\sqrt{\int \Vert z \Vert_2^4 F(dz)} = O(d)$ as $d\rightarrow \infty$
        \item $\Vert b \Vert_2^2 = O(d)$ as $d\rightarrow \infty$
        \item $\lambda_{\min} \left(\dfrac{Q_0 + Q_0^\top}{2}\right) > \epsilon$ for some $\epsilon >0$.
        \item $\kappa_{\max} = \max_{i}\lambda_{i}(C_\infty) = O(d)$ as $d\rightarrow \infty$  and there exists a $C>0$ such that $\kappa_{\min} = \min_{i}\lambda_i(C_\infty)\ge C$ for all $d \in \mathbb{N}$.
    \end{enumerate}
\end{assumption}
\begin{remark}
We note that all assumptions above are mild and fairly easy to check. The above bounds  can be relaxed as desired at the cost of a slightly slower convergence of our estimator. However, all proofs still go through if these are modified. We underline that for any $d \in \mathbb{N}$, $\kappa_{\min} > 0$ following e.g. the remarks in Appendix A of \cite{Dexheimer2022} and hence the condition on the minimal eigenvalue of $C_{\infty}$ is not strict.
\end{remark}
\begin{remark}
    When $F$ is finite $\sqrt{\int \Vert z \Vert_2^4 F(dz)} = O(d)$ implies $\int \Vert z \Vert_2^2 F(dz) = O(d)$, but in general this does not hold.
\end{remark}

\begin{assumption}\label{Ass:LevyH}
    Grant \Cref{Ass:Estimation,Ass.H}. There exists a function $H: \mathbb{R}^{+} \times  \mathbb{R}^{+} \rightarrow \mathbb{R}^{+}$such that
(i) for any $T, r>0$, the functions $H(T,\cdot)$ and $H(\cdot, r)$ are non-increasing, with $\lim_{T\rightarrow \infty} H(T,r) = 0$ and
(ii) for any vector $u \in \mathbb{R}^d$ with $\|u\| \leq 1$, it holds
\begin{equation*}   
\forall T, r>0, \quad \mathbb{P}^{Q_0}\left(\left|u^{\top}\left(\widehat{\mathbf{C}}_T-\mathbf{C}_{\infty}\right) u\right| \geq r\right) \leq H(T, r),
\end{equation*}
where
\begin{equation*}  
\widehat{\mathbf{C}}_T:=\frac{1}{T} \int_0^T X_s X_s^{\top} \mathrm{d} s \quad \text { and } \quad \mathbf{C}_{\infty}:=\int x x^{\top} \mu(\mathrm{d} x).
\end{equation*}
\end{assumption}
\begin{remark}
When $\mathbb{L} = \mathbb{W}$ and $Q_0$ is diagonalizable with a minimal eigenvalue bounded away from $0$ uniformly in $d$, \citet{Ciolek2020} show that under \Cref{Ass:Estimation}, this assumption is satisfied, with
\begin{equation*}
H(T, r) = 2\exp\!\left(-T \cdot \frac{r_0}{8\, p_0\, K_\infty} \cdot \frac{r^2}{r + K_\infty}\right),
\end{equation*}
where $r_0 := \min_{1 \le j \le d}\operatorname{Re}(\theta_j)$ is the minimal real part of the eigenvalues of $Q_0$, $K_\infty = \lambda_{\max}(C_\infty)$, and $p_0 = \Vert P_0\Vert_{op}\Vert P_0^{-1}\Vert_{op}$ is the condition number of the eigenvector matrix from the diagonalization $Q_0 = P_0 \mathrm{diag}(\theta_1,\ldots,\theta_d)P_0^{-1}$; see \citet[Proposition $3.2$]{Ciolek2020}. The constants $r_0, K_\infty, p_0$ may depend on $d$ through model parameters, so $H$ decays exponentially in $T$ for any fixed $d$, but the rate may degrade with $d$. We note that the assumption of diagonalizability of $Q_0$ may be replaced by  \Cref{Ass.H} $(4)$. The reason for this is that the assumptions in \cite{Ciolek2020} are used to derive an exponential bound on the operator norm of the matrix exponential of $Q_0$ in terms of its smallest real part of the eigenvalues \citep[See proof of Proposition $3.2$]{Ciolek2020}. Using \Cref{ExponentialBound1} we may get the same under \Cref{Ass.H} $4$. As shown in the example below neither assumption implies the other, but our alternative assumption is more suited for estimation of the drift matrix of a SBOU process (\Cref{SBOUProc}) where in general diagonalizability is not ensured, whereas the new condition is easily satisfied.  \begin{example}
Let 
$$
A=\left(\begin{array}{ll}
1 & 1 \\
0 & 1
\end{array}\right).
$$
Then
$$
A+A^\top=\left(\begin{array}{ll}
2 & 1 \\
1 & 2
\end{array}\right).
$$
Then $A$ has eigenvalue $\lambda = 1$ with algebraic multiplicity $2$. Whereas the symmetric version has eigenvalues $1$ and $3$. However, as is evident, $A$ is not diagonalizable. Indeed $\dim \text{ker} (A - I )= \dim \{ \left(\begin{array}{l}
x_1  \\
0 
\end{array}\right) : x_1 \in \mathbb{R}\} = 1 <2$. As for the reverse, let 
$$A = \left(\begin{array}{ll}
1 & 10 \\
0 & 2
\end{array}\right).$$
Then,
$$A + A^\top = \left(\begin{array}{ll}
2 & 10 \\
10 & 4
\end{array}\right).$$
Clearly, 
$A$ has eigenvalues $1$ and $2$ and hence is diagonalizable but it is easily found that $A+A^\top$ has eigenvalue $3-\sqrt{101}<0$. 
\end{example}
\end{remark}
\begin{remark}
    In the literature \citep{Bickel2009,Gaiffas2019,Dexheimer2024,Li2023} \Cref{Ass:LevyH} is closely related to what is referred to as a restricted eigenvalue condition. Assumption \ref{Ass:LevyH} gives a uniform probabilistic bound on how far the empirical covariance matrix can deviate from its population counterpart when evaluated in any fixed direction \(u \in \mathbb{R}^d\) with \(\|u\|_2 \leq 1\).
\end{remark}
We now show that the maximum likelihood estimator (MLE) of $Q_0$ satisfies an upper bound similar to that derived in Propositions $2.2$ and $2.4$ on the Lasso and Slope estimator in \cite{Dexheimer2022}.\footnote{We cite the arXiv preprint version \cite{Dexheimer2022} since the published version \cite{Dexheimer2024} contains a slightly modified formulation of these propositions; the preprint statements are the ones used in our derivation.}
\begin{assumption}\label{Ass:WHPT}
   Define
    $$T_d := \inf\left\{ T^\prime > 0: \left(21 (d \wedge e)\right)^d H(T^\prime,\dfrac{\kappa_{\min}}{3d})\le \dfrac{1}{d}\right\}$$
    where $H$ is the function from \Cref{Ass:LevyH}. We assume $T\ge T_d$.
\end{assumption}
\begin{remark}
When $\mathbb{L} = \mathbb{W}$, Proposition 3.2 of \cite{Ciolek2020} suggests that the assumption is satisfied if  $d^4\log d = o(T_d)$ as $d \rightarrow \infty$, and we may get the same bound when using \Cref{ExponentialBound1}.
\end{remark}
\begin{proposition}\label{DexHeimerNormBound}
    Grant \Cref{Ass:Estimation,Ass.H,Ass:WHPT}. If $\widehat{Q}$ is a solution of the minimization problem $\min_{Q\in\mathbb{R}^{d \times d}}\mathcal{L}_T(Q)$ where $\mathcal{L}_T$ is defined as in \eqref{eqn:LikelihoodCont}, then $\widehat{Q}$ satisfies for all $Q \in \mathbb{R}^{d \times d}$ that 
    \begin{equation*}
\left\|\Sigma^{-1/2}\left(\widehat{Q}-Q_0\right) X\right\|_{L^2}^2-\left\|\Sigma^{-1/2}\left(Q-Q_0\right) X\right\|_{L^2}^2 \leq 2c_*^2 \dfrac{\kappa_{\max}}{\kappa_{\min} T}\left( \log 4 d \vee 2d^2\right) ,
\end{equation*} 
with high probability under $\mathbb{P}^{Q_0}$.
\end{proposition}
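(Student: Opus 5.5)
Since $\widehat Q$ minimises $\mathcal L_T$, the approach is the usual basic inequality $\mathcal L_T(\widehat Q)\le \mathcal L_T(Q)$ for every $Q$, with the stochastic term then controlled as in Propositions 2.2 and 2.4 of \cite{Dexheimer2022}. Under $\mathbb P^{Q_0}$ we write $d\boldsymbol X^c_s = -Q_0\boldsymbol X_{s-}\,ds + \Sigma^{1/2}\,d\boldsymbol W_s$. Inserting this into \eqref{eqn:LikelihoodCont}, expanding the quadratic and completing the square gives
\begin{equation*}
\mathcal L_T(Q) = \tfrac12\left\|\Sigma^{-1/2}(Q-Q_0)X\right\|_{L^2}^2 + \tfrac1T\int_0^T \bigl(\Sigma^{-1/2}(Q-Q_0)\boldsymbol X_{s-}\bigr)^\top d\boldsymbol W_s + R_T ,
\end{equation*}
where $R_T$ does not depend on $Q$. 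Subtracting the two evaluations yields
\begin{equation*}
\left\|\Sigma^{-1/2}(\widehat Q-Q_0)X\right\|_{L^2}^2-\left\|\Sigma^{-1/2}(Q-Q_0)X\right\|_{L^2}^2 \le 2\langle \varepsilon_T,\widehat Q - Q\rangle_F ,
\end{equation*}
with the matrix martingale $\varepsilon_T := -\tfrac1T\int_0^T \Sigma^{-1/2}\,d\boldsymbol W_s\,\boldsymbol X_{s-}^\top$ (up to sign and placement of $\Sigma^{-1/2}$).

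Next I bound the right-hand side by duality, $\langle \varepsilon_T,\Delta\rangle_F\le \|\varepsilon_T\|_F\|\Delta\|_F$ with $\Delta=\widehat Q-Q$. This is the point where the MLE differs from Lasso or Slope: there is no penalty to absorb a dual norm, so I work with the plain Frobenius norm and must accept a factor of order $d^2$ rather than $\log d$. To handle $\|\varepsilon_T\|_F$, I apply the Bernstein-type martingale concentration used in \cite{Dexheimer2022}, conditioning on the event that $\widehat{\mathbf C}_T$ is close to $\mathbf C_\infty$. Each entry $[\varepsilon_T]_{ij}$ is a continuous martingale with quadratic variation $T^{-1}[\widehat{\mathbf C}_T]_{jj}$, so on the good event its conditional variance is at most $2\kappa_{\max}/T$. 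Summing $d^2$ such sub-Gaussian entries, with a union bound or a chi-square-type tail, gives $\|\varepsilon_T\|_F^2\lesssim c_*^2\kappa_{\max}(\log 4d\vee 2d^2)/T$ with probability tending to one.

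Then I convert $\|\Delta\|_F$ back into the prediction norm. On the same good event, the deviation bound in \Cref{Ass:LevyH} gives $u^\top\widehat{\mathbf C}_Tu\ge \kappa_{\min}-\kappa_{\min}/(3d)\ge \tfrac23\kappa_{\min}$ uniformly over unit vectors $u$. This is obtained from a $1/(3d)$-type net of the unit sphere whose cardinality is controlled by $(21(d\wedge e))^d$; this is exactly why \Cref{Ass:WHPT} demands $(21(d\wedge e))^dH(T,\kappa_{\min}/(3d))\le 1/d$, so the event has probability at least $1-1/d\to1$. Using $\|\Sigma^{-1/2}\|\ge\|\Sigma\|^{-1/2}$, this restricted-eigenvalue bound gives $\|\Delta\|_F^2\lesssim \|\Sigma^{-1/2}\Delta X\|_{L^2}^2/\kappa_{\min}$. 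I split $\Delta=(\widehat Q-Q_0)-(Q-Q_0)$ and apply $2ab\le a^2/2+2b^2$ to absorb the $\widehat Q$ prediction term into the left-hand side. This leaves $2c_*^2\kappa_{\max}(\log 4d\vee 2d^2)/(\kappa_{\min}T)$, after adjusting the constant $c_*$; the residual $Q$ term is handled as in \cite{Dexheimer2022} by absorbing it with the same inequality.

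I expect the main obstacle to be this absorption step: keeping the $Q$-dependent cross term from leaving an extra positive contribution, and lining up constants with those in \cite{Dexheimer2022}. The other delicate point is showing that the net argument really produces the probability bound $1-1/d$ under \Cref{Ass:WHPT}, using only the directional bound of \Cref{Ass:LevyH} rather than a uniform operator-norm bound.
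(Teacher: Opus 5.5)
Your overall architecture (basic inequality, dual bound on the noise, lower-eigenvalue event, AM--GM) matches the paper's. The gap is in the absorption step you flag yourself, and it comes from starting with only $\mathcal L_T(\widehat Q)\le\mathcal L_T(Q)$. Write $A=\|\Sigma^{-1/2}(\widehat Q-Q_0)X\|_{L^2}^2$ and $B=\|\Sigma^{-1/2}(Q-Q_0)X\|_{L^2}^2$. Set $\rho=c_*(\sqrt{\log 4d}\vee\sqrt2\,d)\sqrt{2\kappa_{\max}/(\kappa_{\min}T)}$, so that $\rho^2$ is exactly the right-hand side of the proposition. Your chain gives at best $A-B\le 2\rho(\sqrt A+\sqrt B)$. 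Absorbing $2\rho\sqrt A\le A/2+2\rho^2$ and then $2\rho\sqrt B\le B/2+2\rho^2$ yields only $A\le 3B+8\rho^2$. That is an oracle inequality with leading constant $3$, not $A-B\le\rho^2$ uniformly in $Q$; the $Q$-term cannot be absorbed, because $B$ sits on the left with a minus sign.

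The missing idea is that $\mathcal L_T$ is an exact quadratic with $\nabla\mathcal L_T(\widehat Q)=0$. Hence $\mathcal L_T(Q)-\mathcal L_T(\widehat Q)=\tfrac12\|\Sigma^{-1/2}(Q-\widehat Q)X\|_{L^2}^2$, and with your decomposition this gives the identity
\[
A-B=2\langle\varepsilon_T,\Sigma^{-1/2}(Q-\widehat Q)\rangle_F-\|\Sigma^{-1/2}(\widehat Q-Q)X\|_{L^2}^2,
\]
which is the unpenalized Lemma 2.1 of \cite{Dexheimer2022} that the paper uses. You then do not split $\Delta$ at all. On the good event the inner-product term is at most $2\rho\|\Sigma^{-1/2}(Q-\widehat Q)X\|_{L^2}\le\rho^2+\|\Sigma^{-1/2}(Q-\widehat Q)X\|_{L^2}^2$, and the quadratic term cancels exactly. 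Note also that $A$ does not depend on $Q$, and $B\ge0$ vanishes at $Q_0$, so the claim is equivalent to its $Q=Q_0$ case. But your weak inequality at $Q=Q_0$ only gives $A\le4\rho^2$, i.e.\ $8c_*^2$ instead of the stated $2c_*^2$.

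Two smaller points. First, pair the noise with $\Sigma^{-1/2}\Delta$: take $\varepsilon_T=\frac1T\int_0^T d\widetilde{\boldsymbol W}_s\,\boldsymbol X_{s-}^\top$ and apply the event $\inf_{B\neq0}\|BX\|_{L^2}^2/\|B\|_F^2\ge\kappa_{\min}/2$ to $B=\Sigma^{-1/2}\Delta$ directly. Your conversion through $\|\Delta\|_F$ brings in a factor $\lambda_{\max}(\Sigma)$ (or $\lambda_{\min}(\Sigma)^{-1}$ if $\Sigma^{-1/2}$ stays inside $\varepsilon_T$). Neither factor is in the target, and neither is controlled by \Cref{Ass.H}.

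Second, a union bound over the $d^2$ entries of $\varepsilon_T$ loses a $\log d$, giving $d^2\log d$. A chi-square tail for martingales with random quadratic variation is not immediate either. The expression $\log 4d\vee 2d^2$ and the constant $c_*$ come from Proposition 3.3 of \cite{Dexheimer2022}: $\sup_{B\neq0}\langle\varepsilon_T,B\rangle_F/\|B\|_S\le c_*\sqrt{\kappa_{\max}/T}$ with probability at least $1-\varepsilon_0/2$. Combine this with $\|B\|_*\le\sqrt2\,d\|B\|_F$, which gives $\|B\|_S\le(\sqrt{\log(4\varepsilon_0^{-1})}\vee\sqrt2\,d)\|B\|_F$, and choose $\varepsilon_0=1/d$. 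Intersecting with the eigenvalue event, whose complement has probability at most $(21(d\wedge e))^dH(T,\kappa_{\min}/(3d))\le1/d$ by \Cref{Ass:WHPT}, gives the high-probability statement.
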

\begin{proof}
     See \Cref{Proofs:ContSetting}. 
\end{proof}
\begin{corollary}\label{CorrollaryBoundMaxLike}
    Let everything be as in the setup of \Cref{DexHeimerNormBound}. Then $\widehat{Q}$ satisfies 
    \begin{equation*}
        \Vert \widehat{Q}-Q_0 \Vert_F^2 \le  \lambda_{\max}(\Sigma)\, c_*^2 \dfrac{4\kappa_{\max}}{\kappa_{\min}^2 T}\left( \log 4 d\vee 2d^2\right).
    \end{equation*}
   with high probability under $\mathbb{P}^{Q_0}$.
\end{corollary}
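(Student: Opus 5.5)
The plan is to deduce the Frobenius bound from the prediction-type inequality in \Cref{DexHeimerNormBound}. I will choose $Q=Q_0$ there and then convert the empirical $L^2$ norm into a Frobenius norm using a lower bound on the empirical covariance.

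Taking $Q=Q_0$ in \Cref{DexHeimerNormBound}, the second term on the left vanishes. Hence, with high probability under $\mathbb{P}^{Q_0}$,
\[
\bigl\|\Sigma^{-1/2}(\widehat Q-Q_0)X\bigr\|_{L^2}^2\le 2c_*^2\frac{\kappa_{\max}}{\kappa_{\min}T}\bigl(\log 4d\vee 2d^2\bigr).
\]
Write $\Delta=\widehat Q-Q_0$. Then
\[
\bigl\|\Sigma^{-1/2}\Delta X\bigr\|_{L^2}^2=\operatorname{Tr}\bigl(\Delta^\top\Sigma^{-1}\Delta\,\widehat{\mathbf C}_T\bigr).
\]

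Two lower bounds are needed. For the $\Sigma^{-1}$ factor, use $\Sigma^{-1}\succeq\lambda_{\max}(\Sigma)^{-1}I$, which gives
\[
\operatorname{Tr}\bigl(\Delta^\top\Sigma^{-1}\Delta\,\widehat{\mathbf C}_T\bigr)\ge \lambda_{\max}(\Sigma)^{-1}\operatorname{Tr}\bigl(\Delta\widehat{\mathbf C}_T\Delta^\top\bigr).
\]
This is valid because $\widehat{\mathbf C}_T^{1/2}\Delta^\top\Sigma^{-1}\Delta\widehat{\mathbf C}_T^{1/2}\succeq \lambda_{\max}(\Sigma)^{-1}\widehat{\mathbf C}_T^{1/2}\Delta^\top\Delta\widehat{\mathbf C}_T^{1/2}$.

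For the covariance factor, $\operatorname{Tr}(\Delta\widehat{\mathbf C}_T\Delta^\top)=\sum_i \Delta_{i\bullet}\widehat{\mathbf C}_T\Delta_{i\bullet}^\top\ge\lambda_{\min}(\widehat{\mathbf C}_T)\|\Delta\|_F^2$. The aim is then to show that, with high probability, $\lambda_{\min}(\widehat{\mathbf C}_T)\ge \kappa_{\min}/2$.

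The main obstacle is this last bound, because \Cref{Ass:LevyH} only controls single directions $u$. I would pass to all directions with an $\varepsilon$-net argument on the unit sphere. Take a net $\mathcal N$ of mesh about $1/(3d)$ (or a constant), with cardinality at most $(21(d\wedge e))^d$ after adjusting constants, so that the factor in \Cref{Ass:WHPT} matches. A union bound with $r=\kappa_{\min}/(3d)$ gives
\[
\max_{u\in\mathcal N}|u^\top(\widehat{\mathbf C}_T-\mathbf C_\infty)u|\le \kappa_{\min}/(3d)
\]
with probability at least $1-1/d$, since $T\ge T_d$. The standard net approximation, with the quadratic form bounded via $\|\widehat{\mathbf C}_T-\mathbf C_\infty\|$ and a self-bounding step, extends this to $\|\widehat{\mathbf C}_T-\mathbf C_\infty\|\le\kappa_{\min}/2$ on the whole sphere. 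Weyl's inequality then yields $\lambda_{\min}(\widehat{\mathbf C}_T)\ge\kappa_{\min}/2$. I would try to reuse exactly the event already constructed in the proof of \Cref{DexHeimerNormBound}, since that proof presumably relies on the same restricted-eigenvalue event, so the two high-probability events can simply be intersected.

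Combining the three inequalities gives, on the intersection of the events,
\[
\frac{\kappa_{\min}}{2\lambda_{\max}(\Sigma)}\|\Delta\|_F^2\le 2c_*^2\frac{\kappa_{\max}}{\kappa_{\min}T}\bigl(\log 4d\vee 2d^2\bigr).
\]
Rearranging gives exactly
\[
\|\widehat Q-Q_0\|_F^2\le \lambda_{\max}(\Sigma)\,c_*^2\frac{4\kappa_{\max}}{\kappa_{\min}^2T}\bigl(\log4d\vee2d^2\bigr).
\]
Both events have probability tending to one, so their intersection does as well, which gives the claim with high probability.
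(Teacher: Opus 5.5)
Your proposal is correct and follows the paper's proof. You take $Q=Q_0$ in \Cref{DexHeimerNormBound}, reuse the restricted-eigenvalue event $E_1=\{\inf_{\mathbf B\neq 0}\|\mathbf BX\|_{L^2}^2/\|\mathbf B\|_F^2\ge\kappa_{\min}/2\}$ already constructed in that proof (Proposition 3.1 of Dexheimer), and pay a factor $\lambda_{\max}(\Sigma)$ to remove $\Sigma$. The paper does the same, only in a different order: it applies $E_1$ directly to $\mathbf B=\Sigma^{-1/2}(\widehat Q-Q_0)$ and then uses $\|\widehat Q-Q_0\|_F\le\|\Sigma^{1/2}\|\,\|\Sigma^{-1/2}(\widehat Q-Q_0)\|_F$, so your separate $\varepsilon$-net derivation is not needed.
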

\begin{proof}
    See \Cref{Proofs:ContSetting}.
\end{proof}
\begin{remark}\label{ConvergenceContCase}
By \Cref{Ass:Estimation,Ass:WHPT}, the above bound tends to $0$ as $d \rightarrow \infty$.
\end{remark}

\subsection{Discrete observations}\label{DiscreteSetting}
Until now, we have assumed that our process is observed in continuous time. In practice, however, this is unattainable, as real-world data, regardless of its frequency, is always recorded at discrete intervals due to practical constraints. We now turn to the more realistic setting where observations are on a discrete time grid, which may or may not be irregular. Let \(T > 0\) and \(N_T \in \mathbb{N}\) denote the time horizon and the number of observations, respectively. Define the discrete time grid as:

\begin{equation*}
\mathcal{P}_{T}:=\left\{0=s_0^{N_T}<s_1^{N_T}<\ldots<s_{N_T}^{N_T}=T\right\},
\end{equation*}
and let \(\Delta_{\mathcal{P}_T} := \sup_{0 \leq i < N_T - 1} \left(s_{i+1}^{N_T} - s_i^{N_T}\right)\) denote the maximum distance between consecutive observation times. We follow the notation that for some stochastic process $\mathbb{Z}$, $\Delta_{\mathcal{P}_T}^k \mathbb{Z} := \bold{Z}_{s_{k+1}^{N_T}}-\bold{Z}_{s_{k}^{N_T}}$, that is we work with forward-looking increments; this distinction will be important later. As in the continuous setting, our convergence results will depend on the observation scheme satisfying certain assumptions as \(d \to \infty\) and \(T \to \infty\) rather than $d$ fixed and $T\rightarrow \infty$. We impose the following assumption:

\begin{assumption}\label{Ass.High-FrequencySampling}
    Let \(T = T(d)\). The observation scheme \(\mathcal{P}_T\) satisfies:
    \begin{equation*}
        \begin{aligned}
            \Delta_{\mathcal{P}_T} &= o(1), \quad \text{ as } d \to \infty, \\
            N_T \Delta_{\mathcal{P}_T} &= O(T), \quad \text{ as } d \to \infty, \\
            \Delta_{\mathcal{P}_T} T &= o(1), \quad \text{ as } d \to \infty,\\
            N_T &> d.
        \end{aligned}
    \end{equation*}
\end{assumption}

\begin{remark}
    The conditions in \Cref{Ass.High-FrequencySampling} are standard high-frequency sampling assumptions, as discussed in \citep{Courgeau2022b,Lucchese2023,Mai2014}, except that here we consider the limit as \(d \to \infty\). As noted in \cite{Mai2014}, the second condition in \Cref{Ass.High-FrequencySampling} can always be satisfied by discarding some observations. Additionally, if the process is observed on an equidistant grid, this condition is trivially satisfied. We still think of \(T\) as a function of \(d\). The last assumption is required for \Cref{StrictConvexityOfC_THat}.
\end{remark}
We denote the discretely observed process by
\begin{equation*}
\mathbb{X}_{\mathcal{P}_{T}} := \left\{\mathbf{X}_s, s \in \mathcal{P}_{T}\right\}.
\end{equation*}
As before, we assume that we observe the continuous \(\mathbb{P}^0\)-martingale part of \(\mathbb{X}_{\mathcal{P}_T}\). Specifically, the observed process is
\begin{equation*}
\mathbb{X}^c_{\mathcal{P}_T} := \left\{\mathbf{X}^c_s, s \in \mathcal{P}_T\right\}.
\end{equation*} 

In this setting, \eqref{eqn:LikelihoodCont} is not observed and must be approximated by the following discrete log-likelihood:
\begin{equation}\label{DiscreteLikelihood}
    \begin{aligned}
\mathcal{L}_T^{\mathcal{P}_T}(Q) &= \dfrac{1}{T} \sum_{n=0}^{N_T-1} \left(\Sigma^{-1}Q \boldsymbol{X}_{s_n^{N_T}}\right)^{\top} \left(\boldsymbol{X}_{s_{n+1}^{N_T}}^c - \boldsymbol{X}_{s_n^{N_T}}^c \right) 
\\&+ \dfrac{1}{2T} \sum_{n=0}^{N_T-1} \left(\Sigma^{-1/2} Q \boldsymbol{X}_{s_n^{N_T}}\right)^{\top} \Sigma^{-1/2} Q \boldsymbol{X}_{s_n^{N_T}} \left(s_{n+1}^{N_T} - s_n^{N_T}\right).
\end{aligned}
\end{equation}

For stochastic processes \(\left(\boldsymbol{X}_t\right)_{t \in[0, T]}\) and \(\left(\boldsymbol{Y}_t\right)_{t \in[0, T]} \in L^2\left([0, T], \mathrm{d}t\right)\), we introduce the \emph{semi} inner product:
\begin{equation*}    
\langle \mathbb{X}, \mathbb{Y}\rangle_{L^2_{\mathcal{P}_T}} := \dfrac{1}{T} \sum_{n=0}^{N_T-1} \boldsymbol{X}_{s_n^{N_T}}^\top \boldsymbol{Y}_{s_n^{N_T}} \left(s_{n+1}^{N_T} - s_n^{N_T}\right).
\end{equation*}

\begin{lemma}\label{StrictConvexityOfC_THat}
Suppose \Cref{Ass.High-FrequencySampling} is satisfied. Let \(\widehat{\mathbf{C}}_{\mathcal{P}_T} = \dfrac{1}{T} \sum_{n=0}^{N_T-1} \boldsymbol{X}_{s_n^{N_T}} \boldsymbol{X}_{s_n^{N_T}}^\top \left(s_{n+1}^{N_T} - s_n^{N_T}\right)\). Then \(\widehat{\mathbf{C}}_{\mathcal{P}_T}\) is almost surely positive definite under \(\mathbb{P}^Q\) for all \(Q \in \mathbb{R}^{d \times d}\).
\end{lemma}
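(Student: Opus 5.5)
Since $\widehat{\mathbf{C}}_{\mathcal{P}_T}$ is a nonnegatively weighted sum of rank-one matrices $\boldsymbol{X}_{s_n}\boldsymbol{X}_{s_n}^\top$ with weights $(s_{n+1}-s_n)/T>0$, it is automatically symmetric positive semi-definite. Moreover,
\[
u^\top \widehat{\mathbf{C}}_{\mathcal{P}_T} u=\frac1T\sum_n (u^\top \boldsymbol{X}_{s_n})^2(s_{n+1}-s_n)=0
\]
forces $u\perp \boldsymbol{X}_{s_n}$ for every $n=0,\dots,N_T-1$. So positive definiteness is equivalent to the vectors $\boldsymbol{X}_{s_0},\dots,\boldsymbol{X}_{s_{N_T-1}}$ spanning $\mathbb{R}^d$. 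This is where the condition $N_T>d$ from \Cref{Ass.High-FrequencySampling} enters: we have at least $d$ vectors available. The plan is to show that $\boldsymbol{X}_{s_0},\dots,\boldsymbol{X}_{s_{d-1}}$ are almost surely linearly independent under $\mathbb{P}^Q$.

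We argue by induction over the sampling times, showing that the conditional law of the next observation puts no mass on a proper subspace. Under $\mathbb{P}^Q$, the canonical process is a Lévy-driven OU process, so the variation-of-constants formula gives
\[
\boldsymbol{X}_{s_{n+1}} = e^{-Q(s_{n+1}-s_n)}\boldsymbol{X}_{s_n} + \int_{s_n}^{s_{n+1}} e^{-Q(s_{n+1}-u)}\,d\mathbf{L}_u .
\]
The stochastic integral is independent of $\mathcal{F}_{s_n}$, and its law is the convolution of
\begin{itemize}
\item a Gaussian part with covariance $\int_0^{h} e^{-Qv}\Sigma e^{-Q^\top v}\,dv$, where $h=s_{n+1}-s_n>0$, which is strictly positive definite since $\Sigma$ is positive definite (\Cref{Ass:Estimation});
\item an independent jump/drift part.
\end{itemize}
A nondegenerate Gaussian convolved with any probability measure is absolutely continuous with respect to Lebesgue measure on $\mathbb{R}^d$. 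Hence, conditionally on $\mathcal{F}_{s_n}$, $\boldsymbol{X}_{s_{n+1}}$ has a density. Its probability of falling in the (measurable in the past, proper) subspace $\operatorname{span}\{\boldsymbol{X}_{s_0},\dots,\boldsymbol{X}_{s_n}\}$, which has dimension at most $n+1<d$, is therefore zero.

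For the base case, $\boldsymbol{X}_{s_0}=\boldsymbol{X}_0$ must be nonzero almost surely. If the initial law is the stationary law, it is itself absolutely continuous by the same convolution argument applied to $\int_0^\infty e^{-Qv}\,d\mathbf{L}_v$. For $Q$ not in $\mathcal{M}^+(\mathbb{R}^d)$, where no invariant law exists, I would avoid the issue by starting the induction from $s_1$ instead: $\boldsymbol{X}_{s_1}$ has a density regardless of $\boldsymbol{X}_0$. Then $s_1,\dots,s_{N_T-1}$ supplies $N_T-1\ge d$ vectors, which again uses $N_T>d$. Iterating by conditioning and taking expectations (tower property) gives almost-sure linear independence of $d$ of the sampled vectors, and hence $\widehat{\mathbf{C}}_{\mathcal{P}_T}\succ 0$ almost surely.

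The main obstacle is making the statement hold for all $Q\in\mathbb{R}^{d\times d}$, not only for the stationary $Q_0$. This needs care about how $\mathbb{P}^Q$ is defined, namely whether it starts from a fixed or a stationary initial law. Routing the argument through the transition densities, as above, is designed to remove any dependence on stationarity. The measurability of the random subspace in the conditioning step is routine but must be written down carefully.
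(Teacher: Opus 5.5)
Your proof is correct and takes the same route as the paper. The paper, following \citet{Gaiffas2019}, reduces degeneracy of $\widehat{\mathbf{C}}_{\mathcal{P}_T}$ to all sampled points lying in a hyperplane and then simply asserts this is a null event because $\Sigma$ is positive definite. Your transition-density induction, using the nondegenerate Gaussian convolution and starting at $s_1$, is a careful justification of that one-line assertion, and it also makes explicit where $N_T>d$ enters and why the conclusion holds for every $Q$ regardless of the initial law, neither of which the paper spells out.
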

\begin{proof}
    See \Cref{Proofs:InfeasibleCase}.
\end{proof}
Now, we extend \Cref{Ass:LevyH} to the discrete setting.
\begin{proposition}\label{Prop:LevyH_discrete} 
    Grant \Cref{Ass:Estimation,Ass.H,Ass:LevyH}. There exists a function $H_2: \mathbb{R}^{+} \times \mathbb{R}^{+} \rightarrow \mathbb{R}^{+}$ such that
(i) for any $\Delta, r>0$, the function $H_2(\Delta,\cdot)$ is non-increasing, whereas $H_2(\cdot,r)$ is non-decreasing with $\lim_{\Delta \rightarrow 0} H_2(\Delta,r) = 0$ for any $r>0$ and
(ii) for any vector $u \in \mathbb{R}^d$ with $\|u\| \leq 1$, it holds for any observation scheme $\mathcal{P}_T$ and
\begin{equation*}   
 \forall r>0, \quad \mathbb{P}^{Q_0}\left(\left|u^{\top}\left(\hat{\mathbf{C}}_{\mathcal{P}_T}-\mathbf{C}_{\infty}\right) u\right| \geq r\right) \leq H(T, r/2) + H_2(\Delta_{\mathcal{P}_T}, r/2),
\end{equation*}
with $H_2\left(\Delta,r\right) = \dfrac{2\sqrt{\mathbb{E}_{\mathbb{P}^Q}\left(\Vert \mathbf{X}_{0}\Vert_2^2\right)}\sqrt{\sup_{u\in [0,\Delta]}\mathbb{E}_{\mathbb{P}^Q}\left(\Vert\bold{X}_u-\mathbf{X}_{0}\Vert_2^2\right)}}{r}$ and $H(\cdot, \cdot)$ as in \Cref{Ass:LevyH}.
\end{proposition}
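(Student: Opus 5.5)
The plan is to split the deviation into a continuous-time part and a discretization part, and use a union bound. Write
\[
u^\top(\hat{\mathbf C}_{\mathcal P_T}-\mathbf C_\infty)u = u^\top(\hat{\mathbf C}_{\mathcal P_T}-\widehat{\mathbf C}_T)u + u^\top(\widehat{\mathbf C}_T-\mathbf C_\infty)u .
\]
If the left-hand side is at least $r$ in absolute value, then one of the two summands is at least $r/2$. Hence
\[
\mathbb P^{Q_0}\bigl(|u^\top(\hat{\mathbf C}_{\mathcal P_T}-\mathbf C_\infty)u|\ge r\bigr)\le \mathbb P^{Q_0}\bigl(|u^\top(\widehat{\mathbf C}_T-\mathbf C_\infty)u|\ge r/2\bigr)+\mathbb P^{Q_0}\bigl(|u^\top(\hat{\mathbf C}_{\mathcal P_T}-\widehat{\mathbf C}_T)u|\ge r/2\bigr).
\]
\Cref{Ass:LevyH} bounds the first probability directly by $H(T,r/2)$. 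What remains is to show that the second probability is at most $H_2(\Delta_{\mathcal P_T},r/2)$.

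\textbf{Discretization term.} For this term I would use Markov's inequality. The difference is
\[
u^\top(\widehat{\mathbf C}_T-\hat{\mathbf C}_{\mathcal P_T})u=\frac1T\sum_{n}\int_{s_n}^{s_{n+1}}\Bigl((u^\top \boldsymbol X_s)^2-(u^\top \boldsymbol X_{s_n})^2\Bigr)\,ds .
\]
Factor each integrand as $a^2-b^2=(a-b)(a+b)$, with $a=u^\top\boldsymbol X_s$ and $b=u^\top\boldsymbol X_{s_n}$. Since $\|u\|\le1$, Cauchy--Schwarz gives
\[
\mathbb E|a^2-b^2|\le \sqrt{\mathbb E\|\boldsymbol X_s-\boldsymbol X_{s_n}\|_2^2}\,\sqrt{\mathbb E\|\boldsymbol X_s+\boldsymbol X_{s_n}\|_2^2}.
\]
By strict stationarity (\Cref{Ass:Estimation}), the first factor equals $\sqrt{\mathbb E\|\boldsymbol X_{s-s_n}-\boldsymbol X_0\|_2^2}$. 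Because $0\le s-s_n\le\Delta_{\mathcal P_T}$, it is bounded by $\sqrt{\sup_{v\in[0,\Delta]}\mathbb E\|\boldsymbol X_v-\boldsymbol X_0\|_2^2}$. The second factor is at most $2\sqrt{\mathbb E\|\boldsymbol X_0\|_2^2}$ by Minkowski's inequality and stationarity.

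These bounds are uniform in $n$ and $s$. Integrating over $[0,T]$ and dividing by $T$, the expected absolute difference is therefore at most
\[
2\sqrt{\mathbb E\|\boldsymbol X_0\|^2}\sqrt{\sup_{v\le\Delta}\mathbb E\|\boldsymbol X_v-\boldsymbol X_0\|^2}.
\]
Markov's inequality at level $r/2$ then gives the tail bound $H_2(\Delta_{\mathcal P_T},r/2)$ with $H_2$ exactly as stated.

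\textbf{Boundary detail.} If the grid ends exactly at $T$, the Riemann sum covers $[0,T]$ and there is no leftover piece. Otherwise, one would include the last interval in the comparison.

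\textbf{Properties of $H_2$.} Monotonicity is immediate: $H_2$ is non-increasing in $r$ because of the factor $1/r$, and non-decreasing in $\Delta$ because of the supremum over $[0,\Delta]$.

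The main obstacle is showing $\lim_{\Delta\to0}H_2(\Delta,r)=0$, i.e.\ $L^2$-continuity of the stationary OU process at $0$. I would prove it from the representation
\[
\boldsymbol X_v-\boldsymbol X_0=(e^{-vQ_0}-I)\boldsymbol X_0+\int_0^v e^{-(v-s)Q_0}\,d\boldsymbol L_s .
\]
The second moment of $\boldsymbol X_0$ is finite because $F$ has a second moment, so the first term is $O(\|e^{-vQ_0}-I\|^2)\to0$. The stochastic integral has second moment $O(v)$ (plus $O(v^2)$ from the drift $\mathbf b$), using the second moment of $F$ and $\Sigma$. Both vanish as $v\to0$, which gives the limit. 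Finiteness of $\mathbb E\|\boldsymbol X_0\|^2$ follows from \Cref{Ass:Estimation}, via the second-moment formula for the invariant law with $Q_0\in\mathcal M^+$.
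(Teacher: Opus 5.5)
Your proposal is correct and follows the paper's proof. It uses the same split $\hat{\mathbf C}_{\mathcal P_T}-\mathbf C_\infty=(\hat{\mathbf C}_{\mathcal P_T}-\widehat{\mathbf C}_T)+(\widehat{\mathbf C}_T-\mathbf C_\infty)$, the union bound at level $r/2$ with \Cref{Ass:LevyH}, and Markov's inequality on the discretization term after bounding its expectation by $2\sqrt{\mathbb E\|\boldsymbol X_0\|_2^2}\sqrt{\sup_{v\le\Delta}\mathbb E\|\boldsymbol X_v-\boldsymbol X_0\|_2^2}$ via Cauchy--Schwarz and stationarity. The differences are minor. You factor the scalar $(u^\top\boldsymbol X_s)^2-(u^\top\boldsymbol X_{s_n})^2$ directly, where the paper bounds $\mathbb E\|\hat{\mathbf C}_{\mathcal P_T}-\widehat{\mathbf C}_T\|_F$ and then uses Rayleigh--Ritz. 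You also verify the limit in (i) explicitly via $L^2$-continuity, correctly keeping the $(e^{-vQ_0}-I)\boldsymbol X_0$ term; the paper's \Cref{BoundOnIncrements} omits it, harmlessly, since it is of lower order.
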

\subsubsection{Infeasible case}
Although \eqref{DiscreteLikelihood} represents the observed likelihood function under this setting, we introduce the following unobserved likelihood function, as it allows us to establish several key properties. We will subsequently demonstrate that the likelihood function in \eqref{DiscreteLikelihood} inherits these properties.

\begin{equation}\label{egn:PseudoLikelihood}
    \begin{aligned}
    \tilde{\mathcal{L}}_T^{\mathcal{P}_T}(Q) &= \dfrac{1}{T}\sum_{n=0}^{N_T-1} \left( \Sigma^{-1} Q \boldsymbol{X}_{s_n^{N_T}} \right)^{\top} \left( \boldsymbol{X}_{s_{n+1}^{N_T}}^c(Q_0) - \boldsymbol{X}_{s_n^{N_T}}^c(Q_0) \right) \\
    &\quad + \dfrac{1}{2T} \sum_{n=0}^{N_T-1} \left( \Sigma^{-1/2} Q \boldsymbol{X}_{s_n^{N_T}} \right)^{\top} \Sigma^{-1/2} Q \boldsymbol{X}_{s_n^{N_T}} \left( s_{n+1}^{N_T} - s_n^{N_T} \right) \\
    &\quad - \dfrac{1}{T} \sum_{n=0}^{N_T-1} \left( \Sigma^{-1/2} Q \boldsymbol{X}_{s_n^{N_T}} \right)^{\top} \Sigma^{-1/2} Q_0 \boldsymbol{X}_{s_n^{N_T}} \left( s_{n+1}^{N_T} - s_n^{N_T} \right),
    \end{aligned}
\end{equation}
where \(\boldsymbol{X}_s^c(Q_0)\) is the \(\mathbb{P}^{Q_0}\)-continuous martingale as defined in \Cref{Prelim:EstSec}. We refer to this as the pseudo-likelihood function of \(\mathcal{L}_T^{\mathcal{P}_T}\). In the following sections, we will present a series of results related to this function. 
\begin{assumption}\label{Ass:WHPDelta}
   $\Delta_{\mathcal{P}_T}$ satisfies $\Delta_{\mathcal{P}_T} \le \Delta^*$, where
    $$\Delta^* := \inf\left\{ \Delta > 0: \left(21 (d \wedge e)\right)^d H_2(\Delta,\dfrac{\kappa_{\min}}{3d})\le \dfrac{1}{d}\right\}$$
    and $H_2$ is the function from \Cref{Prop:LevyH_discrete}.
\end{assumption}
\begin{remark}
    By inspecting the functional form of $H_2(\cdot, \cdot)$ and using \Cref{BoundOnIncrements,BoundExpectationNormInvariant}, we see that the above assumption is satisfied for large $d$ if $\Delta_{\mathcal{P}_T} = o\left(\left(21d\right)^{-(d+5)}\right)$ as $d\rightarrow \infty$. This in turn requires $\Delta_{\mathcal{P}_T}$ to decay to $0$ faster than exponentially as $d$ tends to infinity.
\end{remark}

\begin{corollary}\label{PropertiesDiscretizedEstimator}
    Grant \Cref{Ass.H,Ass:Estimation,Ass:LevyH,Ass:WHPT,Ass:WHPDelta,Ass.High-FrequencySampling}. Let $\tilde{Q}$ be a solution of the minimization problem $\min _{Q \in \mathbb{R}^{d\times d}}\tilde{\mathcal{L}}_T^{\mathcal{P}_T}$. Then $\tilde{Q}$ satisfies 
    \begin{equation*}
        \begin{aligned}
        \Vert \tilde{Q}-Q_0 \Vert_F^2 &\le  \lambda_{\max}(\Sigma)\, c_*^2 \dfrac{4\kappa_{\max}}{\kappa_{\min}^2 T}\left( \log 4 d \vee 2d^2\right),
        \end{aligned}
    \end{equation*}
    with high probability under $\mathbb{P}^{Q_0}$. Above, $c_*$ is the universal constant from \Cref{Prop3.3OfDexheimerDiscrete}.
\end{corollary}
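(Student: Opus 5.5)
The plan is to run the proof of \Cref{DexHeimerNormBound} and \Cref{CorrollaryBoundMaxLike} again for the pseudo-likelihood $\tilde{\mathcal{L}}_T^{\mathcal{P}_T}$. The continuous-time integrals are replaced by the Riemann sums over $\mathcal{P}_T$, the empirical covariance $\widehat{\mathbf C}_T$ is replaced by $\widehat{\mathbf C}_{\mathcal{P}_T}$, and \Cref{Ass:LevyH} is replaced by its discrete analogue, \Cref{Prop:LevyH_discrete}.

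\textbf{Step 1 (rewrite the objective).} Since $\boldsymbol X^c(Q_0)$ is the $\mathbb P^{Q_0}$-continuous martingale part, write $\boldsymbol X^c_{s_{n+1}}-\boldsymbol X^c_{s_n}$ in terms of $Q_0$. Completing the square then gives
\begin{equation*}
\tilde{\mathcal{L}}_T^{\mathcal{P}_T}(Q) = \tfrac12\big\|\Sigma^{-1/2}(Q-Q_0)X\big\|^2_{L^2_{\mathcal P_T}} + \tfrac1T\textstyle\sum_n (\Sigma^{-1}Q\boldsymbol X_{s_n})^\top \Delta^n_{\mathcal P_T}\boldsymbol X^c(Q_0) + \text{(terms not depending on } Q).
\end{equation*}
This is the same structure as in the continuous case: a quadratic form in $Q-Q_0$, driven by $\widehat{\mathbf C}_{\mathcal P_T}$, plus a linear stochastic term $\langle Q-Q_0,\, \varepsilon_T\rangle_F$ with $\varepsilon_T=\frac1T\sum_n \Sigma^{-1}\Delta^n\boldsymbol X^c(Q_0)\boldsymbol X_{s_n}^\top$.

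By \Cref{StrictConvexityOfC_THat}, which uses $N_T>d$, the objective is strictly convex, so $\tilde Q$ exists and is unique. Comparing the objective at $\tilde Q$ with its value at $Q_0$ gives the basic inequality
\begin{equation*}
\tfrac12\big\|\Sigma^{-1/2}(\tilde Q-Q_0)X\big\|^2_{L^2_{\mathcal P_T}}\le |\langle \tilde Q-Q_0,\varepsilon_T\rangle_F|.
\end{equation*}

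\textbf{Step 2 (control the noise term).} Because the grid points are fixed, $\varepsilon_T$ is a discrete martingale transform of Brownian increments, with predictable integrand $\boldsymbol X_{s_n}$ and quadratic variation $\widehat{\mathbf C}_{\mathcal P_T}$. I will apply the deviation inequality behind \Cref{Prop3.3OfDexheimerDiscrete}, with universal constant $c_*$, on the event $\{\lambda_{\max}(\widehat{\mathbf C}_{\mathcal P_T})\le 2\kappa_{\max}\}$. This yields
\begin{equation*}
|\langle M,\varepsilon_T\rangle_F|\le c_*\sqrt{\kappa_{\max}/T}\,\sqrt{\log 4d\vee 2d^2}\,\|\Sigma^{-1/2}MX\|_{L^2_{\mathcal P_T}}/\sqrt{\kappa_{\min}}
\end{equation*}
w.h.p., mirroring the continuous proof. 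Inserting this into the basic inequality bounds $\|\Sigma^{-1/2}(\tilde Q-Q_0)X\|^2_{L^2_{\mathcal P_T}}$.

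\textbf{Step 3 (restricted eigenvalue, the main obstacle).} To pass from the $L^2_{\mathcal P_T}$ seminorm to the Frobenius norm I need
\begin{equation*}
\|\Sigma^{-1/2}MX\|^2_{L^2_{\mathcal P_T}}\ge \lambda_{\max}(\Sigma)^{-1}\tfrac{\kappa_{\min}}{2}\|M\|_F^2 \quad \text{uniformly in } M, \text{ w.h.p.},
\end{equation*}
and similarly an upper bound $\lambda_{\max}(\widehat{\mathbf C}_{\mathcal P_T})=O(\kappa_{\max})$.

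I will prove this by a $1/3$-net argument on the unit sphere of $\mathbb R^d$, whose cardinality is bounded by $(21(d\wedge e))^d$. On each net point, \Cref{Prop:LevyH_discrete} with $r=\kappa_{\min}/(3d)$ gives failure probability at most $H(T,\kappa_{\min}/(6d))+H_2(\Delta_{\mathcal P_T},\kappa_{\min}/(6d))$. A union bound over the net, combined with \Cref{Ass:WHPT,Ass:WHPDelta}, makes the total failure probability $O(1/d)\to0$. A standard net-to-sphere extension then gives $|u^\top(\widehat{\mathbf C}_{\mathcal P_T}-\mathbf C_\infty)u|\le \kappa_{\min}/2$ for all unit $u$. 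The bound for $M$ follows by applying this row by row to $\Sigma^{-1/2}M$.

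The delicate point is a factor-of-two mismatch. \Cref{Prop:LevyH_discrete} splits $r$ into $r/2$, so the definitions of $T_d$ and $\Delta^*$, which are stated at $\kappa_{\min}/(3d)$, must be checked against thresholds at $r/2$. If they do not match, I will adjust the net radius or absorb the discrepancy into constants, using the monotonicity of $H$ and $H_2$.

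\textbf{Step 4 (conclusion).} Combining Steps 2 and 3 gives
\begin{equation*}
\|\tilde Q-Q_0\|_F^2\le \lambda_{\max}(\Sigma)\,c_*^2\,\frac{4\kappa_{\max}}{\kappa_{\min}^2T}\,(\log4d\vee2d^2),
\end{equation*}
exactly as in \Cref{CorrollaryBoundMaxLike}, with all events holding w.h.p.
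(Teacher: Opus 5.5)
Your route is the paper's. The paper re-runs \Cref{DexHeimerNormBound,CorrollaryBoundMaxLike}, using \Cref{Lemma1DexheimerDiscrete} for the basic inequality, \Cref{Prop3.3OfDexheimerDiscrete} for the noise term and \Cref{Prop3.1OfDexheimerDiscrete} for the eigenvalue event. However, Step~4 does not follow from your Steps~1--3 with the constant $4$.

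Write $M=\tilde Q-Q_0$, $a=\Vert\Sigma^{-1/2}MX\Vert_{L^2_{\mathcal P_T}}$ and $L^2=\log 4d\vee 2d^2$.

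First, in Step~2 the passage from $\Vert\Sigma^{-1/2}M\Vert_F$ to $a$ costs $\sqrt{2/\kappa_{\min}}$, not $1/\sqrt{\kappa_{\min}}$, because you only have $\lambda_{\min}(\widehat{\mathbf C}_{\mathcal P_T})\ge\kappa_{\min}/2$. So the noise bound is $|\langle M,\varepsilon_T\rangle_F|\le Ka$ with $K=c_*L\sqrt{2\kappa_{\max}/(\kappa_{\min}T)}$.

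Second, your basic inequality comes from comparing objective values at $\tilde Q$ and $Q_0$, which only gives $\tfrac12 a^2\le Ka$. Hence $a^2\le 4K^2$, and Step~3 then yields $\Vert M\Vert_F^2\le 16\,\lambda_{\max}(\Sigma)c_*^2\kappa_{\max}L^2/(\kappa_{\min}^2T)$, four times the claimed bound.

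The fix is to use stationarity instead of comparing values. Since $\tilde{\mathcal L}_T^{\mathcal P_T}$ is an exact quadratic, $\nabla\tilde{\mathcal L}_T^{\mathcal P_T}(\tilde Q)=0$ reads $\Sigma^{-1}M\widehat{\mathbf C}_{\mathcal P_T}=-\varepsilon_T$. Pairing with $M$ gives the identity $a^2=\langle -M,\varepsilon_T\rangle_F$, twice your left-hand side; this is \Cref{Lemma1DexheimerDiscrete} with $Q=Q_0$. Then $a\le K$, so $a^2\le 2c_*^2\kappa_{\max}L^2/(\kappa_{\min}T)$, and Step~3 gives exactly the stated constant $4$.

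In Step~3 your concern about the halving in \Cref{Prop:LevyH_discrete} is legitimate. The paper's \Cref{Prop3.1OfDexheimerDiscrete} states its bound at $\kappa_{\min}/(3d)$ without visibly accounting for it. But monotonicity cannot absorb the loss. $H(T,\cdot)$ and $H_2(\Delta_{\mathcal P_T},\cdot)$ are non-increasing, so $H(T,\kappa_{\min}/(6d))\ge H(T,\kappa_{\min}/(3d))$, and \Cref{Ass:WHPT,Ass:WHPDelta} say nothing about the former.

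Use the slack in your own net argument instead:
\begin{itemize}
\item For symmetric $A$ and a $1/3$-net $\mathcal N$ of the unit sphere, $\Vert A\Vert\le 3\max_{u\in\mathcal N}|u^\top Au|$. So per-point accuracy $\kappa_{\min}/6$ already gives $\Vert\widehat{\mathbf C}_{\mathcal P_T}-\mathbf C_\infty\Vert\le\kappa_{\min}/2$.
\item Apply \Cref{Prop:LevyH_discrete} with $r=2\kappa_{\min}/(3d)$, which is at most $\kappa_{\min}/6$ once $d\ge 4$. The per-point failure probability is then exactly $H(T,\kappa_{\min}/(3d))+H_2(\Delta_{\mathcal P_T},\kappa_{\min}/(3d))$.
\item Since $|\mathcal N|\le 7^d\le(21(d\wedge e))^d$, \Cref{Ass:WHPT,Ass:WHPDelta} bound the total failure probability by $2/d$.
\end{itemize}
On this event $\lambda_{\min}(\widehat{\mathbf C}_{\mathcal P_T})\ge\kappa_{\min}/2$ and $\lambda_{\max}(\widehat{\mathbf C}_{\mathcal P_T})\le\kappa_{\max}+\kappa_{\min}/2\le 2\kappa_{\max}$, which is all Steps~2 and~3 need.
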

\begin{proof}
    See \Cref{Proofs:InfeasibleCase}.
\end{proof}
Although the estimator derived from minimizing the pseudo-likelihood function \(\tilde{\mathcal{L}}_T^{\mathcal{P}_T}(Q)\) exhibits desirable properties, it cannot be directly applied in practice since \(\tilde{\mathcal{L}}_T^{\mathcal{P}_T}(Q)\) is not observed under the assumptions of this section. In the following, we demonstrate that the observed likelihood function \eqref{DiscreteLikelihood} is sufficiently close to the pseudo-likelihood function to inherit these properties. Additionally, the proof of \Cref{LikelihoodEquality} clarifies why the pseudo-likelihood function is unobservable in this setting.

\begin{lemma}\label{LikelihoodEquality}
Grant \Cref{Ass.H,Ass:Estimation,Ass.High-FrequencySampling,Ass:LevyH,Ass:WHPT,Ass:WHPDelta}. Let $\widehat{Q}$ be a solution of the minimization problem $\min _{Q \in \mathbb{R}^{d\times d}}\mathcal{L}_T^{\mathcal{P}_T}(Q)$ and $\tilde{Q}$ be the solution of the minimization problem $\min _{Q \in \mathbb{R}^{d\times d}}\mathcal{\tilde{L}}_T^{\mathcal{P}_T}(Q)$ where $\mathcal{L}_T^{\mathcal{P}_T}$ is defined as in \eqref{DiscreteLikelihood}  and $\mathcal{\tilde{L}}_T^{\mathcal{P}_T}$ as in \eqref{egn:PseudoLikelihood}. Then,
\begin{equation*}
\Vert \widehat{Q}-\tilde{Q}\Vert_F \le \Vert Q_0\Vert_F  \dfrac{2\Delta_{\mathcal{P}_T}^{1/4}}{\kappa_{\min}}
\end{equation*}
with high probability under $\mathbb{P}^{Q_0}$.
\end{lemma}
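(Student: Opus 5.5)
Both likelihoods are quadratic in $Q$, so I will compute the two minimisers in closed form and compare them directly. Write $\Delta_n := s_{n+1}^{N_T}-s_n^{N_T}$ and $\widehat{\mathbf{C}}_{\mathcal{P}_T}$ as in \Cref{StrictConvexityOfC_THat}. Define the increment matrices
\[
G := \tfrac1T\sum_{n}\bigl(\boldsymbol{X}^c_{s_{n+1}^{N_T}}-\boldsymbol{X}^c_{s_n^{N_T}}\bigr)\boldsymbol{X}_{s_n^{N_T}}^\top,
\qquad
\tilde G := \tfrac1T\sum_{n}\bigl(\boldsymbol{X}^c_{s_{n+1}^{N_T}}(Q_0)-\boldsymbol{X}^c_{s_n^{N_T}}(Q_0)\bigr)\boldsymbol{X}_{s_n^{N_T}}^\top .
\]

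\emph{First-order conditions.} Differentiating \eqref{DiscreteLikelihood} in $Q$ gives $\Sigma^{-1}G+\Sigma^{-1}Q\widehat{\mathbf{C}}_{\mathcal{P}_T}=0$. Doing the same for \eqref{egn:PseudoLikelihood} gives $\Sigma^{-1}\tilde G+\Sigma^{-1}(Q-Q_0)\widehat{\mathbf{C}}_{\mathcal{P}_T}=0$. By \Cref{StrictConvexityOfC_THat}, $\widehat{\mathbf{C}}_{\mathcal{P}_T}$ is a.s.\ invertible, so
\[
\widehat Q=-G\,\widehat{\mathbf{C}}_{\mathcal{P}_T}^{-1},
\qquad
\tilde Q=Q_0-\tilde G\,\widehat{\mathbf{C}}_{\mathcal{P}_T}^{-1}.
\]

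\emph{Relating the two continuous parts.} Under $\mathbb{P}^0$ and $\mathbb{P}^{Q_0}$ the continuous martingale parts differ by the drift: $\boldsymbol{X}^c_t=\boldsymbol{X}^c_t(Q_0)-\int_0^tQ_0\boldsymbol{X}_{s-}\,ds$ (see \Cref{Prelim:EstSec}). This is exactly why $\tilde{\mathcal{L}}$ is unobservable: $\boldsymbol{X}^c(Q_0)$ depends on the unknown $Q_0$. Substituting, $G=\tilde G-\tfrac1T\sum_n\int_{s_n}^{s_{n+1}}Q_0\boldsymbol{X}_{s}\,ds\,\boldsymbol{X}_{s_n}^\top$, and hence
\[
\widehat Q-\tilde Q=\Bigl(\tfrac1T\sum_n\int_{s_n}^{s_{n+1}}Q_0(\boldsymbol{X}_s-\boldsymbol{X}_{s_n})\,ds\,\boldsymbol{X}_{s_n}^\top\Bigr)\widehat{\mathbf{C}}_{\mathcal{P}_T}^{-1}
=:Q_0 R\,\widehat{\mathbf{C}}_{\mathcal{P}_T}^{-1}.
\]
The main term $Q_0\widehat{\mathbf{C}}_{\mathcal{P}_T}$ cancels exactly, leaving only the discretisation remainder $R$.

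\emph{Bounding the two factors.} This gives $\|\widehat Q-\tilde Q\|_F\le\|Q_0\|_F\,\|R\|\,\|\widehat{\mathbf{C}}_{\mathcal{P}_T}^{-1}\|$.
\begin{itemize}
\item For the inverse, I would use \Cref{Prop:LevyH_discrete} together with \Cref{Ass:WHPT,Ass:WHPDelta} and an $\epsilon$-net argument over the unit sphere. This gives $\lambda_{\min}(\widehat{\mathbf{C}}_{\mathcal{P}_T})\ge\kappa_{\min}/2$ w.h.p.; this is the same restricted-eigenvalue event already used in \Cref{PropertiesDiscretizedEstimator}. Hence $\|\widehat{\mathbf{C}}_{\mathcal{P}_T}^{-1}\|\le 2/\kappa_{\min}$.
\item For $R$, Cauchy--Schwarz and stationarity give
\[
\mathbb{E}\|R\|_F\le\sup_{u\le\Delta_{\mathcal{P}_T}}\sqrt{\mathbb{E}\|\boldsymbol{X}_u-\boldsymbol{X}_0\|_2^2}\,\sqrt{\mathbb{E}\|\boldsymbol{X}_0\|_2^2}.
\]
Then \Cref{BoundOnIncrements,BoundExpectationNormInvariant} bound this by $C\Delta_{\mathcal{P}_T}^{1/2}\,\mathrm{poly}(d)$. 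Markov's inequality at level $\Delta_{\mathcal{P}_T}^{1/4}$ then gives $\|R\|\le\Delta_{\mathcal{P}_T}^{1/4}$ with probability at least $1-C\,\mathrm{poly}(d)\,\Delta_{\mathcal{P}_T}^{1/4}\to1$. Here I use \Cref{Ass:WHPDelta}, which forces $\Delta_{\mathcal{P}_T}$ to decay super-exponentially in $d$.
\end{itemize}
Combining the two factors yields the stated bound $\|Q_0\|_F\,2\Delta_{\mathcal{P}_T}^{1/4}/\kappa_{\min}$.

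\emph{Main obstacle.} I expect the delicate step to be the high-probability bound on $\|R\|$. One must track the $d$-dependence of the increment moments, which involve the Lévy jump part and not only the Brownian part, so that the Markov bound still vanishes. The fourth-root exponent is chosen precisely to leave the room for this.
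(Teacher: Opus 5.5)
Your proposal is correct and follows the paper's proof essentially step for step: the paper also equates the two first-order conditions to obtain $\tilde Q-\widehat Q=Q_0\bigl(\widehat{\mathbf{C}}_{\mathcal{P}_T}-\tilde{C}_{\mathcal{P}_T}^\top\bigr)\widehat{\mathbf{C}}_{\mathcal{P}_T}^{-1}$, which is exactly your $-Q_0R\,\widehat{\mathbf{C}}_{\mathcal{P}_T}^{-1}$. It then bounds the remainder by Cauchy--Schwarz, stationarity and Markov's inequality at level $\Delta_{\mathcal{P}_T}^{1/4}$ (\Cref{Prop:BoundEntryWisel1Norm}), and bounds the inverse by $2/\kappa_{\min}$ on the restricted-eigenvalue event (\Cref{Prop:UpperBoundC_TInverse}), with \Cref{Ass:WHPT,Ass:WHPDelta} making both events hold with high probability.
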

\begin{proof}
    See \Cref{Proofs:InfeasibleCase}.
\end{proof}
\subsubsection{Feasible case}\label{FeasibleEstimator}
Despite the nice properties of our estimators related to the discretized likelihood functions, these estimators are infeasible since we do not observe $\bold{X}^c$. To address this issue, we propose to work under a new approximate likelihood function. Given a collection $\left(\nu_{\mathcal{P}_T}^{(i)}, i\in\{1,2,\dots,d\} \right)$. For $\bold{x} = \left(\bold{x}_1,\bold{x}_2,\dots,\bold{x}_d \right)$, we define the $d$-valued vector 
\begin{equation*}
    \vec{\bold{1}}(x) = \begin{pmatrix}
\bold{1}_{[0,\nu_{\mathcal{P}_T}^{(1)}]}(\vert\bold{x}_1\vert) \\
\bold{1}_{[0,\nu_{\mathcal{P}_T}^{(2)}]}(\vert\bold{x}_2\vert) \\
\vdots \\
\bold{1}_{[0,\nu_{\mathcal{P}_T}^{(d)}]}(\vert\bold{x}_d\vert)
\end{pmatrix}.
\end{equation*}
We now introduce our discretized, feasible likelihood function
\begin{equation}\label{FeasibleDiscLike}
    \begin{aligned}
\bar{\mathcal{L}}_T^{\mathcal{P}_T}(Q)&=\frac{1}{T} \sum_{n=0}^{N_T-1}\left(\Sigma^{-1} Q \mathbf{X}_{s_n^{N_T}}\right)^{\top}\left( \Delta_{\mathcal{P}_T}^n \bold{\bold{X}}\odot\vec{\bold{1}}(\Delta_{\mathcal{P}_T}^n \bold{\bold{X}})\right)\\&+\frac{1}{2 T} \sum_{n=0}^{N_T-1}\left(\Sigma^{-1/2} Q \mathbf{X}_{s_n^{N_T}}\right)^{\top} \Sigma^{-1/2} Q \mathbf{X}_{s_n^{N_T}}\left(s^{N_T}_{n+1}-s^{N_T}_n\right).
    \end{aligned}
\end{equation}
By comparing it to \eqref{DiscreteLikelihood}, we observe that the primary difference lies in the replacement of the unobserved continuous \(\mathbb{P}^0\)-martingale with the increments of the observed process \(\mathbb{X}\), which are thresholded with respect to the collection \(\left(\nu_{\mathcal{P}_T}^{(i)}, i \in \{1,2,\dots,d\} \right)\). If these thresholded increments were equal to the unobserved continuous martingale part, \(\bar{\mathcal{L}}_T^{\mathcal{P}_T}(Q)\) would inherit the properties of \(\mathcal{L}_T^{\mathcal{P}_T}(Q)\). However, this is not the case. 

An inherent challenge with the threshold-crossing approach is that, in addition to the true jumps generated by the compound Poisson process, large diffusive fluctuations are often misidentified as jumps. The key challenge, therefore, is to control the extent of these misidentifications. This ultimately depends on appropriately scaling the thresholds \(\left(\nu_{\mathcal{P}_T}^{(i)}, i \in \{1,2,\dots,d\}\right)\) and imposing additional assumptions.

\begin{assumption}\label{Ass.GeneralBothCases}
    We assume that
    \begin{equation*}
        \begin{aligned}
        \max_{j\in\{1,\dots,d\}}\mathbb{E}\left( (\bold{Y}_s^{(j)})^2 \right)=:\mathbb{E}\left( (\bold{Y}_s^{(i^*)})^2 \right) &= O\left(d^2 \right),\quad \text{ as } d \rightarrow \infty.
        \end{aligned}
    \end{equation*}
\end{assumption}
\begin{remark}
    We note that \Cref{Ass.H} implies that $$
        \max_{k,j\in\{1,2,\dots,d\}} \vert [Q_0]_{kj} \vert = O(d)\text{ as } d \rightarrow \infty\quad ,
        \max_{k,j\in\{1,2,\dots,d\}} \vert \Sigma_{kj} \vert = O(d)\text{ as } d \rightarrow \infty.$$
\end{remark}

\begin{assumption}\label{Ass.AssFiniteAct}
    \begin{enumerate}
        \item $F\left( \mathbb{R}^d \right)<\infty$ and in fact $F\left( \mathbb{R}^d \right) = O(d)$ as $d \rightarrow \infty$.
        \item We assume that $\nu_{\mathcal{P}_T^{(j)}} = \Delta_{\mathcal{P}_T}^{\beta^{(j)}}$. Further, for any $j$, $\beta^{(j)}\in (0,1/4]$.
        \item $\max_{i\in\{1,\dots, d\}}\tilde{F}^{(i)}\left(-2 \Delta_{\mathcal{P}_T}^{\beta^{(i)}},2 \Delta_{\mathcal{P}_T}^{\beta^{(i)}} \right) = o\left( (d^2T)^{-1}\right), \quad \text{ as } d \rightarrow \infty$. Here $\tilde{F}^{(i)}$ denotes the $i$th marginal of $\tilde{F}$, the jump distribution in the compound Poisson representation of $\mathbb{\tilde{U}}$ in \Cref{LevyItoImpl}.
        \item Let $\beta^* := \max_{i\in\{1,\dots,d\}}\beta^{(i)}$. $\Delta_{\mathcal{P}_T}^{1-2\beta^*} = o\left( (d^8T)^{-1}\right)$, $\quad \text{ as } d \rightarrow \infty$.
        \item $\Delta_{\mathcal{P}_T}^{1/2} = o\left( d^{-10}\right)$, $\quad \text{ as } d \rightarrow \infty$.
        \item Let  $( \mathbf{b},  \Sigma,  F)$ be the characteristic triplet of $\mathbb{L}$. We assume that the modified drift $\bold{\tilde{b}}:= \bold{b} - \int_{\Vert z\Vert \le 1} z F(dz) =0$.
        \item There exists $\delta \in (0,1/4)$ such that $d^{10}\Delta_{\mathcal{P}_T}^{1/2-\delta} = o(1)$ as $d\rightarrow \infty$.
    \end{enumerate}
\end{assumption}
\begin{remark}
  A few remarks are in order. First, the interval in \Cref{Ass.AssFiniteAct}(2) is smaller than those considered in \cite{Lucchese2023, Courgeau2022b, Ma2021}, and this results from the fact that our problem differs slightly from those in the aforementioned papers, as we allow \(d\) to tend to infinity. \Cref{Ass.AssFiniteAct}(3) and (4) address the two types of errors that may arise when applying a threshold: (i) the failure to discard jumps that should have been discarded, and (ii) the incorrect removal of large diffusive fluctuations, mistaking them for jumps. 

\Cref{Ass.AssFiniteAct}(3) controls the first type of error by governing the rate at which the probability of observing jump increments smaller than twice the threshold decreases. \Cref{Ass.AssFiniteAct}(4) controls the second type of error by bounding the probability of observing large diffusive moves that exceed the threshold. While the impact of \Cref{Ass.AssFiniteAct}(4) may not be immediately apparent, it follows directly from \Cref{Lemma:CountBound}.
\end{remark}
\begin{remark}
    Since we will be working under \Cref{Ass:WHPT}, the rates in \Cref{Ass.AssFiniteAct} $3$ and $4$ are essentially the same. Following the remark in \cite{Mai2014}, if $\tilde{F}$ has a bounded Lebesgue density,then \Cref{Ass.AssFiniteAct} $4$ becomes an assumption about $\Delta_{\mathcal{P}_T}^{\beta^{(i)}} = o((d^2T)^{-1})$. An optimal choice of $\beta^{(i)}$ would then be $\beta^{(i)}= 1/4$. 
\end{remark}
\begin{remark}
Since the Lévy component in \eqref{eqn:1} is often regarded as "noise", \Cref{Ass.AssFiniteAct}(6) may seem somewhat unsatisfactory, as it implies that the Lévy process has a non-zero mean. Alternatively, we could impose the assumption that the drift term \(\tilde{\bold{b}}\) is known, as done in \citep{Lucchese2023}. This would allow \(\mathbb{L}\) in \eqref{eqn:1} to be a martingale and thus more suitable to interpret as "noise".
\end{remark}
\begin{lemma}\label{LikelihoodEqualityFeasibleCase}
Grant \Cref{Ass.H,Ass:Estimation,Ass.High-FrequencySampling,Ass:LevyH,Ass:WHPT,Ass:WHPDelta,Ass.GeneralBothCases,Ass.AssFiniteAct}. Let $\bar{Q}$ be a solution of the minimization problem $\min _{Q \in \mathbb{R}^{d\times d}}\mathcal{\bar{L}}_T^{\mathcal{P}_T}(Q)$ and $\widehat{Q}$ be the solution of the minimization problem $\min _{Q \in \mathbb{R}^{d\times d}}\mathcal{L}_T^{\mathcal{P}_T}(Q)$ where $\mathcal{\bar{L}}_T^{\mathcal{P}_T}$ is defined as in \eqref{FeasibleDiscLike}  and $\mathcal{L}_T^{\mathcal{P}_T}$ as in \eqref{DiscreteLikelihood}. Then,
\begin{equation*}
\Vert \bar{Q}-\widehat{Q}\Vert_F \le \Delta_{\mathcal{P}_T}^{1/4} \dfrac{2}{\kappa_{\min}}
\end{equation*}
with high probability under $\mathbb{P}^{Q_0}$.
\end{lemma}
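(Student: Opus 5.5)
Both $\mathcal{L}_T^{\mathcal{P}_T}$ and $\bar{\mathcal{L}}_T^{\mathcal{P}_T}$ are quadratic in $Q$ with the same quadratic part, so both minimisers have closed forms and their difference is driven only by the linear terms. Writing $\widehat{\mathbf{C}}_{\mathcal{P}_T}$ as in \Cref{StrictConvexityOfC_THat}, the first-order conditions read
\begin{equation*}
\Sigma^{-1}\widehat{Q}\,\widehat{\mathbf{C}}_{\mathcal{P}_T} = -\Sigma^{-1}\frac{1}{T}\sum_{n=0}^{N_T-1}\Delta^n_{\mathcal{P}_T}\mathbf{X}^c\,\mathbf{X}_{s_n^{N_T}}^\top,
\qquad
\Sigma^{-1}\bar{Q}\,\widehat{\mathbf{C}}_{\mathcal{P}_T} = -\Sigma^{-1}\frac{1}{T}\sum_{n=0}^{N_T-1}\bigl(\Delta^n_{\mathcal{P}_T}\mathbf{X}\odot\vec{\mathbf{1}}(\Delta^n_{\mathcal{P}_T}\mathbf{X})\bigr)\mathbf{X}_{s_n^{N_T}}^\top.
\end{equation*}
The factor $\Sigma^{-1}$ cancels. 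By \Cref{StrictConvexityOfC_THat}, $\widehat{\mathbf{C}}_{\mathcal{P}_T}$ is invertible, so
\begin{equation*}
\bar{Q}-\widehat{Q}=\frac{1}{T}\sum_{n}\bigl(\Delta^n_{\mathcal{P}_T}\mathbf{X}^c-\Delta^n_{\mathcal{P}_T}\mathbf{X}\odot\vec{\mathbf{1}}(\Delta^n_{\mathcal{P}_T}\mathbf{X})\bigr)\mathbf{X}_{s_n^{N_T}}^\top\,\widehat{\mathbf{C}}_{\mathcal{P}_T}^{-1}.
\end{equation*}
Hence $\Vert\bar Q-\widehat Q\Vert_F\le \Vert R_T\Vert_F\,\lambda_{\min}(\widehat{\mathbf{C}}_{\mathcal{P}_T})^{-1}$, where $R_T$ is the residual sum.

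The first step is to show $\lambda_{\min}(\widehat{\mathbf{C}}_{\mathcal{P}_T})\ge \kappa_{\min}/2$ w.h.p. I would reuse the argument behind \Cref{PropertiesDiscretizedEstimator}: an $\varepsilon$-net over the unit sphere, whose cardinality produces the factor $(21(d\wedge e))^d$, combined with \Cref{Prop:LevyH_discrete}. \Cref{Ass:WHPT,Ass:WHPDelta} then make $\sup_{\|u\|\le1}|u^\top(\widehat{\mathbf{C}}_{\mathcal{P}_T}-\mathbf C_\infty)u|\le 2\kappa_{\min}/3$ with probability at least $1-2/d$. Since $\lambda_{\min}(\mathbf C_\infty)=\kappa_{\min}$, this gives the claimed lower bound. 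It then remains to prove $\Vert R_T\Vert_F\le\Delta_{\mathcal{P}_T}^{1/4}$ w.h.p.

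For $R_T$, I decompose each increment via the Lévy--Itô representation of \Cref{LevyItoImpl}. Since $F$ is finite and $\tilde{\mathbf b}=0$,
\begin{equation*}
\Delta^n\mathbf{X}=\Delta^n\mathbf{X}^c-\int_{s_n}^{s_{n+1}}Q_0\mathbf X_{s}\,ds+\Delta^n\tilde{\mathbf U}.
\end{equation*}
Componentwise, the residual $\Delta^n\mathbf X^{c,(i)}-\Delta^n\mathbf X^{(i)}\mathbf 1\{|\Delta^n\mathbf X^{(i)}|\le\nu^{(i)}\}$ splits into three pieces:
\begin{itemize}
\item[(a)] the drift contribution on intervals with no jump and a small increment, bounded by $\Delta_{\mathcal P_T}\sup_s|(Q_0\mathbf X_s)^{(i)}|$;
\item[(b)] $\Delta^n\mathbf X^{c,(i)}$ on intervals where the increment is discarded although no jump occurred, i.e.\ a large diffusive move;
\item[(c)] intervals with a jump that is not discarded, or more than one jump.
\end{itemize}
Piece (a) is handled by Cauchy--Schwarz together with \Cref{Ass.GeneralBothCases} and \Cref{Ass.H}(1): since $N_T\Delta_{\mathcal P_T}=O(T)$, it contributes $O(\mathrm{poly}(d)\,\Delta_{\mathcal P_T})$ in $L^1$. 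For (b), \Cref{Lemma:CountBound} bounds the expected number of threshold exceedances by Gaussian tails, $N_T\exp(-c\Delta_{\mathcal P_T}^{2\beta-1})$; this is tiny under \Cref{Ass.AssFiniteAct}(4). For (c), the probability that some jump falls below twice the threshold is controlled by \Cref{Ass.AssFiniteAct}(3) with $F(\mathbb R^d)=O(d)$: at most $O(dT)\cdot o((d^2T)^{-1})=o(1/d)$. Multiple jumps in one interval have probability $O((d\Delta_{\mathcal P_T})^2)N_T$. On the complement of these events, a kept jumping interval cannot occur, because a jump larger than $2\nu$ plus a diffusive part smaller than $\nu$ exceeds the threshold. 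Then I sum over $n$, multiply by $\mathbf X_{s_n}$, bounded via \Cref{BoundExpectationNormInvariant}, and apply Markov's inequality. Provided the polynomial factors are absorbed by \Cref{Ass.AssFiniteAct}(5),(7) ($d^{10}\Delta_{\mathcal P_T}^{1/2-\delta}\to0$), this yields $\Vert R_T\Vert_F\le\Delta_{\mathcal P_T}^{1/4}$ w.h.p.

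The main obstacle is bookkeeping the $d$-dependence in step (b)/(c) and in the drift piece. Each component contributes a factor $d$, the Frobenius norm aggregates $d^2$ entries, and $\mathbb E\|\mathbf X\|^2=O(d^3)$. One must check that the resulting polynomial in $d$ times $\Delta_{\mathcal P_T}^{1/2-\delta}$ is indeed $o(\Delta_{\mathcal P_T}^{1/4})$ in probability. I would first try to establish an $L^1$ bound of the form $d^{10}\Delta_{\mathcal P_T}^{1/2-\delta}$ and then apply Markov's inequality at level $\Delta_{\mathcal P_T}^{1/4}$.
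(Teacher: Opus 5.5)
Your architecture is the paper's: equate the gradients, write $\bar Q-\widehat Q=R_T\widehat{\mathbf C}_{\mathcal P_T}^{-1}$, and bound $\|\widehat{\mathbf C}_{\mathcal P_T}^{-1}\|\le 2/\kappa_{\min}$ via \Cref{Prop:UpperBoundC_TInverse}. One small slip there: a uniform deviation of $2\kappa_{\min}/3$ only yields $\lambda_{\min}\ge\kappa_{\min}/3$, whereas the event in \Cref{Prop3.1OfDexheimerDiscrete} uses $\kappa_{\min}/2$. The real problems are in how you decompose $R_T$.

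\emph{Wrong $\mathbf X^c$ and piece (a).} The $\mathbf X^c$ in \eqref{DiscreteLikelihood}, and hence in your first-order condition for $\widehat Q$, is the $\mathbb P^{\mathbf 0}$-continuous martingale part. Under $\mathbb P^{Q_0}$ it satisfies $\Delta^n_{\mathcal P_T}\mathbf X^c=\Delta^n_{\mathcal P_T}\mathbf X^c(Q_0)-\int_{s_n}^{s_{n+1}}Q_0\mathbf X_s\,ds$. With $\tilde{\mathbf b}=0$ this means $\Delta^n_{\mathcal P_T}\mathbf X=\Delta^n_{\mathcal P_T}\mathbf X^c+\Delta^n_{\mathcal P_T}\tilde{\mathbf U}$, so the drift already sits inside $\mathbf X^c$. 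Your decomposition places the drift outside $\mathbf X^c$, i.e.\ it reads $\mathbf X^c$ as the Brownian part $\mathbf X^c(Q_0)$, and that is what creates piece (a). Under that reading, (a) is not $O(\mathrm{poly}(d)\Delta_{\mathcal P_T})$. It is summed over roughly $N_T\approx T/\Delta_{\mathcal P_T}$ intervals and divided by $T$, so it is essentially $\frac1T\sum_n\int_{s_n}^{s_{n+1}}Q_0\mathbf X_s\,ds\,\mathbf X_{s_n}^\top\approx Q_0\widehat{\mathbf C}_{\mathcal P_T}$. That term is of order one and does not vanish; it is precisely the term that makes $\widehat Q$ target $Q_0$. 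So the step fails as written. With the correct identification, the residual is identically zero on every kept interval without a jump, and (a) simply disappears.

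\emph{Missing case: a single jump that is correctly discarded.} Your case split omits intervals carrying exactly one jump that is correctly thrown away. There the residual is $\Delta^n_{\mathcal P_T}\mathbf X^{c,(i)}=-(\Sigma^{1/2}\Delta^n_{\mathcal P_T}\mathbf W)^{(i)}+\bigl(\int_{s_n}^{s_{n+1}}Q_0\mathbf Y_s\,ds\bigr)^{(i)}\neq 0$, and this is the term that actually drives the rate.

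\emph{How the paper handles it.} The paper's \Cref{L_1BoundFInite} works on the event $A_{N_T}$ of \Cref{Lemma:SetBound}, on which the threshold indicator equals the no-jump indicator for every component and interval. Your (b) and (c) correspond to $A_{N_T}^c$, which is handled by its probability alone, via the polynomial tail of \Cref{Lemma:CountBound} (the drift part makes it polynomial, not Gaussian). On $A_{N_T}$ the residual is supported on $B_{n,N_T}=\{\Delta^n_{\mathcal P_T}N>0\}$. The paper then uses:
\begin{itemize}
\item $\mathbb P(B_{n,N_T})\le\Delta_{\mathcal P_T}F(\mathbb R^d)=O(d\Delta_{\mathcal P_T})$;
\item independence of forward increments from $\mathbf Y_{s_n}$;
\item Cauchy--Schwarz for the drift part and $\mathbb E|\Delta^n_{\mathcal P_T}\mathbf W^{(l)}|=O(\Delta_{\mathcal P_T}^{1/2})$ for the Brownian part.
\end{itemize}
Each interval then contributes $O(\mathrm{poly}(d)\Delta_{\mathcal P_T}^{3/2})$, so the entrywise $\ell_1$ norm of $R_T$ (which dominates $\|\cdot\|_F$) has expectation $O(d^8\Delta_{\mathcal P_T}^{1/2})$.

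\emph{The Markov level.} The paper applies Markov at level $\Delta_{\mathcal P_T}^{\delta}$, with $\delta$ from \Cref{Ass.AssFiniteAct}(7). Applying it at level $\Delta_{\mathcal P_T}^{1/4}$, as you propose, would require $d^8\Delta_{\mathcal P_T}^{1/4}\to0$. Items (5) and (7) do not give this for every admissible $\delta$, so that polynomial factor must be absorbed by the much finer mesh required in \Cref{Ass:WHPDelta}.
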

\begin{proof}
    See \Cref{ProofsFiniteAct}.
\end{proof}
\begin{assumption}\label{Ass.InfiniteAct}
    \begin{enumerate}
     \item $F\left( \mathbb{R}^d \right)=\infty$.
        \item  $F_{\vert \{x \in \mathbb{R}^d : \Vert x \Vert_2 > 1 \}}\left( \mathbb{R}^d \right)= O(d)$ as $d\rightarrow \infty$. That is, the intensity of big jumps satisfies $\lambda := F\left(\{x : \Vert x \Vert_2 > 1\}\right) = O(d)$.
        \item We assume that $\nu_{\mathcal{P}_T^{(j)}} = \Delta_{\mathcal{P}_T}^{\beta^{(j)}}$ for $\beta^{(j)} \in (0,1/4)$ and $j \in \{1,\dots,d \}$.
        \item Let $\bar{F}$ denote the probability measure of jumps related to the compound Poisson process $\mathbb{U}$ in \Cref{LevyIto}, and let $\bar{F}^{(i)}$ denote its $i$-th marginal. Then $\max_{i\in\{1,\dots,d\}}\bar{F}^{(i)}\left(-4\Delta_{\mathcal{P}_T}^{\beta^{(i)}},4\Delta_{\mathcal{P}_T}^{\beta^{(i)}}\right) = o\left((d^2T)^{-1}\right)$ as $d\rightarrow \infty$.
        \item  Let $\beta^* := \max_{i\in\{1,\dots,d\}}\beta^{(i)}$. Then $\Delta_{\mathcal{P}_T}^{1-2\beta^*} = o\left( (d^8T)^{-1}\right)$, $\quad \text{ as } d \rightarrow \infty$.
        \item For any $i\in \{1,\dots,d \}$,    $\Delta_{\mathcal{P}_T}^{1/2} = o\left( d^{-10}\right)$, $\quad \text{ as } d \rightarrow \infty$.
        \item Let  $( \mathbf{b},  \Sigma,  F)$ be the characteristic triplet of $\mathbb{L}$. We assume that $\bold{b}=0$.
        \item The fourth moment of $\mathbb{L}$ exists and $\max_{j\in \{1,\dots,d\}}\mathbb{E}\left(\vert\bold{Y}_0^{j}\vert^4\right) = O(d^4)$ as $d\rightarrow \infty$.
        \item $\exists \delta \in (0,1/4)$ and $\varepsilon>0$ s.t. $d^{1/2}\Delta_{\mathcal{P}_T}^{\delta} \rightarrow 0$ as $d\rightarrow \infty$. Further,
        \begin{enumerate}
            \item   $\left(d^{10}\Delta_{\mathcal{P}_T}^{1/2-\beta^*-\delta}\vee d^{6}\Delta_{\mathcal{P}_T}^{1/2-2\beta^*-\delta}\right) \rightarrow 0$ as $d\rightarrow \infty$.
            \item $d^3\Delta_{\mathcal{P}_T}^{\varepsilon -\delta}\rightarrow 0$ as $d\rightarrow \infty$.
            \item For  $0\le s \le \bar{s}$ it holds that $\max_{i\in\{1,\dots,d\}}\mathbb{E}\left( \vert \bold{M}_s^{i}\vert\bold{1}_{\vert \bold{M}_s^{i}\vert \le \bar{s}^{\beta^{(i)}}}\right)= O(\bar{s}^{1+\varepsilon})$ as $\bar{s}\rightarrow 0$.
        \end{enumerate}
    \end{enumerate}
\end{assumption}
    \begin{remark}
         Assumptions $2$--$7$ are equivalent to the assumptions in \Cref{Ass.AssFiniteAct} and ensure that the finite activity part (consisting of small jumps) of $\mathbb{Y}$, denoted by $\tilde{\mathbb{Y}}$ (see \Cref{LevyItoImpl}), satisfies \Cref{Ass.AssFiniteAct}. This distinction is important in the proof of \Cref{LikelihoodEqualityFeasibleCaseInfiniteActCase}, where we control the error arising from thresholding separately for the infinite activity and finite activity components (resp. small and large jumps). Assumption $8$ is similar to assumptions in \cite{Mai2014,Courgeau2022b,Lucchese2023}.
            Assumption $9$ is closely related to Assumption $7.(iii)$ in \cite{Lucchese2023}. The difference comes from the fact that we are bounding slightly different quantities. Suppose that $\delta$ is chosen so that $\Delta_{\mathcal{P}_T}^\delta \sim T^{1/4}\Delta_{\mathcal{P}_T}^{\frac{\varepsilon}{2}}$; then the assumptions are essentially (ignoring $d$) equivalent. By Jensen's inequality, \Cref{Ass.InfiniteAct} also implies $\max_{j\in\{1,\dots,d\}}\mathbb{E}\left(\vert\bold{Y}_0^{(j)}\vert^2\right) = O(d^2)$ and $\max_{j\in\{1,\dots,d\}}\mathbb{E}\left(\vert\bold{Y}_0^{(j)}\vert\right) = O(d)$ as $d\rightarrow\infty$.
    \end{remark}

\begin{remark}
Both \Cref{Ass.AssFiniteAct}(6) and \Cref{Ass.InfiniteAct}(7) serve the same purpose: to remove any deterministic linear drift from $\mathbb{L}$. The assumption takes a different form in the two regimes because of how the L\'evy-It\^o decomposition is written (with or without compensator on the small jumps), but the role is identical - and it is needed because thresholding cannot filter out a deterministic drift: drift increments are of order $\Delta_{\mathcal{P}_T}$ while Brownian increments are of order $\sqrt{\Delta_{\mathcal{P}_T}}$, so any threshold that retains the diffusion necessarily retains the drift, which would otherwise accumulate into a non-vanishing bias in $\bar{Q}$.
\end{remark}

\begin{remark}
By \citet[Example 4.1]{Lucchese2023}, \Cref{Ass.InfiniteAct}(c) is non-vacuous. For standard Gamma marginals, the condition holds componentwise with exponent $\epsilon_i=\beta^{(i)}$, and the same argument extends directly to the two-parameter range $0\leq s\leq \bar{s}$ considered here. Since $\bar{s}\to0$, the maximum over components is governed by the smallest exponent. Consequently, \Cref{Ass.InfiniteAct}(c) holds with
$\epsilon=\beta_{\min}$,
with $\beta_{\min}:=\inf_{d\geq1}\min_{1\leq i\leq d}\beta^{(i)}>0$.
Thus, it is sufficient that the thresholding powers be uniformly bounded away from zero as $d\to\infty$.
\end{remark}

\begin{lemma}\label{LikelihoodEqualityFeasibleCaseInfiniteActCase}
Grant \Cref{Ass.H,Ass:Estimation,Ass.High-FrequencySampling,Ass:LevyH,Ass:WHPT,Ass:WHPDelta,Ass.GeneralBothCases,Ass.InfiniteAct}. Let $\bar{Q}$ be a solution of the minimization problem $\min _{Q \in \mathbb{R}^{d\times d}}\mathcal{\bar{L}}_T^{\mathcal{P}_T}(Q)$ and $\widehat{Q}$ be the solution of the minimization problem $\min _{Q \in \mathbb{R}^{d\times d}}\mathcal{L}_T^{\mathcal{P}_T}(Q)$ where $\mathcal{\bar{L}}_T^{\mathcal{P}_T}$ is defined as in \eqref{FeasibleDiscLike}  and $\mathcal{L}_T^{\mathcal{P}_T}$ as in \eqref{DiscreteLikelihood}. Then,
\begin{equation*}
\Vert \bar{Q}-\widehat{Q}\Vert_F \le 3\Delta_{\mathcal{P}_T}^{\delta} \dfrac{2}{\kappa_{\min}}
\end{equation*}
with high probability under $\mathbb{P}^{Q_0}$, where $\delta$ is as given in \Cref{Ass.InfiniteAct}.
\end{lemma}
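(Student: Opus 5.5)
The plan mirrors the finite-activity argument behind \Cref{LikelihoodEqualityFeasibleCase}. We compare the two quadratic objectives directly. Both $\mathcal{L}_T^{\mathcal{P}_T}$ and $\bar{\mathcal{L}}_T^{\mathcal{P}_T}$ share the quadratic term $\frac{1}{2}\langle \Sigma^{-1/2}Q\mathbb{X},\Sigma^{-1/2}Q\mathbb{X}\rangle_{L^2_{\mathcal{P}_T}}$ and differ only in their linear terms. The first-order conditions therefore give explicit minimisers:
\begin{equation*}
\widehat{Q} = -\frac{1}{T}\sum_{n}\Delta^n_{\mathcal{P}_T}\mathbf{X}^c\,\mathbf{X}_{s_n^{N_T}}^\top\,\widehat{\mathbf{C}}_{\mathcal{P}_T}^{-1},
\qquad
\bar{Q} = -\frac{1}{T}\sum_{n}\bigl(\Delta^n_{\mathcal{P}_T}\mathbf{X}\odot\vec{\mathbf{1}}(\Delta^n_{\mathcal{P}_T}\mathbf{X})\bigr)\mathbf{X}_{s_n^{N_T}}^\top\,\widehat{\mathbf{C}}_{\mathcal{P}_T}^{-1}.
\end{equation*}
Both are well defined by \Cref{StrictConvexityOfC_THat}. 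Hence
\begin{equation*}
\Vert \bar{Q}-\widehat{Q}\Vert_F \le \Vert \widehat{\mathbf{C}}_{\mathcal{P}_T}^{-1}\Vert\,\Bigl\Vert \frac{1}{T}\sum_n \mathbf{R}_n\mathbf{X}_{s_n^{N_T}}^\top\Bigr\Vert_F,
\qquad
\mathbf{R}_n:=\Delta^n_{\mathcal{P}_T}\mathbf{X}\odot\vec{\mathbf{1}}(\Delta^n_{\mathcal{P}_T}\mathbf{X})-\Delta^n_{\mathcal{P}_T}\mathbf{X}^c.
\end{equation*}
For the inverse, \Cref{Prop:LevyH_discrete} together with \Cref{Ass:WHPT,Ass:WHPDelta} and a covering argument on the unit sphere (as in the proof of \Cref{PropertiesDiscretizedEstimator}) gives $\lambda_{\min}(\widehat{\mathbf{C}}_{\mathcal{P}_T})\ge \kappa_{\min}/2$ w.h.p. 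This yields the factor $2/\kappa_{\min}$, so it remains to show that the cross term is at most $3\Delta_{\mathcal{P}_T}^{\delta}$ w.h.p.

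For the cross term we use the L\'evy--It\^o decomposition (\Cref{LevyIto}). Since $\mathbf{b}=0$ by \Cref{Ass.InfiniteAct}(7), the increment splits as $\Delta^n\mathbf{X}=\Delta^n\mathbf{X}^c-\int Q_0\mathbf{X}_{s}ds+\Delta^n\mathbf{M}+\Delta^n\mathbf{U}$. Here $\mathbf{M}$ is the compensated small-jump martingale and $\mathbf{U}$ the compound Poisson process of big jumps. Component-wise, we split $\mathbf{R}_n$ into three pieces and bound each by $\Delta^\delta_{\mathcal{P}_T}$, which is where the constant $3$ comes from.

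\emph{(a) Big-jump part.} This is handled exactly as in the finite-activity lemma, using \Cref{Ass.InfiniteAct}(2),(4),(5),(6). Intervals with a big jump whose increment falls below the threshold $\nu^{(i)}$ are rare by (4) and a union bound over $d$ components and $N_T$ intervals. Conversely, the probability that the Brownian-plus-drift increment exceeds $\nu^{(i)}/2$ is controlled by Gaussian tails and \Cref{Lemma:CountBound} via (5). Cauchy--Schwarz with $\max_j\mathbb{E}(\mathbf{Y}^{(j)})^2=O(d^2)$ (\Cref{Ass.GeneralBothCases}) then gives a bound of order $d^{10}\Delta^{1/2-\beta^*-\delta}\vee d^6\Delta^{1/2-2\beta^*-\delta}$ times $\Delta^\delta$, and this is at most $\Delta^\delta$ by (9a).

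\emph{(b) Drift part.} The term $\int_{s_n}^{s_{n+1}}Q_0\mathbf{X}_s ds\,\mathbf{1}(\cdot)$ is $O(\Delta)$ per interval. Summed, it contributes $O(\Vert Q_0\Vert_F\, d\,\Delta^{1/2})$ by \Cref{BoundOnIncrements} and \Cref{Ass.H}, which is $\le\Delta^\delta$ by (6) and (9).

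\emph{(c) Small-jump part.} This is the main obstacle, since thresholding does not remove infinitely many small jumps. We split $\Delta^n\mathbf{M}^{(i)}\mathbf{1}_{|\Delta^n\mathbf{X}^{(i)}|\le\nu^{(i)}}$ into its conditional mean given $\mathcal{F}_{s_n}$ plus a martingale difference. The conditional-mean part is bounded by (9c) as $O(\Delta^{1+\varepsilon})$ per interval. After multiplying by $\mathbf{X}_{s_n}$, summing $N_T=O(T/\Delta)$ terms, dividing by $T$ and taking the Frobenius norm over $d^2$ entries, we obtain $O(d^{2}\Delta^{\varepsilon})$, which is $\le\Delta^\delta$ by (9b). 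For the martingale-difference part we use orthogonality of increments and the fourth-moment bound (8) to bound the second moment by roughly $d^{4}\,\Delta^{2\beta^*}/T$. Chebyshev then gives a contribution $\le\Delta^\delta$ w.h.p. using $d^{1/2}\Delta^\delta\to0$. The delicate point is handling the interaction between the indicator, which depends on the total increment, and $\mathbf{M}$. We would first replace the indicator by $\mathbf{1}_{|\Delta^n\mathbf{M}^{(i)}|\le \bar s^{\beta^{(i)}}}$ on the high-probability event where the Brownian, drift and big-jump pieces are small, which is the event controlled in (a)--(b). Combining (a)--(c) with the eigenvalue bound gives the claim.
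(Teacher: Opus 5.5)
Your frame matches the paper's: explicit minimisers from the first-order conditions, $\Vert\widehat{\mathbf C}_{\mathcal P_T}^{-1}\Vert\le 2/\kappa_{\min}$ w.h.p.\ (\Cref{Prop:UpperBoundC_TInverse}), and a cross term split into three pieces, each at most $\Delta_{\mathcal P_T}^{\delta}$. The gaps lie in the cross term, exactly where this lemma goes beyond \Cref{LikelihoodEqualityFeasibleCase}.

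\emph{The martingale-difference step in (c) does not close.} By orthogonality, the second moment of $T^{-1}\sum_n\mathbf X^{(j)}_{s_n}\xi^{(k)}_n$, with $\xi^{(k)}_n$ the centred truncated small-jump increment, is $T^{-2}\sum_n(\cdot)$. This sum has $N_T\asymp T/\Delta_{\mathcal P_T}$ summands, each of size $O(d^2\,\mathbb E(\Delta^n\mathbf M^{(k)})^2)=O(d^2\Delta_{\mathcal P_T})$. Summed over $d^2$ entries, the Frobenius second moment is therefore of order $d^4/T$. Your $d^4\Delta^{2\beta^*}/T$ drops the factor $N_T$, and bounding by the truncation level only makes things worse. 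Chebyshev then requires $T\gg d^4\Delta_{\mathcal P_T}^{-2\delta}$. This is a lower bound on $T$ in terms of $\Delta_{\mathcal P_T}$, and the hypotheses do not supply it: they bound $T$ only from above, via $T\Delta_{\mathcal P_T}\to0$ and \Cref{Ass.InfiniteAct}(5). The fact $d^{1/2}\Delta^{\delta}\to0$ plays no role here. The centring is in any case unnecessary. Condition (9c) bounds the \emph{absolute} truncated first moment, so by independence of $\mathbf X_{s_n}$ from the forward increment, the expectation of $T^{-1}\sum_n\sum_{j,k}|\mathbf X^{(j)}_{s_n}||\Delta^n\mathbf M^{(k)}|\mathbf 1_{\{|\Delta^n\mathbf M^{(k)}|\le 2\nu^{(k)}\}}$ is $O(d^3\Delta^{\varepsilon})$. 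Markov's inequality with (9b) finishes, as in the paper's treatment of $S^3$.

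\emph{The interaction between the threshold and $\mathbf M$ is not handled in (a)--(b), nor in your reduction for (c).}
\begin{itemize}
\item Some intervals have $|\Delta^n\mathbf X^{(i)}|>\nu^{(i)}$ only because $\Delta^n\mathbf M^{(i)}$ is large. These cannot be union-bounded away, since their expected number is only bounded by $N_Td\,\Delta^{1-2\beta^*}\asymp Td\,\Delta^{-2\beta^*}$, which diverges. On such intervals the whole continuous increment $\Sigma^{1/2}\Delta^n\mathbf W-\int Q_0\mathbf X_s\,ds$ is discarded.
\item Consequently the event $A_{N_T}$ from the finite-activity proof (threshold exceedance if and only if a jump occurs) is no longer of high probability. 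So (a) cannot be done ``exactly as in the finite-activity lemma''. The mechanism you describe (big jumps below the threshold, Brownian-plus-drift above $\nu/2$) never produces the $d^{6}\Delta^{1/2-2\beta^*-\delta}$ term you quote. That term comes precisely from $\mathbb E|\mathbf X^{(j)}_{s_n}||(\Sigma^{1/2}\Delta^n\mathbf W)^{(k)}|\mathbf 1_{\{|\Delta^n\mathbf M^{(k)}|>\nu^{(k)}/2\}}$.
\item In (b), an $O(\Delta)$ bound per interval, summed over $T/\Delta$ intervals and divided by $T$, is $O(1)$. The smallness must come from $\mathbb P(|\Delta^n\mathbf X^{(i)}|>\nu^{(i)})^{1/2}$. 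That probability is dominated by the small jumps and yields $\mathrm{poly}(d)\,\Delta^{1/2-\beta^*}$; it does not follow from \Cref{BoundOnIncrements}.
\item In (c), you cannot replace the indicator by a truncation of $\Delta^n\mathbf M$ ``on the event where the big-jump pieces are small''. About $\lambda T$ big jumps occur on $[0,T]$, so that event is not of high probability. What is needed is that on big-jump intervals the indicator vanishes w.h.p.
\end{itemize}

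The missing device is the paper's auxiliary process $\tilde{\mathbf Y}=\mathbf Y-\mathbf M$, used in three steps:
\begin{itemize}
\item Run the finite-activity argument for $\tilde{\mathbf Y}$ thresholded at $2\nu^{(k)}$; this is $S^1$.
\item Control the indicator mismatch ($S^2$). Show that the events $\{|\Delta\mathbf Y|\le\nu,\,|\Delta\tilde{\mathbf Y}|>2\nu\}$ and $\{|\Delta\tilde{\mathbf Y}|\le 2\nu,\,\Delta\mathbf U\ne0\}$ are negligible uniformly in $n,k$ (\Cref{S_2:FirstBound,SecondBoundFirstLemma}). Then bound the Brownian and drift increments on $\{|\Delta\mathbf Y|>\nu,\,|\Delta\tilde{\mathbf Y}|\le 2\nu,\,\Delta\mathbf U=0\}$ by Cauchy--Schwarz and independence (\Cref{SecondBoundS_2D,SecondBoundS_2W}). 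This is where (9a) enters.
\item Handle the retained small jumps ($S^3$) via (9b)--(9c), without any centring.
\end{itemize}
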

\begin{proof}
    See \Cref{ProofsInfAct}.
\end{proof}
\begin{theorem}\label{DiscreteEstimatorConsistencyBothCases}
Suppose \Cref{Ass.H,Ass:Estimation,Ass.High-FrequencySampling,Ass:LevyH,Ass:WHPT,Ass:WHPDelta,Ass.GeneralBothCases} and \Cref{Ass.AssFiniteAct} or \Cref{Ass.InfiniteAct} are satisfied. Let $\bar{Q}$ be a solution of the minimization problem $\min _{Q \in \mathbb{R}^{d\times d}}\mathcal{\bar{L}}_T^{\mathcal{P}_T}(Q)$ and $\widehat{Q}$ be the solution of the minimization problem $\min _{Q \in \mathbb{R}^{d\times d}}\mathcal{L}_T^{\mathcal{P}_T}(Q)$ where $\mathcal{\bar{L}}_T^{\mathcal{P}_T}$ is defined as in \eqref{FeasibleDiscLike}  and $\mathcal{L}_T^{\mathcal{P}_T}$ as in \eqref{DiscreteLikelihood}. Then
\begin{equation*}
\Vert \bar{Q}-Q_0\Vert_F \le 3\Delta_{\mathcal{P}_T}^{\delta} \dfrac{2}{\kappa_{\min}}+ \left\|Q_0\right\|_F \frac{2 \Delta_{\mathcal{P}_T}^{1 / 4}}{\kappa_{\min }}+ c_*\sqrt{\lambda_{\max}(\Sigma)\, \frac{16 \kappa_{\max }}{\kappa_{\min }^2 T}\left(\log 4 d \vee 2 d^2\right)}
\end{equation*}
with high probability under $\mathbb{P}^{Q_0}$, where $\delta$ is as given in \Cref{Ass.InfiniteAct} under that assumption and equal to $1/4$ under \Cref{Ass.AssFiniteAct}.
\end{theorem}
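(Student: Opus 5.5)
The proof is a triangle inequality along the chain of estimators built in this section:
\[
\Vert \bar{Q}-Q_0\Vert_F \le \Vert \bar{Q}-\widehat{Q}\Vert_F + \Vert \widehat{Q}-\tilde{Q}\Vert_F + \Vert \tilde{Q}-Q_0\Vert_F ,
\]
where $\tilde{Q}$ minimises the pseudo-likelihood \eqref{egn:PseudoLikelihood}, $\widehat{Q}$ minimises the discrete likelihood \eqref{DiscreteLikelihood}, and $\bar{Q}$ minimises the feasible thresholded likelihood \eqref{FeasibleDiscLike}. Each of these three quantities is a minimiser of a strictly convex quadratic in $Q$. The quadratic part is governed by $\widehat{\mathbf{C}}_{\mathcal{P}_T}$, which is almost surely positive definite by \Cref{StrictConvexityOfC_THat}. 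Hence all three exist and are unique, and the decomposition is well defined.

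Each term is bounded using a result already stated.

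\textbf{First term.} Under \Cref{Ass.AssFiniteAct}, \Cref{LikelihoodEqualityFeasibleCase} gives $\Vert \bar{Q}-\widehat{Q}\Vert_F \le 2\Delta_{\mathcal{P}_T}^{1/4}/\kappa_{\min}$. This is bounded by $3\Delta_{\mathcal{P}_T}^{\delta}\,2/\kappa_{\min}$ with $\delta=1/4$. Under \Cref{Ass.InfiniteAct}, \Cref{LikelihoodEqualityFeasibleCaseInfiniteActCase} gives exactly $3\Delta_{\mathcal{P}_T}^{\delta}\,2/\kappa_{\min}$.

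\textbf{Second term.} \Cref{LikelihoodEquality} gives $\Vert \widehat{Q}-\tilde{Q}\Vert_F\le \Vert Q_0\Vert_F\,2\Delta_{\mathcal{P}_T}^{1/4}/\kappa_{\min}$.

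\textbf{Third term.} \Cref{PropertiesDiscretizedEstimator} gives $\Vert \tilde{Q}-Q_0\Vert_F^2 \le \lambda_{\max}(\Sigma)c_*^2\,4\kappa_{\max}(\log 4d\vee 2d^2)/(\kappa_{\min}^2T)$. Taking square roots yields a quantity no larger than the stated term with constant $16$, so there is slack.

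The hypotheses of the theorem include all the assumptions needed by these four results, so each bound is available. The only step needing care is combining the three events. Each bound holds on an event $E^{(j)}_d$ with $\mathbb{P}^{Q_0}(E^{(j)}_d)\to 1$. A union bound gives $\mathbb{P}\big((E^{(1)}_d\cap E^{(2)}_d\cap E^{(3)}_d)^c\big)\le \sum_j \mathbb{P}((E^{(j)}_d)^c)\to 0$. So the intersection occurs with high probability in the sense of \Cref{Def:WithHigProb}, and on it the sum of the three bounds gives the displayed inequality.

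I expect no real obstacle here; the work lies in the cited lemmas. The one point to check is that the $\widehat{Q}$ and $\tilde{Q}$ appearing in \Cref{LikelihoodEquality} and in the two feasible-case lemmas are the same objects: the unique minimisers of \eqref{DiscreteLikelihood} and \eqref{egn:PseudoLikelihood} on the common observation grid $\mathcal{P}_T$. Uniqueness, again from \Cref{StrictConvexityOfC_THat}, guarantees this.
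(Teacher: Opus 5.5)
Your proposal is correct and follows the paper's proof exactly. The paper also applies the triangle inequality along $\bar Q\to\widehat Q\to\tilde Q\to Q_0$, using \Cref{PropertiesDiscretizedEstimator}, \Cref{LikelihoodEquality}, and either \Cref{LikelihoodEqualityFeasibleCase} or \Cref{LikelihoodEqualityFeasibleCaseInfiniteActCase}; your union bound over the three high-probability events simply writes out a step the paper leaves implicit.
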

\begin{proof}
    The proof follows trivially from \Cref{PropertiesDiscretizedEstimator} and \Cref{LikelihoodEquality} combined with either \Cref{LikelihoodEqualityFeasibleCase} or \Cref{LikelihoodEqualityFeasibleCaseInfiniteActCase} and the triangle inequality.
\end{proof}
\begin{remark}\label{Rem:functionalFormf}
    We let $h(d,\mathcal{P}_T) := 3\Delta_{\mathcal{P}_T}^{\delta} \dfrac{2}{\kappa_{\min}}+ \left\|Q_0\right\|_F \frac{2 \Delta_{\mathcal{P}_T}^{1 / 4}}{\kappa_{\min }}+ c_*\sqrt{\lambda_{\max}(\Sigma)\, \frac{16 \kappa_{\max }}{\kappa_{\min }^2 T}\left(\log 4 d \vee 2 d^2\right)}$.
\end{remark}
We note that under the assumptions of the Theorem, the right hand side tends to zero as $d$ tends to infinity.

\begin{corollary}\label{corr:cons}
    Suppose \Cref{Ass.H,Ass:Estimation,Ass.High-FrequencySampling,Ass:LevyH,Ass:WHPT,Ass:WHPDelta,Ass.GeneralBothCases} and \Cref{Ass.AssFiniteAct} or \Cref{Ass.InfiniteAct} are satisfied. Then,
    \begin{equation*}
        \Vert \bar{Q}-Q_0\Vert_F = o_\mathbb{P}(1), \quad \text{ as } d\rightarrow \infty. 
    \end{equation*}
\end{corollary}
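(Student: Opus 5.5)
The statement follows directly from \Cref{DiscreteEstimatorConsistencyBothCases}, whose explicit bound is recorded as $h(d,\mathcal{P}_T)$ in \Cref{Rem:functionalFormf}. The plan is to show that each of the three summands of $h$ tends to zero as $d\to\infty$. Then, since the bound of \Cref{DiscreteEstimatorConsistencyBothCases} holds with high probability, for every $\varepsilon>0$ we have $\mathbb{P}^{Q_0}(\Vert\bar Q-Q_0\Vert_F>\varepsilon)\le \mathbb{P}^{Q_0}(\Vert\bar Q-Q_0\Vert_F>h(d,\mathcal{P}_T))\to 0$ once $d$ is large enough that $h(d,\mathcal{P}_T)<\varepsilon$. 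This is exactly $\Vert\bar Q-Q_0\Vert_F=o_{\mathbb{P}}(1)$. By the remark following the definitions, ``$O(\cdot)$ w.h.p.'' implies $O_{\mathbb{P}}$, so no further probabilistic argument is needed.

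\emph{First term, $6\Delta_{\mathcal{P}_T}^{\delta}/\kappa_{\min}$.} By \Cref{Ass.H}(5), $\kappa_{\min}\ge C>0$ uniformly in $d$. By \Cref{Ass.High-FrequencySampling}, $\Delta_{\mathcal{P}_T}=o(1)$, and $\delta>0$ is fixed (it equals $1/4$ in the finite-activity case and lies in $(0,1/4)$ under \Cref{Ass.InfiniteAct}). Hence this term vanishes.

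\emph{Second term, $\Vert Q_0\Vert_F\,2\Delta_{\mathcal{P}_T}^{1/4}/\kappa_{\min}$.} \Cref{Ass.H}(1) gives $\Vert Q_0\Vert_F=O(d)$, so the term is $O(d\,\Delta_{\mathcal{P}_T}^{1/4})$. Either assumption set gives $\Delta_{\mathcal{P}_T}^{1/2}=o(d^{-10})$ (\Cref{Ass.AssFiniteAct}(5) or \Cref{Ass.InfiniteAct}(6)), hence $\Delta_{\mathcal{P}_T}^{1/4}=o(d^{-5})$, and the term is $o(d^{-4})$.

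\emph{Third term, $c_*\sqrt{\lambda_{\max}(\Sigma)\,16\kappa_{\max}(\log 4d\vee 2d^2)/(\kappa_{\min}^2T)}$.} This is the only step requiring care. We have $\lambda_{\max}(\Sigma)\le\Vert\Sigma\Vert_F=O(d)$ and $\kappa_{\max}=O(d)$ by \Cref{Ass.H}(2),(5), and $\kappa_{\min}\ge C$. The term is therefore $O(\sqrt{d^4/T})$, and we need $T=T(d)$ to grow faster than $d^4$. I would get this from the rate conditions coupling $T$ and $\Delta_{\mathcal{P}_T}$. \Cref{Ass.AssFiniteAct}(4) (respectively \Cref{Ass.InfiniteAct}(5)) states $\Delta_{\mathcal{P}_T}^{1-2\beta^*}=o((d^8T)^{-1})$. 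Since $\Delta_{\mathcal{P}_T}\to0$ and $1-2\beta^*>0$, the left side tends to zero. Read as a statement about $T$ alone, though, this condition only bounds $T$ from above, so it does not give the required growth.

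The growth of $T$ must come instead from \Cref{Ass:WHPT}, $T\ge T_d$. This is where I expect the main obstacle. The definition of $T_d$ requires $(21(d\wedge e))^dH(T_d,\kappa_{\min}/(3d))\le 1/d$. Since $H(\cdot,r)$ is nonincreasing with $H(T,r)\to0$ as $T\to\infty$, this forces $T_d\to\infty$. For a quantitative rate, one uses the exponential form of $H$ (Ciolek-type, valid under \Cref{Ass.H}(4) via \Cref{ExponentialBound1}). This gives $T_d\gtrsim d^4\log d$, as recorded in the remark after \Cref{Ass:WHPT}, so that $d^4/T=O(1/\log d)\to0$. Strictly speaking this uses the paper's convention that \Cref{Ass:WHPT} encodes this rate, and the same convention underlies \Cref{ConvergenceContCase}, which I would invoke directly. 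Combining the three terms shows $h(d,\mathcal{P}_T)\to0$, which concludes the proof.
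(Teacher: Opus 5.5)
Your argument is the paper's own: the paper proves the corollary in one line. It invokes \Cref{DiscreteEstimatorConsistencyBothCases}, asserts that $h(d,\mathcal{P}_T)\to 0$ under the stated assumptions, and passes from the high-probability event to $o_{\mathbb{P}}(1)$ exactly as you do, with the decay of the $\sqrt{d^4/T}$ term resting, as in your final step, on \Cref{ConvergenceContCase}. Your term-by-term check is more explicit than the paper's, but note that the remark after \Cref{Ass:WHPT} gives growth of $T$ faster than $d^4\log d$ as a \emph{sufficient} condition for that assumption, not a rate implied by $T\ge T_d$; so the vanishing of the third term is imported from \Cref{ConvergenceContCase} rather than derived.
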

\begin{proof}
    By \Cref{DiscreteEstimatorConsistencyBothCases}, $\Vert \bar{Q}-Q_0\Vert_F$ is bounded by a quantity tending to $0$ on an event whose probability tends to one; hence $\Vert \bar{Q}-Q_0\Vert_F = o_\mathbb{P}(1)$.
\end{proof}
Up until now, we have assumed $\Sigma$ known. In practice this is not the case. The following corollary shows that when we estimate $Q_0$ by maximum likelihood we do not need to know it.
\begin{corollary}\label{SpecificFormQEst}
    Let $\bar{Q}$ be a solution of the minimization problem $\min _{Q \in \Theta}\mathcal{\bar{L}}_T^{\mathcal{P}_T}(Q)$. Then it holds that 
    \begin{equation*}
        \bar{Q} =  -\sum_{n=0}^{N_T-1}\left( \Delta_{\mathcal{P}_T}^n \bold{\bold{X}}\odot\vec{\bold{1}}(\Delta_{\mathcal{P}_T}^n \bold{\bold{X}})\right)\mathbf{X}_{s_n^{N_T}}^{\top}\left( \sum_{n=0}^{N_T-1}\mathbf{X}_{s_n^{N_T}} \mathbf{X}_{s_n^{N_T}}^{\top}\left(s_{n+1}^{N_T}-s_n^{N_T}\right)\right)^{-1}
    \end{equation*}
\end{corollary}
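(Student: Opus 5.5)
The estimator is the minimiser of an unconstrained quadratic in $Q$, so I will compute it in closed form from the first-order condition. The feasible likelihood \eqref{FeasibleDiscLike} has a linear term and a quadratic term in $Q$. Write $\Delta_n := s_{n+1}^{N_T}-s_n^{N_T}$ and $\tilde{\Delta}_n \bold{X} := \Delta_{\mathcal{P}_T}^n\bold{X}\odot\vec{\bold{1}}(\Delta_{\mathcal{P}_T}^n\bold{X})$. Using $\left(\Sigma^{-1}Q\bold{X}\right)^\top v=\Tr\left(\Sigma^{-1}Q\bold{X}v^\top\right)$, the objective becomes
\begin{equation*}
\bar{\mathcal{L}}_T^{\mathcal{P}_T}(Q)=\frac{1}{T}\Tr\!\left(\Sigma^{-1}Q G\right)+\frac{1}{2T}\Tr\!\left(Q^\top\Sigma^{-1}Q S\right),
\end{equation*}
with
\begin{equation*}
G:=\sum_{n=0}^{N_T-1}\bold{X}_{s_n^{N_T}}\left(\tilde{\Delta}_n\bold{X}\right)^\top,\qquad S:=\sum_{n=0}^{N_T-1}\bold{X}_{s_n^{N_T}}\bold{X}_{s_n^{N_T}}^\top\Delta_n=T\,\widehat{\mathbf{C}}_{\mathcal{P}_T}.
\end{equation*}

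The first step is to differentiate with respect to $Q$ using standard matrix calculus. This gives
\begin{equation*}
\nabla_Q\bar{\mathcal{L}}_T^{\mathcal{P}_T}(Q)=\frac{1}{T}\left(\Sigma^{-1}G^\top+\Sigma^{-1}QS\right).
\end{equation*}
Setting the gradient to zero and multiplying on the left by $\Sigma$ removes $\Sigma$ entirely, leaving the normal equation $QS=-G^\top$.

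The second step, which is the only substantive one, is to justify that this stationary point is the unique minimiser. I will invoke \Cref{StrictConvexityOfC_THat}, available under \Cref{Ass.High-FrequencySampling}, which gives that $\widehat{\mathbf{C}}_{\mathcal{P}_T}$, and hence $S$, is almost surely positive definite. In $\operatorname{vec}$ form the Hessian is $T^{-1}\left(S\otimes\Sigma^{-1}\right)$. This is positive definite because $\Sigma$ is positive definite by \Cref{Ass:Estimation}. The objective is therefore strictly convex, so the stationary point is the unique global minimiser over $\mathbb{R}^{d\times d}$; if $\Theta$ is a proper subset, I additionally assume the unconstrained minimiser lies in it.

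Finally, I solve the normal equation to get $\bar Q=-G^\top S^{-1}$. Since
\begin{equation*}
G^\top=\sum_{n=0}^{N_T-1}\tilde{\Delta}_n\bold{X}\,\bold{X}_{s_n^{N_T}}^\top,
\end{equation*}
this is exactly the claimed formula. It involves no $\Sigma$, which is the point of the corollary. The main obstacle is only the invertibility of $S$, and it is handled by \Cref{StrictConvexityOfC_THat}.
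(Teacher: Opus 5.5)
Your proposal is correct and is essentially the paper's proof. The paper simply notes that \eqref{FeasibleDiscLike} is a convex, differentiable quadratic in $Q$, sets its gradient to zero and solves, with $\Sigma$ dropping out; this is the same gradient you compute (cf.\ the proof of \Cref{LikelihoodEqualityFeasibleCase}).

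Your version is slightly more careful on two points. You make explicit that invertibility of $\widehat{\mathbf{C}}_{\mathcal{P}_T}$ comes from \Cref{StrictConvexityOfC_THat}, so the result implicitly uses $N_T>d$ and a positive definite $\Sigma$. You also flag that the undefined set $\Theta$ should be read as $\mathbb{R}^{d\times d}$.
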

\begin{proof}
    Since \eqref{FeasibleDiscLike} is convex and differentiable, the proof follows trivially by finding the gradient and setting it equal to $0$.
\end{proof}
\section{Proofs of the estimation theory}
\subsection{Preliminaries and notation}\label{Prelim:EstSec}
We begin by recalling two foundational results on Lévy processes, the Lévy-Khintchine representation and the Lévy-Itô decomposition, which underpin the canonical-space construction and the proofs that follow.

\paragraph{Lévy-Itô decomposition and related properties}\label{LevyIto}
We revisit the filtered probability space $\left(\Omega^{\prime}, \mathcal{F}^{\prime},\left\{\mathcal{F}_t^{\prime}, t \in \mathbb{R}\right\}, \mathbb{P}\right)$ to which all stochastic processes are adapted. We now state two important results regarding Lévy processes which we will use in this section. Namely, the Lévy-Khintchine representation and the Lévy-Itô decomposition. We take the following versions as stated in \cite{Lucchese2023}. Consider a Lévy process $\mathbb{L}$. The Lévy-Khintchine theorems state:

\begin{theorem}[Lévy-Khintchine representation, Sato (1999, Theorem 8.1)] For each $t>0$ the law $\mathcal{L}\left(\mathbf{L}_t\right)$ of the Lévy process $\mathbb{L}$ is infinitely divisible with characteristic triplet $(t \mathbf{b}, t \Sigma, t F)$ where $\mathbf{b} \in \mathbb{R}^d, \Sigma$ is a symmetric, positive semi-definite $d \times d$ matrix and finally $F$ is a Lévy measure on $\mathbb{R}^d$, satisfying that
$$
\mathbb{E}^{\prime}\left[\exp \left\{i z^{\mathrm{T}} \mathbf{L}_t\right\}\right]=\exp \left\{t\left[i \mathbf{b}^{\mathrm{T}} z-\frac{1}{2} z^{\mathrm{T}} \Sigma z+\int_{\mathbb{R}^d}\left(e^{i y^{\mathrm{T}} z}-1-i y^{\mathrm{T}} z \tau(y)\right) F(d y)\right]\right\}, \quad z \in \mathbb{R}^d,
$$
where $\tau: \mathbb{R}^d \rightarrow \mathbb{R}$ is a fixed truncation function, i.e. a bounded measurable function such that $\tau(x)=1+o(\|x\|)$ as $\|x\| \rightarrow 0$ and $\tau(x)=O\left(\|x\|^{-1}\right)$ as $\|x\| \rightarrow \infty$. $(\mathbf{b}, \Sigma, F)$ is then referred to as the characteristic triplet of $\mathbb{L}$.
\end{theorem}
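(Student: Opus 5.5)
The statement is classical and is taken from Sato, so the plan is to reduce it to two facts: every infinitely divisible law on $\mathbb{R}^d$ has a Lévy--Khintchine exponent, and the exponent of a Lévy process scales linearly in $t$.

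\textbf{Step 1 (infinite divisibility).} Fix $t>0$ and $n\in\mathbb{N}$. Telescope
\[
\mathbf{L}_t=\sum_{j=1}^n\bigl(\mathbf{L}_{jt/n}-\mathbf{L}_{(j-1)t/n}\bigr).
\]
Because $\mathbb{L}$ has independent and stationary increments and $\mathbf{L}_0=0$, the summands are i.i.d.\ with law $\mathcal{L}(\mathbf{L}_{t/n})$. Hence $\mathcal{L}(\mathbf{L}_t)$ is an $n$-fold convolution power for every $n$, so it is infinitely divisible.

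\textbf{Step 2 (linearity in $t$).} Let $\varphi_t(z)=\mathbb{E}'[\exp\{iz^\top\mathbf{L}_t\}]$. Independence and stationarity of increments give $\varphi_{s+t}=\varphi_s\varphi_t$. An infinitely divisible characteristic function has no zeros. One therefore takes the distinguished continuous logarithm $\eta_t=\log\varphi_t$, with $\eta_t(0)=0$, and obtains $\eta_{s+t}=\eta_s+\eta_t$. For rational $t$ this yields $\eta_t=t\,\eta_1$. Stochastic continuity of $\mathbb{L}$ makes $t\mapsto\varphi_t(z)$ continuous, and this extends the identity to all real $t>0$. So $\varphi_t=\exp\{t\,\eta_1\}$.

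\textbf{Step 3 (form of $\eta_1$).} Apply the general Lévy--Khintchine formula for infinitely divisible distributions to $\mathcal{L}(\mathbf{L}_1)$. This produces a unique triplet $(\mathbf{b},\Sigma,F)$ relative to the fixed truncation function $\tau$ such that $\eta_1$ has exactly the displayed exponent. Multiplying by $t$ shows that $\mathcal{L}(\mathbf{L}_t)$ has triplet $(t\mathbf{b},t\Sigma,tF)$. Uniqueness of the triplet gives the identification.

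\textbf{Main obstacle.} The hard part is the representation theorem used in Step 3. I would not reprove it but cite it, as the paper does.

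If a proof were required, the route would be the following:
\begin{enumerate}
\item Write $\varphi=\lim_n \exp\{n(\varphi^{1/n}-1)\}$. Each term is a compound Poisson characteristic function with Lévy measure $n\,\mathcal{L}(\mathbf{L}_{1/n})$.
\item Show tightness of the finite measures $(\|x\|^2\wedge1)\,n\,\mathcal{L}(\mathbf{L}_{1/n})(dx)$.
\item Extract a limit and split off the mass at the origin, which gives $\Sigma$, from the remainder, which gives $F$.
\item Recover $\mathbf{b}$ through the compensation by $\tau$. The growth conditions on $\tau$ make the compensated integrand integrable against $F$.
\end{enumerate}

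Uniqueness of the triplet follows because $\Sigma$ is read off from the quadratic growth of $\eta$ at infinity, and $F$ is determined by the second differences of $\eta$.
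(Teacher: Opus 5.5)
Your proposal is correct and follows the route of the source the paper relies on. The paper gives no argument of its own beyond citing Sato, and your chain is the standard proof behind that citation: infinite divisibility of $\mathcal{L}(\mathbf{L}_t)$ with $\varphi_t=\exp\{t\eta_1\}$ (Sato's Theorem 7.10), then the representation theorem applied to $\mathcal{L}(\mathbf{L}_1)$ (Theorem 8.1 together with its general-truncation version, where $\mathbf{b}$ shifts by $\int y\,(\tau(y)-\mathbf{1}_{\{\|y\|\le 1\}}(y))\,F(dy)$). For the step from rational to real $t$, extend the identity $\varphi_r(z)=e^{r\eta_1(z)}$ itself, both of whose sides are continuous in $r$, rather than $\eta_r=r\eta_1$, which would need continuity of the logarithm in $r$.
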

\begin{remark}
  We will always let $\tau(x) := 1_{D^c}$ where $D = \{x \in \mathbb{R}^d: \Vert x \Vert \le 1 \}$ and in turn ignore the $\tau$-dependence of $b$.
\end{remark}
Further, we have
\begin{theorem}[Lévy-Itô decomposition, \cite{Sato} (Theorem 19.2)]\label{Levyito} One has the following representation for the Lévy process $\mathbb{L}$ with characteristics $(\mathbf{b}, \Sigma, F)$ :
\begin{equation}\label{eqn:LevyIto}
\begin{aligned}
\mathbf{L}_t=\mathbf{b} t+\Sigma^{1 / 2} \mathbf{W}_t+\int_0^t \int_{\{x:\|x\|>1\}} & x\, \mu(d s, d x) \\
& +\int_0^t \int_{\{x:\|x\| \leq 1\}} x\,\{\mu(d s, d x)-F(d x)\,ds\}, \quad t \geq 0, \mathbb{P}-\text { a.s. }
\end{aligned}
\end{equation}
where $\mathbb{W}=\left(\mathbf{W}_t\right)_{t \geq 0}$ is a standard Brownian motion on $\left(\Omega^{\prime}, \mathcal{F}^{\prime},\left(\mathcal{F}_t^{\prime}\right)_{t \geq 0}, \mathbb{P}\right)$ and $\Sigma^{1 / 2}$ is a $d \times d$ matrix satisfying $\Sigma=\Sigma^{1 / 2} \Sigma^{1 / 2, T}$ while $\left\{\mu(B): B \in \mathcal{B}\left((0, \infty) \times \mathbb{R}^d \backslash\{0\}\right)\right\}$ is a Poisson random measure independent of $\mathbb{W}$ such that
$$
\mu(d s, d x)=\sum_{r \geq 0: \Delta \mathbf{L}_r \neq 0} \delta_{\left(r, \Delta \mathbf{L}_r\right)}(d s, d x),
$$
and $\Delta \mathbb{L}=\left(\Delta \mathbf{L}_t\right)_{t \geq 0}$ is the jump process of $\mathbb{L}$, i.e. $\Delta \mathbf{L}_t=\mathbf{L}_t-\mathbf{L}_{t-}$. We define the integral
$$
\int_0^t \int_{\{x:\|x\| \leq 1\}} x\,\{\mu(d s, d x)-F(d x)\,ds\}:=\lim _{\epsilon \downarrow 0} \int_0^t \int_{\{x: \epsilon<\|x\| \leq 1\}} x\,\{\mu(d s, d x)-F(d x)\,ds\} .
$$
\end{theorem}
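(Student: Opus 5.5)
The plan is the classical one: separate the jumps of $\mathbb{L}$ by size, show that what remains after removing all jumps is a continuous Lévy process, and identify that process as $\mathbf{b}t+\Sigma^{1/2}\mathbf{W}_t$. The first step is to define the jump measure $\mu(ds,dx)=\sum_{r:\Delta\mathbf{L}_r\neq0}\delta_{(r,\Delta\mathbf{L}_r)}$. Since $\mathbb{L}$ is càdlàg, for each $\epsilon>0$ only finitely many jumps with $\|\Delta\mathbf{L}_r\|>\epsilon$ occur on compact time intervals, so $\mu$ is a well-defined random measure.

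Next I would show that $\mu$ is a Poisson random measure with intensity $ds\,F(dx)$, where $F$ is the Lévy measure from the Lévy--Khintchine representation. For a Borel set $B$ bounded away from $0$, the counting process $N_t(B)=\mu((0,t]\times B)$ has stationary independent increments and unit jumps, so it is a Poisson process. For disjoint $B_1,\dots,B_m$ the processes $N(B_i)$ never jump simultaneously, and a standard exponential-martingale argument then gives their independence. Consequently the large-jump part $\int_0^t\int_{\|x\|>1}x\,\mu(ds,dx)$ is a compound Poisson process. It is independent of $\mathbb{L}$ minus this part, because that difference has no jumps of size exceeding $1$, and processes with no common jumps are independent in this setting.

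For the small jumps, set $M^\epsilon_t=\int_0^t\int_{\epsilon<\|x\|\le1}x\,\{\mu(ds,dx)-F(dx)ds\}$. Each $M^\epsilon$ is a square-integrable martingale with
\[
\mathbb{E}\|M^\epsilon_t-M^{\epsilon'}_t\|_2^2=t\int_{\epsilon'<\|x\|\le\epsilon}\|x\|_2^2F(dx).
\]
Since $\int(\|x\|^2\wedge1)F(dx)<\infty$, this tends to zero, and Doob's maximal inequality gives uniform-on-compacts $L^2$ convergence along $\epsilon\downarrow0$. This yields the defining limit in the statement and a càdlàg martingale limit.

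Finally, let $\mathbf{C}_t$ denote $\mathbf{L}_t$ minus the large-jump compound Poisson part and minus the compensated small-jump limit. The main obstacle is to verify that $\mathbf{C}$ has no jumps and that the subtraction preserves independent increments. I would handle this by subtracting the approximations at level $\epsilon$ first, each of which is independent of the remainder by the disjoint-jumps property, and then passing to the limit using the uniform convergence. A continuous Lévy process has finite moments of all orders and is Gaussian, so $\mathbf{C}_t=\mathbf{b}t+\Sigma^{1/2}\mathbf{W}_t$ for some standard Brownian motion $\mathbf{W}$. Matching characteristic functions with the Lévy--Khintchine formula, using the truncation $\tau=\mathbf{1}_{D^c}$ fixed above, identifies the drift as $\mathbf{b}$ and the covariance as $\Sigma$. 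The same independence argument shows that $\mathbf{W}$ is independent of $\mu$.
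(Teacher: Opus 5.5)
The paper gives no proof of this statement (it is quoted from Sato's Theorem 19.2), and your outline is the standard textbook argument. The architecture is sound, but one step is asserted rather than proved, and it is exactly the step that ties the jumps to the L\'evy--Khintchine data. In your second step you declare the intensity of $\mu$ to be $ds\,F(dx)$. In the third you use $\int(\|x\|^2\wedge 1)F(dx)<\infty$ to make $(M^\epsilon)$ Cauchy in $L^2$. Nothing up to that point connects the jump counts to $F$: what your argument actually yields is that $N_t(B)$ is Poisson with mean $t\,\nu(B)$ for some measure $\nu$ on $\mathbb{R}^d\setminus\{0\}$, finite on sets bounded away from the origin. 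Your closing characteristic-function comparison only identifies $\mathbf{b}$ and $\Sigma$. So, as written, you presuppose both $\nu=F$ (i.e.\ that the compensator in the statement really is $F(dx)\,ds$) and $\int_{\|x\|\le 1}\|x\|_2^2\,\nu(dx)<\infty$, which the whole small-jump construction rests on.

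You need an independent argument for the integrability. For example, write $\mathbb{L}=P^\epsilon+R^\epsilon$, with $P^\epsilon$ the compound Poisson process of jumps of norm $>\epsilon$, independent of $R^\epsilon$. Adding L\'evy--Khintchine exponents and using uniqueness of the triplet gives $F=F_{R^\epsilon}+\nu|_{\{\|x\|>\epsilon\}}$, hence $\nu\le F$. Alternatively, bound $t\int_{\epsilon<\|x\|\le 1}\|x\|_2^2\,\nu(dx)=\mathbb{E}\|M^\epsilon_t\|_2^2$, uniformly in $\epsilon$, by the variance of $\mathbb{L}$ minus its big jumps. That process has bounded jumps and hence finite second moments, and $M^\epsilon$ is centred and independent of the remainder. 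Then compute $\mathbb{E}e^{iz^\top\mathbf{L}_1}$ from the three independent pieces and invoke uniqueness of the triplet once more to obtain $\nu=F$ together with the drift and the covariance.

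Three smaller points.
\begin{itemize}
\item The principle that jointly L\'evy processes without common jumps are independent needs one of them to be pure-jump of finite variation; two copies of the same Brownian motion never jump together. This holds for $P^\epsilon$ and $M^\epsilon$, which is how you use it. But independence of $\mathbf{W}$ from $\mu$ has to be obtained by applying it to $\mu$ restricted to $\{\|x\|>\epsilon\}$, passing to the limit $\epsilon\downarrow 0$, and noting that these restrictions generate $\sigma(\mu)$.
\item The truncation matching the decomposition is $\tau=\mathbf{1}_D=\mathbf{1}_{\{\|x\|\le 1\}}$ (small jumps compensated), not $\mathbf{1}_{D^c}$ as in the paper's remark, which you copied. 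With $\mathbf{1}_{D^c}$ the L\'evy--Khintchine integral need not even converge.
\item If $\Sigma$ is singular, a standard $d$-dimensional $\mathbf{W}$ with $\mathbf{C}_t-\mathbf{b}t=\Sigma^{1/2}\mathbf{W}_t$ can in general only be constructed on an enlarged probability space. When $\Sigma$ is positive definite, as the paper assumes in its estimation section, simply set $\mathbf{W}_t=(\Sigma^{1/2})^{-1}(\mathbf{C}_t-\mathbf{b}t)$.
\end{itemize}
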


\begin{remark}
    We underline that the above limit should be interpreted in the $\mathcal{L}^2(\mathbb{P})$ sense.
\end{remark}

\paragraph{Canonical space and the continuous martingale part}
With $(\mathbf{b}, \Sigma, F)$ now fixed, we set up the canonical probability space on which the rest of this section operates.
For any $Q \in \mathbb{R}^{d\times d}$, we denote by $\mathbb{Y}^Q$ the unique, strong solution to \eqref{eqn:1} with drift matrix $Q$ (such a solution always exists following e.g. Theorem $6.2.9$ of \cite{Applebaum2009}, and it is c\'adl\'ag). We let $\Omega$ be the set of all c\'adl\'ag functions $\omega: \mathbb{R}_+ \rightarrow \mathbb{R}^d$. We then consider the filtration $\left(\mathcal{F}_t\right)_{t\ge0}$ where for each $t\ge 0$, $\mathcal{F}_t = \bigcap_{s>t}\sigma\left( \omega(u): u\le s \right)$. Then $\mathcal{F}_t$ satisfy the usual hypothesis. Finally, let $\mathcal{F}$ be the smallest $\sigma$-algebra containing $\left(\mathcal{F}_t\right)_{t\ge0}$. We denote $\left(\Omega, \mathcal{F}\right)$ our canonical space. $\left(\Omega^\prime, \mathcal{F}^\prime,\mathbb{P}\right)$ still serves as our stochastic basis. It is then clear that the solution $\mathbb{Y}^Q$ induces a measure on the canonical space through the mapping $\Omega^\prime \ni \omega^\prime \mapsto Y^Q(\omega^\prime)\in \Omega$. Henceforth, we denote this measure $\mathbb{P}^Q$. Such a measure exists for any $Q \in  \mathbb{R}^{d\times d}$ and in turn, we have a family of measures on the canonical space, $\{ \mathbb{P}^Q: Q\in  \mathbb{R}^{d\times d}\}$. We define the canonical process $\mathbb{X}:= \{ \bold{X}_t(\omega) = \omega(t), t\ge0 \}$. Clearly $\mathbb{P}^Q(\mathbb{X}\in A)=\mathbb{P}(\mathbb{Y}^Q\in A)$ for $A\in \mathcal{F}$ and $\mathbb{P}^Q(\bold{X}_t\in B)=\mathbb{P}(\bold{Y}_t^Q\in B)$ for $B\in \mathcal{B}\left( \mathbb{R}^d\right)$. Hence, under $\mathbb{P}^Q$, $\mathbb{X}$ can be viewed as the sample paths of $\mathbb{Y}^Q$.  We denote by $\mathbb{P}_t^Q$ the restriction of the measure $\mathbb{P}^Q$  to $\mathcal{F}_t$. Following \cite{Sorensen2020}, under $\mathbb{P}^Q$, the process 
\begin{equation*}
    \bold{X}_t^c(Q) = \bold{X}_t -\bold{y}_0 - \mathbf{b}\,t - \sum_{s\le t} \Delta \bold{X}_s \bold{1}_{\Vert \Delta \bold{X}_s \Vert >1}  + \int_0^t Q \bold{X}_s ds  - \int_{0}^t \int_{\Vert x \Vert \le 1} x \left( \mu^X -  F\cdot Leb^1 \right)(dx,ds),
\end{equation*}
where $Leb^1$ is the one-dimensional Lebesgue measure and
\begin{equation*}
    \mu^X(dx,ds)= \sum_{r} 1_{\{\Delta \bold{X}_r \ne 0\}} \delta_{(r,\Delta \bold{X}_r)}(dx,ds)
\end{equation*}
is a continuous local martingale on $\left(\Omega, \mathcal{F},\left(\mathcal{F}_t\right)_{t\ge0},\mathbb{P}^Q\right)$. We note, that for $A \in \mathcal{B}\left(\mathbb{R}^d\right)$, it holds that 
\begin{equation}\label{ContMartPart}
    \begin{aligned}
        &\mathbb{P}^Q\left(\bold{X}_t^c(Q) \in A \right) \\&= \mathbb{P}^Q\left(\bold{X}_t -\bold{x}_0 - \sum_{s\le t} \Delta \bold{X}_s \bold{1}_{\Vert \Delta \bold{X}_s \Vert >1}  + \int_0^t Q \bold{X}_s ds  - \int_{0}^t \int_{\Vert x \Vert \le 1} x \left( \mu^X - F\cdot Leb^1 \right)(dx,ds) \in A \right) \\
        &=\mathbb{P}\left(\bold{Y}_t -\bold{y}_0 - \sum_{s\le t} \Delta \bold{Y}_s \bold{1}_{\Vert \Delta \bold{Y}_s \Vert >1}  + \int_0^t Q \bold{Y}_s ds  - \int_{0}^t \int_{\Vert y \Vert \le 1} y \left( \mu^Y - F\cdot Leb^1 \right)(dy,ds) \in A \right) \\
        &=\mathbb{P}\left(\bold{Y}_t -\bold{y}_0 - \int_{0}^t \int_{\Vert y \Vert > 1} y\mu^Y(dy,ds)  + \int_0^t Q \bold{Y}_s ds  - \int_{0}^t \int_{\Vert y \Vert \le 1} y \left( \mu^Y - F\cdot Leb^1 \right)(dy,ds) \in A \right) \\
        &=\mathbb{P}\left(\Sigma^{1/2}\bold{W}_t \in A \right),
    \end{aligned}
\end{equation}
so that $\bold{X}_t^c(Q)$ is a $\mathbb{P}^Q$ Gaussian process. Here, $F$ denotes the Lévy measure associated to the driving Lévy process $\mathbb{L}$ in \eqref{eqn:1}, as defined in \Cref{LevyIto}, and $\mu^Y$ is the jump measure of $\mathbb{Y}$, which coincides with $\mu$ in \Cref{Levyito} since $\Delta \bold{Y}_s = \Delta\bold{L}_s$ for all $s>0$ (the drift integral $-\int_0^s Q\bold{Y}_r\,dr$ is absolutely continuous and therefore jump-free); we write $\mu^X$ and $\mu^Y$ rather than $\mu$ to make explicit whether the jump measure is computed on the canonical process $\mathbb{X}$ or on $\mathbb{Y}$. We used the Lévy Itô decomposition from \Cref{LevyIto} in the second last inequality, and that $\sum_{s\le t} \Delta \bold{Y}_s \bold{1}_{\Vert \Delta \bold{Y}_s \Vert >1}  = \int_{0}^t \int_{\Vert y \Vert > 1} y\mu^Y(dy,ds)$ following e.g.~Theorem $5.1.(4)$ of \cite{JanP2020}. Henceforth, we will drop the $0$ dependence, and  whenever we write $\bold{X}^c_t$ , we refer to the $\mathbb{P}^0$ continuous local martingale. Finally, we introduce a bit of notation:
Let $\mu_{\mathcal{P}_T}$ be the measure on $[0,T]$ defined by 
\begin{equation}\label{dDimMeasure}
    \mu_{\mathcal{P}_T}(dr) = \sum_{n=0}^{N_T-1} \left( s_{n+1}^{N_T} -s_n^{N_T} \right)\delta_{s_n^{N_T}}(dr).
\end{equation}
Further, $\mu_{\mathcal{P}_T}^{\tilde{W}}$ is the random measure on $[0,T]$ defined by 
\begin{equation}\label{stocMeas}
\mu_{\mathcal{P}_T}^{\mathbb{\tilde{W}}}(dr) = \sum_{n=0}^{N_T-1} \Delta_{\mathcal{P}_T}^n \bold{\tilde{W}}\delta_{s_n^{N_T}}(dr),
\end{equation}
where $\mathbb{\tilde{W}}$ is the $\mathbb{P}^{Q_0}$ Wiener process given by $\mathbb{\tilde{W}}:= \Sigma^{-1/2}\mathbb{X}^c(Q_0)$.

Suppose now that $Q_0 \in \mathbb{R}^{d \times d}$ is the drift matrix of an Ornstein-Uhlenbeck process as in \Cref{eqn:1} that we wish to estimate.
 \subsection{Helpful lemmas}
\begin{lemma}\label{IntMatrixExp}
 Let $A$ be a $\mathbb{R}^{d\times d}$ matrix satisfying that its spectrum lies in the left half-plane, i.e.
\begin{equation*}
    r_{\max} = \max\{ \mathrm{Re}\,\lambda: \lambda \in \sigma(A)\}<0.
\end{equation*} Then
\begin{equation*}
    \int_0^\infty e^{sA}ds =-A^{-1}.
\end{equation*} 
\end{lemma}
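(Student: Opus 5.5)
The plan is to write the integral over a finite horizon and pass to the limit. Fix $t>0$. The matrix exponential satisfies $\frac{d}{ds}e^{sA}=Ae^{sA}=e^{sA}A$, so by the fundamental theorem of calculus, applied entrywise,
\begin{equation*}
A\int_0^t e^{sA}\,ds=\int_0^t \frac{d}{ds}e^{sA}\,ds=e^{tA}-I.
\end{equation*}
Since $r_{\max}<0$, zero is not an eigenvalue of $A$, so $A$ is invertible. Multiplying on the left by $A^{-1}$ gives
\begin{equation*}
\int_0^t e^{sA}\,ds=A^{-1}\bigl(e^{tA}-I\bigr).
\end{equation*}

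It then remains to show two things: that $\Vert e^{tA}\Vert\to 0$ as $t\to\infty$, and that the improper integral $\int_0^\infty e^{sA}ds$ converges absolutely entrywise. Then we may let $t\to\infty$ on both sides and obtain $-A^{-1}$.

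The main step is an exponential decay bound on $\Vert e^{sA}\Vert$. Diagonalizability is not assumed, so I would use the Jordan decomposition $A=PJP^{-1}$. Each Jordan block $J_m=\lambda_m I+N_m$ with $N_m$ nilpotent of order at most $d$ gives
\begin{equation*}
e^{sJ_m}=e^{s\lambda_m}\sum_{j=0}^{d-1}\frac{s^jN_m^j}{j!}.
\end{equation*}
Hence
\begin{equation*}
\Vert e^{sA}\Vert\le \Vert P\Vert\,\Vert P^{-1}\Vert\,e^{s r_{\max}}\,p(s)
\end{equation*}
for some polynomial $p$ of degree at most $d-1$. Fix any $\eta\in(r_{\max},0)$. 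Then $e^{sr_{\max}}p(s)\le Ce^{s\eta}$ for all $s\ge0$, so $\Vert e^{sA}\Vert\le C'e^{\eta s}$.

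This bound settles both remaining points. The integrand is dominated entrywise by an integrable exponential, so the improper integral exists and equals $\lim_{t\to\infty}\int_0^t e^{sA}ds$. Also $e^{tA}\to0$. Therefore $\int_0^\infty e^{sA}ds=A^{-1}(0-I)=-A^{-1}$.

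Alternatively, the decay can be taken directly from the characterization used in the estimation setup: $-A\in\mathcal{M}^+(\mathbb{R}^d)$ if and only if $\Vert e^{tA}\Vert\to0$. This still requires the exponential rate for integrability, which is why I would include the Jordan argument. The only real obstacle is this non-diagonalizable decay bound; everything else is routine.
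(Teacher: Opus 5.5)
Your proof is correct and complete. The paper gives no argument of its own for this lemma; it only cites Lemma~15.10.2 of \cite{Bernstein2009}. Your argument is the standard self-contained proof of that fact: the finite-horizon identity $\int_0^t e^{sA}\,ds=A^{-1}(e^{tA}-I)$, combined with the Jordan-form bound $\Vert e^{sA}\Vert\le C'e^{\eta s}$ for $\eta\in(r_{\max},0)$.

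The Jordan route is also the right tool here. The paper's own decay estimate, \Cref{ExponentialBound1}, assumes the stronger condition $\lambda_{\max}\bigl((A+A^\top)/2\bigr)\le-\epsilon$. It therefore does not cover the lemma under its purely spectral hypothesis, whereas your argument needs only $r_{\max}<0$.
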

\begin{proof}
    See e.g. Lemma $15.10.2$ of \cite{Bernstein2009}.
\end{proof}
\begin{lemma}\label{ExponentialBound1}
    Let $A$ be a $\mathbb{R}^{d\times d}$ matrix satisfying that its spectrum lies in the left half-plane, i.e.
\begin{equation*}
    r_{\max} = \max\{ \mathrm{Re}\,\lambda: \lambda \in \sigma(A)\}<0
\end{equation*}
and satisfying that $\max_i\lambda_i\left(\dfrac{A +A^\top}{2} \right) = -\min_i\lambda_i\left(\dfrac{-A +(-A)^\top}{2} \right) \le -\epsilon$.
Then,
\begin{equation}\label{eqn:exponentialBound}
    \Vert e^{tA} \Vert_F^2 \le d\, e^{-2\epsilon t},\quad \text{for } t\ge0.
\end{equation}
\end{lemma}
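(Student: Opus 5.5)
The plan is to reduce the statement to a scalar differential inequality for $\|e^{tA}x\|_2^2$ and then pass from vectors to the Frobenius norm. Fix $x\in\mathbb{R}^d$ and set $g(t):=\|e^{tA}x\|_2^2 = x^\top e^{tA^\top}e^{tA}x$. Since $t\mapsto e^{tA}x$ is smooth with derivative $Ae^{tA}x$, we get
\begin{equation*}
g'(t) = (e^{tA}x)^\top (A^\top + A)(e^{tA}x) = 2\,(e^{tA}x)^\top \frac{A+A^\top}{2}(e^{tA}x).
\end{equation*}
The matrix $(A+A^\top)/2$ is symmetric with largest eigenvalue at most $-\epsilon$. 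The Rayleigh quotient bound $y^\top \tfrac{A+A^\top}{2} y \le -\epsilon\|y\|_2^2$ therefore gives $g'(t)\le -2\epsilon g(t)$ for all $t\ge 0$.

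Next apply Grönwall's inequality in differential form: $\frac{d}{dt}\bigl(e^{2\epsilon t}g(t)\bigr) = e^{2\epsilon t}\bigl(g'(t)+2\epsilon g(t)\bigr)\le 0$. Hence $g(t)\le e^{-2\epsilon t}g(0)=e^{-2\epsilon t}\|x\|_2^2$. Taking the supremum over unit vectors $x$ yields the operator-norm bound $\|e^{tA}\|\le e^{-\epsilon t}$. This is exactly the condition-number-free exponential decay announced in \Cref{Sec:EstShort}.

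Finally, pass to the Frobenius norm. Either apply the vector bound to the standard basis vectors, $\|e^{tA}\|_F^2=\sum_{i=1}^d \|e^{tA}e_i\|_2^2\le d\,e^{-2\epsilon t}$, or use $\|M\|_F^2\le d\|M\|^2$. This gives \eqref{eqn:exponentialBound}.

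The spectral hypothesis $r_{\max}<0$ is not actually needed. It is implied by the symmetrized condition, since for any eigenpair $Av=\lambda v$ with $v\in\mathbb{C}^d$ one has $\mathrm{Re}\,\lambda\,\|v\|^2=v^*\tfrac{A+A^\top}{2}v\le-\epsilon\|v\|^2$. I would remark on this but not rely on it.

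There is no real obstacle. The only point requiring care is the derivative computation: $A$ need not commute with $A^\top$, so one must differentiate the quadratic form $x^\top e^{tA^\top}e^{tA}x$ directly and not try to combine the exponentials into one.
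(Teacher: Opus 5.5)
Your proof is correct and follows essentially the paper's route: first the operator-norm decay $\Vert e^{tA}\Vert \le e^{-\epsilon t}$, then $\Vert e^{tA}\Vert_F^2 \le d\,\Vert e^{tA}\Vert^2$. The only difference is that the paper cites Lemma 3.1 of Hu (2004) for the operator-norm step, whereas your differential inequality for $x^\top e^{tA^\top}e^{tA}x$ proves it directly. Your remark that the hypothesis $r_{\max}<0$ is implied by the symmetrized condition (for $\epsilon>0$) is also correct.
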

\begin{proof}
    Under the assumptions of the Lemma, it holds that 
    \begin{equation}
    \Vert e^{tA} \Vert \le e^{-\epsilon t},\quad \text{for } t\ge0,
\end{equation}
see \citet[Lemma $3.1$]{Hu2004}. Using now that for any matrix $B \in \mathbb{R}^{d\times d}$ it holds that 
\begin{equation}\label{eqn:FrobSpecRel}
\Vert B \Vert_F \le \sqrt{\mathrm{rank}(B)}\Vert B \Vert,
\end{equation}
we get the desired result.
\end{proof}

\begin{lemma}\label{ExponentialBound2}
Let $A$ be a $d\times d$ matrix. Then for any sub-multiplicative norm, $\Vert \cdot \Vert$
\begin{equation*}
     \Vert I_d - e^{-sA}\Vert \le (\vert s \vert \Vert A \Vert)e^{\vert s \vert \Vert A \Vert }
\end{equation*}
where $I_d$ denotes the identity. 
\end{lemma}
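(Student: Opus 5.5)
The plan is to go through the power series of the matrix exponential. Since \(e^{-sA} = \sum_{n\ge 0} \frac{(-s)^n A^n}{n!}\), we get
\begin{equation*}
I_d - e^{-sA} = -\sum_{n\ge 1} \frac{(-s)^n A^n}{n!}.
\end{equation*}
The series converges absolutely in any norm on the finite-dimensional space \(\mathbb{R}^{d\times d}\). Applying the triangle inequality termwise and then sub-multiplicativity, \(\Vert A^n\Vert \le \Vert A\Vert^n\), gives
\begin{equation*}
\Vert I_d - e^{-sA}\Vert \le \sum_{n\ge 1} \frac{(\vert s\vert \Vert A\Vert)^n}{n!} = e^{\vert s\vert\Vert A\Vert} - 1.
\end{equation*}

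It then remains to prove the scalar inequality \(e^x - 1 \le x e^x\) for \(x = \vert s\vert \Vert A\Vert \ge 0\). One route is the mean value theorem: \(e^x - 1 = x e^{\xi}\) for some \(\xi \in [0,x]\), and \(e^{\xi}\le e^x\). Alternatively, compare series coefficientwise: \(e^x - 1 = \sum_{n\ge 0} \frac{x^{n+1}}{(n+1)!}\) and \(x e^x = \sum_{n\ge 0} \frac{x^{n+1}}{n!}\), and each coefficient on the left is no larger than the corresponding one on the right.

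There is no real obstacle here. The only point needing care is justifying the termwise triangle inequality for an infinite sum. This follows from absolute convergence together with continuity of the norm, applied to the partial sums before passing to the limit.
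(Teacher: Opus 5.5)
Your proposal is correct and follows essentially the same route as the paper: expand $e^{-sA}$ as a power series, cancel the identity, apply the triangle inequality and sub-multiplicativity to get $e^{\vert s\vert\Vert A\Vert}-1$, and then bound this by $\vert s\vert\Vert A\Vert\, e^{\vert s\vert\Vert A\Vert}$. Your coefficientwise comparison is exactly the paper's factoring step $\sum_{k\ge 1}x^k/k! = x\sum_{k\ge 0}x^k/(k+1)! \le x e^x$.
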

\begin{proof}
    Using the triangle inequality and the submultiplicativity of the norm, we have that 
    \begin{equation*}
        \begin{aligned}
        \Vert I_d - e^{-sA}\Vert &= \Vert I_d - \sum_{k=0}^\infty \dfrac{(-sA)^k}{k!}\Vert\\
        &\le\Vert \sum_{k=1}^\infty \dfrac{(-sA)^k}{k!}\Vert \\
        &\le  \sum_{k=1}^\infty \dfrac{(\vert s\vert \Vert A\Vert )^k}{k!}\\
        &=  (\vert s \vert \Vert A \Vert )\sum_{k=0}^\infty \dfrac{(\vert s\vert \Vert A\Vert )^k}{(k+1)!} \\
         &\le  (\vert s \vert \Vert A \Vert)e^{\vert s \vert \Vert A \Vert }.
        \end{aligned}
    \end{equation*}
\end{proof}

\begin{lemma}\label{BoundExpectationNormInvariant}
      Let $\mathbb{Y}$ be a diffusion as in \eqref{eqn:1} with drift matrix $Q_0$ satisfying \Cref{Ass:Estimation,Ass.H}. Then, for any $s>0$,
    \begin{equation*}
        \mathbb{E}\left( \Vert \bold{Y}_s\Vert_2^2 \right) = O (d^4), \text{ as } d \rightarrow \infty.
    \end{equation*}
\end{lemma}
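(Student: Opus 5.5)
Since $\mathbb{Y}$ is strictly stationary (\Cref{Ass:Estimation}), $\mathbb{E}\Vert \bold{Y}_s\Vert_2^2$ does not depend on $s$, so it suffices to bound the second moment of the invariant law. Because $F$ has a second moment, $\mathbb{E}\Vert \bold{Y}_0\Vert_2^2 = \Vert \mathbb{E}\bold{Y}_0\Vert_2^2 + \Tr(\mathrm{Cov}(\bold{Y}_0))$. I will bound the mean part and the covariance part separately.

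\textbf{Mean.} Write the stationary solution as $\bold{Y}_s=\int_{-\infty}^s e^{-(s-u)Q_0}\,d\bold{L}_u$. Then $\mathbb{E}\bold{Y}_0 = \int_0^\infty e^{-uQ_0}\,du\;\mathbb{E}\bold{L}_1 = Q_0^{-1}\mathbb{E}\bold{L}_1$, using \Cref{IntMatrixExp} with $A=-Q_0$. Here $\mathbb{E}\bold{L}_1 = \bold{b}+\int_{\Vert z\Vert>1}z\,F(dz)$. By \Cref{Ass.H}(3), $\Vert\bold{b}\Vert_2=O(\sqrt d)$. The large-jump term satisfies
\[
\Big\Vert\int_{\Vert z\Vert>1}z\,F(dz)\Big\Vert_2\le \int \Vert z\Vert_2^2F(dz) = \sum_k\int (z^{(k)})^2F(dz)=O(d)
\]
by \Cref{Ass.H}(2). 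Since $\Vert Q_0^{-1}\Vert\le\Vert Q_0^{-1}\Vert_F=O(d)$ by \Cref{Ass.H}(1), we get $\Vert\mathbb{E}\bold{Y}_0\Vert_2^2 = O(d^2)\cdot O(d^2)=O(d^4)$.

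\textbf{Covariance.} The covariance of the stationary law is
\[
\int_0^\infty e^{-uQ_0}\Sigma_L e^{-uQ_0^\top}\,du, \qquad \Sigma_L:=\Sigma+\int zz^\top F(dz).
\]
Its trace is at most
\[
\int_0^\infty \Vert e^{-uQ_0}\Vert^2\,du\;\Tr(\Sigma_L).
\]
Apply \Cref{ExponentialBound1} with $A=-Q_0$, which is allowed by \Cref{Ass.H}(4). Its proof gives the operator-norm bound $\Vert e^{-uQ_0}\Vert\le e^{-\epsilon u}$, so $\int_0^\infty\Vert e^{-uQ_0}\Vert^2du\le 1/(2\epsilon)=O(1)$. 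For the trace of the noise covariance, $\Tr(\Sigma)\le\sqrt d\,\Vert\Sigma\Vert_F=O(d^{3/2})$ and $\Tr\int zz^\top F(dz)=\int\Vert z\Vert_2^2F(dz)=O(d)$. Hence the covariance part is $O(d^{3/2})$.

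Adding the two parts gives $O(d^4)$, which proves the lemma; the bound is loose in the exponent. The only delicate step is justifying the integral representation of the stationary law and its moments under a second-moment Lévy measure, for which I would cite \citet{Masuda2004}. Routine inequality chasing handles the rest. If one avoids \Cref{Ass.H}(4), the operator-norm decay fails, but $\int_0^\infty e^{-uQ_0}\,du=Q_0^{-1}$ still handles the mean.
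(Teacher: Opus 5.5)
Your proposal is correct and follows essentially the paper's route: the stationary representation $\mathbf{Y}_s=\int_{-\infty}^s e^{-(s-r)Q_0}\,d\mathbf{L}_r$, a dominant mean term bounded by $\Vert Q_0^{-1}\Vert_F^2\,\Vert \mathbf{b}+\int_{\Vert z\Vert>1}z\,F(dz)\Vert_2^2=O(d^2)\cdot O(d^2)$, and fluctuation terms controlled through \Cref{ExponentialBound1} under \Cref{Ass.H}(4). The differences are cosmetic and do not change the $O(d^4)$ conclusion. You use the exact mean--covariance split rather than the paper's elementary bound on the square of a sum of the L\'evy--It\^o components plus the It\^o isometry. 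Your operator-norm/trace bound gives $O(d^{3/2})$ for the fluctuation part, whereas the paper's Frobenius bound $\Vert e^{-tQ_0}\Vert_F^2\le d\,e^{-2\epsilon t}$ gives $O(d^2)$.
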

\begin{proof}
    By \cref{Ass:Estimation}, $\mathbb{Y}$ is stationary and, hence, the law of $\bold{Y}_0$ is that of its invariant distribution. This allows us to write $\bold{Y}_s$, for any $s\ge0$, compactly as
\begin{equation*}
\bold{Y}_s = \int_{-\infty}^s e^{-(s-r)Q_0}d\bold{L}_r.
\end{equation*}
Using the Lévy-Itô decomposition (\Cref{LevyIto}), this means that equivalently,
\begin{equation*}
    \begin{aligned}
 \bold{Y}_s&=\int_{-\infty}^s \mathrm{e}^{-(s-r) Q_0} b \mathrm{d} r+\int_{-\infty}^s \mathrm{e}^{-(s-r) Q_0} \Sigma^{1/2} \mathrm{d} W_r+\int_{-\infty}^s \int_{\Vert z \Vert_2 \geq 1} \mathrm{e}^{-(s-r) Q_0} z\, \mu(\mathrm{d} r, \mathrm{d} z) \\
&+\int_{-\infty}^s \int_{\Vert z \Vert_2<1} \mathrm{e}^{-(s-r) Q_0} z\, \{\mu(\mathrm{d} r, \mathrm{d} z) - F(dz)\,dr\}\\
&= \int_{-\infty}^s \mathrm{e}^{-(s-r) Q_0} b^* \mathrm{d} r+\int_{-\infty}^s \mathrm{e}^{-(s-r) Q_0} \Sigma^{1/2} \mathrm{d} W_r+\int_{-\infty}^s \int \mathrm{e}^{-(s-r) Q_0} z\, \{\mu(\mathrm{d} r, \mathrm{d} z) - F(dz)\,dr\},
\end{aligned}
\end{equation*}
where $b^* = b + \int_{\Vert z \Vert \ge 1} z F(dz)$.
By the triangle inequality and the Itô Isometry, and using that for real numbers $a,b$, $(a+b)^2 \le 2(a^2+b^2)$, we have under \Cref{Ass:Estimation,Ass.H} that, for any $s\ge0$,
\begin{equation*}
    \begin{aligned}
\dfrac{1}{4}\mathbb{E}\left(\Vert\bold{Y}_s\Vert_2^2\right)&\le \mathbb{E}\left(\Vert\int_{-\infty}^s \mathrm{e}^{-(s-r) Q_0} b^* \mathrm{d} r\Vert_2^2\right)+\mathbb{E}\left(\Vert\int_{-\infty}^s \mathrm{e}^{-(s-r) Q_0} \Sigma^{1/2} \mathrm{d} W_r\Vert_2^2\right)\\
&+\mathbb{E}\left(\Vert\int_{-\infty}^s \int \mathrm{e}^{-(s-r) Q_0} z\, \{\mu(\mathrm{d} r, \mathrm{d} z) - F(dz)\,dr\} \Vert_2^2\right) \\
& \le
\Vert\int_{-\infty}^s \mathrm{e}^{-(s-r) Q_0} b^* \mathrm{d} r\Vert_2^2+\int_{-\infty}^s \Vert\mathrm{e}^{-(s-r) Q_0} \Sigma^{1/2}\Vert_F^2 \mathrm{d} r\\
&+\int_{-\infty}^s \int\Vert\ \mathrm{e}^{-(s-r) Q_0} z \Vert_2^2 F(dz)dr \\
\le
&\Vert Q_0^{-1}b^*\Vert_2^2+\int_{0}^{\infty} \Vert \mathrm{e}^{r(-Q_0)} \Vert_F^2  \mathrm{d} r\Vert\Sigma\Vert\\
&+\int_{0}^\infty \Vert \mathrm{e}^{r(-Q_0)} \Vert_F^2 dr\int\Vert z \Vert_2^2 F(dz) \\
\le
&\Vert Q_0^{-1}\Vert_F^2 \Vert b^*\Vert_2^2+\int_{0}^{\infty} d \mathrm{e}^{-2\epsilon r}   \mathrm{d} r\Vert\Sigma\Vert\\
&+\int_{0}^\infty  d\mathrm{e}^{-2\epsilon r}   dr\int\Vert z \Vert_2^2 F(dz)
\\
\le
&\Vert Q_0^{-1}\Vert_F^2 \Vert b^*\Vert_2^2+\dfrac{d}{2 \epsilon}\Vert\Sigma\Vert\\
&+\dfrac{d}{2 \epsilon}\int\Vert z \Vert_2^2 F(dz) =: f_1(d), 
    \end{aligned}
\end{equation*}
where we used \Cref{IntMatrixExp,ExponentialBound1} together with \Cref{Ass.H}. By \Cref{Ass.H}, the term $\Vert Q_0^{-1}\Vert_F^2\Vert b^*\Vert_2^2 = O(d^2)\cdot O(d^2) = O(d^4)$ dominates, where $\Vert b^*\Vert_2^2 = O(d^2)$ follows since $\Vert b\Vert_2^2 = O(d)$ and $\Vert \int_{\Vert z\Vert \ge 1} z F(dz)\Vert_2^2 \le (\int \Vert z\Vert_2^2 F(dz))^2 = O(d^2)$. The remaining terms are $O(d^2)$. Hence $f_1(d) = O(d^4)$, and the desired result follows.
\end{proof}
\begin{lemma}\label{BoundOnIncrements}
    Let $\mathbb{Y}$ be a diffusion as in \eqref{eqn:1} with drift matrix $Q_0 \in \mathbb{R}^{d\times d}$ satisfying \Cref{Ass:Estimation,Ass.H}. Then
    \begin{equation*}
        \mathbb{E}\left( \Vert \bold{Y}_s -\bold{Y}_0 \Vert_2^2 \right) = O (d^3s), \text{ as } d^3s \rightarrow 0. 
    \end{equation*}
\end{lemma}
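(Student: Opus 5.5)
The plan is to write the increment of the stationary solution in its integrated form and bound each contribution separately. By stationarity and the variation-of-constants formula,
\begin{equation*}
\bold{Y}_s-\bold{Y}_0 = \left(e^{-sQ_0}-I_d\right)\bold{Y}_0 + \int_0^s e^{-(s-r)Q_0}\,d\bold{L}_r .
\end{equation*}
Using $(a+b)^2\le 2(a^2+b^2)$, it suffices to bound $\mathbb{E}\Vert (e^{-sQ_0}-I_d)\bold{Y}_0\Vert_2^2$ and $\mathbb{E}\Vert\int_0^s e^{-(s-r)Q_0}d\bold{L}_r\Vert_2^2$ separately. For the stochastic integral, we would repeat the Lévy--Itô splitting from the proof of \Cref{BoundExpectationNormInvariant}, with the modified drift $b^*$, the Brownian part, and the compensated jump part. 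By the Itô isometry and \Cref{ExponentialBound1}, the Brownian and jump pieces are bounded by
\begin{equation*}
\int_0^s d\,e^{-2\epsilon(s-r)}dr\,\left(\Vert\Sigma\Vert+\int\Vert z\Vert_2^2F(dz)\right)\le d s\,\left(\Vert\Sigma\Vert+\int\Vert z\Vert_2^2F(dz)\right),
\end{equation*}
which is $O(d^2 s)$ under \Cref{Ass.H}. For the compensated jump part we use $\int\Vert z\Vert_2^2F(dz)\le d\max_k\int (z^{(k)})^2F(dz)=O(d)$. The drift piece is at most $\left(\int_0^s\Vert e^{-(s-r)Q_0}\Vert\,dr\right)^2\Vert b^*\Vert_2^2\le s^2\,O(d^2)$, which is of smaller order once $s\to 0$.

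For the first term, we would apply \Cref{ExponentialBound2} in operator norm: $\Vert I_d-e^{-sQ_0}\Vert\le s\Vert Q_0\Vert e^{s\Vert Q_0\Vert}$. With $\Vert Q_0\Vert\le\Vert Q_0\Vert_F=O(d)$, we get $\Vert I_d-e^{-sQ_0}\Vert^2 = O(d^2s^2)$ whenever $ds\to 0$, since the exponential factor is then bounded. Multiplying by $\mathbb{E}\Vert\bold{Y}_0\Vert_2^2$ gives the contribution $O(d^2s^2)\,\mathbb{E}\Vert\bold{Y}_0\Vert_2^2$.

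The main obstacle is this first term. A crude bound via \Cref{BoundExpectationNormInvariant}, $\mathbb{E}\Vert\bold{Y}_0\Vert^2=O(d^4)$, gives $O(d^6s^2)$. This is at most $O(d^3s)$ exactly when $d^3s=O(1)$, and it is $o(d^3 s)$ in the regime $d^3s\to 0$. The first term is therefore dominated by $O(d^3 s)$ precisely in the stated regime, which is why the hypothesis reads $d^3s\to0$. Combining the pieces, with every term of order $O(d^2s)$, $O(d^2s^2)$ or $O(d^6s^2)\subseteq O(d^3s)$, yields $\mathbb{E}\Vert\bold{Y}_s-\bold{Y}_0\Vert_2^2=O(d^3s)$.

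Some care is needed to check that the constants are uniform in $d$. We would use that $\epsilon$ from \Cref{Ass.H}(4) is independent of $d$, and that the exponential factor $e^{s\Vert Q_0\Vert}$ is bounded because $ds\le d^3s\to0$.
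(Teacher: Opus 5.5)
Your argument is correct and uses the same toolkit as the paper: variation of constants, the Lévy--Itô splitting with $b^*$, the Itô isometry with \Cref{ExponentialBound1}, and \Cref{ExponentialBound2} for a matrix-exponential difference. However, your decomposition differs from the paper's in a way that matters.

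The paper writes $\mathbf{Y}_s-\mathbf{Y}_0$ directly as $\int_0^s e^{-(s-r)Q_0}\,d\mathbf{L}_r$. For the stationary solution this drops the initial-condition term $(e^{-sQ_0}-I_d)\mathbf{Y}_0$, which you correctly keep.

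In the paper, the order $d^6s^2$ that forces the regime $d^3s\to0$ comes from the drift integral. It is bounded as $\Vert Q_0^{-1}(I_d-e^{-sQ_0})b^*\Vert_2^2\le \Vert Q_0^{-1}\Vert_F^2\,(s\Vert Q_0\Vert_F)^2e^{2s\Vert Q_0\Vert_F}\Vert b^*\Vert_2^2$.

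Your bound $s^2\Vert b^*\Vert_2^2$, which uses $\Vert e^{-tQ_0}\Vert\le 1$, is sharper and makes that piece only $O(d^2s^2)$. In your proof the $O(d^6s^2)$ contribution instead comes from $\Vert I_d-e^{-sQ_0}\Vert^2\,\mathbb{E}\Vert\mathbf{Y}_0\Vert_2^2$ via \Cref{BoundExpectationNormInvariant}.

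Both routes end at $O(d^6s^2+d^2s)=O(d^3s)$ as $d^3s\to0$. Yours is the one that accounts for all of $\mathbf{Y}_s-\mathbf{Y}_0$ under stationarity, so it repairs an omission in the paper's proof rather than just rephrasing it.
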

\begin{proof}
     By \Cref{Ass:Estimation}, $\mathbb{Y}$ is stationary and, hence, the law of $\bold{Y}_0$ is that of its invariant distribution. This allows us to for any $s\ge0$ write $\bold{Y}_s$ compactly as
\begin{equation*}
\bold{Y}_s = \int_{-\infty}^s e^{-(s-r)Q_0}d\bold{L}_r.
\end{equation*}
Let $\bold{Z}_s := \bold{Y}_s - \bold{Y}_0 $.
Now, by the Lévy-Itô decomposition, for all $s>0$,
\begin{equation*}
\begin{aligned}
& \bold{Z}_s=\int_0^s \mathrm{e}^{-(s-r) Q_0} b \mathrm{d} r+\int_0^s \mathrm{e}^{-(s-r) Q_0} \Sigma^{1/2} \mathrm{d} W_r+\int_0^s \int_{\Vert z\Vert \geq 1} \mathrm{e}^{-(s-r) Q_0} z\, \mu(\mathrm{d} r, \mathrm{d} z) \\
&+\int_0^s \int_{\Vert z\Vert<1} \mathrm{e}^{-(s-r) Q_0} z\, \{\mu(\mathrm{d} r, \mathrm{d} z) - F(dz)\,dr\}.
\end{aligned}
\end{equation*}
Let $b^*: = b + \int_{\Vert z \Vert \ge 1} z F(dz)$. By the Itô isometry and using again that for real numbers $a,b$, $(a+b)^2 \le 2(a^2+b^2)$, we get
\begin{equation*}
\begin{aligned}
\dfrac{1}{4}\mathbb{E}\left(\Vert\bold{Z}_s\Vert_2^2\right) &\le\Vert\int_{0}^s \mathrm{e}^{-(s-r) Q_0} b^* \mathrm{d} r\Vert_2^2+\int_{0}^s \Vert\mathrm{e}^{-(s-r) Q_0} \Sigma^{1/2}\Vert_F^2 \mathrm{d} r\\
&+\int_{0}^s \int\Vert\ \mathrm{e}^{-(s-r) Q_0} z \Vert_2^2 F(dz)dr \\
&\le\Vert Q_0^{-1} \left(I_d - e^{-sQ_0} \right)b^* \Vert_2^2+\int_{0}^s \Vert \mathrm{e}^{-(s-r)  Q_0}\Vert_F^2  \mathrm{d} r\Vert\Sigma\Vert\\
&+\int_{0}^s \Vert \mathrm{e}^{-(s-r)  Q_0}\Vert_F^2  \int\Vert  z \Vert_2^2 F(dz)dr \\
&\le \Vert Q_0^{-1} \Vert_F^2 d \left((\vert s \vert \Vert Q_0 \Vert)^2 e^{2\vert s \vert \Vert Q_0 \Vert } \right) \Vert b^* \Vert_2^2+ \int_{0}^s \Vert \mathrm{e}^{r(-  Q_0)}\Vert_F^2  \mathrm{d} r\Vert\Sigma\Vert\\
&+\int_{0}^s \Vert \mathrm{e}^{r  (-Q_0)}\Vert_F^2  \int\Vert  z \Vert_2^2 F(dz)dr\\
&\le \Vert Q_0^{-1} \Vert_F^2 \left((\vert s \vert \Vert Q_0\Vert )^2 e^{2\vert s \vert \Vert Q_0 \Vert } \right) \Vert b^* \Vert_2^2+ \int_{0}^s \Vert \mathrm{e}^{r(-  Q_0)}\Vert_F^2  \mathrm{d} r\Vert\Sigma\Vert\\
&+\int_{0}^s \Vert \mathrm{e}^{r  (-Q_0)}\Vert_F^2  \int\Vert  z \Vert_2^2 F(dz)dr
\\
&\le \Vert Q_0^{-1} \Vert_F^2 \left((\vert s \vert \Vert Q_0 \Vert_F)^2 e^{2\vert s \vert \Vert Q_0 \Vert_F } \right) \Vert b^* \Vert_2^2+ \dfrac{e^1 d}{2\epsilon}(1-e^{-2\epsilon s})\Vert\Sigma\Vert\\
&+\dfrac{e^1 d}{2\epsilon}(1-e^{-2\epsilon s})\int\Vert  z \Vert_2^2 F(dz)\\
&\le \Vert Q_0^{-1} \Vert_F^2 \left((\vert s \vert \Vert Q_0 \Vert_F)^2 e^{2\vert s \vert \Vert Q_0 \Vert_F } \right) \Vert b^* \Vert_2^2+ e^1 d s\Vert\Sigma\Vert\\
&+e^1 d s \int\Vert  z \Vert_2^2 F(dz) =: f_2(d,s),
\end{aligned}
\end{equation*}
where we used \Cref{IntMatrixExp,ExponentialBound1,ExponentialBound2}, the inequality $1-e^{-x} \le x$ for $x \ge 0$, and so $\epsilon$ is the constant from \Cref{Ass.H}. Following \Cref{Ass.H}, we have the desired result.
\end{proof}
\begin{lemma}\label{MatrixCalc3}
    For $T>0$, consider the measurable space $\left([0,T],\mathcal{B}\left([0,T]\right), Leb \right)$. For any function $f: \mathbb{R} \mapsto \mathbb{R}^d$, $d \in \mathbb{N}$ satisfying $f \in L^1\left([0,T], \mathbb{R}^d\right)$, and $0\le a <b \le T$, it holds that
    \begin{equation*}
        \Vert \int_{a}^b f(x) \dd x \Vert_2 \le   \int_{a}^b \Vert f(x)\Vert_2  \dd x. 
    \end{equation*}
\end{lemma}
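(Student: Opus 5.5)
This is the vector-valued triangle inequality for Bochner/Lebesgue integrals. I will reduce it to the scalar case via duality of the Euclidean norm. Set $v := \int_a^b f(x)\,\dd x \in \mathbb{R}^d$, defined componentwise. This is well defined because each component $f_i$ is integrable: $|f_i| \le \Vert f\Vert_2$, and $f \in L^1([0,T],\mathbb{R}^d)$ means $\Vert f\Vert_2$ is integrable.

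If $v = 0$ there is nothing to prove. Otherwise I choose the unit vector $u := v/\Vert v\Vert_2$. Linearity of the componentwise integral gives
\[
\Vert v\Vert_2 = u^\top v = \sum_{i=1}^d u_i \int_a^b f_i(x)\,\dd x = \int_a^b u^\top f(x)\,\dd x .
\]
Next I apply the scalar inequality $\left|\int g\right| \le \int |g|$ to $g = u^\top f$. Then Cauchy--Schwarz pointwise gives $|u^\top f(x)| \le \Vert u\Vert_2 \Vert f(x)\Vert_2 = \Vert f(x)\Vert_2$. Chaining these yields $\Vert v\Vert_2 \le \int_a^b \Vert f(x)\Vert_2\,\dd x$, which is the claim.

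The only point needing care is measurability and integrability. Measurability of $x\mapsto \Vert f(x)\Vert_2$ follows from continuity of the norm composed with the measurable $f$. Integrability of $u^\top f$ follows from the same pointwise bound. Beyond this there is no real obstacle; the argument is a routine duality step.
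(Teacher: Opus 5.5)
Your proof is correct and is essentially the paper's argument: the paper sets $g=\int_a^b f(x)\,\dd x$, writes $\Vert g\Vert_2^2=\int_a^b g^\top f(x)\,\dd x\le \Vert g\Vert_2\int_a^b\Vert f(x)\Vert_2\,\dd x$ by pointwise Cauchy--Schwarz, and then divides by $\Vert g\Vert_2$. That is your duality step, with the unnormalized vector $g$ in place of $u$; your separate handling of the case $v=0$ (where the paper's division is not valid) and your integrability check are small but welcome additions.
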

\begin{proof}
Let $g := \int_a^b f(x) \dd x$. Then by Cauchy-Schwarz, we find
\begin{equation*}
    \begin{aligned}
        \Vert g \Vert_2^2 &= g^\top g\\
        &= \int_a^b g^\top f(x) \dd x \\
         &\le \int_a^b \Vert g \Vert_2 \Vert f(x) \Vert_2 \dd x \\
         &=  \Vert g \Vert_2 \int_a^b  \Vert f(x) \Vert_2 \dd x. 
    \end{aligned}
\end{equation*}
Dividing by $\Vert g \Vert_2$ now yields the result.
\end{proof}
\begin{lemma}\label{MatrixCalc1}
    Let $A \in \mathbb{R}^{d\times d}$. Further, let $\mathbb{X}$ and $\mathbb{Y}$ be stochastic processes on $\mathbb{R}^d$ and $\mu$ a measure on $[0,T]$. Then 
\begin{equation*}
\int_{0}^T\bold{X}_s^\top A \bold{Y}_s \mu(\dd s) =     \Tr\left(A^\top\int_{0}^T\bold{X}_s \bold{Y}_s^\top \mu(\dd s) \right).
\end{equation*}
\end{lemma}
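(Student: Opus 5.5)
This is pure linear algebra. We use the identity $x^\top A y = \Tr(x^\top A y) = \Tr(A y x^\top)$, together with cyclicity and transpose invariance of the trace. First fix $s$ and work pointwise: since $\bold{X}_s^\top A \bold{Y}_s$ is a scalar, it equals its own transpose $\bold{Y}_s^\top A^\top \bold{X}_s$. Writing this scalar as a trace and cycling gives
\[
\bold{X}_s^\top A \bold{Y}_s = \Tr\left(\bold{Y}_s^\top A^\top \bold{X}_s\right) = \Tr\left(A^\top \bold{X}_s \bold{Y}_s^\top\right).
\]
Alternatively, one can see this entrywise: $\Tr(A^\top M) = \sum_{i,j} A_{ij} M_{ij}$ with $M = \bold{X}_s\bold{Y}_s^\top$, so $M_{ij} = \bold{X}_s^{(i)}\bold{Y}_s^{(j)}$, and this is exactly $\sum_{i,j}\bold{X}_s^{(i)}A_{ij}\bold{Y}_s^{(j)}$.

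Next we integrate against $\mu$ and pull the trace and $A^\top$ outside the integral. The map $M \mapsto \Tr(A^\top M) = \sum_{i,j}A_{ij}M_{ij}$ is a finite linear combination of matrix entries. The matrix integral $\int_0^T \bold{X}_s\bold{Y}_s^\top\,\mu(\dd s)$ is defined entrywise. So interchanging the trace with the integral is just linearity of the integral applied to the $d^2$ scalar integrals $\int_0^T \bold{X}_s^{(i)}\bold{Y}_s^{(j)}\,\mu(\dd s)$.

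The only point needing care is integrability. We assume, as implicitly in the statement, that each product $\bold{X}^{(i)}_s\bold{Y}^{(j)}_s$ is $\mu$-integrable pathwise. This holds in every application in the paper: for Lebesgue measure it follows from càdlàg paths being bounded on $[0,T]$, and for the discrete measure $\mu_{\mathcal{P}_T}$ in \eqref{dDimMeasure} the integrals are finite sums. With this, every integral is finite and linearity applies. There is no real obstacle; the lemma is a bookkeeping identity used later to rewrite the quadratic terms of the likelihoods as Frobenius inner products $\langle A, \widehat{\mathbf{C}}\rangle_F$.
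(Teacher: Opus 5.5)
Your proposal is correct and follows essentially the same route as the paper. The paper expands both sides entrywise into $\sum_{k,l}A_{kl}\int_0^T \mathbf{X}_s^{(k)}\mathbf{Y}_s^{(l)}\,\mu(\mathrm{d}s)$ and matches them. Your pointwise trace identity followed by linearity of the integral is the same computation written compactly. Your explicit integrability caveat is a sensible addition that the paper leaves implicit.
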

\begin{proof}
    We note that 
    \begin{equation*}
        \begin{aligned}
        \int_{0}^T\bold{X}_s^\top A \bold{Y}_s \mu(\dd s) &= \int_0^T \sum_{k=1}^d\sum_{l=1}^d \bold{X}_s^{(k)}A_{kl}\bold{Y}_s^{(l)} \mu(\dd s)\\
        &= \sum_{k=1}^d\sum_{l=1}^d A_{kl}\int_0^T \bold{X}_s^{(k)} \bold{Y}_s^{(l)} \mu(\dd s).
        \end{aligned}
    \end{equation*}
        Further,
    \begin{equation*}
        \left[A^\top \int_{0}^T\bold{X}_s \bold{Y}_s^\top \mu(\dd s)\right]_{ij} =\sum_{k=1}^d A_{ki} \int_0^T\bold{X}_s^{(k)}\bold{Y}_s^{(j)} \mu(\dd s),
    \end{equation*}
    and so
    \begin{equation*}
    \Tr\left(\left[A^\top \int_{0}^T\bold{X}_s \bold{Y}_s^\top \mu(\dd s)\right]\right) =\sum_{l=1}^d\sum_{k=1}^d A_{kl} \int_0^T\bold{X}_s^{(k)}\bold{Y}_s^{(l)}\mu(\dd s),
    \end{equation*}
    yielding the desired result.
\end{proof}
\begin{lemma}\label{MatrixCalc2}
    Let $A \in \mathbb{R}^{d\times d}$. Further, let $\mathbb{X}$ be stochastic processes on $\mathbb{R}^d$ and $\mu$ a d-dimensional measure on $[0,T]$. Then 
\begin{equation*}
\int_{0}^T\bold{X}_s^\top A \mu( \dd s) =     \Tr\left(A^\top\int_{0}^T\bold{X}_s \mu^\top(\dd s) \right).
\end{equation*}
\end{lemma}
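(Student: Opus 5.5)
This is the same computation as in \Cref{MatrixCalc1}, with the scalar measure and the second process replaced by the vector-valued measure $\mu=(\mu^{(1)},\dots,\mu^{(d)})^\top$. I interpret $\int_0^T \bold{X}_s^\top A\,\mu(\dd s)$ as the scalar $\sum_{k,l}A_{kl}\int_0^T \bold{X}_s^{(k)}\mu^{(l)}(\dd s)$. Likewise, $\int_0^T \bold{X}_s\,\mu^\top(\dd s)$ denotes the $d\times d$ matrix whose $(k,l)$ entry is $\int_0^T \bold{X}_s^{(k)}\mu^{(l)}(\dd s)$. This matches how the lemma is used later with the measure $\mu^{\tilde{\mathbb{W}}}_{\mathcal P_T}$ from \eqref{stocMeas}, whose components are the increments $\Delta^n_{\mathcal P_T}\tilde{\bold W}^{(l)}$ placed at the atoms $s_n^{N_T}$. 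In that case all integrals are finite sums, so no integrability issues arise. In general I would assume that each $\int_0^T|\bold{X}_s^{(k)}|\,|\mu^{(l)}|(\dd s)<\infty$, so that linearity of the integral is available.

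The first step is to expand the left-hand side coordinatewise. I would write $\bold{X}_s^\top A\,\mu(\dd s)=\sum_{k=1}^d\sum_{l=1}^d \bold{X}_s^{(k)}A_{kl}\,\mu^{(l)}(\dd s)$. Linearity of integration against each component measure $\mu^{(l)}$ then lets me pull the constants $A_{kl}$ out, giving $\sum_{k,l}A_{kl}\int_0^T \bold{X}_s^{(k)}\mu^{(l)}(\dd s)$.

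The second step is to compute the right-hand side. I would note that $\bigl[A^\top\int_0^T\bold{X}_s\mu^\top(\dd s)\bigr]_{ij}=\sum_{k}A_{ki}\int_0^T\bold{X}_s^{(k)}\mu^{(j)}(\dd s)$. Taking the trace means setting $i=j=l$ and summing over $l$, which yields exactly the same double sum as in the first step, and this finishes the proof.

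I see no real obstacle. The only point needing care is to state precisely what the vector-measure integral means, including, in the later application, the interpretation via finite sums of increments. Once that convention is fixed, the identity reduces to the elementary fact $x^\top A y=\Tr(A^\top x y^\top)$ combined with linearity of the integral.
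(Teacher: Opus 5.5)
Your proposal is correct and follows the paper's proof: the paper states that the argument is identical to that of \Cref{MatrixCalc1}, with $\mathbf{Y}_s\,\mu(\mathrm{d}s)$ replaced componentwise by the vector measure $\mu(\mathrm{d}s)$, which is exactly your coordinatewise expansion of both sides into $\sum_{k,l}A_{kl}\int_0^T \mathbf{X}_s^{(k)}\mu^{(l)}(\mathrm{d}s)$. Your remarks on what the vector-measure integral means and on the integrability condition are sensible clarifications that the paper leaves implicit.
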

\begin{proof}
Identical to the proof of \Cref{MatrixCalc1}, replacing the product measure $\mathbb{Y}_s\,\mu(ds)$ by the vector-valued measure $\mu(ds)$ component-wise.
\end{proof}

\begin{lemma}\label{Matrixcalc4}
    Consider $\bold{A}, \bold{B} \in \mathbb{R}^{d\times d}$. It holds that 
    \begin{equation*}
        \Tr \left(\mathbf{B}\bold{A}\bold{B}^\top \right) =\vect\left(\mathbf{B}^\top \right)^\top \left(I_{d\times d} \otimes \bold{A} \right)\vect\left(\mathbf{B}^\top\right).
    \end{equation*}
\end{lemma}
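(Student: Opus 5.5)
The identity is linear algebra, and I will prove it by writing both sides as explicit double sums and matching them. I will use the column-stacking convention for $\vect$ fixed earlier in the appendix: for $\mathbf{M}\in\mathbb{R}^{d\times d}$, the entry of $\vect(\mathbf{M})$ in position $(j-1)d+i$ equals $\mathbf{M}_{ij}$. Since the paper uses this convention throughout, I will not need the general identity $\vect(\mathbf{A}\mathbf{X}\mathbf{C})=(\mathbf{C}^\top\otimes \mathbf{A})\vect(\mathbf{X})$. A short block computation suffices.

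\emph{Step 1.} Partition $\vect(\mathbf{B}^\top)$ into $d$ blocks of length $d$. The $j$th block is the $j$th column of $\mathbf{B}^\top$, which is $(\mathbf{B}_{j\bullet})^\top$, the transpose of the $j$th row of $\mathbf{B}$.

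\emph{Step 2.} The matrix $I_{d\times d}\otimes \mathbf{A}$ is block diagonal with $d$ copies of $\mathbf{A}$ on the diagonal. The quadratic form therefore splits into block-diagonal contributions:
\begin{equation*}
\vect(\mathbf{B}^\top)^\top (I_{d\times d}\otimes\mathbf{A})\vect(\mathbf{B}^\top)=\sum_{j=1}^d \mathbf{B}_{j\bullet}\,\mathbf{A}\,(\mathbf{B}_{j\bullet})^\top .
\end{equation*}

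\emph{Step 3.} The $(j,j)$ entry of $\mathbf{B}\mathbf{A}\mathbf{B}^\top$ is $\sum_{k,l}\mathbf{B}_{jk}\mathbf{A}_{kl}\mathbf{B}_{jl}=\mathbf{B}_{j\bullet}\mathbf{A}(\mathbf{B}_{j\bullet})^\top$. Summing over $j$ gives $\Tr(\mathbf{B}\mathbf{A}\mathbf{B}^\top)$, which matches the expression from Step 2.

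The only point needing care is Step 1: identifying the blocks of $\vect(\mathbf{B}^\top)$ with the rows of $\mathbf{B}$. This is exactly why $\mathbf{B}^\top$, not $\mathbf{B}$, appears inside $\vect$. If $\vect(\mathbf{B})$ were used, its blocks would be the columns of $\mathbf{B}$, and the same computation would give $\Tr(\mathbf{B}^\top\mathbf{A}\mathbf{B})$, which is the wrong expression. I will spell out the index bookkeeping explicitly to avoid this. No other step is a real obstacle, and no symmetry or other assumption on $\mathbf{A}$ is needed.
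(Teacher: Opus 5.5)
Your proof is correct and follows essentially the same route as the paper. The paper also identifies the $j$th length-$d$ block of $\vect(\mathbf{B}^\top)$ with the $j$th row of $\mathbf{B}$ and writes $\vect(\mathbf{B}^\top)^\top(I_{d\times d}\otimes\mathbf{A})=(\mathbf{B}_{1\bullet}\mathbf{A},\dots,\mathbf{B}_{d\bullet}\mathbf{A})$; it then matches the result with $\sum_j(\mathbf{B}\mathbf{A}\mathbf{B}^\top)_{jj}$, so your block-diagonal quadratic-form phrasing is a more compact packaging of the same index computation.
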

\begin{proof}
    We first note that $\left( \bold{B} \bold{A} \right)_{ij} = \sum_{k=1}^d \bold{B}_{ik}\bold{A}_{kj}$. Hence $\left(\bold{B}\bold{A}\bold{B}^\top \right)_{ij} = \sum_{l=1}^d \left(\sum_{k=1}^d \bold{B}_{ik}\bold{A}_{kl}\right)\bold{B}^\top_{lj}$. In turn, we conclude that 
    \begin{equation*}
        \Tr \left(\bold{B}\bold{A}\bold{B}^\top \right) = \sum_{j=1}^d\sum_{l=1}^d \left(\sum_{k=1}^d \bold{B}_{jk}\bold{A}_{kl}\right)\bold{B}^\top_{lj}.
    \end{equation*}
    We then note that 
    \begin{equation*}
        vec\left(\bold{B}^\top\right)^\top \left(I_{d\times d}\otimes \bold{A}\right) = \left(\bold{B}_{1\bullet}\bold{A},\dots,\bold{B}_{d\bullet }\bold{A}\right) 
    \end{equation*}
    Hence,
    \begin{equation*}
        \left( vec\left(\bold{B}^\top\right)^\top \left(I_{d\times d}\otimes \bold{A}\right) \right)_{i + (j-1)d}= \left(\bold{B}_{j\bullet} \bold{A}\right)_i = \sum_{k=1}^d \bold{B}_{jk}\bold{A}_{ki}, \quad i,j=1,2,\dots,d.
    \end{equation*}
    We thus conclude that 
    \begin{equation*}
        \begin{aligned}
            vec\left(\bold{B}^\top \right)^\top \left(I_{d\times d} \otimes \bold{A} \right)vec\left(\bold{B}^\top\right) &= \sum_{j=1}^d\sum_{l=1}^d\left( vec\left(\bold{B}^\top\right)^\top \left(I_{d\times d}\otimes \bold{A}\right) \right)_{l + (j-1)d} \left(vec\left(\bold{B}^\top\right)\right)_{l + (j-1)d} \\&=
            \sum_{j=1}^d\sum_{l=1}^d\left(\sum_{k=1}^d \bold{B}_{jk}\bold{A}_{kl}\right)\bold{B}^\top_{lj},
        \end{aligned}
    \end{equation*}
    as desired.
\end{proof}

\subsection{Proofs in the continuous setting}\label{Proofs:ContSetting}
\begin{proof}[Proof of \Cref{DexHeimerNormBound}]
We follow \citet{Dexheimer2022}. Hence, we assume we are on the event %\newline 
$$E=E_1\cap E_2 :=\left\{ \inf _{\mathbf{B} \in \mathbb{R}^{d \times d} \backslash\{0\}} \frac{\|\mathbf{B} X\|_{L^2}^2}{\|\mathbf{B}\|_F^2} \geq \frac{\kappa_{\min }}{2}\right\} \bigcap \left\{\sup _{\mathbf{B} \in \mathbb{R}^{d \times d}, \mathbf{B} \neq 0} \frac{\left\langle\varepsilon_T, \mathbf{B}\right\rangle_F}{\|\mathbf{B}\|_S} \leq c_* \sqrt{\frac{\kappa_{\max }}{T}} \right\},$$ where $c_*$ is the constant defined in Proposition $3.3$ of \citet{Dexheimer2022}, which does not depend on $d$ or $T$. By the same argument as in the proof of Lemma $2.1$ of \citet{Dexheimer2022} (applied with no penalty), we have that for any $Q \in \mathcal{M}_d(\mathbb{R})$
\begin{multline}\label{eqn:DexFirstBound}
\left\|\Sigma^{-1/2}\left(\widehat{Q}-Q_0\right) X\right\|_{L^2}^2-\left\|\Sigma^{-1/2}\left(Q-Q_0\right) X\right\|_{L^2}^2 \\
\leq 2\left\langle\varepsilon_T, \Sigma^{-1/2}(Q-\widehat{Q})\right\rangle_F-\left\|\Sigma^{-1/2}(\widehat{Q}-Q) X\right\|_{L^2}^2,
\end{multline}
where
\begin{equation*}
\varepsilon_T^{\top}:=\frac{1}{T} \int_0^T \bold{X}_s \mathrm{d} \widetilde{\bold{X}}^c(Q_0)_s^{\top},
\end{equation*}
with $\widetilde{\bold{X}}^c(Q_0) = \Sigma^{-1/2}\bold{X}^c(Q_0)$ being a $\mathbb{P}^{Q_0}$-Wiener process (see \Cref{Prelim:EstSec}). We now note that, for $0<\varepsilon_0<1$,
$
\|\mathbf{B}\|_S:=\|\mathbf{B}\|_* \vee \sqrt{\log \left(4 \varepsilon_0^{-1}\right)}\|\mathbf{B}\|_F
$. Further, $\|\mathbf{B}\|_* \le \sqrt{2}d\Vert \mathbf{B} \Vert_F $, for all $ \mathbf{B} \in \mathbb{R}^{d \times d}$ following the arguments in \cite{Dexheimer2022} $(2.7)$ . We thus notice, that on $E$, 
\begin{equation*}
    \begin{aligned}
  2\left\langle\varepsilon_T, \Sigma^{-1/2}(Q-\widehat{Q})\right\rangle_F&\le 2c_* \sqrt{\dfrac{\kappa_{\max}}{T}} \Vert \Sigma^{-1/2}(Q-\widehat{Q}) \Vert_S\\&\le2 c_* \sqrt{\dfrac{\kappa_{\max}}{T}}\left( \sqrt{\log 4 \epsilon_0^{-1}} \vee \sqrt{2}d\right) \Vert \Sigma^{-1/2}(Q-\widehat{Q}) \Vert_F \\
    & \le 2c_* \sqrt{\dfrac{2 \kappa_{\max}}{\kappa_{\min} T}}\left( \sqrt{\log 4 \epsilon_0^{-1}} \vee \sqrt{2}d\right) \Vert \Sigma^{-1/2}(Q-\widehat{Q}) X \Vert _{L_2}\\
     & \le c_*^2 \dfrac{2\kappa_{\max}}{\kappa_{\min} T}\left( \log 4 \epsilon_0^{-1} \vee 2d^2\right) + \Vert \Sigma^{-1/2}(Q-\widehat{Q}) X \Vert _{L_2}^2,
    \end{aligned}
\end{equation*}
where we used the AM-GM inequality. Inserting this into \eqref{eqn:DexFirstBound} gives us that
\begin{equation*}
\left\|\Sigma^{-1/2}\left(\widehat{Q}-Q_0\right) X\right\|_{L^2}^2-\left\|\Sigma^{-1/2}\left(Q-Q_0\right) X\right\|_{L^2}^2 \leq c_*^2 \dfrac{2\kappa_{\max}}{\kappa_{\min} T}\left( \log 4 \epsilon_0^{-1} \vee 2d^2\right) ,
\end{equation*}
as desired. Now we just need to provide a lower bound on the probability of $E$. Note that 
\begin{equation*}
    \begin{aligned}
        \mathbb{P}^{Q_0}\left( E \right) &= 1 - \mathbb{P}^{Q_0}(E^c) \\
        &=1 - \mathbb{P}^{Q_0}(E_1^c\cup E_2^c) \\
        & \ge 1 - \dfrac{\epsilon_0}{2} - \left(21(d \wedge e)\right)^d H(T,\dfrac{\kappa_{\min}}{3d}),
    \end{aligned}
\end{equation*}
where we used De Morgan's law in the second equality and Propositions $3.1$ and $3.3$ of \cite{Dexheimer2022} in providing the lower bound.
\end{proof}
\begin{proof}[Proof of \Cref{CorrollaryBoundMaxLike}]
    First we apply \Cref{DexHeimerNormBound} with $Q=Q_0$ and use that we are on $E$ to get, \begin{equation*}
\dfrac{\kappa_{\min}}{2} \Vert \Sigma^{-1/2}\left(\widehat{Q}-Q_0\right)\Vert_F^2 \le \left\|\Sigma^{-1/2}\left(\widehat{Q}-Q_0\right) X\right\|_{L^2}^2 \leq c_*^2 \dfrac{2\kappa_{\max}}{\kappa_{\min} T}\left( \log 4 \epsilon_0^{-1} \vee 2d^2\right),
\end{equation*} 
with probability  $1 - \dfrac{\epsilon_0}{2} - \left(21(d \wedge e)\right)^d H(T,\dfrac{\kappa_{\min}}{3d})$. Now using the submultiplicativity of the Frobenius norm, we have that
\begin{equation*}
    \Vert \widehat{Q}-Q_0 \Vert_F^2=\Vert\Sigma^{1/2} \Sigma^{-1/2} \left(\widehat{Q}-Q_0\right)\Vert_F^2 \le \Vert \Sigma^{1/2} \Vert^2\Vert \Sigma^{-1/2} \left(\widehat{Q}-Q_0\right) \Vert_F^2 = \lambda_{\max}(\Sigma)\,\Vert \Sigma^{-1/2}(\widehat{Q}-Q_0)\Vert_F^2,
\end{equation*}
and finally letting $\epsilon_0 = 1/d$ yields the result.
\end{proof}
\subsection{Proofs: Infeasible case}\label{Proofs:InfeasibleCase}
\begin{proof}[Proof of \Cref{StrictConvexityOfC_THat}]
We follow the arguments in \cite{Gaiffas2019} and observe that, for $u\in\mathbb{R}^d$ such that $\Vert u \Vert_2=1$, we have that 
\begin{equation*}
    u^\top \widehat{\mathbf{C}}_{\mathcal{P}_T} u = \dfrac{1}{T}\sum_{n=0}^{N_T-1} \left(u^\top\bold{X}_{s_n^{N_T}}\right)^2 \left( s_{n+1}^{N_T} - s_n^{N_T} \right), 
\end{equation*}
which is equal to $0$ only if $\mathbb{X}_{\mathcal{P}_T}$ is included in a hyperplane of $\mathbb{R}^d$. Since by assumption $\Sigma$ is positive definite, this event is a null set.
\end{proof}
\begin{proof}[Proof of \Cref{Prop:LevyH_discrete}]
    We note that 
    \begin{equation*}
        \begin{aligned}
            \widehat{\mathbf{C}}_{\mathcal{P}_T} - \widehat{\mathbf{C}}_T &=\dfrac{1}{T} \sum_{n=0}^{N_T-1}\bold{X}_{s_n^{N_T}}\bold{X}_{s_n^{N_T}}^\top (s^{N_T}_{n+1}-s^{N_T}_n)-\dfrac{1}{T} \int_0^T \bold{X}_s \ \bold{X}_s^\top \dd s\\
            &=\dfrac{1}{T} \sum_{n=0}^{N_T-1}\bold{X}_{s_n^{N_T}}\bold{X}_{s_n^{N_T}}^\top \int_{s^{N_T}_n}^{s^{N_T}_{n+1}}\dd s-\dfrac{1}{T} \int_0^T \bold{X}_s  \bold{X}_s^\top \dd s\\
            &=\dfrac{1}{T} \sum_{n=0}^{N_T-1}\int_{s^{N_T}_n}^{s^{N_T}_{n+1}}\bold{X}_{s_n^{N_T}}\bold{X}_{s_n^{N_T}}^\top \dd s-\dfrac{1}{T} \int_0^T \bold{X}_s  \bold{X}_s^\top \dd s\\
            &=\dfrac{1}{T} \int_{0}^{T}\bold{X}_{\lfloor s \rfloor_{\mathcal{P}_T} } \bold{X}_{\lfloor s \rfloor_{\mathcal{P}_T} }^\top  \dd s-\dfrac{1}{T} \int_0^T \bold{X}_s  \bold{X}_s^\top \dd s
            \\
          &= \frac{1}{T} \int_0^T\left(\mathbf{X}_{\lfloor s\rfloor_{\mathcal{P}_T}}-\mathbf{X}_s\right) \mathbf{X}_{\lfloor s\rfloor_{\mathcal{P}_T}}^{\top} \mathrm{d} s\\&+\frac{1}{T} \int_0^T \mathbf{X}_s\left(\mathbf{X}_{\lfloor s\rfloor_{\mathcal{P}_T}}^{\top}-\mathbf{X}_s^{\top}\right) \mathrm{d} s.
        \end{aligned}
    \end{equation*}
    Hence, following the submultiplicativity of the Frobenius norm, \Cref{MatrixCalc3}, Cauchy-Schwarz and stationarity of $\bold X$ under $\mathbb{P}^{Q_0}$ (\Cref{Ass:Estimation}), which gives $\mathbb{E}_{\mathbb{P}^{Q_0}}\Vert\bold{X}_t\Vert_2^2=\mathbb{E}_{\mathbb{P}^{Q_0}}\Vert\bold{X}_0\Vert_2^2$ for all $t\ge 0$, we find that
    \begin{equation*}
        \begin{aligned}
            \mathbb{E}_{\mathbb{P}^{Q_0}}\left(\Vert\widehat{\mathbf{C}}_{\mathcal{P}_T} - \widehat{\mathbf{C}}_T \Vert_F\right) &\le \dfrac{1}{T} \int_{0}^{T}\mathbb{E}_{\mathbb{P}^{Q_0}}\left( \Vert\bold{X}_{\lfloor s \rfloor_{\mathcal{P}_T} }- \bold{X}_s\Vert_2 \Vert \bold{X}_{\lfloor s \rfloor_{\mathcal{P}_T} }\Vert _2 \right) \dd s\\
            &+\dfrac{1}{T} \int_{0}^{T}\mathbb{E}_{\mathbb{P}^{Q_0}}\left( \Vert\bold{X}_{\lfloor s \rfloor_{\mathcal{P}_T} }- \bold{X}_s\Vert_2 \Vert \bold{X}_{s }\Vert _2 \right) \dd s\\
            &\le \dfrac{2}{T}\int_0^T\sqrt{\mathbb{E}_{\mathbb{P}^{Q_0}}\left(\Vert \mathbf{X}_{0}\Vert_2^2\right)}\sqrt{\mathbb{E}_{\mathbb{P}^{Q_0}}\left(\Vert\bold{X}_s-\mathbf{X}_{\lfloor s\rfloor_{\mathcal{P}_T}}\Vert_2^2\right)}\dd s\\
            &\le 2\sqrt{\mathbb{E}_{\mathbb{P}^{Q_0}}\left(\Vert \mathbf{X}_{0}\Vert_2^2\right)}\sqrt{\sup_{u\in [0,\Delta_{\mathcal{P}_T}]}\mathbb{E}_{\mathbb{P}^{Q_0}}\left(\Vert\bold{X}_u-\mathbf{X}_{0}\Vert_2^2\right)}\\
            &= O\left(d^{7/2} \Delta_{\mathcal{P}_T}^{1/2}\right), \text{ as } d \rightarrow \infty.
        \end{aligned}
    \end{equation*}
    We used \Cref{BoundExpectationNormInvariant,BoundOnIncrements} in the last equality. Since $\widehat{\mathbf{C}}_{\mathcal{P}_T} - \widehat{\mathbf{C}}_T$ is a symmetric, real-valued matrix, by the Rayleigh-Ritz Theorem, for any $u \in \mathbb{R}^d$,
    \begin{equation*}
        \left|u^{\top}\left(\widehat{\mathbf{C}}_{\mathcal{P}_T}-\widehat{\mathbf{C}}_T\right) u\right| \le \Vert \widehat{\mathbf{C}}_{\mathcal{P}_T}-\widehat{\mathbf{C}}_T \Vert \le \Vert \widehat{\mathbf{C}}_{\mathcal{P}_T}-\widehat{\mathbf{C}}_T \Vert _F.
    \end{equation*}
    By Markov's inequality and the fact that 
    \begin{equation*}
        \begin{aligned}
           \mathbb{P}^{Q_0}\left(\left|u^{\top}\left(\widehat{\mathbf{C}}_{\mathcal{P}_T}-\mathbf{C}_{\infty}\right) u\right| \geq r\right)&=\mathbb{P}^{Q_0}\left(\left|u^{\top}\left(\widehat{\mathbf{C}}_{\mathcal{P}_T}-\widehat{\mathbf{C}}_T+\widehat{\mathbf{C}}_T-\mathbf{C}_{\infty}\right) u\right| \geq r\right)\\ &\le    \mathbb{P}^{Q_0}\left(\left|u^{\top}\left(\widehat{\mathbf{C}}_{\mathcal{P}_T}-\widehat{\mathbf{C}}_T\right) u\right| \geq r/2\right)\\&+\mathbb{P}^{Q_0}\left(\left|u^{\top}\left(\widehat{\mathbf{C}}_T-\mathbf{C}_{\infty}\right) u\right| \geq r/2\right),
        \end{aligned}
    \end{equation*} we get the desired result. 
\end{proof}
\begin{lemma}\label{Lemma1DexheimerDiscrete}
Suppose \Cref{Ass:Estimation} holds and recall the definition of $\tilde{\mathcal{L}}_T^{\mathcal{P}_T}(\cdot)$ in \eqref{egn:PseudoLikelihood}. If $\tilde{Q}$ is a solution of the minimization problem $\min _{Q \in \mathbb{R}^{d \times d}}\tilde{\mathcal{L}}_T^{\mathcal{P}_T}(Q)$, then $\tilde{Q}$ satisfies, for all $Q \in \mathbb{R}^{d \times d}$,
\begin{equation*}
    \begin{aligned}
\left\|\Sigma^{-1/2}\left(\tilde{Q}-Q_0\right) X\right\|_{L_{\mathcal{P}_T}^2}^2-\left\|\Sigma^{-1/2}\left(Q-Q_0\right) X\right\|_{L_{\mathcal{P}_T}^2}^2 &\leq 2\left\langle\varepsilon_{\mathcal{P}_T}^{N_T}, \Sigma^{-1/2}(Q-\tilde{Q})\right\rangle_F\\&-\left\|\Sigma^{-1/2}(\tilde{Q}-Q) X\right\|_{L_{\mathcal{P}_T}^2}^2,\quad  \mathbb{P}^{Q_0}\text{-almost surely },
\end{aligned}
\end{equation*}
where
\begin{equation}\label{epsilon_Tisc}
(\varepsilon_{\mathcal{P}_T}^{n})^{\top}:=\frac{1}{T} \sum_{k=0}^{(N_T-1)\wedge n} X_k \left(\Delta_{\mathcal{P}_T}^k \Sigma^{-1/2}\bold{X}^c(Q_0)\right)^\top,
\end{equation}
with $\left(\Sigma^{-1/2}\bold{X}^c(Q_0)\right)$ being the $\mathbb{P}^{Q_0}$-Wiener process introduced above \Cref{Prelim:EstSec}. 
\end{lemma}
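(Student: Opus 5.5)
The plan is to mimic the proof of Lemma 2.1 of \citet{Dexheimer2022}, now with the discrete semi-inner product $\langle\cdot,\cdot\rangle_{L^2_{\mathcal{P}_T}}$ in place of the continuous one. The argument is purely deterministic and pathwise: it uses only the first-order optimality of $\tilde Q$ and a polarization identity. No probabilistic estimate is needed beyond the almost-sure existence of a minimizer.

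\textbf{Step 1 (rewrite the objective).} We rewrite $\tilde{\mathcal{L}}_T^{\mathcal{P}_T}$ in matrix form. Using \Cref{MatrixCalc1} with the measure $\mu_{\mathcal{P}_T}$ from \eqref{dDimMeasure}, the two quadratic/cross sums in \eqref{egn:PseudoLikelihood} combine into
\[
\tfrac12\|\Sigma^{-1/2}(Q-Q_0)X\|^2_{L^2_{\mathcal{P}_T}}-\tfrac12\|\Sigma^{-1/2}Q_0X\|^2_{L^2_{\mathcal{P}_T}}.
\]
Here the second term does not depend on $Q$. Using \Cref{MatrixCalc2} with the random measure $\mu^{\tilde{\mathbb W}}_{\mathcal{P}_T}$ from \eqref{stocMeas}, the stochastic sum becomes $\langle \varepsilon^{N_T}_{\mathcal{P}_T}, \Sigma^{-1/2}Q\rangle_F$. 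Here we write $\Sigma^{-1}Q=\Sigma^{-1/2}(\Sigma^{-1/2}Q)$ and absorb the remaining $\Sigma^{-1/2}$ into the increments of $\Sigma^{-1/2}\mathbf X^c(Q_0)$.

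Hence, up to a $Q$-independent constant, $\tilde{\mathcal{L}}_T^{\mathcal{P}_T}(Q)=\langle\varepsilon^{N_T}_{\mathcal{P}_T},\Sigma^{-1/2}Q\rangle_F+\tfrac12\|\Sigma^{-1/2}(Q-Q_0)X\|^2_{L^2_{\mathcal{P}_T}}$. This is a convex quadratic in $Q$. It is strictly convex $\mathbb P^{Q_0}$-a.s. by \Cref{StrictConvexityOfC_THat}, so a minimizer $\tilde Q$ exists and is unique.

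\textbf{Step 2 (first-order condition).} Since $\tilde Q$ minimizes a differentiable convex function over all of $\mathbb R^{d\times d}$, its directional derivative in every direction $Q-\tilde Q$ vanishes. This gives
\[
\langle\varepsilon^{N_T}_{\mathcal{P}_T},\Sigma^{-1/2}(Q-\tilde Q)\rangle_F+\langle\Sigma^{-1/2}(\tilde Q-Q_0)X,\Sigma^{-1/2}(Q-\tilde Q)X\rangle_{L^2_{\mathcal{P}_T}}=0 .
\]
The inequality $\ge 0$ would suffice. We note that with a penalty one would only get a subgradient inequality, which is why the statement is phrased with ``$\le$''.

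\textbf{Step 3 (polarization).} We apply the identity
\[
2\langle a-c,b-a\rangle=\|b-c\|^2-\|a-c\|^2-\|a-b\|^2,
\]
valid for the semi-inner product, with $a=\Sigma^{-1/2}\tilde QX$, $b=\Sigma^{-1/2}QX$ and $c=\Sigma^{-1/2}Q_0X$. Substituting into Step 2 and rearranging yields exactly the claimed inequality, in fact with equality.

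\textbf{Main obstacle.} The only step requiring care is Step 1. One must check that the indexing and the forward-looking increments in \eqref{epsilon_Tisc} match the sums in \eqref{egn:PseudoLikelihood}, including the transpose convention in $\langle\varepsilon,\Sigma^{-1/2}Q\rangle_F=\mathrm{Tr}(\varepsilon^\top\Sigma^{-1/2}Q)$. One must also confirm that the $\Sigma^{-1/2}$ factors land on the correct side. Everything else is algebra that holds pathwise, hence $\mathbb P^{Q_0}$-almost surely.
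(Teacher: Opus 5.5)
Your proposal is correct and follows essentially the same route as the paper. The paper rewrites $\tilde{\mathcal{L}}_T^{\mathcal{P}_T}$ in trace form and computes the gradient $(\Sigma^{-1/2})^\top\varepsilon_{\mathcal{P}_T}^{N_T}+\Sigma^{-1}(Q-Q_0)\widehat{C}_{\mathcal{P}_T}$. It then invokes strict convexity (via \Cref{StrictConvexityOfC_THat}) for the first-order condition at $\tilde Q$, and applies the same polarization identity, written with $\langle\cdot,\widehat{C}_{\mathcal{P}_T}\rangle_F$, to obtain the claim with equality. As you implicitly note, strict convexity is not needed for the stated implication, since $\tilde Q$ is assumed to be an unconstrained minimizer of a differentiable function. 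Hence the appeal to \Cref{StrictConvexityOfC_THat}, which relies on $N_T>d$ from \Cref{Ass.High-FrequencySampling} and is not among the lemma's hypotheses, can be dropped.
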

\begin{proof}
We follow \cite{Dexheimer2022}. For $Q\in \mathbb{R}^{d\times d}$, we notice that we can rewrite $\tilde{\mathcal{L}}_T^{\mathcal{P}_T}(Q)$ using the rules for the trace operator and \Cref{MatrixCalc1,MatrixCalc2} in the following manner:
\begin{equation}\label{TrInfeasDisc}
    \begin{aligned} \tilde{\mathcal{L}}_T^{\mathcal{P}_T}(Q)&=\frac{1}{T} \int_0^T\left(\Sigma^{-1/2} Q \mathbf{X}_s\right)^{\top}  \mu_{\mathcal{P}_T}^{\mathbb{\tilde{W}}}(ds)+\frac{1}{2 T} \int_0^T\left(\Sigma^{-1/2} Q \mathbf{X}_s\right)^{\top} \Sigma^{-1/2} Q \mathbf{X}_s\mu_{\mathcal{P}_T}(ds)\\
    &-\frac{1}{T} \int_0^T\left(\Sigma^{-1/2} Q \mathbf{X}_s\right)^{\top} \Sigma^{-1/2} Q_0 \mathbf{X}_s\mu_{\mathcal{P}_T}(ds)\\ 
    &= \frac{1}{T} \int_0^T\left(\Sigma^{-1/2} Q \mathbf{X}_s\right)^{\top}  \mu_{\mathcal{P}_T}^{\mathbb{\tilde{W}}}(ds)+\frac{1}{2 T} \int_0^T\left(\Sigma^{-1/2} Q \mathbf{X}_s\right)^{\top} \Sigma^{-1/2} (Q-2Q_0) \mathbf{X}_s\mu_{\mathcal{P}_T}(ds)\\
    &= \frac{1}{T} \int_0^T\mathbf{X}_s^\top Q^\top(\Sigma^{-1/2} )^\top \mu_{\mathcal{P}_T}^{\mathbb{\tilde{W}}}(ds)+\frac{1}{2 T} \int_0^T\mathbf{X}_s^\top Q^\top\Sigma^{-1} (Q-2Q_0) \mathbf{X}_s\mu_{\mathcal{P}_T}(ds)\\
    &= \Tr\biggl(\frac{1}{T} \Sigma^{-1/2} Q\int_0^T\mathbf{X}_s \left(\mu_{\mathcal{P}_T}^{\mathbb{\tilde{W}}}\right)^\top (ds)+\frac{1}{2 T}  (Q-2Q_0)^\top(\Sigma^{-1})^\top Q\int_0^T\mathbf{X}_s  \mathbf{X}_s^\top\mu_{\mathcal{P}_T}(ds)\biggr)\\
    &= \Tr\biggl( \Sigma^{-1/2} Q \left(\epsilon_{\mathcal{P}_T}^{N_T}\right)^\top+\frac{1}{2}  Q\widehat{C}_{{\mathcal{P}_T}}(Q-2Q_0)^\top\Sigma^{-1}\biggr)\\
    &= \Tr\biggl( \Sigma^{-1/2} Q \left(\epsilon_{\mathcal{P}_T}^{N_T}\right)^\top+\frac{1}{2}  Q\widehat{C}_{{\mathcal{P}_T}}Q^\top\Sigma^{-1}-Q\widehat{C}_{{\mathcal{P}_T}}Q_0^\top\Sigma^{-1}\biggr).
    \end{aligned}
\end{equation}
We notice a few things. First of all, following the above calculations, the gradient of $\tilde{\mathcal{L}}_T^{\mathcal{P}_T}$ is $(\Sigma^{-1/2})^\top\epsilon_{\mathcal{P}_T}^{N_T}+\Sigma^{-1}\left(Q-Q_0\right)\widehat{C}_{{\mathcal{P}_T}}$. Further, the Hessian of $\tilde{\mathcal{L}}_T^{\mathcal{P}_T}$ is $\widehat{C}_{\mathcal{P}_T}\otimes\Sigma^{-1}$, which is positive definite since $\Sigma$ is positive definite and, following \Cref{StrictConvexityOfC_THat}, $\widehat{C}_{{\mathcal{P}_T}}$ is $\mathbb{P}^Q$-almost surely positive definite for all $Q$ (in particular for $Q_0$). This implies $\mathbb{P}^{Q_0}$-almost sure strict convexity of $\tilde{\mathcal{L}}_T^{\mathcal{P}_T}(Q)$, hence $\tilde{Q}$ is unique, and $\tilde{\mathcal{L}}_T^{\mathcal{P}_T}$ is also differentiable in $Q$. Since $\tilde{Q}$ is the unique minimizer of a differentiable strictly convex function, the first-order condition $\nabla \tilde{\mathcal{L}}_T^{\mathcal{P}_T}(\tilde{Q}) = \bold{0}$ holds, i.e.\
$\bold{0}=(\Sigma^{-1/2})^\top\epsilon_{\mathcal{P}_T}^{N_T}+\Sigma^{-1}\left(\tilde{Q}-Q_0\right)\widehat{C}_{{\mathcal{P}_T}}$. Hence, following the derivations in \cite{Dexheimer2022} and using \Cref{MatrixCalc2}, we note that
\begin{equation*}
\begin{aligned}
&\left\|\Sigma^{-1/2}\left(\tilde{Q}-Q_0\right) X\right\|_{L_{\mathcal{P}_T}^2}^2-\left\|\Sigma^{-1/2}\left(Q-Q_0\right) X\right\|_{L_{\mathcal{P}_T}^2}^2+\left\|\Sigma^{-1/2}(\tilde{Q}-Q) X\right\|_{L_{\mathcal{P}_T}^2}^2\\
\quad&=\left\langle\left(\tilde{Q}-Q_0\right)^{\top} \Sigma^{-1}\left(\tilde{Q}-Q_0\right)-\left(Q-Q_0\right)^{\top} \Sigma^{-1}\left(Q-Q_0\right)+(\tilde{Q}-Q)^{\top} \Sigma^{-1}(\tilde{Q}-Q), \widehat{C}_{{\mathcal{P}_T}}\right\rangle_F\\
& =2\left\langle\left(\tilde{Q}-Q_0\right)^{\top} \Sigma^{-1}(\tilde{Q}-Q), \widehat{C}_{{\mathcal{P}_T}}\right\rangle_F \\
& =2\left\langle(\tilde{Q}-Q)^{\top}, \widehat{C}_{{\mathcal{P}_T}}\left(\tilde{Q}-Q_0\right)^{\top} \Sigma^{-1}\right\rangle_F \\
& =2\left\langle\Sigma^{-1}\left(\tilde{Q}-Q_0\right)\widehat{C}_{{\mathcal{P}_T}},(\tilde{Q}-Q)\right\rangle_F \\
& =2\left\langle\left(\Sigma^{-1/2}\right)^{\top} \varepsilon_{\mathcal{P}_T}^{N_T},(Q-\tilde{Q})\right\rangle_F \\
& = 2\left\langle\varepsilon_{\mathcal{P}_T}^{N_T},\Sigma^{-1/2}(Q-\tilde{Q}) \right\rangle_F.
\end{aligned}
\end{equation*}
In the first equality, we used that, following \Cref{MatrixCalc2},
\begin{equation*}
\begin{aligned}
\left\|\Sigma^{-1/2}\left(\tilde{Q}-Q_0\right) X\right\|_{L_{\mathcal{P}_T}^2}^2&=\dfrac{1}{T}\int_0^T\bold{X}_s^\top\left(\Sigma^{-1/2}(\tilde{Q}-Q_0)\right)^\top\left(\Sigma^{-1/2}(\tilde{Q}-Q_0)\right) \bold{X}_s \mu_{\mathcal{P}_T}(\dd s) \\
&=\Tr\left( \left(\Sigma^{-1/2}(\tilde{Q}-Q_0)\right)^\top\left(\Sigma^{-1/2}(\tilde{Q}-Q_0)\right) \dfrac{1}{T}\int_0^T\bold{X}_s\bold{X}_s^\top \mu_{\mathcal{P}_T}(\dd s)\right),
\end{aligned}
\end{equation*} 
while in the third equality, we used that 
\begin{equation*}
    \begin{aligned}
    \langle \tilde{Q}^\top \Sigma^{-1}Q_0,\widehat{C}_{{\mathcal{P}_T}}\rangle_F &=  \Tr \left(Q_0^T \Sigma^{-1}\tilde{Q}\widehat{C}_{{\mathcal{P}_T}}\right) \\&=\Tr \left(\widehat{C}_{{\mathcal{P}_T}}^\top \tilde{Q}^\top \Sigma^{-1}Q_0\right)
    \\&=\Tr \left( \tilde{Q}^\top \Sigma^{-1}Q_0\widehat{C}_{{\mathcal{P}_T}}^\top\right)\\
    &=\langle Q_0^\top \Sigma^{-1}\tilde{Q},\widehat{C}_{{\mathcal{P}_T}}\rangle_F.
    \end{aligned}
\end{equation*}
\end{proof}

\begin{proposition}\label{Prop3.1OfDexheimerDiscrete}
Grant \Cref{Ass.H,Ass:Estimation,Ass:LevyH}. For any $T>0$ and sampling scheme $\mathcal{P}_T$, define
\begin{equation}\label{Q_def}
    Q_{\mathcal{P}_T}:=\left\{\sup _{\mathbf{B} \in \mathbb{B}_2(1)}\left|\operatorname{Tr}\left(\mathbf{B}\left(\widehat{C}_{\mathcal{P}_T} -\mathbf{C}_{\infty}\right) \mathbf{B}^{\boldsymbol{T}}\right)\right| \leq \frac{\kappa_{\min}}{2}\right\}.
\end{equation}
It holds that
\begin{equation*}
Q_{\mathcal{P}_T} \subseteq\left\{\lambda_{\min}\left(\widehat{C}_{\mathcal{P}_T}\right) \geq \frac{\kappa_{\min }}{2}\right\} \cap \left\{\lambda_{\max}\left(\widehat{C}_{\mathcal{P}_T}\right) \leq \frac{\kappa_{\min}}{2}+\kappa_{\max}\right\}
\end{equation*}
and
\begin{equation*}
\mathbb{P}^{Q_0}\left(Q_{\mathcal{P}_T}\right) \geq 1-(21(d \wedge \mathrm{e}))^d \left(H\left(T,\frac{\kappa_{\min}}{3d}\right) +H_2\left(\Delta_{\mathcal{P}_T},\frac{\kappa_{\min}}{3d}\right)\right),
\end{equation*}
where $H(\cdot,\cdot),H_2(\cdot,\cdot)$ are as in \Cref{Prop:LevyH_discrete}.
\end{proposition}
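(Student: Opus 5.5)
The plan follows Proposition 3.1 of \citet{Dexheimer2022} in two parts: a deterministic inclusion, and then a probability bound from a covering argument fed by \Cref{Prop:LevyH_discrete}.

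\emph{Inclusion.} I would test the supremum in \eqref{Q_def} with rank-one matrices $\mathbf{B}=e_1u^\top$, where $\Vert u\Vert_2=1$. Then $\Vert\mathbf{B}\Vert_F=1$ and
\[
\operatorname{Tr}\bigl(\mathbf{B}(\widehat{C}_{\mathcal{P}_T}-\mathbf{C}_\infty)\mathbf{B}^\top\bigr)=u^\top(\widehat{C}_{\mathcal{P}_T}-\mathbf{C}_\infty)u .
\]
On $Q_{\mathcal{P}_T}$ this gives $|u^\top(\widehat{C}_{\mathcal{P}_T}-\mathbf{C}_\infty)u|\le\kappa_{\min}/2$ for every unit $u$. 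Since $\kappa_{\min}\le u^\top\mathbf{C}_\infty u\le\kappa_{\max}$, Rayleigh--Ritz yields $\lambda_{\min}(\widehat{C}_{\mathcal{P}_T})\ge\kappa_{\min}/2$ and $\lambda_{\max}(\widehat{C}_{\mathcal{P}_T})\le\kappa_{\max}+\kappa_{\min}/2$.

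\emph{Probability bound.} Write $\Delta:=\widehat{C}_{\mathcal{P}_T}-\mathbf{C}_\infty$, which is symmetric. For $\mathbf{B}$ with rows $b_i$ and $\sum_i\Vert b_i\Vert_2^2\le1$,
\[
\operatorname{Tr}(\mathbf{B}\Delta\mathbf{B}^\top)=\sum_i b_i^\top\Delta b_i ,
\]
so the supremum is bounded by $\sup_{\Vert u\Vert_2\le1}|u^\top\Delta u|=\Vert\Delta\Vert$. It therefore suffices to show $\Vert\Delta\Vert\le\kappa_{\min}/2$ with the stated probability.

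To reduce to finitely many directions, take a net of the unit sphere and use the standard bound $\Vert\Delta\Vert\le c\max_{u\in\mathcal{N}}|u^\top\Delta u|$. I would reproduce the argument of \citet{Dexheimer2022} verbatim, since it is purely deterministic and does not depend on whether the sampling is continuous or discrete. That argument produces a net $\mathcal{N}$ of cardinality at most $(21(d\wedge e))^d$ together with a threshold: if $\max_{u\in\mathcal{N}}|u^\top\Delta u|<\kappa_{\min}/(3d)$, then $\sup_{\mathbf{B}\in\mathbb{B}_2(1)}|\operatorname{Tr}(\mathbf{B}\Delta\mathbf{B}^\top)|\le\kappa_{\min}/2$. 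The extra factor $d$ in the threshold comes from their Frobenius-to-net bookkeeping. Because of it, I will keep the constants exactly as they do rather than improve them.

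For each fixed $u\in\mathcal{N}$, \Cref{Prop:LevyH_discrete} with $r=\kappa_{\min}/(3d)$ gives a tail bound of $H(T,r/2)+H_2(\Delta_{\mathcal{P}_T},r/2)$. There is a slight mismatch with the stated bound, which has $H(T,\kappa_{\min}/(3d))+H_2(\Delta_{\mathcal{P}_T},\kappa_{\min}/(3d))$. I would resolve this by splitting the threshold so that each of $\widehat{C}_{\mathcal{P}_T}-\widehat{C}_T$ and $\widehat{C}_T-\mathbf{C}_\infty$ gets a tolerance of $\kappa_{\min}/(3d)$, with the net constant absorbing the factor two. Failing that, I would redefine the constant in the net lemma, or else simply state the bound with $r/2$. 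A union bound over $\mathcal{N}$ then gives the claim.

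\emph{Main obstacle.} The main obstacle is this constant bookkeeping in the net argument: matching exactly $\kappa_{\min}/(3d)$ in both $H$ and $H_2$ when the discretisation split naturally halves $r$. I expect the remaining steps to be routine.
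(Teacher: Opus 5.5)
Your proposal is correct and follows the paper's route: the paper's proof simply reruns Proposition~3.1 of \citet{Dexheimer2022} (deterministic inclusion, net, union bound) with \Cref{Prop:LevyH_discrete} in place of \Cref{Ass:LevyH}, and your first resolution of the factor-two mismatch, which that one-line proof leaves implicit, is the right one. With a $1/10$-net of the unit sphere ($|\mathcal{N}|\le 21^d\le(21(d\wedge \mathrm{e}))^d$) one has $\Vert\Delta\Vert\le\frac{5}{4}\max_{u\in\mathcal{N}}|u^\top\Delta u|$ for your $\Delta=\widehat{C}_{\mathcal{P}_T}-\mathbf{C}_\infty$; bound $\widehat{\mathbf{C}}_T-\mathbf{C}_\infty$ via \Cref{Ass:LevyH} and $\widehat{C}_{\mathcal{P}_T}-\widehat{\mathbf{C}}_T$ via the Markov step in the proof of \Cref{Prop:LevyH_discrete}, each at level $\kappa_{\min}/(3d)$, and you get $\Vert\Delta\Vert\le 5\kappa_{\min}/(6d)\le\kappa_{\min}/2$ for $d\ge 2$, hence exactly the stated probability (the $H_2$ term does not even need the union bound, since $\sup_{\Vert u\Vert_2\le 1}|u^\top(\widehat{C}_{\mathcal{P}_T}-\widehat{\mathbf{C}}_T)u|\le\Vert\widehat{C}_{\mathcal{P}_T}-\widehat{\mathbf{C}}_T\Vert_F$ is controlled once by Markov).
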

\begin{proof}
    The proof is identical to the proof of Proposition $3.1$ in \cite{Dexheimer2022}. Only now we use that, by \Cref{MatrixCalc1},
    \begin{equation*}
        \Vert \bold{B}X \Vert_{L_{\mathcal{P}_T}^2}^2 = \Tr\left(\bold{B}\widehat{C}_{\mathcal{P}_T} \bold{B}^T\right),
    \end{equation*}
    combined with \Cref{Matrixcalc4} and \Cref{Prop:LevyH_discrete}.
\end{proof}

\begin{lemma}\label{Prop3.3OfDexheimerDiscrete_first}
Grant \Cref{Ass:Estimation}. There exists a universal constant $c_0>0$ such that, for any $\mathcal{D} \subset \mathbb{R}^{d \times d}$, it holds for all $u>0$,
\begin{equation*}
\mathbb{P}^{Q_0}\left(\sup _{\mathbf{B} \in \mathcal{D}}\left\langle\varepsilon_{\mathcal{P}_T}^{N_T}, \mathbf{B}\right\rangle_F \mathbf{1}_{Q_{\mathcal{P}_T}\left(\kappa_{\max }\right)} \leq c_0 \sqrt{\frac{6 \kappa_{\max }}{T}}(w(\mathcal{D})+u \operatorname{rad}(\mathcal{D}))\right) \geq 1-2 \exp \left(-u^2\right),
\end{equation*}
where  the Gaussian width $w(\mathcal{D})$ and radius $\operatorname{rad}(\mathcal{D})$ of a set $\mathcal{D} \subset \mathbb{R}^{d \times d}$ are defined by setting
\begin{equation*}
w(\mathcal{D}):=\mathbf{E}\left[\sup _{\mathbf{B} \in \mathcal{D}}\langle\operatorname{vec}(\mathbf{B}), Z\rangle_F\right] \quad \text { and } \quad \operatorname{rad}(\mathcal{D}):=\sup _{\mathbf{B} \in \mathcal{D}}\|\mathbf{B}\|_F,
\end{equation*}
where $Z \sim \mathcal{N}\left(0, \operatorname{Id}_{d^2 \times d^2}\right)$. Here, $\varepsilon_{\mathcal{P}_T}^n$ is defined as in \eqref{epsilon_Tisc} and $Q_{\mathcal{P}_T}$ as in \eqref{Q_def}.
\end{lemma}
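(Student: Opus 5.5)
This is the discrete analogue of Proposition 3.3 in \cite{Dexheimer2022}: a Gaussian-width bound on the supremum of a Gaussian process. The plan is to show that, conditionally on the observed path $\mathbb{X}_{\mathcal{P}_T}$, the linear functional $\mathbf{B}\mapsto\langle\varepsilon_{\mathcal{P}_T}^{N_T},\mathbf{B}\rangle_F$ is a centred Gaussian process. On the event $Q_{\mathcal{P}_T}$, its natural metric will be dominated by a constant multiple of $\sqrt{\kappa_{\max}/T}\,\Vert\cdot\Vert_F$. Talagrand's majorizing-measure theorem, in the form of a generic-chaining tail bound (as used in \cite{Dexheimer2022}), then gives the stated inequality with a universal constant $c_0$.

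\textbf{Step 1 (Gaussian structure).} Write $\tilde{\mathbf W}=\Sigma^{-1/2}\mathbf{X}^c(Q_0)$, which is a standard $\mathbb{P}^{Q_0}$-Wiener process. By \eqref{epsilon_Tisc} and \Cref{MatrixCalc1,MatrixCalc2},
\[
\langle\varepsilon_{\mathcal{P}_T}^{N_T},\mathbf{B}\rangle_F=\frac{1}{T}\sum_{n=0}^{N_T-1}\bigl(\mathbf{B}\mathbf{X}_{s_n^{N_T}}\bigr)^{\top}\Delta^n_{\mathcal{P}_T}\tilde{\mathbf W}.
\]
The increments $\Delta^n_{\mathcal{P}_T}\tilde{\mathbf W}$ are independent of $\mathcal{F}_{s_n}$, and each is distributed as $\mathcal{N}(0,(s_{n+1}-s_n)I_d)$. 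So $M_n(\mathbf{B})$, the partial sum up to index $n$, is a discrete-time martingale indexed linearly by $\mathbf{B}$. Its predictable quadratic variation is
\[
\langle M(\mathbf{B})\rangle_{N_T}=\frac{1}{T^2}\sum_n\Vert\mathbf{B}\mathbf{X}_{s_n}\Vert_2^2(s_{n+1}-s_n)=\frac{1}{T}\Tr\bigl(\mathbf{B}\widehat{\mathbf{C}}_{\mathcal{P}_T}\mathbf{B}^\top\bigr).
\]

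\textbf{Step 2 (sub-Gaussian increments).} For $\mathbf{B},\mathbf{B}'$, apply this with $\mathbf{B}-\mathbf{B}'$. By \Cref{Prop3.1OfDexheimerDiscrete}, on $Q_{\mathcal{P}_T}$ we have $\lambda_{\max}(\widehat{\mathbf{C}}_{\mathcal{P}_T})\le\kappa_{\min}/2+\kappa_{\max}\le \tfrac32\kappa_{\max}$, so the quadratic variation is at most $\tfrac{3\kappa_{\max}}{2T}\Vert\mathbf{B}-\mathbf{B}'\Vert_F^2$. The event $Q_{\mathcal{P}_T}$ is not predictable, however. I would therefore stop the martingale at the first index $n$ at which $\frac1T\sum_{k\le n}\Vert(\mathbf{B}-\mathbf{B}')\mathbf{X}_{s_k}\Vert^2(s_{k+1}-s_k)$ exceeds $\tfrac{3\kappa_{\max}}{2T}\Vert\mathbf{B}-\mathbf{B}'\Vert_F^2$; this stopping time is predictable because $\mathbf{X}_{s_k}$ is known at time $s_k$. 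On $Q_{\mathcal{P}_T}$ the stopped and unstopped processes coincide. The exponential supermartingale $\exp(\lambda M-\tfrac{\lambda^2}{2}\langle M\rangle)$, valid since increments are conditionally Gaussian, then gives the tail bound $\mathbb{P}(|M^{\tau}|\ge t)\le 2\exp(-t^2T/(3\kappa_{\max}\Vert\mathbf{B}-\mathbf{B}'\Vert_F^2))$. This is sub-Gaussian with respect to the metric $\sqrt{6\kappa_{\max}/T}\,\Vert\cdot\Vert_F$, which accounts for the factor $6$. A technical point: the stopping time depends on the pair $(\mathbf{B},\mathbf{B}')$. To handle this, I would follow \cite{Dexheimer2022} and apply the bound to the process $\mathbf{B}\mapsto\langle\varepsilon,\mathbf{B}\rangle\mathbf 1_{Q_{\mathcal{P}_T}}$ pairwise, as they do.

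\textbf{Step 3 (chaining).} The increments of the process indexed by $\mathcal{D}$ are sub-Gaussian with respect to a constant multiple of the Euclidean metric on $\operatorname{vec}(\mathbf{B})\in\mathbb{R}^{d^2}$. The generic-chaining tail bound \cite[Thm.~8.5.5]{Vershynin}-type then yields $\sup_{\mathcal{D}}\le c_0\sqrt{6\kappa_{\max}/T}\,(w(\mathcal{D})+u\,\mathrm{rad}(\mathcal{D}))$ with probability at least $1-2e^{-u^2}$. This uses Talagrand's comparison $\gamma_2(\mathcal{D},\Vert\cdot\Vert_F)\asymp w(\mathcal{D})$.

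I expect the main obstacle to be Step 2: correctly handling the non-predictable indicator of $Q_{\mathcal{P}_T}$ through a localization argument that is uniform over $\mathcal{D}$. Everything else transfers verbatim from the continuous case, with integrals replaced by the sums against $\mu_{\mathcal{P}_T}$ and $\mu^{\tilde{\mathbb W}}_{\mathcal{P}_T}$.
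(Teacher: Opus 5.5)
Your proposal is correct and takes essentially the paper's route. The paper also reduces the lemma to pairwise sub-Gaussian increments of $\mathbf B\mapsto\langle\varepsilon_{\mathcal P_T}^{N_T},\mathbf B\rangle_F\mathbf 1_{Q_{\mathcal P_T}(\kappa_{\max})}$: it rewrites the sum as $\int_0^T((\mathbf B_1-\mathbf B_2)\mathbf X_{\lfloor s\rfloor_{\mathcal P_T}})^\top d\tilde{\mathbf W}_s$, applies Bernstein's inequality for continuous martingales on the event where the bracket is at most $2T\kappa_{\max}\Vert\mathbf B_1-\mathbf B_2\Vert_F^2$ (your discrete-time predictable stopping is a hands-on proof of that bound, and the pair-dependence of the stopping time is harmless), and then invokes the chaining argument of \cite{Dexheimer2022}.

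Two minor corrections. First, the process is not conditionally Gaussian given the observed grid values, since each $\mathbf X_{s_{n+1}}$ depends on $\Delta^n_{\mathcal P_T}\tilde{\mathbf W}$; your Steps 1--3 never use this claim. Second, in the paper's proof the indicator is of $Q_{\mathcal P_T}(\kappa_{\max})$, with threshold $\kappa_{\max}$ rather than $\kappa_{\min}/2$. On that event one only has $\Tr(\mathbf B\widehat{\mathbf C}_{\mathcal P_T}\mathbf B^\top)\le 2\kappa_{\max}\Vert\mathbf B\Vert_F^2$, not the $\tfrac32\kappa_{\max}$ from \Cref{Prop3.1OfDexheimerDiscrete}, which changes only the constant absorbed into $c_0$.
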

\begin{proof}
We follow \cite{Dexheimer2022} and their proof of Lemma $3.2$. From their derivations, we have
\begin{equation}\label{eqn:Importantquant}
\begin{aligned}
    Q_{\mathcal{P}_T}\left(\kappa_{\max }\right) & =\left\{\sup _{\mathbf{B} \in \mathbb{B}(1)} \operatorname{Tr}\left(\mathbf{B}\left(\hat{\mathbf{C}}_{\mathcal{P}_T}-\mathbf{C}_{\infty}\right) \mathbf{B}^{\top}\right) \leq \kappa_{\max }\right\} \\
    & \subseteq\left\{\forall \mathbf{B}_1, \mathbf{B}_2 \in \mathcal{D}: \operatorname{Tr}\left(\left(\mathbf{B}_1-\mathbf{B}_2\right) \hat{\mathbf{C}}_{\mathcal{P}_T}\left(\mathbf{B}_1-\mathbf{B}_2\right)^{\top}\right)\le 2 \kappa_{\max }\left\|\mathbf{B}_1-\mathbf{B}_2\right\|_F^2\right\} .
\end{aligned}
\end{equation}
Let $\bold{B}_1 \neq \bold{B}_2 \in \mathcal{D} $ be given. Recall the definitions of $\mu_{\mathcal{P}_T}^{\mathbb{\tilde{W}}}(ds)$ and $\tilde{\mathbb{W}}$ from \eqref{stocMeas}: $\mu_{\mathcal{P}_T}^{\tilde{W}}$ is the random measure on $[0,T]$ defined by 
\begin{equation}
\mu_{\mathcal{P}_T}^{\mathbb{\tilde{W}}}(dr) = \sum_{n=0}^{N_T-1} \Delta_{\mathcal{P}_T}^n \bold{\tilde{W}}\delta_{s_n^{N_T}}(dr),
\end{equation}
where $\mathbb{\tilde{W}}$ is the $\mathbb{P}^{Q_0}$ Wiener process given by $\mathbb{\tilde{W}}:= \Sigma^{-1/2}\mathbb{X}^c(Q_0)$. Let $\lfloor s\rfloor_{\mathcal{P}_T}:=s_k^{N_T}$ for $s_k^{N_T}$  such that $s_k^{N_T} \leq s<s_{k+1}^{N_T}$ for $k \in\{1, \ldots, N\}$. It holds that 
\begin{equation*}
\begin{aligned}
    \int_0^T \left( (\bold{B}_1-\bold{B}_2)\bold{X}_s\right)^\top \mu_{\mathcal{P}_T}^{\mathbb{\tilde{\bold{W}}}}(ds)&=\sum_{k=0}^{N_T-1}\left( (\bold{B}_1-\bold{B}_2)\bold{X}_{s_k^{N_T}}\right)^\top\left(\tilde{\bold{W}}_{s_{k+1}^{N_T}}-\tilde{\bold{W}}_{s_k^{N_T}}\right) \\
    &=\sum_{k=0}^{N_T-1}\left( (\bold{B}_1-\bold{B}_2)\bold{X}_{s_k^{N_T}}\right)^\top \int_{s_k^{N_T}}^{s_{k+1}^{N_T}} d\tilde{\bold{W}}_s \\
    &=\sum_{k=0}^{N_T-1}\int_{s_k^{N_T}}^{s_{k+1}^{N_T}} \left( (\bold{B}_1-\bold{B}_2)\bold{X}_{s_k^{N_T}}\right)^\top d\tilde{\bold{W}}_s\\
&=\int_0^T\left( (\bold{B}_1-\bold{B}_2)\bold{X}_{\lfloor s\rfloor_{\mathcal{P}_T}}\right)^\top d\tilde{\bold{W}}_s, \quad \mathbb{P}^{Q_0}-a.s.,
\end{aligned}
\end{equation*}
where we used Exercise $III.2.30$ of \cite{Karatzas2012}, the $L^2$ integrability condition of $\bold{X}$, that we are working with forward looking increments and that $\bold{X}_{s_k^{N_T}}$ is $\mathcal{F}_{s_k^{N_T}}$-adapted, in the third equality.
In other words, at time $T$ the $\mathbb{P}^{Q_0}$ discrete martingale $\int_0^T \left( (\bold{B}_1-\bold{B}_2)\bold{X}_s\right)^\top \mu_{\mathcal{P}_T}^{\mathbb{\tilde{W}}}(ds)$ is almost surely equal to that of the $\mathbb{P}^{Q_0}$ continuous martingale  $\int_0^T\left( (\bold{B}_1-\bold{B}_2)\bold{X}_{\lfloor s \rfloor}\right)^\top d\tilde{\bold{W}}_s$ with quadratic variation given by 
\begin{equation*}
    \begin{aligned}
    \int_0^T\Vert\left( (\bold{B}_1-\bold{B}_2)\bold{X}_{\lfloor s \rfloor}\right)\Vert_2^2 ds &= \sum_{k=0}^{N_T-1}\int_{s_k^{N_T}}^{s_{k+1}^{N_T}}\Vert\left( (\bold{B}_1-\bold{B}_2)\bold{X}_{s_{k}^{N_T}}\right)\Vert_2^2 ds
\\&=\sum_{k=0}^{N_T-1}\Vert\left( (\bold{B}_1-\bold{B}_2)\bold{X}_{s_{k}^{N_T}}\right)\Vert_2^2 (s_{k+1}^{N_T}-s_k^{N_T}) \\
&= T \Tr\left((\bold{B}_1-\bold{B}_2)\hat{\bold{C}}_{\mathcal{P}_T}(\bold{B}_1-\bold{B}_2)^\top\right),
\end{aligned}
\end{equation*}
where we used \Cref{MatrixCalc1} in the last equality.
Using this insight, we can follow the derivation in \cite{Dexheimer2022} (e.g. we may use Bernstein's inequality for continuous martingales) to get that 
\begin{equation*}
    \begin{aligned}
\mathbb{E}_{Q_0}\biggl(\exp &\biggl( \frac{\left\langle\varepsilon_{\mathcal{P}_T}^{N_T}, \mathbf{B}_1-\mathbf{B}_2\right\rangle_F^2 \mathbf{1}_{Q_{\mathcal{P}_T}\left(\kappa_{\max }\right)}}{12 T^{-1} \kappa_{\max }\left\|\mathbf{B}_1-\mathbf{B}_2\right\|_F^2} \biggr)\biggr)\\
& \leq \int_0^{\infty} \mathbb{P}^{Q_0}\left(\exp \left(\frac{\left\langle\varepsilon_{\mathcal{P}_T}^{N_T}, \mathbf{B}_1-\mathbf{B}_2\right\rangle_F^2 \mathbf{1}_{Q_{\mathcal{P}_T}\left(\kappa_{\max }\right)}}{12 T^{-1} \kappa_{\max }\left\|\mathbf{B}_1-\mathbf{B}_2\right\|_F^2}\right)>u\right) \mathrm{d} u \\
& \leq 1+\int_1^{\infty} \mathbb{P}^{Q_0}\left(\left|\left\langle\varepsilon_{\mathcal{P}_T}^{N_T}, \mathbf{B}_1-\mathbf{B}_2\right\rangle_F\right| \mathbf{1}_{Q_{\mathcal{P}_T}\left(\kappa_{\max }\right)}>\sqrt{12 T^{-1} \kappa_{\max } \log (u)}\left\|\mathbf{B}_1-\mathbf{B}_2\right\|_F\right) \mathrm{d} u \\
& =1+\int_1^{\infty} \mathbb{P}^{Q_0}\left(\left|\int_0^T\left(\left(\mathbf{B}_1-\mathbf{B}_2\right) X_s\right)^{\top} \mu_{\mathcal{P}_T}^{\tilde{W}}(\mathrm{d}s) \right|>\sqrt{12 T \kappa_{\max } \log (u)}\left\|\mathbf{B}_1-\mathbf{B}_2\right\|_F, Q_{\mathcal{P}_T}\left(\kappa_{\max }\right)\right) \mathrm{d} u\\
&\le1+\int_1^{\infty} \mathbb{P}^{Q_0}\biggl(\left|\int_0^T\left(\left(\mathbf{B}_1-\mathbf{B}_2\right) X_s\right)^{\top} \mu_{\mathcal{P}_T}^{\tilde{W}}(\mathrm{d}s) \right|>\sqrt{12 T \kappa_{\max } \log (u)}\left\|\mathbf{B}_1-\mathbf{B}_2\right\|_F,\\&\quad \quad \int_0^T\Vert\left( (\bold{B}_1-\bold{B}_2)\bold{X}_{\lfloor s \rfloor}\right)\Vert_2^2 ds  \le 2T\kappa_{\max }\left\|\mathbf{B}_1-\mathbf{B}_2\right\|_F^2 \biggr) \mathrm{d} u \\
& \le 1+ 2\int_1^\infty u^{-3}\mathrm{d} u =2,
\end{aligned}
\end{equation*}
where we used that by \Cref{MatrixCalc2}, it holds that $\langle\varepsilon_{\mathcal{P}_T}^{N_T}, \mathbf{B}_1-\mathbf{B}_2\rangle_F = T^{-1}\int_0^T\left(\left(\mathbf{B}_1-\mathbf{B}_2\right) X_s\right)^{\top} \mu_{\mathcal{P}_T}^{\tilde{W}}(\mathrm{d}s)$, where once more, $\mu_{\mathcal{P}_T}^{\tilde{W}}(\mathrm{d}s)$ is as in \eqref{stocMeas}.
By the arguments for Lemma $3.2$ in \cite{Dexheimer2022}, we have our result.
\end{proof}
\begin{lemma}\label{Prop3.3OfDexheimerDiscrete}
Grant \Cref{Ass:Estimation}. Let $0<\varepsilon_0<1$.
Then it holds for all samplings scheme $\mathcal{P}_T$, that
\begin{equation*}
\mathbb{P}^{Q_0}\left(\sup _{\mathbf{B} \in \mathbb{R}^{d \times d}, \mathbf{B} \neq 0} \frac{\left\langle\varepsilon_{\mathcal{P}_T}^{N_T}, \mathbf{B}\right\rangle_F}{\|\mathbf{B}\|_S} \bold{1}_{Q_{\mathcal{P}_T}\left(\kappa_{\max }\right)} \leq c_* \sqrt{\frac{\kappa_{\max }}{T}}\right) \geq 1-\frac{\varepsilon_0}{2},
\end{equation*}
where
\begin{equation*}
\|\mathbf{B}\|_S:=\|\mathbf{B}\|_* \vee \sqrt{\log \left(4 \varepsilon_0^{-1}\right)}\|\mathbf{B}\|_F, \quad \forall \mathbf{B} \in \mathbb{R}^{d \times d},
\end{equation*} and
$c_*:=c_0\left(\sqrt{\frac{3 \pi}{\log (2)}}+\sqrt{300}\right)$ with $c_0$ being the constant from \Cref{Prop3.3OfDexheimerDiscrete_first}.
\end{lemma}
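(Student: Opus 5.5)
The plan is to transfer the argument behind Lemma 3.3 / Proposition 3.3 of \cite{Dexheimer2022} to the discrete setting, with \Cref{Prop3.3OfDexheimerDiscrete_first} as the basic tail bound. I would not try to control the supremum over all of $\mathbb{R}^{d\times d}\setminus\{0\}$ directly. Because the ratio $\langle\varepsilon^{N_T}_{\mathcal{P}_T},\mathbf{B}\rangle_F/\|\mathbf{B}\|_S$ is invariant under positive scaling, I would first reduce to the unit ball $\{\mathbf{B}:\|\mathbf{B}\|_S\le 1\}$. I would then decompose this set by sparsity level, following the standard Slope argument of Bellec, Lecu\'e and Tsybakov. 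For $s\in\{1,\dots,d^2\}$, set
\[
\mathcal{D}_s:=\Bigl\{\mathbf{B}:\|\mathbf{B}\|_F\le 1,\ \|\mathbf{B}\|_*\le \sqrt{s\log(2ed^2/s)}\Bigr\}.
\]

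Two facts about these sets are needed. First, by a standard bound, $\mathbf{B}\mapsto\|\mathbf{B}\|_*$-balls intersected with the Frobenius ball have Gaussian width satisfying
\[
w(\mathcal{D}_s)\lesssim \sqrt{s\log(2ed^2/s)},
\]
obtained by comparing $\langle \operatorname{vec}(\mathbf{B}),Z\rangle\le \|\mathbf{B}\|_*\max_j Z_j^{\#}/\lambda_j$ and using $\mathbf{E}\max_j Z^{\#}_j/\lambda_j\le$ a universal constant. Second, $\operatorname{rad}(\mathcal{D}_s)\le1$.

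I would then apply \Cref{Prop3.3OfDexheimerDiscrete_first} to each $\mathcal{D}_s$. On the event $Q_{\mathcal{P}_T}(\kappa_{\max})$, this gives a supremum bound of
\[
c_0\sqrt{6\kappa_{\max}/T}\,\bigl(w(\mathcal{D}_s)+u_s\bigr)
\]
with probability at least $1-2e^{-u_s^2}$. The choice
\[
u_s=\sqrt{\log(4\varepsilon_0^{-1})}+\sqrt{s\log(2ed^2/s)}
\]
is intended to make $\sum_s 2e^{-u_s^2}\le \varepsilon_0/2$ via a union bound; a geometric series in $s$ gives this. I would rather do a peeling only over dyadic $s$ to keep the union bound clean.

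For a general $\mathbf{B}$ with $\|\mathbf{B}\|_S=1$, one has $\|\mathbf{B}\|_F\le 1/\sqrt{\log(4/\varepsilon_0)}\le1$ and $\|\mathbf{B}\|_*\le 1$. I would pick the smallest $s$ for which $\mathbf{B}$, rescaled, lies in $\mathcal{D}_s$, and check that the resulting bound on $\langle\varepsilon,\mathbf{B}\rangle$ is at most a constant times $\|\mathbf{B}\|_*\vee\sqrt{\log(4/\varepsilon_0)}\|\mathbf{B}\|_F$. Tracking the constants through this comparison is what produces $c_*=c_0(\sqrt{3\pi/\log 2}+\sqrt{300})$. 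The first summand comes from the Gaussian-width constant, using the integral $\int\!\sqrt{\log(\cdot)}$ evaluation of $\mathbf{E}\max_j Z_j^\#/\lambda_j$. The second comes from the $u$-term together with the factor $\sqrt6$.

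Since the only place the continuous-time structure entered Dexheimer's proof was the subgaussian increment bound, which \Cref{Prop3.3OfDexheimerDiscrete_first} already supplies for the discrete martingale, the remaining argument is purely deterministic and geometric, and I would essentially copy it. I expect the main obstacle to be matching the exact constant $c_*$, in particular the Gaussian-width computation for the Slope unit ball with weights $\sqrt{\log(2d^2/j)}$ in dimension $d^2$. There I would follow the original paper's computation of $\mathbf{E}\sup_{\|\mathbf{B}\|_*\le1}\langle \operatorname{vec}\mathbf{B},Z\rangle\le\sqrt{3\pi/\log2}$ verbatim rather than re-derive it.
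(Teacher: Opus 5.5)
Your reduction is the right one: everything beyond \Cref{Prop3.3OfDexheimerDiscrete_first} is deterministic. The paper's proof is exactly a rerun of the argument for Proposition~3.3 of \citet{Dexheimer2022}, with that lemma in place of their Lemma~3.2.

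Your peeling is also sound as far as it goes. Since $u_s^2\ge\log(4\varepsilon_0^{-1})+s\log(2ed^2/s)$, the union bound costs at most $\frac{\varepsilon_0}{2}\sum_{s\ge1}(2e)^{-s}<\frac{\varepsilon_0}{2}$. Minimality of $s$ loses at most a factor $\sqrt2$ when $s\ge2$. For $s=1$ you need $\|\mathbf{B}\|_*\ge\sqrt{\log(2d^2)}\,\|\mathbf{B}\|_F$, which follows from $\operatorname{vec}(\mathbf{B})^{\#}_j\le j^{-1/2}\|\mathbf{B}\|_F$; you should state it.

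What this yields, however, is the inequality with some other universal multiple of $c_0$, not with $c_*$. Write $W$ for your universal bound on $\mathbf{E}\max_j Z^{\#}_j/\lambda_j$. Your peeled bound then has coefficient roughly $\sqrt6\,c_0\bigl(\sqrt2(W+1)+1\bigr)$. The one-shot argument below gives $\sqrt6\,c_0(W+1)$. So the step ``tracking the constants through this comparison is what produces $c_*$'' fails, and as written you prove the lemma only up to the value of the constant.

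The missing observation is one you already use. Since $\langle\operatorname{vec}(\mathbf{B}),Z\rangle\le\|\mathbf{B}\|_*\max_j Z^{\#}_j/\lambda_j$, the entire Slope unit ball has Gaussian width at most $W$, a universal constant. So no sparsity decomposition is needed.

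Apply \Cref{Prop3.3OfDexheimerDiscrete_first} once, to $\mathcal{D}=\{\mathbf{B}:\|\mathbf{B}\|_S\le1\}$:
\begin{itemize}
\item $w(\mathcal{D})\le W$ and $\operatorname{rad}(\mathcal{D})\le(\log(4\varepsilon_0^{-1}))^{-1/2}$;
\item the choice $u=\sqrt{\log(4\varepsilon_0^{-1})}$ gives $2e^{-u^2}=\varepsilon_0/2$ exactly and $u\operatorname{rad}(\mathcal{D})\le1$;
\item homogeneity of $\mathbf{B}\mapsto\langle\varepsilon^{N_T}_{\mathcal{P}_T},\mathbf{B}\rangle_F/\|\mathbf{B}\|_S$ then extends the bound to all $\mathbf{B}\neq0$.
\end{itemize}
This is what the factor $\sqrt{\log(4\varepsilon_0^{-1})}$ in $\|\cdot\|_S$ is designed for. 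It is also how $c_*=c_0\sqrt6\,\bigl(\sqrt{\pi/(2\log2)}+\sqrt{50}\bigr)$ arises: as $c_0\sqrt6$ times a universal bound on $w(\mathcal{D})+u\operatorname{rad}(\mathcal{D})$.

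Note that the factor $\sqrt6$ multiplies the width term as well. So the summand $\sqrt{3\pi/\log2}=\sqrt6\,\sqrt{\pi/(2\log2)}$ reflects a width contribution of $\sqrt{\pi/(2\log 2)}$. It does not correspond to a width bound of $\sqrt{3\pi/\log2}$, as your last sentence suggests.
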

\begin{proof}
    The proof follows by the arguments to Proposition $3.3$ in \cite{Dexheimer2022}, invoking our \Cref{Prop3.3OfDexheimerDiscrete_first} in place of their Lemma $3.2$.
\end{proof}
\begin{proof}[Proof of \Cref{PropertiesDiscretizedEstimator}]
    We assume we are on the set 
    \begin{equation*}
        E=E_1\cap E_2 :=\left\{ \inf _{\mathbf{B} \in \mathbb{R}^{d \times d} \backslash\{0\}} \frac{\|\mathbf{B} X\|_{L_{\mathcal{P}_T}^2}^2}{\|\mathbf{B}\|_2^2} \geq \frac{\kappa_{\min }}{2}\right\} \bigcap \left\{\sup _{\mathbf{B} \in \mathbb{R}^{d \times d}, \mathbf{B} \neq 0} \frac{\left\langle\varepsilon_{\mathcal{P}_T}^{N_T}, \mathbf{B}\right\rangle_F}{\|\mathbf{B}\|_S} \leq c_* \sqrt{\frac{\kappa_{\max }}{T}} \right\}.
    \end{equation*}
    Here $\|\mathbf{B}\|_S$ is defined as in \Cref{Prop3.3OfDexheimerDiscrete} and thus depends on $\varepsilon_0$.
    Following \Cref{Prop3.1OfDexheimerDiscrete} and \Cref{Prop3.3OfDexheimerDiscrete} it holds that 
    \begin{equation*}
    \begin{aligned}
        \mathbb{P}^{Q_0}\left( E \right) &= 1 - \mathbb{P}^{Q_0}(E^c) \\
        &=1 - \mathbb{P}^{Q_0}(E_1^c\cup E_2^c) \\
        & \ge 1 - \dfrac{\varepsilon_0}{2} - (21(d \wedge \mathrm{e}))^d \left(H\left(T,\frac{\kappa_{\min }}{3d}\right) +H_2\left(\Delta_{\mathcal{P}_T},\frac{\kappa_{\min }}{3d}\right)\right).
    \end{aligned}
\end{equation*}
By repeating the arguments of \Cref{DexHeimerNormBound} and \Cref{CorrollaryBoundMaxLike}, now invoking \Cref{Lemma1DexheimerDiscrete} in place of the (continuous-time) Lemma~$2.1$ argument of \cite{Dexheimer2022}, we obtain the bound on the norm difference for the maximum likelihood estimate of $Q_0$.
\end{proof}

\begin{proposition}\label{Prop:BoundEntryWisel1Norm}
    Suppose \Cref{Ass:Estimation,Ass.H,Ass.High-FrequencySampling} are satisfied. Let $ \tilde{C}_{{\mathcal{P}_T}} =\dfrac{1}{T} \int_0^T \bold{X}_{\lfloor s \rfloor_{\mathcal{P}_T} } \bold{X}_s^\top \dd s$. Then,
    \begin{equation*}
        \Vert\widehat{C}_{{\mathcal{P}_T}} - \tilde{C}_{{\mathcal{P}_T}}^\top\Vert_F \le \Delta_{\mathcal{P}_T}^{1/4},
    \end{equation*}
    with probability greater than $1-\dfrac{\sqrt{\mathbb{E}_{\mathbb{P}^{Q_0}}\left(\Vert \mathbf{X}_{0}\Vert_2^2\right)} \sqrt{f_2(d,\Delta_{\mathcal{P}_T})}}{\Delta_{\mathcal{P}_T}^{1/4}} = 1 - O(d^{7/2}\Delta^{1/4}_{\mathcal{P}_T})$, as $d\rightarrow \infty$, under $\mathbb{P}^{Q_{0}}$ with $f_2$ defined as in \Cref{BoundOnIncrements}.
\end{proposition}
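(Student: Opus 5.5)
Mirroring the proof of \Cref{Prop:LevyH_discrete}, I will express $\widehat{C}_{\mathcal{P}_T}-\tilde{C}_{\mathcal{P}_T}^\top$ as a single time integral, bound its expected Frobenius norm using the increment and moment bounds of \Cref{BoundOnIncrements,BoundExpectationNormInvariant}, and then apply Markov's inequality at level $\Delta_{\mathcal{P}_T}^{1/4}$.

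\emph{Step 1 (integral representation).} Write $\widehat{C}_{\mathcal{P}_T}=\frac{1}{T}\int_0^T \mathbf{X}_{\lfloor s\rfloor_{\mathcal{P}_T}}\mathbf{X}_{\lfloor s\rfloor_{\mathcal{P}_T}}^\top\,\dd s$, exactly as in the proof of \Cref{Prop:LevyH_discrete}. Since $\tilde{C}_{\mathcal{P}_T}^\top=\frac1T\int_0^T \mathbf{X}_s\mathbf{X}_{\lfloor s\rfloor_{\mathcal{P}_T}}^\top\dd s$, this gives
\begin{equation*}
\widehat{C}_{\mathcal{P}_T}-\tilde{C}_{\mathcal{P}_T}^\top=\frac1T\int_0^T\left(\mathbf{X}_{\lfloor s\rfloor_{\mathcal{P}_T}}-\mathbf{X}_s\right)\mathbf{X}_{\lfloor s\rfloor_{\mathcal{P}_T}}^\top\,\dd s .
\end{equation*}
This has only one term, unlike the two terms in the proof of \Cref{Prop:LevyH_discrete}.

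\emph{Step 2 (expectation bound).} For rank-one matrices, $\Vert a b^\top\Vert_F=\Vert a\Vert_2\Vert b\Vert_2$. Applying the Frobenius analogue of \Cref{MatrixCalc3}, Cauchy--Schwarz inside the expectation, and stationarity under $\mathbb{P}^{Q_0}$ (\Cref{Ass:Estimation}) gives
\begin{equation*}
\mathbb{E}_{\mathbb{P}^{Q_0}}\Vert\widehat{C}_{\mathcal{P}_T}-\tilde{C}_{\mathcal{P}_T}^\top\Vert_F\le \sqrt{\mathbb{E}_{\mathbb{P}^{Q_0}}\Vert\mathbf{X}_0\Vert_2^2}\,\sup_{u\in[0,\Delta_{\mathcal{P}_T}]}\sqrt{\mathbb{E}_{\mathbb{P}^{Q_0}}\Vert\mathbf{X}_u-\mathbf{X}_0\Vert_2^2}.
\end{equation*}
The proof of \Cref{BoundOnIncrements} shows that the increment second moment is at most $4f_2(d,u)$. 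Since $f_2(d,\cdot)$ is nondecreasing, the supremum is controlled by $f_2(d,\Delta_{\mathcal{P}_T})$, up to the constant absorbed in the stated probability bound.

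\emph{Step 3 (Markov and rates).} Markov's inequality then yields
\begin{equation*}
\mathbb{P}^{Q_0}\left(\Vert\cdot\Vert_F>\Delta_{\mathcal{P}_T}^{1/4}\right)\le \frac{\sqrt{\mathbb{E}\Vert\mathbf{X}_0\Vert_2^2}\sqrt{f_2(d,\Delta_{\mathcal{P}_T})}}{\Delta_{\mathcal{P}_T}^{1/4}}.
\end{equation*}
By \Cref{BoundExpectationNormInvariant}, $\sqrt{\mathbb{E}\Vert\mathbf{X}_0\Vert_2^2}=O(d^2)$. By \Cref{BoundOnIncrements}, $\sqrt{f_2}=O(d^{3/2}\Delta_{\mathcal{P}_T}^{1/2})$, which is valid because $d^3\Delta_{\mathcal{P}_T}\to0$ under the sampling assumptions. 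Combining these gives $1-O(d^{7/2}\Delta_{\mathcal{P}_T}^{1/4})$.

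The main obstacle is checking that the rate $O(d^3s)$ for $f_2$ holds uniformly in the regime used. In particular, the drift term $\Vert Q_0^{-1}\Vert_F^2 s^2\Vert Q_0\Vert_F^2 e^{2s\Vert Q_0\Vert_F}\Vert b^*\Vert_2^2$ is $O(d^6 s^2)$, and it must be dominated by $d^3 s$. This needs $d^3\Delta_{\mathcal{P}_T}\to 0$ and $s\Vert Q_0\Vert_F=O(1)$. I would verify both from \Cref{Ass.High-FrequencySampling} together with the mesh conditions (e.g.\ \Cref{Ass:WHPDelta}); if they fail, the stated rate needs adjusting.
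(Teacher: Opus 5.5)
Your proposal is correct and is essentially the paper's proof. It uses the same one-term representation $\widehat{C}_{\mathcal{P}_T}-\tilde{C}_{\mathcal{P}_T}^\top=\frac1T\int_0^T(\mathbf{X}_{\lfloor s\rfloor_{\mathcal{P}_T}}-\mathbf{X}_s)\mathbf{X}_{\lfloor s\rfloor_{\mathcal{P}_T}}^\top\,\dd s$, Cauchy--Schwarz with stationarity to reach $\sqrt{\mathbb{E}_{\mathbb{P}^{Q_0}}\Vert\mathbf{X}_0\Vert_2^2}\,\sup_{u\le\Delta_{\mathcal{P}_T}}\sqrt{\mathbb{E}_{\mathbb{P}^{Q_0}}\Vert\mathbf{X}_u-\mathbf{X}_0\Vert_2^2}$, then \Cref{BoundOnIncrements,BoundExpectationNormInvariant} and Markov at level $\Delta_{\mathcal{P}_T}^{1/4}$.

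Your caveats apply equally to the paper's argument and leave the $O(d^{7/2}\Delta_{\mathcal{P}_T}^{1/4})$ rate intact. First, the explicit failure probability should carry a factor $2$, since $f_2$ bounds only $\tfrac14\mathbb{E}\Vert\mathbf{Y}_u-\mathbf{Y}_0\Vert_2^2$; so the constant is not absorbed into the stated expression, only into the $O(\cdot)$ form. Strictly, the proof of \Cref{BoundOnIncrements} also drops the term $(e^{-uQ_0}-I)\mathbf{Y}_0$ of $\mathbf{Y}_u-\mathbf{Y}_0$, which contributes an extra $O(d^6u^2)$. Second, $d^3\Delta_{\mathcal{P}_T}\to0$ is not implied by \Cref{Ass.High-FrequencySampling} alone; it is an additional regime condition, satisfied e.g.\ under \Cref{Ass.AssFiniteAct}(5).
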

\begin{proof}
    We note that 
    \begin{equation*}
        \begin{aligned}
            \widehat{C}_{\mathcal{P}_T} - \tilde{C}_{{\mathcal{P}_T}}^\top &=\dfrac{1}{T} \sum_{n=0}^{N_T-1}\bold{X}_{s_n^{N_T}}\bold{X}_{s_n^{N_T}}^\top (s^{N_T}_{n+1}-s^{N_T}_n)-\dfrac{1}{T} \int_0^T \bold{X}_s \bold{X}_{\lfloor s \rfloor_{\mathcal{P}_T} }^\top \dd s\\
            &=\dfrac{1}{T} \sum_{n=0}^{N_T-1}\bold{X}_{s_n^{N_T}}\bold{X}_{s_n^{N_T}}^\top \int_{s^{N_T}_n}^{s^{N_T}_{n+1}}\dd s-\dfrac{1}{T} \int_0^T \bold{X}_s \bold{X}_{\lfloor s \rfloor_{\mathcal{P}_T} }^\top \dd s\\
            &=\dfrac{1}{T} \sum_{n=0}^{N_T-1}\int_{s^{N_T}_n}^{s^{N_T}_{n+1}}\bold{X}_{s_n^{N_T}}\bold{X}_{s_n^{N_T}}^\top \dd s-\dfrac{1}{T} \int_0^T \bold{X}_s \bold{X}_{\lfloor s \rfloor_{\mathcal{P}_T} }^\top \dd s\\
            &=\dfrac{1}{T} \int_{0}^{T}\bold{X}_{\lfloor s \rfloor_{\mathcal{P}_T} } \bold{X}_{\lfloor s \rfloor_{\mathcal{P}_T} }^\top  \dd s-\dfrac{1}{T} \int_0^T \bold{X}_s \bold{X}_{\lfloor s \rfloor_{\mathcal{P}_T} }^\top \dd s
            \\
            &=\dfrac{1}{T} \int_{0}^{T}\left(\bold{X}_{\lfloor s \rfloor_{\mathcal{P}_T} }- \bold{X}_s\right) \bold{X}_{\lfloor s \rfloor_{\mathcal{P}_T} }^\top  \dd s.
        \end{aligned}
    \end{equation*}
    Hence, following the submultiplicativity of the Frobenius norm, \Cref{MatrixCalc3}, Cauchy-Schwarz and stationarity of $\bold X$ under $\mathbb P^{Q_0}$ (\Cref{Ass:Estimation}), which gives $\mathbb E_{\mathbb P^{Q_0}}\Vert\bold X_t\Vert_2^2 = \mathbb E_{\mathbb P^{Q_0}}\Vert\bold X_0\Vert_2^2$ for all $t \ge 0$, we find that
    \begin{equation*}
        \begin{aligned}
            \mathbb{E}_{\mathbb{P}^{Q_0}}\left(\Vert \widehat{C}_{\mathcal{P}_T} - \tilde{C}_{{\mathcal{P}_T}}^\top \Vert _F\right) &\le \dfrac{1}{T} \int_{0}^{T}\mathbb{E}_{\mathbb{P}^{Q_0}}\left( \Vert\bold{X}_{\lfloor s \rfloor_{\mathcal{P}_T} }- \bold{X}_s\Vert_2 \Vert \bold{X}_{\lfloor s \rfloor_{\mathcal{P}_T} }\Vert _2 \right) \dd s\\
            &\le \dfrac{1}{T}\int_0^T\sqrt{\mathbb{E}_{\mathbb{P}^{Q_0}}\left(\Vert \mathbf{X}_{0}\Vert_2^2\right)}\sqrt{\mathbb{E}_{\mathbb{P}^{Q_0}}\left(\Vert\bold{X}_s-\mathbf{X}_{\lfloor s\rfloor_{\mathcal{P}_T}}\Vert_2^2\right)}\dd s\\
            &\le \sqrt{\mathbb{E}_{\mathbb{P}^{Q_0}}\left(\Vert \mathbf{X}_{0}\Vert_2^2\right)}\sqrt{\sup_{u\in [0,\Delta_{\mathcal{P}_T}]}\mathbb{E}_{\mathbb{P}^{Q_0}}\left(\Vert\bold{X}_u-\mathbf{X}_{0}\Vert_2^2\right)}\\
            &\le \sqrt{\mathbb{E}_{\mathbb{P}^{Q_0}}\left(\Vert \mathbf{X}_{0}\Vert_2^2\right)} \sqrt{f_2(d,\Delta_{\mathcal{P}_T})}\\
            &= O\left(d^{7/2} \Delta_{\mathcal{P}_T}^{1/2}\right),\quad \text{as } d\rightarrow \infty.
        \end{aligned}
    \end{equation*}
    We used \Cref{BoundOnIncrements} in the last inequality and $f_2(\cdot,\cdot)$ is the function from there. By Markov's inequality, we thus get the desired result.
\end{proof}
\begin{proposition}\label{Prop:UpperBoundC_TInverse}
    Grant \Cref{Ass.H,Ass:Estimation,Ass.High-FrequencySampling}. Then,
    \begin{equation*}
        \Vert \widehat{C}_{\mathcal{P}_T}^{-1} \Vert \le \dfrac{2}{\kappa_{\min}},
    \end{equation*}
    with probability greater than $1-(21(d \wedge \mathrm{e}))^d \left(H\left(T,\frac{\kappa_{\min}}{3d}\right) +H_2\left(\Delta_{\mathcal{P}_T},\frac{\kappa_{\min}}{3d}\right)\right) $ under $\mathbb{P}^{Q_0}$.
\end{proposition}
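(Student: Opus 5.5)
This is a direct consequence of \Cref{Prop3.1OfDexheimerDiscrete}. The plan is to work on the event $Q_{\mathcal{P}_T}$ defined in \eqref{Q_def}. That proposition already shows that $Q_{\mathcal{P}_T}$ is contained in $\{\lambda_{\min}(\widehat{C}_{\mathcal{P}_T}) \ge \kappa_{\min}/2\}$. It also shows that $Q_{\mathcal{P}_T}$ has $\mathbb{P}^{Q_0}$-probability at least
\begin{equation*}
1-(21(d\wedge \mathrm{e}))^d\left(H\left(T,\tfrac{\kappa_{\min}}{3d}\right)+H_2\left(\Delta_{\mathcal{P}_T},\tfrac{\kappa_{\min}}{3d}\right)\right),
\end{equation*}
which is exactly the probability claimed in the statement. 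So the only remaining task is to convert the lower eigenvalue bound into an upper bound on the spectral norm of the inverse.

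First, I will note that $\widehat{C}_{\mathcal{P}_T}$ is symmetric by construction, being a weighted sum of outer products $\mathbf{X}_{s_n}\mathbf{X}_{s_n}^\top$. By \Cref{StrictConvexityOfC_THat}, which uses the condition $N_T>d$ from \Cref{Ass.High-FrequencySampling}, it is also almost surely positive definite, so the inverse exists. On $Q_{\mathcal{P}_T}$ this invertibility is in any case automatic from $\lambda_{\min}(\widehat{C}_{\mathcal{P}_T})\ge \kappa_{\min}/2>0$. Here $\kappa_{\min}>0$ by \Cref{Ass.H}(5).

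Second, for a symmetric positive definite matrix the spectral norm of the inverse equals the largest eigenvalue of the inverse, which is $1/\lambda_{\min}(\widehat{C}_{\mathcal{P}_T})$. This follows from the spectral decomposition: the eigenvalues of $\widehat{C}_{\mathcal{P}_T}^{-1}$ are the reciprocals of those of $\widehat{C}_{\mathcal{P}_T}$. Hence on $Q_{\mathcal{P}_T}$ we get $\Vert \widehat{C}_{\mathcal{P}_T}^{-1}\Vert \le 2/\kappa_{\min}$, which is the claim.

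There is no genuine obstacle, since all the probabilistic work sits inside \Cref{Prop3.1OfDexheimerDiscrete}. The one point to check is that the hypotheses listed here match the ones that proposition needs. In particular, \Cref{Ass:LevyH} is required to define $H$ and thereby $H_2$ through \Cref{Prop:LevyH_discrete}, so I would state explicitly that it is implicitly in force; otherwise the probability bound is not defined.
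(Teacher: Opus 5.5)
Your proposal is correct and follows essentially the same route as the paper: invoke \Cref{Prop3.1OfDexheimerDiscrete} for the inclusion $Q_{\mathcal{P}_T}\subseteq\{\lambda_{\min}(\widehat{C}_{\mathcal{P}_T})\ge\kappa_{\min}/2\}$ together with its probability bound, then use that for the symmetric positive definite matrix $\widehat{C}_{\mathcal{P}_T}$ one has $\Vert\widehat{C}_{\mathcal{P}_T}^{-1}\Vert = 1/\lambda_{\min}(\widehat{C}_{\mathcal{P}_T})$. Your remark that \Cref{Ass:LevyH} must implicitly be in force is also correct: \Cref{Prop3.1OfDexheimerDiscrete} requires it, and $H$ and $H_2$ are not even defined without it, yet it is missing from the statement's hypothesis list.
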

\begin{proof}
    By \Cref{Prop3.1OfDexheimerDiscrete}, the event $Q_{\mathcal{P}_T}$ defined in \eqref{Q_def} satisfies $Q_{\mathcal{P}_T} \subseteq \{\lambda_{\min}(\widehat{C}_{\mathcal{P}_T}) \ge \kappa_{\min}/2\}$ and has probability at least $1-(21(d \wedge \mathrm{e}))^d \left(H\left(T,\frac{\kappa_{\min}}{3d}\right) +H_2\left(\Delta_{\mathcal{P}_T},\frac{\kappa_{\min}}{3d}\right)\right)$ under $\mathbb{P}^{Q_0}$. Hence on $Q_{\mathcal{P}_T}$,
    \begin{equation*}
        \Vert \widehat{C}_{\mathcal{P}_T}^{-1} \Vert = \lambda_{\max}\left(\widehat{C}_{\mathcal{P}_T}^{-1} \right) = \dfrac{1}{\lambda_{\min}\left(\widehat{C}_{\mathcal{P}_T} \right)} \le \dfrac{2}{\kappa_{\min}},
    \end{equation*}
    where the first two equalities follow from the fact that $\widehat{C}_{\mathcal{P}_T}$ is a normal, positive definite matrix (so its spectral norm equals its maximal eigenvalue, and it is diagonalizable, so the eigenvalues of its inverse are the inverses of the eigenvalues of $\widehat{C}_{\mathcal{P}_T}$), and the final inequality is by the event we are on. This yields the desired result.
\end{proof}
\begin{proof}[Proof of \Cref{LikelihoodEquality}]
We assume throughout that we are on the set $$\left\{ \Vert\widehat{C}_{{\mathcal{P}_T}} - \tilde{C}_{{\mathcal{P}_T}}^\top\Vert_F \le \Delta_{\mathcal{P}_T}^{1/4}, \Vert\widehat{C}_{{\mathcal{P}_T}}^{-1}\Vert\le \dfrac{2}{\kappa_{\min}}\right\}.$$ This set is of high probability under $\mathbb{P}^{Q_0}$ following the results in \Cref{Prop:BoundEntryWisel1Norm,Prop:UpperBoundC_TInverse} combined with \Cref{Ass:WHPT,Ass:WHPDelta}.
We remember that $\Sigma^{-1/2}\bold{X}_t^c$ is a $\mathbb{P}^0$ Wiener process following the derivations in \Cref{PDynOfYandX}. Further, by the same results,
\begin{equation*}
\bold{X}_t^c = \bold{X}_t^c([Q_0]) - \int_0^tQ_0\bold{X}_s ds, 
\end{equation*}
where
\[
    \widetilde{\boldsymbol W}_t
    :=
    \Sigma^{-1/2}\boldsymbol X_t^c[Q_0]
\]
is a $\mathbb P^{Q_0}$-Wiener process. Hence, we have that 
\begin{equation*}
\Sigma^{-1/2}\Delta_{\mathcal{P}_T}^n\bold{X}^c = \Delta_{\mathcal{P}_T}^n\bold{\tilde{W}}-\Sigma^{-1/2}\int_{s_{n}^{N_T}}^{s_{n+1}^{N_T}}Q_0\bold{X}_s ds, 
\end{equation*}
We get that \eqref{DiscreteLikelihood} can be written as
\begin{equation}\label{DiscreteWrittenOut}
    \begin{aligned}
    \mathcal{L}_T^{\mathcal{P}_T}(Q)&=\frac{1}{T} \sum_{n=0}^{N_T-1}\left(\Sigma^{-1} Q \mathbf{X}_{s_n^{N_T}}\right)^{\top}    \Delta_{\mathcal{P}_T}^n \bold{X}^c+\frac{1}{2 T} \sum_{n=0}^{N_T-1}\left(\Sigma^{-1/2} Q \mathbf{X}_{s_n^{N_T}}\right)^{\top} \Sigma^{-1/2} Q \mathbf{X}_{s_n^{N_T}}\left(s_{n+1}^{N_T}-s_n^{N_T}\right) \\
    &= \frac{1}{T} \int_0^T\left(\Sigma^{-1/2} Q \mathbf{X}_s\right)^{\top}    \mu_{\mathcal{P}_T}^{\mathbb{\tilde{W}}}(ds)+\frac{1}{2 T} \int_0^T\left(\Sigma^{-1/2} Q \mathbf{X}_s\right)^{\top} \Sigma^{-1/2} Q \mathbf{X}_s\mu_{\mathcal{P}_T}(ds)\\
    &-\frac{1}{T} \sum_{n=0}^{N_T-1}\left(\Sigma^{-1/2} Q \mathbf{X}_{s_n^{N_T}}\right)^{\top}\Sigma^{-1/2}Q_0 \int_{s_n^{N_T}}^{s_{n+1}^{N_T}}\bold{X}_sds\\
    &= \frac{1}{T} \int_0^T\left(\Sigma^{-1/2} Q \mathbf{X}_s\right)^{\top}    \mu_{\mathcal{P}_T}^{\mathbb{\tilde{W}}}(ds)+\frac{1}{2 T} \int_0^T\left(\Sigma^{-1/2} Q\mathbf{X}_s\right)^{\top} \Sigma^{-1/2} Q \mathbf{X}_s\mu_{\mathcal{P}_T}(ds)\\
    &-\frac{1}{T} \int_0^T\left(\Sigma^{-1/2} Q \mathbf{X}_{\lfloor s\rfloor_{\mathcal{P}_T}}\right)^{\top}\Sigma^{-1/2}Q_0 \bold{X}_sds
    \\
    &= \frac{1}{T} \int_0^T\left(\Sigma^{-1/2} Q \mathbf{X}_s\right)^{\top}    \mu_{\mathcal{P}_T}^{\mathbb{\tilde{W}}}(ds)+\frac{1}{2 T} \int_0^T\left(\Sigma^{-1/2} Q \mathbf{X}_s\right)^{\top} \Sigma^{-1/2} Q \mathbf{X}_s\mu_{\mathcal{P}_T}(ds)\\
    &-\frac{1}{T} \int_0^T\left(\Sigma^{-1/2} Q \mathbf{X}_{\lfloor s\rfloor_{\mathcal{P}_T}}\right)^{\top}\Sigma^{-1/2}Q_0 \bold{X}_sds
    \\&= \Tr\biggl(\frac{1}{T} \Sigma^{-1/2} Q\int_0^T\mathbf{X}_s  \mu_{\mathcal{P}_T}^{\mathbb{\tilde{W}}}(ds)^\top+\frac{1}{2 T}  Q^\top(\Sigma^{-1})^\top Q\int_0^T\mathbf{X}_s \mathbf{X}_s^\top \mu_{\mathcal{P}_T}(ds) \\&-
    \frac{1}{ T}  Q_0^\top(\Sigma^{-1})^\top Q\int_0^T\mathbf{X}_{\lfloor s\rfloor_{\mathcal{P}_T}} \mathbf{X}_s^\top ds\biggr)\\
    &= \Tr\biggl( \Sigma^{-1/2} Q (\varepsilon_{\mathcal{P}_T}^{N_T})^\top+\frac{1}{2}  Q\widehat{C}_{{\mathcal{P}_T}}Q^\top\Sigma^{-1}-Q\tilde{C}_{{\mathcal{P}_T}}Q_0^\top\Sigma^{-1}\biggr),
    \end{aligned}
\end{equation}
where $\lfloor s\rfloor_{\mathcal{P}_T}$ is as previously defined, and we used \Cref{MatrixCalc1,MatrixCalc2} in the second to last equality. Finally $\tilde{C}_{{\mathcal{P}_T}}:=\dfrac{1}{T}\int_0^T\mathbf{X}_{\lfloor s\rfloor_{\mathcal{P}_T}} \mathbf{X}_s^\top ds$. Comparing \eqref{DiscreteWrittenOut} with \eqref{TrInfeasDisc}, we note that they share the same Hessian matrix, $\widehat{C}_{\mathcal{P}_T}\otimes\Sigma^{-1}$, which is positive definite following \Cref{StrictConvexityOfC_THat}. Since the minimizations are over $\mathbb R^{d\times d}$ and the objective
functions are differentiable and strictly convex on the event under
consideration, the minimizers are characterized by the first-order conditions
\[
    \nabla_Q \mathcal L_T^{\mathcal P_T}(\widehat Q)=0,
    \qquad
    \nabla_Q \widetilde{\mathcal L}_T^{\mathcal P_T}(\widetilde Q)=0.
\]
\begin{equation*}
    \begin{aligned}(\Sigma^{-1/2})^\top\varepsilon_{\mathcal{P}_T}^{N_T}+\Sigma^{-1}\tilde{Q}\widehat{C}_{{\mathcal{P}_T}}-\Sigma^{-1}Q_0\widehat{C}_{{\mathcal{P}_T}} &= (\Sigma^{-1/2})^\top\varepsilon_{\mathcal{P}_T}^{N_T}+\Sigma^{-1}\widehat{Q}\widehat{C}_{{\mathcal{P}_T}}-\Sigma^{-1}Q_0\tilde{C}_{{\mathcal{P}_T}}^\top
    \\ &\iff \\\left(\tilde{Q}-\widehat{Q}\right)\widehat{C}_{{\mathcal{P}_T}} &= Q_0\left(\widehat{C}_{{\mathcal{P}_T}} - \tilde{C}_{{\mathcal{P}_T}}^\top\right)\\
    &\iff \\\left(\tilde{Q}-\widehat{Q}\right) &= Q_0\left(\widehat{C}_{{\mathcal{P}_T}} - \tilde{C}_{{\mathcal{P}_T}}^\top\right)\widehat{C}_{{\mathcal{P}_T}}^{-1}.
    \end{aligned}
\end{equation*}
We thus conclude that
\begin{equation*}
    \begin{aligned}
    \Vert \tilde{Q}-\widehat{Q}\Vert_F &\le \Vert Q_0\Vert_F \Vert\widehat{C}_{{\mathcal{P}_T}} - \tilde{C}_{{\mathcal{P}_T}}^\top\Vert_F \Vert\widehat{C}_{{\mathcal{P}_T}}^{-1}\Vert
    \\&\le \Vert Q_0\Vert_F  \dfrac{2\Delta_{\mathcal{P}_T}^{1/4}}{\kappa_{\min}},
    \end{aligned}
\end{equation*}
where we used that for any matrices $A,B$,
\begin{equation*}
    \begin{aligned}
    \Vert AB\Vert_F &\le \Vert A\Vert_F \Vert B \Vert,
    \end{aligned}
\end{equation*}
together with $\Vert Q_0(\widehat{C}_{\mathcal{P}_T} - \tilde{C}_{\mathcal{P}_T}^\top)\Vert_F \le \Vert Q_0\Vert_F \Vert \widehat{C}_{\mathcal{P}_T} - \tilde{C}_{\mathcal{P}_T}^\top\Vert_F$ by submultiplicativity of the Frobenius norm, and the operator-norm bound on $\widehat{C}_{\mathcal{P}_T}^{-1}$ from \Cref{Prop:UpperBoundC_TInverse}.
\end{proof}
\subsection{Proofs: Feasible case}
In this section, we have all proofs regarding the discretization of the likelihood and the magnitude of the error associated to this discretization as a function of $d$, $\Delta_{\mathcal{P}_T}$, $N$ and $T$. Throughout this entire section, we will often refer to the $(i,j)$th entry of $Q_0$. We denote this $[Q_0]_{ij}$. In this section, we will aim to bound 
\begin{equation}\label{L1BoundRef}
    \Vert\left(\dfrac{1}{T}\sum_{n=0}^{N_T-1} \left( \Delta_n\bold{X}^c-\Delta_{\mathcal{P}_T}^n \bold{\bold{X}}\odot\vec{\bold{1}}(\Delta_{\mathcal{P}_T}^n \bold{\bold{X}})\right)\mathbf{X}_{s_n^{N_T}}^\top\right)\Vert_1
\end{equation} in probability. We distinguish between whether or not $\mathbb{L}$ is of finite activity or not as this will have implications on the Lévy-Itô decomposition $\mathbb{L}$. We cover that next.
\subsubsection{Preliminaries}
We will use the two important theorems recalled in \Cref{LevyIto} to bound \eqref{L1BoundRef}. With this in mind, we underline a few things these theorems tell us.
\paragraph{A closer look on the implications of the Lévy-Itô Decomposition}\label{LevyItoImpl}
Following \eqref{eqn:LevyIto} using \eqref{eqn:1}, we have that 
\begin{equation*}
    \Delta_{\mathcal{P}_T}^k \bold{Y}= -\int_{s^{\mathcal{P}_T }_{k-1}}^{s^{N_T}_{k}}Q\bold{Y}_s ds+\Delta_{\mathcal{P}_T}^k\bold{L}.
\end{equation*}
From the Lévy-Itô decomposition, we have that
\begin{equation}\label{LevyProcessDecomInfAct}
    \Delta_{\mathcal{P}_T}^k\bold{L} = \bold{b}(s^{N_T}_{k}-s^{N_T }_{k-1}) + \Sigma^{1/2}\Delta_{\mathcal{P}_T}^k \bold{W} + \Delta_{\mathcal{P}_T}^k\bold{U} + \Delta_{\mathcal{P}_T}^k \bold{M}, \quad  \mathbb{P}-a.s.,
\end{equation}
where $\bold{U}_t := \int_0^t \int_{\{x:\|x\|>1\}}  x \mu(d t, d x)$ is a finite activity Lévy process, since $F\left(\{x: \Vert x \Vert > 1\}\right) < \infty$ as our Lévy measure satisfies $\int \left(\Vert x \Vert^2 \wedge 1 \right) F(dx) < \infty$, and $\bold{M}_t := \int_0^t \int_{\{x:\|x\| \leq 1\}} x\{\mu(d t, d x)- F(d x)d t\}$. Let $\bold{D}_t :=-\int_{0}^{t}Q\bold{Y}_s ds$, then we have the following decomposition of the increments of $ \mathbb{Y}$ from \eqref{eqn:1}: 
\begin{equation}\label{eqn:DecompositionInfiniteAct}
    \Delta_{\mathcal{P}_T}^k \bold{Y} = \Delta_{\mathcal{P}_T}^k \bold{D} + \bold{b}(s^{N_T}_{k}-s^{N_T }_{k-1}) + \Sigma^{1/2}\Delta_{\mathcal{P}_T}^k \bold{W} + \Delta_{\mathcal{P}_T}^k\bold{U} + \Delta_{\mathcal{P}_T}^k \bold{M},\quad \mathbb{P}-a.s..
\end{equation}
Let $\tilde{\bold{Y}} =\bold{Y} -  \bold{M}$ and hence  $\Delta_{\mathcal{P}_T}^k \tilde{\bold{Y}} := \Delta_{\mathcal{P}_T}^k \bold{Y} - \Delta_{\mathcal{P}_T}^k \bold{M}$. Effectively, $\tilde{\bold{Y}} $ under the infinite activity case behaves as $\bold{Y}$ under the finite activity case, only with slightly different jump rate ($\lambda$). This will be used in the following proofs.
Using that $\mu$ is a Poisson random measure, we have that $\bold{M}_t, \bold{U}_t$ and $\bold{W}_t$ are all independent from each other. 
Using Propositions $3.3$ and $3.5$ of \cite{Tankov2003}, we underline that since $\mathbb{U}$ is a finite activity Lévy process, it is a compound Poisson process, that is, for any $t>0$,
    \begin{equation*}
        \bold{U}_t = \sum_{j=1}^{N_t}\bold{Z}_j,\quad \mathbb{P}-a.s.,
    \end{equation*}
    where $N_t$ is a univariate Poisson process with rate $\lambda = F(\{x:\Vert x \Vert>1 \})$ and $\bold{Z}_j$ have probability measure $\bar{F}(dx)=F|_{x:\Vert x \Vert>1}(dx)/\lambda$. We only work with $\mathbb{U}$ in the infinite activity case.
    In the finite activity case, we may simplify \eqref{LevyProcessDecomInfAct} as 
\begin{equation}\label{LevyProcessDecomFiniteAct}
    \Delta_{\mathcal{P}_T}^k\bold{L} = \bold{\tilde{b}}(s^{N_T}_{k}-s^{N_T }_{k-1}) + \Sigma^{1/2}\Delta_{\mathcal{P}_T}^k \bold{W} + \Delta_{\mathcal{P}_T}^k\bold{\tilde{U}}, \quad \mathbb{P}-a.s,
\end{equation}
where $\mathbf{\tilde{b}} = \mathbf{b}-\int_{x:\Vert x \Vert \le 1} x F(dx) < \infty$ and $\tilde{\mathbb{U}}$ is a finite activity Lévy process. Using Propositions $3.3$ and $3.5$ of \cite{Tankov2003}, we again have that since $\tilde{\mathbb{U}}$ is a finite activity Lévy process, it can also be represented as a compound Poisson process, that is, for any $t>0$,
    \begin{equation*}
        \tilde{\bold{U}}_t = \sum_{j=1}^{\tilde{N}_t}\tilde{\bold{Z}}_j,\quad \mathbb{P}-a.s.,
    \end{equation*}
    where $\tilde{N}_t$ is a univariate Poisson process with rate $\tilde{\lambda} = F(\mathbb{R}^d)$ and $\bold{\tilde{Z}}_j$ have probability measure $\tilde{F}(dx)=F(dx)/\tilde{\lambda}$.
We now introduce the last stochastic process, we shall work with, $\mathbb{J}$, which we define as
\begin{equation*}
    \mathbb{J} = \mathbb{L} - \Sigma^{1/2}\mathbb{W},
\end{equation*}
i.e.\ it is the non-Brownian part of the Lévy process. We note that the exact definition of $\mathbb{J}$ thus varies depending on whether or not we are in the finite or infinite activity case. In the finite activity case, $\mathbb{J} = \mathbb{\tilde{U}}$ is simply the compound Poisson process from \eqref{LevyProcessDecomFiniteAct} whereas it equals $\mathbb{U} + \mathbb{M}$ from \eqref{eqn:DecompositionInfiniteAct} in the infinite activity case.
We now state and prove the following Lemma which holds regardless of whether or not we are in the finite or infinite activity case.

\begin{lemma}\label{Lemma:CountBound}
     Grant \Cref{Ass:Estimation,Ass.High-FrequencySampling}. Let $\delta \in [0,1/2)$. Assume that, for some $l\ge 1$, $\max_{k\in\{1,\dots,d\}}\mathbb{E}\left(\vert \bold{Y}_0^{(k)}\vert^l \right)= O(d^l)$ as $d\rightarrow \infty$, and that $\Delta_{\mathcal{P}_T}^{1-2\delta} = o(d^{-4})$ as $d\rightarrow \infty$. Then, for any $\alpha\in \mathbb{N}$, $c \in \mathbb{R}^+$, $k \in \{0,1,\dots,N_T-1\}$ and $i ,j\in \{1,\dots,d\}$, there exists a constant $C$ \emph{independent} of $k$, $i$ and $j$ such that
    \begin{multline*}
     %   \begin{aligned}
        \max_{i \in \{1,\dots,d\}}\mathbb{P}\left(\vert-\sum_{j=1}^d [Q_0]_{ij}\int_{s_k^{N_T}}^{s_{k+1}^{N_T}} \bold{Y}_s^{(j)} ds + \sum_{j=1}^d\Sigma_{ij}^{1/2}\Delta_{\mathcal{P}_T}^k\bold{W}^{(j)}\vert >c \Delta_{\mathcal{P}_T}^{\delta}\right) \\
        =O \left(d^{3l+1} \Delta_{\mathcal{P}_T}^{(l(1-\delta))}\vee d\left(d^4\Delta_{\mathcal{P}_T}^{1-2\delta}\right)^\alpha\right),\quad \text{as } d \rightarrow \infty.
      %  \end{aligned}
    \end{multline*}
    Moreover, if $\delta \in (0,1/4)$ and there exist $l^\prime \in \mathbb{N}$ and $\alpha^\prime > 0$ with $\max_{k\in\{1,\dots,d\}}\mathbb{E}\left(\vert \bold{Y}_0^{(k)}\vert^{l^\prime} \right)= O(d^{l^\prime})$ and $d^{4\alpha^\prime+l^\prime}\Delta_{\mathcal{P}_T}^{\frac{1}{2}\left(\alpha^\prime  - \frac{1}{2}l^\prime\right)}= o(1)$ as $d\rightarrow \infty$, then
    \begin{multline*}
        %\begin{aligned}
        \max_{i \in \{1,\dots,d\}} \mathbb{P}\left(\vert-\sum_{j=1}^d [Q_0]_{ij}\int_{s_k^{N_T}}^{s_{k+1}^{N_T}} \bold{Y}_s^{(j)} ds + \sum_{j=1}^d\Sigma_{ij}^{1/2}\Delta_{\mathcal{P}_T}^k\bold{W}^{(j)}\vert >c \Delta_{\mathcal{P}_T}^{\delta}\right) \\=O \left(d^{3l^\prime+1} \Delta_{\mathcal{P}_T}^{(l^\prime(1-\delta))}\right),\quad \text{as }d \rightarrow \infty.
        %\end{aligned}
    \end{multline*}
\end{lemma}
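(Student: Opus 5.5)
The plan is to split the event by a union bound into a drift part and a Brownian part. For fixed $i$ and $k$, write $D_k^{(i)} := -\sum_j [Q_0]_{ij}\int_{s_k^{N_T}}^{s_{k+1}^{N_T}} \bold{Y}_s^{(j)} ds$ and $G_k^{(i)} := \sum_j \Sigma^{1/2}_{ij}\Delta^k_{\mathcal{P}_T}\bold{W}^{(j)}$. Then
\begin{equation*}
\mathbb{P}\left(\vert D_k^{(i)}+G_k^{(i)}\vert > c\Delta_{\mathcal{P}_T}^{\delta}\right)\le \mathbb{P}\left(\vert D_k^{(i)}\vert > \tfrac{c}{2}\Delta_{\mathcal{P}_T}^{\delta}\right)+\mathbb{P}\left(\vert G_k^{(i)}\vert > \tfrac{c}{2}\Delta_{\mathcal{P}_T}^{\delta}\right).
\end{equation*}
Each piece is bounded uniformly in $i$ and $k$. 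Uniformity in $k$ comes from stationarity of $\mathbb{Y}$ (\Cref{Ass:Estimation}) and from $s_{k+1}^{N_T}-s_k^{N_T}\le \Delta_{\mathcal{P}_T}$. Uniformity in $i$ comes from the entrywise bounds $\max_{ij}\vert[Q_0]_{ij}\vert = O(d)$ and $\max_{ij}\vert\Sigma_{ij}\vert=O(d)$ that follow from \Cref{Ass.H}.

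For the drift term I would apply Markov's inequality with the $l$-th moment. First,
\begin{equation*}
\vert D_k^{(i)}\vert\le \Delta_{\mathcal{P}_T}\max_{j}\vert[Q_0]_{ij}\vert \sum_j \sup_{s\in[s_k,s_{k+1}]}\vert \bold{Y}^{(j)}_s\vert.
\end{equation*}
To avoid the supremum, I would instead use Jensen/Hölder on the time integral: $\left\vert \int \bold{Y}^{(j)}_s ds\right\vert^l\le \Delta^{l-1}\int\vert\bold{Y}^{(j)}_s\vert^l ds$. Hölder over the $d$ summands contributes $d^{l-1}$, and stationarity gives $\mathbb{E}\vert\bold{Y}_s^{(j)}\vert^l=O(d^l)$. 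Together these give
\begin{equation*}
\mathbb{E}\vert D_k^{(i)}\vert^l = O\left(d^{l}\cdot d^{l-1}\cdot d\cdot d^l\,\Delta_{\mathcal{P}_T}^l\right)=O\left(d^{3l}\Delta_{\mathcal{P}_T}^{l}\right).
\end{equation*}
Dividing by $(c\Delta^\delta/2)^l$ yields $O(d^{3l}\Delta^{l(1-\delta)})$. The spare factor $d$ in the target bound $d^{3l+1}$ absorbs any lost factor from the crude Hölder step.

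For the Gaussian term, $G_k^{(i)}$ is centred normal with variance at most $\Delta_{\mathcal{P}_T}\sum_j \Sigma^{1/2}_{ij}{}^2 = \Delta_{\mathcal{P}_T}\Sigma_{ii}=O(d\Delta_{\mathcal{P}_T})$. I would apply Markov's inequality with the $2\alpha$-th moment, using $\mathbb{E}\vert N(0,v)\vert^{2\alpha}=C_\alpha v^\alpha$. This gives
\begin{equation*}
\mathbb{P}\left(\vert G_k^{(i)}\vert>\tfrac{c}{2}\Delta^\delta\right)\le C_\alpha (d\Delta^{1-2\delta})^\alpha\le C_\alpha\, d\,(d^4\Delta^{1-2\delta})^\alpha.
\end{equation*}
The condition $\Delta^{1-2\delta}=o(d^{-4})$ makes this term vanish. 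The constant depends only on $\alpha$, $c$ and the $O(\cdot)$ constants, so it is independent of $k$, $i$ and $j$, which gives the first display. If one insists on the sharper $\Sigma^{1/2}$ entry bound, the same computation goes through with $\Vert\Sigma\Vert_F=O(d)$ in place of $\Sigma_{ii}$.

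For the second statement I would redo the argument with $l'$ in place of $l$ and choose the Gaussian exponent $\alpha=\alpha'$. The remaining task is to show that the Gaussian term $d(d^4\Delta^{1-2\delta})^{\alpha'}$ is dominated by $d^{3l'+1}\Delta^{l'(1-\delta)}$. Their ratio is
\begin{equation*}
d^{4\alpha'-3l'}\Delta^{\alpha'(1-2\delta)-l'(1-\delta)}.
\end{equation*}
Since $\delta<1/4$, we have $1-2\delta>1/2$ and $1-\delta<1$, so the $\Delta$-exponent is at least $\tfrac12(\alpha'-\tfrac12 l')$ minus a nonnegative correction. This is the step I expect to be the main obstacle. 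The plan is to pick $\alpha'$ large relative to $l'$ so that the $\Delta$-exponent is positive, and then use the assumed rate $d^{4\alpha'+l'}\Delta^{\frac12(\alpha'-\frac12 l')}=o(1)$ to show that the ratio tends to zero. If the exponent bookkeeping does not close exactly, I would fall back on taking the maximum of the two terms, since a maximum is already in the first display.
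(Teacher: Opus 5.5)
Your argument for the first display is correct, and it is tighter than the paper's. The paper union-bounds over all $2d$ summands with thresholds $c\Delta^{\delta}/(2d)$ and then uses a Gaussian Mills-ratio tail. You instead treat the drift and Gaussian parts as single random variables: Hölder/Jensen plus Markov for the drift, and the exact variance $(s_{k+1}^{N_T}-s_k^{N_T})\Sigma_{ii}=O(d\Delta)$ for $G_k^{(i)}$, writing $\Delta=\Delta_{\mathcal{P}_T}$. This gives $O(d^{3l}\Delta^{l(1-\delta)})$ and $C_\alpha(d\Delta^{1-2\delta})^\alpha$.

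The gap is in the second assertion. Setting the Gaussian exponent to $\alpha=\alpha'$ does not close. The ratio $d^{4\alpha'-3l'}\Delta^{\alpha'(1-2\delta)-l'(1-\delta)}$ is not controlled by the hypothesis $d^{4\alpha'+l'}\Delta^{\frac12(\alpha'-\frac12 l')}=o(1)$; your ``nonnegative correction'' has the wrong sign. For instance, with $\alpha'=l'$ the ratio equals $d^{l'}\Delta^{-l'\delta}\to\infty$. You cannot ``pick $\alpha'$ large'', because $\alpha'$ is supplied by the hypothesis. The fallback of keeping the maximum only reproduces the first display, not the claimed bound $O(d^{3l'+1}\Delta^{l'(1-\delta)})$ without the Gaussian term.

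The missing idea is that the exponent $\alpha$ in the first display is yours to choose and must be decoupled from $\alpha'$. The paper takes $\alpha=l'+\alpha'$, for which
\[
\frac{d\,(d^4\Delta^{1-2\delta})^{l'+\alpha'}}{d^{3l'+1}\Delta^{l'(1-\delta)}}
=d^{4\alpha'+l'}\Delta^{\alpha'(1-2\delta)-l'\delta}
\le d^{4\alpha'+l'}\Delta^{\frac12(\alpha'-\frac12 l')}\,\Delta^{l'(\frac14-\delta)}=o(1).
\]
This uses $\Delta<1$, $\alpha'(1-2\delta)\ge\alpha'/2$ and $\delta<1/4$. Any larger $\alpha$ also works, since $d^4\Delta^{1-2\delta}\to0$; this also handles the requirement $\alpha\in\mathbb{N}$.

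Alternatively, do not weaken your own Gaussian bound. Let $u=\Delta^{1-2\delta}$, so $u\le d^{-4}$ eventually, and $r=(1-\delta)/(1-2\delta)<3/2$. Then
\[
\frac{(d\Delta^{1-2\delta})^\alpha}{d^{3l'+1}\Delta^{l'(1-\delta)}}=d^{\alpha-3l'-1}u^{\alpha-l'r}\le d^{-3\alpha-3l'-1+4l'r}\to0
\]
for, e.g., $\alpha\ge 2l'$. So with your sharper variance computation, the second display follows from the standing hypothesis $\Delta^{1-2\delta}=o(d^{-4})$ alone, and the $\alpha'$-condition is not needed.
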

\begin{proof}
    We first slightly modify Lemma $3.7$ in \cite{Mai2014} while noting that the proof strategy is based on their proof. We then note, that for $\delta \in (0,1/2)$, then, for any $i \in \{1,\dots,d\}$ and $k \in \{0,\dots, N_T-1 \}$, it holds that
    \begin{equation*}
    \begin{aligned}
        &\mathbb{P}\left(\vert-\sum_{j=1}^d [Q_0]_{ij}\int_{s_k^{N_T}}^{s_{k+1}^{N_T}} \bold{Y}_s^{(j)} ds + \sum_{j=1}^d\Sigma_{ij}^{1/2}\Delta_{\mathcal{P}_T}^k\bold{W}^{(j)}\vert > c\Delta_{\mathcal{P}_T}^{\delta}\right) \\ &\le  \mathbb{P}\left(\vert\sum_{j=1}^d- [Q_0]_{ij}\int_{s_k^{N_T}}^{s_{k+1}^{N_T}} \bold{Y}_s^{(j)} ds\vert + \vert \sum_{j=1}^d\Sigma_{ij}^{1/2}\Delta_{\mathcal{P}_T}^k\bold{W}^{(j)}\vert > c\Delta_{\mathcal{P}_T}^{\delta}\right) \\ &\le  \mathbb{P}\left(\sum_{j=1}^d \vert- [Q_0]_{ij}\int_{s_k^{N_T}}^{s_{k+1}^{N_T}} \bold{Y}_s^{(j)} ds\vert > c\Delta_{\mathcal{P}_T}^{\delta}/2\right)+\mathbb{P}\left(\vert \sum_{j=1}^d\Sigma_{ij}^{1/2}\Delta_{\mathcal{P}_T}^k\bold{W}^{(j)}\vert > c\Delta_{\mathcal{P}_T}^{\delta}/2\right) \\
        &\le  \sum_{j=1}^d\mathbb{P}\left(\vert- [Q_0]_{ij}\int_{s_k^{N_T}}^{s_{k+1}^{N_T}} \bold{Y}_s^{(j)} ds\vert > c\Delta_{\mathcal{P}_T}^{\delta}/(2d)\right)+\sum_{j=1}^d \mathbb{P}\left(\vert \Sigma_{ij}^{1/2}\Delta_{\mathcal{P}_T}^k\bold{W}^{(j)}\vert > c\Delta_{\mathcal{P}_T}^{\delta}/(2d)\right).  
    \end{aligned}
    \end{equation*}
    We first note that, following Lemma $22.2$, in \cite{Klenke2013}, for any $i,j \in \{ 1,\dots,d\}$ such that $\Sigma^{1/2}_{ij} \ne 0$, it follows that 
    \begin{equation*}
        \begin{aligned}
        \mathbb{P}\left(\vert \Sigma_{ij}^{1/2}\Delta_{\mathcal{P}_T}^k\bold{W}^{(j)}\vert > c\Delta_{\mathcal{P}_T}^{\delta}/(2d)\right)&=\mathbb{P}\left(\vert \vert\Sigma_{ij}^{1/2}\vert(s_{k+1}^{N_T}-s_k^{N_T})^{1/2}\bold{Z}\vert > c\Delta_{\mathcal{P}_T}^{\delta}/(2d)\right)\\&\le 2c^{-1}\sqrt{\dfrac{2}{\pi}}2d\vert\Sigma_{ij}^{1/2}\vert\Delta_{P_{T}}^{1/2-\delta}e^{-\dfrac{\left(c\,\Delta_{P_{T}}^{\delta-1/2}/(2d\Sigma_{ij}^{1/2})\right)^{2}}{2}}\\
        &= 2c^{-1}\sqrt{\dfrac{2}{\pi}}2d\max_{ij}\vert\Sigma_{ij}^{1/2}\vert\Delta_{P_{T}}^{1/2-\delta}e^{-\dfrac{c^{2}}{8d^2\left(\max_{ij}\vert\Sigma_{ij}^{1/2}\vert \right)^{2}\Delta_{P_{T}}^{1-2\delta}}}\\
          &= O\left(\sqrt{d^2 \Delta_{\mathcal{P}_T}^{1-2\delta}}\, e^{-\dfrac{c^{2}}{d^4\Delta_{\mathcal{P}_T}^{1-2\delta}}}\right),\quad \text{as } d\rightarrow \infty,
        \end{aligned}
    \end{equation*}
    where $\bold{Z} \sim N(0,1)$. Now by the arguments in Lemma $3.7$ of \cite{Mai2014} (in particular the use of Jensen's inequality), we have that 
    \begin{equation*}
        \begin{aligned}
            \mathbb{E}\left(\vert- [Q_0]_{ij}\int_{s_k^{N_T}}^{s_{k+1}^{N_T}} \bold{Y}_s^{(j)} ds\vert^l\right) \le \max_{ij}\vert [Q_0]_{ij}\vert^l\Delta_{\mathcal{P}_T}^{l}\mathbb{E}\left(\vert\bold{Y}_0^{(j)}\vert^l\right),\quad \text{as } d \rightarrow \infty.
        \end{aligned}
    \end{equation*}
    Hence, we have that 
    \begin{equation}\label{eqn:ContIncreaseBound}
    \begin{aligned}
         \mathbb{P}\left(\vert- [Q_0]_{ij}\int_{s_k^{N_T}}^{s_{k+1}^{N_T}} \bold{Y}_s^{(j)} ds\vert > c\Delta_{\mathcal{P}_T}^{\delta}/(2d)\right)&= \mathbb{P}\left(\vert- [Q_0]_{ij}\int_{s_k^{N_T}}^{s_{k+1}^{N_T}} \bold{Y}_s^{(j)} ds\vert^l > c^l\Delta_{\mathcal{P}_T}^{l\delta}/(2d)^l\right) \\& \le(\frac{2d}{c})^l\max_{ij}\vert [Q_0]_{ij}\vert^l\Delta_{\mathcal{P}_T}^{l(1-\delta)}\mathbb{E}\left(\vert\bold{Y}_0^{(j)}\vert^l\right) \\&=O \left(d^{3l} \Delta_{\mathcal{P}_T}^{(l(1-\delta))}\right) ,\quad \text{as } d \rightarrow \infty.
    \end{aligned}
    \end{equation}
    We conclude that, for $\delta \in (0,1/2)$, $i \in \{1,\dots,d \}$, $k\in \{0,\dots, N_T-1 \}$ and $\alpha\in \mathbb{N}$,
    \begin{multline*}
       % \begin{aligned}
        \mathbb{P}\left(\vert-\sum_{j=1}^d [Q_0]_{ij}\int_{s_k^{N_T}}^{s_{k+1}^{N_T}} \bold{Y}_s^{(j)} ds + \sum_{j=1}^d\Sigma_{ij}^{1/2}\Delta_{\mathcal{P}_T}^k\bold{W}^{(j)}\vert > c\Delta_{\mathcal{P}_T}^{\delta}\right) \\
        = O \left(dd^{3l} \Delta_{\mathcal{P}_T}^{(l(1-\delta))}\vee d\left(d^4\Delta_{\mathcal{P}_T}^{1-2\delta}\right)^\alpha\right),\quad \text{as } d \rightarrow \infty.
      %  \end{aligned}
    \end{multline*}
    Here we used that $e^{-1/x}$ tends faster to $0$ than $x^\alpha$ for any $\alpha>0$ as $x\rightarrow 0$.
    For the second assertion, we now choose $\alpha = l^\prime + \alpha^{\prime}$. 
    For any $l$, 
    \begin{equation*}
        d^{3l}\Delta_{\mathcal{P}_T}^{(l(1-\delta))} =\left(d^{3}\Delta_{\mathcal{P}_T}^{((1-2\delta))}\right)^l \Delta_{\mathcal{P}_T}^{l\delta}.
    \end{equation*}
    Hence, we have that 
    \begin{multline*}
      %  \begin{aligned}
         \dfrac{\left(d^4\Delta_{\mathcal{P}_T}^{1-2\delta}\right)^\alpha}{d^{3l^\prime} \Delta_{\mathcal{P}_T}^{(l^\prime(1-\delta))}}= d^{\alpha}\dfrac{\left(d^3\Delta_{\mathcal{P}_T}^{1-2\delta}\right)^{\alpha^\prime}}{\Delta_{\mathcal{P}_T}^{l^{\prime}\delta}} = d^{4\alpha^\prime +l^\prime}\Delta_{\mathcal{P}_T}^{\alpha^\prime -2\alpha^{\prime}\delta - l^\prime/4}\Delta_{\mathcal{P}_T}^{l^\prime/4 - l^\prime\delta} \\
         \le d^{4\alpha^\prime+l^\prime}\Delta_{\mathcal{P}_T}^{\frac{1}{2}\left(\alpha^\prime  - \frac{1}{2}l^\prime\right)}\Delta_{\mathcal{P}_T}^{l^\prime/4 - l^\prime\delta}  = o(1), \text{ as } d \rightarrow \infty,
        %\end{aligned}
    \end{multline*}
    where the inequality uses $\alpha^\prime(1-2\delta) > \alpha^\prime/2$ for $\delta < 1/4$ (so $\Delta_{\mathcal{P}_T}^{\alpha^\prime-2\alpha^\prime\delta - l^\prime/4} \le \Delta_{\mathcal{P}_T}^{(\alpha^\prime - l^\prime/2)/2}$ since $\Delta_{\mathcal{P}_T} < 1$), and the final $o(1)$ follows from the hypothesis $d^{4\alpha^\prime+l^\prime}\Delta_{\mathcal{P}_T}^{(\alpha^\prime - l^\prime/2)/2}=o(1)$ together with $\Delta_{\mathcal{P}_T}^{l^\prime/4 - l^\prime\delta} = O(1)$ (which holds since $\delta < 1/4$).
\end{proof}
We will now establish some properties related to the stochastic processes introduced in \Cref{LevyIto}.
\begin{proposition}\label{PropertiesOfIntroducedStochasticProcesses}
    Grant \Cref{Ass:Estimation,Ass.H,Ass.High-FrequencySampling,Ass.GeneralBothCases}, and assume that either \Cref{Ass.AssFiniteAct} or \Cref{Ass.InfiniteAct} holds. For any function $f: \mathbb{R}^+ \rightarrow \mathbb{R}^+$, $n\in \{1,2,\dots, N_T \}$ and $k\in \{1,\dots,d\}$, and $\alpha \in \mathbb{N}$, it holds that
    \begin{equation*}
        \begin{aligned}
        \max_{k\in \{1,\dots,d\}}\mathbb{P}\left( \vert (\Sigma^{1/2}\Delta_{\mathcal{P}_T}^n \bold{W})^{(k)}\vert>f(\Delta_{\mathcal{P}_T})\right)&= O\left(d\left(d^4\frac{\Delta_{\mathcal{P}_T}}{f(\Delta_{\mathcal{P}_T})^2}\right)^\alpha\right), \quad \text{as } d\rightarrow \infty. \ \\
        \max_{k\in \{1,\dots,d\}}\mathbb{P}\left( \vert\Delta_{\mathcal{P}_T}^n \bold{D}^{(k)}\vert>f(\Delta_{\mathcal{P}_T})\right)&= O\left(d^7\Delta_{\mathcal{P}_T}^2f(\Delta_{\mathcal{P}_T})^{-2}\right), \quad \text{as } d\rightarrow \infty. \\ 
        \max_{k\in \{1,\dots,d\}} \mathbb{P}\left( \vert\Delta_{\mathcal{P}_T}^n \bold{U}^{(k)}\vert>0\right)&= O\left(d\Delta_{\mathcal{P}_T}\right), \quad \text{as } d\rightarrow \infty. \\
        \max_{k\in \{1,\dots,d\}} \mathbb{P}\left( \vert\Delta_{\mathcal{P}_T}^n \bold{M}^{(k)}\vert >f(\Delta_{\mathcal{P}_T})\right)&= O\left(\Delta_{\mathcal{P}_T}f(\Delta_{\mathcal{P}_T})^{-2}\right), \quad \text{as } d  \rightarrow \infty.
        \end{aligned}
    \end{equation*}
    Here $\mathbb{M}$, $\mathbb{U}$, $\mathbb{D}$ and $\mathbb{W}$ are the stochastic processes defined in \Cref{LevyItoImpl}.
\end{proposition}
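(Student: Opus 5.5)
I would treat the four claims separately, each by a Markov/Chebyshev-type bound or a Gaussian tail estimate, using the moment bounds already established in \Cref{BoundExpectationNormInvariant,BoundOnIncrements} and in the proof of \Cref{Lemma:CountBound}. Throughout, the aim is to obtain bounds that are uniform in $k$ and $n$. Stationarity under \Cref{Ass:Estimation} gives uniformity in $n$, and the entrywise bounds $\max_{kj}|[Q_0]_{kj}|=O(d)$ and $\max_{kj}|\Sigma_{kj}|=O(d)$ (from the remark after \Cref{Ass.GeneralBothCases}) give uniformity in $k$.

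\emph{Brownian term.} The $k$-th component of $\Sigma^{1/2}\Delta^n_{\mathcal P_T}\bold W$ is centred Gaussian with variance $\Sigma_{kk}(s_{n+1}^{N_T}-s_n^{N_T})\le \max_k\Sigma_{kk}\,\Delta_{\mathcal P_T}=O(d\Delta_{\mathcal P_T})$. I would apply the Mills-ratio bound from \citet[Lemma 22.2]{Klenke2013} exactly as in \Cref{Lemma:CountBound}. This gives a bound of the form $C\sqrt{d\Delta_{\mathcal P_T}/f^2}\,\exp\bigl(-f^2/(Cd\Delta_{\mathcal P_T})\bigr)$. Since $e^{-1/x}=O(x^\alpha)$ for every $\alpha$, this is $O\bigl((d^4\Delta_{\mathcal P_T}/f^2)^\alpha\bigr)$, and the factor $d$ absorbs a union over $j$ if one splits $\sum_j\Sigma^{1/2}_{kj}W^{(j)}$ as in that lemma.

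\emph{Drift term.} Write $\Delta^n\bold D^{(k)}=-\sum_j[Q_0]_{kj}\int_{s_n}^{s_{n+1}}\bold Y^{(j)}_s\,ds$. By Jensen, $|\Delta^n\bold D^{(k)}|^2\le \Delta_{\mathcal P_T}\int_{s_n}^{s_{n+1}}\bigl(\sum_j[Q_0]_{kj}\bold Y^{(j)}_s\bigr)^2ds$. Then Cauchy--Schwarz gives $\bigl(\sum_j[Q_0]_{kj}Y^{(j)}\bigr)^2\le \Vert[Q_0]_{k\bullet}\Vert_2^2\Vert\bold Y_s\Vert_2^2$. Combining this with stationarity, Chebyshev and the moment bounds yields $\Delta_{\mathcal P_T}^2 f^{-2}$ times a power of $d$. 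The main obstacle is obtaining exactly $d^7$. The bound $\Vert[Q_0]_{k\bullet}\Vert_2^2\le d\max|[Q_0]_{kj}|^2=O(d^3)$ together with $\mathbb E\Vert\bold Y\Vert_2^2=O(d^4)$ from \Cref{BoundExpectationNormInvariant} gives $O(d^7)$. Failing that, one can use \Cref{Ass.GeneralBothCases} with the coordinatewise bound $d\cdot d^2\cdot d^2$, which gives an even smaller power. I would use the first route.

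\emph{Big-jump term.} $\Delta^n\bold U^{(k)}\neq0$ only if the Poisson process $N$ jumps in $(s_n,s_{n+1}]$. This happens with probability $1-e^{-\lambda(s_{n+1}-s_n)}\le\lambda\Delta_{\mathcal P_T}$. Since $\lambda=O(d)$ under \Cref{Ass.AssFiniteAct}(1) (for $\tilde{\mathbb U}$) or \Cref{Ass.InfiniteAct}(2), this gives $O(d\Delta_{\mathcal P_T})$.

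\emph{Small-jump martingale.} $\bold M^{(k)}$ is a square-integrable martingale with $\mathbb E|\Delta^n\bold M^{(k)}|^2=(s_{n+1}-s_n)\int_{\Vert x\Vert\le1}(x^{(k)})^2F(dx)\le\Delta_{\mathcal P_T}\max_k\int(z^{(k)})^2F(dz)$. This is $O(\Delta_{\mathcal P_T})$ by \Cref{Ass.H}(2), so Chebyshev gives the final bound. In the finite-activity case $\mathbb M$ is absent and the claim is trivial.
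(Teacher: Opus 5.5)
Your proposal is correct and follows the paper's proof essentially term by term: a Gaussian tail bound for the Brownian increment, a second-moment Markov bound for the drift, $1-e^{-\lambda\Delta_{\mathcal P_T}}\le\lambda\Delta_{\mathcal P_T}$ with $\lambda=O(d)$ for the big jumps, and the It\^o isometry plus Chebyshev for $\mathbb M$. The only variation is in the drift term. There you apply Cauchy--Schwarz to the whole row and use $\mathbb{E}\Vert\mathbf Y_0\Vert_2^2=O(d^4)$, whereas the paper takes a union bound over $j$ and applies the coordinatewise estimate of \Cref{Lemma:CountBound} with $l=2$ (i.e.\ \Cref{Ass.GeneralBothCases}). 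Both routes give $d^7\Delta_{\mathcal P_T}^2 f(\Delta_{\mathcal P_T})^{-2}$. One small inaccuracy: $\mathbb M$ is not absent in the finite-activity case, but your isometry argument covers that case anyway.
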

\begin{proof}
We establish the four bounds in the order they appear in the statement.

\emph{(i) Brownian increment.} The claim follows directly from the Gaussian-tail derivation in the proof of \Cref{Lemma:CountBound} applied per-coordinate to $(\Sigma^{1/2}\Delta_{\mathcal{P}_T}^n \bold{W})^{(k)}$.

\emph{(ii) Drift.} We note that
\begin{equation*}
    \Delta_{\mathcal{P}_T}^n \bold{D}^{(k)} =-\sum_{j=1}^d [Q_0]_{kj}\int_{s_n^{N_T}}^{s_{n+1}^{N_T}} \bold{Y}_s^{(j)} ds.
\end{equation*}
The claim follows by the derivations in \Cref{Lemma:CountBound} with $l=2$.

\emph{(iii) Big jumps.} Since $\bold{U}$ is a compound Poisson process, its increment over the $n$-th interval is $\Delta_{\mathcal{P}_T}^n \bold{U}^{(k)} = \sum_{j=1}^{\Delta_{\mathcal{P}_T}^n \bold{N}}\bold{Z}_j^{(k)}$, where $\Delta_{\mathcal{P}_T}^n \bold{N}$ is the number of jumps of $\mathbb{U}$ in that interval; this vanishes on $\{\Delta_{\mathcal{P}_T}^n \bold{N}=0\}$, so the indicator-level relation
\begin{equation*}
    \begin{aligned}
        \bold{1}_{\{\vert\Delta_{\mathcal{P}_T}^n \bold{U}^{(k)}\vert\ne0\}} \le \bold{1}_{\{\Delta_{\mathcal{P}_T}^n \bold{N}\ne0\}}
    \end{aligned}
\end{equation*}
holds. Here, $\mathbb{N}$ is the Poisson counting process associated to $\mathbb{U}$.
Further,
\begin{equation}\label{PoissonProcessDecomp}
    \begin{aligned}
        \mathbb{P}\left(\bold{N}_{T}\ne0 \right) &= 1 - e^{-T\lambda} \\
        &\le T\lambda.
     \end{aligned}
\end{equation}
In turn,
\begin{equation*}
 \mathbb{P}\left(\vert\Delta_{\mathcal{P}_T}^n \bold{U}^{(k)} \vert> 0\right) \le \mathbb{P}\left( \Delta_{\mathcal{P}_T}^n \bold{N} \ne 0\right) = O \left(d \Delta_{\mathcal{P}_T}\right),\quad \text{ as } d\rightarrow \infty,
\end{equation*}
using the bound $\lambda = F(\{x:\Vert x\Vert>1\}) = O(d)$ from either \Cref{Ass.AssFiniteAct} or \Cref{Ass.InfiniteAct}.

\emph{(iv) Compensated small jumps.} By It\^o isometry, for any $\epsilon >0$,
\begin{equation*}
    \begin{aligned}
    \mathbb{E}\left( \Vert\int_0^T \int_{\{x: \epsilon<\|x\| \leq 1\}} x\{\mu(d t, d x)- F(d x)d t\}\Vert_2^2\right) &= \left(\int_0^T \int_{\{x: \epsilon<\|x\| \leq 1\}} \Vert x \Vert_2^2 F(d x)d t\right).
    \end{aligned}
\end{equation*}
Using that $\mathbb{M}$ is the $L^2(\mathbb{P})$ limit of $\int_0^T \int_{\{x: \epsilon<\|x\| \leq 1\}} x\{\mu(d t, d x)- F(d x)d t\}$, we thus have that
\begin{equation*}
    \begin{aligned}
        \mathbb{E}\left(\Vert \bold{M}_T\Vert_2^2\right) &= \lim_{\epsilon \rightarrow 0}\mathbb{E}\left(\Vert  \int_0^T \int_{\{x: \epsilon<\|x\|_2 \leq 1\}} x\{\mu(d t, d x)- F(d x)d t\}\Vert^2_2\right) = T\lim_{\epsilon \rightarrow 0}\int_{\{x: \epsilon<\|x\| \leq 1\}} \Vert x\Vert^2_2 F(d x)\\
        &=T\int_{\{x: \|x\|_2 \leq 1\}} \Vert x\Vert_2^2 F(d x)
        \\&= O(Td),\quad \text{as }d \rightarrow \infty,
    \end{aligned}
\end{equation*}
using the per-coordinate bound $\max_k\int(z^{(k)})^2 F(dz) = O(1)$ from \Cref{Ass.H}.2, summed over $d$ coordinates.
Applying It\^o isometry per coordinate together with stationarity of $\mathbb{M}$, it holds that
\begin{equation}\label{MomentBoundOnM}
    \begin{aligned}
        \max_{k\in \{1,\dots,d\}} \mathbb{E}\left( (\Delta_{\mathcal{P}_T}^n  \bold{M}^{(k)})^2\right) &=\max_{k\in \{1,\dots,d\}} \mathbb{E}\left( (  \bold{M}^{(k)}_{(s^{N_T}_n-s^{N_T}_{n-1})})^2\right)\\&= \Delta_{\mathcal{P}_T}\max_{k\in \{1,\dots,d\}}\int_{\{x:\|x\|_2\le 1\}} (x^{(k)})^2 F(dx) = O\left( \Delta_{\mathcal{P}_T} \right),\quad \text{as } d \rightarrow \infty,
    \end{aligned}
\end{equation}
using the same per-coordinate bound from \Cref{Ass.H}.
Following Markov's inequality, we thus get
\begin{equation}\label{eqn:IntActBoundM}
    \begin{aligned}
        \max_{k\in \{1,\dots,d\}}\mathbb{P}\left( \vert (\Delta_{\mathcal{P}_T}^n  \bold{M}^{(k)})\vert >f(\Delta_{\mathcal{P}_T})\right) \le  f(\Delta_{\mathcal{P}_T})^{-2} \max_{k\in \{1,\dots,d\}}\mathbb{E}\left( (\Delta_{\mathcal{P}_T}^n  \bold{M}^{(k)})^2\right) &= O\left(\Delta_{\mathcal{P}_T}f(\Delta_{\mathcal{P}_T})^{-2}\right),\\&\quad \text{as } d \rightarrow \infty.
    \end{aligned}
\end{equation}
\end{proof}
With these established, we concern ourselves with the proof in the finite activity case.
\subsubsection{Proofs: Finite activity}\label{ProofsFiniteAct}

\begin{lemma}\label{Lemma:SetBound}
Grant \Cref{Ass:Estimation,Ass.H,Ass.High-FrequencySampling,Ass.GeneralBothCases}, and assume that either \Cref{Ass.AssFiniteAct} or \Cref{Ass.InfiniteAct} holds. For each $i\in \{1,\dots, d\}$ and $k \in \{0,\dots, N_T-1 \}$, we define the set where a small increment of our process is equivalent to the absence of jumps, that is
\begin{equation*}
    A_{N_T} = \bigcap_{i=1}^d \bigcap_{k=1}^{N_T-1}  A_{k,N_T}^{(i)},
\end{equation*}
where
\begin{equation*}
    A_{k,N_T}^{(i)}:= \{\omega \in \Omega: \bold{1}_{\vert \Delta_{\mathcal{P}_T}^{k}\bold{Y}^{(i)}\vert \le \nu_{\mathcal{P}_T}^{(i)}} = \bold{1}_{\Delta_{\mathcal{P}_T}^{k} \bold{N}=0} \}.
\end{equation*}
Then it holds, that 
\begin{equation*}
      \mathbb{P}\left( A_{N_T} \right)  \rightarrow 1 \text{ as } d \rightarrow \infty .
\end{equation*}
\end{lemma}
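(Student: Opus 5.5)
We bound the complement by a union bound over $d N_T$ index pairs, $\mathbb{P}(A_{N_T}^c)\le \sum_{i=1}^d\sum_{k} \mathbb{P}\big((A_{k,N_T}^{(i)})^c\big)$, and show each summand is $o((dN_T)^{-1})$ uniformly in $(i,k)$. Since $N_T\Delta_{\mathcal{P}_T}=O(T)$ by \Cref{Ass.High-FrequencySampling}, $dN_T$ is of order $dT/\Delta_{\mathcal{P}_T}$ up to constants (taking an equidistant-type bound $N_T\lesssim T/\Delta_{\mathcal{P}_T}$; if the grid is irregular we use $N_T$ directly and the same rates). We treat the finite activity case. There $\Delta^k\bold{Y}=\Delta^k\bold{D}+\Sigma^{1/2}\Delta^k\bold{W}+\Delta^k\tilde{\bold{U}}$ by \eqref{LevyProcessDecomFiniteAct} and \Cref{Ass.AssFiniteAct}(6). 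In the infinite activity case we apply the same argument to $\tilde{\bold{Y}}=\bold{Y}-\bold{M}$ and add a term controlling $\Delta^k\bold{M}$ via \Cref{PropertiesOfIntroducedStochasticProcesses}(iv).

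The event $(A_{k,N_T}^{(i)})^c$ splits into two error types. The first is no jump with a large increment: $\{\Delta^k N=0,\ |\Delta^k\bold{D}^{(i)}+(\Sigma^{1/2}\Delta^k\bold{W})^{(i)}|>\nu^{(i)}\}$. By \Cref{Lemma:CountBound} with $\delta=\beta^{(i)}\le 1/4$, $c=1$, and $l=2$ (legitimate by \Cref{Ass.GeneralBothCases}), its probability is $O\big(d^{7}\Delta_{\mathcal{P}_T}^{2(1-\beta^{(i)})}\vee d(d^4\Delta_{\mathcal{P}_T}^{1-2\beta^{(i)}})^\alpha\big)$ for arbitrary $\alpha$. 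Multiplying by $dN_T\lesssim dT/\Delta_{\mathcal{P}_T}$ gives $d^8T\Delta_{\mathcal{P}_T}^{1-2\beta^*}$ for the first piece, using $\Delta^{1-2\beta}\le 1$ and $\beta\le1/4$. This vanishes by \Cref{Ass.AssFiniteAct}(4). The Gaussian piece is handled by choosing $\alpha$ large and using \Cref{Ass.AssFiniteAct}(4),(5) to absorb $T/\Delta$.

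The second error type is a jump with a small increment: $\{\Delta^k N\ge1,\ |\Delta^k\bold{Y}^{(i)}|\le\nu^{(i)}\}$. We split on $\Delta^kN\ge2$, which has probability $O((d\Delta_{\mathcal{P}_T})^2)$ since the rate is $O(d)$ by \Cref{Ass.AssFiniteAct}(1). Summed, this gives $d^3 T\Delta_{\mathcal{P}_T}\to0$. On $\Delta^kN=1$ with jump $\tilde{\bold Z}$, a small increment forces either $|\tilde{\bold Z}^{(i)}|\le 2\nu^{(i)}$ or $|\Delta^k\bold D^{(i)}+(\Sigma^{1/2}\Delta^k\bold W)^{(i)}|>\nu^{(i)}$. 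Using independence of $N$ from $\tilde{\bold Z}$, the first has probability at most $d\Delta_{\mathcal{P}_T}\,\tilde F^{(i)}(-2\nu^{(i)},2\nu^{(i)})$; summed over $dN_T$ pairs this is $O(d^2T)\cdot o((d^2T)^{-1})=o(1)$ by \Cref{Ass.AssFiniteAct}(3). The second is bounded as in the first error type.

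The main obstacle is the diffusive/drift term under the union over $dN_T$ indices. The per-index bound from \Cref{Lemma:CountBound} must beat $dT/\Delta_{\mathcal{P}_T}$, and the drift piece only decays polynomially. The exponent bookkeeping (choice $l=2$, $\delta=\beta^{(i)}$, and absorbing $T$ via \Cref{Ass.AssFiniteAct}(4)) is where the argument could be tight. If $l=2$ is insufficient, we would instead use the second assertion of \Cref{Lemma:CountBound} with a larger $l'$ where moments allow (fourth moments under \Cref{Ass.InfiniteAct}(8)).
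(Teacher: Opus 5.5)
Your finite-activity argument is correct and is essentially the paper's proof. The paper also union-bounds over the $dN_T$ index pairs and splits $(A^{(i)}_{k,N_T})^c$ into the ``no jump, large increment'' and ``jump, small increment'' events, following Lemma~3.8 of \cite{Mai2014}. It then applies \Cref{Lemma:CountBound} with $l=2$, $\delta=\beta^{(i)}$, and ends with the same two terms $d^2T\max_i\tilde F^{(i)}(-2\Delta_{\mathcal P_T}^{\beta^{(i)}},2\Delta_{\mathcal P_T}^{\beta^{(i)}})$ and $Td^8\Delta_{\mathcal P_T}^{1-2\beta^*}$. You make the $\{\Delta^k_{\mathcal P_T}N\ge2\}$ piece and the Gaussian tail explicit, whereas the paper absorbs both into the $d^7\Delta_{\mathcal P_T}^{2(1-\beta^{(i)})}$ term. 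Your own computation shows $l=2$ suffices, so the fallback to a larger $l'$ is unnecessary.

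The step that would fail is your infinite-activity sentence, ``add a term controlling $\Delta^k\mathbf M$ via (iv)''. That bound gives $O(\Delta_{\mathcal P_T}^{1-2\beta^*})$ per index. After the union over $dN_T\lesssim dT/\Delta_{\mathcal P_T}$ pairs it becomes $O(dT\Delta_{\mathcal P_T}^{-2\beta^*})$, which diverges. This is not slack in Markov's inequality. When $F(\mathbb R^d)=\infty$, the counter $N$ records only jumps of norm above $1$. Jumps of $\mathbf M$ whose size lies between the threshold and $1$ are invisible to $N$, yet typically push $|\Delta^k_{\mathcal P_T}\mathbf Y^{(i)}|$ above $\nu^{(i)}_{\mathcal P_T}$. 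Their expected number on $[0,T]$ is of order $T\,F(\{\nu\lesssim\|x\|\le1\})\to\infty$, so in general the event built from $\mathbf Y$ itself does not have probability tending to one.

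In the infinite-activity case the lemma has to be read for $\tilde{\mathbf Y}=\mathbf Y-\mathbf M$. Use threshold $2\nu^{(i)}_{\mathcal P_T}$, $\bar F$ in place of $\tilde F$, and \Cref{Ass.InfiniteAct}(2)--(7) in the roles of \Cref{Ass.AssFiniteAct}(1)--(6). Your finite-activity argument then goes through verbatim, with $c=2$ in \Cref{Lemma:CountBound} and the $4\nu$ window of \Cref{Ass.InfiniteAct}(4). This is exactly how the paper uses the lemma: only the term $S^1_{\Delta_{\mathcal P_T}}$ in the proof of \Cref{L_1BoundInFinite} needs it. There, $\Delta\mathbf M$ enters only inside expectations of weighted sums, or intersected with a rare event such as a big jump. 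It never appears as the bare event $\{|\Delta^k_{\mathcal P_T}\mathbf M^{(i)}|>\nu^{(i)}_{\mathcal P_T}\}$ summed over all indices. The paper's own proof of this lemma likewise carries out only the finite-activity case, so simply drop the extra $\mathbf M$ term.
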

\begin{proof}
We follow the proof strategy of \cite{Mai2014} adapting it slightly to our framework. In a similar vein as the proof of Lemma $3.8$ of said paper, we introduce the events
\begin{equation*}
    \begin{aligned}
    K_{k,N_T}^{(i)} &= \{\vert \Delta_{\mathcal{P}_T}^k \bold{Y}^{(i)} \vert \le \nu_{\mathcal{P}_T}^{(i)}\},\\
    M_{k,N_T} &= \{\Delta_{\mathcal{P}_T}^k \bold{N} = 0\}.
    \end{aligned}
\end{equation*}
We may write the complement of $A_{k,N_T}^{(i)}$ as 
\begin{equation*}
    (A_{k,N_T}^{(i)})^c = \{\bold{1}_{K_{k,N_T}^{(i)}} \ne \bold{1}_{M_{k,N_T}}\} =(K_{k,N_T}^{(i)}\backslash M_{k,N_T})\cup(M_{k,N_T}\backslash K_{k,N_T}^{(i)})  .
\end{equation*}
    It follows straightforwardly from the proof of Lemma $3.8$ in \cite{Mai2014} by replacing the use of the bound from Lemma $3.7$ in Equation $(10)$ of said paper with the bound in \Cref{Lemma:CountBound} with $l^\prime  = 2$ and $\delta = \beta^{(i)}$ that
\begin{multline*}
    %\begin{aligned}
     \mathbb{P}\left((A_{k,N_T}^{(i)})^c \right) = O\biggl(d  \Delta_{\mathcal{P}_T}\left(\tilde{F}^{(i)}\left(2 \Delta_{\mathcal{P}_T}^{\beta^{(i)}} \right)-\tilde{F}^{(i)}\left(-2 \Delta_{\mathcal{P}_T}^{\beta^{(i)}} \right)\right) \\+\left(d^{7} \Delta_{\mathcal{P}_T}^{(2(1-\beta^{(i)}))}\right)\biggr),\quad \text{as } d \rightarrow \infty.
     %\end{aligned}
\end{multline*}
Hence,
\begin{equation*}
    \begin{aligned}
    \mathbb{P}\left( A_{N_T} \right) &= 1- \mathbb{P}\left( A_{N_T}^c \right) \\&\ge 1- \sum_{i=1}^d \sum_{k=0}^{N_T-1}\mathbb{P}\left((A_{k,N_T}^{(i)})^c \right) \\
    &\ge 1- \sum_{i=1}^d \sum_{k=0}^{N_T-1}O\biggl(d \Delta_{\mathcal{P}_T}\left(\tilde{F}^{(i)}\left(2 \Delta_{\mathcal{P}_T}^{\beta^{(i)}} \right)-\tilde{F}^{(i)}\left(-2 \Delta_{\mathcal{P}_T}^{\beta^{(i)}} \right)\right) \\&+\left(d^{7} \Delta_{\mathcal{P}_T}^{(2(1-\beta^{(i)}))}\right)\biggr)\\
    &= 1- O\biggl(d^2T\max_{i\in\{1,2,\dots,d\}}\left(\left(\tilde{F}^{(i)}\left(2 \Delta_{\mathcal{P}_T}^{\beta^{(i)}} \right) - \tilde{F}^{(i)}\left(-2 \Delta_{\mathcal{P}_T}^{\beta^{(i)}} \right)\right)\right) \\&+ \left(Td^{8} \Delta_{\mathcal{P}_T}^{1-2\beta^*}\right)\biggr)\\&=1- o(1),\quad \text{as } d \rightarrow \infty.
    \end{aligned}
\end{equation*}
\end{proof}
\begin{lemma}\label{L_1BoundFInite}
    Grant \Cref{Ass:Estimation,Ass.H,Ass.High-FrequencySampling,Ass.GeneralBothCases,Ass.AssFiniteAct}. Then, for $\delta \in (0,1/4)$
    \begin{equation}
    \Vert\left(\dfrac{1}{T}\sum_{n=0}^{N_T-1} \left( \Delta_n\bold{X}^c-\Delta_{\mathcal{P}_T}^n \bold{\bold{X}}\odot\vec{\bold{1}}(\Delta_{\mathcal{P}_T}^n \bold{\bold{X}})\right)\mathbf{X}_{s_n^{N_T}}^\top\right)\Vert_1 \le \Delta_{\mathcal{P}_T}^{\delta}, 
    \end{equation}
    with high probability under $\mathbb{P}^{{Q_0}}$.
\end{lemma}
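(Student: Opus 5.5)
We work on the event $A_{N_T}$ of \Cref{Lemma:SetBound}, which has probability tending to one. In the finite activity case the decomposition \eqref{LevyProcessDecomFiniteAct} together with \Cref{Ass.AssFiniteAct}(6) ($\tilde{\bold b}=0$) gives, coordinatewise,
\begin{equation*}
\Delta^n_{\mathcal{P}_T}\bold{X}=\Delta^n_{\mathcal{P}_T}\bold{D}+\Sigma^{1/2}\Delta^n_{\mathcal{P}_T}\bold{W}+\Delta^n_{\mathcal{P}_T}\tilde{\bold U},
\end{equation*}
and $\Delta^n_{\mathcal{P}_T}\bold{X}^c=\Sigma^{1/2}\Delta^n_{\mathcal{P}_T}\bold{W}+\Delta^n_{\mathcal{P}_T}\bold{D}$, with $\bold D$ the drift of the $\mathbb{P}^0$ continuous martingale part. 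On $A_{N_T}$ the indicator $\vec{\bold 1}(\Delta^n_{\mathcal{P}_T}\bold X)$ equals $\bold 1_{\{\Delta^n_{\mathcal{P}_T}\bold N=0\}}$ in every coordinate. Hence the $n$-th summand vanishes whenever there is no jump in $[s_n,s_{n+1})$. The difference then reduces to
\begin{equation*}
\frac1T\sum_{n=0}^{N_T-1}\bold 1_{\{\Delta^n_{\mathcal{P}_T}\bold N\neq 0\}}\left(\Sigma^{1/2}\Delta^n_{\mathcal{P}_T}\bold W+\Delta^n_{\mathcal{P}_T}\bold D\right)\bold X_{s_n}^\top,
\end{equation*}
that is, the continuous increments on the jump intervals, which thresholding has discarded.

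Next I bound the entrywise $\ell_1$ norm of this matrix in expectation and conclude with Markov's inequality at level $\Delta_{\mathcal{P}_T}^\delta$. For each entry $(i,j)$ I use that $\bold 1_{\{\Delta^n\bold N\ne0\}}$ is independent of $\Delta^n\bold W$ and of $\mathcal F_{s_n}$. Cauchy--Schwarz then gives the bound
\begin{equation*}
\frac1T\sum_n \mathbb P(\Delta^n\bold N\ne0)^{1/2}\,\mathbb E\big[(\Delta^n\bold X^{c,(i)})^4\big]^{1/4}\,\mathbb E\big[(\bold X^{(j)}_{s_n})^4\big]^{1/4}.
\end{equation*}
Equivalently, I split the Brownian part, using independence, from the drift part, using \Cref{PropertiesOfIntroducedStochasticProcesses}. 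The ingredients are:
\begin{itemize}
\item $\mathbb P(\Delta^n\bold N\ne0)=O(d\Delta_{\mathcal{P}_T})$ by \Cref{Ass.AssFiniteAct}(1);
\item Brownian increments of size $O(d^{1/2}\Delta^{1/2}_{\mathcal{P}_T})$ in $L^2$, since $\max|\Sigma_{ij}|=O(d)$;
\item drift increments of size $O(d^2\Delta_{\mathcal{P}_T})$ via Jensen as in \Cref{Lemma:CountBound};
\item $\mathbb E(\bold X^{(j)}_0)^2=O(d^2)$ by \Cref{Ass.GeneralBothCases}.
\end{itemize}
For the Brownian term it is cleaner to use exact independence: $\mathbb E|\bold 1_{\{\Delta\bold N\ne0\}}\Delta\bold W^{(i)}\bold X^{(j)}_{s_n}|\le \mathbb P(\Delta\bold N\ne0)\,\mathbb E|\Delta\bold W|\,\mathbb E|\bold X^{(j)}|$.

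Summing over $n$ (with $N_T\Delta_{\mathcal{P}_T}=O(T)$, so $N_T\le CT/\Delta_{\mathcal{P}_T}$) and over the $d^2$ entries gives an expected $\ell_1$ norm of order $d^{2}\cdot d\Delta_{\mathcal{P}_T}\cdot\Delta_{\mathcal{P}_T}^{-1}\cdot d^{1/2}\Delta_{\mathcal{P}_T}^{1/2}\cdot d=O(d^{9/2}\Delta_{\mathcal{P}_T}^{1/2})$ for the Brownian part. The drift part is smaller. Markov's inequality then gives
\begin{equation*}
\mathbb P(\|\cdot\|_1>\Delta^\delta_{\mathcal{P}_T})=O\!\left(d^{9/2}\Delta_{\mathcal{P}_T}^{1/2-\delta}\right)+\mathbb P(A_{N_T}^c)\to0.
\end{equation*}
The first term vanishes by \Cref{Ass.AssFiniteAct}(7), since $d^{10}\Delta_{\mathcal{P}_T}^{1/2-\delta}\to0$, and the second by \Cref{Lemma:SetBound}.

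The main obstacle is making the identification on $A_{N_T}$ rigorous. \Cref{Lemma:SetBound} is stated with a common counting process $\bold N$, but a jump need not move every coordinate. I would handle this exactly as the lemma does, through the marginal condition \Cref{Ass.AssFiniteAct}(3), which excludes jumps whose $i$-th coordinate is below $2\Delta^{\beta^{(i)}}$. A second, minor concern is the bookkeeping of the drift $\bold D$ relative to $\bold X^c$ (under $\mathbb P^0$ versus $\mathbb P^{Q_0}$). If the drift term does not cancel, its contribution on non-jump intervals must also be shown to be $o(\Delta^\delta_{\mathcal{P}_T})$ using the $O(d^2\Delta_{\mathcal{P}_T})$ bound. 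This still fits within the $d^{10}$ margin of assumption (7).
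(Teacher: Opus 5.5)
Your proposal is correct and follows the paper's proof essentially step for step. You restrict to the event $A_{N_T}$ of \Cref{Lemma:SetBound}, note that the summand vanishes on intervals without jumps and reduces to $(\Sigma^{1/2}\Delta^n_{\mathcal{P}_T}\mathbf{W}+\Delta^n_{\mathcal{P}_T}\mathbf{D})\mathbf{X}_{s_n^{N_T}}^\top$ on jump intervals, and bound the expected entrywise $\ell_1$ norm. You use exact independence for the Brownian part and Cauchy--Schwarz against $\mathbb{P}(\Delta^n_{\mathcal{P}_T}\mathbf{N}\neq 0)^{1/2}$ for the drift part, which needs only the second moments of \Cref{Ass.GeneralBothCases} (unlike your fourth-moment variant), and finish with Markov's inequality, exactly as the paper does.

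One correction: the drift part is not smaller. After Cauchy--Schwarz it is also of order $\Delta_{\mathcal{P}_T}^{1/2}$, with a higher power of $d$; the paper records $O(d^{8}\Delta_{\mathcal{P}_T}^{1/2})$ overall. This is still absorbed by \Cref{Ass.AssFiniteAct}(7). Your two stated obstacles are already settled: the first by \Cref{Lemma:SetBound}, the second by the exact identity $\Delta^n_{\mathcal{P}_T}\mathbf{X}^c=\Sigma^{1/2}\Delta^n_{\mathcal{P}_T}\mathbf{W}+\Delta^n_{\mathcal{P}_T}\mathbf{D}$ under $\tilde{\mathbf{b}}=0$, which you wrote down yourself.
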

\begin{proof}
    We note that 
    \begin{equation*}
        \begin{aligned}
            &\Vert\left(\dfrac{1}{T}\sum_{n=0}^{N_T-1} \left( \Delta_n\bold{X}^c-\Delta_{\mathcal{P}_T}^n \bold{\bold{X}}\odot\vec{\bold{1}}(\Delta_{\mathcal{P}_T}^n \bold{\bold{X}})\right)\mathbf{X}_{s_n^{N_T}}^\top\right)\Vert_1 \\&\le \dfrac{1}{T}\sum_{n=0}^{N_T-1} \Vert\left(\left( \Delta_n\bold{X}^c-\Delta_{\mathcal{P}_T}^n \bold{\bold{X}}\odot\vec{\bold{1}}(\Delta_{\mathcal{P}_T}^n \bold{\bold{X}})\right)\mathbf{X}_{s_n^{N_T}}^\top\right)\Vert_1 \\
            &=\dfrac{1}{T}\sum_{n=0}^{N_T-1} \sum_{k=1}^d \sum_{j=1}^d \vert\bold{X}_{s_n^{N_T}}^{(j)} \left(\Delta_{\mathcal{P}_T}^n \bold{\bold{X}}^{(k)}\bold{1}_{\{\vert \Delta_{\mathcal{P}_T}^n \bold{\bold{X}}^{(k)} \vert \le \nu_{\mathcal{P}_T}^{(k)}\}}-(\Delta_{\mathcal{P}_T}^n \bold{X}^c)^{(k)} \right) \vert.
        \end{aligned}
    \end{equation*}
    Since
    \begin{equation*}
            \begin{aligned}
            &\mathbb{E}_{\mathbb{P}^{Q_0}}\left(\dfrac{1}{T}\sum_{n=0}^{N_T-1} \sum_{k=1}^d \sum_{j=1}^d \vert\bold{X}_{s_n^{N_T}}^{(j)} \left(\Delta_{\mathcal{P}_T}^n \bold{\bold{X}}^{(k)}\bold{1}_{\{\vert \Delta_{\mathcal{P}_T}^n \bold{\bold{X}}^{(k)} \vert \le \nu_{\mathcal{P}_T}^{(k)}\}}-(\Delta_{\mathcal{P}_T}^n \bold{X}^c)^{(k)} \right) \vert \right) \\&= 
            \mathbb{E}_{\mathbb{P}^{Q_0}}\left(\dfrac{1}{T}\sum_{n=0}^{N_T-1} \sum_{k=1}^d \sum_{j=1}^d \vert\bold{X}_{s_n^{N_T}}^{(j)} \left(\Delta_{\mathcal{P}_T}^n \bold{\bold{X}}^{(k)}\bold{1}_{\{\vert \Delta_{\mathcal{P}_T}^n \bold{\bold{X}}^{(k)} \vert \le \nu_{\mathcal{P}_T}^{(k)}\}}-(\Delta_{\mathcal{P}_T}^n \bold{X}^c(Q_0))^{(k)}+\left(\int_{s_{n}^{N_T}}^{s_{n+1}^{N_T}} Q_0 \bold{X_s}ds\right)^{(k)} \right) \vert \right) \\&= 
             \mathbb{E}\left(\dfrac{1}{T}\sum_{n=0}^{N_T-1} \sum_{k=1}^d \sum_{j=1}^d \vert\bold{Y}_{s_n^{N_T}}^{(j)} \left(\Delta_{\mathcal{P}_T}^n \bold{\bold{Y}}^{(k)}\bold{1}_{\{\vert \Delta_{\mathcal{P}_T}^n \bold{\bold{Y}}^{(k)} \vert \le \nu_{\mathcal{P}_T}^{(k)}\}}-(\Delta_{\mathcal{P}_T}^n \Sigma^{1/2}\bold{W})^{(k)}+\left(\int_{s_{n}^{N_T}}^{s_{n+1}^{N_T}} Q_0 \bold{Y}_sds\right)^{(k)} \right) \vert \right).
            \end{aligned}
    \end{equation*}
    We work under the right hand side instead. In the above calculations we used theory of \Cref{PDynOfYandX}, i.e.\ that the relationship between $P_0$ martingale and $\mathbb{P}^{Q_0}$ continuous martingale on the canonical space is given by $\Delta_{\mathcal{P}_T}^n \bold{X}^c = -\int_{s_{n-1}}^{s_n} Q_0 \bold{X_s}ds + \Delta_{\mathcal{P}_T}^n \bold{X}^c(Q_0)$. We also note, that by \eqref{eqn:1} and \Cref{Levyito} under the assumptions,  $\Delta_{\mathcal{P}_T}^n \bold{Y} = -\int_{s_{n}^{N_T}}^{s_{n+1}^{N_T}} Q_0 \bold{Y}_sds + \Sigma^{1/2}\Delta_{\mathcal{P}_T}^n\bold{W} + \Delta_{\mathcal{P}_T}^n \bold{J}$, where $\mathbb{W}$ and $\mathbb{J}$ is a standard Brownian motion and a compound Poisson process. On $A_{N_T}$, we have that, for any $k \in \{1,\dots, d\}$ and $n\in \{1,\dots, N_T-1 \}$,
    \begin{equation*}
        \begin{aligned}
            &\Delta_{\mathcal{P}_T}^n \bold{Y}^{(k)}\bold{1}_{\{\vert \Delta_{\mathcal{P}_T}^n \bold{\bold{Y}}^{(k)} \vert \le \nu_{\mathcal{P}_T}^{(k)}\}} - \left(\Sigma^{1/2}\Delta_{\mathcal{P}_T}^n \bold{W}\right)^{(k)} +\left(\int_{s_{n}^{N_T}}^{s_{n+1}^{N_T}} Q_0 \bold{Y}_sds\right)^{(k)} \\&=\Delta_{\mathcal{P}_T}^n \bold{Y}^{(k)}\bold{1}_{\Delta_{\mathcal{P}_T}^{n}N =0} - \left(\Sigma^{1/2}\Delta_{\mathcal{P}_T}^n \bold{W}\right)^{(k)} +\left(\int_{s_{n}^{N_T}}^{s_{n+1}^{N_T}} Q_0 \bold{Y}_sds\right)^{(k)}
            \\&=\begin{cases}
\begin{array}{l}
0,\\
-\left(\Sigma^{1/2}\Delta_{\mathcal{P}_{T}}^{n}\mathbf{W}\right)^{(k)}+\left(\int_{s_{n}^{N_T}}^{s_{n+1}^{N_T}} Q_0 \bold{Y}_sds\right)^{(k)},
\end{array} & \begin{array}{l}
\text{on }{\Delta_{\mathcal{P}_{T}}^{n}N=0}\\
\text{on }{\Delta_{\mathcal{P}_{T}}^{n}N>0}.
\end{array}\end{cases}
        \end{aligned}
    \end{equation*}
    As in \cite{Courgeau2022a}, we define $B_{n,N_T}:=\{\Delta_{\mathcal{P}_T}^{n}N >0\}$.
    In the finite activity case, $N$ is the Poisson counting process of the compound Poisson representation of $\tilde{\mathbb{U}}$ from \Cref{LevyItoImpl}, with rate $\tilde{\lambda} = F(\mathbb{R}^d)$. We therefore have $\mathbb{P}\left(B_{n,N_T} \right) \le \Delta_{\mathcal{P}_T}\, F(\mathbb{R}^d) = O\left(d\Delta_{\mathcal{P}_T}\right)$ as $d\rightarrow \infty$, using $F(\mathbb{R}^d) = O(d)$ from \Cref{Ass.AssFiniteAct}(1). Then,
    \begin{equation*}
        \begin{aligned}
        &\bold{1}_{A_{N_T}}\dfrac{1}{T}\sum_{n=0}^{N_T-1}\sum_{k=1}^d\sum_{j=1}^d\vert \bold{Y}_{s_n^{N_T}}^{(j)}\left(\Delta_{\mathcal{P}_T}^n \bold{Y}^{(k)}\bold{1}_{\{\Delta_{\mathcal{P}_T}^{n}N =0\}} - \left(\Sigma^{1/2}\Delta_{\mathcal{P}_T}^n \bold{W}\right)^{(k)} +\left(\int_{s_{n}^{N_T}}^{s_{n+1}^{N_T}} Q_0 \bold{Y}_sds\right)^{(k)} \right)\vert \\&= \sum_{n=0}^{N_T-1}\sum_{k=1}^d\sum_{j=1}^d\vert \bold{Y}_{s_n^{N_T}}^{(j)} \left( -\left(\Sigma^{1/2}\Delta_{\mathcal{P}_T}^n \bold{W}\right)^{(k)} +\left(\int_{s_{n}^{N_T}}^{s_{n+1}^{N_T}} Q_0 \bold{Y}_sds\right)^{(k)}\right) \vert\bold{1}_{A_{N_T}\cap B_{n,N_T}}.
        \end{aligned}
    \end{equation*}
    We then note, that 
    \begin{equation*}
        \begin{aligned}
       &\vert \bold{Y}_{s_n^{N_T}}^{(j)} \left( -\left(\Sigma^{1/2}\Delta_{\mathcal{P}_T}^n \bold{W}\right)^{(k)} +\left(\int_{s_{n}^{N_T}}^{s_{n+1}^{N_T}} Q_0 \bold{Y}_sds\right)^{(k)}\right) \vert\\ &=\vert\bold{Y}_{s_n^{N_T}}^{(j)}\left(\sum_{l=1}^d [Q_0]_{kl}\int_{s_{n}^{N_T}}^{s_{n+1}^{N_T}} \bold{Y}_s^{(l)} ds - \sum_{l=1}^d\Sigma_{kl}^{1/2}\Delta_{\mathcal{P}_T}^n\bold{W}^{(l)}\right)\vert 
        \\ &\le\sum_{l=1}^d\vert\bold{Y}_{s_n^{N_T}}^{(j)} [Q_0]_{kl}\int_{s_{n}^{N_T}}^{s_{n+1}^{N_T}} \bold{Y}_s^{(l)} ds\vert + \sum_{l=1}^d\vert\bold{Y}_{s_n^{N_T}}^{(j)}\Sigma_{kl}^{1/2}\Delta_{\mathcal{P}_T}^n\bold{W}^{(l)}\vert.
        \end{aligned}
    \end{equation*}
    Following Hölders and Jensen's inequality, the independence of $\mathbb{W}$ and $\mathbb{N}$ as well as $\bold{Y}_{s_n^{N_T}}$ and $\Delta_{{\mathcal{P}_T}}^n \bold{N}$ since we are working with forward-looking increments, it holds that  
    \begin{equation*}
        \begin{aligned}
        &\mathbb{E}\left(\bold{1}_{A_{N_T}\cap B_{n,N_T}} \sum_{l=1}^d\vert\bold{Y}_{s_n^{N_T}}^{(j)} [Q_0]_{kl}\int_{s_{n}^{N_T}}^{s_{n+1}^{N_T}} \bold{Y}_s^{(l)} ds\vert\right) \\&\le \sum_{l=1}^d \vert [Q_0]_{kl} \vert \mathbb{E}\left( \bold{1}_{A_{N_T}\cap B_{n,N_T}} \vert\bold{Y}_{s_n^{N_T}}^{(j)} \int_{s_{n}^{N_T}}^{s_{n+1}^{N_T}} \bold{Y}_s^{(l)} ds\vert\right) \\
        &\le \sum_{l=1}^d \vert [Q_0]_{kl} \vert \mathbb{E}\left(\bold{1}_{B_{n,N_T}} \vert\bold{Y}_{s_n^{N_T}}^{(j)} \int_{s_{n}^{N_T}}^{s_{n+1}^{N_T}} \bold{Y}_s^{(l)} ds\vert\right) \\
        &\le \sum_{l=1}^d \vert [Q_0]_{kl} \vert \mathbb{E}\left( (\vert\bold{Y}_{s_n^{N_T}}^{(j)}\bold{1}_{B_{n,N_T}} )^2\right)^{1/2} \mathbb{E}\left( \left(\int_{s_{n}^{N_T}}^{s_{n+1}^{N_T}} \bold{Y}_s^{(l)} ds\right)^2\right)^{1/2} \\
        &= \sum_{l=1}^d \vert [Q_0]_{kl} \vert \mathbb{E}\left( (\bold{Y}_0^{(j)})^2\right)^{1/2} \mathbb{E}\left( \left(\int_{0}^{s_{n+1}^{N_T}-s_{n}^{N_T}} \bold{Y}_s^{(l)} ds\right)^2\right)^{1/2} \mathbb{P}\left(B_{n,N_T} \right)^{1/2}\\
        \\
        &\le \sum_{l=1}^d \vert [Q_0]_{kl} \vert \mathbb{E}\left( (\bold{Y}_0^{(j)})^2\right)^{1/2} \mathbb{E}\left( \Delta_{\mathcal{P}_T}\int_{0}^{s_{n+1}^{N_T}-s_{n}^{N_T}} \left(\bold{Y}_s^{(l)}\right)^2 ds\right)^{1/2} \mathbb{P}\left(B_{n,N_T} \right)^{1/2}\\
        &\le \sum_{l=1}^d \vert [Q_0]_{kl} \vert \mathbb{E}\left( (\bold{Y}_0^{(j)})^2\right)^{1/2} \mathbb{E}\left( (\bold{Y}_0^{(l)})^2\right)^{1/2} \Delta_{\mathcal{P}_T}^{3/2}F\left(\mathbb{R}^d\right)^{1/2}.
        \end{aligned}
    \end{equation*}
Again, by independence of $\mathbb{W}$ and $\mathbb{N}$ as well as $\bold{Y}_{s_n^{N_T}}$ and $\Delta_{{\mathcal{P}_T}}^n \bold{N}$ and the Cauchy-Schwarz inequality, we have
\begin{equation*}
\begin{aligned}
    &\mathbb{E}\left(\bold{1}_{A_{N_T}\cap B_{n,N_T}}\sum_{l=1}^d\vert\bold{Y}_{s_n^{N_T}}^{(j)}\Sigma_{kl}^{1/2}\Delta_{\mathcal{P}_T}^n\bold{W}^{(l)}\vert\right)\\
    &\le \mathbb{P}\left( B_{n,N_T}\right)\sum_{l=1}^d\mathbb{E}\left(\vert\bold{Y}_{s_n^{N_T}}^{(j)}\vert\right)\Sigma_{kl}^{1/2}\mathbb{E}\left(\vert\Delta_{\mathcal{P}_T}^n\bold{W}^{(l)}\vert\right)\\ &\le \Delta_{\mathcal{P}_T}F\left(\mathbb{R}^d\right)\sum_{l=1}^d\sqrt{\mathbb{E}\left(\left(\bold{Y}_{s_n^{N_T}}^{(j)}\right)^2\right)}\vert\Sigma_{kl}^{1/2}\vert\sqrt{\mathbb{E}\left(\left(\Delta_{\mathcal{P}_T}^n\bold{W}^{(l)}\right)^2\right)}\\&= \sum_{l=1}^d \vert\Sigma_{kl} \vert  \Delta_{\mathcal{P}_T}^{3/2} \mathbb{E}\left( (\bold{Y}_0^{(j)})^2\right) F\left(\mathbb{R}^d\right).
\end{aligned}
\end{equation*}
We thus conclude that 
\begin{equation}\label{FiniteActBound1}
    \begin{aligned}
         &\mathbb{E}\left(1_{A_{N_T}}\dfrac{1}{T}\sum_{n=0}^{N_T-1} \sum_{k=1}^d \sum_{j=1}^d \vert\bold{Y}_{s_n^{N_T}}^{(j)} \left(\Delta_{\mathcal{P}_T}^n \bold{\bold{Y}}^{(k)}\bold{1}_{\{\vert \Delta_{\mathcal{P}_T}^n \bold{\bold{Y}}^{(k)} \vert \le \nu_{\mathcal{P}_T}^{(k)}\}}-(\Delta_{\mathcal{P}_T}^n \Sigma^{1/2}\bold{W})^{(k)}+\left(\int_{s_{n}^{N_T}}^{s_{n+1}^{N_T}} Q_0 \bold{Y}_sds\right)^{(k)} \right) \vert \right) \\&= 
             \mathbb{E}\biggl(\dfrac{1}{T}\sum_{n=0}^{N_T-1} \sum_{k=1}^d \sum_{j=1}^d \vert\bold{Y}_{s_n^{N_T}}^{(j)} (\Delta_{\mathcal{P}_T}^n \bold{\bold{Y}}^{(k)}\bold{1}_{\{\vert \Delta_{\mathcal{P}_T}^n \bold{\bold{Y}}^{(k)} \vert \le \nu_{\mathcal{P}_T}^{(k)}\}}-(\Delta_{\mathcal{P}_T}^n \Sigma^{1/2}\bold{W})^{(k)}+(\int_{s_{n}^{N_T}}^{s_{n+1}^{N_T}} Q_0 \bold{Y}_sds)^{(k)} )\vert 1_{A_{N_T}\cap B_{n,N_T}}\biggr)
             \\&\le  \mathbb{E}\left(\dfrac{1}{T}\sum_{n=0}^{N_T-1} \sum_{k=1}^d \sum_{j=1}^d\sum_{l=1}^d\vert\bold{Y}_{s_n^{N_T}}^{(j)} [Q_0]_{kl}\int_{s_{n}^{N_T}}^{s_{n+1}^{N_T}} \bold{Y}_s^{(l)} ds\vert 1_{ B_{n,N_T}} + \sum_{l=1}^d\vert\bold{Y}_{s_n^{N_T}}^{(j)}\Sigma_{kl}^{1/2}\Delta_{\mathcal{P}_T}^n\bold{W}^{(l)}\vert1_{ B_{n,N_T}}\right) \\&\le
             \dfrac{1}{T}\sum_{n=0}^{N_T-1} \sum_{k=1}^d \sum_{j=1}^d \biggl( \sum_{l=1}^d \vert [Q_0]_{kl} \vert \mathbb{E}\left( (\bold{Y}_0^{(j)})^2\right)^{1/2} \mathbb{E}\left( (\bold{Y}_0^{(l)})^2\right)^{1/2} \Delta_{\mathcal{P}_T}^{3/2}F\left(\mathbb{R}^d\right)^{1/2} \\& \quad + \sum_{l=1}^d \vert\Sigma_{kl} \vert  \Delta_{\mathcal{P}_T}^{3/2} \mathbb{E}\left( (\bold{Y}_0^{(j)})^2\right) F\left(\mathbb{R}^d\right)\biggr)\\&\le
             \dfrac{N_T}{T} \sum_{k=1}^d \sum_{j=1}^d \biggl( \sum_{l=1}^d \vert [Q_0]_{kl} \vert \mathbb{E}\left( (\bold{Y}_0^{(j)})^2\right)^{1/2} \mathbb{E}\left( (\bold{Y}_0^{(l)})^2\right)^{1/2} \Delta_{\mathcal{P}_T}^{3/2}F\left(\mathbb{R}^d\right)^{1/2} \\& \quad + \sum_{l=1}^d \vert\Sigma_{kl} \vert  \Delta_{\mathcal{P}_T}^{3/2} \mathbb{E}\left( (\bold{Y}_0^{(j)})^2\right) F\left(\mathbb{R}^d\right)\biggr)
             \\&\le
             \dfrac{N_T}{T} \sum_{k=1}^d \sum_{j=1}^d \biggl( \sum_{l=1}^d \max_{k,l}\vert [Q_0]_{kl} \vert \mathbb{E}\left( (\bold{Y}_0^{(j)})^2\right)^{1/2} \mathbb{E}\left( (\bold{Y}_0^{(l)})^2\right)^{1/2} \Delta_{\mathcal{P}_T}^{3/2}F\left(\mathbb{R}^d\right)^{1/2} \\& \quad + \sum_{l=1}^d \max_{k,l}\vert\Sigma_{kl} \vert  \Delta_{\mathcal{P}_T}^{3/2} \mathbb{E}\left( (\bold{Y}_0^{(j)})^2\right) F\left(\mathbb{R}^d\right)\biggr)    \\&\le
             \dfrac{N_T}{T}\max_{k,l}\vert [Q_0]_{kl}\vert d^3\biggl(  \vert \mathbb{E}\left( (\bold{Y}_0^{(i^*)})^2\right)^{1/2} \mathbb{E}\left( (\bold{Y}_0^{(i^*)})^2\right)^{1/2} \Delta_{\mathcal{P}_T}^{3/2}F\left(\mathbb{R}^d\right)^{1/2} \\& \quad +\max_{k,l}\vert\Sigma_{kl} \vert  \Delta_{\mathcal{P}_T}^{3/2} \mathbb{E}\left( (\bold{Y}_0^{(i^*)})^2\right) F\left(\mathbb{R}^d\right)\biggr) \\
             & = O\left( \Delta_{\mathcal{P}_T}^{1/2} d^{8}\right), \quad \text{ as } d\rightarrow \infty .
    \end{aligned}
\end{equation}
We now note that, for $\delta \in (0,1/4)$,
\small
\begin{equation*}
    \begin{aligned}
        &\mathbb{P}^{Q_0}\left(\left\Vert\left(\dfrac{1}{T}\sum_{n=0}^{N_T-1} \left( \Delta_n\bold{X}^c-\Delta_{\mathcal{P}_T}^n \bold{\bold{X}}\odot\vec{\bold{1}}(\Delta_{\mathcal{P}_T}^n \bold{\bold{X}})\right)\mathbf{X}_{s_n^{N_T}}^\top\right)\right\Vert_1 > \Delta_{\mathcal{P}_T}^{\delta}\right)\\
        & \le \mathbb{P}\left(\dfrac{1}{T}\sum_{n=0}^{N_T-1} \sum_{k=1}^d \sum_{j=1}^d \left(\vert\bold{Y}_{s_n^{N_T}}^{(j)} \left(\Delta_{\mathcal{P}_T}^n \bold{\bold{Y}}^{(k)}\bold{1}_{\{\vert \Delta_{\mathcal{P}_T}^n \bold{\bold{Y}}^{(k)} \vert \le \nu_{\mathcal{P}_T}^{(k)}\}}-(\Delta_{\mathcal{P}_T}^n \Sigma^{1/2}\bold{W})^{(k)}+\left(\int_{s_{n}^{N_T}}^{s_{n+1}^{N_T}} Q_0 \bold{Y}_sds\right)^{(k)} \right) \vert\right) >\Delta_{\mathcal{P}_T}^{\delta} \right)\\
        &= \mathbb{P}\biggl(1_{A_{N_T}}\dfrac{1}{T}\sum_{n=0}^{N_T-1} \sum_{k=1}^d \sum_{j=1}^d \left\vert\bold{Y}_{s_n^{N_T}}^{(j)} \left(\Delta_{\mathcal{P}_T}^n \bold{\bold{Y}}^{(k)}\bold{1}_{\{\vert \Delta_{\mathcal{P}_T}^n \bold{\bold{Y}}^{(k)} \vert \le \nu_{\mathcal{P}_T}^{(k)}\}}-(\Delta_{\mathcal{P}_T}^n \Sigma^{1/2}\bold{W})^{(k)}+\left(\int_{s_{n}^{N_T}}^{s_{n+1}^{N_T}} Q_0 \bold{Y}_sds\right)^{(k)} \right) \right\vert \\&+1_{A_{N_T}^c}\dfrac{1}{T}\sum_{n=0}^{N_T-1} \sum_{k=1}^d \sum_{j=1}^d \left\vert\bold{Y}_{s_n^{N_T}}^{(j)} \left(\Delta_{\mathcal{P}_T}^n \bold{\bold{Y}}^{(k)}\bold{1}_{\{\vert \Delta_{\mathcal{P}_T}^n \bold{\bold{Y}}^{(k)} \vert \le \nu_{\mathcal{P}_T}^{(k)}\}}-(\Delta_{\mathcal{P}_T}^n \Sigma^{1/2}\bold{W})^{(k)}+\left(\int_{s_{n}^{N_T}}^{s_{n+1}^{N_T}} Q_0 \bold{Y}_sds\right)^{(k)} \right) \right\vert >\Delta_{\mathcal{P}_T}^{\delta}\biggr)\\
      &\le\mathbb{P}\biggl(1_{A_{N_T}}\dfrac{1}{T}\sum_{n=0}^{N_T-1} \sum_{k=1}^d \sum_{j=1}^d \left\vert\bold{Y}_{s_n^{N_T}}^{(j)} \left(\Delta_{\mathcal{P}_T}^n \bold{\bold{Y}}^{(k)}\bold{1}_{\{\vert \Delta_{\mathcal{P}_T}^n \bold{\bold{Y}}^{(k)} \vert \le \nu_{\mathcal{P}_T}^{(k)}\}}-(\Delta_{\mathcal{P}_T}^n \Sigma^{1/2}\bold{W})^{(k)}+\left(\int_{s_{n}^{N_T}}^{s_{n+1}^{N_T}} Q_0 \bold{Y}_sds\right)^{(k)} \right) \right\vert >\Delta_{\mathcal{P}_T}^{\delta}/2\biggr) \\
      &+\mathbb{P}\biggl(1_{A_{N_T}^c}\dfrac{1}{T}\sum_{n=0}^{N_T-1} \sum_{k=1}^d \sum_{j=1}^d \left\vert\bold{Y}_{s_n^{N_T}}^{(j)} \left(\Delta_{\mathcal{P}_T}^n \bold{\bold{Y}}^{(k)}\bold{1}_{\{\vert \Delta_{\mathcal{P}_T}^n \bold{\bold{Y}}^{(k)} \vert \le \nu_{\mathcal{P}_T}^{(k)}\}}-(\Delta_{\mathcal{P}_T}^n \Sigma^{1/2}\bold{W})^{(k)}+\left(\int_{s_{n}^{N_T}}^{s_{n+1}^{N_T}} Q_0 \bold{Y}_sds\right)^{(k)} \right) \right\vert >\Delta_{\mathcal{P}_T}^{\delta}/2\biggr)\\ 
      &\le\mathbb{P}\biggl(1_{A_{N_T}}\dfrac{1}{T}\sum_{n=0}^{N_T-1} \sum_{k=1}^d \sum_{j=1}^d \left\vert\bold{Y}_{s_n^{N_T}}^{(j)} \left(\Delta_{\mathcal{P}_T}^n \bold{\bold{Y}}^{(k)}\bold{1}_{\{\vert \Delta_{\mathcal{P}_T}^n \bold{\bold{Y}}^{(k)} \vert \le \nu_{\mathcal{P}_T}^{(k)}\}}-(\Delta_{\mathcal{P}_T}^n \Sigma^{1/2}\bold{W})^{(k)}+\left(\int_{s_{n}^{N_T}}^{s_{n+1}^{N_T}} Q_0 \bold{Y}_sds\right)^{(k)} \right) \right\vert >\Delta_{\mathcal{P}_T}^{\delta}/2\biggr) \\
      &+ \mathbb{P}\left(A_{N_T}^c \right).
    \end{aligned}
\end{equation*}
Following \eqref{FiniteActBound1}, \Cref{Lemma:SetBound} and Markov's inequality, we thus conclude that under the assumptions,
\begin{equation*}
    \begin{aligned}
          &\mathbb{P}^{Q}\left(\Vert\left(\dfrac{1}{T}\sum_{n=0}^{N_T-1} \left( \Delta_n\bold{X}^c-\Delta_{\mathcal{P}_T}^n \bold{\bold{X}}\odot\vec{\bold{1}}(\Delta_{\mathcal{P}_T}^n \bold{\bold{X}})\right)\mathbf{X}_{s_n^{N_T}}^\top\right)\Vert_1 > \Delta_{\mathcal{P}_T}^{\delta}\right) \\&\le             2\dfrac{N_T\Delta_{\mathcal{P}_T}^{1+(1/2-\delta)}}{T}\max_{k,l}\vert [Q_0]_{kl}\vert d^3\mathbb{E}\left( (\bold{Y}_0^{(i^*)})^2\right)\biggl(  F\left(\mathbb{R}^d\right)^{1/2} \\& \quad +\max_{k,l}\vert\Sigma_{kl} \vert    F\left(\mathbb{R}^d\right)\biggr) + \mathbb{P}\left(A_{N_T}^c \right) \\
          &= O\left(\Delta_{\mathcal{P}_T}^{1/2- \delta}d^8 \right) + o(1)\\&= o(1), \quad \text{ as } d \rightarrow \infty,
    \end{aligned}
\end{equation*}
which gives us the desired result.
\end{proof}
\begin{proof}[Proof of \Cref{LikelihoodEqualityFeasibleCase}]
    We first remember that 
    \begin{equation*}
        \begin{aligned}
        \mathcal{L}_T^{\mathcal{P}_T}\left(Q\right) &= \dfrac{1}{T}\sum_{n=0}^{N_T-1}\left(\Sigma^{-1}Q \bold{X}_{s_n^{N_T}}\right)^{\top}\left(\bold{X}_{s_{n+1}^{N_T}}^c-\bold{X}_{s_n^{N_T}}^c\right) +\dfrac{1}{2T}\sum_{n=0}^{N_T-1}\left(\Sigma^{-1/2}Q \bold{X}_{s_{n}^{N_T}}\right)^{\top}\Sigma^{-1/2}Q \bold{X}_{s_{n}^{N_T}}\left(s_{n+1}^{N_T}-s_{n}^{N_T}\right) \\
        &=\Tr \biggl(\dfrac{1}{T}\sum_{n=0}^{N_T-1}\left(\Sigma^{-1}Q \bold{X}_{s_n^{N_T}}\right)^{\top}\left(\bold{X}_{s_{n+1}^{N_T}}^c-\bold{X}_{s_n^{N_T}}^c\right) \\&+\dfrac{1}{2T}\sum_{n=0}^{N_T-1}\left(\Sigma^{-1/2}Q \bold{X}_{s_{n}^{N_T}}\right)^{\top}\Sigma^{-1/2}Q \bold{X}_{s_{n}^{N_T}}\left(s_{n+1}^{N_T}-s_{n}^{N_T}\right)  \biggr) \\
        &=\Tr\biggl(\frac{1}{T} \Sigma^{-1} Q\sum_{n=0}^{N_T-1}\mathbf{X}_{s_n^{N_T}} \left(\Delta_n\bold{X}^c\right)^\top +\frac{1}{2 T}  Q^\top(\Sigma^{-1})^\top Q\int_0^T\mathbf{X}_s \mathbf{X}_s^\top \mu_{\mathcal{P}_T}(ds)\biggr),
         \end{aligned}
    \end{equation*}
    where we used \Cref{MatrixCalc1,MatrixCalc2}. Hence, another expression for its gradient is
    \begin{equation*}
        \nabla \mathcal{L}_T^{\mathcal{P}_T}\left(Q\right) = \Sigma^{-1}\dfrac{1}{T}\sum_{n=0}^{N_T-1}\left(\Delta_n\bold{X}^c\right) \mathbf{X}_{s_n^{N_T}}^\top   + \Sigma^{-1}Q\widehat{C}_{\mathcal{P}_T}.
    \end{equation*}
Similarly, for \eqref{FeasibleDiscLike}, we find that 
\begin{equation*}
    \nabla\bar{\mathcal{L}}_T^{\mathcal{P}_T}(Q) =\Sigma^{-1}\dfrac{1}{T}\sum_{n=0}^{N_T-1} \left( \Delta_{\mathcal{P}_T}^n \bold{\bold{X}}\odot\vec{\bold{1}}(\Delta_{\mathcal{P}_T}^n \bold{\bold{X}})\right)\mathbf{X}_{s_n^{N_T}}^\top   + \Sigma^{-1}Q\widehat{C}_{\mathcal{P}_T}.
\end{equation*}
Following differentiability and strict convexity of \eqref{FeasibleDiscLike} and \eqref{DiscreteLikelihood}, we know that their minimizers are the unique matrices at which the respective gradients equal $0$. We thus have that
\begin{equation*}
    \begin{aligned}
        \nabla \mathcal{L}_T^{\mathcal{P}_T}\left(\widehat{Q}\right) &= \nabla\bar{\mathcal{L}}_T^{\mathcal{P}_T}(\bar{Q}) \\
        &\iff\\
        \left(\bar{Q}-\widehat{Q}\right) &= \left(\dfrac{1}{T}\sum_{n=0}^{N_T-1} \Delta_n\bold{X}^c\mathbf{X}_{s_n^{N_T}}^\top -\dfrac{1}{T}\sum_{n=0}^{N_T-1} \left( \Delta_{\mathcal{P}_T}^n \bold{\bold{X}}\odot\vec{\bold{1}}(\Delta_{\mathcal{P}_T}^n \bold{\bold{X}})\right)\mathbf{X}_{s_n^{N_T}}^\top \right)\widehat{C}_{\mathcal{P}_T}^{-1} \\
        &=
        \left(\dfrac{1}{T}\sum_{n=0}^{N_T-1} \left( \Delta_n\bold{X}^c-\Delta_{\mathcal{P}_T}^n \bold{\bold{X}}\odot\vec{\bold{1}}(\Delta_{\mathcal{P}_T}^n \bold{\bold{X}})\right)\mathbf{X}_{s_n^{N_T}}^\top\right)\widehat{C}_{\mathcal{P}_T}^{-1}.
    \end{aligned}
\end{equation*}
We thus have that
\begin{equation*}
    \begin{aligned}
    \Vert \bar{Q}-\widehat{Q} \Vert_F &= \Vert \left(\dfrac{1}{T}\sum_{n=0}^{N_T-1} \left( \Delta_n\bold{X}^c-\Delta_{\mathcal{P}_T}^n \bold{\bold{X}}\odot\vec{\bold{1}}(\Delta_{\mathcal{P}_T}^n \bold{\bold{X}})\right)\mathbf{X}_{s_n^{N_T}}^\top\right)\widehat{C}_{\mathcal{P}_T}^{-1}\Vert_F \\&
    \le \Vert\left(\dfrac{1}{T}\sum_{n=0}^{N_T-1} \left( \Delta_n\bold{X}^c-\Delta_{\mathcal{P}_T}^n \bold{\bold{X}}\odot\vec{\bold{1}}(\Delta_{\mathcal{P}_T}^n \bold{\bold{X}})\right)\mathbf{X}_{s_n^{N_T}}^\top\right)\Vert_1 \Vert\widehat{C}_{\mathcal{P}_T}^{-1}\Vert,
    \end{aligned}
\end{equation*}
where we used that $\Vert AB\Vert_F \le \Vert A\Vert_F\Vert B\Vert$ for any matrices $A,B$, and that the entry-wise $\ell_1$ norm is always an upper bound of the Frobenius norm by Cauchy-Schwarz. By \Cref{L_1BoundFInite} and \Cref{Prop:UpperBoundC_TInverse} we then have the desired bound.
\end{proof}
\subsubsection{Proofs: Infinite activity}\label{ProofsInfAct}
The main result in this Section will be \Cref{L_1BoundInFinite}. However, prior to that we establish a few helpful lemmas.
\begin{lemma}\label{S_2:FirstBound}
    Grant \Cref{Ass:Estimation,Ass.H,Ass.High-FrequencySampling,Ass.GeneralBothCases,Ass.InfiniteAct}, and let the notation be as in \Cref{LevyItoImpl}. Then
    \begin{equation*}
        \mathbb{P}\left(\dfrac{1}{T}\sum_{n=0}^{N_T-1} \sum_{k=1}^d\sum_{j=1}^d \vert\bold{Y}_{s_n^{N_T}}^{(j)} \Delta_{\mathcal{P}_T}^n \bold{\bold{\tilde{Y}}}^{(k)}\left(\bold{1}_{\{\vert \Delta_{\mathcal{P}_T}^n \bold{Y}^{(k)} \vert \le \nu_{\mathcal{P}_T}^{(k)}, \vert\Delta_{\mathcal{P}_T}^n \bold{\bold{\tilde{Y}}}^{(k)} \vert > 2\nu_{\mathcal{P}_T}^{(k)}\}}\right) \vert >0 \right) = o(1), \quad \text{ as } d\rightarrow \infty.
    \end{equation*}
\end{lemma}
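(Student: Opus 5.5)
The plan is to show that the event in question is, with probability tending to one, contained in an event where some increment has a jump. Along the way we will see that this event requires a "large jump plus cancellation" configuration that is ruled out by \Cref{Ass.InfiniteAct}(4).

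First, reduce to individual terms. The sum is strictly positive only if there exist $n,k$ such that three things hold at once: $\vert\Delta_{\mathcal{P}_T}^n\bold{Y}^{(k)}\vert\le\nu_{\mathcal{P}_T}^{(k)}$, $\vert\Delta_{\mathcal{P}_T}^n\tilde{\bold{Y}}^{(k)}\vert>2\nu_{\mathcal{P}_T}^{(k)}$, and $\bold{Y}^{(j)}_{s_n^{N_T}}\ne0$ for some $j$. We drop the last condition and use a union bound over $n\le N_T$ and $k\le d$. It then suffices to show
\[
\max_{k}\mathbb{P}\bigl(\vert\Delta_{\mathcal{P}_T}^n\bold{Y}^{(k)}\vert\le\nu^{(k)},\ \vert\Delta_{\mathcal{P}_T}^n\tilde{\bold{Y}}^{(k)}\vert>2\nu^{(k)}\bigr)=o\bigl((dN_T)^{-1}\bigr)=o\bigl((dT/\Delta_{\mathcal{P}_T})^{-1}\bigr),
\]
using $N_T\Delta_{\mathcal{P}_T}=O(T)$ from \Cref{Ass.High-FrequencySampling}.

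Since $\Delta\bold{Y}=\Delta\tilde{\bold{Y}}+\Delta\bold{M}$, the two conditions force $\vert\Delta_{\mathcal{P}_T}^n\bold{M}^{(k)}\vert>\nu^{(k)}$. Decompose $\Delta\tilde{\bold{Y}}^{(k)}=\Delta\bold{D}^{(k)}+(\Sigma^{1/2}\Delta\bold{W})^{(k)}+\Delta\bold{U}^{(k)}$, using $\bold{b}=0$ from \Cref{Ass.InfiniteAct}(7). We split on whether the big-jump counter $\Delta_{\mathcal{P}_T}^n\bold{N}$ of $\mathbb{U}$ vanishes.

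\emph{Case $\Delta_{\mathcal{P}_T}^n\bold{N}=0$.} Then $\vert\Delta\tilde{\bold{Y}}^{(k)}\vert>2\nu^{(k)}$ requires the drift-plus-Brownian part to exceed $2\Delta_{\mathcal{P}_T}^{\beta^{(k)}}$. By \Cref{Lemma:CountBound} with $l=2$, $\delta=\beta^{(k)}$ (moments supplied by \Cref{Ass.GeneralBothCases}), this has probability $O(d^{7}\Delta_{\mathcal{P}_T}^{2(1-\beta^{(k)})})$, up to the superpolynomially small Gaussian term. Multiplying by $dN_T\lesssim dT/\Delta_{\mathcal{P}_T}$ gives $O(Td^8\Delta_{\mathcal{P}_T}^{1-2\beta^*})=o(1)$ by \Cref{Ass.InfiniteAct}(5). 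This mirrors the bound in \Cref{Lemma:SetBound}.

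\emph{Case $\Delta_{\mathcal{P}_T}^n\bold{N}\ge1$.} Here we need both $\vert\Delta\bold{M}^{(k)}\vert>\nu^{(k)}$ and $\vert\Delta\bold{Y}^{(k)}\vert\le\nu^{(k)}$ in the presence of a big jump. We use the independence of $\mathbb{M}$, $\mathbb{U}$ and $\mathbb{W}$. Then $\mathbb{P}(\Delta\bold{N}\ge1)=O(d\Delta_{\mathcal{P}_T})$ by \Cref{PropertiesOfIntroducedStochasticProcesses}(iii), and $\mathbb{P}(\vert\Delta\bold{M}^{(k)}\vert>\nu^{(k)})=O(\Delta_{\mathcal{P}_T}^{1-2\beta^{(k)}})$ by part (iv). The product is $O(d\Delta_{\mathcal{P}_T}^{2-2\beta^*})$, and after the union bound this gives $O(d^2T\Delta_{\mathcal{P}_T}^{1-2\beta^*})=o(1)$, again by \Cref{Ass.InfiniteAct}(5).

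If a sharper route is needed in this case, we can instead condition on exactly one jump $\bold{Z}$ and split further. If $\vert\bold{Z}^{(k)}\vert\le4\nu^{(k)}$, the probability is controlled by $\bar F^{(k)}(-4\nu,4\nu)=o((d^2T)^{-1})$ from \Cref{Ass.InfiniteAct}(4). Otherwise $\vert\Delta\bold{Y}^{(k)}\vert\le\nu$ forces the remaining terms to exceed $3\nu$, which is small by \Cref{Lemma:CountBound} and \Cref{PropertiesOfIntroducedStochasticProcesses}. Two or more jumps contribute $O((d\Delta_{\mathcal{P}_T})^2)$ per interval, which is negligible after the union bound.

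The main obstacle is the second case. The product bound requires the independence of $\Delta\bold{M}$ from $\Delta\bold{N}$ to be used cleanly. Note that $\mathbb{M}$ and $\mathbb{U}$ are driven by disjoint parts of the same Poisson random measure, so they are independent, as stated in \Cref{LevyItoImpl}. The rates must also combine to $o(1)$ after multiplying by $dN_T$. I would first try the crude product bound and fall back on the refined single-jump decomposition only if exponents fail to match.
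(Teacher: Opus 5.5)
Your proposal is correct and follows the paper's proof essentially step for step: a union bound over $(n,k)$, the observation that the event forces $|\Delta_{\mathcal{P}_T}^n\mathbf{M}^{(k)}|>\nu_{\mathcal{P}_T}^{(k)}$, a split on whether a big jump occurs, and then two bounds. In the no-jump case you use \Cref{Lemma:CountBound}, and in the jump case you use the product bound $O(d\Delta_{\mathcal{P}_T}^{2-2\beta^*})$ obtained from independence of $\mathbb{M}$ and $\mathbb{U}$. The paper splits on $\Delta_{\mathcal{P}_T}^n\mathbf{U}^{(k)}=0$ rather than on $\Delta_{\mathcal{P}_T}^n\mathbf{N}=0$, which makes no difference. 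It also absorbs the Gaussian term, which you call superpolynomially small, by using the second assertion of \Cref{Lemma:CountBound} with $l'=\alpha'=2$, justified by \Cref{Ass.InfiniteAct}(6); your fallback single-jump decomposition is not needed.
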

\begin{proof}
    We note that 
\begin{equation}\label{BoundFirstTerm}
    \begin{aligned}
    &\mathbb{P}\left(\dfrac{1}{T}\sum_{n=0}^{N_T-1} \sum_{k=1}^d\sum_{j=1}^d \vert\bold{Y}_{s_n^{N_T}}^{(j)} \Delta_{\mathcal{P}_T}^n \bold{\bold{\tilde{Y}}}^{(k)}\left(\bold{1}_{\{\vert \Delta_{\mathcal{P}_T}^n \bold{Y}^{(k)} \vert \le \nu_{\mathcal{P}_T}^{(k)}, \vert\Delta_{\mathcal{P}_T}^n \bold{\bold{\tilde{Y}}}^{(k)} \vert > 2\nu_{\mathcal{P}_T}^{(k)}\}}\right) \vert >0 \right) \\& =\mathbb{P}\left(\bigcup_{n=0}^{N_T-1} \bigcup_{k=1}^d \{ \vert \Delta_{\mathcal{P}_T}^n \bold{Y}^{(k)} \vert \le \nu_{\mathcal{P}_T}^{(k)},\vert \Delta_{\mathcal{P}_T}^n \bold{\bold{\tilde{Y}}}^{(k)} \vert > 2\nu_{\mathcal{P}_T}^{(k)}\} \right)
    \\& \le \sum_{n=0}^{N_T-1} \sum_{k=1}^d  \mathbb{P}\left(\{ \vert \Delta_{\mathcal{P}_T}^n \bold{Y}^{(k)} \vert \le \nu_{\mathcal{P}_T}^{(k)},\vert \Delta_{\mathcal{P}_T}^n \bold{\bold{\tilde{Y}}}^{(k)} \vert > 2\nu_{\mathcal{P}_T}^{(k)}\} \right).
    \end{aligned}
\end{equation}
We now note, that when $\left|\Delta_{\mathcal{P}_T}^n \tilde{\bold{Y}}^{(k)}\right|>2 \nu_{\mathcal{P}_T}^{(k)}$ then with high probability $\vert\Delta_{\mathcal{P}_T}^n \bold{U}^{(k)}\vert >0$. Indeed, we remember that $\Delta_{\mathcal{P}_T}^n\tilde{\bold{Y}}^{(k)} = -\sum_{l=1}^d\int_{s^{\mathcal{P}_T }_{n}}^{s^{\mathcal{P}_T }_{n+1}}[Q_0]_{kl}\bold{Y}_s^{(l)} ds + \sum_{l=1}^d \Sigma^{1/2}_{kl}\Delta_{\mathcal{P}_T}^n \bold{W}^{(l)} +\Delta_{\mathcal{P}_T}^n \bold{U}^{(k)} $. Hence, 
\begin{equation*}
    \begin{aligned}
        &\sum_{n=0}^{N_T-1} \sum_{k=1}^d\mathbb{P}\left(\left|\Delta_{\mathcal{P}_T}^n  \tilde{\bold{Y}}^{(k)}\right|>2 \nu_{\mathcal{P}_T}^{(k)}, \vert\Delta_{\mathcal{P}_T}^n \bold{U}^{(k)}\vert=0\right) \\&= \sum_{n=0}^{N_T-1} \sum_{k=1}^d\mathbb{P}\left(\left|-\sum_{l=1}^d\int_{s^{N_T}_{n}}^{s^{N_T}_{n+1}}[Q_0]_{kl}\bold{Y}_s^{(l)} ds + \sum_{l=1}^d \Sigma^{1/2}_{kl}\Delta_{\mathcal{P}_T}^n \bold{W}^{(l)}\right|>2 \nu_{\mathcal{P}_T}^{(k)}, \vert\Delta_{\mathcal{P}_T}^n \bold{U}^{(k)}\vert=0\right) \\
        &\le  \sum_{n=0}^{N_T-1} \sum_{k=1}^d\mathbb{P}\left(\left|-\sum_{l=1}^d\int_{s^{N_T}_{n}}^{s^{N_T}_{n+1}}[Q_0]_{kl}\bold{Y}_s^{(l)} ds + \sum_{l=1}^d \Sigma^{1/2}_{kl}\Delta_{\mathcal{P}_T}^n \bold{W}^{(l)}\right|>2 \nu_{\mathcal{P}_T}^{(k)}\right).
    \end{aligned}
\end{equation*}
Now, following the second assertion in \Cref{Lemma:CountBound} with $c=2$, $\delta = \beta^*$, $l^\prime=\alpha^\prime=2$, we conclude that
\begin{equation}\label{eqn:InfAct1}
    \begin{aligned}
    &\sum_{n=0}^{N_T-1}\sum_{k=1}^d\mathbb{P}\left(\left|\Delta_{\mathcal{P}_T}^n  \tilde{\bold{Y}}^{(k)}\right|>2 \nu_{\mathcal{P}_T}^{(k)}, \vert\Delta_{\mathcal{P}_T}^n \bold{U}^{(k)}\vert=0\right) \\
    &\le \sum_{n=0}^{N_T-1}\sum_{k=1}^d\mathbb{P}\left(\left|\Delta_{\mathcal{P}_T}^n  \tilde{\bold{Y}}^{(k)}\right|>2 \Delta_{\mathcal{P}_T}^{\beta^*}, \vert\Delta_{\mathcal{P}_T}^n \bold{U}^{(k)}\vert=0\right) \\
    &\le N_T d \max_{k\in\{1,\dots,d\}}\mathbb{P}\left(\left|\Delta_{\mathcal{P}_T}^1  \tilde{\bold{Y}}^{(k)}\right|>2 \Delta_{\mathcal{P}_T}^{\beta^*}, \vert\Delta_{\mathcal{P}_T}^1 \bold{U}^{(k)}\vert=0\right)\\
    &= N_T d\, O \left(d^7\Delta_{\mathcal{P}_T}^{2(1-\beta^*)}\right)\\
    &= O\left(Td^8\Delta_{\mathcal{P}_T}^{1-2\beta^*}\right)
    \\&= o(1),\quad \text{as } d \rightarrow \infty,
    \end{aligned}
\end{equation}
where we used $\nu_{\mathcal{P}_T}^{(k)}\ge\Delta_{\mathcal{P}_T}^{\beta^*}$ (since $\beta^*=\max_k\beta^{(k)}$ and $\Delta_{\mathcal{P}_T}<1$ for large enough $d$) in the first inequality, stationarity of the increments in the second, $N_T=T/\Delta_{\mathcal{P}_T}$ in the second to last equality, and \Cref{Ass.InfiniteAct} in the last.
We now note, that on $\left\{\left|\Delta_{\mathcal{P}_T}^n \bold{Y}^{(k)}\right| \leq \nu_{\mathcal{P}_T}^{(k)},\left|\Delta_{\mathcal{P}_T}^n \tilde{\bold{Y}}^{(k)}\right|>2 \nu_{\mathcal{P}_T}^{(k)}\right\}$, it holds that 
\begin{equation*}
    \begin{aligned}
    \left|\Delta_{\mathcal{P}_T}^n  \tilde{\bold{Y}}^{(k)}\right| &= \left|\Delta_{\mathcal{P}_T}^n  \bold{Y}^{(k)} -\Delta_{\mathcal{P}_T}^n  \bold{M}^{(k)} \right| \\
    \Rightarrow \left|\Delta_{\mathcal{P}_T}^n  \bold{Y}^{(k)} \right| +\left|\Delta_{\mathcal{P}_T}^n  \bold{M}^{(k)} \right| &> 2 \nu_{\mathcal{P}_T}^{(k)}
    \\
     \iff 
  \left|\Delta_{\mathcal{P}_T}^n  \bold{M}^{(k)} \right| &> 2 \nu_{\mathcal{P}_T}^{(k)} -  \left|\Delta_{\mathcal{P}_T}^n  \bold{Y}^{(k)} \right| \ge \nu_{\mathcal{P}_T}^{(k)}.
    \end{aligned}
\end{equation*}
And so 
\begin{equation}
    \left\{\left|\Delta_{\mathcal{P}_T}^n \bold{Y}^{(k)}\right| \leq \nu_{\mathcal{P}_T}^{(k)},\left|\Delta_{\mathcal{P}_T}^n \tilde{\bold{Y}}^{(k)}\right|>2 \nu_{\mathcal{P}_T}^{(k)}\right\} \subset \left\{\left|\Delta_{\mathcal{P}_T}^n  \bold{M}^{(k)} \right|>\nu_{\mathcal{P}_T}^{(k)}  \right\}.
\end{equation}
We thus have, that
\begin{equation}\label{eqn:InfAct2}
    \begin{aligned}
    &\left\{\left|\Delta_{\mathcal{P}_T}^n \bold{Y}^{(k)}\right| \leq \nu_{\mathcal{P}_T}^{(k)},\left|\Delta_{\mathcal{P}_T}^n \tilde{\bold{Y}}^{(k)}\right|>2 \nu_{\mathcal{P}_T}^{(k)}\right\}  = \left\{\left|\Delta_{\mathcal{P}_T}^n \bold{Y}^{(k)}\right| \leq \nu_{\mathcal{P}_T}^{(k)},\left|\Delta_{\mathcal{P}_T}^n \tilde{\bold{Y}}^{(k)}\right|>2 \nu_{\mathcal{P}_T}^{(k)},\left|\Delta_{\mathcal{P}_T}^n  \bold{M}^{(k)}\right| >\nu_{\mathcal{P}_T}^{(k)} \right\} \\
 & = \left\{\left|\Delta_{\mathcal{P}_T}^n \bold{Y}^{(k)}\right| \leq \nu_{\mathcal{P}_T}^{(k)},\left|\Delta_{\mathcal{P}_T}^n \tilde{\bold{Y}}^{(k)}\right|>2 \nu_{\mathcal{P}_T}^{(k)},\left|\Delta_{\mathcal{P}_T}^n  \bold{M}^{(k)}\right| >\nu_{\mathcal{P}_T}^{(k)} ,\vert\Delta_{\mathcal{P}_T}^n \bold{U}^{(k)}\vert=0 \right\}\\& \bigcup
\left\{\left|\Delta_{\mathcal{P}_T}^n \bold{Y}^{(k)}\right| \leq \nu_{\mathcal{P}_T}^{(k)},\left|\Delta_{\mathcal{P}_T}^n \tilde{\bold{Y}}^{(k)}\right|>2 \nu_{\mathcal{P}_T}^{(k)},\left|\Delta_{\mathcal{P}_T}^n  \bold{M}^{(k)} \right|>\nu_{\mathcal{P}_T}^{(k)},\vert\Delta_{\mathcal{P}_T}^n \bold{U}^{(k)}\vert\ne 0 \right\}.
\end{aligned}
\end{equation}
We have following \eqref{eqn:InfAct2} that 
\begin{equation}\label{eqn:InfAct3}
    \begin{aligned}
    &\mathbb{P}\left(\{ \vert \Delta_{\mathcal{P}_T}^n \bold{Y}^{(k)} \vert \le \nu_{\mathcal{P}_T}^{(k)},\vert \Delta_{\mathcal{P}_T}^n \bold{\bold{\tilde{Y}}}^{(k)} \vert > 2\nu_{\mathcal{P}_T}^{(k)}\} \right)\\ &\le \mathbb{P}\left(\left\{\left|\Delta_{\mathcal{P}_T}^n \bold{Y}^{(k)}\right| \leq \nu_{\mathcal{P}_T}^{(k)},\left|\Delta_{\mathcal{P}_T}^n \tilde{\bold{Y}}^{(k)}\right|>2 \nu_{\mathcal{P}_T}^{(k)},\left|\Delta_{\mathcal{P}_T}^n  \bold{M}^{(k)}\right| >\nu_{\mathcal{P}_T}^{(k)} ,\vert\Delta_{\mathcal{P}_T}^n \bold{U}^{(k)}\vert=0\right\}\right)
    \\ &+\mathbb{P}\left(\left\{\left|\Delta_{\mathcal{P}_T}^n \bold{Y}^{(k)}\right| \leq \nu_{\mathcal{P}_T}^{(k)},\left|\Delta_{\mathcal{P}_T}^n \tilde{\bold{Y}}^{(k)}\right|>2 \nu_{\mathcal{P}_T}^{(k)},\left|\Delta_{\mathcal{P}_T}^n  \bold{M}^{(k)}\right| >\nu_{\mathcal{P}_T}^{(k)} ,\vert\Delta_{\mathcal{P}_T}^n \bold{U}^{(k)}\vert \ne 0\right\}\right)\\
     &\le \mathbb{P}\left(\left|\Delta_{\mathcal{P}_T}^n  \bold{M}^{(k)}\right| >\nu_{\mathcal{P}_T}^{(k)} ,\vert\Delta_{\mathcal{P}_T}^n \bold{U}^{(k)}\vert\ne0\right) +\mathbb{P}\left(\left|\Delta_{\mathcal{P}_T}^n  \tilde{\bold{Y}}^{(k)}\right|>2 \nu_{\mathcal{P}_T}^{(k)}, \vert\Delta_{\mathcal{P}_T}^n \bold{U}^{(k)}\vert=0\right) \\
     &= \mathbb{P}\left(\left|\Delta_{\mathcal{P}_T}^n  \bold{M}^{(k)}\right| >\nu_{\mathcal{P}_T}^{(k)} \right)\mathbb{P}\left(\vert\Delta_{\mathcal{P}_T}^n \bold{U}^{(k)}\vert\ne0\right) + \mathbb{P}\left(\left|\Delta_{\mathcal{P}_T}^n  \tilde{\bold{Y}}^{(k)}\right|>2 \nu_{\mathcal{P}_T}^{(k)}, \vert\Delta_{\mathcal{P}_T}^n \bold{U}^{(k)}\vert=0\right) ,
    \end{aligned}
\end{equation}
where we used independence of $\mathbb{M}$ and $\mathbb{U}$ in the last equality. 
Following \Cref{PropertiesOfIntroducedStochasticProcesses} with $f(\Delta_{\mathcal{P}_T}) = \Delta_{\mathcal{P}_T}^{\beta^*}$, we thus conclude that 
\begin{equation}\label{eqn:InfAct4}
    \begin{aligned}
&\max_{k}\mathbb{P}\left(\left|\Delta_{\mathcal{P}_T}^n  \bold{M}^{(k)}\right| >\nu_{\mathcal{P}_T}^{(k)} \right)\max_{k}\mathbb{P}\left(\vert\Delta_{\mathcal{P}_T}^n \bold{U}^{(k)}\vert\ne0\right) \\
&\le \max_{k}\mathbb{P}\left(\left|\Delta_{\mathcal{P}_T}^n  \bold{M}^{(k)}\right| >\Delta_{\mathcal{P}_T}^{\beta^*} \right)\max_{k}\mathbb{P}\left(\vert\Delta_{\mathcal{P}_T}^n \bold{U}^{(k)}\vert\ne0\right) \\
&=O\left(d\Delta_{\mathcal{P}_T}^{2-2\beta^*} \right),\quad \text{as } d\rightarrow\infty,
    \end{aligned}
\end{equation}
where the inequality uses $\nu_{\mathcal{P}_T}^{(k)}=\Delta_{\mathcal{P}_T}^{\beta^{(k)}}\ge\Delta_{\mathcal{P}_T}^{\beta^*}$ (since $\beta^*=\max_k\beta^{(k)}$ and $\Delta_{\mathcal{P}_T}<1$).
As such, using \eqref{BoundFirstTerm},\eqref{eqn:InfAct1} and \eqref{eqn:InfAct3}, we have that
\begin{equation}\label{eqn:InfAct5}
    \begin{aligned}
           &\mathbb{P}\biggl(  \dfrac{1}{T}\sum_{n=0}^{N_T-1} \sum_{k=1}^d\sum_{j=1}^d \vert\bold{Y}_{s_n^{N_T}}^{(j)} \Delta_{\mathcal{P}_T}^n \bold{\bold{\tilde{Y}}}^{(k)}\left(\bold{1}_{\{\vert \Delta_{\mathcal{P}_T}^n \bold{Y}^{(k)} \vert \le \nu_{\mathcal{P}_T}^{(k)}, \vert\Delta_{\mathcal{P}_T}^n \bold{\bold{\tilde{Y}}}^{(k)} \vert > 2\nu_{\mathcal{P}_T}^{(k)}\}}\right) \vert >\Delta_{\mathcal{P}_T}^\delta/2 \biggr)\\
    &\le\sum_{n=0}^{N_T-1} \sum_{k=1}^d  \mathbb{P}\left(\{ \vert \Delta_{\mathcal{P}_T}^n \bold{Y}^{(k)} \vert \le \nu_{\mathcal{P}_T}^{(k)},\vert \Delta_{\mathcal{P}_T}^n \bold{\bold{\tilde{Y}}}^{(k)} \vert > 2\nu_{\mathcal{P}_T}^{(k)}\} \right)\\
    &\le\sum_{n=0}^{N_T-1} \sum_{k=1}^d \biggl(\max_{k\in\{1,\dots,d\}}\mathbb{P}\left(\left|\Delta_{\mathcal{P}_T}^n  \bold{M}^{(k)}\right| >\nu_{\mathcal{P}_T}^{(k)} \right)\mathbb{P}\left(\vert\Delta_{\mathcal{P}_T}^n \bold{U}^{(k)}\vert\ne0\right) \\&+ \max_{k\in\{1,\dots,d\}}\mathbb{P}\left(\left|\Delta_{\mathcal{P}_T}^n  \tilde{\bold{Y}}^{(k)}\right|>2 \nu_{\mathcal{P}_T}^{(k)}, \vert\Delta_{\mathcal{P}_T}^n \bold{U}^{(k)}\vert=0\right)\biggr)\\
    &=O \left(Td^8\Delta_{\mathcal{P}_T}^{1-2\beta^*}\right)
    \\&= o(1),\quad \text{as }d \rightarrow \infty.
    \end{aligned}
\end{equation}
\end{proof}
\begin{lemma}\label{SecondBoundFirstLemma}
   Grant \Cref{Ass:Estimation,Ass.H,Ass.High-FrequencySampling,Ass.GeneralBothCases,Ass.InfiniteAct}, and let the notation be as in \Cref{LevyItoImpl}. Then
    \begin{equation*}
        \mathbb{P}\biggl(\bigcup_{n=0}^{N_T-1}\bigcup_{k=1}^d \{\vert\Delta_{\mathcal{P}_T}^n \bold{\bold{\tilde{Y}}}^{(k)} \vert \le 2\nu_{\mathcal{P}_T}^{(k)},\vert\Delta_{\mathcal{P}_T}^n \bold{U}^{(k)}\vert>0\}\biggr) = o(1),\quad \text{as } d\rightarrow \infty.
    \end{equation*}
\end{lemma}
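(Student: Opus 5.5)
The event says that a big jump of $\mathbb{U}$ occurs in the $n$-th interval, yet the increment of $\tilde{\mathbb{Y}}$ in coordinate $k$ stays below $2\nu_{\mathcal{P}_T}^{(k)}$. I would mirror the finite-activity argument of \Cref{Lemma:SetBound}, i.e.\ Lemma 3.8 of \cite{Mai2014}, applied to $\tilde{\mathbb{Y}}$. By the remark after \Cref{Ass.InfiniteAct}, $\tilde{\mathbb{Y}}$ behaves like the finite-activity process with jump rate $\lambda = F(\{\Vert x\Vert_2>1\}) = O(d)$. First take a union bound over $n$ and $k$ and use stationarity of increments. This reduces the claim to showing that
\[
N_T d \max_{k}\mathbb{P}\bigl(\vert\Delta_{\mathcal{P}_T}^n \tilde{\bold{Y}}^{(k)}\vert \le 2\nu_{\mathcal{P}_T}^{(k)},\ \vert\Delta_{\mathcal{P}_T}^n \bold{U}^{(k)}\vert>0\bigr)=o(1).
\]

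Next, split on the number of Poisson jumps $\Delta_{\mathcal{P}_T}^n\bold{N}$ of $\mathbb{U}$ in the interval.

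\emph{Case $\Delta_{\mathcal{P}_T}^n\bold{N}\ge 2$.} This has probability $O((\lambda\Delta_{\mathcal{P}_T})^2)=O(d^2\Delta_{\mathcal{P}_T}^2)$. Multiplied by $N_T d = O(Td/\Delta_{\mathcal{P}_T})$, it contributes $O(Td^3\Delta_{\mathcal{P}_T})$. This is $o(1)$ by \Cref{Ass.InfiniteAct}(5), since $\Delta_{\mathcal{P}_T}\le \Delta_{\mathcal{P}_T}^{1-2\beta^*}$.

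\emph{Case exactly one jump $\bold{Z}$.} On this event $\Delta_{\mathcal{P}_T}^n\tilde{\bold{Y}}^{(k)} = G + \bold{Z}^{(k)}$, where
\[
G := -\sum_l [Q_0]_{kl}\int \bold{Y}^{(l)}_s\,ds + \sum_l \Sigma^{1/2}_{kl}\Delta_{\mathcal{P}_T}^n\bold{W}^{(l)}.
\]
Then $\{\vert G+\bold{Z}^{(k)}\vert\le 2\nu\}\subset\{\vert\bold{Z}^{(k)}\vert\le 4\nu\}\cup\{\vert G\vert>2\nu\}$.
\begin{itemize}
\item The first piece has probability at most $\lambda\Delta_{\mathcal{P}_T}\,\bar F^{(k)}(-4\nu,4\nu)$, using independence of $\bold{Z}$ from the jump count. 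Summed over $n,k$ this gives $O(Td^2\max_k\bar F^{(k)}(-4\Delta^{\beta^{(k)}},4\Delta^{\beta^{(k)}}))=o(1)$ by \Cref{Ass.InfiniteAct}(4).
\item The second piece is controlled as in \eqref{eqn:InfAct1}. \Cref{Lemma:CountBound} with $c=2$, $\delta=\beta^*$ and $l'=\alpha'=2$ bounds it by $O(d^7\Delta_{\mathcal{P}_T}^{2(1-\beta^*)})$, which sums to $O(Td^8\Delta_{\mathcal{P}_T}^{1-2\beta^*})=o(1)$.
\end{itemize}

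The main subtlety is in the second piece. The drift integral in $G$ involves $\bold{Y}$, which itself contains the jump $\bold{Z}$, so $G$ and the jump event are not independent. I would not try to use independence there: I would simply drop the intersection with the jump event and bound $\mathbb{P}(\vert G\vert>2\nu)$ directly via \Cref{Lemma:CountBound}, which needs only moment bounds on $\bold{Y}_0$, available from \Cref{Ass.InfiniteAct}(8).

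Where the jump was used in the first piece, I would separate out the drift contribution of the jump itself, which is of size $O(\Delta_{\mathcal{P}_T}\vert\bold{Z}\vert)$, by enlarging the window from $2\nu$ to $4\nu$. The assumption in \Cref{Ass.InfiniteAct}(4) is stated on $4\nu$ precisely to absorb this slack, with the leftover handled by \Cref{Lemma:CountBound} through moments of $\bold{Y}$. Combining the cases gives the claim.
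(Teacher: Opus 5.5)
Your proposal is correct and follows the paper's proof essentially step for step. You split on one versus at least two Poisson jumps in the interval, then on $\{|Z^{(k)}|\le 4\nu_{\mathcal{P}_T}^{(k)}\}$ versus $\{|G|>2\nu_{\mathcal{P}_T}^{(k)}\}$; the latter is bounded via \Cref{Lemma:CountBound} without any independence claim, and the sums close using \Cref{Ass.InfiniteAct}(4)--(5).

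Your final paragraph is superfluous, though. The piece $\{|Z^{(k)}|\le 4\nu_{\mathcal{P}_T}^{(k)},\,\Delta_{\mathcal{P}_T}^n N=1\}$ involves no drift term at all; its probability is just $\lambda\Delta_{\mathcal{P}_T}\bar F^{(k)}(-4\nu_{\mathcal{P}_T}^{(k)},4\nu_{\mathcal{P}_T}^{(k)})$ by independence of jump size and count. The factor $4$ is simply the triangle-inequality slack $2\nu_{\mathcal{P}_T}^{(k)}+2\nu_{\mathcal{P}_T}^{(k)}$.
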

\begin{proof}
    We first note that
\begin{equation*}
    \begin{aligned}
    &\mathbb{P}\left( \{\left|\Delta_{\mathcal{P}_T}^n  \tilde{\bold{Y}}^{(k)}\right|\le2 \nu_{\mathcal{P}_T}^{(k)}, \vert\Delta_{\mathcal{P}_T}^n \bold{U}^{(k)}\vert>0\}\right) \\&\le  \mathbb{P}\left( \{\left|\Delta_{\mathcal{P}_T}^n  \tilde{\bold{Y}}^{(k)}\right|\le2 \nu_{\mathcal{P}_T}^{(k)}, \vert\Delta_{\mathcal{P}_T}^n \bold{N}= 1\}\right) \\&+ \mathbb{P}\left( \{\left|\Delta_{\mathcal{P}_T}^n  \tilde{\bold{Y}}^{(k)}\right|\le2 \nu_{\mathcal{P}_T}^{(k)}, \vert\Delta_{\mathcal{P}_T}^n \bold{N}> 1\}\right)\\
    &\le  \mathbb{P}\left( \{\left|\Delta_{\mathcal{P}_T}^n  \tilde{\bold{Y}}^{(k)}\right|\le2 \nu_{\mathcal{P}_T}^{(k)}, \vert\Delta_{\mathcal{P}_T}^n \bold{N}= 1\}\right) \\&+ \mathbb{P}\left(\Delta_{\mathcal{P}_T}^n \bold{N} > 1\right) \\
    &= \mathbb{P}\left( \{\left|\Delta_{\mathcal{P}_T}^n  \tilde{\bold{Y}}^{(k)}\right|\le2 \nu_{\mathcal{P}_T}^{(k)}, \vert\Delta_{\mathcal{P}_T}^n \bold{N} = 1\}\right) + O(d^2\Delta_{\mathcal{P}_T}^2),
    \end{aligned}
\end{equation*}
where $\mathbb{N}$ is the Poisson process associated to the compound Poisson Process $\mathbb{U}$ and in the last equality, we used that the intensity $\lambda$ of $\mathbb{N}$ is given by $F\left(\{x:\Vert x \Vert_2>1\}\right)$, which is $O(d)$ by assumption. Thus 
\begin{equation*}
    \begin{aligned}
     \mathbb{P}\left(\Delta_{\mathcal{P}_T}^n \bold{N} > 1\right) &= \sum_{j=2}^{\infty} \dfrac{\left(\lambda\Delta_{\mathcal{P}_T}\right)^j e^{-\lambda\Delta_{\mathcal{P}_T}} }{j!}\\
     &=\left(\lambda\Delta_{\mathcal{P}_T}\right)^2e^{-\lambda\Delta_{\mathcal{P}_T}}\sum_{j=2}^{\infty} \dfrac{\left(\lambda\Delta_{\mathcal{P}_T}\right)^{j-2}  }{j!}\\ &\le \left(\lambda\Delta_{\mathcal{P}_T}\right)^2e^{-\lambda\Delta_{\mathcal{P}_T}}\sum_{j=2}^{\infty} \dfrac{\left(\lambda\Delta_{\mathcal{P}_T}\right)^{j-2}  }{(j-2)!}\\
     &= \left(\lambda\Delta_{\mathcal{P}_T}\right)^2 \\
     &=O\left(\left(d\Delta_{\mathcal{P}_T}\right)^2\right),\quad \text{ as } d \rightarrow \infty.
     \end{aligned}
\end{equation*}
Since $\mathbb{U}$ is a compound Poisson process, with jumps with probability measure $\bar{F}(dx)=F|_{x:\Vert x \Vert>1}(dx)/\lambda$ where $\lambda = F(\{x:\Vert x \Vert_2>1 \})$, we have that on $\{\Delta_{\mathcal{P}_T}^n\bold{N} = 1 \}$, it holds that $\Delta_{\mathcal{P}_T}^n \bold{U}^{(k)} = \sum_{j=1}^{\Delta_{\mathcal{P}_T}^n \bold{N}} \bold{Z}_i^{(k)} =\bold{Z}_1^{(k)}$. Clearly,
\begin{equation*}
    \begin{aligned}
    &\{\left|\Delta_{\mathcal{P}_T}^n  \tilde{\bold{Y}}^{(k)}\right|\le2 \nu_{\mathcal{P}_T}^{(k)}, \Delta_{\mathcal{P}_T}^n \bold{N}= 1\} \\&=\{\left|\Delta_{\mathcal{P}_T}^n  \tilde{\bold{Y}}^{(k)}\right|\le2 \nu_{\mathcal{P}_T}^{(k)}, \vert \bold{Z}_1^{(k)} \vert> 4\nu_{\mathcal{P}_T}^{(k)} ,\Delta_{\mathcal{P}_T}^n \bold{N}= 1\}\\&\bigcup \{\left|\Delta_{\mathcal{P}_T}^n  \tilde{\bold{Y}}^{(k)}\right|\le2 \nu_{\mathcal{P}_T}^{(k)}, \vert\bold{Z}_1^{(k)}\vert\le 4\nu_{\mathcal{P}_T}^{(k)} ,\Delta_{\mathcal{P}_T}^n \bold{N}= 1\}.
    \end{aligned}
\end{equation*}
On $\{\left|\Delta_{\mathcal{P}_T}^n  \tilde{\bold{Y}}^{(k)}\right|\le2 \nu_{\mathcal{P}_T}^{(k)}, \vert\bold{Z}_1^{(k)}\vert> 4\nu_{\mathcal{P}_T}^{(k)} ,\Delta_{\mathcal{P}_T}^n \bold{N}= 1\}$, we claim that necessarily, $\{ \vert\Delta_{\mathcal{P}_T}^n \bold{D}^{(k)} +\left(\Sigma^{1/2}\Delta_{\mathcal{P}_T}^n \bold{W}\right)^{(k)} \vert > 2 \nu_{\mathcal{P}_T}^{(k)} \}$.  Indeed, it follows that
\begin{equation*}
    \begin{aligned}
    \left|\Delta_{\mathcal{P}_T}^n  \tilde{\bold{Y}}^{(k)}\right| &= \left|\Delta_{\mathcal{P}_T}^n \bold{D}^{(k)}+\left(\Sigma^{1/2}\Delta_{\mathcal{P}_T}^n \bold{W}\right)^{(k)} + (\Delta_{\mathcal{P}_T}^n\bold{U})^{(k)} \right|\\ &= \left|\Delta_{\mathcal{P}_T}^n \bold{D}^{(k)}+\left(\Sigma^{1/2}\Delta_{\mathcal{P}_T}^n \bold{W}\right)^{(k)} + \bold{Z}_1^{(k)}\right|\le2 \nu_{\mathcal{P}_T}^{(k)} \\&\iff \Delta_{\mathcal{P}_T}^n \bold{D}^{(k)}+\left(\Sigma^{1/2}\Delta_{\mathcal{P}_T}^n \bold{W}\right)^{(k)} \in \left[-\left(\bold{Z}_1^{(k)} +2 \nu_{\mathcal{P}_T}^{(k)} \right);-\left(\bold{Z}_1^{(k)} -2 \nu_{\mathcal{P}_T}^{(k)} \right)\right].
    \end{aligned}
\end{equation*}
Since $\vert\bold{Z}_1^{(k)}\vert>4\nu_{\mathcal{P}_T}^{(k)}$, every point of this interval has absolute value at least $\vert\bold{Z}_1^{(k)}\vert-2\nu_{\mathcal{P}_T}^{(k)}>2\nu_{\mathcal{P}_T}^{(k)}$, so $\vert\Delta_{\mathcal{P}_T}^n \bold{D}^{(k)}+(\Sigma^{1/2}\Delta_{\mathcal{P}_T}^n \bold{W})^{(k)}\vert>2\nu_{\mathcal{P}_T}^{(k)}$, and the claim follows.
In turn, we find that
\begin{equation*}
    \begin{aligned}
        &\max_{k\in\{1,\dots,d\}}\mathbb{P}\left(\{\left|\Delta_{\mathcal{P}_T}^n  \tilde{\bold{Y}}^{(k)}\right|\le2 \nu_{\mathcal{P}_T}^{(k)}, \vert \bold{Z}_1^{(k)} \vert> 4\nu_{\mathcal{P}_T}^{(k)} ,\Delta_{\mathcal{P}_T}^n \bold{N}= 1\}\right) \\ 
        &\le \mathbb{P}\left(\{ \vert\Delta_{\mathcal{P}_T}^n \bold{D}^{(k)} +\left(\Sigma^{1/2}\Delta_{\mathcal{P}_T}^n \bold{W}\right)^{(k)} \vert > 2 \nu_{\mathcal{P}_T}^{(k)}\}\right) \\
        &\le \mathbb{P}\left(\{ \vert\Delta_{\mathcal{P}_T}^n \bold{D}^{(k)} +\left(\Sigma^{1/2}\Delta_{\mathcal{P}_T}^n \bold{W}\right)^{(k)} \vert > 2 \Delta_{\mathcal{P}_T}^{\beta^*}\}\right) \\
        &= O\left(d^7 \Delta_{\mathcal{P}_T}^{2(1-\beta^*)}\right), \quad \text{ as } d\rightarrow \infty,
    \end{aligned}
\end{equation*}
where we used $\nu_{\mathcal{P}_T}^{(k)}\ge\Delta_{\mathcal{P}_T}^{\beta^*}$ (since $\beta^*=\max_k\beta^{(k)}$ and $\Delta_{\mathcal{P}_T}<1$) and \Cref{Lemma:CountBound} with $c=2$, $l^{\prime}=2$ and $\delta = \beta^*$ in the last inequality.
Further,
\begin{equation*}
    \begin{aligned}
        &\max_{k\in\{1,\dots,d\}}\mathbb{P}\left(\{\left|\Delta_{\mathcal{P}_T}^n  \tilde{\bold{Y}}^{(k)}\right|\le2 \nu_{\mathcal{P}_T}^{(k)}, \vert\bold{Z}_1^{(k)}\vert\le 4\nu_{\mathcal{P}_T}^{(k)} ,\Delta_{\mathcal{P}_T}^n \bold{N}= 1\}\right)
        \\&\le\mathbb{P}\left(\{ \vert\bold{Z}_1^{(k)}\vert\le 4\nu_{\mathcal{P}_T}^{(k)} ,\Delta_{\mathcal{P}_T}^n \bold{N}= 1\}\right) \\
        &\le \lambda\Delta_{\mathcal{P}_T}\bar{F}^{(k)}\left(- 4\Delta_{\mathcal{P}_T}^{\beta^{(k)}}, 4\Delta_{\mathcal{P}_T}^{\beta^{(k)}}\right).
    \end{aligned}
\end{equation*}
Using the independence of $\Delta_{\mathcal{P}_T}^n \bold{N}$ and $\bold{Z}_1^{(k)}$.
Consequently,
\begin{equation*}
    \begin{aligned}
      &\mathbb{P}\biggl(\bigcup_{n=0}^{N_T-1}\bigcup_{k=1}^d \{\vert\Delta_{\mathcal{P}_T}^n \bold{\bold{\tilde{Y}}}^{(k)} \vert \le 2\nu_{\mathcal{P}_T}^{(k)},\vert\Delta_{\mathcal{P}_T}^n \bold{U}^{(k)}\vert>0\}\biggr) \\&\le   \sum_{n=0}^{N_T-1}\sum_{k=1}^d\max_{k}\mathbb{P}\biggl(\{\vert\Delta_{\mathcal{P}_T}^n \bold{\bold{\tilde{Y}}}^{(k)} \vert \le 2\nu_{\mathcal{P}_T}^{(k)},\vert\Delta_{\mathcal{P}_T}^n \bold{U}^{(k)}\vert>0\}\biggr) \\
      &= TdO\left(d^7 \Delta_{\mathcal{P}_T}^{1-2\beta^*} \vee d\max_{k}\bar{F}^{(k)}\left(- 4\Delta_{\mathcal{P}_T}^{\beta^{(k)}}, 4\Delta_{\mathcal{P}_T}^{\beta^{(k)}}\right) \right) \\
      &= o(1),\quad \text{as } d \rightarrow \infty. 
      \end{aligned}
\end{equation*}
\end{proof}
\begin{lemma}\label{SecondBoundS_2D}
   Grant \Cref{Ass:Estimation,Ass.H,Ass.High-FrequencySampling,Ass.GeneralBothCases,Ass.InfiniteAct}, and let the notation be as in \Cref{LevyItoImpl}. Then for $C>0$ and $\delta$ satisfying \Cref{Ass.InfiniteAct}
    \begin{multline*}
        %\begin{aligned}
        \mathbb{P}\biggl(\dfrac{1}{T}\sum_{n=0}^{N_T-1} \sum_{k=1}^d\sum_{j=1}^d \vert\bold{Y}_{s_n^{N_T}}^{(j)}\Delta_{\mathcal{P}_T}^n \bold{\bold{D}}^{(k)}\vert\biggl(\bold{1}_{\{\vert \Delta_{\mathcal{P}_T}^n \bold{Y}^{(k)} \vert > \nu_{\mathcal{P}_T}^{(k)}, \vert\Delta_{\mathcal{P}_T}^n \bold{\bold{\tilde{Y}}}^{(k)} \vert \le 2\nu_{\mathcal{P}_T}^{(k)},\vert\Delta_{\mathcal{P}_T}^n \bold{U}^{(k)}\vert=0\}}\biggr) \vert>C\Delta_{\mathcal{P}_T}^\delta\biggr) \\= o(1) \text{as } d \rightarrow \infty.
        %\end{aligned}
    \end{multline*}
\end{lemma}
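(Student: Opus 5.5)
The plan is a first-moment bound plus Markov's inequality, in the same way as the bound \eqref{FiniteActBound1} in the proof of \Cref{L_1BoundFInite}. Write $S$ for the random sum in the statement. We drop the two threshold indicators and keep only
\[
\bold{1}_{\{\vert \Delta_{\mathcal{P}_T}^n \bold{Y}^{(k)}\vert > \nu_{\mathcal{P}_T}^{(k)}\}}.
\]
This gives
\[
S \le \dfrac{1}{T}\sum_{n,k,j}\vert\bold{Y}_{s_n^{N_T}}^{(j)}\vert\,\vert\Delta_{\mathcal{P}_T}^n\bold{D}^{(k)}\vert\,\bold{1}_{\{\vert \Delta_{\mathcal{P}_T}^n \bold{Y}^{(k)}\vert > \nu_{\mathcal{P}_T}^{(k)},\,\vert\Delta_{\mathcal{P}_T}^n\bold{U}^{(k)}\vert=0\}}.
\]
On $\{\vert\Delta_{\mathcal{P}_T}^n\bold{U}^{(k)}\vert=0\}$ we have
\[
\Delta_{\mathcal{P}_T}^n\bold{Y}^{(k)}=\Delta_{\mathcal{P}_T}^n\bold{D}^{(k)}+(\Sigma^{1/2}\Delta_{\mathcal{P}_T}^n\bold{W})^{(k)}+\Delta_{\mathcal{P}_T}^n\bold{M}^{(k)},
\]
with $\bold{b}=0$ by \Cref{Ass.InfiniteAct}(7). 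Hence the event $\{\vert\Delta_{\mathcal{P}_T}^n\bold{Y}^{(k)}\vert>\nu_{\mathcal{P}_T}^{(k)}\}$ is contained in
\[
\{\vert\Delta_{\mathcal{P}_T}^n\bold{D}^{(k)}+(\Sigma^{1/2}\Delta_{\mathcal{P}_T}^n\bold{W})^{(k)}\vert>\nu_{\mathcal{P}_T}^{(k)}/2\}\cup\{\vert\Delta_{\mathcal{P}_T}^n\bold{M}^{(k)}\vert>\nu_{\mathcal{P}_T}^{(k)}/2\}.
\]
The first piece has probability $O(d^7\Delta_{\mathcal{P}_T}^{2(1-\beta^*)})$ by \Cref{Lemma:CountBound} with $c=1/2$, $l'=2$ and $\delta=\beta^*$. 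The second has probability $O(\Delta_{\mathcal{P}_T}^{1-2\beta^*})$ by \Cref{PropertiesOfIntroducedStochasticProcesses}, using $\nu_{\mathcal{P}_T}^{(k)}\ge\Delta_{\mathcal{P}_T}^{\beta^*}$.

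Next we bound the expectation of each summand by Hölder's inequality with three factors:
\[
\mathbb{E}\bigl(\vert\bold{Y}_{s_n^{N_T}}^{(j)}\vert^4\bigr)^{1/4}\,\mathbb{E}\bigl(\vert\Delta_{\mathcal{P}_T}^n\bold{D}^{(k)}\vert^4\bigr)^{1/4}\,\mathbb{P}(\cdot)^{1/2}.
\]
\Cref{Ass.InfiniteAct}(8) gives $\mathbb{E}(\vert\bold{Y}_0^{(j)}\vert^4)^{1/4}=O(d)$. For the drift increment we use Jensen's inequality as in \Cref{Lemma:CountBound}: since $\Delta_{\mathcal{P}_T}^n\bold{D}^{(k)}=-\sum_l[Q_0]_{kl}\int\bold{Y}_s^{(l)}ds$, we get $\mathbb{E}(\vert\Delta_{\mathcal{P}_T}^n\bold{D}^{(k)}\vert^4)^{1/4}\le d\max\vert[Q_0]_{kl}\vert\,\Delta_{\mathcal{P}_T}\max_l\mathbb{E}(\vert\bold{Y}_0^{(l)}\vert^4)^{1/4}=O(d^3\Delta_{\mathcal{P}_T})$. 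Summing over $n\le N_T$, $k$ and $j$ and using $N_T\Delta_{\mathcal{P}_T}=O(T)$, we obtain
\[
\mathbb{E}(S)=O\bigl(d^{6}\,(d^{7/2}\Delta_{\mathcal{P}_T}^{1-\beta^*}+\Delta_{\mathcal{P}_T}^{1/2-\beta^*})\bigr).
\]
Markov's inequality then bounds the probability in the statement by $C^{-1}\Delta_{\mathcal{P}_T}^{-\delta}\mathbb{E}(S)$. The dominant term is $d^{6}\Delta_{\mathcal{P}_T}^{1/2-\beta^*-\delta}$, and this tends to $0$ by \Cref{Ass.InfiniteAct}(9a), which states exactly that $d^{10}\Delta_{\mathcal{P}_T}^{1/2-\beta^*-\delta}\to0$. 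The other term is smaller because $1-\beta^*>1/2$.

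The main obstacle is bookkeeping the powers of $d$. The step I would double-check first is the probability of a large small-jump increment $\Delta_{\mathcal{P}_T}^n\bold{M}^{(k)}$: it decays only like $\Delta_{\mathcal{P}_T}^{1-2\beta^*}$, so the square root in Hölder costs half the exponent. If the resulting exponent falls short of (9a), I would not use Markov's inequality on $\bold{M}$. Instead I would use the truncated first-moment condition (9c), which gives the sharper $O(\Delta_{\mathcal{P}_T}^{1+\varepsilon})$ control, and fall back on (9b) to absorb the resulting factor $d^3\Delta_{\mathcal{P}_T}^{\varepsilon-\delta}$.
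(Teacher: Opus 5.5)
Your proposal is correct and follows essentially the paper's route. Both arguments use the same inclusion of the event into $\{\vert\Delta_{\mathcal{P}_T}^n\mathbf{D}^{(k)}+(\Sigma^{1/2}\Delta_{\mathcal{P}_T}^n\mathbf{W})^{(k)}\vert>\nu_{\mathcal{P}_T}^{(k)}/2\}\cup\{\vert\Delta_{\mathcal{P}_T}^n\mathbf{M}^{(k)}\vert>\nu_{\mathcal{P}_T}^{(k)}/2\}$, the same tail bounds from \Cref{Lemma:CountBound} and \Cref{PropertiesOfIntroducedStochasticProcesses}, a fourth-moment first-moment bound, and Markov's inequality closed by \Cref{Ass.InfiniteAct}(9a); your single $(4,4,2)$ H\"older step is equivalent to the paper's two Cauchy--Schwarz steps, and your $d^{6}$ bookkeeping is slightly tighter than the paper's $d^{7}$. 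The fallback via (9c) is unnecessary, since (9a) already gives $d^{6}\Delta_{\mathcal{P}_T}^{1/2-\beta^*-\delta}\to0$; the other term $d^{19/2}\Delta_{\mathcal{P}_T}^{1-\beta^*-\delta}$ is smaller by the factor $d^{7/2}\Delta_{\mathcal{P}_T}^{1/2}\to0$ (from \Cref{Ass.InfiniteAct}(6)), not merely because $1-\beta^*>1/2$.
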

\begin{proof}
    We notice that 
\begin{equation*}
     \begin{aligned}
     &\mathbb{E}\left(\dfrac{1}{T}\sum_{n=0}^{N_T-1} \sum_{k=1}^d\sum_{j=1}^d \vert\bold{Y}_{s_n^{N_T}}^{(j)} \left(\Delta_{\mathcal{P}_T}^n \bold{\bold{D}}^{(k)}\right)\left(\bold{1}_{\{\vert \Delta_{\mathcal{P}_T}^n \bold{Y}^{(k)} \vert > \nu_{\mathcal{P}_T}^{(k)}, \vert\Delta_{\mathcal{P}_T}^n \bold{\bold{\tilde{Y}}}^{(k)} \vert \le 2\nu_{\mathcal{P}_T}^{(k)}\},\vert\Delta_{\mathcal{P}_T}^n \bold{U}^{(k)}\vert=0}\right) \vert \right)\\
     &=\dfrac{1}{T}\sum_{n=0}^{N_T-1} \sum_{k=1}^d\sum_{j=1}^d\mathbb{E}\left(\vert\bold{Y}_{s_n^{N_T}}^{(j)} \left(\Delta_{\mathcal{P}_T}^n \bold{\bold{D}}^{(k)}\right)\left(\bold{1}_{\{\vert \Delta_{\mathcal{P}_T}^n \bold{Y}^{(k)} \vert > \nu_{\mathcal{P}_T}^{(k)}, \vert\Delta_{\mathcal{P}_T}^n \bold{\bold{\tilde{Y}}}^{(k)} \vert \le 2\nu_{\mathcal{P}_T}^{(k)}\},\vert\Delta_{\mathcal{P}_T}^n \bold{U}^{(k)}\vert=0}\right) \vert \right).
     \end{aligned}
\end{equation*}
Following the triangle and Cauchy-Schwarz inequalities, we have
\begin{equation*}
    \begin{aligned}
        &\mathbb{E}\left(\vert\bold{Y}_{s_n^{N_T}}^{(j)} \left(\Delta_{\mathcal{P}_T}^n \bold{\bold{D}}^{(k)}\right)\left(\bold{1}_{\{\vert \Delta_{\mathcal{P}_T}^n \bold{Y}^{(k)} \vert > \nu_{\mathcal{P}_T}^{(k)}, \vert\Delta_{\mathcal{P}_T}^n \bold{\bold{\tilde{Y}}}^{(k)} \vert \le 2\nu_{\mathcal{P}_T}^{(k)}\},\vert\Delta_{\mathcal{P}_T}^n \bold{U}^{(k)}\vert=0}\right) \vert \right)\\&= \mathbb{E}\left(\vert\bold{Y}_{s_n^{N_T}}^{(j)} \left(\sum_{l=1}^d [Q_0]_{kl}\int_{s_n^{N_T}}^{s_{n+1}^{N_T}} \bold{Y}_s^{(l)} ds\right)\left(\bold{1}_{\{\vert \Delta_{\mathcal{P}_T}^n \bold{Y}^{(k)} \vert > \nu_{\mathcal{P}_T}^{(k)}, \vert\Delta_{\mathcal{P}_T}^n \bold{\bold{\tilde{Y}}}^{(k)} \vert \le 2\nu_{\mathcal{P}_T}^{(k)}\},\vert\Delta_{\mathcal{P}_T}^n \bold{U}^{(k)}\vert=0}\right) \vert \right)\\
        &\le \sum_{l=1}^d\vert [Q_0]_{kl} \vert \mathbb{E}\left(\vert\bold{Y}_{s_n^{N_T}}^{(j)} \vert\left(\int_{s_n^{N_T}}^{s_{n+1}^{N_T}} \vert\bold{Y}_s^{(l)}\vert ds\right)\left(\bold{1}_{\{\vert \Delta_{\mathcal{P}_T}^n \bold{Y}^{(k)} \vert > \nu_{\mathcal{P}_T}^{(k)}, \vert\Delta_{\mathcal{P}_T}^n \bold{\bold{\tilde{Y}}}^{(k)} \vert \le 2\nu_{\mathcal{P}_T}^{(k)}\},\vert\Delta_{\mathcal{P}_T}^n \bold{U}^{(k)}\vert=0}\right) \vert \right)\\ 
          &\le \sum_{l=1}^d\vert [Q_0]_{kl} \vert \mathbb{E}\left(\vert\bold{Y}_{s_n^{N_T}}^{(j)} \vert^2\left(\int_{s_n^{N_T}}^{s_{n+1}^{N_T}} \vert\bold{Y}_s^{(l)}\vert ds\right)^2\right)^{1/2}\\&\cdot\mathbb{P}\left(\{\vert \Delta_{\mathcal{P}_T}^n \bold{Y}^{(k)} \vert > \nu_{\mathcal{P}_T}^{(k)}, \vert\Delta_{\mathcal{P}_T}^n \bold{\bold{\tilde{Y}}}^{(k)} \vert \le 2\nu_{\mathcal{P}_T}^{(k)},\vert\Delta_{\mathcal{P}_T}^n \bold{U}^{(k)}\vert=0\}\right)^{1/2}.
    \end{aligned}
\end{equation*}
Notice that following Jensen's and Hölder's inequalities together with stationarity of $\mathbb{Y}$, we get that
\begin{equation*}
    \begin{aligned}
        \mathbb{E}\left(\vert\bold{Y}_{s_n^{N_T}}^{(j)} \vert^2\left(\int_{s_n^{N_T}}^{s_{n+1}^{N_T}} \vert\bold{Y}_s^{(l)}\vert ds\right)^2\right) &\le \mathbb{E}\left(\vert\bold{Y}_{s_n^{N_T}}^{(j)} \vert^4\right)^{1/2} \mathbb{E}\left(\left(\int_{s_n^{N_T}}^{s_{n+1}^{N_T}} \vert\bold{Y}_s^{(l)}\vert ds\right)^4\right)^{1/2}\\
        & \le \mathbb{E}\left(\vert\bold{Y}_{s_n^{N_T}}^{(j)} \vert^4 \right)^{1/2}\mathbb{E}\left(\dfrac{\Delta_{\mathcal{P}_T}^4}{s_{n+1}^{N_T}-s_n^{N_T}}\int_{s_n^{N_T}}^{s_{n+1}^{N_T}} \vert\bold{Y}_s^{(l)}\vert^4 ds\right)^{1/2}
        \\&\le\Delta_{\mathcal{P}_T}^{2}\max_{l\in\{1,\dots,d\}}\mathbb{E}\left(\vert\bold{Y}_0^{(l)}\vert^4\right).
    \end{aligned}
\end{equation*}
We note that the right hand side is also an upper bound, if we took the maximum over $j$ and $l$.
By the triangle inequality, we have
\begin{equation}\label{SetinclusionS2}
    \begin{aligned}
        &\{\vert \Delta_{\mathcal{P}_T}^n \bold{Y}^{(k)} \vert > \nu_{\mathcal{P}_T}^{(k)}, \vert\Delta_{\mathcal{P}_T}^n \bold{\bold{\tilde{Y}}}^{(k)} \vert \le 2\nu_{\mathcal{P}_T}^{(k)},\vert\Delta_{\mathcal{P}_T}^n \bold{U}^{(k)}\vert=0\}\\
        &\subset  \{\vert \Delta_{\mathcal{P}_T}^n \bold{Y}^{(k)} \vert > \nu_{\mathcal{P}_T}^{(k)},\vert\Delta_{\mathcal{P}_T}^n \bold{U}^{(k)}\vert=0\}\\
        &\subset   \{\vert \Delta_{\mathcal{P}_T}^n \bold{D}^{(k)} + \left(\Sigma^{1/2}\Delta_{\mathcal{P}_T}^n\bold{W}^{(k)}\right)+  \Delta_{\mathcal{P}_T}^n\bold{M}^{(k)}\vert > \nu_{\mathcal{P}_T}^{(k)}\}
        \\& \subset \{\vert \Delta_{\mathcal{P}_T}^n \bold{D}^{(k)} + \left(\Sigma^{1/2}\Delta_{\mathcal{P}_T}^n\bold{W}^{(k)}\right)\vert+  \vert\Delta_{\mathcal{P}_T}^n\bold{M}^{(k)}\vert > \nu_{\mathcal{P}_T}^{(k)}\}
        \\& \subset \{\vert \Delta_{\mathcal{P}_T}^n \bold{D}^{(k)} + \left(\Sigma^{1/2}\Delta_{\mathcal{P}_T}^n\bold{W}^{(k)}\right)\vert > \nu_{\mathcal{P}_T}^{(k)}/2\}\cup\{ \vert\Delta_{\mathcal{P}_T}^n\bold{M}^{(k)}\vert > \nu_{\mathcal{P}_T}^{(k)}/2\}.
    \end{aligned}
\end{equation}
We also notice, that following \Cref{Lemma:CountBound} with $\delta = \beta^*$, $l^{\prime}=2$ and $c=1/2$, it holds that
\begin{equation*}
    \begin{aligned}
    \max_{k\in\{1,\dots,d\}}\mathbb{P}\left(\vert \Delta_{\mathcal{P}_T}^n \bold{W}^{(k)}+ \Delta_{\mathcal{P}_T}^n \bold{D}^{(k)}\vert > \nu_{\mathcal{P}_T}^{(k)}/2  \right) &\le \max_{k\in\{1,\dots,d\}}\mathbb{P}\left(\vert \Delta_{\mathcal{P}_T}^n \bold{W}^{(k)}+ \Delta_{\mathcal{P}_T}^n \bold{D}^{(k)}\vert > \tfrac{1}{2}\Delta_{\mathcal{P}_T}^{\beta^*}  \right)\\&= O\left(d^{7}\Delta_{\mathcal{P}_T}^{2(1-\beta^{*})}\right),\quad \text{as } d\rightarrow \infty.
    \end{aligned}
\end{equation*}
Also, following \Cref{PropertiesOfIntroducedStochasticProcesses}, with $f(\Delta_{\mathcal{P}_T})=\tfrac{1}{2}\Delta_{\mathcal{P}_T}^{\beta^*}$, we have that
\begin{equation*}
    \begin{aligned}
         \max_{k\in\{1,\dots,d\}}\mathbb{P}\left(\vert \Delta_{\mathcal{P}_T}^n \bold{M}^{(k)}\vert > \nu_{\mathcal{P}_T}^{(k)}/2  \right) &\le \max_{k\in\{1,\dots,d\}}\mathbb{P}\left(\vert \Delta_{\mathcal{P}_T}^n \bold{M}^{(k)}\vert > \tfrac{1}{2}\Delta_{\mathcal{P}_T}^{\beta^*}  \right)\\&= O\left(d\Delta_{\mathcal{P}_T}^{1-2\beta^{*}}\right),\quad \text{as } d\rightarrow \infty.
    \end{aligned}
\end{equation*}
In both displays we used $\nu_{\mathcal{P}_T}^{(k)}=\Delta_{\mathcal{P}_T}^{\beta^{(k)}}\ge\Delta_{\mathcal{P}_T}^{\beta^*}$ (since $\beta^*=\max_k\beta^{(k)}$ and $\Delta_{\mathcal{P}_T}<1$).
We thus conclude using \eqref{SetinclusionS2}, that 
\begin{equation*}
     \begin{aligned}
     &\mathbb{E}\left(\dfrac{1}{T}\sum_{n=0}^{N_T-1} \sum_{k=1}^d\sum_{j=1}^d\vert\bold{Y}_{s_n^{N_T}}^{(j)} \left(\Delta_{\mathcal{P}_T}^n \bold{\bold{D}}^{(k)}\right)\left(\bold{1}_{\{\vert \Delta_{\mathcal{P}_T}^n \bold{Y}^{(k)} \vert > \nu_{\mathcal{P}_T}^{(k)}, \vert\Delta_{\mathcal{P}_T}^n \bold{\bold{\tilde{Y}}}^{(k)} \vert \le 2\nu_{\mathcal{P}_T}^{(k)}\},\vert\Delta_{\mathcal{P}_T}^n \bold{U}^{(k)}\vert=0}\right) \vert \right)\\
     &=\dfrac{1}{T}\sum_{n=0}^{N_T-1} \sum_{k=1}^d\sum_{j=1}^d\mathbb{E}\left(\vert\bold{Y}_{s_n^{N_T}}^{(j)} \left(\Delta_{\mathcal{P}_T}^n \bold{\bold{D}}^{(k)}\right)\left(\bold{1}_{\{\vert \Delta_{\mathcal{P}_T}^n \bold{Y}^{(k)} \vert > \nu_{\mathcal{P}_T}^{(k)}, \vert\Delta_{\mathcal{P}_T}^n \bold{\bold{\tilde{Y}}}^{(k)} \vert \le 2\nu_{\mathcal{P}_T}^{(k)}\},\vert\Delta_{\mathcal{P}_T}^n \bold{U}^{(k)}\vert=0}\right) \vert \right) \\
     &\le   \dfrac{1}{T}\sum_{n=0}^{N_T-1} \sum_{k=1}^d\sum_{j=1}^d\sum_{l=1}^d\ \vert [Q_0]_{kl} \vert \mathbb{E}\left(\vert\bold{Y}_{s_n^{N_T}}^{(j)} \vert^2\left(\int_{s_n^{N_T}}^{s_{n+1}^{N_T}} \vert\bold{Y}_s^{(l)}\vert ds\right)^2\right)^{1/2}\\&\cdot\biggl(\mathbb{P}\left(\{\vert \Delta_{\mathcal{P}_T}^n \bold{D}^{(k)} + \left(\Sigma^{1/2}\Delta_{\mathcal{P}_T}^n\bold{W}^{(k)}\right)\vert > \nu_{\mathcal{P}_T}^{(k)}/2\}\right) \\&+ \mathbb{P}\left(\{ \vert\Delta_{\mathcal{P}_T}^n\bold{M}^{(k)}\vert > \nu_{\mathcal{P}_T}^{(k)}/2\}\right)\biggr)^{1/2} 
     \\&=O\left(\left(d^{10}\Delta_{\mathcal{P}_T}^{1-\beta^*}\vee d^{7} \Delta_{\mathcal{P}_T}^{1/2-\beta^*} \right)\right),\quad \text{as } d  \rightarrow \infty. 
     \end{aligned}
\end{equation*}
By Markov's inequality, we thus have that 
\begin{equation*}
    \begin{aligned}
     &     \mathbb{P}\biggl(\dfrac{1}{T}\sum_{n=0}^{N_T-1} \sum_{k=1}^d\sum_{j=1}^d \vert\bold{Y}_{s_n^{N_T}}^{(j)}\Delta_{\mathcal{P}_T}^n \bold{\bold{D}}^{(k)}\vert\biggl(\bold{1}_{\{\vert \Delta_{\mathcal{P}_T}^n \bold{Y}^{(k)} \vert > \nu_{\mathcal{P}_T}^{(k)}, \vert\Delta_{\mathcal{P}_T}^n \bold{\bold{\tilde{Y}}}^{(k)} \vert \le 2\nu_{\mathcal{P}_T}^{(k)},\vert\Delta_{\mathcal{P}_T}^n \bold{U}^{(k)}\vert=0\}}\biggr) \vert>C\Delta_{\mathcal{P}_T}^\delta\biggr) \\&=O\left(d^{10}\Delta_{\mathcal{P}_T}^{1-\beta^*-\delta}\vee d^{7} \Delta_{\mathcal{P}_T}^{1/2-\beta^*-\delta} \right)\\&=o(1),\quad \text{as } d \rightarrow \infty,
     \end{aligned}
\end{equation*}
since $\delta$ satisfies \Cref{Ass.InfiniteAct}.
\end{proof}
\begin{lemma}\label{SecondBoundS_2W}
   Grant \Cref{Ass:Estimation,Ass.H,Ass.High-FrequencySampling,Ass.GeneralBothCases,Ass.InfiniteAct}, and let the notation be as in \Cref{LevyItoImpl}. Then for $C>0$ and $\delta$ satisfying \Cref{Ass.InfiniteAct}
    \begin{multline*}
        \mathbb{P}\biggl(\dfrac{1}{T}\sum_{n=0}^{N_T-1} \sum_{k=1}^d\sum_{j=1}^d \vert\bold{Y}_{s_n^{N_T}}^{(j)} \left(\Sigma^{1/2}\Delta_{\mathcal{P}_T}^n \bold{W}\right)^{(k)}\vert\bold{1}_{\{\vert \Delta_{\mathcal{P}_T}^n \bold{Y}^{(k)} \vert > \nu_{\mathcal{P}_T}^{(k)}, \vert\Delta_{\mathcal{P}_T}^n \bold{\bold{\tilde{Y}}}^{(k)} \vert \le 2\nu_{\mathcal{P}_T}^{(k)},\vert\Delta_{\mathcal{P}_T}^n \bold{U}^{(k)}\vert=0\}}>C\Delta_{\mathcal{P}_T}^\delta\biggr) \\= o(1),\quad \text{as } d\rightarrow \infty.
    \end{multline*}
\end{lemma}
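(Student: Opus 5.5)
The plan is to mirror the proof of \Cref{SecondBoundS_2D}: bound the expectation of the sum, then apply Markov's inequality. The only difference is that the drift increment $\Delta_{\mathcal{P}_T}^n\bold{D}^{(k)}$ is replaced by the Brownian increment $(\Sigma^{1/2}\Delta_{\mathcal{P}_T}^n\bold{W})^{(k)}=\sum_{l}\Sigma^{1/2}_{kl}\Delta_{\mathcal{P}_T}^n\bold{W}^{(l)}$.

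For each triple $(n,k,j)$ I would write $G_{n,k}$ for the event $\{\vert \Delta_{\mathcal{P}_T}^n \bold{Y}^{(k)} \vert > \nu_{\mathcal{P}_T}^{(k)}, \vert\Delta_{\mathcal{P}_T}^n \tilde{\bold{Y}}^{(k)} \vert \le 2\nu_{\mathcal{P}_T}^{(k)},\vert\Delta_{\mathcal{P}_T}^n \bold{U}^{(k)}\vert=0\}$. I would then apply the triangle inequality over $l$ and Hölder's inequality with exponents $(4,4,2)$:
\[
\mathbb{E}\bigl(\vert\bold{Y}_{s_n^{N_T}}^{(j)}\Sigma^{1/2}_{kl}\Delta_{\mathcal{P}_T}^n\bold{W}^{(l)}\vert\bold{1}_{G_{n,k}}\bigr)\le \vert\Sigma^{1/2}_{kl}\vert\,\mathbb{E}\bigl(\vert\bold{Y}_0^{(j)}\vert^4\bigr)^{1/4}\mathbb{E}\bigl((\Delta_{\mathcal{P}_T}^n\bold{W}^{(l)})^4\bigr)^{1/4}\mathbb{P}(G_{n,k})^{1/2}.
\]
Stationarity of $\mathbb{Y}$ lets me replace $\bold{Y}_{s_n^{N_T}}$ by $\bold{Y}_0$. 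I cannot use independence of $\bold{Y}_{s_n^{N_T}}$ from the forward increment here, because the indicator couples them; Hölder's inequality is what avoids needing it.

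The factors are then bounded as follows.
\begin{itemize}
\item By \Cref{Ass.InfiniteAct}(8), $\mathbb{E}(\vert\bold{Y}_0^{(j)}\vert^4)^{1/4}=O(d)$.
\item The Gaussian fourth moment gives $\mathbb{E}((\Delta\bold{W}^{(l)})^4)^{1/4}=3^{1/4}\Delta_{\mathcal{P}_T}^{1/2}$.
\item For $\vert\Sigma^{1/2}_{kl}\vert$ I would use $\vert\Sigma^{1/2}_{kl}\vert\le\Vert\Sigma^{1/2}\Vert=\lambda_{\max}(\Sigma)^{1/2}\le\Vert\Sigma\Vert_F^{1/2}=O(d^{1/2})$ from \Cref{Ass.H}.
\item For $\mathbb{P}(G_{n,k})$ I would reuse the inclusion \eqref{SetinclusionS2}. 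Combining \Cref{Lemma:CountBound} (with $c=1/2$, $\delta=\beta^*$, $l'=2$) and \Cref{PropertiesOfIntroducedStochasticProcesses} (with $f=\tfrac12\Delta_{\mathcal{P}_T}^{\beta^*}$) gives $\max_k\mathbb{P}(G_{n,k})=O(d^7\Delta_{\mathcal{P}_T}^{2(1-\beta^*)}\vee d\Delta_{\mathcal{P}_T}^{1-2\beta^*})$.
\end{itemize}

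Summing over $l,j,k$ contributes a factor $d^3$, and summing over $n$ with the prefactor $1/T$ contributes $N_T/T=O(\Delta_{\mathcal{P}_T}^{-1})$ by \Cref{Ass.High-FrequencySampling}. The expectation is therefore
\[
O\Bigl(d^{4.5}\,\Delta_{\mathcal{P}_T}^{-1/2}\bigl(d^{3.5}\Delta_{\mathcal{P}_T}^{1-\beta^*}\vee d^{1/2}\Delta_{\mathcal{P}_T}^{1/2-\beta^*}\bigr)\Bigr)=O\bigl(d^{8}\Delta_{\mathcal{P}_T}^{1/2-\beta^*}\vee d^{5}\Delta_{\mathcal{P}_T}^{-\beta^*}\bigr).
\]

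The main obstacle is the second term, $d^5\Delta^{-\beta^*}$, which does not vanish. The crude split that throws away all information on $\Delta\bold{W}$ loses too much: when the small-jump probability bound $d\Delta^{1-2\beta^*}$ enters only through a square root, it is not small enough to beat $\Delta^{-1/2}$.

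To fix this, I would not take the square root of the $\bold{M}$-part. Instead, I would split $\bold{1}_{G_{n,k}}\le \bold{1}\{\vert\Delta\bold{D}^{(k)}+(\Sigma^{1/2}\Delta\bold{W})^{(k)}\vert>\nu/2\}+\bold{1}\{\vert\Delta\bold{M}^{(k)}\vert>\nu/2\}$.

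\textbf{First summand.} This is handled by Cauchy--Schwarz as above. \Cref{Lemma:CountBound} actually gives an arbitrarily high polynomial rate through the free parameter $\alpha$, so this piece is negligible.

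\textbf{Second summand.} I would exploit the independence of $\mathbb{W}$ and $\mathbb{M}$, together with the independence of both from $\bold{Y}_{s_n^{N_T}}$ (forward increments). This factorises the expectation as
\[
\mathbb{E}\vert\bold{Y}_0^{(j)}\vert\cdot\mathbb{E}\vert(\Sigma^{1/2}\Delta\bold{W})^{(k)}\vert\cdot\mathbb{P}(\vert\Delta\bold{M}^{(k)}\vert>\nu/2),
\]
which is of order $d\cdot d^{1/2}\Delta^{1/2}\cdot\Delta^{1-2\beta^*}$, up to polynomial factors in $d$. After the sums this is $O(d^{5}\Delta_{\mathcal{P}_T}^{1/2-2\beta^*})$. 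Dividing by $C\Delta_{\mathcal{P}_T}^{\delta}$ via Markov's inequality, I would close with \Cref{Ass.InfiniteAct}(9a), whose term $d^6\Delta_{\mathcal{P}_T}^{1/2-2\beta^*-\delta}\to0$ appears designed for exactly this.
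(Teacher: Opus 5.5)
Your final argument is correct and is essentially the paper's proof: you split the indicator via $\{|\Delta_{\mathcal{P}_T}^n\mathbf{D}^{(k)}+(\Sigma^{1/2}\Delta_{\mathcal{P}_T}^n\mathbf{W})^{(k)}|>\nu_{\mathcal{P}_T}^{(k)}/2\}\cup\{|\Delta_{\mathcal{P}_T}^n\mathbf{M}^{(k)}|>\nu_{\mathcal{P}_T}^{(k)}/2\}$, bound the first piece by Cauchy--Schwarz/H\"older together with \Cref{Lemma:CountBound}, factorise the second via mutual independence of $\mathbf{Y}_{s_n^{N_T}}$, $\Delta_{\mathcal{P}_T}^n\mathbf{W}$ and $\Delta_{\mathcal{P}_T}^n\mathbf{M}$, and close with Markov's inequality and \Cref{Ass.InfiniteAct}(9a). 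The paper uses Cauchy--Schwarz followed by independence of $\mathbf{Y}_{s_n^{N_T}}$ and the forward Brownian increment instead of your $(4,4,2)$ H\"older, so independence is usable once the indicator is split off; your one slip is claiming the free $\alpha$ in \Cref{Lemma:CountBound} gives an arbitrarily high rate, since $\alpha$ only sharpens the Brownian term while the drift term is capped by the moment order ($l\le 4$), and negligibility of the first piece therefore rests on the explicit $d^{8}\Delta_{\mathcal{P}_T}^{1/2-\beta^*-\delta}\to 0$ supplied by (9a).
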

\begin{proof}
 Notice that
\begin{equation*}
    \begin{aligned}
    &\left\{\left|\Delta_{\mathcal{P}_T}^n  \bold{Y}^{(k)}\right| > \nu_{\mathcal{P}_T}^{(k)},\left|\Delta_{\mathcal{P}_T}^n  \tilde{\bold{Y}}^{(k)}\right|\le2 \nu_{\mathcal{P}_T}^{(k)},\vert\Delta_{\mathcal{P}_T}^n \bold{U}^{(k)}\vert=0\right\} \\
    &=\bigg\{\left|\left(\Sigma^{1/2}\Delta_{\mathcal{P}_T}^n \bold{W}\right)^{(k)}+\Delta_{\mathcal{P}_T}^n \bold{D}^{(k)}+\Delta_{\mathcal{P}_T}^n \bold{M}^{(k)}\right| > \nu_{\mathcal{P}_T}^{(k)},\\&\left|\Delta_{\mathcal{P}_T}^n \bold{D}^{(k)}+\left(\Sigma^{1/2}\Delta_{\mathcal{P}_T}^n \bold{W}\right)^{(k)}\right|\le2 \nu_{\mathcal{P}_T}^{(k)},\vert\Delta_{\mathcal{P}_T}^n \bold{U}^{(k)}\vert=0\bigg\}\\
    &\subseteq \bigg\{\left|\left(\Sigma^{1/2}\Delta_{\mathcal{P}_T}^n \bold{W}\right)^{(k)}+\Delta_{\mathcal{P}_T}^n \bold{D}^{(k)}+\Delta_{\mathcal{P}_T}^n \bold{M}^{(k)}\right| > \nu_{\mathcal{P}_T}^{(k)}\bigg\} \\
     &\subseteq \bigg\{\left|\left(\Sigma^{1/2}\Delta_{\mathcal{P}_T}^n \bold{W}\right)^{(k)}+\Delta_{\mathcal{P}_T}^n \bold{D}^{(k)}\right| > \nu_{\mathcal{P}_T}^{(k)}/2\bigg\}\bigcup \bigg\{\left|\Delta_{\mathcal{P}_T}^n \bold{M}^{(k)}\right| > \nu_{\mathcal{P}_T}^{(k)}/2\bigg\}.
    \end{aligned}
\end{equation*}
As such, 
\begin{equation}\label{eqn:LemmaBrownianPart}
    \begin{aligned}
        &\mathbb{E}\biggl(\dfrac{1}{T}\sum_{n=0}^{N_T-1} \sum_{k=1}^d\sum_{j=1}^d \vert\bold{Y}_{s_n^{N_T}}^{(j)} \left(\Sigma^{1/2}\Delta_{\mathcal{P}_T}^n \bold{W}\right)^{(k)}\vert\bold{1}_{\{\vert \Delta_{\mathcal{P}_T}^n \bold{Y}^{(k)} \vert > \nu_{\mathcal{P}_T}^{(k)}, \vert\Delta_{\mathcal{P}_T}^n \bold{\bold{\tilde{Y}}}^{(k)} \vert \le 2\nu_{\mathcal{P}_T}^{(k)},\vert\Delta_{\mathcal{P}_T}^n \bold{U}^{(k)}\vert=0\}}\biggr) \\
        &\le\mathbb{E}\biggl(\dfrac{1}{T}\sum_{n=0}^{N_T-1} \sum_{k=1}^d\sum_{j=1}^d \vert\bold{Y}_{s_n^{N_T}}^{(j)} \left(\Sigma^{1/2}\Delta_{\mathcal{P}_T}^n \bold{W}\right)^{(k)}\vert\bold{1}_{\{\left|\Delta_{\mathcal{P}_T}^n \bold{W}^{(k)}+\Delta_{\mathcal{P}_T}^n \bold{D}^{(k)}\right| > \nu_{\mathcal{P}_T}^{(k)}/2\}}\biggr) \\&+\mathbb{E}\biggl(\dfrac{1}{T}\sum_{n=0}^{N_T-1} \sum_{k=1}^d\sum_{j=1}^d \vert\bold{Y}_{s_n^{N_T}}^{(j)} \left(\Sigma^{1/2}\Delta_{\mathcal{P}_T}^n \bold{W}\right)^{(k)}\vert\bold{1}_{\{\left|\Delta_{\mathcal{P}_T}^n \bold{M}^{(k)}\right| > \nu_{\mathcal{P}_T}^{(k)}/2\}}\biggr).
    \end{aligned}
\end{equation}
Using Cauchy-Schwarz and the fact that we work with forward looking increments, we have that
\begin{equation*}
    \begin{aligned}
    &\mathbb{E}\biggl(\dfrac{1}{T}\sum_{n=0}^{N_T-1} \sum_{k=1}^d\sum_{j=1}^d \vert\bold{Y}_{s_n^{N_T}}^{(j)} \left(\Sigma^{1/2}\Delta_{\mathcal{P}_T}^n \bold{W}\right)^{(k)}\vert\bold{1}_{\{\left|\Delta_{\mathcal{P}_T}^n \bold{W}^{(k)}+\Delta_{\mathcal{P}_T}^n \bold{D}^{(k)}\right| > \nu_{\mathcal{P}_T}^{(k)}/2\}}\biggr)\\ &\le \dfrac{1}{T}\sum_{n=0}^{N_T-1} \sum_{k=1}^d\sum_{j=1}^d\sum_{l=1}^d \vert \Sigma^{1/2}_{kl}\vert \mathbb{E}\left(\vert\bold{Y}_{s_n^{N_T}}^{(j)}\vert^2\right)^{1/2} \mathbb{E}\left(\vert\Delta_{\mathcal{P}_T}^n \bold{\bold{W}}^{(l)}\vert^2\right)^{1/2}\mathbb{P}\left(\left|\Delta_{\mathcal{P}_T}^n \bold{W}^{(k)}+\Delta_{\mathcal{P}_T}^n \bold{D}^{(k)}\right| > \nu_{\mathcal{P}_T}^{(k)}/2\right)^{1/2}\\
    &\le \dfrac{1}{T}N_T d^3 \max_{k,l}\vert \Sigma^{1/2}_{kl}\vert \max_{j}\mathbb{E}\left(\vert\bold{Y}_{s_n^{N_T}}^{(j)}\vert^2\right)^{1/2} O\left(\Delta_{\mathcal{P}_T}^{1/2}d^{4}\Delta_{\mathcal{P}_T}^{1-\beta^*} \right) \\
    &= O \left(d^{10}\Delta_{\mathcal{P}_T}^{1/2-\beta^*} \right), \quad \text{as } d\rightarrow \infty,
    \end{aligned}
\end{equation*}
and \Cref{Lemma:CountBound} with $l^\prime = \alpha^\prime = 2$ coupled with the fact that $\Delta_{\mathcal{P}_T}N_T = O(T)$ as $d$ tends to infinity.
As for the second term in \eqref{eqn:LemmaBrownianPart}, it holds following independence of $\mathbb{W}$ and $\mathbb{M}$ and the fact that we have forward-looking increments, that 
\begin{equation*}
    \begin{aligned}
&\mathbb{E}\left(\dfrac{1}{T}\sum_{n=0}^{N_T-1} \sum_{k=1}^d\sum_{j=1}^d\sum_{l=1}^d \vert \Sigma^{1/2}_{kl}\vert \vert\bold{Y}_{s_n^{N_T}}^{(j)} \left(\Delta_{\mathcal{P}_T}^n \bold{\bold{W}}^{(l)}\right)\left(\bold{1}_{\{\vert \Delta_{\mathcal{P}_T}^n \bold{M}^{(k)} \vert > \nu_{\mathcal{P}_T}^{(k)}\}}\right) \vert \right) \\&= \dfrac{1}{T}\sum_{n=0}^{N_T-1} \sum_{k=1}^d\sum_{j=1}^d\sum_{l=1}^d \vert \Sigma^{1/2}_{kl}\vert\mathbb{E}\left(\vert\bold{Y}_{s_n^{N_T}}^{(j)} \vert\right) \mathbb{E}\left(\vert\Delta_{\mathcal{P}_T}^n \bold{\bold{W}}^{(l)}\vert\right)\mathbb{P}\left(\vert \Delta_{\mathcal{P}_T}^n \bold{M}^{(k)} \vert > \nu_{\mathcal{P}_T}^{(k)}\right).
    \end{aligned}
\end{equation*}
Since by \eqref{eqn:IntActBoundM}
\begin{equation*}
    \max_{k}\mathbb{P}\left(\vert \Delta_{\mathcal{P}_T}^n \bold{M}^{(k)} \vert > \nu_{\mathcal{P}_T}^{(k)} \right) = O(\Delta_{\mathcal{P}_T}^{1-2\beta^*}),\quad \text{as } d\rightarrow \infty,
\end{equation*}
we have by Markov's inequality that
\begin{equation*}
    \begin{aligned}
      \mathbb{P}\biggl(\dfrac{1}{T}\sum_{n=0}^{N_T-1} \sum_{k=1}^d\sum_{j=1}^d \vert\bold{Y}_{s_n^{N_T}}^{(j)} \left(\Sigma^{1/2}\Delta_{\mathcal{P}_T}^n \bold{W}\right)^{(k)}\vert&\bold{1}_{\{\vert \Delta_{\mathcal{P}_T}^n \bold{Y}^{(k)} \vert > \nu_{\mathcal{P}_T}^{(k)},\vert\Delta_{\mathcal{P}_T}^n \bold{\bold{\tilde{Y}}}^{(k)} \vert \le 2\nu_{\mathcal{P}_T}^{(k)},\vert\Delta_{\mathcal{P}_T}^n \bold{U}^{(k)}\vert=0\}}>C\Delta_{\mathcal{P}_T}^\delta\biggr) \\&=O\left(d^{10}\Delta_{\mathcal{P}_T}^{1/2-\beta^{*}-\delta}\vee d^{6}\Delta_{\mathcal{P}_T}^{1/2-2\beta^{*}-\delta}\right)\\&=o(1), \text{as } d\rightarrow \infty,
      \end{aligned}
\end{equation*}
where we used that 
\begin{equation*}
    \begin{aligned}
    \mathbb{E}\left(\vert\Delta_{\mathcal{P}_T}^n \bold{\bold{W}}^{(l)}\vert\right) 
&\le\mathbb{E}\left(\left(\Delta_{\mathcal{P}_T}^n \bold{\bold{W}}^{(l)}\right)^2\right)^{1/2}
     =O( \Delta_{\mathcal{P}_T}^{1/2}),\quad \text{as } d\rightarrow \infty.
    \end{aligned}
\end{equation*}
\end{proof}
\begin{lemma}\label{L_1BoundInFinite}
    Grant \Cref{Ass:Estimation,Ass.H,Ass.High-FrequencySampling,Ass.GeneralBothCases,Ass.InfiniteAct}. Then for $\delta$ satisfying \Cref{Ass.InfiniteAct}
    \begin{equation}
    \Vert\left(\dfrac{1}{T}\sum_{n=0}^{N_T-1} \left( \Delta_n\bold{X}^c-\Delta_{\mathcal{P}_T}^n \bold{\bold{X}}\odot\vec{\bold{1}}(\Delta_{\mathcal{P}_T}^n \bold{\bold{X}})\right)\mathbf{X}_{s_n^{N_T}}^\top\right)\Vert_1 \le 3\Delta_{\mathcal{P}_T}^{\delta},
    \end{equation}
    with high probability under $\mathbb{P}^{{Q_0}}$.
    
\end{lemma}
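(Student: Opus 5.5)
The plan is to decompose the thresholded increment error exactly as in the finite-activity proof, now using the infinite-activity Lévy–Itô decomposition \eqref{eqn:DecompositionInfiniteAct}, and to treat each piece with one of \Cref{S_2:FirstBound,SecondBoundFirstLemma,SecondBoundS_2D,SecondBoundS_2W}. As in \Cref{L_1BoundFInite}, I first bound the entrywise $\ell_1$ norm by the triple sum
\[
\frac1T\sum_{n=0}^{N_T-1}\sum_{k,j=1}^d \Bigl|\mathbf Y^{(j)}_{s_n^{N_T}}\Bigl(\Delta^n_{\mathcal P_T}\mathbf Y^{(k)}\mathbf 1_{\{|\Delta^n_{\mathcal P_T}\mathbf Y^{(k)}|\le \nu^{(k)}_{\mathcal P_T}\}}-(\Sigma^{1/2}\Delta^n_{\mathcal P_T}\mathbf W)^{(k)}-\Delta^n_{\mathcal P_T}\mathbf D^{(k)}\Bigr)\Bigr|.
\]
This uses the canonical-space identity $\Delta_n\mathbf X^c=\Delta_n\mathbf X^c(Q_0)-\int Q_0\mathbf X_s\,ds$ and $\mathbf b=0$ (\Cref{Ass.InfiniteAct}(7)). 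I then write $\Delta\mathbf Y=\Delta\tilde{\mathbf Y}+\Delta\mathbf M$, with $\Delta\tilde{\mathbf Y}=\Delta\mathbf D+\Sigma^{1/2}\Delta\mathbf W+\Delta\mathbf U$.

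Next I split each summand according to the events $\{|\Delta\mathbf Y^{(k)}|\le\nu\}$ versus $>\nu$, $\{|\Delta\tilde{\mathbf Y}^{(k)}|\le 2\nu\}$ versus $>2\nu$, and $\{\Delta\mathbf U^{(k)}=0\}$ versus $\neq0$.

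\begin{itemize}
\item On $\{|\Delta\mathbf Y|\le\nu,\ |\Delta\tilde{\mathbf Y}|>2\nu\}$, the whole contribution vanishes with probability $1-o(1)$ by \Cref{S_2:FirstBound}. The accompanying $\mathbf W,\mathbf D$ terms on this event are also zero off a vanishing-probability union.
\item Any event with $|\Delta\tilde{\mathbf Y}|\le2\nu$ and $\Delta\mathbf U\ne0$ is negligible by \Cref{SecondBoundFirstLemma}.
\item On $\{|\Delta\mathbf Y|>\nu,\ |\Delta\tilde{\mathbf Y}|\le2\nu,\ \Delta\mathbf U=0\}$, the indicator kills $\Delta\mathbf Y$. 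What remains is $-\Sigma^{1/2}\Delta\mathbf W-\Delta\mathbf D$. Each piece exceeds $\Delta^\delta_{\mathcal P_T}$ only with probability $o(1)$, by \Cref{SecondBoundS_2D,SecondBoundS_2W} with $C=1$.
\item On $\{|\Delta\mathbf Y|>\nu,\ |\Delta\tilde{\mathbf Y}|>2\nu\}$ with $\Delta\mathbf U\neq 0$, the indicator is zero. The residual $\mathbf W+\mathbf D$ terms are handled as in the finite-activity bound \eqref{FiniteActBound1}, using $\mathbb P(\Delta\mathbf U\ne0)=O(d\Delta_{\mathcal P_T})$ from \Cref{PropertiesOfIntroducedStochasticProcesses}. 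If instead $\Delta\mathbf U=0$ here, then $|\mathbf D+\mathbf W|>2\nu$, which has vanishing probability by \Cref{Lemma:CountBound}.
\end{itemize}

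The genuinely new case is the remaining one: $\{|\Delta\mathbf Y|\le\nu,\ |\Delta\tilde{\mathbf Y}|\le2\nu,\ \Delta\mathbf U=0\}$. There the indicator retains $\Delta\mathbf Y$, so the error is exactly the small-jump term $\Delta\mathbf M^{(k)}\mathbf 1_{\{|\Delta\mathbf Y^{(k)}|\le\nu\}}$. On this event $|\Delta\mathbf M^{(k)}|\le|\Delta\mathbf Y^{(k)}|+|\Delta\tilde{\mathbf Y}^{(k)}|\le 3\nu$, so I can bound it by $|\Delta\mathbf M^{(k)}|\mathbf 1_{\{|\Delta\mathbf M^{(k)}|\le 3\Delta_{\mathcal P_T}^{\beta^{(k)}}\}}$. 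I then take expectations, using the forward-looking independence of $\mathbf Y_{s_n}$ from $\Delta^n\mathbf M$ and \Cref{Ass.InfiniteAct}(9c) (adjusting the constant $3$ into the threshold via stationarity and scaling). This gives a bound of order $\frac{N_T}{T}d^2\max_j\mathbb E|\mathbf Y_0^{(j)}|\,\Delta_{\mathcal P_T}^{1+\varepsilon}=O(d^3\Delta^{\varepsilon}_{\mathcal P_T})$. Markov's inequality together with \Cref{Ass.InfiniteAct}(9b) then shows this term is at most $\Delta^\delta_{\mathcal P_T}$ with high probability.

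Finally I collect terms by a union bound: the three nonzero contributions ($\mathbf M$-term, $\mathbf D$-term, $\mathbf W$-term), each at most $\Delta^\delta_{\mathcal P_T}$ with high probability, produce the constant $3$. I expect the main obstacle to be this small-jump truncated-moment step, specifically applying (9c) with the inflated threshold $3\nu$. A fallback is to split the $\mathbf M$ term at $\nu$: on $\{|\Delta\mathbf M|>\nu\}$ use Markov with $O(\Delta^{1-2\beta^*})$ times the $d^3$ moment factor, and apply (9c) directly below $\nu$.
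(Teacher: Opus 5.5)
Your argument is correct and is essentially the paper's proof. It uses the same comparison of $\Delta^n_{\mathcal P_T}\mathbf Y^{(k)}$ at level $\nu^{(k)}_{\mathcal P_T}$ with $\Delta^n_{\mathcal P_T}\tilde{\mathbf Y}^{(k)}$ at level $2\nu^{(k)}_{\mathcal P_T}$, and the same ingredients: \Cref{S_2:FirstBound,SecondBoundFirstLemma,SecondBoundS_2D,SecondBoundS_2W,Lemma:CountBound}, the finite-activity computation \eqref{FiniteActBound1}, and \Cref{Ass.InfiniteAct}(9c) for the small jumps. You merely organise it as a direct partition of each summand rather than the paper's $S^1_{\Delta_{\mathcal P_T}}+S^2_{\Delta_{\mathcal P_T}}+S^3_{\Delta_{\mathcal P_T}}$ split following \citet{Mai2014}.

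Two small corrections, neither a gap:
\begin{enumerate}
\item Your final count of three nonzero contributions misses one. The $\mathbf W+\mathbf D$ residual on $\{|\Delta\mathbf Y|>\nu,\ |\Delta\tilde{\mathbf Y}|>2\nu,\ \Delta\mathbf U\neq 0\}$ is a fourth genuinely nonzero term. This is harmless: each of your Markov bounds is $o(1)$ at any fixed multiple of $\Delta_{\mathcal P_T}^{\delta}$, so the level $3\Delta_{\mathcal P_T}^{\delta}$ can simply be shared among four pieces.
\item The inflated truncation level $3\nu$ is fine, but the justification is not stationarity or scaling. It works because (9c) is stated for $0\le s\le\bar s$. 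Take $\bar s=3^{1/\beta_{\min}}\Delta_{\mathcal P_T}$, where $\beta_{\min}>0$ is the uniform lower bound on the thresholding powers, and use that the truncated moment is monotone in the truncation level. The paper tacitly does the same at level $2\nu$.
\end{enumerate}

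You should rely on that main route and drop your fallback, because the fallback fails. Its Chebyshev step on $\{\nu^{(k)}<|\Delta\mathbf M^{(k)}|\le 3\nu^{(k)}\}$ only gives $O(d\,\Delta_{\mathcal P_T}^{1-\beta^{(k)}})$ per summand. Summing over $n,k,j$ and dividing by $T$ gives a total bound of order $d^3\Delta_{\mathcal P_T}^{-\beta^*}$, which diverges.
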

\begin{proof}
We let $\tilde{\mathbb{Y}} :=\mathbb{Y}-\mathbb{M}$, i.e.\ our Lévy process with the small jumps subtracted. From \Cref{LevyItoImpl} we have that for each $t>0$,
\begin{equation*}
    \bold{\tilde{Y}}_t = \bold{Y}_0 + \bold{D}_t +\Sigma^{1/2}\bold{W}_t + \bold{U}_t.
\end{equation*}
We note that the Lévy part of this OU process is equal to a Lévy process with characteristic triplet $\left(0, \Sigma, \bar{F}=F_{\vert \{x \in \mathbb{R}^d : \Vert x \Vert_2 > 1 \}}\right)$ which is a finite activity Lévy process, since $F$ is a Lévy measure. Since $\bar{F}$ is finite and the Lévy measure associated to $\bold{U}$, we know that $\bold{U}$ has a compound Poisson representation as already argued in \Cref{LevyItoImpl}. We once again note that 
 \begin{equation*}
        \begin{aligned}
            &\Vert\left(\dfrac{1}{T}\sum_{n=0}^{N_T-1} \left( \Delta_n\bold{X}^c-\Delta_{\mathcal{P}_T}^n \bold{\bold{X}}\odot\vec{\bold{1}}(\Delta_{\mathcal{P}_T}^n \bold{\bold{X}})\right)\mathbf{X}_{s_n^{N_T}}^\top\right)\Vert_1 \\&\le \dfrac{1}{T}\sum_{n=0}^{N_T-1} \Vert\left(\left( \Delta_n\bold{X}^c-\Delta_{\mathcal{P}_T}^n \bold{\bold{X}}\odot\vec{\bold{1}}(\Delta_{\mathcal{P}_T}^n \bold{\bold{X}})\right)\mathbf{X}_{s_n^{N_T}}^\top\right)\Vert_1 \\
            &=\dfrac{1}{T}\sum_{n=0}^{N_T-1} \sum_{k=1}^d \sum_{j=1}^d \vert\bold{X}_{s_n^{N_T}}^{(j)} \left(\Delta_{\mathcal{P}_T}^n \bold{\bold{X}}^{(k)}\bold{1}_{\{\vert \Delta_{\mathcal{P}_T}^n \bold{\bold{X}}^{(k)} \vert \le \nu_{\mathcal{P}_T}^{(k)}\}}-(\Delta_{\mathcal{P}_T}^n \bold{X}^c)^{(k)} \right) \vert.
        \end{aligned}
    \end{equation*}
    As in Lemma $4.10$ of \cite{Mai2014}, we note that 
\begin{equation*}
    \begin{aligned}
   &\dfrac{1}{T}\sum_{n=0}^{N_T-1} \sum_{k=1}^d\sum_{j=1}^d \vert \vert\bold{Y}_{s_n^{N_T}}^{(j)} \left(\Delta_{\mathcal{P}_T}^n \bold{\bold{Y}}^{(k)}\bold{1}_{\{\vert \Delta_{\mathcal{P}_T}^n \bold{\bold{Y}}^{(k)} \vert \le \nu_{\mathcal{P}_T}^{(k)}\}}-(\Delta_{\mathcal{P}_T}^n \Sigma^{1/2}\bold{W})^{(k)}+\left(\int_{s_{n}^{N_T}}^{s_{n+1}^{N_T}} Q_0 \bold{Y}_sds\right)^{(k)} \right) \vert \\
   &\le \dfrac{1}{T}\sum_{n=0}^{N_T-1} \sum_{k=1}^d\sum_{j=1}^d\vert\bold{Y}_{s_n^{N_T}}^{(j)} \left(\Delta_{\mathcal{P}_T}^n \bold{\bold{\tilde{Y}}}^{(k)}\bold{1}_{\{\vert \Delta_{\mathcal{P}_T}^n \bold{\bold{\tilde{Y}}}^{(k)} \vert \le 2\nu_{\mathcal{P}_T}^{(k)}\}}-(\Delta_{\mathcal{P}_T}^n \Sigma^{1/2}\bold{W})^{(k)}+\left(\int_{s_{n}^{N_T}}^{s_{n+1}^{N_T}} Q_0 \bold{Y}_sds\right)^{(k)} \right)\vert  \\
   &+\dfrac{1}{T}\sum_{n=0}^{N_T-1} \sum_{k=1}^d\sum_{j=1}^d \vert\bold{Y}_{s_n^{N_T}}^{(j)} \Delta_{\mathcal{P}_T}^n \bold{\bold{\tilde{Y}}}^{(k)}\left(\bold{1}_{\{\vert \Delta_{\mathcal{P}_T}^n \bold{Y}^{(k)} \vert \le \nu_{\mathcal{P}_T}^{(k)}\}}-\bold{1}_{\{\vert \Delta_{\mathcal{P}_T}^n \bold{\bold{\tilde{Y}}}^{(k)} \vert \le 2\nu_{\mathcal{P}_T}^{(k)}\}}\right) \vert \\
    &+\dfrac{1}{T}\sum_{n=0}^{N_T-1} \sum_{k=1}^d\sum_{j=1}^d \vert\bold{Y}_{s_n^{N_T}}^{(j)} \Delta_{\mathcal{P}_T}^n \bold{M}^{(k)}\bold{1}_{\{\vert \Delta_{\mathcal{P}_T}^n \bold{Y}^{(k)} \vert \le \nu_{\mathcal{P}_T}^{(k)}\}}\vert  \\
    &:= S_{\Delta_{\mathcal{P}_T}}^1 + S_{\Delta_{\mathcal{P}_T}}^2 + S_{\Delta_{\mathcal{P}_T}}^3.
    \end{aligned}
\end{equation*}
We notice that all terms are positive. We are interested in bounding 
\begin{equation*}
\small
    \begin{aligned}
    &\mathbb{P}^{Q_0}\left(\Vert\left(\dfrac{1}{T}\sum_{n=0}^{N_T-1} \left( \Delta_n\bold{X}^c-\Delta_{\mathcal{P}_T}^n \bold{\bold{X}}\odot\vec{\bold{1}}(\Delta_{\mathcal{P}_T}^n \bold{\bold{X}})\right)\mathbf{X}_{s_n^{N_T}}^\top\right)\Vert_1>3 \Delta_{\mathcal{P}_T}^{\delta}\right) \\
    &= \mathbb{P}\left(\dfrac{1}{T}\sum_{n=0}^{N_T-1} \sum_{k=1}^d\sum_{j=1}^d \vert \vert\bold{Y}_{s_n^{N_T}}^{(j)} \left(\Delta_{\mathcal{P}_T}^n \bold{\bold{Y}}^{(k)}\bold{1}_{\{\vert \Delta_{\mathcal{P}_T}^n \bold{\bold{Y}}^{(k)} \vert \le \nu_{\mathcal{P}_T}^{(k)}\}}-(\Delta_{\mathcal{P}_T}^n \Sigma^{1/2}\bold{W})^{(k)}+\left(\int_{s_{n}^{N_T}}^{s_{n+1}^{N_T}} Q_0 \bold{Y}_sds\right)^{(k)} \right) \vert>3 \Delta_{\mathcal{P}_T}^{\delta}\right)
    \\&\le \mathbb{P} \left(S_{\Delta_{\mathcal{P}_T}}^1 + S_{\Delta_{\mathcal{P}_T}}^2 + S_{\Delta_{\mathcal{P}_T}}^3>3 \Delta_{\mathcal{P}_T}^{\delta}\right)\\
    &\le \mathbb{P}\left(S_{\Delta_{\mathcal{P}_T}}^1> \Delta_{\mathcal{P}_T}^\delta\right)+\mathbb{P} \left(S_{\Delta_{\mathcal{P}_T}}^2> \Delta_{\mathcal{P}_T}^\delta\right) +\mathbb{P}\left(S_{\Delta_{\mathcal{P}_T}}^3> \Delta_{\mathcal{P}_T}^{\delta}\right).
    \end{aligned}
\end{equation*}
Since $\tilde{\mathbb{Y}}$ is a finite activity Lévy process with $0$ drift which satisfies  \Cref{Ass.AssFiniteAct}, we have by repeating the arguments in \Cref{L_1BoundFInite} that $\mathbb{P} \left(S_{\Delta_{\mathcal{P}_T}}^1> \Delta_{\mathcal{P}_T}^\delta\right)$ tends to $0$ so our focus will be on the two remaining terms. We will follow the approach in \citealp{Mai2014,Courgeau2022a,Lucchese2023} but will not treat the dimension as a constant. First our focus will be on $S_{\Delta^\mathcal{P}_T}^2$. 
We note that we can bound $S_{\Delta_{\mathcal{P}_T}}^2$ as 
\begin{equation}\label{eqn:InfActSecondTermSplitup}
\begin{aligned}
S_{\Delta_{\mathcal{P}_T}}^2 &\le \dfrac{1}{T}\sum_{n=0}^{N_T-1} \sum_{k=1}^d\sum_{j=1}^d \vert\bold{Y}_{s_n^{N_T}}^{(j)} \Delta_{\mathcal{P}_T}^n \bold{\bold{\tilde{Y}}}^{(k)}\left(\bold{1}_{\{\vert \Delta_{\mathcal{P}_T}^n \bold{Y}^{(k)} \vert \le \nu_{\mathcal{P}_T}^{(k)}, \vert\Delta_{\mathcal{P}_T}^n \bold{\bold{\tilde{Y}}}^{(k)} \vert > 2\nu_{\mathcal{P}_T}^{(k)}\}}\right) \vert \\
&+ \dfrac{1}{T}\sum_{n=0}^{N_T-1} \sum_{k=1}^d\sum_{j=1}^d \vert\bold{Y}_{s_n^{N_T}}^{(j)} \Delta_{\mathcal{P}_T}^n \bold{\bold{\tilde{Y}}}^{(k)}\left(\bold{1}_{\{\vert \Delta_{\mathcal{P}_T}^n \bold{Y}^{(k)} \vert > \nu_{\mathcal{P}_T}^{(k)}, \vert\Delta_{\mathcal{P}_T}^n \bold{\bold{\tilde{Y}}}^{(k)} \vert \le 2\nu_{\mathcal{P}_T}^{(k)}\}}\right) \vert.
\end{aligned}
\end{equation}
We thus have that 
\begin{equation}\label{DecompositionS2}
    \begin{aligned}    &\mathbb{P}\left(S_{\Delta_{\mathcal{P}_T}}^2>\Delta_{\mathcal{P}_T}^\delta \right) \\&\le \mathbb{P}\biggl(  \dfrac{1}{T}\sum_{n=0}^{N_T-1} \sum_{k=1}^d\sum_{j=1}^d \vert\bold{Y}_{s_n^{N_T}}^{(j)} \Delta_{\mathcal{P}_T}^n \bold{\bold{\tilde{Y}}}^{(k)}\left(\bold{1}_{\{\vert \Delta_{\mathcal{P}_T}^n \bold{Y}^{(k)} \vert \le \nu_{\mathcal{P}_T}^{(k)}, \vert\Delta_{\mathcal{P}_T}^n \bold{\bold{\tilde{Y}}}^{(k)} \vert > 2\nu_{\mathcal{P}_T}^{(k)}\}}\right) \vert \\
&+ \dfrac{1}{T}\sum_{n=0}^{N_T-1} \sum_{k=1}^d\sum_{j=1}^d \vert\bold{Y}_{s_n^{N_T}}^{(j)} \Delta_{\mathcal{P}_T}^n \bold{\bold{\tilde{Y}}}^{(k)}\left(\bold{1}_{\{\vert \Delta_{\mathcal{P}_T}^n \bold{Y}^{(k)} \vert > \nu_{\mathcal{P}_T}^{(k)}, \vert\Delta_{\mathcal{P}_T}^n \bold{\bold{\tilde{Y}}}^{(k)} \vert \le 2\nu_{\mathcal{P}_T}^{(k)}\}}\right) \vert >\Delta_{\mathcal{P}_T}^\delta \biggr)
\\&\le \mathbb{P}\biggl(  \dfrac{1}{T}\sum_{n=0}^{N_T-1} \sum_{k=1}^d\sum_{j=1}^d \vert\bold{Y}_{s_n^{N_T}}^{(j)} \Delta_{\mathcal{P}_T}^n \bold{\bold{\tilde{Y}}}^{(k)}\left(\bold{1}_{\{\vert \Delta_{\mathcal{P}_T}^n \bold{Y}^{(k)} \vert \le \nu_{\mathcal{P}_T}^{(k)}, \vert\Delta_{\mathcal{P}_T}^n \bold{\bold{\tilde{Y}}}^{(k)} \vert > 2\nu_{\mathcal{P}_T}^{(k)}\}}\right) \vert >\Delta_{\mathcal{P}_T}^\delta/2 \biggr)\\&+
\mathbb{P}\biggl( \dfrac{1}{T}\sum_{n=0}^{N_T-1} \sum_{k=1}^d\sum_{j=1}^d \vert\bold{Y}_{s_n^{N_T}}^{(j)} \Delta_{\mathcal{P}_T}^n \bold{\bold{\tilde{Y}}}^{(k)}\left(\bold{1}_{\{\vert \Delta_{\mathcal{P}_T}^n \bold{Y}^{(k)} \vert > \nu_{\mathcal{P}_T}^{(k)}, \vert\Delta_{\mathcal{P}_T}^n \bold{\bold{\tilde{Y}}}^{(k)} \vert \le 2\nu_{\mathcal{P}_T}^{(k)}\}}\right) \vert >\Delta_{\mathcal{P}_T}^\delta/2 \biggr)\\
&= o(1) + \mathbb{P}\biggl( \dfrac{1}{T}\sum_{n=0}^{N_T-1} \sum_{k=1}^d\sum_{j=1}^d \vert\bold{Y}_{s_n^{N_T}}^{(j)} \Delta_{\mathcal{P}_T}^n \bold{\bold{\tilde{Y}}}^{(k)}\left(\bold{1}_{\{\vert \Delta_{\mathcal{P}_T}^n \bold{Y}^{(k)} \vert > \nu_{\mathcal{P}_T}^{(k)}, \vert\Delta_{\mathcal{P}_T}^n \bold{\bold{\tilde{Y}}}^{(k)} \vert \le 2\nu_{\mathcal{P}_T}^{(k)}\}}\right) \vert >\Delta_{\mathcal{P}_T}^\delta/2 \biggr), 
    \end{aligned}
\end{equation}
where we used \Cref{S_2:FirstBound} in the last equality.
We now focus on bounding the second term in \eqref{DecompositionS2}.
We have that 
\begin{equation*}
    \begin{aligned}
        &\left\{\left|\Delta_{\mathcal{P}_T}^n \bold{Y}^{(k)}\right| > \nu_{\mathcal{P}_T}^{(k)},\left|\Delta_{\mathcal{P}_T}^n \tilde{\bold{Y}}^{(k)}\right|\le2 \nu_{\mathcal{P}_T}^{(k)}\right\} \\
        &= 
        \left\{\left|\Delta_{\mathcal{P}_T}^n \bold{Y}^{(k)}\right| > \nu_{\mathcal{P}_T}^{(k)},\left|\Delta_{\mathcal{P}_T}^n \tilde{\bold{Y}}^{(k)}\right|\le2 \nu_{\mathcal{P}_T}^{(k)},\vert\Delta_{\mathcal{P}_T}^n \bold{U}^{(k)}\vert=0\right\} \\&\cup \left\{\left|\Delta_{\mathcal{P}_T}^n \bold{Y}^{(k)}\right| > \nu_{\mathcal{P}_T}^{(k)},\left|\Delta_{\mathcal{P}_T}^n \tilde{\bold{Y}}^{(k)}\right|\le2 \nu_{\mathcal{P}_T}^{(k)},\vert\Delta_{\mathcal{P}_T}^n \bold{U}^{(k)}\vert>0\right\}.
    \end{aligned}
\end{equation*}
As such, 
\begin{equation*}
    \begin{aligned}
         &\mathbb{P}\biggl( \dfrac{1}{T}\sum_{n=0}^{N_T-1} \sum_{k=1}^d\sum_{j=1}^d \vert\bold{Y}_{s_n^{N_T}}^{(j)} \Delta_{\mathcal{P}_T}^n \bold{\bold{\tilde{Y}}}^{(k)}\left(\bold{1}_{\{\vert \Delta_{\mathcal{P}_T}^n \bold{Y}^{(k)} \vert > \nu_{\mathcal{P}_T}^{(k)}, \vert\Delta_{\mathcal{P}_T}^n \bold{\bold{\tilde{Y}}}^{(k)} \vert \le 2\nu_{\mathcal{P}_T}^{(k)}\}}\right) \vert >\Delta_{\mathcal{P}_T}^\delta/2 \biggr)\\
         &\le \mathbb{P}\biggl( \dfrac{1}{T}\sum_{n=0}^{N_T-1} \sum_{k=1}^d\sum_{j=1}^d \vert\bold{Y}_{s_n^{N_T}}^{(j)} \Delta_{\mathcal{P}_T}^n \bold{\bold{\tilde{Y}}}^{(k)}\left(\bold{1}_{\{\vert \Delta_{\mathcal{P}_T}^n \bold{Y}^{(k)} \vert > \nu_{\mathcal{P}_T}^{(k)}, \vert\Delta_{\mathcal{P}_T}^n \bold{\bold{\tilde{Y}}}^{(k)} \vert \le 2\nu_{\mathcal{P}_T}^{(k)},\vert\Delta_{\mathcal{P}_T}^n \bold{U}^{(k)}\vert=0\}}\right) \vert \\&+\dfrac{1}{T}\sum_{n=0}^{N_T-1} \sum_{k=1}^d\sum_{j=1}^d \vert\bold{Y}_{s_n^{N_T}}^{(j)} \Delta_{\mathcal{P}_T}^n \bold{\bold{\tilde{Y}}}^{(k)}\left(\bold{1}_{\{ \vert \Delta_{\mathcal{P}_T}^n \bold{Y}^{(k)} \vert > \nu_{\mathcal{P}_T}^{(k)},\vert\Delta_{\mathcal{P}_T}^n \bold{\bold{\tilde{Y}}}^{(k)} \vert \le 2\nu_{\mathcal{P}_T}^{(k)},\vert\Delta_{\mathcal{P}_T}^n \bold{U}^{(k)}\vert>0\}}\right) \vert >\Delta_{\mathcal{P}_T}^\delta/2 \biggr)\\
         &\le \mathbb{P}\biggl( \dfrac{1}{T}\sum_{n=0}^{N_T-1} \sum_{k=1}^d\sum_{j=1}^d \vert\bold{Y}_{s_n^{N_T}}^{(j)} \Delta_{\mathcal{P}_T}^n \bold{\bold{\tilde{Y}}}^{(k)}\left(\bold{1}_{\{\vert \Delta_{\mathcal{P}_T}^n \bold{Y}^{(k)} \vert > \nu_{\mathcal{P}_T}^{(k)}, \vert\Delta_{\mathcal{P}_T}^n \bold{\bold{\tilde{Y}}}^{(k)} \vert \le 2\nu_{\mathcal{P}_T}^{(k)},\vert\Delta_{\mathcal{P}_T}^n \bold{U}^{(k)}\vert=0\}}\right) \vert >\Delta_{\mathcal{P}_T}^\delta/4 \biggr)\\
         &+\mathbb{P}\biggl(\dfrac{1}{T}\sum_{n=0}^{N_T-1} \sum_{k=1}^d\sum_{j=1}^d \vert\bold{Y}_{s_n^{N_T}}^{(j)} \Delta_{\mathcal{P}_T}^n \bold{\bold{\tilde{Y}}}^{(k)}\left(\bold{1}_{\{ \vert \Delta_{\mathcal{P}_T}^n \bold{Y}^{(k)} \vert > \nu_{\mathcal{P}_T}^{(k)},\vert\Delta_{\mathcal{P}_T}^n \bold{\bold{\tilde{Y}}}^{(k)} \vert \le 2\nu_{\mathcal{P}_T}^{(k)},\vert\Delta_{\mathcal{P}_T}^n \bold{U}^{(k)}\vert>0\}}\right) \vert >\Delta_{\mathcal{P}_T}^\delta/4 \biggr)\\
         &= \mathbb{P}\biggl( \dfrac{1}{T}\sum_{n=0}^{N_T-1} \sum_{k=1}^d\sum_{j=1}^d \vert\bold{Y}_{s_n^{N_T}}^{(j)} \Delta_{\mathcal{P}_T}^n \bold{\bold{\tilde{Y}}}^{(k)}\left(\bold{1}_{\{\vert \Delta_{\mathcal{P}_T}^n \bold{Y}^{(k)} \vert > \nu_{\mathcal{P}_T}^{(k)}, \vert\Delta_{\mathcal{P}_T}^n \bold{\bold{\tilde{Y}}}^{(k)} \vert \le 2\nu_{\mathcal{P}_T}^{(k)},\vert\Delta_{\mathcal{P}_T}^n \bold{U}^{(k)}\vert=0\}}\right) \vert >\Delta_{\mathcal{P}_T}^\delta/4 \biggr) \\&\quad + o(1), \quad \text{ as } d\rightarrow \infty.
    \end{aligned}
\end{equation*}
In the last equality, we used \Cref{SecondBoundFirstLemma} coupled with the fact that 
\begin{equation*}
    \begin{aligned}
        &\mathbb{P}\biggl(\dfrac{1}{T}\sum_{n=0}^{N_T-1} \sum_{k=1}^d\sum_{j=1}^d \vert\bold{Y}_{s_n^{N_T}}^{(j)} \Delta_{\mathcal{P}_T}^n \bold{\bold{\tilde{Y}}}^{(k)}\left(\bold{1}_{\{\vert \Delta_{\mathcal{P}_T}^n \bold{Y}^{(k)} \vert > \nu_{\mathcal{P}_T}^{(k)}, \vert\Delta_{\mathcal{P}_T}^n \bold{\bold{\tilde{Y}}}^{(k)} \vert \le 2\nu_{\mathcal{P}_T}^{(k)},\vert\Delta_{\mathcal{P}_T}^n \bold{U}^{(k)}\vert>0\}}\right) \vert >\Delta_{\mathcal{P}_T}^\delta/4 \biggr)\\
        &\le \mathbb{P}\biggl(\bigcup_{n=0}^{N_T-1}\bigcup_{k=1}^d \{\vert \Delta_{\mathcal{P}_T}^n \bold{Y}^{(k)} \vert > \nu_{\mathcal{P}_T}^{(k)}, \vert\Delta_{\mathcal{P}_T}^n \bold{\bold{\tilde{Y}}}^{(k)} \vert \le 2\nu_{\mathcal{P}_T}^{(k)},\vert\Delta_{\mathcal{P}_T}^n \bold{U}^{(k)}\vert>0\}\biggr).
    \end{aligned}
\end{equation*}
 We now wish to show that
\begin{equation*}
    \mathbb{P}\biggl( \dfrac{1}{T}\sum_{n=0}^{N_T-1} \sum_{k=1}^d\sum_{j=1}^d \vert\bold{Y}_{s_n^{N_T}}^{(j)} \Delta_{\mathcal{P}_T}^n \bold{\bold{\tilde{Y}}}^{(k)}\biggl(\bold{1}_{\{\vert \Delta_{\mathcal{P}_T}^n \bold{Y}^{(k)} \vert > \nu_{\mathcal{P}_T}^{(k)}, \vert\Delta_{\mathcal{P}_T}^n \bold{\bold{\tilde{Y}}}^{(k)} \vert \le 2\nu_{\mathcal{P}_T}^{(k)},\vert\Delta_{\mathcal{P}_T}^n \bold{U}^{(k)}\vert=0\}}\biggr) \vert >\Delta_{\mathcal{P}_T}^\delta/4 \biggr)
\end{equation*}
is $o(1)$ as $d$ tends to infinity. With this in mind, we note that on
$\vert\Delta_{\mathcal{P}_T}^n \bold{U}^{(k)}\vert=0$ necessarily, $\Delta_{\mathcal{P}_T}^n \bold{\bold{\tilde{Y}}}^{(k)} =\left(\Sigma^{1/2}\Delta_{\mathcal{P}_T}^n \bold{W}\right)^{(k)}+\Delta_{\mathcal{P}_T}^n \bold{\bold{D}}^{(k)}$.  Hence,
\begin{equation*}
    \begin{aligned}
    &\dfrac{1}{T}\sum_{n=0}^{N_T-1} \sum_{k=1}^d\sum_{j=1}^d \vert\bold{Y}_{s_n^{N_T}}^{(j)} \Delta_{\mathcal{P}_T}^n \bold{\bold{\tilde{Y}}}^{(k)}\biggl(\bold{1}_{\{\vert \Delta_{\mathcal{P}_T}^n \bold{Y}^{(k)} \vert > \nu_{\mathcal{P}_T}^{(k)}, \vert\Delta_{\mathcal{P}_T}^n \bold{\bold{\tilde{Y}}}^{(k)} \vert \le 2\nu_{\mathcal{P}_T}^{(k)},\vert\Delta_{\mathcal{P}_T}^n \bold{U}^{(k)}\vert=0\}}\biggr) \vert \\&= 
     \dfrac{1}{T}\sum_{n=0}^{N_T-1} \sum_{k=1}^d\sum_{j=1}^d \vert\bold{Y}_{s_n^{N_T}}^{(j)} \left(\left(\Sigma^{1/2}\Delta_{\mathcal{P}_T}^n \bold{W}\right)^{(k)}+\Delta_{\mathcal{P}_T}^n \bold{\bold{D}}^{(k)}\right)\biggl(\bold{1}_{\{\vert \Delta_{\mathcal{P}_T}^n \bold{Y}^{(k)} \vert > \nu_{\mathcal{P}_T}^{(k)}, \vert\Delta_{\mathcal{P}_T}^n \bold{\bold{\tilde{Y}}}^{(k)} \vert \le 2\nu_{\mathcal{P}_T}^{(k)},\vert\Delta_{\mathcal{P}_T}^n \bold{U}^{(k)}\vert=0\}}\biggr) \vert\\
     &\le \dfrac{1}{T}\sum_{n=0}^{N_T-1} \sum_{k=1}^d\sum_{j=1}^d \left(\vert\bold{Y}_{s_n^{N_T}}^{(j)} \left(\Sigma^{1/2}\Delta_{\mathcal{P}_T}^n \bold{W}\right)^{(k)}\vert+\vert\bold{Y}_{s_n^{N_T}}^{(j)}\Delta_{\mathcal{P}_T}^n \bold{\bold{D}}^{(k)}\vert\right)\\&\quad\biggl(\bold{1}_{\{\vert \Delta_{\mathcal{P}_T}^n \bold{Y}^{(k)} \vert > \nu_{\mathcal{P}_T}^{(k)}, \vert\Delta_{\mathcal{P}_T}^n \bold{\bold{\tilde{Y}}}^{(k)} \vert \le 2\nu_{\mathcal{P}_T}^{(k)},\vert\Delta_{\mathcal{P}_T}^n \bold{U}^{(k)}\vert=0\}}\biggr).
     \end{aligned}
\end{equation*}
We thus have that
\begin{equation*}
    \begin{aligned}
    &\mathbb{P}\biggl( \dfrac{1}{T}\sum_{n=0}^{N_T-1} \sum_{k=1}^d\sum_{j=1}^d \vert\bold{Y}_{s_n^{N_T}}^{(j)} \Delta_{\mathcal{P}_T}^n \bold{\bold{\tilde{Y}}}^{(k)}\biggl(\bold{1}_{\{\vert \Delta_{\mathcal{P}_T}^n \bold{Y}^{(k)} \vert > \nu_{\mathcal{P}_T}^{(k)}, \vert\Delta_{\mathcal{P}_T}^n \bold{\bold{\tilde{Y}}}^{(k)} \vert \le 2\nu_{\mathcal{P}_T}^{(k)},\vert\Delta_{\mathcal{P}_T}^n \bold{U}^{(k)}\vert=0\}}\biggr) \vert >\Delta_{\mathcal{P}_T}^\delta/4 \biggr)\\
    & \le \mathbb{P}\biggl(\dfrac{1}{T}\sum_{n=0}^{N_T-1} \sum_{k=1}^d\sum_{j=1}^d\biggl( \vert\bold{Y}_{s_n^{N_T}}^{(j)} \left(\Sigma^{1/2}\Delta_{\mathcal{P}_T}^n \bold{W}\right)^{(k)}\vert\\&+\vert\bold{Y}_{s_n^{N_T}}^{(j)}\Delta_{\mathcal{P}_T}^n \bold{\bold{D}}^{(k)}\vert\biggr)\bold{1}_{\{\vert \Delta_{\mathcal{P}_T}^n \bold{Y}^{(k)} \vert > \nu_{\mathcal{P}_T}^{(k)}, \vert\Delta_{\mathcal{P}_T}^n \bold{\bold{\tilde{Y}}}^{(k)} \vert \le 2\nu_{\mathcal{P}_T}^{(k)},\vert\Delta_{\mathcal{P}_T}^n \bold{U}^{(k)}\vert=0\}}>\Delta_{\mathcal{P}_T}^\delta/4\biggr)\\
        & \le \mathbb{P}\biggl(\dfrac{1}{T}\sum_{n=0}^{N_T-1} \sum_{k=1}^d\sum_{j=1}^d \vert\bold{Y}_{s_n^{N_T}}^{(j)} \left(\Sigma^{1/2}\Delta_{\mathcal{P}_T}^n \bold{W}\right)^{(k)}\vert\bold{1}_{\{\vert \Delta_{\mathcal{P}_T}^n \bold{Y}^{(k)} \vert > \nu_{\mathcal{P}_T}^{(k)}, \vert\Delta_{\mathcal{P}_T}^n \bold{\bold{\tilde{Y}}}^{(k)} \vert \le 2\nu_{\mathcal{P}_T}^{(k)},\vert\Delta_{\mathcal{P}_T}^n \bold{U}^{(k)}\vert=0\}}>\Delta_{\mathcal{P}_T}^\delta/8\biggr)\\
            & + \mathbb{P}\biggl(\dfrac{1}{T}\sum_{n=0}^{N_T-1} \sum_{k=1}^d\sum_{j=1}^d\vert\bold{Y}_{s_n^{N_T}}^{(j)}\Delta_{\mathcal{P}_T}^n \bold{\bold{D}}^{(k)}\vert\bold{1}_{\{\vert \Delta_{\mathcal{P}_T}^n \bold{Y}^{(k)} \vert > \nu_{\mathcal{P}_T}^{(k)}, \vert\Delta_{\mathcal{P}_T}^n \bold{\bold{\tilde{Y}}}^{(k)} \vert \le 2\nu_{\mathcal{P}_T}^{(k)},\vert\Delta_{\mathcal{P}_T}^n \bold{U}^{(k)}\vert=0\}}>\Delta_{\mathcal{P}_T}^\delta/8\biggr)\\
            &= o(1),\text{ as } d \rightarrow \infty,
    \end{aligned}
\end{equation*}
where we used \Cref{SecondBoundS_2D,SecondBoundS_2W} in the last equality.
We now turn our attention to 
\begin{equation}\label{eqn:SmallIncrBigInctilde}
    \begin{aligned}
    S_{\Delta_{\mathcal{P}_T}}^3 &= \dfrac{1}{T}\sum_{n=0}^{N_T-1} \sum_{k=1}^d\sum_{j=1}^d\vert\bold{Y}_{s_n^{N_T}}^{(j)} \Delta_{\mathcal{P}_T}^n \bold{M}^{(k)}\bold{1}_{\{\vert \Delta_{\mathcal{P}_T}^n \bold{Y}^{(k)} \vert \le \nu_{\mathcal{P}_T}^{(k)}\}}\vert   \\
    &\le \dfrac{1}{T}\sum_{n=0}^{N_T-1} \sum_{k=1}^d\sum_{j=1}^d \vert\bold{Y}_{s_n^{N_T}}^{(j)} \Delta_{\mathcal{P}_T}^n \bold{M}^{(k)}\vert\bold{1}_{\{\vert \Delta_{\mathcal{P}_T}^n \bold{Y}^{(k)} \vert \le \nu_{\mathcal{P}_T}^{(k)},\vert \Delta_{\mathcal{P}_T}^n \bold{\tilde{Y}}^{(k)} \vert \le 2\nu_{\mathcal{P}_T}^{(k)}\}}  \\
    &+ \dfrac{1}{T}\sum_{n=0}^{N_T-1} \sum_{k=1}^d\sum_{j=1}^d \vert\bold{Y}_{s_n^{N_T}}^{(j)} \Delta_{\mathcal{P}_T}^n \bold{M}^{(k)}\vert  \bold{1}_{\{\vert \Delta_{\mathcal{P}_T}^n \bold{Y}^{(k)} \vert \le \nu_{\mathcal{P}_T}^{(k)},\vert \Delta_{\mathcal{P}_T}^n \bold{\tilde{Y}}^{(k)} \vert > 2\nu_{\mathcal{P}_T}^{(k)}\}}.
    \end{aligned}
\end{equation}
Observe that
\begin{equation*}
    \begin{aligned}
        &\{\vert \Delta_{\mathcal{P}_T}^n \bold{Y}^{(k)} \vert \le \nu_{\mathcal{P}_T}^{(k)},\vert \Delta_{\mathcal{P}_T}^n \bold{\tilde{Y}}^{(k)} \vert \le 2\nu_{\mathcal{P}_T}^{(k)}\} \\&= \{\vert \Delta_{\mathcal{P}_T}^n \bold{Y}^{(k)} \vert \le \nu_{\mathcal{P}_T}^{(k)},\vert \Delta_{\mathcal{P}_T}^n \bold{\tilde{Y}}^{(k)} \vert \le 2\nu_{\mathcal{P}_T}^{(k)}, \vert \Delta_{\mathcal{P}_T}^n \bold{U}^{(k)} \vert =0 \} \\&\cup \{\vert \Delta_{\mathcal{P}_T}^n \bold{Y}^{(k)} \vert \le \nu_{\mathcal{P}_T}^{(k)},\vert \Delta_{\mathcal{P}_T}^n \bold{\tilde{Y}}^{(k)} \vert \le 2\nu_{\mathcal{P}_T}^{(k)}, \vert \Delta_{\mathcal{P}_T}^n \bold{U}^{(k)} \vert \ne 0 \}\\
        &\subset \{\vert \Delta_{\mathcal{P}_T}^n \bold{Y}^{(k)} \vert \le \nu_{\mathcal{P}_T}^{(k)},\vert \Delta_{\mathcal{P}_T}^n \bold{\tilde{Y}}^{(k)} \vert \le 2\nu_{\mathcal{P}_T}^{(k)}, \vert \Delta_{\mathcal{P}_T}^n \bold{U}^{(k)} \vert =0,\vert\Delta_{\mathcal{P}_T}^n \bold{M}^{(k)}  \vert> 2\nu_{\mathcal{P}_T}^{(k)} \}\\
        &\cup \{\vert \Delta_{\mathcal{P}_T}^n \bold{Y}^{(k)} \vert \le \nu_{\mathcal{P}_T}^{(k)},\vert \Delta_{\mathcal{P}_T}^n \bold{\tilde{Y}}^{(k)} \vert \le 2\nu_{\mathcal{P}_T}^{(k)}, \vert \Delta_{\mathcal{P}_T}^n \bold{U}^{(k)} \vert =0,\vert\Delta_{\mathcal{P}_T}^n \bold{M}^{(k)}  \vert\le 2\nu_{\mathcal{P}_T}^{(k)} \}\\&\cup \{\vert \Delta_{\mathcal{P}_T}^n \bold{Y}^{(k)} \vert \le \nu_{\mathcal{P}_T}^{(k)},\vert \Delta_{\mathcal{P}_T}^n \bold{\tilde{Y}}^{(k)} \vert \le 2\nu_{\mathcal{P}_T}^{(k)}, \vert \Delta_{\mathcal{P}_T}^n \bold{U}^{(k)} \vert \ne 0 \}\\
         &\subset  \{\vert \Delta_{\mathcal{P}_T}^n \bold{W}^{(k)} +\Delta_{\mathcal{P}_T}^n \bold{D}^{(k)}  \vert > \nu_{\mathcal{P}_T}^{(k)}\} \\&\cup  \{\vert\Delta_{\mathcal{P}_T}^n \bold{M}^{(k)}  \vert \le 2\nu_{\mathcal{P}_T}^{(k)}\}  \\&\cup \{\vert \Delta_{\mathcal{P}_T}^n \bold{Y}^{(k)} \vert \le \nu_{\mathcal{P}_T}^{(k)},\vert \Delta_{\mathcal{P}_T}^n \bold{\tilde{Y}}^{(k)} \vert \le 2\nu_{\mathcal{P}_T}^{(k)}, \vert \Delta_{\mathcal{P}_T}^n \bold{U}^{(k)} \vert \ne 0 \}.
    \end{aligned}
\end{equation*}
Above we used that on $\{\vert \Delta_{\mathcal{P}_T}^n \bold{U}^{(k)} \vert =0\}$, necessarily, $\Delta_{\mathcal{P}_T}^n \bold{Y}^{(k)}  = \Delta_{\mathcal{P}_T}^n \bold{W}^{(k)} +\Delta_{\mathcal{P}_T}^n \bold{D}^{(k)} +\Delta_{\mathcal{P}_T}^n \bold{M}^{(k)}$ and then we used the following set inclusion:
\begin{equation*}
    \begin{aligned}
       \{\vert \Delta_{\mathcal{P}_T}^n \bold{W}^{(k)} +\Delta_{\mathcal{P}_T}^n \bold{D}^{(k)} +\Delta_{\mathcal{P}_T}^n \bold{M}^{(k)}  \vert \le \nu_{\mathcal{P}_T}^{(k)}, \vert\Delta_{\mathcal{P}_T}^n \bold{M}^{(k)}  \vert> 2\nu_{\mathcal{P}_T}^{(k)}\} \subseteq \{\vert \Delta_{\mathcal{P}_T}^n \bold{W}^{(k)} +\Delta_{\mathcal{P}_T}^n \bold{D}^{(k)}  \vert > \nu_{\mathcal{P}_T}^{(k)}\}.
    \end{aligned}
\end{equation*}
This last inclusion follows from the reverse triangle inequality: on the event on the left-hand side,
\begin{equation*}
    \left\vert \Delta_{\mathcal{P}_T}^n \bold{W}^{(k)} +\Delta_{\mathcal{P}_T}^n \bold{D}^{(k)}  \right\vert \ge \left\vert\Delta_{\mathcal{P}_T}^n \bold{M}^{(k)}\right\vert - \left\vert \Delta_{\mathcal{P}_T}^n \bold{W}^{(k)} +\Delta_{\mathcal{P}_T}^n \bold{D}^{(k)} +\Delta_{\mathcal{P}_T}^n \bold{M}^{(k)}  \right\vert > 2\nu_{\mathcal{P}_T}^{(k)} - \nu_{\mathcal{P}_T}^{(k)} = \nu_{\mathcal{P}_T}^{(k)}.
\end{equation*}
Inserting the above in \eqref{eqn:SmallIncrBigInctilde}, we get that
\begin{equation}\label{eqn:BoundOnSn3}
    \begin{aligned}
 S_{\Delta_{\mathcal{P}_T}}^3 &\le \dfrac{1}{T}\sum_{n=0}^{N_T-1} \sum_{k=1}^d\sum_{j=1}^d\vert\bold{Y}_{s_n^{N_T}}^{(j)} \Delta_{\mathcal{P}_T}^n \bold{M}^{(k)}\vert\bold{1}_{\{\vert \Delta_{\mathcal{P}_T}^n \bold{Y}^{(k)} \vert \le \nu_{\mathcal{P}_T}^{(k)},\vert \Delta_{\mathcal{P}_T}^n \bold{\tilde{Y}}^{(k)} \vert \le 2\nu_{\mathcal{P}_T}^{(k)}\}}  \\
    &+ \dfrac{1}{T}\sum_{n=0}^{N_T-1} \sum_{k=1}^d\sum_{j=1}^d \vert\bold{Y}_{s_n^{N_T}}^{(j)} \Delta_{\mathcal{P}_T}^n \bold{M}^{(k)}\vert  \bold{1}_{\{\vert \Delta_{\mathcal{P}_T}^n \bold{Y}^{(k)} \vert \le \nu_{\mathcal{P}_T}^{(k)},\vert \Delta_{\mathcal{P}_T}^n \bold{\tilde{Y}}^{(k)} \vert > 2\nu_{\mathcal{P}_T}^{(k)}\}}\\
    &\le \dfrac{1}{T}\sum_{n=0}^{N_T-1} \sum_{k=1}^d\sum_{j=1}^d \vert\bold{Y}_{s_n^{N_T}}^{(j)} \Delta_{\mathcal{P}_T}^n \bold{M}^{(k)}\vert\bold{1}_{\{\vert \Delta_{\mathcal{P}_T}^n \bold{W}^{(k)}+\Delta_{\mathcal{P}_T}^n \bold{D}^{(k)} \vert > \nu_{\mathcal{P}_T}^{(k)}\}}  \\
    &+ \dfrac{1}{T}\sum_{n=0}^{N_T-1} \sum_{k=1}^d\sum_{j=1}^d \vert\bold{Y}_{s_n^{N_T}}^{(j)} \Delta_{\mathcal{P}_T}^n \bold{M}^{(k)}\vert\bold{1}_{\{\vert \Delta_{\mathcal{P}_T}^n \bold{M}^{(k)}\vert \le  2\nu_{\mathcal{P}_T}^{(k)}\}}\\
    &+ \dfrac{1}{T}\sum_{n=0}^{N_T-1} \sum_{k=1}^d\sum_{j=1}^d \vert\bold{Y}_{s_n^{N_T}}^{(j)} \Delta_{\mathcal{P}_T}^n \bold{M}^{(k)}\vert\bold{1}_{\{\vert \Delta_{\mathcal{P}_T}^n \bold{Y}^{(k)} \vert \le \nu_{\mathcal{P}_T}^{(k)},\vert \Delta_{\mathcal{P}_T}^n \bold{\tilde{Y}}^{(k)} \vert \le 2\nu_{\mathcal{P}_T}^{(k)},\vert \Delta_{\mathcal{P}_T}^n \bold{U}^{(k)} \vert\ne 0\}} \\
    &+ \dfrac{1}{T}\sum_{n=0}^{N_T-1} \sum_{k=1}^d\sum_{j=1}^d \vert\bold{Y}_{s_n^{N_T}}^{(j)} \Delta_{\mathcal{P}_T}^n \bold{M}^{(k)}\vert  \bold{1}_{\{\vert \Delta_{\mathcal{P}_T}^n \bold{Y}^{(k)} \vert \le \nu_{\mathcal{P}_T}^{(k)},\vert \Delta_{\mathcal{P}_T}^n \bold{\tilde{Y}}^{(k)} \vert > 2\nu_{\mathcal{P}_T}^{(k)}\}}.
    \end{aligned}
\end{equation}
As such, 
\begin{equation*}
    \begin{aligned}
        &\mathbb{P}\left( S_{\Delta_{\mathcal{P}_T}}^3 >\Delta_{\mathcal{P}_T}^\delta \right) \\&\le \mathbb{P}\left(\dfrac{1}{T}\sum_{n=0}^{N_T-1} \sum_{k=1}^d\sum_{j=1}^d \vert\bold{Y}_{s_n^{N_T}}^{(j)} \Delta_{\mathcal{P}_T}^n \bold{M}^{(k)}\vert  \bold{1}_{\{\vert \Delta_{\mathcal{P}_T}^n \bold{W}^{(k)}+\Delta_{\mathcal{P}_T}^n \bold{D}^{(k)} \vert > \nu_{\mathcal{P}_T}^{(k)}\}}> \Delta_{\mathcal{P}_T}^\delta /4 \right) \\
        &+\mathbb{P}\left(\dfrac{1}{T}\sum_{n=0}^{N_T-1} \sum_{k=1}^d\sum_{j=1}^d \vert\bold{Y}_{s_n^{N_T}}^{(j)} \Delta_{\mathcal{P}_T}^n \bold{M}^{(k)}\vert  \bold{1}_{\{\vert \Delta_{\mathcal{P}_T}^n \bold{M}^{(k)}\vert \le  2\nu_{\mathcal{P}_T}^{(k)}\}}> \Delta_{\mathcal{P}_T}^\delta /4 \right)\\
        &+\mathbb{P}\left(\dfrac{1}{T}\sum_{n=0}^{N_T-1} \sum_{k=1}^d\sum_{j=1}^d \vert\bold{Y}_{s_n^{N_T}}^{(j)} \Delta_{\mathcal{P}_T}^n \bold{M}^{(k)}\vert  \bold{1}_{\{\vert \Delta_{\mathcal{P}_T}^n \bold{Y}^{(k)} \vert \le \nu_{\mathcal{P}_T}^{(k)},\vert \Delta_{\mathcal{P}_T}^n \bold{\tilde{Y}}^{(k)} \vert \le 2\nu_{\mathcal{P}_T}^{(k)},\vert \Delta_{\mathcal{P}_T}^n \bold{U}^{(k)} \vert\ne 0\}}> \Delta_{\mathcal{P}_T}^\delta /4 \right)\\
        &+\mathbb{P}\left(\dfrac{1}{T}\sum_{n=0}^{N_T-1} \sum_{k=1}^d\sum_{j=1}^d \vert\bold{Y}_{s_n^{N_T}}^{(j)} \Delta_{\mathcal{P}_T}^n \bold{M}^{(k)}\vert  \bold{1}_{\{\vert \Delta_{\mathcal{P}_T}^n \bold{Y}^{(k)} \vert \le \nu_{\mathcal{P}_T}^{(k)},\vert \Delta_{\mathcal{P}_T}^n \bold{\tilde{Y}}^{(k)} \vert > 2\nu_{\mathcal{P}_T}^{(k)}\}} > \Delta_{\mathcal{P}_T}^\delta /4 \right)\\
        & = o(1), \quad \text{as } d\rightarrow \infty.
    \end{aligned}
\end{equation*}
As of now, the last equality is a claim, but we shall show in the following that all four terms are indeed $o(1)$. As for the first probability, we note that
\begin{equation*}
    \begin{aligned}
       &\mathbb{E}\left(\dfrac{1}{T}\sum_{n=0}^{N_T-1} \sum_{k=1}^d\sum_{j=1}^d \vert\bold{Y}_{s_n^{N_T}}^{(j)} \Delta_{\mathcal{P}_T}^n \bold{M}^{(k)}\vert  \bold{1}_{\{\vert \Delta_{\mathcal{P}_T}^n \bold{W}^{(k)}+\Delta_{\mathcal{P}_T}^n \bold{D}^{(k)} \vert > \nu_{\mathcal{P}_T}^{(k)}\}}\right)\\&\le \dfrac{1}{T}\sum_{n=0}^{N_T-1} \sum_{k=1}^d\sum_{j=1}^d \mathbb{E}\left(\vert\bold{Y}_{s_n^{N_T}}^{(j)}\vert^2 \vert\Delta_{\mathcal{P}_T}^n \bold{M}^{(k)}\vert^2\right)^{1/2}  \mathbb{P}\left({\{\vert \Delta_{\mathcal{P}_T}^n \bold{W}^{(k)}+\Delta_{\mathcal{P}_T}^n \bold{D}^{(k)} \vert > \nu_{\mathcal{P}_T}^{(k)}\}}\right)^{1/2}
       \\&= \dfrac{1}{T}\sum_{n=0}^{N_T-1} \sum_{k=1}^d\sum_{j=1}^d \mathbb{E}\left(\vert\bold{Y}_{s_n^{N_T}}^{(j)}\vert^2\right)^{1/2} \mathbb{E}\left(\vert\Delta_{\mathcal{P}_T}^n \bold{M}^{(k)}\vert^2\right)^{1/2}  \mathbb{P}\left({\{\vert \Delta_{\mathcal{P}_T}^n \bold{W}^{(k)}+\Delta_{\mathcal{P}_T}^n \bold{D}^{(k)} \vert > \nu_{\mathcal{P}_T}^{(k)}\}}\right)^{1/2}
       \\&= O\left(d^{7} \Delta_{\mathcal{P}_T}^{1/2-\beta^*} \right), \quad \text{ as } d \rightarrow \infty.
    \end{aligned}
\end{equation*}
Here we used Hölder's inequality in the first bound and the fact that we are working with forward-looking increments, $\bold{Y}_{s_n^{N_T}}^{(k)}$ and  $\Delta_{\mathcal{P}_T}^n \bold{M}^{(k)}$ are independent for any $k$ in the first equality. Finally we used \Cref{Lemma:CountBound} with $\alpha^\prime = l^\prime = 2$ and \eqref{MomentBoundOnM} in the last equality. Together with Markov's inequality, we therefore have that
\begin{equation*}
    \begin{aligned}
    &\mathbb{P}\left(\dfrac{1}{T}\sum_{n=0}^{N_T-1} \sum_{k=1}^d\sum_{j=1}^d \vert\bold{Y}_{s_n^{N_T}}^{(j)} \Delta_{\mathcal{P}_T}^n \bold{M}^{(k)}\vert  \bold{1}_{\{\vert \Delta_{\mathcal{P}_T}^n \bold{W}^{(k)}+\Delta_{\mathcal{P}_T}^n \bold{D}^{(k)} \vert > \nu_{\mathcal{P}_T}^{(k)}\}}> \Delta_{\mathcal{P}_T}^\delta /4 \right)\\&= O\left(d^{7} \Delta_{\mathcal{P}_T}^{1/2-\beta^*-\delta} \right)\\&=o(1),\quad \text{as } d\rightarrow \infty.
    \end{aligned}
\end{equation*}
As for the second term, we notice following independence of $\bold{Y}_{s_n^{N_T}}^{(k)}$ and  $\Delta_{\mathcal{P}_T}^n \bold{M}^{(k)}$ that
\begin{equation*}
    \begin{aligned}
    &\mathbb{E}\left(  \dfrac{1}{T}\sum_{n=0}^{N_T-1} \sum_{k=1}^d\sum_{j=1}^d \vert\bold{Y}_{s_n^{N_T}}^{(j)} \Delta_{\mathcal{P}_T}^n \bold{M}^{(k)}\vert\bold{1}_{\{\vert \Delta_{\mathcal{P}_T}^n \bold{M}^{(k)} \vert \le 2\nu_{\mathcal{P}_T}^{(k)}\}}\right) \\
    &=  \dfrac{1}{T}\sum_{n=0}^{N_T-1} \sum_{k=1}^d\sum_{j=1}^d \mathbb{E}\left(\vert\bold{Y}_{s_n^{N_T}}^{(j)}\vert\right) \mathbb{E}\left( \vert\Delta_{\mathcal{P}_T}^n \bold{M}^{(k)}\vert\bold{1}_{\{\vert \Delta_{\mathcal{P}_T}^n \bold{M}^{(k)} \vert \le 2\nu_{\mathcal{P}_T}^{(k)}\}}\right) \\
    &= O\left(d^3 \Delta_{\mathcal{P}_T}^\varepsilon \right),\quad \text{ as } d \rightarrow \infty.
    \end{aligned}
\end{equation*}
In turn, we find that
\begin{equation*}
    \begin{aligned}
    &\mathbb{P}\left(  \dfrac{1}{T}\sum_{n=0}^{N_T-1} \sum_{k=1}^d\sum_{j=1}^d \vert\bold{Y}_{s_n^{N_T}}^{(j)} \Delta_{\mathcal{P}_T}^n \bold{M}^{(k)}\vert\bold{1}_{\{\vert \Delta_{\mathcal{P}_T}^n \bold{M}^{(k)} \vert \le 2\nu_{\mathcal{P}_T}^{(k)}\}}>\Delta_{\mathcal{P}_T}^\delta /4\right)\\ &=  O\left(d^3 \Delta_{\mathcal{P}_T}^{\varepsilon-\delta} \right)\\&=o(1),\quad \text{ as } d \rightarrow \infty.
    \end{aligned}
\end{equation*}
For the third probability, write the event in the indicator as
\begin{equation*}
E_{n,k} := \{|\Delta_{\mathcal{P}_T}^n \bold{Y}^{(k)}|\le\nu_{\mathcal{P}_T}^{(k)},\,|\Delta_{\mathcal{P}_T}^n \bold{\tilde Y}^{(k)}|\le 2\nu_{\mathcal{P}_T}^{(k)},\,|\Delta_{\mathcal{P}_T}^n \bold{U}^{(k)}|\ne 0\}.
\end{equation*}
The weighted sum in the indicator vanishes outside $\bigcup_{n,k} E_{n,k}$, and $E_{n,k} \subseteq \{|\Delta_{\mathcal{P}_T}^n \bold{\tilde Y}^{(k)}|\le 2\nu_{\mathcal{P}_T}^{(k)},\,|\Delta_{\mathcal{P}_T}^n \bold{U}^{(k)}|>0\}$, so
\begin{equation*}
    \begin{aligned}
        &\mathbb{P}\left(\dfrac{1}{T}\sum_{n=0}^{N_T-1} \sum_{k=1}^d\sum_{j=1}^d \vert\bold{Y}_{s_n^{N_T}}^{(j)} \Delta_{\mathcal{P}_T}^n \bold{M}^{(k)}\vert  \bold{1}_{E_{n,k}} > \Delta_{\mathcal{P}_T}^\delta /4 \right)\\
        &\le \mathbb{P}\biggl(\bigcup_{n=0}^{N_T-1}\bigcup_{k=1}^d E_{n,k}\biggr) \\
        &\le \mathbb{P}\biggl(\bigcup_{n=0}^{N_T-1}\bigcup_{k=1}^d \{|\Delta_{\mathcal{P}_T}^n \bold{\tilde Y}^{(k)}|\le 2\nu_{\mathcal{P}_T}^{(k)},\,|\Delta_{\mathcal{P}_T}^n \bold{U}^{(k)}|>0\}\biggr) \\
        &= o(1), \quad \text{as } d\rightarrow \infty,
    \end{aligned}
\end{equation*}
where the last equality follows from \Cref{SecondBoundFirstLemma}.
As for the last probability, we have that 
\begin{equation*}
    \begin{aligned}
        &\mathbb{P}\left(\dfrac{1}{T}\sum_{n=0}^{N_T-1} \sum_{k=1}^d\sum_{j=1}^d \vert\bold{Y}_{s_n^{N_T}}^{(j)} \Delta_{\mathcal{P}_T}^n \bold{M}^{(k)}\vert  \bold{1}_{\{\vert \Delta_{\mathcal{P}_T}^n \bold{Y}^{(k)} \vert \le \nu_{\mathcal{P}_T}^{(k)},\vert \Delta_{\mathcal{P}_T}^n \bold{\tilde{Y}}^{(k)} \vert > 2\nu_{\mathcal{P}_T}^{(k)}\}} > \Delta_{\mathcal{P}_T}^\delta /4 \right)\\
        &\le\mathbb{P}\left(\bigcup_{n=0}^{N_T-1}\bigcup_{k=1}^d\{\vert \Delta_{\mathcal{P}_T}^n \bold{Y}^{(k)} \vert \le \nu_{\mathcal{P}_T}^{(k)},\vert \Delta_{\mathcal{P}_T}^n \bold{\tilde{Y}}^{(k)} \vert > 2\nu_{\mathcal{P}_T}^{(k)}\} \right) \\
        & = o(1),\quad \text{ as } d\rightarrow \infty,
    \end{aligned}
\end{equation*}
where we used the arguments from \eqref{eqn:InfAct5} for the last equality. In turn we get the desired result.
\end{proof}
\begin{proof}[Proof of \Cref{LikelihoodEqualityFeasibleCaseInfiniteActCase}]
    The proof is identical to the proof of \Cref{LikelihoodEqualityFeasibleCase} only now replacing \Cref{L_1BoundFInite} with \Cref{L_1BoundInFinite}.
\end{proof}
\section{Proofs: Main section}
\subsection{Helpful lemmas and propositions}
\begin{lemma}[Row-normalisation bound]\label{RowNormalisedBound}
    Let $A,B$ be $m\times n$ matrices where $A$ has no rows with all $0$ entries and where $B$ is allowed to contain rows with all $0$s. Let $A^*$ and $B^*$ be the corresponding $m\times n$ row-normalised matrices. That is, the $(i,j)$th entry of $B^*$ is $B^*_{ij}=\dfrac{B_{ij}}{\Vert B_{i\bullet}\Vert_2}$. Here we use the convention that $\dfrac{0}{0}:=0$. Then, it holds that
    \begin{equation*}
        \begin{aligned}
            \Vert A^* - B^*\Vert _F \le \dfrac{2 \Vert A - B\Vert _F}{\min_i\left(\Vert A_{i\bullet}\Vert_2\right)}\quad. 
        \end{aligned}
    \end{equation*}
\end{lemma}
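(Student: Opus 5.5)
The plan is to prove the bound row by row and then sum over rows. Fix a row index $i$ and write $a=A_{i\bullet}$ and $b=B_{i\bullet}$, viewed as vectors in $\mathbb{R}^n$. By hypothesis $a\neq 0$. The Frobenius norm decomposes over rows as
\begin{equation*}
\Vert A^*-B^*\Vert_F^2=\sum_{i=1}^m \Bigl\Vert \frac{a}{\Vert a\Vert_2}-b^*\Bigr\Vert_2^2 .
\end{equation*}
It therefore suffices to show the per-row inequality $\Vert a/\Vert a\Vert_2 - b^*\Vert_2 \le 2\Vert a-b\Vert_2/\Vert a\Vert_2$. Squaring, bounding $\Vert a\Vert_2$ below by $\min_i\Vert A_{i\bullet}\Vert_2$ in each denominator, summing over $i$ and taking square roots then gives the claim.

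For the per-row inequality I would split into two cases.

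\emph{Case $b=0$.} By the convention $0/0:=0$ we have $b^*=0$. The left side is then $1$, and the right side is $2\Vert a\Vert_2/\Vert a\Vert_2=2$, so the inequality holds.

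\emph{Case $b\neq 0$.} Insert the intermediate vector $b/\Vert a\Vert_2$ and use the triangle inequality:
\begin{equation*}
\Bigl\Vert \frac{a}{\Vert a\Vert_2}-\frac{b}{\Vert b\Vert_2}\Bigr\Vert_2 \le \frac{\Vert a-b\Vert_2}{\Vert a\Vert_2}+\Vert b\Vert_2\Bigl|\frac{1}{\Vert a\Vert_2}-\frac{1}{\Vert b\Vert_2}\Bigr| .
\end{equation*}
The second term equals $\bigl|\Vert b\Vert_2-\Vert a\Vert_2\bigr|/\Vert a\Vert_2$. By the reverse triangle inequality this is at most $\Vert a-b\Vert_2/\Vert a\Vert_2$, which yields the factor $2$.

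The only subtle point is the zero-row convention for $B$, and the first case handles it explicitly. The key design choice is to normalise by $\Vert a\Vert_2$, which is guaranteed nonzero, rather than by $\Vert b\Vert_2$. This is what makes the final bound depend only on $\min_i\Vert A_{i\bullet}\Vert_2$ and not on any empirical quantity attached to $B$. Apart from this, I expect no real obstacle: the argument is elementary, and squaring and summing are routine.
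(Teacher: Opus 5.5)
Your proof is correct and follows essentially the same route as the paper. The paper writes the row difference as $\bigl(\Vert B_{i\bullet}\Vert_2(A_{i\bullet}-B_{i\bullet})-B_{i\bullet}(\Vert A_{i\bullet}\Vert_2-\Vert B_{i\bullet}\Vert_2)\bigr)/(\Vert A_{i\bullet}\Vert_2\Vert B_{i\bullet}\Vert_2)$, which is algebraically identical to your insertion of $b/\Vert a\Vert_2$. It then applies the triangle and reverse triangle inequalities, treats zero rows of $B$ separately (noting the sharper bound $1=\Vert A_{i\bullet}-B_{i\bullet}\Vert_2/\Vert A_{i\bullet}\Vert_2$ there), and sums the squared row bounds exactly as you do.
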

\begin{proof}
    First, we note that for $i$ such that $\Vert B_{i\bullet}\Vert_2>0$ it holds that
    \begin{equation}\label{FirstBoundHelpFulLemma}
        \begin{aligned}
             \Vert A^*_{i\bullet} - B^*_{i\bullet}\Vert_2 = \dfrac{1}{\Vert A_{i\bullet}\Vert_2\Vert B_{i\bullet}\Vert_2}\Vert\Vert B_{i\bullet}\Vert_2A_{i\bullet}-B_{i\bullet}\Vert A_{i\bullet}\Vert_2\Vert_2.
        \end{aligned}
    \end{equation}
Observe that 
\begin{equation*}
    \begin{aligned}
        \Vert\Vert B_{i\bullet}\Vert_2A_{i\bullet}-B_{i\bullet}\Vert A_{i\bullet}\Vert_2\Vert_2 &=\Vert\Vert B_{i\bullet}\Vert_2\left(A_{i\bullet}-B_{i\bullet}\right)-B_{i\bullet}\left(\Vert A_{i\bullet}\Vert_2-\Vert B_{i\bullet}\Vert_2\right)\Vert_2 \\
       & \le 
       \Vert\Vert B_{i\bullet}\Vert_2\left(A_{i\bullet}-B_{i\bullet}\right)\Vert_2+\Vert B_{i\bullet}\left(\Vert A_{i\bullet}\Vert_2-\Vert B_{i\bullet}\Vert_2\right)\Vert_2 \\
       &=
        \Vert B_{i\bullet}\Vert_2\Vert A_{i\bullet}-B_{i\bullet}\Vert_2+\Vert B_{i\bullet}\Vert_2\vert \Vert A_{i\bullet}\Vert_2-\Vert B_{i\bullet}\Vert_2\vert \\
        &\le 2\Vert B_{i\bullet}\Vert_2\Vert A_{i\bullet}-B_{i\bullet}\Vert_2.
    \end{aligned}
\end{equation*}
Inserting this into \eqref{FirstBoundHelpFulLemma} yields
\begin{equation}
\Vert A^*_{i\bullet} - B^*_{i\bullet}\Vert_2      \le \dfrac{2\Vert A_{i\bullet} - B_{i\bullet}\Vert_2}{\Vert A_{i\bullet}\Vert_2}.
\end{equation}
Consider now $i$ such that $\Vert B_{i\bullet}\Vert_2 = 0$. Then $B_{i\bullet} = 0$, and by our convention $B^*_{i\bullet} = 0$ as well. Hence
\begin{equation*}
    \Vert A^*_{i\bullet} - B^*_{i\bullet}\Vert_2 = \Vert A^*_{i\bullet}\Vert_2 = 1 = \dfrac{\Vert A_{i\bullet}\Vert_2}{\Vert A_{i\bullet}\Vert_2} = \dfrac{\Vert A_{i\bullet} - B_{i\bullet}\Vert_2}{\Vert A_{i\bullet}\Vert_2},
\end{equation*}
which is even tighter than the bound just established for the case $\Vert B_{i\bullet}\Vert_2 > 0$ by a factor of $2$. We note that this bound in fact holds for any norm, not just the Euclidean norm.
We thus have, that 
\begin{equation*}
    \begin{aligned}
    \dfrac{1}{4}\Vert A^* - B^* \Vert _F^2 &\le \sum_{i=1}^m \left(\dfrac{\Vert A_{i\bullet} - B_{i\bullet}\Vert_2}{\Vert A_{i\bullet}\Vert_2}\right)^2\\
    &\le \sum_{i=1}^m \left(\dfrac{\Vert A_{i\bullet} - B_{i\bullet}\Vert_2}{\min_{i\in\{1,2,\dots, m\}}\Vert A_{i\bullet}\Vert_2}\right)^2
    \\
    &=\dfrac{1}{\min_{i\in\{1,2,\dots, m\}}\Vert A_{i\bullet}\Vert_2^2} \sum_{i=1}^m \left(\Vert A_{i\bullet} - B_{i\bullet}\Vert_2\right)^2
     \\
    &=\dfrac{\Vert A - B\Vert_F^2}{\min_{i\in\{1,2,\dots, m\}}\Vert A_{i\bullet}\Vert_2^2}, 
    \end{aligned}
\end{equation*}
which in turn concludes the proof.
\end{proof}

\begin{lemma}\label{MaximalEigenValueLaplace}
Let \( \mathcal{G} \) be an undirected, weighted graph with \( n \) vertices, and let \( \mathbf{L} = \mathbf{D}^{-1/2} \mathbf{A} \mathbf{D}^{-1/2} \) represent a slightly modified normalized Laplacian matrix, where \( \mathbf{A} \) is the adjacency matrix and \( \mathbf{D} \) is the absolute degree matrix, that it is the diagonal matrix with $(i,i)$th entry equal to $\mathbf{D}_{ii} = \sum_{j=1}^n \vert A_{ij} \vert$. Then $\Vert  \mathbf{L} \Vert \le 1$.
\end{lemma}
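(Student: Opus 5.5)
Since $\mathbf{L}$ is symmetric (the graph is undirected, so $\mathbf{A}=\mathbf{A}^\top$), its spectral norm equals $\sup_{\Vert x\Vert_2=1}\vert x^\top \mathbf{L} x\vert$. It therefore suffices to show $\vert x^\top \mathbf{D}^{-1/2}\mathbf{A}\mathbf{D}^{-1/2}x\vert \le \Vert x\Vert_2^2$ for all $x$.

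Vertices with $\mathbf{D}_{ii}=0$ have no incident edges. With the pseudo-inverse convention for $\mathbf{D}^{-1/2}$, the corresponding rows and columns of $\mathbf{L}$ vanish. We may therefore restrict attention to vertices with positive degree and set $y=\mathbf{D}^{-1/2}x$, i.e.\ $y_i = x_i/\sqrt{\mathbf{D}_{ii}}$.

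The key step is a weighted AM--GM (Cauchy--Schwarz) argument with absolute weights. Because the degrees are defined through $\vert A_{ij}\vert$, negative weights cause no difficulty. We write
\begin{equation*}
\vert x^\top \mathbf{L} x\vert = \Big\vert\sum_{i,j} A_{ij} y_i y_j\Big\vert \le \sum_{i,j}\vert A_{ij}\vert\,\vert y_i\vert\,\vert y_j\vert \le \sum_{i,j}\vert A_{ij}\vert\,\frac{y_i^2+y_j^2}{2}.
\end{equation*}
By symmetry of $\vert A_{ij}\vert$, the right-hand side equals $\sum_i y_i^2 \sum_j \vert A_{ij}\vert = \sum_i \mathbf{D}_{ii} y_i^2$. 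Substituting back, this is $\sum_{i:\mathbf{D}_{ii}>0} x_i^2 \le \Vert x\Vert_2^2$, which gives $\Vert \mathbf{L}\Vert\le 1$.

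An equivalent route avoids the quadratic form. One notes that $\Vert \mathbf{L}\Vert$ is at most the spectral radius of the nonnegative matrix $\mathbf{D}^{-1/2}\vert\mathbf{A}\vert\mathbf{D}^{-1/2}$. That matrix is similar to the row-stochastic (or substochastic) matrix $\mathbf{D}^{-1}\vert\mathbf{A}\vert$, whose spectral radius is at most $1$.

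The only delicate point is the bookkeeping for zero-degree vertices and the pseudo-inverse. The main estimate itself is routine. One should also note, for later use in the paper, that symmetry of $\mathbf{A}$ is essential here. For the directed $\Psi$, the lemma would instead be applied to the symmetrized object, where the degree in the definition of $D$ sums in- and out-weights precisely so that this argument applies.
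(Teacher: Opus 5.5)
Your proof is correct, but your main argument takes a different route from the paper's.

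\emph{The paper's route.} The paper works with the non-symmetric matrix $\mathbf{D}^{-1}\mathbf{A}$ and proceeds in three steps:
\begin{itemize}
\item By Gershgorin, every eigenvalue of $\mathbf{D}^{-1}\mathbf{A}$ has modulus at most $\vert A_{ii}\vert/\mathbf{D}_{ii}+\sum_{j\neq i}\vert A_{ij}\vert/\mathbf{D}_{ii}=1$.
\item The similarity $\mathbf{D}^{-1}\mathbf{A}=\mathbf{D}^{-1/2}\mathbf{L}\mathbf{D}^{1/2}$ (the device from the proof of \Cref{Thm.1}) transfers this bound to the spectrum of $\mathbf{L}$.
\item Symmetry of $\mathbf{L}$ turns the spectral-radius bound into the norm bound.
\end{itemize}
Your secondary ``substochastic'' route is essentially this proof. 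Passing to $\vert\mathbf{A}\vert$ is not needed there, since $\Vert\mathbf{D}^{-1}\mathbf{A}\Vert_\infty=1$ already bounds the spectral radius.

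\emph{Your main route.} You bound the quadratic form directly by the weighted AM--GM (Schur-test) inequality. This needs no eigenvalue localisation and no similarity transform, and it buys two things.
\begin{itemize}
\item It treats isolated vertices more cleanly. The paper's similarity step implicitly takes $\mathbf{D}$ invertible; with the pseudo-inverse one must restrict to the positive-degree block, which you do explicitly.
\item It gives the two-sided bound $-I\preceq\mathbf{L}\preceq I$ in one line.
\end{itemize}
The paper's version is shorter, because the similarity argument is already available, and it matches the Gershgorin reasoning used elsewhere in the paper (e.g.\ \Cref{Thm.1,Prop.5,Prop.2}).
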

\begin{proof}
    First consider $ \mathbf{\tilde{L}} = \mathbf{D}^{-1} \mathbf{A}$. Clearly, for any $i\in\{1,\dots,n\}$, the disc centred at $\tilde{\bold{L}}_{ii}$ has radius $R_i$ with
    \begin{equation*}
            R_i= \dfrac{\sum_{j\ne i} \vert \mathbf{A}_{ij}\vert}{\mathbf{D}_{ii}}.
    \end{equation*}
    Following Gershgorin circle theorem, it holds that any eigenvalue of $\mathbf{\tilde{L}}$ satisfies that $\vert\lambda_j(\mathbf{\tilde{L}}) \vert \le \max_i \vert \mathbf{\tilde{L}}_{ii} \pm R_i \vert\le \vert \mathbf{\tilde{L}}_{ii}\vert + R_i = 1$. Following the proof to \Cref{Thm.1} the eigenvalues of  $\mathbf{\tilde{L}}$ and $\mathbf{L}$ are equal. Now since $\mathbf{L}$ is symmetric, the result follows.
\end{proof}
\begin{lemma}\label{ConcentrationLemma}
    Consider a random undirected and weighted graph on $n$ vertices $\mathcal{G}=(\mathcal{V}, \mathcal{E}, \mathcal{W})$. For all $i, j$ let $W_{i j}$ be a random variable which is supported on the interval $\left[\underline{w}_{i j}, \bar{w}_{i j}\right]$ where $\underline{w}_{i j}, \bar{w}_{i j}$ are real numbers such that $\underline{w}_{i j} \leq \bar{w}_{i j}$, and has mean $\mu_{i j}\ne0$ and variance $\sigma_{i j}^2$, with all $W_{ij}$ drawn independently of each other. For all edges $(i, j)$ in $\mathcal{E}$, the edge weight is then given by $w_{(i, j)}=W_{i j}$. Let $\mathbf{A}$ be the adjacency matrix of the graph; let $\mathbf{D}$ be the \emph{absolute} degree matrix, a diagonal matrix whose entries are the row (or column) sums of the absolute values in $\mathbf{A}$; and let $\mathbf{L}$ be the Laplacian, defined as $\mathbf{D}^{-1 / 2} \mathbf{A D}^{-1 / 2}$. Let $\mathcal{A}=\mathbb{E}(\mathbf{A}), \mathcal{D}=\mathbb{E}(\mathbf{D})$ and $\mathcal{L}=\mathcal{D}^{-1 / 2} \mathcal{A} \mathcal{D}^{-1 / 2}$ be their population analogues. Let $\bar{d}_i \equiv \sum_{j=1}^n \mathbb{E}\left(\vert\mathbf{A}_{ij}  \vert\right) = \sum_{j=1}^n  p_{ij} \bar{\mu}_{ij}$ where $\bar{\mu}_{ij} = \mathbb{E}\left(\vert \bold{W}_{ij}\vert\right)$. Then $\bar{d}_{\min }=\min _i \bar{d}_i$ is the minimum expected degree of $\mathcal{G}$ and $\bar{d}_{\max }=\max _i \bar{d}_i$ its maximum. Define $v \equiv 2 \max_{ij}\max\left(\vert  \bar{w}_{i j}\vert ,\vert  \underline{w}_{i j}\vert \right) +\max_{ij} \dfrac{\sigma_{ij}^2}{\vert \mu_{ij} \vert} $.
Then, for any constant $c>0$ there exists another constant $C>0$ independent of $n$ and the edge probabilities, such that if $\bar{d}_{\max }>C \log n$, then, for all $n^{-c} \leq \delta \leq 1 / 2$,
$$
\mathbb{P}\left(\|\mathbf{A}-\mathcal{A}\| \leq 4 \sqrt{\nu \bar{d}_{\text {max }} \log (2 n / \delta)}\right) \geq 1-\delta .
$$
Moreover, if $\bar{d}_{\min } \geq C \log n$, then, for all $n^{-c} \leq \delta \leq 1 / 2$,
$$
\mathbb{P}\left(\|\mathbf{L}-\mathcal{L}\| \leq 14 \sqrt{\frac{\nu \log (4 n / \delta)}{\bar{d}_{\min }}}\right) \geq 1-\delta.
$$
\end{lemma}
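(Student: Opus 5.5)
This is the weighted-edge analogue of the concentration result of \citet{Qin2013} (their Theorem 4.1, building on Chung--Radcliffe), and I would follow that proof with two modifications: the edge weights are signed and bounded, and the degree uses absolute values. For the first inequality I would write
\[
\mathbf{A}-\mathcal{A}=\sum_{i\le j} X_{ij},
\]
where each $X_{ij}$ is the self-adjoint matrix supported on the entries $(i,j)$ and $(j,i)$, equal to $(\mathbf{A}_{ij}-\mathcal{A}_{ij})(E_{ij}+E_{ji})$, with the diagonal handled analogously. These summands are independent and have mean zero.

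Since $|\mathbf{A}_{ij}|\le \max(|\bar w_{ij}|,|\underline w_{ij}|)$ and $|\mathcal{A}_{ij}|$ obeys the same bound, we get $\|X_{ij}\|\le 2\max_{ij}\max(|\bar w_{ij}|,|\underline w_{ij}|)\le \nu$. For the variance proxy, $\sum X_{ij}^2$ is diagonal with $i$-th entry $\sum_j \mathrm{Var}(\mathbf{A}_{ij})$. Here
\[
\mathrm{Var}(\mathbf{A}_{ij})\le \mathbb{E}\mathbf{A}_{ij}^2=p_{ij}(\sigma_{ij}^2+\mu_{ij}^2).
\]
The key step is to bound this by $\nu\,p_{ij}\bar\mu_{ij}$. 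We have $\mu_{ij}^2\le |\mu_{ij}|\max|w|\le \bar\mu_{ij}\max|w|$ and $\sigma_{ij}^2=(\sigma_{ij}^2/|\mu_{ij}|)|\mu_{ij}|\le (\sigma_{ij}^2/|\mu_{ij}|)\bar\mu_{ij}$. This is exactly where the $\sigma^2/|\mu|$ term in $\nu$ and the hypothesis $\mu_{ij}\neq 0$ enter. Hence $\|\sum \mathbb{E}X_{ij}^2\|\le \nu\bar d_{\max}$. The matrix Bernstein inequality then gives
\[
\mathbb{P}(\|\mathbf{A}-\mathcal{A}\|\ge a)\le 2n\exp\!\Big(-\frac{a^2}{2\nu\bar d_{\max}+2\nu a/3}\Big).
\]
Taking $a=4\sqrt{\nu\bar d_{\max}\log(2n/\delta)}$ and using $\bar d_{\max}>C\log n\ge C'\log(2n/\delta)$ (since $\delta\ge n^{-c}$) gives $a\le 3\bar d_{\max}$, so the exponent is at least $\log(2n/\delta)$.

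For the Laplacian I would use the decomposition of \citet{Qin2013}:
\[
\mathbf{L}-\mathcal{L}=\bigl(\mathbf{D}^{-1/2}\mathbf{A}\mathbf{D}^{-1/2}-\mathcal{D}^{-1/2}\mathbf{A}\mathcal{D}^{-1/2}\bigr)+\mathcal{D}^{-1/2}(\mathbf{A}-\mathcal{A})\mathcal{D}^{-1/2}.
\]
The second term is at most $\|\mathbf{A}-\mathcal{A}\|/\bar d_{\min}$, which is controlled by the first part. Applied with $\delta/2$, and using $\bar d_{\max}$ versus $\bar d_{\min}$, I would actually rerun Bernstein on the rescaled matrix $\mathcal{D}^{-1/2}(\mathbf{A}-\mathcal{A})\mathcal{D}^{-1/2}$. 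Its summands have norm at most $\nu/\bar d_{\min}$ and variance proxy at most $\nu/\bar d_{\min}$, since $\sum_j p_{ij}\bar\mu_{ij}/(\bar d_i\bar d_j)\le 1/\bar d_{\min}$. This yields $4\sqrt{\nu\log(4n/\delta)/\bar d_{\min}}$.

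For the first term, I would write $\mathbf{D}^{-1/2}\mathcal{D}^{1/2}=I+F$, which gives the bound $\|F\|\,\|\mathbf{L}\|(2+\|F\|)$. Here $\|\mathbf{L}\|\le 1$ by \Cref{MaximalEigenValueLaplace}; this is precisely why the absolute degree is used. It remains to bound $\|F\|=\max_i|\sqrt{\bar d_i/\mathbf{D}_{ii}}-1|\le \max_i |\mathbf{D}_{ii}-\bar d_i|/\bar d_i$.

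The main obstacle is this degree concentration. Each $\mathbf{D}_{ii}=\sum_j|\mathbf{A}_{ij}|$ is a sum of independent variables bounded by $\max|w|\le\nu$, with variance at most $\max|w|\,\bar d_i$. Scalar Bernstein plus a union bound over $i$ gives $|\mathbf{D}_{ii}-\bar d_i|\le \sqrt{3\nu\bar d_i\log(4n/\delta)}$ with probability $1-\delta/2$, again using $\bar d_{\min}\ge C\log n$ to absorb the linear term. This gives $\|F\|\le\sqrt{3\nu\log(4n/\delta)/\bar d_{\min}}\le 1$ for $C$ large. The first term is then at most $3\|F\|\le 3\sqrt3\sqrt{\nu\log(4n/\delta)/\bar d_{\min}}$, and together with the second term the sum is below $14\sqrt{\cdot}$ once the constants are tracked. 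The care needed is in checking that $\nu$ dominates both $\max|w|$ and the variance ratios uniformly, so that a single constant $C$ depending only on $c$ works.
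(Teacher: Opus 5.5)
Your proposal is correct and takes essentially the same route as the paper. Both arguments apply a matrix Bernstein bound to $\mathbf{A}-\mathcal{A}$ and to the rescaled $\mathcal{D}^{-1/2}(\mathbf{A}-\mathcal{A})\mathcal{D}^{-1/2}$ (the paper uses Oliveira's (2009) Corollary 7.1, you use the Tropp form), use scalar Bernstein plus a union bound for the absolute degrees, and pass through $\mathcal{M}=\mathcal{D}^{-1/2}\mathbf{A}\mathcal{D}^{-1/2}$ together with $\|\mathbf{L}\|\le 1$ from \Cref{MaximalEigenValueLaplace}.

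There is one small orientation slip. With $\mathbf{D}^{-1/2}\mathcal{D}^{1/2}=I+F$ you get $\mathbf{L}=(I+F)\mathcal{M}(I+F)$, so the bound involves $\|\mathcal{M}\|$, not $\|\mathbf{L}\|$. Orient it as the paper does, $\mathcal{D}^{-1/2}\mathbf{D}^{1/2}=I+E$, so that $\mathcal{M}-\mathbf{L}=E\mathbf{L}+\mathbf{L}E+E\mathbf{L}E$. Then $\|E\|=\max_i|\sqrt{\mathbf{D}_{ii}/\bar d_i}-1|\le\max_i|\mathbf{D}_{ii}/\bar d_i-1|$ holds unconditionally, whereas your bound on $|\sqrt{\bar d_i/\mathbf{D}_{ii}}-1|$ needs $\mathbf{D}_{ii}$ bounded below by a fixed fraction of $\bar d_i$.
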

\begin{proof}
We reproduce the proof here for completeness, adapting Lemma~A.1 of \citet{Gudhmundsson2021} (which in turn builds on \cite{Oliveira2009}) to accommodate signed edge weights. The setup and proof scheme follow theirs closely; we flag the points where the modification for signed $W_{ij}$ enters. For all $1\le i,j \le n$, let $B_{ij}$ be independent Bernoulli variables with parameters $p_{ij}$. Let $\bold{e_1,e_2,\dots,e_n}$ be the standard basis on $\mathbb{R}^n$. For any $i,j \in \{1,2,\dots,n\}$, let 
    \begin{equation*}
        I_{ij} = \begin{cases}\boldsymbol{e}_i \boldsymbol{e}_j^{\top}+\boldsymbol{e}_j \boldsymbol{e}_i^{\top}, & i \neq j, \\ \boldsymbol{e}_i \boldsymbol{e}_i^{\top}, & i=j .\end{cases}
    \end{equation*}
We notice that $  \sum_{1 \le i \le j \le n}I_{ij}B_{ij}W_{ij}=\mathbf{A}$ whereas $ \sum_{1 \le i \le j \le n}I_{ij}p_{ij}\mu_{ij}=\mathcal{A}$. We introduce
\begin{equation*}
    \mathbf{X}_{ij} = \left( B_{ij}W_{ij} - p_{ij}\mu_{ij} \right)I_{ij}, \quad \text{for} 1\le i \le j \le n.
\end{equation*}
Clearly,
\begin{equation}\label{Equality}
    \sum_{1 \le i \le j \le n} \mathbf{X}_{ij} = \mathbf{A} - \mathcal{A}.
\end{equation}
Following, the above, it is clear that bounding the spectral norm of $\mathbf{A} - \mathcal{A}$ boils down to bounding the spectral norm of $\sum_{1 \le i \le j \le n} \mathbf{X}_{ij}$. To do this, we will use Corollay $7.1$ of \cite{Oliveira2009}. In light of this, we notice that $\Vert \mathbf{X}_{ij} \Vert= \vert B_{ij}W_{ij}  -p_{ij}\mu_{ij}\vert \Vert I_{ij} \Vert \le 2 \max\left(\vert  \bar{w}_{i j}\vert ,\vert  \underline{w}_{i j}\vert \right)$, following the triangle inequality and the fact that the eigenvalues of $I_{ij}$ are in $\{-1,0,1\}$. The two-sided support bound $\max(\vert\bar w_{ij}\vert,\vert\underline w_{ij}\vert)$ is the first place where our setting deviates from \citet{Gudhmundsson2021}: their argument uses only the upper bound $\bar w_{ij}$, which suffices when $W_{ij}\ge 0$ but not in our signed-weight setting. Following identical arguments as in the proof of Lemma $A.1$ in\citealp{Gudhmundsson2021}, it can be shown that the maximal eigenvalue of $ \sum_{1 \le i \le j \le n} \mathbb{E}\left(\mathbf{X}_{ij}^2\right) $ is given by
\begin{equation}\label{eqn:spectralNorm}
    \lambda_{\text{max}}\left( \sum_{1 \le i \le j \le n} \mathbb{E}\left(\mathbf{X}_{ij}^2\right)\right) = \max_{i\in\{1,2,\dots,n\}} \sum_{j=1}^n \mathrm{Var}\left( B_{ij}W_{ij}\right).
\end{equation}
Using the same arguments as in \cite{Gudhmundsson2021}, due to our slightly different setting, we get the following bound 
\begin{equation}\label{VarBound}
    \mathrm{Var}\left( B_{ij}W_{ij}\right) \le p_{ij}\vert \mu_{ij} \vert\left( \max_{ij} \vert \mu_{ij} \vert +\max_{ij} \dfrac{\sigma_{ij}^2}{\vert \mu_{ij} \vert} \right).
\end{equation}
Using \eqref{eqn:spectralNorm} and \eqref{VarBound}, we get that 
\begin{equation*}
    \lambda_{\text{max}}\left( \sum_{1 \le i \le j \le n} \mathbb{E}\left(\mathbf{X}_{ij}^2\right) \right) \le v \bar{d}_{\max},
\end{equation*}
where $v = \left( 2\max_{ij} \max\left(\vert  \bar{w}_{i j}\vert ,\vert  \underline{w}_{i j}\vert \right) +\max_{ij} \dfrac{\sigma_{ij}^2}{\vert \mu_{ij} \vert} \right)$. Let $\sigma^2 := \lambda_{\text{max}}\left( \sum_{1 \le i \le j \le n} \mathbb{E}\left(\mathbf{X}_{ij}^2\right) \right)$, $M=2 \max_{ij}\max\left(\vert  \bar{w}_{i j}\vert ,\vert  \underline{w}_{i j}\vert \right)\le v$ and finally $d=n$. 
Then, following Corollary $7.1$ of \cite{Oliveira2009}, for all $t \geq 0$,
\begin{equation}\label{Firstbound}
\mathbb{P}\left(\Vert \sum_{1 \le i \le j \le n} \mathbf{X}_{ij}\Vert \geq t\right) \leq 2 n e^{-\frac{t^2}{8 \sigma^2+4 M t}} \le 2 n e^{-\frac{t^2}{v(8 \bar{d}_{\max}+4 t})}.
\end{equation}
Fix $c>0$, and assume $n^{-c} \le \delta \le 1/2$. Let $t=4 \sqrt{v \bar{d}_{\max} \log(2n/\delta)}$, then we can choose a constant $C$ independent of $n$ and each $p_{ij}$ such that if $\bar{d}_{\max} \ge C \log n$, then $t \le 2 \bar{d}_{\max}$. Take for instance $C = 8v(1+c)$. If $\bar{d}_{\max} = \Omega(\log n)$ such a $C$ will always exist simply by choosing $c$ small enough. To see that this $C$ does the job, note that
\begin{equation*}
    \begin{aligned}
    \log(2n/\delta) &\le \log(2n/n^{-c}) \\
    &\le \log(2n^{1+c}) \\
    &\le \log((2n)^{1+c}) \\
    &= (1+c)\log(2n) \\
    &\le (1+c)2\log(n),
    \end{aligned}
\end{equation*}
where in the last inequality, we used that $n\ge2$, since $n^{-c}\le 1/2$. Now, clearly
\begin{equation*}
    \begin{aligned}
    t &= 4 \sqrt{v \bar{d}_{\max} \log(2n/\delta)} \\
     &= 4 \sqrt{v \bar{d}_{\max} 2(1+c)\log n} \\
     &=  \sqrt{4 \bar{d}_{\max} 8v(1+c)\log n} \\
     &=  \sqrt{4 \bar{d}_{\max} C\log n} \\
     &\le 2 \sqrt{\bar{d}^2_{\max}} \\
     &=  2 \bar{d}_{\max}.
    \end{aligned}
\end{equation*}
It follows that, for $n$ satisfying that $n^{-c}\le \delta \le 1/2$, following \eqref{Firstbound},
\begin{equation*}
    \begin{aligned}
\mathbb{P}\left(\Vert \sum_{1 \le i \le j \le n}  \mathbf{X}_{ij}\Vert \geq t\right) &\le 2 n e^{-\frac{t^2}{(8  \bar{d}_{\max}+4 t)}} \\
&= 2 n e^{-\frac{16v \bar{d}_{\max} \log(2n/\delta)}{v(8 \bar{d}_{\max}+8 \bar{d}_{\max})}} \\
&= \delta,
\end{aligned}
\end{equation*}
which proves the first assertion, following \eqref{Equality}. 
To see the second, we first define new variables: $d_i \equiv \sum_{j=1}^n \vert\mathbf{A}_{ij}  \vert=\sum_{j=1}^n  B_{ij} \vert W_{ij}  \vert$. Then $d_i - \bar{d}_i = \sum_{j=1}^n \left( B_{ij}\vert W_{ij} \vert - p_{ij}\bar{\mu}_{ij}\right)$. Again, we wish to apply Corollary $7.1$ of \cite{Oliveira2009}. We will therefore redefine $M$ and $\sigma^2$ to be consistent with the notation of this corollary.  Now on $\vert d_i - \bar{d}_i \vert = \big\vert \sum_{j=1}^n \big( B_{ij}\vert W_{ij} \vert-p_{ij}\bar{\mu}_{ij}\big) \big\vert$, i.e. treating each summand $B_{ij}\vert W_{ij} \vert-p_{ij}\bar{\mu}_{ij}$ as a $1\times 1$ random matrix. We thus notice, that $d=1$ and reiterate that we interpret the scalar as a $1\times 1$ matrix. Notice, that the only eigenvalue of this matrix/scalar is then the scalar itself. Also, $\mathbb{E}\left(\vert B_{ij}\vert W_{ij} \vert-p_{ij} \bar{\mu}_{ij} \vert^2  \right)=\mathrm{Var}\left( B_{ij} \vert W_{ij} \vert \right) \le \mathrm{Var}\left( B_{ij} W_{ij}\right)$. Using \eqref{VarBound} and that $\vert \mu_{ij}\vert \le \bar{\mu}_{ij}$, following Jensens inequality, we therefore have that  $\sigma^2 \le \bar{d}_iv $. We may bound the absolute value of each summand by the same upper bound, $M$ as before, as we following the Triangle Inequality may establish that, $\vert B_{ij}\vert W_{ij} \vert-p_{ij} \bar{\mu}_{ij}\vert \le M \le v$. Following Corollary $7.1$ of \cite{Oliveira2009}, we then have that 
\begin{equation*}
    \mathbb{P}\left(\vert d_i - \bar{d}_i \vert\ge r\right) \le 2 e^{-\dfrac{r^2}{8\sigma^2 + 4Mr}} \le 2 e^{-\dfrac{r^2}{8v\bar{d}_i + 4vr}}.
\end{equation*}
We now use that $\vert d_i - \bar{d}_i\vert = \bar{d}_i\vert \dfrac{d_i}{\bar{d}_i} - 1\vert$. Then, by taking $r=t\bar{d}_i$, we have that 
\begin{equation*}
    \mathbb{P}\left( \vert \dfrac{d_i}{\bar{d}_i} - 1\vert \ge t\right) \le 2 e^{-\dfrac{t^2\bar{d}_i^2}{8v\bar{d}_i + 4vt\bar{d}_i}}=2 e^{-\dfrac{t^2\bar{d}_i}{8v + 4vt}}\le 2 e^{-\dfrac{t^2\bar{d}_{\min}}{8v + 4vt}}.
\end{equation*}
Again, we fix $c>0$ and consider $n,\delta$ such that $n^{-c}\le \delta \le 1/2$. Again, we note that there exist a $C$ that only depends on $c$ and $W_{ij}$(in particular it is independent of $n$ and $p$) such that for $\bar{d}_{\min} \ge C\log n$, then $t=4\sqrt{v \log(4n/\delta)/\bar{d}_{\min}}\le 1/2$. Once more, for completeness, we note that $C = v(1+c)64\cdot4$ does the trick. Indeed, then, for $n \ge 2$,
\begin{equation*}
    \begin{aligned}
        4\sqrt{\dfrac{v \log(4n/\delta)}{\bar{d}_{\min}}} &= \sqrt{\dfrac{16v \log(4n/\delta)}{\bar{d}_{\min}}} \\
        &\le \sqrt{\dfrac{64v(1+c) \log(n)}{\bar{d}_{\min}}} \\&\le 
        \sqrt{\dfrac{64v(1+c) }{C}} \\
        &= \sqrt{\dfrac{64v(1+c) }{64v(1+c)4}}  \\
        &= \sqrt{\dfrac{1}{4}} = 1/2 .
    \end{aligned}
\end{equation*}
We note that clearly also $t\le 3/4 \le 2$. Hence, following Corollary $7.1$ of \cite{Oliveira2009}, we have, for all $i$, that 
\begin{equation*}
    \mathbb{P}\left( \vert \dfrac{d_i}{\bar{d}_i} - 1\vert \ge t\right) \le 2 e^{-\dfrac{16 v\log(4n/\delta)}{8v + 4vt}} \le 2 e^{-\dfrac{16 v\log(4n/\delta)}{16v}} = \dfrac{\delta}{2n}.
\end{equation*}
In turn, we conclude that with probability greater than or equal to $1-\dfrac{\delta}{2}$,
\begin{equation*}
    \forall i \in \{ 1,2,\dots,n\},\quad \vert \dfrac{d_i}{\bar{d}_i} - 1\vert  \le 4\sqrt{v \log(4n/\delta)/\bar{d}_{\min}}.
\end{equation*}
This will be important for the next step. Using identical arguments to \cite{Oliveira2009} in the proof of Theorem $3.1$ and the fact that $t$ is chosen to be less than $3/4$, it can be shown that it then also holds that, with probability $1-\dfrac{\delta}{2}$,
\begin{equation}\label{ForallI}
    \forall i \in \{ 1,2,\dots,n\},\quad \vert \sqrt{\dfrac{d_i}{\bar{d}_i}} - 1\vert  \le 4\sqrt{v \log(4n/\delta)/\bar{d}_{\min}}.
\end{equation}
We now let $\mathbf{T} = \mathbf{D}^{-1/2}$, where $\mathbf{D}$ is the diagonal matrix with $(i,i)$th entry given by $\sum_{j=1}^n B_{ij}\vert W_{ij}\vert$ which following \eqref{ForallI} is well defined with high probability if $\bar{d}_{\min}$ tends sufficiently fast to $\infty$, i.e. if $4\sqrt{v \log(4n/\delta)/\bar{d}_{\min}} \rightarrow c\text{ as } n \rightarrow \infty$ for some $c<1$. Similarly, let $\mathcal{T}=\mathcal{D}^{-1/2}$ be the population analogous. As in \cite{Gudhmundsson2021}, we introduce the intermediate operator $\mathcal{M}= \mathcal{T}\mathbf{A}\mathcal{T}$. We notice the following things. First of all, $\Vert \mathcal{T}\mathbf{T}^{-1} -I \Vert = \max_{i}\vert \sqrt{\dfrac{d_i}{\bar{d}_i}} -1 \vert$ and also, $\mathcal{M} = (\mathcal{T}\mathbf{T}^{-1})\mathbf{L}(\mathcal{T}\mathbf{T}^{-1})$, where $\mathbf{L}=\mathbf{T}\mathbf{A}\mathbf{T}$, since diagonal matrices commute. We wish to bound the distance of $\mathcal{L}-\mathbf{L}$ where $\mathcal{L} = \mathcal{T}\mathcal{A}\mathcal{T}$. To do so, we bound their respective distances to $\mathcal{M}$. Writing $E = \mathcal{T}\mathbf{T}^{-1}-I$, we have $\mathcal{M}-\mathbf{L} = E\mathbf{L}+\mathbf{L}E+E\mathbf{L}E$, so that $\Vert \mathcal{M}-\mathbf{L}\Vert \le \left(2\Vert E\Vert + \Vert E\Vert^2\right)\Vert \mathbf{L}\Vert$. By \Cref{MaximalEigenValueLaplace}, $\Vert \mathbf{L}\Vert \le 1$, and by \eqref{ForallI}, $\Vert E\Vert = \max_i\vert \sqrt{d_i/\bar{d}_i}-1\vert \le 4\sqrt{v\log(4n/\delta)/\bar{d}_{\min}} \le 1/2$; hence $\Vert E\Vert^2 \le \tfrac12\Vert E\Vert$ and $\Vert \mathcal{M}-\mathbf{L}\Vert \le \tfrac52\Vert E\Vert$. Thus, with probability $1-\dfrac{\delta}{2}$,
\begin{equation}\label{MBound1}
    \Vert \mathcal{M} - \mathbf{L} \Vert \le   10\sqrt{\dfrac{v \log(4n/\delta)}{\bar{d}_{\min}}} .
\end{equation}
Now, turning our attention to $\Vert \mathcal{M}-\mathcal{L} \Vert$, we notice that $\mathcal{M}-\mathcal{L} = \sum_{1\le i \le j \le n} \mathcal{T}\mathbf{X}_{ij}\mathcal{T}$, where once more, $\mathbf{X}_{ij} = \left(B_{ij} W_{ij} - p_{ij} \mu_{ij} \right)I_{ij}$. Define
\begin{equation*}
    \mathbf{Y}_{ij} = \mathcal{T}\mathbf{X}_{ij}\mathcal{T} = \dfrac{B_{ij} W_{ij} - p_{ij} \mu_{ij}}{\sqrt{\bar{d}_i\bar{d}_j}}I_{ij}.
\end{equation*}
Then $\mathcal{M}-\mathcal{L} = \sum_{1\le i \le j \le n} \mathbf{Y}_{ij}$. Since the eigenvalues of $I_{ij} \in \{-1,1,0 \}$, clearly the eigenvalues of $Y_{ij}$ are in $\{ \dfrac{\pm \left(B_{ij} W_{ij} - p_{ij} \mu_{ij} \right) }{\sqrt{\bar{d}_i\bar{d}_j}},\dfrac{\pm  p_{ij} \mu_{ij} }{\sqrt{\bar{d}_i\bar{d}_j}},0\}$. Hence, we have that $\Vert Y_{ij}\Vert \le \dfrac{v}{\sqrt{\bar{d}_i\bar{d}_j}}\le \dfrac{v}{\bar{d}_{\min}}$. As we will once more invoke Corollary $7.1$ of \cite{Oliveira2009}, this will serve as our $M$. It also holds that
\begin{equation*}
    \sum_{1\le i \le j \le n} \mathbb{E}\left(\mathbf{Y}_{ij}^2\right) \le \sum_{1\le i \le j \le n} \dfrac{\mathrm{Var}(B_{ij}W_{ij})}{\bar{d}_i\bar{d}_j}I_{ij}^2,
\end{equation*}
since the above matrix is diagonal, we have by the same arguments as in \cite{Gudhmundsson2021}, that $\sigma^2 \le \dfrac{v}{\bar{d}_{\min}}$. Following, Corollary $7.1$ of \cite{Oliveira2009}, with $t= 4\sqrt{v\log(4n/\delta)/\bar{d}_{\min}}$ we then have that 
\begin{equation}\label{MBound2}
    \begin{aligned}
\mathbb{P}\left(\|\mathcal{M}-\mathcal{L}\| \ge 4 \sqrt{\frac{\nu \log (4 n / \delta)}{\bar{d}_{\text {min }}}}\right) &\le 2n e^{-\dfrac{16v \log(4n/\delta)/\bar{d}_{\min}}{8\sigma^2 + 4 Mt}}\\ &\le 2n e^{-\dfrac{16v \log(4n/\delta)/\bar{d}_{\min}}{8v/\bar{d}_{\min} +8v/\bar{d}_{\min}  }} \\
&= \dfrac{\delta}{2},
    \end{aligned}
\end{equation}
where we used that $C$ is chosen so that $t\le 2$ and $\sigma^2,M \le \dfrac{v}{\bar{d}_{\min}}$. Using \eqref{MBound1} and \eqref{MBound2} together with the triangle inequality then gives the desired result. 
\end{proof}

\begin{proposition}\label{Prop.2}
    Suppose \Cref{Ass.NodeSum,Ass.2} are satisfied. Let $\Psi^s_p$ be the symmetrized population network effect associated to $\Psi$ as defined in \ref{SBOUProc}. Let $\bar{\Psi}^s_p$ be the same quantity, based on a GSBM. Introduce $\bar{\Psi}^s_{p-} = \bar{\Psi}^s_p-\mathrm{diag}(\bar{\Psi}^s_p) $. Then,  
    \begin{equation*}
        \Vert \Psi^s_p-\bar{\Psi}^s_{p-} \Vert = O\left( \dfrac{1}{\sigma_d} \right), \quad \text{ as } d\rightarrow \infty.
    \end{equation*}
\end{proposition}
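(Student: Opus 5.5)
We compare the two population objects entrywise. By \Cref{Lemma:1} we have the explicit form $\bar{\Psi}^s_p = \frac{\mu}{\bar\mu}\psi_2\,\Theta_{\sigma_d}^{1/2}ZB_LZ^\top\Theta_{\sigma_d}^{1/2}$. Hence, for $i\neq j$,
\[
[\bar\Psi^s_p]_{ij}=\frac{\mu}{\bar\mu}\psi_2\frac{\sqrt{\theta_i\theta_j}}{\sigma_d}[B_L]_{s(i)s(j)} .
\]
We derive the analogous expression for $\Psi^s_p=\psi_2(\mathcal D^{-1/2}\mathcal A^\top\mathcal D^{-1/2}+\mathcal D^{-1/2}\mathcal A\mathcal D^{-1/2})$ in the GSBM2. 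There $\mathcal A_{ij}=\mu\theta_i\theta_j b_{s(i)s(j)}$ for $i\ne j$ and $\mathcal A_{ii}=0$, while
\[
\mathcal D_{ii}=\bar\mu\theta_i\Bigl(\sum_{j\ne i}\theta_j\bigl(b_{s(i)s(j)}+b_{s(j)s(i)}\bigr)\Bigr)=\bar\mu\theta_i\bigl(\sigma_d[D_B]_{s(i)s(i)}-2\theta_i b_{s(i)s(i)}\bigr)
\]
by \Cref{Ass.NodeSum}. The GSBM degree is $\bar{\mathcal D}_{ii}=\bar\mu\theta_i\sigma_d[D_B]_{s(i)s(i)}$. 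Thus, off the diagonal, the only difference between $\Psi^s_p$ and $\bar\Psi^s_{p-}$ comes from the missing self-loop term $2\theta_i b_{s(i)s(i)}$ in the degrees. On the diagonal, $\Psi^s_p$ is zero and $\bar\Psi^s_{p-}$ is zero by construction, so the difference matrix has zero diagonal.

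Writing $\mathcal D_{ii}=\bar{\mathcal D}_{ii}(1-\epsilon_i)$ with $\epsilon_i=2\theta_i b_{s(i)s(i)}/(\sigma_d[D_B]_{s(i)s(i)})$, we use $\theta_i\le 1$ and the fact that $\mathbf B$, hence $D_B$, is fixed with positive diagonal (by positive definiteness of $\mathbf B+\mathbf B^\top$ in \Cref{Ass.2}). This gives $\max_i\epsilon_i=O(1/\sigma_d)$. We then express the difference as
\[
\Psi^s_p-\bar\Psi^s_{p-}=(E\,\bar\Psi^s_{p-}E)-\bar\Psi^s_{p-},\qquad E=\mathrm{diag}\bigl((1-\epsilon_i)^{-1/2}\bigr).
\]
This holds because both matrices share the same numerator $\mathcal A+\mathcal A^\top$ off the diagonal. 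Setting $F=E-I$, so that $\|F\|=O(1/\sigma_d)$, the difference equals $F\bar\Psi^s_{p-}+\bar\Psi^s_{p-}F+F\bar\Psi^s_{p-}F$. Its operator norm is therefore at most $(2\|F\|+\|F\|^2)\|\bar\Psi^s_{p-}\|$.

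It remains to show $\|\bar\Psi^s_{p-}\|=O(1)$. By \Cref{prop:eigenvaluesnindependence}, $\|\bar\Psi^s_p\|$ equals a constant independent of $d$. The diagonal satisfies $|[\bar\Psi^s_p]_{ii}|\le C\theta_i/\sigma_d\le C/\sigma_d$, so $\|\mathrm{diag}(\bar\Psi^s_p)\|=O(1/\sigma_d)$, and consequently $\|\bar\Psi^s_{p-}\|\le\|\bar\Psi^s_p\|+O(1/\sigma_d)=O(1)$. Alternatively, one can invoke \Cref{MaximalEigenValueLaplace} on the normalized structure. Combining these bounds gives $O(1/\sigma_d)$.

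The main obstacle is the careful bookkeeping of the degree identity: checking that $\mathcal D$ for GSBM2 differs from $\bar{\mathcal D}$ exactly by the self-loop mass, which is in-plus-out and therefore yields the factor $2$, with the same $\bar\mu$ factor. This is what allows the factorization through a diagonal rescaling $E$. We also need $\sigma_d\to\infty$, guaranteed by \Cref{Ass.2}, so that $\epsilon_i<1/2$ eventually and $E$ is well defined with $\|F\|\lesssim\max_i\epsilon_i$.
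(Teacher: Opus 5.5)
Your proof is correct, but it takes a different route from the paper's.

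\textbf{The paper's route (entrywise).} For $i\neq j$ it bounds $|[\mathcal L-\bar L]_{ij}|$ in three steps.
\begin{enumerate}
\item It rationalises $\sqrt{\bar{\mathcal D}_{ii}\bar{\mathcal D}_{jj}}-\sqrt{\mathcal D_{ii}\mathcal D_{jj}}$ via $\sqrt a-\sqrt{a-b}=b/(\sqrt a+\sqrt{a-b})$, together with the lower bound $\mathcal D_{ii}\mathcal D_{jj}\ge C\bar{\mathcal D}_{ii}\bar{\mathcal D}_{jj}$.
\item It substitutes $\bar{\mathcal D}_{jj}=\bar\mu\sigma_d\theta_j[D_B]_{s(j)s(j)}$, which gives terms of order $\theta_j/\sigma_d^2$.
\item It sums over $j$ using $\sum_{j\in\mathcal V_l}\theta_j=\sigma_d$, so each row sum is $O(1/\sigma_d)$, and concludes by Gershgorin for the symmetric difference.
\end{enumerate}

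\textbf{Your route (matrix-level).} You observe the exact congruence $\Psi^s_p=E\bar\Psi^s_{p-}E$. Here $\|E-I\|=O(1/\sigma_d)$; in fact $\epsilon_i\le\theta_i/\sigma_d$, since $[D_B]_{ll}\ge 2b_{ll}$. The problem then reduces to a multiplicative perturbation bound times $\|\bar\Psi^s_{p-}\|=O(1)$.

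\textbf{What your route buys.} It is shorter and gives explicit constants. It also needs no sign bookkeeping, whereas the paper first reduces to $\mu>0$ in order to drop absolute values. It is the same $E\mathbf L+\mathbf LE+E\mathbf LE$ device the paper itself uses in the proof of \Cref{ConcentrationLemma}.

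\textbf{What it costs, and how to avoid the cost.} You need a global spectral bound on $\bar\Psi^s_{p-}$, which you import from the rank-$k$ structure (\Cref{Lemma:2}, \Cref{prop:eigenvaluesnindependence}) plus the diagonal estimate underlying \Cref{Prop.3}. The paper's row-sum argument needs only the degree formula and the normalisation of the $\theta_i$; it never uses $\|\bar\Psi^s_p\|$. You can drop the dependence on the spectral lemmas with a row-sum bound in the spirit of \Cref{MaximalEigenValueLaplace}, as you hint. The off-diagonal absolute row sums of $\bar{\mathcal A}+\bar{\mathcal A}^\top$ are at most $(|\mu|/\bar\mu)\bar{\mathcal D}_{ii}$. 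Since $\bar{\mathcal D}^{-1/2}N\bar{\mathcal D}^{-1/2}$ is symmetric and similar to $\bar{\mathcal D}^{-1}N$, this gives $\|\bar\Psi^s_{p-}\|\le\psi_2$ directly.
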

\begin{proof}
Define the normalized Laplacians as $\mathcal{L} = \mathcal{D}^{-1/2}\left(\mathcal{A}^\top+\mathcal{A}\right)\mathcal{D}^{-1/2}$ and $\bar{\mathcal{L}}=\bar{\mathcal{D}}^{-1/2}\left(\bar{\mathcal{A}}^\top+\bar{\mathcal{A}}\right)\bar{\mathcal{D}}^{-1/2}$ with $\bar{L} = \bar{\mathcal{L}} - \mathrm{diag}(\bar{\mathcal{L}})$. Since $\Vert \Psi^s_p-\bar{\Psi}^s_{p-} \Vert = \psi_2 \Vert \mathcal{L} - \bar{L} \Vert$, from now on, we focus on $\Vert \mathcal{L} - \bar{L} \Vert$.
    We first note a few things. First of all, following  Proposition \ref{Prop:MinimalDegree}, it holds that $\bar{\mathcal{D}}_{ii}= \Omega\left( \log d \right)$, secondly, $\mathcal{D}_{ii}=\bar{\mathcal{D}}_{ii}- 2\bar{\mu}\theta_i^2 b_{s(i)s(i)}\le \bar{\mathcal{D}}_{ii}$. With this in mind, for $d$ large enough, clearly $\mathcal{D}_{ii}>1$. 
    Further, for $i\ne j$, it holds that $\mathcal{A}_{ij} = \bar{\mathcal{A}}_{ij}=\mu \theta_i\theta_jb_{s(i)s(j)}$. Since $\Vert \mathcal{L}-\bar{L}\Vert$ depends on $\mu$ only through $|\mu|$, we may without loss of generality assume $\mu>0$.
    We note that for $i=j$, $[\mathcal{L}-\bar{L}]_{ij} = 0$. Further,
    \begin{equation}\label{eqn:DecompDegree}
    \begin{aligned}\mathcal{D}_{ii}\mathcal{D}_{jj}&=\bar{\mathcal{D}}_{ii}\bar{\mathcal{D}}_{jj}-2\bar{\mu}\bar{\mathcal{D}}_{ii}\theta_j^2b_{s(j)s(j)}-2\bar{\mu}\bar{\mathcal{D}}_{jj}\theta_i^2b_{s(i)s(i)} + 4\bar{\mu}^2\theta_i^2b_{s(i)s(i)}\theta_j^2b_{s(j)s(j)} \\&\ge \bar{\mathcal{D}}_{ii}\bar{\mathcal{D}}_{jj}-2\bar{\mu}\bar{\mathcal{D}}_{ii}-2\bar{\mu}\bar{\mathcal{D}}_{jj} \\&= \bar{\mathcal{D}}_{ii}\bar{\mathcal{D}}_{jj}\left(1-\frac{2\bar{\mu}}{\bar{\mathcal{D}}_{jj}}-\frac{2\bar{\mu}}{\bar{\mathcal{D}}_{ii}}\right)\\
        &\ge C\bar{\mathcal{D}}_{ii}\bar{\mathcal{D}}_{jj},\quad \text{ for d large enough,}
        \end{aligned}
    \end{equation}
    where $C>0$ is independent of $i$, $j$ and $d$ using that $\min_{j}\bar{\mathcal{D}}_{jj} = \Omega(\log(d))$ and that $\bar{\mu}$ does not depend on $d$. Further, we note that for $a,b$ such that $a-b>0$, it holds that $\sqrt{a}- \sqrt{a-b} = \dfrac{b}{\sqrt{a} + \sqrt{a-b}}$. We will use this fact next with  $a=\bar{\mathcal{D}}_{ii}\bar{\mathcal{D}}_{jj}$ and $b=2\bar{\mu}\bar{\mathcal{D}}_{ii}\theta_j^2b_{s(j)s(j)}+2\bar{\mu}\bar{\mathcal{D}}_{jj}\theta_i^2b_{s(i)s(i)} - 4\bar{\mu}^2\theta_i^2b_{s(i)s(i)}\theta_j^2b_{s(j)s(j)}$.  With this in mind, consider $i\ne j$. Then,
\begin{align}
    \vert [\mathcal{L}-\bar{L}]_{ij} \vert &=  \vert  \dfrac{\mathcal{A}_{ji}+\mathcal{A}_{ij}}{\sqrt{\mathcal{D}_{ii}\mathcal{D}_{jj}}}-\dfrac{\bar{\mathcal{A}}_{ji}+\bar{\mathcal{A}}_{ij}}{\sqrt{\bar{\mathcal{D}}_{ii}\bar{\mathcal{D}}_{jj}}} \vert \notag \\
    &=\dfrac{\mathcal{A}_{ji}+\mathcal{A}_{ij}}{\sqrt{\mathcal{D}_{ii}\mathcal{D}_{jj}}} -\dfrac{\bar{\mathcal{A}}_{ji}+\bar{\mathcal{A}}_{ij}}{\sqrt{\bar{\mathcal{D}}_{ii}\bar{\mathcal{D}}_{jj}}} \notag \\
    &=\dfrac{\left(\bar{\mathcal{A}}_{ji}+\bar{\mathcal{A}}_{ij}\right)\left(\sqrt{\bar{\mathcal{D}}_{ii}\bar{\mathcal{D}}_{jj}}-\sqrt{\mathcal{D}_{ii}\mathcal{D}_{jj}}\right)}{\sqrt{\bar{\mathcal{D}}_{ii}\bar{\mathcal{D}}_{jj}}\sqrt{\mathcal{D}_{ii}\mathcal{D}_{jj}}} \label{eqn:AbsValEntryWise} \\
    &\le \dfrac{\left(\bar{\mathcal{A}}_{ji}+\bar{\mathcal{A}}_{ij}\right)\left(2\bar{\mu}\bar{\mathcal{D}}_{ii}\theta_j^2b_{s(j)s(j)}+2\bar{\mu}\bar{\mathcal{D}}_{jj}\theta_i^2b_{s(i)s(i)} - 4\bar{\mu}^2\theta_i^2b_{s(i)s(i)}\theta_j^2b_{s(j)s(j)}\right)}{\sqrt{\bar{\mathcal{D}}_{ii}\bar{\mathcal{D}}_{jj}}\sqrt{\mathcal{D}_{ii}\mathcal{D}_{jj}}\left(\sqrt{\bar{\mathcal{D}}_{ii}\bar{\mathcal{D}}_{jj}}+\sqrt{\mathcal{D}_{ii}\mathcal{D}_{jj}}\right)} \notag \\
    &\le \dfrac{\left(\bar{\mathcal{A}}_{ji}+\bar{\mathcal{A}}_{ij}\right)\left(2\bar{\mu}\bar{\mathcal{D}}_{ii}\theta_j^2b_{s(j)s(j)}+2\bar{\mu}\bar{\mathcal{D}}_{jj}\theta_i^2b_{s(i)s(i)} - 4\bar{\mu}^2\theta_i^2b_{s(i)s(i)}\theta_j^2b_{s(j)s(j)}\right)}{\sqrt{C}\bar{\mathcal{D}}_{ii}\bar{\mathcal{D}}_{jj}\sqrt{\bar{\mathcal{D}}_{ii}\bar{\mathcal{D}}_{jj}}\left(1+\sqrt{C}\right)} \notag \\
    &=\dfrac{\left(\bar{\mathcal{A}}_{ji}+\bar{\mathcal{A}}_{ij}\right)2\bar{\mu}\theta_j^2b_{s(j)s(j)}}{\sqrt{C}\bar{\mathcal{D}}_{jj}\sqrt{\bar{\mathcal{D}}_{ii}\bar{\mathcal{D}}_{jj}}\left(1+\sqrt{C}\right)} \notag \\
    &+\dfrac{\left(\bar{\mathcal{A}}_{ji}+\bar{\mathcal{A}}_{ij}\right)2\bar{\mu}\theta_i^2b_{s(i)s(i)}}{\sqrt{C}\bar{\mathcal{D}}_{ii}\sqrt{\bar{\mathcal{D}}_{ii}\bar{\mathcal{D}}_{jj}}\left(1+\sqrt{C}\right)} \notag \\
    &+\dfrac{\left(\bar{\mathcal{A}}_{ji}+\bar{\mathcal{A}}_{ij}\right)4\bar{\mu}^2\theta_i^2b_{s(i)s(i)}\theta_j^2b_{s(j)s(j)}}{\sqrt{C}\bar{\mathcal{D}}_{ii}\bar{\mathcal{D}}_{jj}\sqrt{\bar{\mathcal{D}}_{ii}\bar{\mathcal{D}}_{jj}}\left(1+\sqrt{C}\right)}. \notag
\end{align}
    Next, we will use that $\bar{\mathcal{D}}_{ii}=\bar{\mu}\sigma_d\theta_i D_{B_{s(i)s(i)}}$ following the proof of  \Cref{Lemma:2}. $D_{B_{s(i)s(i)}}$ is defined in that Supplementary material (under the proof of \Cref{Lemma:2}) and hence, it is independent of $d$. We thus have that
    \begin{equation*}
        \begin{aligned}
            \dfrac{\bar{\mathcal{D}}_{jj}}{\theta_j} = \bar{\mu}\sigma_d D_{B_{s(j)s(j)}},
        \end{aligned}
    \end{equation*}
    in turn,
    \begin{equation*}
        \begin{aligned}
            \dfrac{\left(\bar{\mathcal{A}}_{ji}+\bar{\mathcal{A}}_{ij}\right)2\bar{\mu}\theta_j^2b_{s(j)s(j)}}{\sqrt{C}\bar{\mathcal{D}}_{jj}\sqrt{\bar{\mathcal{D}}_{ii}\bar{\mathcal{D}}_{jj}}\left(1+\sqrt{C}\right)} &= \dfrac{\left(\bar{\mathcal{A}}_{ji}+\bar{\mathcal{A}}_{ij}\right)2\theta_jb_{s(j)s(j)}}{\sqrt{C}\sigma_d D_{B_{s(j)s(j)}}\sqrt{\bar{\mathcal{D}}_{ii}\bar{\mathcal{D}}_{jj}}\left(1+\sqrt{C}\right)}\\ &\le\dfrac{2\theta_j\left(\bar{\mathcal{A}}_{ji}+\bar{\mathcal{A}}_{ij}\right)}{\sqrt{C}\sigma_d D_{B_{s(j)s(j)}}\sqrt{\bar{\mathcal{D}}_{ii}\bar{\mathcal{D}}_{jj}}\left(1+\sqrt{C}\right)}\\&=\dfrac{2\mu\theta_i\theta_j^{2}\left(b_{s(j)s(i)}+b_{s(i)s(j)}\right)}{\sqrt{C}\sigma_d D_{B_{s(j)s(j)}}\sqrt{\bar{\mathcal{D}}_{ii}\bar{\mathcal{D}}_{jj}}\left(1+\sqrt{C}\right)}\\&=\dfrac{2\mu\sqrt{\theta_i\theta_j^3}\left(b_{s(j)s(i)}+b_{s(i)s(j)}\right)}{\sqrt{C}\bar{\mu}\sigma_d^2 D_{B_{s(j)s(j)}}\sqrt{D_{B_{s(i)s(i)}}D_{B_{s(j)s(j)}}}\left(1+\sqrt{C}\right)}\\
            &\le \dfrac{2\theta_j}{C^\prime\sigma_d^2},
        \end{aligned}
    \end{equation*}
    where $ C^\prime = \sqrt{C}\bar{\mu} D_B^* \left(1+\sqrt{C}\right)$ and $D_B^* = \min_{j}[D_{B}]_{s(j)s(j)}>0 $ and does not depend on $d$, since $\bold{B}+\bold{B}^\top$ is positive definite and does not depend on $d$. The above upper bound thus holds for any $i\in \{1,\dots,d\}$.
    In turn, we conclude that 
    \begin{equation*}
        \begin{aligned}
        \sum_{j\in\{1,\dots,d\}\backslash \{i\}}\dfrac{\left(\bar{\mathcal{A}}_{ji}+\bar{\mathcal{A}}_{ij}\right)2\theta_j^2b_{s(j)s(j)}}{\sqrt{C}\bar{\mathcal{D}}_{jj}\sqrt{\bar{\mathcal{D}}_{ii}\bar{\mathcal{D}}_{jj}}\left(1+\sqrt{C}\right)} & \le \sum_{j=1}^d\dfrac{2\theta_j}{C^\prime\sigma_d^2}\\
        &= \sum_{l=1}^k\sum_{j \in \mathcal{V}_l}\dfrac{2\theta_j}{C^\prime\sigma_d^2}\\
        &= \dfrac{2k\sigma_d}{C^\prime\sigma_d^2}\\
        &= O\left(\dfrac{1}{\sigma_d}\right), \quad \text{ as } d\rightarrow \infty.
        \end{aligned}
    \end{equation*}
    By symmetry, the second term in \eqref{eqn:AbsValEntryWise} shares this bound. Similarly,  
    \begin{equation*}
        \sum_{j\in\{1,\dots,d\}\backslash \{i\}}\dfrac{\left(\bar{\mathcal{A}}_{ji}+\bar{\mathcal{A}}_{ij}\right)4\bar{\mu}^2\theta_i^2b_{s(i)s(i)}\theta_j^2b_{s(j)s(j)}}{\sqrt{C}\bar{\mathcal{D}}_{ii}\bar{\mathcal{D}}_{jj}\sqrt{\bar{\mathcal{D}}_{ii}\bar{\mathcal{D}}_{jj}}\left(1+\sqrt{C}\right)} = O\left(\dfrac{1}{\sigma_d^2}\right), \quad \text{ as } d\rightarrow \infty.
    \end{equation*}
    Consequently, $\max_{i\in \{1,\dots,d\}} R_i = O\left(\dfrac{1}{\sigma_d}\right)$, as $d$ tends to infinity. Here  $R_i := \sum_{j\in\{1,\dots,d\}\backslash \{i\}}\vert [\mathcal{L}-\bar{L}]_{ij} \vert$. Let $\lambda^*$ denote the maximal eigenvalue of  $\mathcal{L}-\bar{L}$. Following the Gershgorin Circle Theorem, we conclude that 
    \begin{equation*}
        \vert \lambda^* \vert  = O\left(\dfrac{1}{\sigma_d}\right), \quad \text{ as } d\rightarrow \infty.
    \end{equation*}
    Since $\mathcal{L}-\bar{L}$ is a normal matrix we know that $\Vert \mathcal{L}-\bar{L}\Vert = \vert \lambda^* \vert $ which in turns completes the proof.
\end{proof}
\begin{proposition}\label{Prop.3}
   Assume that Assumptions \ref{Ass.NodeSum} and \ref{Ass.2} are satisfied. Then, it holds that  
    \begin{equation*}
        \Vert  \mathrm{diag}(\bar{\Psi}^s_p)  \Vert = O\left( \dfrac{1}{\sigma_d} \right),\quad \text{ as } d\rightarrow \infty.
    \end{equation*}
\end{proposition}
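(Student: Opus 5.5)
The plan is to compute the diagonal of $\bar{\Psi}^s_p$ explicitly and then use that the spectral norm of a diagonal matrix is its largest absolute diagonal entry:
\[
\Vert \mathrm{diag}(\bar{\Psi}^s_p)\Vert = \max_i \vert [\bar{\Psi}^s_p]_{ii}\vert .
\]
Two routes give the diagonal, and both lead to the same formula.

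The first route is through \Cref{Lemma:1}. Since $\bar{\Psi}^s_p = \frac{\mu}{\bar\mu}\psi_2\Theta_{\sigma_d}^{1/2}ZB_LZ^\top\Theta_{\sigma_d}^{1/2}$ and $\Theta_{\sigma_d}$ is diagonal, the $(i,i)$ entry is
\[
[\bar{\Psi}^s_p]_{ii} = \frac{\mu}{\bar\mu}\psi_2\,\frac{\theta_i}{\sigma_d}\,[B_L]_{s(i)s(i)} .
\]
The second route works directly from the definitions. Here $[\bar{\Psi}^s_p]_{ii} = \psi_2\,2\bar{\mathcal A}_{ii}/\bar{\mathcal D}_{ii}$, with $\bar{\mathcal A}_{ii}=\mu\theta_i^2 b_{s(i)s(i)}$. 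One also uses $\bar{\mathcal D}_{ii}=\bar\mu\sigma_d\theta_i [D_B]_{s(i)s(i)}$, the identity quoted in the proof of \Cref{Prop.2} from the proof of \Cref{Lemma:2}. It follows from \Cref{Ass.NodeSum}: summing $\theta_i\theta_j(b_{s(i)s(j)}+b_{s(j)s(i)})$ over $j$ gives $\theta_i\sigma_d\sum_m (b_{s(i)m}+b_{m s(i)})$. Substituting, the diagonal entry equals $2\psi_2\mu\theta_i b_{s(i)s(i)}/(\bar\mu\sigma_d [D_B]_{s(i)s(i)})$, which matches the first route.

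To bound this expression, I would use that every $\theta_i\le 1$ by the GSBM2 definition, that $\vert\mu\vert\le\bar\mu$, and that $\psi_2$ and $b_{s(i)s(i)}\le 1$ are fixed. The one point needing care is that $[D_B]_{ll}$ is bounded away from zero uniformly in $d$. This holds because $\mathbf{B}$ does not depend on $d$, by \Cref{Ass.2}. Moreover, $\mathbf B+\mathbf B^\top$ is positive definite, so its diagonal entries $2b_{ll}$ are strictly positive, and hence $[D_B]_{ll}\ge 2b_{ll}>0$. Setting $D_B^*=\min_l [D_B]_{ll}$, a constant independent of $d$, this gives
\[
\max_i\vert[\bar{\Psi}^s_p]_{ii}\vert\le \frac{2\psi_2}{D_B^*}\cdot\frac{1}{\sigma_d} = O(1/\sigma_d).
\]

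There is no real obstacle. The only thing to verify is the degree identity, which requires $\sigma_d$ to be the common community sum from \Cref{Ass.NodeSum}; that is where the $1/\sigma_d$ rate comes from.
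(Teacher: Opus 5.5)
Your proposal is correct and follows the paper's proof essentially step for step: compute $[\bar{\Psi}^s_p]_{ii}=2\psi_2\mu\theta_i b_{s(i)s(i)}/(\bar\mu\sigma_d[D_B]_{s(i)s(i)})$ via $\bar{\mathcal{D}}_{ii}=\bar\mu\sigma_d\theta_i[D_B]_{s(i)s(i)}$, bound $\theta_i, b_{s(i)s(i)}\le 1$, and use $D_B^*>0$, which follows from positive definiteness of $\mathbf{B}+\mathbf{B}^\top$. Your use of $\vert\mu\vert\le\bar\mu$ and of the maximum absolute diagonal entry is a small improvement: the paper calls $\mathrm{diag}(\bar{\mathcal{L}})$ a positive matrix, which tacitly assumes $\mu>0$.
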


\begin{proof}[Proof of Proposition \ref{Prop.3}]
    Let $\bar{\mathcal{L}}$ be as in \ref{Prop.2}. Since $\mathrm{diag}(\bar{\Psi}_p
^s) = \psi_2 \mathrm{diag}(\bar{\mathcal{L}})$,we focus on $\mathrm{diag}(\bar{\mathcal{L}})$ given as in \Cref{Lemma:1}. We note that for any $i \in \{1,2,\dots,d\}$, we have that
    \begin{equation*}
        \mathrm{diag}(\bar{\mathcal{L}})_{ii} = \dfrac{2\theta_i^2\mu b_{s(i)s(i)}}{\bar{\mathcal{D}}_{ii}}= \dfrac{2\mu\theta_i b_{s(i)s(i)}}{\bar{\mu} \sigma_d D_{B_{s(i)s(i)}}} \le \dfrac{2\mu}{\bar{\mu} D_B^*\sigma_d},
    \end{equation*}
where $D_B^*$ is as defined in \Cref{Lemma:1}. If we let $\rho_d = \max_{i\in \{1,\dots,d\}} \mathrm{diag}(\bar{\mathcal{L}})_{ii}$, the above calculations tell us that $\rho_d =O\left(\dfrac{1}{ \sigma_d}\right)$. Since $\mathrm{diag}(\bar{\mathcal{L}})$ is a diagonal matrix, its entries are its eigenvalues and so $\rho_d$ is the maximal eigenvalue of $\mathrm{diag}(\bar{\mathcal{L}})$. Further, since $\mathrm{diag}(\bar{\mathcal{L}})$ is a positive, diagonal matrix, $\Vert \mathrm{diag}(\bar{\mathcal{L}}) \Vert = \rho_d$, which concludes the proof.
\end{proof}
\begin{proposition}\label{Prop.4}
Assume that \Cref{Ass.NodeSum,Ass.2} are satisfied. Let $\Psi^s$ be the symmetrized drift matrix and let $\Psi_p^s$ be the population analogous. Then, with high probability
    \begin{equation*}
        \Vert \Psi^s - \Psi_p^s \Vert = O\left(\sqrt{\dfrac{d \log d}{\sigma_d^2}} \right).
    \end{equation*}
\end{proposition}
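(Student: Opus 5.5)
We reduce \Cref{Prop.4} to the concentration result in \Cref{ConcentrationLemma}. Write $\Psi^s = \psi_2 D^{-1/2}(A+A^\top)D^{-1/2}$ and $\Psi^s_p = \psi_2 \mathcal{D}^{-1/2}(\mathcal{A}+\mathcal{A}^\top)\mathcal{D}^{-1/2}$. Since $D_{ii}=\sum_j|A_{ij}|+\sum_j|A_{ji}|$, $D$ is exactly the absolute degree matrix of the symmetric weighted matrix $\tilde A := A+A^\top$, and $\mathcal{D}=\mathbb{E}D$ is its expectation. Hence $\Psi^s/\psi_2$ and $\Psi^s_p/\psi_2$ are precisely the $\mathbf{L}$ and $\mathcal{L}$ of \Cref{ConcentrationLemma} for the undirected graph with adjacency $\tilde A$ on $n=d$ vertices, and it suffices to bound $\Vert \mathbf{L}-\mathcal{L}\Vert$.

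First we check the hypotheses of \Cref{ConcentrationLemma} for $\tilde A$. For $i<j$ the entry $\tilde A_{ij}=B_{ij}W_{ij}+B_{ji}W_{ji}$ is a sum of two independent bounded terms, and the pairs $\{i,j\}$, $i<j$, are mutually independent; the diagonal is zero. The lemma is phrased with a single Bernoulli times weight per pair, but its proof only uses (a) independence across $i\le j$ of the summands $\mathbf{X}_{ij}=(\tilde A_{ij}-\mathbb{E}\tilde A_{ij})I_{ij}$, (b) a uniform bound $\Vert\mathbf{X}_{ij}\Vert\le M$, here $M=4\max(|\bar w|,|\underline w|)$, and (c) a variance bound $\mathrm{Var}(\tilde A_{ij})\le v\,\mathbb{E}|\tilde A_{ij}|$-type control, and likewise for the degree sums $\sum_j|\tilde A_{ij}|$ (using $|\tilde A_{ij}|\le B_{ij}|W_{ij}|+B_{ji}|W_{ji}|$). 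We therefore rerun that proof with $\mathrm{Var}(\tilde A_{ij})=\mathrm{Var}(B_{ij}W_{ij})+\mathrm{Var}(B_{ji}W_{ji})\le v'(p_{ij}+p_{ji})$ for a constant $v'$ depending only on $W$ (bounded support), so $\sigma^2\le v'' \mathcal{D}_{\max}$ in the first Bernstein step and $\sigma^2\le v''/\mathcal{D}_{\min}$ in the $\mathcal{M}-\mathcal{L}$ step. One subtlety: the lemma's degree is $\sum_j \mathbb{E}|\tilde A_{ij}|$, whereas $\mathcal{D}_{ii}=\sum_j(\mathbb{E}|A_{ij}|+\mathbb{E}|A_{ji}|)$ may be larger; the degree concentration step should be run directly on $D_{ii}$ (a sum of independent bounded nonnegative terms $B_{ij}|W_{ij}|$, $B_{ji}|W_{ji}|$), which gives the same multiplicative Bernstein bound $|D_{ii}/\mathcal{D}_{ii}-1|\lesssim\sqrt{\log(d/\delta)/\mathcal{D}_{\min}}$, and \Cref{MaximalEigenValueLaplace} still gives $\Vert\mathbf{L}\Vert\le1$ because $\sum_j|\tilde A_{ij}|\le D_{ii}$ (Gershgorin). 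This adaptation is the main obstacle; everything else is bookkeeping.

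Next we verify the degree condition: by \Cref{Prop:MinimalDegree}, $\mathcal{D}_{\min}=\Omega(\sigma_d^2/d)$, and by \Cref{Ass.2}, $\sigma_d^2/d=\omega(\log d)$, so $\mathcal{D}_{\min}\ge C\log d$ for $d$ large, for whatever constant $C$ the lemma requires. Choose $\delta=\delta_d=d^{-c}$ (or any $\delta_d\to0$ with $\delta_d\ge d^{-c}$), so the event has probability $\ge 1-\delta_d\to1$, i.e.\ holds w.h.p. On it, $\Vert\mathbf{L}-\mathcal{L}\Vert\le 14\sqrt{v\log(4d/\delta_d)/\mathcal{D}_{\min}} = O\bigl(\sqrt{\log d/\mathcal{D}_{\min}}\bigr)$ since $\log(4d^{1+c})=O(\log d)$. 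Plugging $\mathcal{D}_{\min}\ge C'\sigma_d^2/d$ yields $\Vert\Psi^s-\Psi^s_p\Vert=\psi_2\Vert\mathbf{L}-\mathcal{L}\Vert=O\bigl(\sqrt{d\log d/\sigma_d^2}\bigr)$ w.h.p., with constants independent of $d$ since $\psi_2$, $W$ and $\bold{B}$ are fixed. Finally, the pseudo-inverse convention for $D^{-1/2}$ is harmless: on the same event all $D_{ii}\ge \mathcal{D}_{ii}/2>0$.
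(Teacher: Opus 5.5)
Your proposal is correct and follows the paper's proof: you view $A+A^\top$ as an undirected weighted graph, apply \Cref{ConcentrationLemma} with $\delta$ of order $d^{-c}$, and use \Cref{Prop:MinimalDegree} with \Cref{Ass.2} to get $\mathcal{D}_{\min}=\Omega(\sigma_d^2/d)=\omega(\log d)$, which turns $\sqrt{\log d/\mathcal{D}_{\min}}$ into $\sqrt{d\log d/\sigma_d^2}$. Beyond that, you rerun the lemma's argument because $A_{ij}+A_{ji}$ is a sum of two independent Bernoulli-weighted terms and because, with signed weights, $D_{ii}$ can exceed $\sum_j|A_{ij}+A_{ji}|$; this makes rigorous a step the paper covers with a one-line identification, though your opening claim that $D$ is \emph{exactly} the absolute degree matrix of $A+A^\top$ should be dropped in favour of your later correction.
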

\begin{proof}
  We notice, that $\Psi^s = \psi_2 D^{-1/2}\left(A + A^\top \right)D^{-1/2}$ and \\$\Psi^s_p = \psi_2 \mathcal{D}^{-1/2}\left(\mathcal{A} + \mathcal{A}^\top \right)\mathcal{D}^{-1/2}$. We can view $A + A^\top$ as the realisations of an undirected, weighted graph with mean given by $2\mu$, node specific probabilities given by $p_{ij}= \theta_i\theta_j(b_{s(i)s(j)} +b_{s(j)s(i)})$ for $i\ne j$ and $0$ for $j=i$. Following Proposition \ref{Prop:MinimalDegree}, we also have that $\mathcal{D}_{\min}= \Omega\left( \dfrac{\sigma_d^2}{d}\right)$ as $d$ tends to infinity. Under \Cref{Ass.2} we have $\sigma_d=\omega((d\log d)^{1/2})$, so that $\sigma_d^2/d = \omega(\log d)$ and hence $\mathcal{D}_{\min}/\log d\to\infty$. Fix any $c>0$, set $\delta=2d^{-c}$, and let $C$ be the corresponding constant from \Cref{ConcentrationLemma}; since $\mathcal{D}_{\min}/\log d\to\infty$, the hypothesis $\mathcal{D}_{\min}\ge C\log d$ holds for all $d$ large enough. With $v$ as in the Lemma, then for any $d$ satisfying $d^{-c}\le 1/4$, we have that
  \begin{equation*}
      \mathbb{P}\left( \Vert \Psi^s - \Psi^s_p \Vert \ge C^{\prime \prime} \sqrt{\dfrac{d \log d}{\sigma_d^2}} \right) \le 2d^{-c},
  \end{equation*}
  where $C^{\prime \prime}$ is another constant. This in turn finishes the proof.
\end{proof}

\begin{proposition}\label{Prop.5}
    Let $Q = Q(\psi)$ be defined as in \Cref{SBOUProc}. Then, if $\psi_1 > \psi_2$, $\lambda_1\left(Q(\psi) + Q(\psi)^\top\right) > 0$ and $Q(\psi) \in \mathcal{M}^+(\mathbb{R}^d)$.
\end{proposition}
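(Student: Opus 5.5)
The plan is to work entirely with the symmetrised matrix $Q(\psi)+Q(\psi)^\top = 2\psi_1 I + \Psi^s$, where $\Psi^s=\psi_2 D^{-1/2}(A+A^\top)D^{-1/2}$. This identity holds because $\Psi^\top = \psi_2 D^{-1/2} A D^{-1/2}$, so $\Psi+\Psi^\top=\psi_2 D^{-1/2}(A^\top + A)D^{-1/2}$.

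The key step is to show $\Vert D^{-1/2}(A+A^\top)D^{-1/2}\Vert \le 1$. I would apply \Cref{MaximalEigenValueLaplace} to the undirected weighted graph with adjacency matrix $\tilde A := A+A^\top$. Its absolute row sums are $\sum_j |A_{ij}+A_{ji}| \le \sum_j |A_{ij}|+\sum_j|A_{ji}| = D_{ii}$. The lemma is stated with $\mathbf D$ equal to the exact absolute degree of $\tilde A$, so I would not invoke it verbatim but redo its Gershgorin argument with $D$ in place of the exact degree. The matrix $D^{-1}\tilde A$ has zero diagonal, since there are no self-loops, and its $i$th Gershgorin radius is at most $\sum_j |\tilde A_{ij}|/D_{ii}\le 1$. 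It is similar to the symmetric matrix $D^{-1/2}\tilde A D^{-1/2}$ on the support of $D$, and rows with $D_{ii}=0$ are zero in both matrices. Hence every eigenvalue of $D^{-1/2}\tilde AD^{-1/2}$ lies in $[-1,1]$, and by symmetry the spectral norm is at most $1$.

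Consequently $\lambda_{\min}(Q+Q^\top)\ge 2\psi_1-\psi_2>\psi_1>0$, using $\psi_1>\psi_2\ge 0$. This gives the first claim, reading $\lambda_1$ as the smallest eigenvalue; if the largest is meant, the claim is weaker still. It also gives \Cref{Ass.H}(4) with $\epsilon=\psi_1-\psi_2/2$, uniformly in $d$.

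For $Q\in\mathcal M^+(\mathbb R^d)$, take any eigenpair $Qv=\lambda v$ with $v\in\mathbb C^d$ and $\|v\|_2=1$. Then $\mathrm{Re}\,\lambda = \mathrm{Re}(v^*Qv)=\tfrac12 v^*(Q+Q^\top)v \ge \tfrac12\lambda_{\min}(Q+Q^\top)>0$, because $Q$ is real. As an alternative route, one could use \Cref{ExponentialBound1} applied to $A=-Q$, which gives $\Vert e^{-tQ}\Vert\le e^{-\epsilon t}\to0$, matching the paper's equivalent characterisation of $\mathcal M^+$.

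The only delicate point is the degree bound: $D$ collects in- and out-degrees of the directed $A$, while \Cref{MaximalEigenValueLaplace} uses the exact absolute degree of the symmetrised adjacency matrix. This is why I would re-run the Gershgorin step directly rather than quote the lemma, taking care with signed weights (which can only shrink the row sums) and with the pseudo-inverse convention on zero-degree nodes.
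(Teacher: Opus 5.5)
Your proposal is correct and is essentially the paper's argument. The paper also passes to the similar matrix $2\psi_1 I+\psi_2 D^{-1}(A+A^\top)$ and applies Gershgorin with radius $\psi_2\sum_{j}|A_{ij}+A_{ji}|/D_{ii}\le\psi_2$, placing all eigenvalues of $Q+Q^\top$ in $[2\psi_1-\psi_2,\,2\psi_1+\psi_2]$; the paper only asserts that $Q\in\mathcal{M}^+(\mathbb{R}^d)$ ``follows'', and your numerical-range step $\mathrm{Re}\,\lambda=\tfrac12 v^*(Q+Q^\top)v$, together with your handling of zero-degree nodes under the pseudo-inverse, supplies that missing detail.
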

\begin{proof}
    Consider $\bar{Q} = 2\psi_1 I + \psi_2 D^{-1}\left(A^\top + A\right)$. By the arguments in the proof of \Cref{Thm.1}, $\bar{Q}$ and $Q(\psi) + Q(\psi)^\top$ share the same eigenvalues. Applying Gershgorin's circle theorem to $\bar{Q}$, the non-diagonal row sum $R_i = \sum_{j \ne i} \psi_2 \dfrac{\vert A_{ij} +A_{ji} \vert}{D_{ii}}\le \psi_2$, while the diagonal entry is $\bar{Q}_{ii} = 2\psi_1$. Hence every eigenvalue of $\bar{Q}$ lies in $[2\psi_1 - \psi_2,\, 2\psi_1 + \psi_2]$, which is strictly positive when $\psi_1 > \psi_2$. In particular $\lambda_1\left(Q(\psi) + Q(\psi)^\top\right) > 0$, and $Q(\psi) \in \mathcal{M}^+(\mathbb{R}^d)$ follows under the assumptions of the proposition.
\end{proof}
\begin{proposition}\label{Prop.6}
     Let $\psi_1 > \psi_2$ and suppose the assumptions of \Cref{DiscreteEstimatorConsistencyBothCases} are satisfied. Then, with high probability
    \begin{equation*}
        \Vert \widehat{\Psi}^s - \Psi^s \Vert \le f(d,\mathcal{P}_T), \quad \text{ as } d \rightarrow \infty.
    \end{equation*}
\end{proposition}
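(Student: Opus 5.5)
The plan is to reduce \Cref{Prop.6} to the drift-estimation result \Cref{DiscreteEstimatorConsistencyBothCases}. The condition $\psi_1>\psi_2$ is what makes that theorem applicable to the SBOU drift. After that, an elementary matrix-norm comparison, already sketched in \Cref{Rem:AssEstSatisfiedByMLE}, converts the Frobenius bound on $\bar Q^{\mathcal P_T}-Q_0$ into an operator-norm bound on $\widehat{\Psi}^s-\Psi^s$.

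First, I will verify the structural hypotheses on $Q_0=Q(\psi)$ that the estimation theory needs. By \Cref{Prop.5}, $\psi_1>\psi_2$ gives $Q_0\in\mathcal M^+(\mathbb R^d)$. The Gershgorin argument there puts every eigenvalue of $Q_0+Q_0^\top$ in $[2\psi_1-\psi_2,\,2\psi_1+\psi_2]$. Hence $\lambda_{\min}\bigl((Q_0+Q_0^\top)/2\bigr)\ge \psi_1-\psi_2/2>0$, uniformly in $d$, which is \Cref{Ass.H}(4) with $\epsilon=\psi_1-\psi_2/2$.

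Condition (1) of \Cref{Ass.H} follows directly from the model. By \Cref{MaximalEigenValueLaplace}-type reasoning, $\|\Psi\|\le\psi_2$, so $\|Q_0\|_F\le\sqrt d\,\|Q_0\|=O(\sqrt d)$. Also $\|Q_0^{-1}\|\le(\psi_1-\psi_2/2)^{-1}$, because $x^\top Q_0x\ge\epsilon\|x\|^2$; hence $\|Q_0^{-1}\|_F=O(\sqrt d)$. Both are $O(d)$. The remaining assumptions are granted by hypothesis, and the bound is conditional on the realised graph, which is fixed for the path law.

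Second, invoke \Cref{DiscreteEstimatorConsistencyBothCases} to get $\|\bar Q^{\mathcal P_T}-Q_0\|_F\le h(d,\mathcal P_T)$ with high probability, where $h=o(1)$ as in \Cref{Rem:functionalFormf}.

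Third, relate the off-diagonal parts. Write $\widehat\Psi=\bar Q^{\mathcal P_T}-\mathrm{diag}(\bar Q^{\mathcal P_T})$. Since $\Psi$ has zero diagonal, $\Psi=Q_0-\mathrm{diag}(Q_0)$. Then $\widehat\Psi-\Psi$ is the off-diagonal part of $\bar Q^{\mathcal P_T}-Q_0$, and deleting the diagonal cannot increase the Frobenius norm. This gives
\[
\|\widehat\Psi^s-\Psi^s\|\le\|\widehat\Psi^s-\Psi^s\|_F\le 2\|\widehat\Psi-\Psi\|_F\le 2\|\bar Q^{\mathcal P_T}-Q_0\|_F\le 2h(d,\mathcal P_T),
\]
using the triangle inequality and $\|B^\top\|_F=\|B\|_F$. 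Setting $f:=2h$ yields the claim on the same high-probability event.

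The main obstacle is the first step. The other assumptions of the theorem, notably \Cref{Ass.H}(5), \Cref{Ass:LevyH}, \Cref{Ass.GeneralBothCases} and the moment bounds, involve $C_\infty$ and the stationary law, and hence depend on $Q_0$. They must hold uniformly over the random graph realisation. I would handle this by noting that they are included in the premise ``the assumptions of \Cref{DiscreteEstimatorConsistencyBothCases} are satisfied''. I would check only those items that $\psi_1>\psi_2$ is meant to supply, namely (1) and (4), using \Cref{ExponentialBound1} to confirm that the exponential decay of $e^{-tQ_0}$ is uniform in $d$.
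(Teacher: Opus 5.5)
Your proposal is correct and follows the paper's route: the paper likewise uses \Cref{Prop.5} to supply \Cref{Ass.H}(4) under $\psi_1>\psi_2$, invokes \Cref{DiscreteEstimatorConsistencyBothCases}, and (via \Cref{Rem:AssEstSatisfiedByMLE}) converts $\Vert \bar Q^{\mathcal P_T}-Q_0\Vert_F\le h$ into $\Vert\widehat\Psi^s-\Psi^s\Vert\le 2h=f$; your explicit check of \Cref{Ass.H}(1) is a harmless addition. One small point: $\Psi$ is not symmetric, so \Cref{MaximalEigenValueLaplace} does not give $\Vert\Psi\Vert\le\psi_2$ verbatim (you would need a weighted Schur test or a symmetric dilation), but the entrywise bound $\vert\Psi_{ij}\vert\le\psi_2$, which follows from $\vert A_{ji}\vert^2\le D_{ii}D_{jj}$, already gives $\Vert Q_0\Vert_F=O(d)$.
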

\begin{proof}
The result follows from \Cref{DiscreteEstimatorConsistencyBothCases}. The assumptions of that theorem include the high-level condition \Cref{Ass.H}.4. Under the additional restriction $\psi_1>\psi_2$, however, \Cref{Prop.5} verifies that \Cref{Ass.H}.4 holds for the drift matrix of the SBOU process. The conclusion therefore follows immediately from \Cref{DiscreteEstimatorConsistencyBothCases}.
\end{proof}
\begin{proposition}\label{Prop.7}
   Grant \Cref{Ass.NodeSum,Ass.2}, the assumptions of \Cref{DiscreteEstimatorConsistencyBothCases}, and suppose $\psi_1 > \psi_2 $. Then, with high probability,
    \begin{equation*}
        \Vert \widehat{\Psi}^s - \bar{\Psi}_p^s \Vert =  O \left( f(d,\mathcal{P}_T) +\sqrt{\dfrac{d \log d}{\sigma_d^2}} \right).
    \end{equation*}
\end{proposition}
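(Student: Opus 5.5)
The plan is to use the four-term triangle-inequality decomposition already displayed in \Cref{Sec:Cons}:
\begin{equation*}
    \Vert \widehat{\Psi}^s - \bar{\Psi}^s_p \Vert \le \Vert \Psi_p^s-\bar{\Psi}^s_{p-} \Vert+\Vert \mathrm{diag}(\bar{\Psi}^s_p) \Vert+ \Vert\Psi^s -\Psi^s_p\Vert  +\Vert \widehat{\Psi}^s - \Psi^s \Vert .
\end{equation*}
This inequality is just the triangle inequality applied to the telescoping identity
\[
\widehat{\Psi}^s - \bar{\Psi}^s_p = (\widehat{\Psi}^s-\Psi^s)+(\Psi^s-\Psi^s_p)+(\Psi^s_p-\bar{\Psi}^s_{p-})-\mathrm{diag}(\bar{\Psi}^s_p),
\]
which holds because $\bar{\Psi}^s_{p-}=\bar{\Psi}^s_p-\mathrm{diag}(\bar{\Psi}^s_p)$. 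Each term is then bounded separately, and the bounds are combined on a single high-probability event.

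\textbf{Deterministic terms.} The first two terms are deterministic. By \Cref{Prop.2}, $\Vert \Psi_p^s-\bar{\Psi}^s_{p-}\Vert = O(1/\sigma_d)$. By \Cref{Prop.3}, $\Vert \mathrm{diag}(\bar{\Psi}^s_p)\Vert = O(1/\sigma_d)$.

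\textbf{Random terms.} For the third term, \Cref{Prop.4} gives $\Vert\Psi^s-\Psi^s_p\Vert = O\bigl(\sqrt{d\log d/\sigma_d^2}\bigr)$ on an event $E_1$ with $\mathbb{P}(E_1)\ge 1-2d^{-c}$. For the fourth term, \Cref{Prop.6} applies: it uses $\psi_1>\psi_2$ through \Cref{Prop.5} to verify \Cref{Ass.H}(4). It yields $\Vert \widehat{\Psi}^s-\Psi^s\Vert\le f(d,\mathcal{P}_T)$ on an event $E_2$ of probability tending to one. Alternatively, \Cref{Ass.EstimationBound} together with \Cref{Rem:AssEstSatisfiedByMLE} gives the same bound.

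\textbf{Combining.} The last step is to absorb the $O(1/\sigma_d)$ terms into the concentration rate. Since $d\ge 3$ eventually, $\log d\ge 1$, and so
\[
\frac{1}{\sigma_d}\le \sqrt{\frac{d\log d}{\sigma_d^2}}.
\]
On $E_1\cap E_2$, whose probability tends to one by a union bound, this gives
\[
\Vert \widehat{\Psi}^s - \bar{\Psi}^s_p \Vert = O\bigl(f(d,\mathcal{P}_T)+\sqrt{d\log d/\sigma_d^2}\bigr).
\]

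\textbf{Expected difficulty.} No step should be a real obstacle, since each piece is already established. The only delicate point is bookkeeping. First, the constants hidden in the $O(\cdot)$ must be uniform so that the ``w.h.p.'' notion of \Cref{Def:WithHigProb} carries through the intersection of events. Second, the hypotheses of \Cref{Prop.4}, namely $\mathcal{D}_{\min}\ge C\log d$, must hold for large $d$. This follows from \Cref{Prop:MinimalDegree} and $\sigma_d=\omega((d\log d)^{1/2})$ under \Cref{Ass.2}.
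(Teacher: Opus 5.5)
Your proposal is correct and follows the paper's own proof: you use the same four-term triangle-inequality decomposition and bound each term with \Cref{Prop.2,Prop.3,Prop.4,Prop.6}, with \Cref{Prop.5} supplying \Cref{Ass.H}(4) under $\psi_1>\psi_2$. You also spell out two steps the paper leaves implicit, namely absorbing the deterministic $O(1/\sigma_d)$ terms into $\sqrt{d\log d/\sigma_d^2}$ and taking a union bound over the two high-probability events.
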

\begin{proof}
    The proof follows by the triangle inequality and \Cref{Prop.2,Prop.3,Prop.4,Prop.5,Prop.6}. Indeed, by the triangle inequality we have that
    \begin{equation*}
        \begin{aligned}
        \Vert \widehat{\Psi}^s - \bar{\Psi}_p^s \Vert &= \Vert \widehat{\Psi}^s - \Psi^s +\Psi^s -\Psi^s_p  +\Psi^s_p -\bar{\Psi}_{p}^s \Vert \\
        &= \Vert \widehat{\Psi}^s - \Psi^s +\Psi^s -\Psi^s_p  +\Psi^s_p -\bar{\Psi}_{p-}^s - \mathrm{diag}\left(\bar{\Psi}_{p}^s\right)\Vert \\
        &\le \Vert \widehat{\Psi}^s - \Psi^s \Vert +\Vert\Psi^s -\Psi^s_p\Vert  +\Vert\Psi^s_p -\bar{\Psi}_{p-}^s\Vert+\Vert \mathrm{diag}\left(\bar{\Psi}_{p}^s\right) \Vert,
        \end{aligned}
    \end{equation*}
    where we used \Cref{Prop.2,Prop.3,Prop.4,Prop.5,Prop.6} and recalled that $\bar{\Psi}^s_{p-} = \bar{\Psi}^s_p-\mathrm{diag}(\bar{\Psi}^s_p) $.
\end{proof}

\begin{proposition}\label{SetInclusion}
    Let $M =\left\{i: \|\widehat{\mathbf{C}}_{\hat{s}(i) \bullet}-\mathcal{C}_{s(i) \bullet} \|_2 \geq \sqrt{1 / 2}\right\}$. Then 
    \begin{equation*}
    \mathcal{M} \subseteq M
    \end{equation*}
\end{proposition}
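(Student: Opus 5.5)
The plan is to show that for every node $i\in\mathcal{M}$ the estimated centroid $\widehat{\mathbf{C}}_{\hat{s}(i)\bullet}$ lies at distance at least $\sqrt{1/2}$ from its true centroid $\mathcal{C}_{s(i)\bullet}$. This follows from a geometric fact about the population centroids: they are well separated.

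The first step is to identify the population centroids. By \Cref{Lemma:2}, $\mathcal{X}^*=\mathbf{Z}U$ with $U$ a $k\times k$ orthonormal matrix. Hence for any $i$ with $s(i)=l$, the row $\mathcal{X}^*_{i\bullet}\mathcal{O}$ equals $U_{l\bullet}\mathcal{O}$. As noted in the discussion preceding \Cref{Def:Miscluster}, the true partition gives the centroids $\mathcal{C}_{l\bullet}=U_{l\bullet}\mathcal{O}$. Since $U\mathcal{O}$ is orthonormal, its rows are orthonormal vectors. Therefore, for $l\neq m$,
\[
\|\mathcal{C}_{l\bullet}-\mathcal{C}_{m\bullet}\|_2^2=\|U_{l\bullet}\mathcal{O}\|_2^2+\|U_{m\bullet}\mathcal{O}\|_2^2=2,
\]
so distinct centroids are exactly $\sqrt2$ apart.

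The second step argues by contradiction (equivalently, by contrapositive). Take $i\in\mathcal{M}$. By \Cref{Def:Miscluster} there is a $j$ with $s(j)\neq s(i)$ and $\|\widehat{\mathbf{C}}_{\hat{s}(i)\bullet}-\mathcal{C}_{s(i)\bullet}\|_2\ge\|\widehat{\mathbf{C}}_{\hat{s}(i)\bullet}-\mathcal{C}_{s(j)\bullet}\|_2$. The triangle inequality gives
\[
\sqrt2=\|\mathcal{C}_{s(i)\bullet}-\mathcal{C}_{s(j)\bullet}\|_2\le \|\widehat{\mathbf{C}}_{\hat{s}(i)\bullet}-\mathcal{C}_{s(i)\bullet}\|_2+\|\widehat{\mathbf{C}}_{\hat{s}(i)\bullet}-\mathcal{C}_{s(j)\bullet}\|_2\le 2\|\widehat{\mathbf{C}}_{\hat{s}(i)\bullet}-\mathcal{C}_{s(i)\bullet}\|_2 .
\]
It follows that $\|\widehat{\mathbf{C}}_{\hat{s}(i)\bullet}-\mathcal{C}_{s(i)\bullet}\|_2\ge\sqrt2/2=\sqrt{1/2}$, so $i\in M$.

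The only point I expect to need care is the first step: justifying that the centroid rows really are orthonormal, so that the separation is exactly $\sqrt2$. This requires that the factorization in \Cref{Lemma:2} yields a genuinely orthonormal $U$ with distinct rows for distinct communities, and that $\mathcal{C}$ is defined relative to the same rotation $\mathcal{O}$ as in \Cref{EigenvectorBound}. Both are stated earlier in the paper, so I will simply invoke them. Everything after that is the elementary triangle-inequality argument above.
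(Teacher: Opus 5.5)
Your proof is correct and is essentially the paper's argument: both use the fact that the population centroids are rows of the orthonormal matrix $U\mathcal{O}$, hence pairwise $\sqrt{2}$ apart, followed by a triangle-inequality step. The only difference is cosmetic. The paper proves the contrapositive $M^c\subseteq\mathcal{M}^c$ via the reverse triangle inequality, whereas you argue directly from $i\in\mathcal{M}$, and your version handles the non-strict inequality in the definition of $\mathcal{M}$ cleanly.
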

\begin{proof}[Proof of \Cref{SetInclusion}]
    This follows exactly by the calculation in \cite{Rohe2011} for Theorem  $4.4$ only minimally changed to our setup. The result is also mentioned in \cite{Qin2013}. We write it out in our setting for completeness only, but do not claim any of the ideas.  We show the desired result by showing that $M^c \subseteq \mathcal{M}^c$, where $M^c =\left\{i: \|\widehat{\mathbf{C}}_{\hat{s}(i) \bullet}-\mathcal{C}_{s(i) \bullet} \|_2 < \sqrt{1 / 2}\right\}$ and $\mathcal{M}^c=\{i:  \left\|\widehat{\mathbf{C}}_{\hat{s}(i) \bullet} -\mathcal{C}_{s(i)\bullet}\right\|_2< \left\|\widehat{\mathbf{C}}_{\hat{s}(i) \bullet}-\mathcal{C}_{s(j)\bullet}\right\|_2 \forall j  \text{ s.t. } s(j)\ne s(i)\}$. With this in mind, take an $i \in M^c$. Then $ \Vert \widehat{\mathbf{C}}_{\hat{s}(i) \bullet}-\mathcal{C}_{s(i) \bullet}\Vert_2 < \sqrt{1 / 2}$. Following Lemma \ref{Lemma:2}, we have that $\mathcal{C}_{s(i)\bullet} = (\mathcal{X}^*\mathcal{O})_{i\bullet} = \mathbf{Z}_{i\bullet}U\mathcal{O} = \mathbf{Z}_{i\bullet}V$, where $V=U\mathcal{O}$ is an orthonormal rotation matrix. For $j$ such that $s(i) \ne s(j)$, i.e. nodes in different communities, we clearly have that 
    \begin{equation*}
        \Vert \mathcal{C}_{s(i)\bullet} -\mathcal{C}_{s(j)\bullet} \Vert_2 =  \Vert \mathbf{Z}_{i\bullet}V -\mathbf{Z}_{j\bullet}V \Vert_2 =  \Vert \mathbf{Z}_{i\bullet} -\mathbf{Z}_{j\bullet} \Vert_2 = \sqrt{2},
    \end{equation*}
where we used that $V$ is an orthonormal matrix. We then note, that for any $j$ and  $i$ such that $s(i) \ne s(j)$, we have by the reverse triangle inequality that
\begin{equation*}
    \begin{aligned}
    \Vert \widehat{\mathbf{C}}_{\hat{s}(i) \bullet} - \mathcal{C}_{s(j) \bullet} \Vert_2 &\ge \vert \Vert \widehat{\mathbf{C}}_{\hat{s}(i) \bullet}- \mathcal{C}_{ s(i)\bullet} \Vert_2 - \Vert \mathcal{C}_{ s(j)\bullet}-\mathcal{C}_{s(i) \bullet}   \Vert_2 \vert \\
    &=  \Vert \mathcal{C}_{s(j)\bullet}-\mathcal{C}_{s(i) \bullet}   \Vert_2 -\Vert \widehat{\mathbf{C}}_{\hat{s}(i) \bullet} - \mathcal{C}_{s(i)\bullet} \Vert_2 \\
    &> \sqrt{2} - \sqrt{1/2} \\
    &= \sqrt{2} - 1/2\sqrt{2} \\
    &= \sqrt{1/2} \\&>\Vert\widehat{\mathbf{C}}_{\hat{s}(i) \bullet} - \mathcal{C}_{s(i)\bullet} \Vert_2.
    \end{aligned}
\end{equation*}
Hence, $i \in \mathcal{M}^c$
\end{proof}

\begin{lemma}\label{EstimatorsBAndTheta}
    Assume that $\sigma_d \rightarrow \infty$ as $d\rightarrow \infty$. Let $s$ be the map from \Cref{Remark:sMapping}. Let $B_{ij}$ be independent Bernoulli random variables with $p_{ij}= \theta_i\theta_jb_{s(i)s(j)}$ for all $i,j\in \{1,\dots,d\}$ such that $i\ne j$ and $p_{ij}=0$ for $i=j$. Further, assume that $\theta_i$ can be partitioned in $k$ groups, $\left(\mathcal{V}_1,\dots,\mathcal{V}_k\right)$ such that $\sum_{j\in \mathcal{V}_l}\theta_j=\sigma_d$. Also assume that for any $l,m\in \{1,\dots,k\}$, $b_{lm}$ does not depend on $d$. For $i\in \{1,\dots,d\}$, let $d_i =\sum_{j=1}^d B_{ij}$. And finally assume that for any $s \in \{1,\dots,k\}$, $\sum_{l=1}^k b_{sl}>0$. Let $\hat{\theta}_i=\dfrac{\sum_{j=1}^d B_{ij}}{\sum_{j \in \mathcal{V}_{s(i)}} d_j}$, $\hat{b}_{lk} = \sum_{i \in \mathcal{V}_l, j \in \mathcal{V}_k} B_{ij}$ and finally let $\hat{\mathcal{A}}_{ij} = \hat{\theta}_i \hat{\theta}_j \hat{b}_{s(i)s(j)}$ denote the estimated probability of an edge between $i$ and $j$ of the $GSBM2
(\bold{Z},\bold{B},\Theta,W)$. It holds that
    \begin{equation*}
        \begin{aligned}
            \sigma_d\hat{\theta}_j &\overset{\mathbb{P}}{\rightarrow}  \theta_j,\quad \text{ as } d\rightarrow \infty. \\
            \dfrac{\hat{b}_{lk}}{\sigma_d^2} & \overset{\mathbb{P}}{\rightarrow}   b_{lk},\quad \text{ as } d\rightarrow \infty.\\
            \hat{\mathcal{A}}_{ij} & \overset{\mathbb{P}}{\rightarrow} \theta_i\theta_j b_{s(i)s(j)},\quad \text{ as } d\rightarrow \infty.
        \end{aligned}
    \end{equation*}
\end{lemma}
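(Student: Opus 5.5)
We prove the three convergences in order, each building on the previous, by combining Bernstein/Chebyshev concentration for sums of independent Bernoullis with the continuous mapping theorem.

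\textbf{Group sums.} For a block $l$, set $S_l := \sum_{j\in\mathcal{V}_l} d_j = \sum_{j\in\mathcal{V}_l}\sum_{m} B_{jm}$. Its mean is
\[
\mathbb{E}S_l = \sum_{j\in\mathcal{V}_l}\theta_j \sum_{m'}\sigma_d b_{lm'} - \sum_{j\in\mathcal{V}_l}\theta_j^2 b_{ll} = \sigma_d^2\sum_{m'} b_{lm'} + O(\sigma_d),
\]
using \Cref{Ass.NodeSum}, with the no-self-loop correction bounded by $\sum_{j\in\mathcal{V}_l}\theta_j=\sigma_d$. Since $\sum_{m'}b_{lm'}>0$ and $\sigma_d\to\infty$, the mean is of exact order $\sigma_d^2$. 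Because $\mathrm{Var}(S_l)\le \mathbb{E}S_l$, Chebyshev gives $S_l/\sigma_d^2 \overset{\mathbb{P}}{\to} \sum_{m'}b_{lm'}$. The same argument applied to $\hat b_{lk}$, whose mean is $\sigma_d^2 b_{lk}+O(\sigma_d)$ when $l=k$ and exactly $\sigma_d^2 b_{lk}$ otherwise, yields $\hat b_{lk}/\sigma_d^2\overset{\mathbb{P}}{\to} b_{lk}$. Here the variance is at most $\sigma_d^2$, so the relative error is $O_{\mathbb{P}}(1/\sigma_d)$. This is the second claim.

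\textbf{Individual degrees (main obstacle).} Write
\[
\sigma_d\hat\theta_j = \frac{d_j/\sigma_d}{S_{s(j)}/\sigma_d^2}.
\]
The mean of $d_j$ is $\theta_j\sigma_d\sum_m b_{s(j)m} - \theta_j^2 b_{s(j)s(j)}$, so $\mathbb{E}(d_j/\sigma_d)\to\theta_j\sum_m b_{s(j)m}$. Its variance satisfies $\mathrm{Var}(d_j/\sigma_d)\le \theta_j\sum_m b_{s(j)m}/\sigma_d\to 0$. Chebyshev therefore gives $d_j/\sigma_d-\theta_j\sum_m b_{s(j)m}\overset{\mathbb{P}}{\to}0$. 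This is the delicate step: $\theta_j$ may itself vary with $d$, so I would state the claim as $\sigma_d\hat\theta_j-\theta_j\overset{\mathbb{P}}{\to}0$. This form holds uniformly because $\theta_j\le 1$ keeps every variance bound $O(1/\sigma_d)$. Dividing by $S_{s(j)}/\sigma_d^2$, which is bounded away from zero with high probability, and applying the continuous mapping theorem, the factor $\sum_m b_{s(j)m}$ cancels. This gives the first claim.

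\textbf{Edge probabilities.} Write
\[
\hat{\mathcal{A}}_{ij} = (\sigma_d\hat\theta_i)(\sigma_d\hat\theta_j)\,\frac{\hat b_{s(i)s(j)}}{\sigma_d^2}.
\]
This is a product of three quantities, each converging (in difference) to bounded limits $\theta_i$, $\theta_j$ and $b_{s(i)s(j)}$. Continuous mapping for products of tight sequences then gives $\hat{\mathcal{A}}_{ij}-\theta_i\theta_j b_{s(i)s(j)}\overset{\mathbb{P}}{\to}0$, which is the third claim.
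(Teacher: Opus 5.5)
Your proof is correct and follows essentially the paper's route: Chebyshev with $\mathrm{Var}\le$ mean for the three Bernoulli sums normalised by $\sigma_d$ and $\sigma_d^2$, an $O(\sigma_d)$ no-self-loop correction, then Slutsky for the ratio and the product. Your difference-form conclusion $\sigma_d\hat\theta_j-\theta_j\overset{\mathbb{P}}{\to}0$ is a sensible sharpening, since the paper implicitly treats $\theta_j$ as fixed in $d$.
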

\begin{proof}
Let $\bar{\mathcal{A}}$ be the population adjacency matrix of a $GSBM
(\bold{Z},\bold{B},\Theta,W)$ where $W$ is the degenerate random variable $1$, i.e. there are no weights associated to the links. If a realisation of a $GSBM
(\bold{Z},\bold{B},\Theta,W)$  with weights is observed one simply sets the non-zero entries to $1$ and finds oneself in the correct setting. Then,
\begin{equation*}
    \dfrac{\sum_{j=1}^d \bar{\mathcal{A}}_{ij}}{\sum_{j\in\mathcal{V}_{s(i)}}\sum_{k=1}^d \bar{\mathcal{A}}_{jk}} = \dfrac{\sigma_d\theta_i \sum_{l=1}^k b_{s(i)l} }{\sigma_d^2 \sum_{l=1}^k b_{s(i)l}} = \dfrac{\theta_i}{\sigma_d}.
\end{equation*}
Also,
\begin{equation*}
    \sum_{i \in \mathcal{V}_l, j \in \mathcal{V}_k} \bar{\mathcal{A}}_{ij} = b_{lk}\sum_{i \in \mathcal{V}_l, j \in \mathcal{V}_k} \theta_i \theta_j = \sigma_d^2 b_{lk}.
\end{equation*}
Now, let $\mathcal{A}$ be the population adjacency matrix of a $GSBM2
(\bold{Z},\bold{B},\Theta,W)$. It holds that
\begin{equation*}
    \begin{aligned}
          \dfrac{\sum_{j=1}^d \mathcal{A}_{ij}}{\sum_{j\in\mathcal{V}_{s(i)}}\sum_{k=1}^d \mathcal{A}_{jk}} & =  \dfrac{\sum_{j=1}^d \bar{\mathcal{A}}_{ij}-\theta_i^2 b_{s(i)s(i)}}{\sum_{j\in\mathcal{V}_{s(i)}}\sum_{k=1}^d \bar{\mathcal{A}}_{jk}-\sum_{j\in\mathcal{V}_{s(i)}}\theta_j^2 b_{s(i)s(i)}}.
    \end{aligned}
\end{equation*}
We now note that
\begin{equation*}
    \dfrac{\sum_{j=1}^d \mathcal{A}_{ij}}{\sigma_d}=\dfrac{\sum_{j=1}^d \bar{\mathcal{A}}_{ij}-\theta_i^2 b_{s(i)s(i)}}{\sigma_d}\overset{d\rightarrow \infty}{\rightarrow} \theta_i \sum_{l=1}^k b_{s(i)l}.
\end{equation*}
Further,
\begin{equation}\label{StandardLimit1}
    \dfrac{\sum_{j\in\mathcal{V}_{s(i)}}\sum_{k=1}^d \mathcal{A}_{jk}}{\sigma_d^2}=\dfrac{\sum_{j\in\mathcal{V}_{s(i)}}\sum_{k=1}^d \bar{\mathcal{A}}_{jk}-\sum_{j\in\mathcal{V}_{s(i)}}\theta_j^2 b_{s(i)s(i)}}{\sigma_d^2}\overset{d\rightarrow \infty}{\rightarrow}\sum_{l=1}^k b_{s(i)l}.
\end{equation}
In turn, we conclude that
\begin{equation}\label{ConvToTheta}
     \sigma_d\dfrac{\sum_{j=1}^d \mathcal{A}_{ij}}{\sum_{j\in\mathcal{V}_{s(i)}}\sum_{k=1}^d \mathcal{A}_{jk}} \overset{d\rightarrow \infty}{\rightarrow} \theta_i.
\end{equation}
Similarly, for $l \ne k$,
\begin{equation*}
    \sum_{i \in \mathcal{V}_l, j \in \mathcal{V}_k} \mathcal{A}_{ij}  = \sum_{i \in \mathcal{V}_l, j \in \mathcal{V}_k} \bar{\mathcal{A}}_{ij}.
\end{equation*}
And, for $l=k$,
\begin{equation*}
    \sum_{i \in \mathcal{V}_l, j \in \mathcal{V}_k} \mathcal{A}_{ij}  = \sum_{i \in \mathcal{V}_l, j \in \mathcal{V}_k} \bar{\mathcal{A}}_{ij} - \sum_{i \in\mathcal{V}_l }\bar{\mathcal{A}}_{ii}= b_{lk}\sum_{i \in \mathcal{V}_l, j \in \mathcal{V}_k} \theta_i \theta_j - \sum_{i \in\mathcal{V}_l } \theta_i^2 b_{s(i)s(i)} = \sigma_d^2 b_{lk} - \sum_{i \in\mathcal{V}_l } \theta_i^2 b_{s(i)s(i)}.
\end{equation*}
We thus have that
\begin{equation}\label{BoundAWithingroupSum}
       \sigma_d^2 b_{lk} - \sigma_d \le \sum_{i \in \mathcal{V}_l, j \in \mathcal{V}_k} \mathcal{A}_{ij}  \le \sigma_d^2 b_{lk} + \sigma_d.
\end{equation}
In turn,
\begin{equation}\label{ConvOfSums1}
    \dfrac{1}{\sigma_d^2}\sum_{i \in \mathcal{V}_l, j \in \mathcal{V}_k} \mathcal{A}_{ij} \overset{d\rightarrow \infty}{\rightarrow}  b_{lk}.
\end{equation}
We underline that $\mathbb{E}\left(B_{ij}\right) = \mathcal{A}_{ij}$. Note now that, for any $i\in \{1,\dots,d\}$,
\begin{equation*}
    \begin{aligned}
    \mathrm{Var}\left(\sum_{j=1}^d B_{ij}\right) &= \sum_{j=1}^d \mathrm{Var}\left( B_{ij}\right) \\
    &=\sum_{j=1}^d p_{ij}(1-p_{ij}) \\
    &\le \sum_{j=1}^d\theta_i\theta_{j}b_{s(i)s(j)}\\
    &= \theta_i \sigma_d \sum_{l=1}^kb_{s(i)l}.
    \end{aligned}
\end{equation*}
In turn, we conclude using Markov's inequality that, for any $\epsilon>0$,
\begin{equation}\label{ConvOfSums2}
    \mathbb{P}\left(\left( \dfrac{\sum_{j=1}^dB_{ij}-\sum_{j=1}^d\mathcal{A}_{ij}}{\sigma_d}\right)^2 > \epsilon\right) \le \dfrac{\mathrm{Var}\left(\sum_{j=1}^d B_{ij}\right)}{\sigma_d^2 \epsilon} \le \dfrac{\theta_i \sum_{l=1}^k b_{s(i)l}}{\sigma_d} \overset{d\rightarrow \infty}{\rightarrow} 0.
\end{equation}
Hence, for any $\epsilon>0$,
\begin{equation}\label{LimitInProb1}
    \begin{aligned}
    \mathbb{P}\left(\left| \dfrac{\sum_{j=1}^dB_{ij}}{\sigma_d} - \theta_i \sum_{l=1}^kb_{s(i)l} \right|>\epsilon
\right) &=\mathbb{P}\left(\left| \dfrac{\sum_{j=1}^dB_{ij}}{\sigma_d} - \dfrac{\sum_{j=1}^d\mathcal{A}_{ij}}{\sigma_d} +\dfrac{\sum_{j=1}^d\mathcal{A}_{ij}}{\sigma_d} -\theta_i \sum_{l=1}^kb_{s(i)l} \right|>\epsilon
\right)\\
&\le \mathbb{P}\left(\left| \dfrac{\sum_{j=1}^dB_{ij}}{\sigma_d} - \dfrac{\sum_{j=1}^d\mathcal{A}_{ij}}{\sigma_d}  \right|>\epsilon/2
\right) \\&+ \mathbb{P}\left(\left| \dfrac{\sum_{j=1}^d\mathcal{A}_{ij}}{\sigma_d} -\theta_i \sum_{l=1}^kb_{s(i)l} \right|>\epsilon/2
\right)\overset{d\rightarrow \infty }{\rightarrow} 0,
\end{aligned}
\end{equation}
where we used \eqref{ConvOfSums1} and \eqref{ConvOfSums2} for the last assertion.
Now,
\begin{equation*}
    \begin{aligned}
        \mathrm{Var}\left(\sum_{j\in\mathcal{V}_{s(i)}}\sum_{k=1}^d B_{jk}\right) &\le \sum_{j\in\mathcal{V}_{s(i)}}\sum_{k=1}^d \mathcal{A}_{jk}\\
        &\le \sum_{j\in\mathcal{V}_{s(i)}}\sum_{k=1}^d \bar{\mathcal{A}}_{jk}\\
        &= \sigma_d^2\sum_{l=1}^k b_{s(i)l}.
    \end{aligned}
\end{equation*}
In turn, we conclude that for any $\epsilon>0$
\begin{equation}\label{ConvOfSums3}
    \begin{aligned}
    \mathbb{P}\left(\left( \dfrac{\sum_{j\in\mathcal{V}_{s(i)}}\sum_{k=1}^d B_{jk}-\sum_{j\in\mathcal{V}_{s(i)}}\sum_{k=1}^d \mathcal{A}_{jk}}{\sigma_d^2}\right)^2 > \epsilon\right) &\le \dfrac{\mathrm{Var}\left(\sum_{j\in\mathcal{V}_{s(i)}}\sum_{k=1}^d B_{jk}\right)}{\sigma_d^4 \epsilon} \\&\le \dfrac{ \sum_{l=1}^k b_{s(i)l}}{\sigma_d^2} \overset{d\rightarrow \infty}{\rightarrow} 0.
    \end{aligned}
\end{equation}
As such, using \eqref{ConvOfSums3} and \eqref{StandardLimit1} we have
\begin{equation}\label{LimitInProb2}
    \begin{aligned}
   \mathbb{P}\left( \left|\dfrac{\sum_{j\in\mathcal{V}_{s(i)}}\sum_{k=1}^d B_{jk}}{\sigma_d^2} - \sum_{l=1}^k b_{s(i)l} \right| > \epsilon\right)&\le \mathbb{P}\left( \left|\dfrac{\sum_{j\in\mathcal{V}_{s(i)}}\sum_{k=1}^d B_{jk}}{\sigma_d^2} - \sum_{j\in\mathcal{V}_{s(i)}}\sum_{k=1}^d \mathcal{A}_{jk} \right| > \epsilon/2\right) \\
   &+  \mathbb{P}\left( \left|\dfrac{\sum_{j\in\mathcal{V}_{s(i)}}\sum_{k=1}^d \mathcal{A}_{jk}}{\sigma_d^2} - \sum_{l=1}^k b_{s(i)l} \right| > \epsilon/2\right)\overset{d\rightarrow \infty}{\rightarrow} 0,
    \end{aligned}
\end{equation}
since convergence in distribution is equivalent to convergence in probability to a degenerate random variable, Slutsky's theorem, in conjunction with \eqref{LimitInProb1} and \eqref{LimitInProb2}, implies that
\begin{equation*}
      \sigma_d\hat{\theta}_j \overset{\mathbb{P}}{\rightarrow}  \theta_j\quad \text{ as } d\rightarrow \infty.
\end{equation*}
We now notice that
\begin{equation*}
    \begin{aligned}
        \mathrm{Var}\left(\sum_{i \in \mathcal{V}_l, j \in \mathcal{V}_k} B_{ij}\right) &\le\sum_{i \in \mathcal{V}_l, j \in \mathcal{V}_k} \mathcal{A}_{ij} = b_{lk}\sigma_d^2.
    \end{aligned}
\end{equation*}
Hence,
\begin{equation}\label{ConvofSum4}
    \mathbb{P}\left(\left( \dfrac{\sum_{i \in \mathcal{V}_l, j \in \mathcal{V}_k}B_{ij}-\sum_{i \in \mathcal{V}_l, j \in \mathcal{V}_k}\mathcal{A}_{ij}}{\sigma_d^2}\right)^2 > \epsilon\right) \overset{d\rightarrow \infty}{\rightarrow} 0,
\end{equation}
where we used \eqref{BoundAWithingroupSum} for the limit. In turn, we find that, for any $l,k \in \{1,\dots,k\}$,
\begin{equation}\label{LimitInProb3}
    \begin{aligned}
          \mathbb{P}\left(\left| \dfrac{\sum_{i \in \mathcal{V}_l, j \in \mathcal{V}_k}B_{ij}}{\sigma_d^2}-b_{lk}\right| > \epsilon \right)&\le \mathbb{P}\left(\left| \dfrac{\sum_{i \in \mathcal{V}_l, j \in \mathcal{V}_k}B_{ij}-\sum_{i \in \mathcal{V}_l, j \in \mathcal{V}_k}\mathcal{A}_{ij}}{\sigma_d^2}\right| > \epsilon/2 \right) \\&+\mathbb{P}\left(\left| \dfrac{\sum_{i \in \mathcal{V}_l, j \in \mathcal{V}_k}\mathcal{A}_{ij}}{\sigma_d^2}-b_{lk}\right| > \epsilon/2 \right) \overset{d\rightarrow \infty}{\rightarrow} 0,
    \end{aligned}
\end{equation}
where we used \eqref{ConvofSum4} and \eqref{ConvOfSums1} for the limit. This establishes that $   \dfrac{\hat{b}_{lk}}{\sigma_d^2}  \overset{\mathbb{P}}{\rightarrow} b_{lk}\text{, as } d\rightarrow \infty.$ Finally, by once more applying Slutsky's Theorem, we get that
\begin{equation*}
        \hat{\mathcal{A}}_{ij}  \overset{\mathbb{P}}{\rightarrow} \mathcal{A}_{ij}, \text{ as } d\rightarrow \infty,
\end{equation*}
as desired.
\end{proof}

\subsection{Proofs}\label{ProofsModelClusteringSelection}
\begin{proof}[Proof of \Cref{Thm.1}]
    For $\mathbb{Y}$ as in \eqref{eqn:1}, it is well known that if $\sigma(Q)$, the spectrum of a matrix, $Q$, satisfies that $\sigma\left( Q\right) \subseteq (0,\infty) + i \mathbb{R}$ and the L\'evy measure $F$ satisfies the moment condition $\int_{\Vert x \Vert >1} \ln \Vert x \Vert F(dx) < \infty$. Then, assuming that $\mathcal{L}(\bold{Y}_0)$ is the invariant distribution of $\mathbb{Y}$,  $\mathbb{Y}$ is strictly stationary. See e.g \cite{Masuda2004}. Hence, we only need to prove that the real part of the spectrum of $Q = Q(\psi)$ is positive. Our proof is based on the Gershgorin Circle Theorem.  We first note that $\bar{Q} = \psi_1 I_d + \psi_2 D^{-1}A^\top$ where $D$ and $A$ as in \Cref{SBOUProc} satisfies the condition. Consider a disc around $\bar{Q}_{ii}=\psi_1$, (since $A_{ii}=0$) for any $i$, and define the disc radius as $R_i = \sum_{j\ne i} \vert \bar{Q}_{ij}\vert$. Then,
    \begin{equation*}
        \begin{aligned}
            \max_{i}R_i &= \psi_2\max_{i}\dfrac{\sum_{j =1}^d\vert A\vert_{ji}}{D_{ii}}\\
            &= \psi_2\max_{i}\dfrac{\sum_{j =1}^d\vert A\vert_{ji}}{\sum_{j =1}^d\vert A\vert_{ji}+\sum_{j =1}^d\vert A\vert_{ij}}\\
            &=  \dfrac{\psi_2}{1 + \min_i\dfrac{\sum_{j =1}^d\vert A\vert_{ij}}{\sum_{j =1}^d\vert A\vert_{ji}}}
            \\&=\dfrac{\psi_2}{1+\tau},
        \end{aligned}
    \end{equation*}
     i.e. any eigenvalue $\lambda$ of $\bar{Q}$ satisfies that 
     \begin{equation*}
         \vert \lambda - \bar{Q}_{ii}\vert =\vert \lambda - \psi_1\vert \le R_i \le \dfrac{\psi_2}{1+\tau},
     \end{equation*}
     which under our assumption implies that
\[
\operatorname{Re}(\lambda)
\geq
\psi_1-\frac{\psi_2}{1+\tau}
>0.
\] Now we claim that the eigenvalues of $\bar{Q}$ and $Q=Q(\psi)$ are the same. Indeed, if we let $\lambda$ be an eigenvalue of $Q(\psi)$ and let the corresponding eigenvector be $x$ then,
    \begin{equation*}
        \begin{aligned}
            Q(\psi)x &= \lambda x\\
            \iff \left( \psi_1 I_n + \psi_2D^{-1/2}A^\top D^{-1/2}\right)x &= \lambda x
            \\
            \iff  \psi_1 x + \psi_2D^{-1/2}A^\top D^{-1/2}x &= \lambda x
            \\
            \iff  \psi_1 D^{-1/2}x + \psi_2D^{-1}A^\top D^{-1/2}x &= \lambda D^{-1/2}x
            \\
            \iff  \left(\psi_1 I_n + \psi_2D^{-1}A^\top \right)y &= \lambda y,
        \end{aligned}
    \end{equation*}
    with $y = D^{-1/2}x$. This in turn finishes the proof.
\end{proof}
\begin{proof}[Proof of \Cref{Prop:LBMinNode}]
    We first note, that 
    \begin{equation}\label{MaxCardComm}
        d = \sum_{j=1}^k \vert \mathcal{V}_j\vert  \Rightarrow \max_{j\in \{1,2,\dots,k\}}\vert \mathcal{V}_j\vert \ge \dfrac{d}{k}.
    \end{equation}
    Since,
    \begin{equation*}
        \begin{aligned}
        \sigma _d &= \sum_{j \in \mathcal{V}_l} \theta_j, \quad \forall l\in \{1,2,\dots,k\} \\ \Rightarrow 
        \min_{i}\theta_i &\le\dfrac{\sigma_d}{\vert \mathcal{V}_l \vert} \quad \forall  l\in \{1,2,\dots,k\}. 
        \end{aligned}
    \end{equation*}
    Using the above bound with \eqref{MaxCardComm} yield that 
    \begin{equation*}
         \min_{i}\theta_i \le\dfrac{k\sigma_d}{d}, \text{ for any } d\in \mathbb{N}.
    \end{equation*}
\end{proof}
\begin{proof}[Proof of \Cref{Prop:SumReg}]
 Since for any $i$, $\mathcal{D}_{ii}\le \bar{\mathcal{D}}_{ii}$, we restrict our attention to the expected degree matrix from a GSBM. For any $i$, we have that 
 \begin{equation*}
     \begin{aligned}
 \bar{\mathcal{D}}_{ii} &= \bar{\mu} \theta_i \sum_{j=1}^d \theta_j \left( b_{s(i)s(j)} +b_{s(j)s(i)} \right)\\ &=\bar{\mu} \theta_i \sum_{l=1}^k \sum_{j\in \mathcal{V}_l} \theta_j \left( b_{s(i)s(j)} +b_{s(j)s(i)} \right)\\
  &=\bar{\mu} \theta_i \sum_{l=1}^k \left( b_{s(i)l} +b_{ls(i)} \right)\sum_{j\in \mathcal{V}_l} \theta_j \\
  &= \bar{\mu} \theta_i \sigma_d \sum_{l=1}^k \left( b_{s(i)l} +b_{ls(i)} \right)\\
  & \le \bar{\mu} \sigma_d \theta_i 2k.
     \end{aligned}
 \end{equation*}
Here $\bar{\mu}:= \mathbb{E}\left(\vert W \vert \right)$, where $W$ denotes the edge distributions as specified in \Cref{SBOUProc}.
Hence, $\min_{i\in\{1,2,\dots,d\}}\bar{\mathcal{D}}_{ii} = o(\log d)$.
\end{proof}
\begin{proof}[Proof of \Cref{Prop:MinimalDegree}]
    We first notice, that 
    \begin{equation*}
        \begin{aligned}\bar{\mathcal{D}}_{ii} &= \bar{\mu} \sum_{j=1}^d\theta_i \theta_j\left( b_{s(i)s(j)}+b_{s(j)s(i)} \right) \\
        &=\bar{\mu} \theta_i \sum_{l=1}^k \left( b_{s(i)l}+b_{ls(i)} \right) \sum_{j \in \mathcal{V}_l}\theta_j \\
        &=\bar{\mu} \theta_i \sigma_d \sum_{l=1}^k \left( b_{s(i)l}+b_{ls(i)} \right),
        \end{aligned}
    \end{equation*}
    where we remember once again $\bar{\mu} = \mathbb{E}\left(\vert W \vert \right)$. The above in turn implies that $\bar{\mathcal{D}}_{\min} =\Omega \left( \dfrac{\sigma_d^2}{d}\right)=\Omega \left(\log d\right)$
    following Assumption \ref{Ass.2} and the fact that $\sum_{l=1}^k \left( b_{s(i)l}+b_{ls(i)} \right) >0$ and $\bar{\mu}$ not depending on $d$. The second assertion now follows from the first, and the fact that the $(i,i)$th entry of $\mathcal{D}$ is related to the $(i,i)$th entry of $\bar{\mathcal{D}}$ in the following manner:
\begin{equation*}
    \mathcal{D}_{ii}=\bar{\mathcal{D}}_{ii}- 2 \bar{\mu}\theta_i^2 b_{s(i)s(i)},
\end{equation*} 
where the second term is at most $2 \bar{\mu}$ which does not depend on $d$.
\end{proof}
\begin{proof}[Proof of Lemma \ref{Lemma:1}]
    We follow \cite{Qin2013} closely, adapting their arguments to our slightly different setting. First, we notice that $\bar{\Psi}^s_p = \bar{\Psi}_p + (\bar{\Psi}_p)^\top = \psi_2 \bar{\mathcal{D}}^{-1/2}\bar{\mathcal{A}}^\top\bar{\mathcal{D}}^{-1/2} +\psi_2 \bar{\mathcal{D}}^{-1/2}\bar{\mathcal{A}}\bar{\mathcal{D}}^{-1/2} = \psi_2 \bar{\mathcal{D}}^{-1/2}\left(\bar{\mathcal{A}}+\bar{\mathcal{A}}^\top\right)\bar{\mathcal{D}}^{-1/2}$. We note that $\bar{\mathcal{A}}=\mu\Theta\bold{ZB}\bold{Z}^\top \Theta$ and so $\bar{\mathcal{A}} + \bar{\mathcal{A}}^\top =\mu\Theta\bold{Z}(\bold{B}+\bold{B}^\top)\bold{Z}^\top \Theta$. We underline, that 
    \begin{equation*}
        \begin{aligned}
        \bar{\mathcal{D}}_{ii} &=\bar{\mu}\sum_{j=1}^d \theta_i \theta_j (b_{s(i)s(j)} + b_{s(j)s(i)}) \\
        &=\bar{\mu}\sum_{l=1}^k \theta_i\sum_{j \in \mathcal{V}_l} \theta_j (b_{s(i)l} + b_{ls(i)})
    \\&=\bar{\mu}\sigma_d\theta_i\sum_{l=1}^k  (b_{s(i)l} + b_{ls(i)})
    \\&=\bar{\mu}\sigma_d\theta_i [D_B]_{s(i)s(i)},
        \end{aligned}
    \end{equation*}
where by $[D_B]_{s(i)s(i)}$ we refer to the $(s(i),s(i))$th entry of $D_B$.
It thus follows that 
\begin{equation*}
    \begin{aligned}
    \bar{\Psi}^s_{p,ij} &= \psi_2\dfrac{\bar{\mathcal{A}}_{ij}+\bar{\mathcal{A}}_{ji}}{\sqrt{\bar{\mathcal{D}}_{ii}\bar{\mathcal{D}}_{jj}}} \\
    &=\mu\psi_2\dfrac{\theta_i\theta_j\left(b_{s(i)s(j)}+b_{s(j)s(i)}\right)}{\sqrt{\bar{\mu}^2\sigma_d^2\theta_i \theta_j[D_B]_{s(i)s(i)}[D_B]_{s(j)s(j)}}}\\ &=\dfrac{\mu}{\bar{\mu}}\psi_2\left(
    \sqrt{\theta_i/\sigma_d} \dfrac{b_{s(i)s(j)}+b_{s(j)s(i)}}{\sqrt{[D_B]_{s(i)s(i)}[D_B]_{s(j)s(j)}}}\sqrt{\theta_j/\sigma_d}\right).
    \end{aligned}
\end{equation*}
Hence, $\bar{\Psi}^s_{p,ij}$ equals the $(i,j)$ entry of $(\mu/\bar{\mu})\psi_2\Theta_{\sigma_d}^{1/2} Z B_L Z^\top \Theta_{\sigma_d}^{1/2}$, completing the proof.
\end{proof}
\begin{proof}[Proof of Lemma \ref{Lemma:2}]
    Given Lemma \ref{Lemma:1}, this proof follows using arguments as in \cite{Qin2013} Lemma $3.3$. Let $C = \dfrac{\mu}{\bar{\mu}}\psi_2 B_L$. Since $B_L$ is positive definite, so is $C$ if $\mu >0$ and it is negative definite if $\mu<0$. Hence, we sort the $k$ eigenvalues of $C$ according to absolute size, $\vert \lambda_1 \vert \ge \vert\lambda_2 \vert\ge \dots\ge \vert\lambda_k\vert>0$. Let $\Lambda \in \mathbb{R}^{k \times k}$ be a diagonal matrix with its $(s, s)$th element equal to $\lambda_s$. Let $U \in \mathbb{R}^{k \times k}$ be an orthogonal matrix such that $U_{\bullet s}$  is the eigenvector of $C$ corresponding to $\lambda_s, s=1, \ldots, k$. By eigen-decomposition, we have $C=U \Lambda U^T$. Define $\mathcal{X}= \Theta_{\sigma_d}^{1/2}ZU$, then we have that 
    \begin{equation*}
    \mathcal{X}^\top\mathcal{X} =  U^\top Z^\top \Theta_{\sigma_d} Z U  = U^\top U = I_k.
    \end{equation*}
    Here we used that $Z^\top \Theta_{\sigma_d} Z$ is the identity matrix. Further,
    \begin{equation*}
        \mathcal{X} \Lambda \mathcal{X}^\top = \Theta_{\sigma_d}^{1/2}Z C Z^\top \Theta_{\sigma_d}^{1/2} = \dfrac{\mu}{\bar{\mu}}\psi_2\Theta_{\sigma_d}^{1/2}Z B_L Z^\top \Theta_{\sigma_d}^{1/2} = \bar{\Psi}_{p}^{s},
    \end{equation*}
   where we used Lemma \ref{Lemma:1} in the last equality. In this case, the $k$ eigenvalues are the non-zero eigenvalues of $\bar{\Psi}^s_p$ satisfying that $\vert \lambda_1 \vert \ge \vert\lambda_2 \vert\ge \dots\ge \vert\lambda_k\vert>0$. Positive if $\mu >0$ and negative if $\mu<0$. Further, $\mathcal{X}$ contains the eigenvectors. To see this, we first show that for $s =1,\dots,k$, $\lambda_s$ and $\mathcal{X}_{\bullet s}$ are also an eigenvalue and eigenvector pair to $\bar{\Psi}_{p}^{s}$. Let $\bold{e}_{s}$ denote the s-th standard basis vector in $\mathbb{R}^k$.
\begin{equation*}
    \begin{aligned}
        \lambda_s\bold{e}_{s} &= \lambda_s \bold{e}_{s} \\
        \iff 
        \Lambda \bold{e}_{s} &= \lambda_s \bold{e}_{s} \\
        \iff 
        \Lambda \mathcal{X}^\top\mathcal{X}_{\bullet s} &= \lambda_s  \bold{e}_{s}\\
        \iff 
        \mathcal{X} \Lambda \mathcal{X}^\top \mathcal{X}_{\bullet s} &= \lambda_s\mathcal{X}\bold{e}_{s}\\
        \iff \bar{\Psi}_{p}^{s}\mathcal{X}_{\bullet s} &= \lambda_s\mathcal{X}_{\bullet s}.
    \end{aligned}
\end{equation*}
Finally, since $\mathrm{rank}(\bar{\Psi}_{p}^{s})\le \mathrm{rank}(\Lambda) = k$, these are the only non-zero eigenvalues of $\bar{\Psi}_{p}^{s}$.
   Now, notice that $\mathcal{X}_{i\bullet} = (\theta_i/\sigma_d)^{1/2}Z_{i\bullet}U$. Clearly, $\Vert \mathcal{X}_{i\bullet}\Vert_2^2 = \theta_i/\sigma_d $.  Thus, for $\mathcal{X}^*$ defined to be the matrix with $(i,j)$th entry equal to $\dfrac{\mathcal{X}_{ij}}{\Vert\mathcal{X}_{i\bullet} \Vert_2}$, we have that 
    \begin{equation*}
        \mathcal{X}^*_{i\bullet} = \dfrac{\sqrt{\theta_i/\sigma_d}Z_{i\bullet}U}{\sqrt{\theta_i/\sigma_d}} = Z_{i\bullet}U,
    \end{equation*}
    which concludes the proof. 
\end{proof}

\begin{proof}[Proof of \Cref{EigenvectorBound}]
    First, we will bound the difference between the Frobenius norm of $\widehat{\mathcal{X}}$ and $\mathcal{X}$ using Theorem $2$ in \cite{Yu2015}. Regardless of the sign on $\mu$, we sort the $d$ eigenvalues of $\bar{\Psi}^s_p$ according to size, i.e. $\lambda_1\ge \lambda_2 \dots \ge\lambda_d$. We assume first that $\mu>0$. With this in mind, we set $r=1$, $s=k$ and $p=d$ with the convention that $\lambda_0 = \infty$. We also sort the eigenvalues of $\widehat{\Psi}^s$ in decreasing order. Following Lemma \ref{Lemma:2}, there are $k$  non-zero eigenvalues of  $\bar{\Psi}_p^s$ and their corresponding eigenvectors are orthogonal. Since $\widehat{\Psi}^s$ is real and symmetric we can also choose its eigenvectors to be orthonormal by the Spectral Theorem for real matrices \citep{Axler1997}. Finally, since the $k$ largest eigenvalues of $\bar{\Psi}_p^s$ are strictly positive and the rest are $0$, the assumption $\rho_d = \min\left( \lambda_0 - \lambda_1, \lambda_k - \lambda_{k+1} \right)=\min\left( \infty - \lambda_1, \lambda_k - 0\right) = \lambda_k >0$ is satisfied, and hence, we have by Theorem $2$ of \cite{Yu2015} that
\begin{equation}\label{FirstBound}
     \Vert \widehat{\mathcal{X}}\widehat{O}_d - \mathcal{X} \Vert_F \le \dfrac{C \Vert \widehat{\Psi}^s - \bar{\Psi}^s_p\Vert }{\rho_d},
 \end{equation}
For some  $C>0$ that does not depend on $d$. Since we sorted the eigenvalues according to size, by \Cref{Lemma:2} $\rho_d$ is the smallest non-zero eigenvalue of $\bar{\Psi}^s_p$. For $\mu <0$, we proceed similarly, but now with $r = d-k+1$, and $s=p=d$. Then, using the convention that $\lambda_{d+1} = -\infty$, $\rho_d = \min \left( \lambda_{d-k}-\lambda_{d-k+1},\lambda_d - \lambda_{d+1} \right) = -\lambda_{d-k+1}= \vert \lambda_{d-k+1} \vert >0$ is equal to the absolute value of the smallest non-zero eigenvalue of $\bar{\Psi}^s_p$. We used here that $\lambda_{d-k}=0$ according to \Cref{Lemma:2}. Thus, the above bound holds regardless of the sign on $\mu$. We note that
 \begin{equation*}
      \Vert \widehat{\mathcal{X}}\widehat{O}_d - \mathcal{X} \Vert_F =  \Vert \widehat{\mathcal{X}}- \mathcal{X}\mathcal{O}_d \Vert_F,
 \end{equation*}
with $\mathcal{O}_d = \widehat{O}_d^T$. Using that $\rho_d$ is independent from $d$ by Proposition \ref{prop:eigenvaluesnindependence}, we conclude that 
\begin{equation}\label{EigenvectorBound2}
     \Vert \widehat{\mathcal{X}}- \mathcal{X}\mathcal{O}_d \Vert_F = O\left( \Vert \widehat{\Psi}^s - \bar{\Psi}^s_p\Vert \right).
\end{equation}
We finally conclude that 
\begin{equation*}
    \begin{aligned}
\Vert\widehat{\mathcal{X}^*}-\mathcal{X}^*\mathcal{O}_d \Vert_F &\le 2\dfrac{\Vert \widehat{\mathcal{X}}- \mathcal{X}\mathcal{O}_d \Vert_F}{\min_{i\in\{1,2,\dots, d\}}\Vert \mathcal{X}_{i\bullet}\Vert_2} \\
&= O\left(\sqrt{d}\Vert \widehat{\mathcal{X}}- \mathcal{X}\mathcal{O}_d\Vert_F\right),
    \end{aligned}
\end{equation*}
 where we used \Cref{RowNormalisedBound} for the upper bound and the fact that, for each $i$, $\Vert \mathcal{X}_{i\bullet}\Vert_2 = \left(\dfrac{\theta_i}{\sigma_d}\right)^{1/2}$ together with \Cref{Ass.NodeSum,Ass.2}; the $O(\sqrt d)$ scaling follows from the bullet $\min_i \theta_i = \Omega(\sigma_d/d)$ in \Cref{Ass.2}. To conclude, we bound the spectral norm $\Vert \widehat{\Psi}^s - \bar{\Psi}^s_p\Vert$ via the triangle inequality through the four-term decomposition
\begin{equation*}
\Vert \widehat{\Psi}^s - \bar{\Psi}^s_p\Vert \le \Vert \Psi^s_p - \bar{\Psi}^s_{p-}\Vert + \Vert \mathrm{diag}(\bar{\Psi}^s_p)\Vert + \Vert \Psi^s - \Psi^s_p\Vert + \Vert \widehat{\Psi}^s - \Psi^s\Vert.
\end{equation*}
The three concentration terms are bounded by \Cref{Prop.2,Prop.3,Prop.4}, contributing $O\!\left(\sqrt{d\log d / \sigma_d^2}\right)$ collectively; the estimation term is bounded by $f(d,\mathcal{P}_T)$ via \Cref{Ass.EstimationBound}. Combined with \eqref{EigenvectorBound2} and the row-normalisation bound established above, this concludes the proof.
 \end{proof}

\begin{proof}[Proof of \Cref{Thm:ConsistencyCommunitytCluster}] Let $\mathcal{M}$ be as in \Cref{Def:Miscluster} and $M$ as in \Cref{SetInclusion}, then
    $\vert \mathcal{M} \vert \le \vert M \vert$, following Proposition \ref{SetInclusion}. Then, using the arguments in \cite{Rohe2011,Gudhmundsson2021} and \Cref{EigenvectorBound} we get that, with high probability,
    \begin{equation*}
        \vert \mathcal{M} \vert \le 8 \Vert \widehat{\mathcal{X}^*}  - \mathcal{X}^*\mathcal{O}\Vert_F^2 = O \left( d f(d,\mathcal{P}_T)^2+\dfrac{d^2\log d}{\sigma_d^2} \right).
    \end{equation*}
\end{proof}

\section{Support recovery algorithm}\label{SecSupportRec}
We consider model selection for the drift matrix $\bold{Q}$ of an Ornstein-Uhlenbeck process heuristically. That is, we do not derive any theoretical results; we outline the idea behind our model-selection method and show in \Cref{Sim2} that it performs very well in finite samples.
Consider an Ornstein-Uhlenbeck diffusion model as in \eqref{eqn.Intro:1}. We think of $\bold{Q}$ as being sparse and are interested in recovering the support. We assume that we observe the OU-process on a discrete grid: As in \Cref{DiscreteSetting}, $\mathbb{X}_{\mathcal{P}_T}:=\left\{\mathbf{X}_s, s \in \mathcal{P}_T\right\}
$ where, as in \Cref{DiscreteSetting}, $N_T$ denotes the sample size (so that $N_T = T/\Delta$ for an equidistant grid). Let $[d^2]=\{1,2,\dots,d^2\}$. Let $\mathcal{P}([d^2])$ denote the power set of $[d^2]$. For any $\ell \in [d^2]$, define $i(\ell) := \lceil \ell/d \rceil$ and $j(\ell) := ((\ell-1) \bmod d) + 1$, so that $\ell \mapsto (i(\ell), j(\ell))$ is the row-major bijection between $[d^2]$ and $[d] \times [d]$. Define $\mathbb{W}\left(N \right) = \{\bold{A}\in \mathcal{M}_d(\mathbb{R}): \bold{A}_{i(\ell)\,j(\ell)}=0 \text{ for all } \ell \in N\}$ for $N \in \mathcal{P}([d^2])$. For a penalty function $\lambda: \mathbb{N}\times \mathbb{R} \rightarrow \mathbb{R}$, our model selection problem then consists in choosing
\begin{equation}\label{ModelsSelection}
    N^* = \min \argmin_{N  \in \mathcal{P}([d^2])}\left( -2\hat{L}_N+ \lambda\left(d^2-\vert N\vert, N_T\right)\right),
\end{equation}
where $\widehat{Q}_N := \argmin _{Q \in \mathbb{W}(N)}\bar{\mathcal{L}}_T^{\mathcal{P}_T}(Q)$ is the constrained MLE and $\hat{L}_N := -\bar{\mathcal{L}}_T^{\mathcal{P}_T}(\hat{Q}_N)$ is the corresponding maximized log-likelihood (recall that $\bar{\mathcal{L}}_T^{\mathcal{P}_T}$ in \eqref{FeasibleDiscLike} is the negative log-likelihood, so this sign convention puts $\hat{L}_N$ on the same footing as the standard log-likelihood in AIC/BIC). The outer minimum in \eqref{ModelsSelection} is there in the case of multiple solutions, in which the solution with the smallest cardinality is chosen.
\begin{remark}
    For $\lambda(x,y)= 2x$, \eqref{ModelsSelection} is the solution from AIC whereas when $\lambda(x,y) = x\ln(y)$  we arrive at BIC. To ensure the existence of $\widehat{Q}_N$ for any $N \in \mathcal{P}\left([d^2]\right)$, we could restrict $\mathbb{W}\left(N \right)$ to being compact which would entail bounding each entry. This is a fairly standard assumption in the literature, as in practice, parameter bounds would typically be applied during the optimization procedure and so we see no issue with it.
\end{remark}
Clearly, the solution to \eqref{ModelsSelection} is infeasible as it requires us solving $2^{d^2}$ minimization problems. In practice, we opt for the following scheme:
\begin{enumerate}
    \item Estimate drift matrix with no restrictions using \Cref{SpecificFormQEst}. Denote this estimate by $\widehat{Q}$.
    \item Sort the $d^2$ elements of $\widehat{Q}$ according to absolute size in ascending order and let \\ $\left\{(i_1,j_1),(i_2,j_2),\dots,(i_{d^2},j_{d^2})\right\}$ denote the list of corresponding row-column pairs. Let $S_{\#}:[d]\times[d] \to [d^2]$ be the row-major linear index $(i,j) \mapsto (i-1)d + j$, the inverse of the bijection $\ell \mapsto (i(\ell), j(\ell))$ defined above. Consider the list $\left\{ S_{\#}(i_1,j_1),\dots,S_{\#}(i_{d^2},j_{d^2})\right\} \subseteq [d^2]$, whose first entry is the linear index of the smallest absolute value of $\widehat{Q}$, second entry the linear index of the second smallest absolute value, and so on.
    \item From  $\left\{ S_{\#}(i_1,j_1),\dots,S_{\#}(i_{d^2},j_{d^2})\right\}$, construct $d$ sets $\left(D_j\right)_{j=1}^d$ such that $D_1\subset D_2 \subset \dots,\subset D_d$ with $$D_1 = \left\{ S_{\#}(i_1,j_1),\dots,S_{\#}(i_{1\cdot d},j_{1\cdot d})\right\},D_2 =  \left\{ S_{\#}(i_1,j_1),\dots,S_{\#}(i_{1\cdot d+1},j_{1\cdot d+1}),\dots,S_{\#}(i_{2\cdot d},j_{2\cdot d})\right\}$$ and so on. Sets increase by $d$ at a time, that is $\vert D_{j+1} \vert - \vert D_j\vert = d$.
    \item Find    \begin{equation}\label{ModelsSelectionFeasible1}
    K_{\#} = \min \argmin_{j  \in [d]}\left( -2\hat{L}_{D_j}+ \lambda\left(d^2-\vert D_{j}\vert, N_T\right)\right).
\end{equation}
Note that $K_{\#}$ is an integer, whereas $N^*$ from $\eqref{ModelsSelection}$ is a set.
\item For a set  $S$, let $S[-j]$ select all but the last $j$ elements of $S$. Find,
    \begin{equation*}
    K_{*} = \min \argmin_{j  \in [2d] }\left( -2\hat{L}_{D_{(K_{\#}+1)}[-j]}+ \lambda\left(d^2-\vert D_{(K_{\#}+1)}[-j]\vert, N_T\right)\right).
\end{equation*}
If $K_{\#} = d$, replace $(K_{\#} +1)$ by $K_{\#}$ above.
\item We then estimate $\mathrm{supp}(Q_0)=\{(i,j) \in [d]\times [d]: [Q_0]_{ij}\ne 0\}$ by $\widehat{\mathrm{supp}(Q)} = \{(i,j) \in [d]\times [d]: S_{\#}(i,j) \notin D_{(K_{\#}+1)}[-K_{*} ] \}$
\end{enumerate}
\begin{remark}
    The above procedure requires us solving $3d$ minimization problems instead of $2^{d^2}$ as in \eqref{ModelsSelection} and is indeed feasible to compute even large $d$.
\end{remark}
\begin{remark}
    We have throughout assumed that the L\'evy process $\mathbb{L}$ has L\'evy triplet $\left(0,\Sigma,F\right)$ and assumed $\Sigma$. For the unrestricted minimization problem, we showed with \Cref{SpecificFormQEst} that the assumption could be dropped. However, for the restricted minimization problems above, knowledge of $\Sigma$ is necessary for the minimization to be performed. Following \eqref{eqn.Intro:1}, clearly,
    \begin{equation*}
        \Delta_{\mathcal{P}_T}^k \mathbb{L} = \Delta_{\mathcal{P}_T}^k \mathbb{Y} + \bold{Q}\int_{s_{k}^{N_T}}^{s_{k+1}^{N_T}} \bold{Y}_s ds.
    \end{equation*}
    We note that, given an estimate of $Q$, $\widehat{Q}$, we may then approximate the L\'evy increments by
 \begin{equation*}
        \widehat{\Delta_{\mathcal{P}_T}^k \mathbb{L}} = \Delta_{\mathcal{P}_T}^k \mathbb{Y} + \widehat{\bold{Q}}\dfrac{1}{2}\left(s_{k+1}^{N_T}-s_{k}^{N_T}\right)\left(\bold{Y}_{s_{k+1}^{N_T}}+\bold{Y}_{s_{k}^{N_T}}\right).
    \end{equation*}
    We can then estimate $\Sigma$ by the method in \cite{Gegler2010} and Appendix $H.1$ in \cite{Lucchese2023}. In practice, we then replace $\Sigma$ in \eqref{FeasibleDiscLike} with this estimate in the scheme above.
\end{remark}

\section{Numerical experiments}\label{App:NumericalExperiments}
\subsection{Simulation study on the group detection algorithm of \Cref{Sec:Cons}}\label{App:SimStudyClusteringAlgo}
We are now interested in testing the finite sample performance of our community detection algorithm. In light of this, using the Euler-Maruyama scheme written out in \cite{Lucchese2023} Appendix $G$, we simulate repeated samples of a SBOU process on a $\Delta_{\mathcal{P}_T}$-grid of size $1/24$ for $T=274,1096,4384$, and thus time series dimension of size $N_T= T/\Delta_{\mathcal{P}_T}$. We interpret $T=1$ as a day, and hence the setting $T=1096$ corresponds to us having hourly data over the course of $3$ years. Further, we consider the cross-sectional dimensionality/number of nodes equal to $d = 50,75,100$. We let the Lévy-process, $\mathbb{L}$, in \eqref{eqn:1} be given by 
\begin{equation}\label{eqn:SimNoiseSpec}
\mathbb{L}=\mathbb{W}+\mathbb{J}.
\end{equation}
Here $\mathbb{W}$ is a $d$-dimensional Brownian motion with covariance matrix $I_d$ and $\mathbb{J}$ is a compound Poisson process independent of $\mathbb{W}$ with rate $\lambda=1$ such that $\mathbb{J}_t^{(j)}=\sum_{k=0}^{N_t} Z_k^{(j)}$ where $Z_k^{(j)}$ are i.i.d. jump heights that are standard normally distributed and $\mathbb{N}$ is a Poisson process with rate $\lambda$. For any $d$, we let $\sigma_d=\dfrac{d}{k} \min \left(1, C \dfrac{(d\log d)^{1/2+\epsilon}}{d} \right)$, for $\epsilon =1/100$, and $\theta_j = \theta =\min \left(1, C \dfrac{(d\log d)^{1/2+\epsilon}}{d} \right)$, where $C$ is a constant such that $\theta\in \{0.5,0.8,1\}$ when $d=100$. Clearly, the $\theta$s are decreasing in $d$, so, for $d=50, 75$, the node-specific probabilities are larger than for $d=100$. We thus underline that we are testing \Cref{Thm:ConsistencyCommunitytCluster} in finite samples in the sparsest possible setup where the share of correctly clustered nodes tend to $1$ slowest. In other words, the results displayed here are in the hardest setting where we may still expect correct clustering. Intuitively, it will be easier for our clustering algorithm to work, the larger the difference in the intra-community and inter-community probabilities. This, we already touched upon by displaying \Cref{fig:combined_signal_illustration}. That is, if the probability of a link within communities is very high paired with the probability of links between nodes in different communities being very low, then clustering will be easier. On the other hand, the closer these probabilities are, the harder clustering will be. To shed light on how much this affects the clustering algorithm, we consider different setups for the entries of the community-specific probability matrix $\mathbf{B}$ from a very clear signal to a less clear group signal. In all setups, we let its entries on the diagonal be given by $b_{ll}=0.5$, for $l\in \{1,\dots,k\}$, and then we vary the off-diagonal setups. In total, we consider four settings: $b_{lk} \in \{ 0.01, 0.05,0.1,0.2\}$ for $l\ne k$. Lastly, we let the communities be equal and of size $d/k$. We underline that these settings satisfy the assumptions of Theorem \ref{Thm:ConsistencyCommunitytCluster}. We draw $R=1000$ of such samples and display the results in \Cref{table:SimTable1}. We notice a few interesting things in these tables. Perhaps not too surprisingly, the clustering algorithm works best when $k$ is small. Indeed, for smaller $k$ the problem of clustering the nodes becomes easier. For $k=2$, by clustering at random, we would expect to cluster $50 \%$ correctly. As expected we also see that the larger the difference between the probabilities of links within and between communities, the better the algorithm works. We further notice, that as the node-specific probabilities increases the overall tendency is that the clustering algorithm works better. This is also quite intuitive and in line with the idea that the matrix $\mathbf{B}$ carries the `signal' of which nodes are in which communities; as argued, the larger the difference between the within-community and between-community link probabilities, the easier clustering is. On the other hand, the node-specific probabilities $\theta_i$ essentially play the dominating part in determining whether a link is established or not, thereby determining the strength of this signal. Lastly, for fixed $T$, it is not clear whether increasing $d$ improves the clustering percentage. For fixed $d$, increasing $T$ improves clustering. However, as Theorem \ref{Thm:ConsistencyCommunitytCluster} establishes the consistency of Algorithm  \ref{alg:VARBlockbuster} when $d$ and $T$ tend to infinity, the key pattern to note is that, for fixed $k$ and $\theta$, when increasing $d$ and $T$ together, the percentage of correctly clustered nodes increases significantly, i.e.~when one considers the diagonal elements of each sub-table. In conclusion, these results suggest that Algorithm \ref{alg:VARBlockbuster} not only works for samples tending to infinity, but actually performs very well also for finite samples.
\begin{sidewaystable}[p] 
\setlength{ \tabcolsep}{0.1cm}
\begin{center}
\caption{Share of correctly clustered nodes from applying \Cref{alg:VARBlockbuster} to observations of \Cref{SBOUProc} where the L\'evy driving noise is specified as \eqref{eqn:SimNoiseSpec} with independent noise }
\label{table:SimTable1}
\begin{footnotesize}
  \adjustbox{max width=\textheight}{%
\begin{tabular}{ccccccccccccccccccccccccccc}
    \hline
    \hline
 & &     \multicolumn{3}{c}{ k = 2} & &     \multicolumn{3}{c}{ k = 3} & &     \multicolumn{3}{c}{k = 5} & & & &    \multicolumn{3}{c}{k = 2} & &     \multicolumn{3}{c}{k = 3} & &     \multicolumn{3}{c}{k = 5} \\
    \cline{3-5} \cline{7-9} \cline{11-13} \cline{17-19} \cline{21-23} \cline{25-27}
 \\
    \multicolumn{2}{c}{Panel A}    && && && && &&& &&    \multicolumn{2}{c}{Panel B}   && && && && &&  \\
$\theta$ &$d/T$ &274&1096&4384& & 274&1096&4384 & & 274&1096&4384&&$\theta$ &$d/T$ &274&1096&4384 & & 274&1096&4384 & & 274&1096&4384 \\
    \cline{3-5} \cline{7-9} \cline{11-13} \cline{17-19} \cline{21-23} \cline{25-27}
 & 50  & 67.5\% & 92.0\% & 99.3\%&   & 55.7\% & 76.9\% & 92.8\%&   & 44.9\% & 56.3\% & 63.9\%&  &   & 50  & 64.3\% & 85.3\% & 96.5\%&   & 51.4\% & 63.4\% & 75.1\%&   & 40.9\% & 46.2\% & 48.5\% \\
$0.5$ & 75  & 61.3\% & 91.3\% & 99.7\%&   & 49.5\% & 82.4\% & 98.6\%&   & 41.2\% & 61.3\% & 80.7\%&  &  $0.5$ & 75  & 59.8\% & 83.6\% & 98.2\%&   & 46.6\% & 67.6\% & 91.3\%&   & 37.3\% & 46.3\% & 57.1\% \\
 & 100  & 57.9\% & 87.1\% & 99.7\%&   & 45.0\% & 78.6\% & 99.2\%&   & 36.7\% & 62.9\% & 92.8\%&  &   & 100  & 56.9\% & 79.0\% & 98.4\%&   & 43.5\% & 62.2\% & 94.6\%&   & 33.9\% & 45.4\% & 64.7\% \\
 \\
 & 50  & 73.5\% & 98.7\% & 100.0\%&   & 64.5\% & 98.3\% & 100.0\%&   & 54.6\% & 88.0\% & 98.3\%&  &   & 50  & 68.3\% & 95.9\% & 99.9\%&   & 55.5\% & 90.0\% & 99.3\%&   & 44.0\% & 62.5\% & 77.9\% \\
$0.8$ & 75  & 63.0\% & 96.5\% & 100.0\%&   & 52.5\% & 96.0\% & 100.0\%&   & 45.6\% & 93.1\% & 99.8\%&  &  $0.8$ & 75  & 61.0\% & 91.8\% & 99.9\%&   & 48.0\% & 85.3\% & 99.5\%&   & 39.1\% & 64.4\% & 94.7\% \\
 & 100  & 58.8\% & 92.6\% & 100.0\%&   & 46.1\% & 91.3\% & 100.0\%&   & 38.3\% & 89.3\% & 99.9\%&  &   & 100  & 57.2\% & 86.0\% & 99.7\%&   & 43.8\% & 75.9\% & 99.5\%&   & 34.5\% & 58.8\% & 97.6\% \\
 \\
 & 50  & 74.0\% & 99.2\% & 100.0\%&   & 67.0\% & 99.4\% & 100.0\%&   & 59.7\% & 97.9\% & 100.0\%&  &   & 50  & 68.5\% & 97.4\% & 100.0\%&   & 57.9\% & 95.9\% & 99.9\%&   & 45.7\% & 74.1\% & 95.8\% \\
$1.0$ & 75  & 63.2\% & 97.1\% & 100.0\%&   & 52.7\% & 97.5\% & 100.0\%&   & 46.8\% & 98.1\% & 100.0\%&  &  $1.0$ & 75  & 60.8\% & 93.6\% & 99.9\%&   & 48.5\% & 89.7\% & 99.9\%&   & 39.4\% & 74.0\% & 99.5\% \\
 & 100  & 58.6\% & 93.3\% & 100.0\%&   & 46.6\% & 93.3\% & 100.0\%&   & 38.4\% & 94.0\% & 100.0\%&  &   & 100  & 57.2\% & 87.5\% & 99.9\%&   & 44.0\% & 78.9\% & 99.8\%&   & 34.4\% & 64.7\% & 99.5\% \\
 \\
 \\
    \multicolumn{2}{c}{Panel C}    && && && && &&& &&    \multicolumn{2}{c}{Panel D}   && && && && &&  \\
$\theta$ &$d/T$ &274&1096&4384& & 274&1096&4384 & & 274&1096&4384&&$\theta$ &$d/T$ &274&1096&4384 & & 274&1096&4384 & & 274&1096&4384 \\
    \cline{3-5} \cline{7-9} \cline{11-13} \cline{17-19} \cline{21-23} \cline{25-27}
 & 50  & 61.8\% & 75.0\% & 87.8\%&   & 48.5\% & 54.0\% & 59.7\%&   & 38.1\% & 40.1\% & 41.5\%&  &   & 50  & 58.6\% & 63.0\% & 65.6\%&   & 45.3\% & 46.4\% & 47.0\%&   & 35.6\% & 36.3\% & 36.3\% \\
$0.5$ & 75  & 57.9\% & 74.3\% & 93.6\%&   & 44.8\% & 55.1\% & 71.8\%&   & 35.2\% & 38.8\% & 42.6\%&  &  $0.5$ & 75  & 56.4\% & 61.7\% & 73.1\%&   & 42.9\% & 46.1\% & 48.6\%&   & 33.1\% & 33.9\% & 34.6\% \\
 & 100  & 56.1\% & 69.1\% & 94.4\%&   & 42.4\% & 51.9\% & 79.6\%&   & 32.2\% & 37.5\% & 45.0\%&  &   & 100  & 55.4\% & 58.8\% & 74.5\%&   & 41.1\% & 44.1\% & 49.9\%&   & 30.9\% & 32.6\% & 33.7\% \\
 \\
 & 50  & 63.2\% & 89.7\% & 99.1\%&   & 50.6\% & 71.3\% & 93.7\%&   & 39.3\% & 47.2\% & 56.0\%&  &   & 50  & 59.0\% & 70.2\% & 88.6\%&   & 46.1\% & 50.1\% & 57.1\%&   & 36.1\% & 37.4\% & 39.4\% \\
$0.8$ & 75  & 58.3\% & 83.8\% & 99.1\%&   & 45.3\% & 66.8\% & 96.5\%&   & 35.4\% & 44.8\% & 65.7\%&  &  $0.8$ & 75  & 56.6\% & 65.5\% & 90.4\%&   & 42.9\% & 48.2\% & 62.8\%&   & 33.0\% & 34.8\% & 38.5\% \\
 & 100  & 56.3\% & 75.6\% & 98.6\%&   & 42.5\% & 57.7\% & 95.8\%&   & 32.5\% & 41.0\% & 71.3\%&  &   & 100  & 55.1\% & 60.9\% & 87.8\%&   & 41.0\% & 45.2\% & 62.9\%&   & 30.9\% & 33.1\% & 37.2\% \\
 \\
 & 50  & 64.4\% & 92.8\% & 99.7\%&   & 51.3\% & 80.4\% & 98.8\%&   & 39.8\% & 52.0\% & 70.4\%&  &   & 50  & 59.6\% & 74.3\% & 95.2\%&   & 46.0\% & 52.5\% & 67.7\%&   & 36.0\% & 38.1\% & 42.2\% \\
$1.0$ & 75  & 58.6\% & 85.7\% & 99.6\%&   & 45.6\% & 71.0\% & 98.7\%&   & 35.8\% & 47.3\% & 80.5\%&  &  $1.0$ & 75  & 56.5\% & 66.7\% & 93.9\%&   & 43.0\% & 48.7\% & 72.7\%&   & 33.1\% & 35.4\% & 41.1\% \\
 & 100  & 56.2\% & 76.5\% & 99.1\%&   & 42.5\% & 59.6\% & 97.9\%&   & 32.4\% & 42.5\% & 85.5\%&  &   & 100  & 55.3\% & 61.0\% & 90.8\%&   & 41.1\% & 45.3\% & 70.4\%&   & 31.0\% & 33.3\% & 39.4\% \\
 \\
    \hline
    \hline
  \end{tabular}%
}%
  \end{footnotesize}%
\smallskip
\begin{scriptsize}
\parbox{0.98\textwidth}{\emph{Note: } We simulate the process in \Cref{SBOUProc} with $\Delta_{\mathcal{P}_T=1/24}$ and interpret $T=1$ as one day so that we have hourly observations. The L\'evy driving noise is specified as \eqref{eqn:SimNoiseSpec} with independent entries. We simulate the process $R= 1,000$ times and apply \Cref{alg:VARBlockbuster} to detect groups. In this table, we present the mean of the share of correctly clustered nodes over this replications for for different group signals across varying parameters. Panels $A-B$ show results for the case where $b_{ii} = 0.5$ for all entries, while $b_{ij}$ varies across panels with values in $\{0.01, 0.05, 0.1, 0.2\}$. Specifically, Panel $C$ corresponds to the configuration where $b_{ij} = 0.1$.
}
\end{scriptsize}
\end{center}
\end{sidewaystable}
Finally, we refer to the simulation results in \Cref{SimCorrelation} that clearly highlight how clustering based on correlation metrics, such as those inspired by \cite{Mantegna1999}, mistakenly interprets noise as signal (i.e., groupings) when the noise is highly correlated. In contrast, our method is able to distinguish the \emph{true signal} from \emph{noise}.
\FloatBarrier

\subsection{Simulation study on the algorithm for detecting $k$}\label{Sim2}
In this section, we are interested in testing the finite-sample performance of the proposed method for finding $k$ mentioned in \Cref{SecModelSelection} as well as the support recovery algorithm in \Cref{SecSupportRec}.  We let the Lévy-process, $\mathbb{L}$, in \eqref{eqn:1} be given by 
\begin{equation*}
\mathbb{L}=\mathbb{W}+\mathbb{J},
\end{equation*}
where $\mathbb{W}$ is a $d$-dimensional Brownian motion with covariance matrix $I_d$ and $\mathbb{J}$ is a compound Poisson process independent of $\mathbb{W}$ with rate $\lambda=1$, given by $\mathbb{J}_t^{(j)}=\sum_{k=0}^{\mathbb{N}_t} Z_k^{(j)}$, where $\mathbb{N}$ is a Poisson process of rate $\lambda$ and the jump heights $Z_k^{(j)}$ are i.i.d.\ standard normally distributed, exactly as in \Cref{App:SimStudyClusteringAlgo}. We simulate an SBOU process as in \Cref{SBOUProc} on a $\Delta_{\mathcal{P}_T}$-grid of size $1/24$ for $T=1096$ and $T=4384$. We interpret $T=1$ as a day and these thus correspond to having hourly observations over the course of $3$ and $12$ years.  Intuitively, the larger the non-zero values of the drift matrix in the \Cref{SBOUProc}, the easier it will be to recover the support. Here, we pick $\psi_2$ in \Cref{SBOUProc} so that the median non-zero value of the network effect matrix is $0.3$. We then choose $\psi_1$ according to \Cref{Thm.1}. By comparing to the heat maps of the matrices used for support recovery in \cite{Gaiffas2019, Ciolek2020}, we observe that this represents a relatively conservative choice. We underline that in this simulation study, we hold $\theta$ fixed as we vary $d$; our interest is in seeing how the procedure performs in this regime. We display the results in \Cref{tab:SupportRecovery}. The convergence we are interested in now is as $d$ grows. We see that our support recovery algorithm performs extremely well across almost all scenarios. As specified in \Cref{tab:SupportRecovery}, we opt for using AIC for support recovery as opposed to say BIC, since our recovery rate of the non-zero elements of the drift matrix was a lot better for AIC. This is not too surprising. It is well-known that BIC prefers sparser models more than AIC, as it penalizes model complexity more than AIC. We believe the reason for BIC's somewhat poorer performance comes from the fact that the diagonal entries of the drift matrix of a \Cref{SBOUProc} tend to be quite a lot bigger than the off-diagonal entries by the normalization of these entries in \Cref{SBOUProc}. This translates to the diagonal elements in \Cref{SBOUProc} explaining the major part of the probabilistic behavior of the process. Once the diagonal entries have been added, allowing more entries (the off-diagonal entries) to be non-zero does not contribute much in terms of likelihood. The stricter penalty of BIC combined with the little contribution to the likelihood function has the consequence that BIC tends to select models that are sparser than the true drift matrix.  In \Cref{fig:EstProbabilities} we demonstrate the strong-signal regime anticipated in \Cref{SecModelSelection}: applying the plug-in estimators of \Cref{EstimatorsBAndTheta} to the partition recovered by \Cref{alg:VARBlockbuster}, the block structure of the consistently estimated drift matrix is clearly visible and the edge-probability matrix is recovered accurately.
\begin{table}[!htbp]
\centering
\caption{Accuracy of estimators from \citet{Ma2021} and support recovery algorithm in \Cref{SecSupportRec}.}
\label{tab:SupportRecovery}

% Subtable 1
\subfloat[$T = 1096$]{
\begin{footnotesize}
\begin{tabular}{@{\hspace{0.5cm}}clccc@{\hspace{0.5cm}}ccc@{\hspace{0.5cm}}ccc@{}}
\hline
\hline
& & \multicolumn{3}{c}{$k=2$} & \multicolumn{3}{c}{$k=3$} & \multicolumn{3}{c}{$k=5$} \\
\cmidrule(lr){3-5} \cmidrule(lr){6-8} \cmidrule(lr){9-11}
$\theta$ & $d$ & $1$s RR & $0$s RR & $\widehat{K}_2$ & $1$s RR & $0$s RR & $\widehat{K}_2$ & $1$s RR & $0$s RR & $\widehat{K}_2$ \\
\midrule
 & 50  & 98.0\% & 88.0\% & 47.0\% & 99.5\% & 86.5\% & 22.0\% & 100.0\% & 87.0\% & 7.0\% \\
$0.5$ & 75  & 95.5\% & 89.5\% & 65.5\% & 97.5\% & 89.5\% & 22.0\% & 99.0\% & 88.0\% & 5.0\% \\
 & 100  & 92.0\% & 90.5\% & 87.0\% & 95.5\% & 90.0\% & 13.0\% & 97.5\% & 89.5\% & 5.0\% \\
\midrule
 & 50  & 91.0\% & 91.0\% & 97.0\% & 95.5\% & 89.5\% & 65.0\% & 97.5\% & 89.5\% & 3.0\% \\
$0.8$ & 75  & 81.0\% & 92.0\% & 99.0\% & 89.0\% & 91.0\% & 90.0\% & 94.0\% & 90.0\% & 6.0\% \\
 & 100  & 70.0\% & 93.0\% & 99.0\% & 81.0\% & 92.0\% & 99.0\% & 89.0\% & 90.5\% & 22.0\% \\
\midrule
 & 50  & 86.5\% & 90.5\% & 100.0\% & 90.0\% & 91.0\% & 97.0\% & 95.0\% & 90.0\% & 9.0\% \\
$1.0$ & 75  & 71.5\% & 93.0\% & 100.0\% & 77.0\% & 92.5\% & 98.0\% & 88.0\% & 91.0\% & 60.5\% \\
 & 100  & 57.5\% & 94.5\% & 100.0\% & 65.0\% & 93.5\% & 100.0\% & 79.0\% & 92.0\% & 92.0\% \\
\bottomrule
\end{tabular}
\end{footnotesize}
}

\vspace{0.5cm}

% Subtable 2
\subfloat[$T = 4384$]{
\begin{footnotesize}
\begin{tabular}{@{\hspace{0.5cm}}clccc@{\hspace{0.5cm}}ccc@{\hspace{0.5cm}}ccc@{}}
\hline
\hline
& & \multicolumn{3}{c}{$k=2$} & \multicolumn{3}{c}{$k=3$} & \multicolumn{3}{c}{$k=5$} \\
\cmidrule(lr){3-5} \cmidrule(lr){6-8} \cmidrule(lr){9-11}
$\theta$ & $d$ & $1$s RR & $0$s RR & $\widehat{K}_2$ & $1$s RR & $0$s RR & $\widehat{K}_2$ & $1$s RR & $0$s RR & $\widehat{K}_2$ \\
\midrule
 & 50  & 99.5\% & 93.5\% & 65.0\% & 100.0\% & 91.0\% & 40.0\% & 100.0\% & 92.0\% & 12.0\% \\
$0.5$ & 75  & 98.5\% & 94.5\% & 80.0\% & 99.0\% & 93.5\% & 38.0\% & 100.0\% & 93.0\% & 15.0\% \\
 & 100  & 96.5\% & 95.0\% & 95.0\% & 98.5\% & 94.5\% & 28.0\% & 99.5\% & 93.5\% & 20.0\% \\
\midrule
 & 50  & 95.0\% & 95.5\% & 98.5\% & 97.5\% & 94.0\% & 80.0\% & 99.0\% & 93.5\% & 16.5\% \\
$0.8$ & 75  & 90.0\% & 96.5\% & 99.5\% & 93.5\% & 95.0\% & 92.0\% & 97.0\% & 94.5\% & 20.0\% \\
 & 100  & 85.0\% & 97.5\% & 99.5\% & 90.0\% & 96.5\% & 99.5\% & 95.0\% & 94.5\% & 35.0\% \\
\midrule
 & 50  & 93.5\% & 94.5\% & 100.0\% & 95.0\% & 95.5\% & 98.5\% & 98.0\% & 94.0\% & 28.0\% \\
$1.0$ & 75  & 85.0\% & 97.5\% & 100.0\% & 88.0\% & 96.0\% & 99.0\% & 94.5\% & 95.5\% & 70.0\% \\
 & 100  & 75.0\% & 98.5\% & 100.0\% & 82.0\% & 97.5\% & 100.0\% & 89.0\% & 96.0\% & 95.0\% \\

\bottomrule
\end{tabular}
\end{footnotesize}
}

\vspace{0.5cm}

\begin{scriptsize}
\parbox{0.98\textwidth}{\emph{Note: } We simulate the process from \Cref{SBOUProc} $R=200$ times for $T= 1096, 4384$ and $\Delta_{\mathcal{P}_T} = 1/24$, corresponding to hourly observation over the course of $3$ and $12$ years. The L\'evy noise equals a standard $d$-dimensional Brownian motion and with group signal with $b_{ji} = 0.5$ for $j=i$ and $b_{ji} = 0.05$ for $j \neq i$. We let $\lambda(x,y) = 2x$ in \eqref{ModelsSelection}. 

The column ``1s RR'' refers to the recovery rate (RR) of non-zero elements in the drift matrix, representing the percentage of non-zero elements in the estimated support of the drift matrix from \Cref{SecSupportRec} that are also non-zero in the actual, unobserved drift matrix. The column ``0s RR'' refers to the recovery rate of zero elements in the drift matrix, indicating the percentage of correctly estimated zero elements. Lastly, the column $``\widehat{K}_2''$ indicates the percentage where the estimator $\widehat{K}_2$ from Algorithm 1 of \citet{Ma2021} accurately estimates the true number of communities.}
\end{scriptsize}

\end{table}

\begin{figure}[htbp]
    \caption{Illustration of the ability of the support recovery algorithm in \Cref{SecSupportRec} to recover the group signal and the probability of directed edges between nodes in our SBOU process.}
    \label{fig:EstProbabilities}
    \centering
   \includegraphics[scale=0.5]{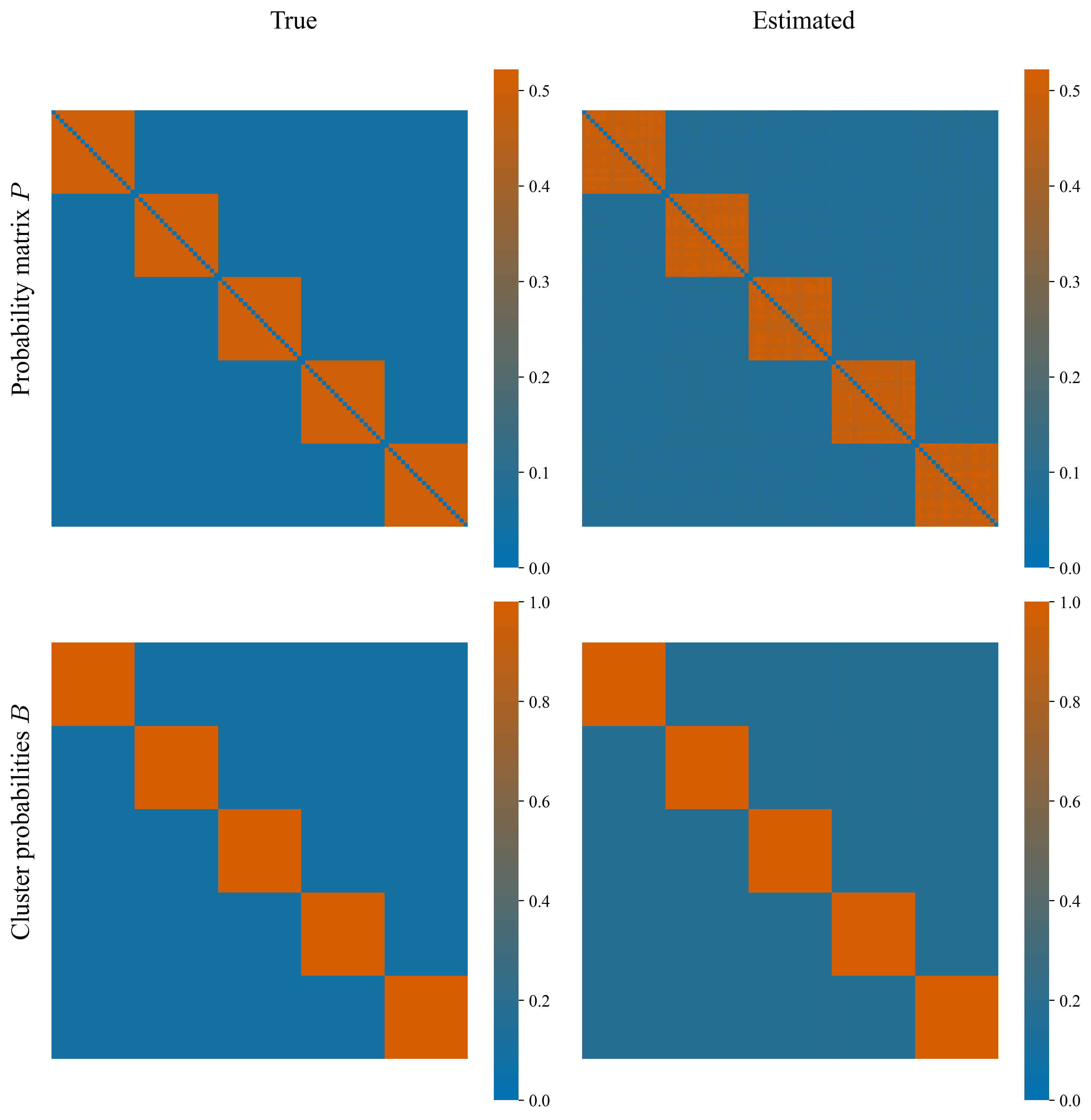}
           \begin{scriptsize}
\parbox{0.98\textwidth}{\emph{Note: } We simulate the process from \Cref{SBOUProc} $R=200$ times for $T= 4384$ and $\Delta_{\mathcal{P}_T} = 1/24$ corresponding to hourly observation over the course of $12$ years. The L\'evy noise is equal to a standard $d$-dimensional Brownian motion and with group signal with $B_{ii}=b_{ii} = 0.5$  and $B_{ij}=b_{ij} = 0.05$ for $j \neq i$. We let $\lambda(x,y) = 2x$ in \eqref{ModelsSelection}. On the left we show for a $GSBM2
(\bold{Z},\bold{B},\Theta,W)$ the heat maps of the scaled cluster probability matrix $\dfrac{\bold{B}}{\max_{ij}\bold{B}_{ij}}$ and probability matrix $\mathcal{A} = \Theta Z \mathbf{B} Z^T \Theta - \mathrm{diag}\left(\Theta Z \mathbf{B} Z^T \Theta\right)$ with $\mathcal{A}_{ij} = \theta_i\theta_jb_{s(i)s(j)}$ if $i\ne j $ and $0$ else sorted according to groups $\mathcal{V}_k$. We simulate a SBOU-process $R=200$ times with each network effect being defined by a new realisation $\mathcal{G} \sim GSBM2
(\bold{Z},\bold{B},\Theta,W)$. For each run, \Cref{alg:VARBlockbuster} is used to find $\left(\widehat{\mathcal{V}}_i\right)_{i=1}^k$. Based on these estimated groupings $\bold{B}$ and $\mathcal{A}$ are estimated using \Cref{EstimatorsBAndTheta}. $k$ is assumed known. The right hand side then shows the heatmaps of the mean matrices from these $200$ simulations.}
\end{scriptsize}
\end{figure}
\FloatBarrier

\subsection{Additional simulation results on the group detection algorithm}\label{SimCorrelation}
In this section, we investigate if our community detection method is able to discern noise from signal as motivated in the beginning. We compare it to the naive method of clustering by applying $K$-means on the correlation matrix of the SBOU-process from \Cref{SBOUProc}. We let
$$ \mathbb{L} = \Sigma^{1/2}\mathbb{W} + \mathbb{J}$$
Here, $\mathbb{W}$ is a $d$-dimensional Brownian motion with covariance matrix $\Sigma$ and $\mathbb{J}$ is a compound Poisson process independent of $\mathbb{W}$ with rate $\lambda=1$ such that $\mathbb{J}_t^{(j)}=\sum_{k=0}^{N_t} Z_k^{(j)}$, where $Z_k^{(j)}$ are i.i.d. jump heights that are standard normally distributed and $\mathbb{N}$ is a Poisson process with rate $\lambda$. In \Cref{table:SimTable1_KMeans_covar} we show the accuracy of the clustering based on the correlation matrix of the SBOU-process with $\Sigma=I_d$. We see that it detects the groups quite well, although, compared to \Cref{table:SimTable1}, it is clear that, even in the case where the noise is independent, it does not do as good a job as \Cref{alg:VARBlockbuster}. When the noise becomes correlated, the contrast, however, becomes even bigger. In \Cref{table:SimTable2} we see that, even with very correlated noise ($\Sigma_{ii}=1$ and $\Sigma_{ij} = 1/2$), \Cref{alg:VARBlockbuster} is able to discern signal from noise as opposed to the correlation-based clustering method that performs very poorly, see \Cref{table:SimTable2_KMeans_covar}.

\begin{sidewaystable}[p] 
\setlength{ \tabcolsep}{0.1cm}
\begin{center}
\caption{Accuracy of $K$-means applied directly to the correlation matrix of the SBOU process from \Cref{SBOUProc} the L\'evy driving noise is specified as \eqref{eqn:SimNoiseSpec} with independent noise}
\label{table:SimTable1_KMeans_covar}
\begin{footnotesize}
  \adjustbox{max width=\textheight}{%
\begin{tabular}{ccccccccccccccccccccccccccc}
    \hline
    \hline
 & &     \multicolumn{3}{c}{ k = 2} & &     \multicolumn{3}{c}{ k = 3} & &     \multicolumn{3}{c}{k = 5} & & & &    \multicolumn{3}{c}{k = 2} & &     \multicolumn{3}{c}{k = 3} & &     \multicolumn{3}{c}{k = 5} \\
    \cline{3-5} \cline{7-9} \cline{11-13} \cline{17-19} \cline{21-23} \cline{25-27}
 \\
    \multicolumn{2}{c}{Panel A}    && && && && &&& &&    \multicolumn{2}{c}{Panel B}   && && && && &&  \\
$\theta$ &$d/T$ &274&1096&4384& & 274&1096&4384 & & 274&1096&4384&&$\theta$ &$d/T$ &274&1096&4384 & & 274&1096&4384 & & 274&1096&4384 \\
    \cline{3-5} \cline{7-9} \cline{11-13} \cline{17-19} \cline{21-23} \cline{25-27}
 & 50  & 59.1\% & 77.3\% & 98.6\%&   & 49.1\% & 68.2\% & 92.1\%&   & 44.6\% & 60.2\% & 74.0\%&  &   & 50  & 58.4\% & 70.0\% & 92.9\%&   & 46.9\% & 58.9\% & 78.1\%&   & 40.5\% & 49.9\% & 58.8\% \\
$0.5$ & 75  & 56.3\% & 66.8\% & 98.5\%&   & 44.3\% & 60.6\% & 96.8\%&   & 37.8\% & 55.2\% & 83.5\%&  &  $0.5$ & 75  & 55.9\% & 62.1\% & 93.5\%&   & 43.0\% & 51.7\% & 83.4\%&   & 35.0\% & 43.7\% & 59.7\% \\
 & 100  & 55.1\% & 60.3\% & 96.4\%&   & 41.7\% & 51.9\% & 96.9\%&   & 33.6\% & 48.9\% & 89.4\%&  &   & 100  & 54.6\% & 57.9\% & 89.1\%&   & 41.2\% & 46.6\% & 81.9\%&   & 31.8\% & 38.4\% & 60.8\% \\
 \\
 & 50  & 57.1\% & 66.6\% & 98.8\%&   & 45.8\% & 64.8\% & 99.4\%&   & 41.8\% & 71.5\% & 97.8\%&  &   & 50  & 56.4\% & 62.3\% & 95.1\%&   & 44.7\% & 54.7\% & 93.1\%&   & 38.1\% & 51.0\% & 80.8\% \\
$0.8$ & 75  & 55.2\% & 57.9\% & 92.1\%&   & 42.2\% & 49.3\% & 96.9\%&   & 34.5\% & 51.1\% & 98.5\%&  &  $0.8$ & 75  & 54.9\% & 57.1\% & 82.8\%&   & 41.8\% & 45.4\% & 82.6\%&   & 33.2\% & 39.4\% & 75.7\% \\
 & 100  & 54.3\% & 55.6\% & 77.7\%&   & 40.5\% & 43.9\% & 87.5\%&   & 31.4\% & 39.8\% & 96.2\%&  &   & 100  & 54.2\% & 55.4\% & 67.1\%&   & 40.3\% & 42.3\% & 64.5\%&   & 30.8\% & 33.9\% & 62.5\% \\
 \\
 & 50  & 56.2\% & 60.8\% & 96.1\%&   & 44.7\% & 56.2\% & 98.9\%&   & 38.9\% & 64.7\% & 99.5\%&  &   & 50  & 56.1\% & 59.4\% & 89.2\%&   & 43.9\% & 50.5\% & 90.9\%&   & 36.7\% & 47.0\% & 86.5\% \\
$1.0$ & 75  & 54.9\% & 56.4\% & 78.8\%&   & 41.5\% & 45.3\% & 89.7\%&   & 33.6\% & 43.0\% & 97.9\%&  &  $1.0$ & 75  & 54.8\% & 56.2\% & 69.5\%&   & 41.2\% & 43.6\% & 68.8\%&   & 32.5\% & 36.5\% & 70.1\% \\
 & 100  & 54.2\% & 54.9\% & 64.2\%&   & 40.1\% & 41.9\% & 67.9\%&   & 30.8\% & 35.2\% & 88.4\%&  &   & 100  & 54.1\% & 54.5\% & 59.2\%&   & 40.1\% & 41.1\% & 51.7\%&   & 30.3\% & 32.4\% & 51.1\% \\
 \\
 \\
    \multicolumn{2}{c}{Panel C}    && && && && &&& &&    \multicolumn{2}{c}{Panel D}   && && && && &&  \\
$\theta$ &$d/T$ &274&1096&4384& & 274&1096&4384 & & 274&1096&4384&&$\theta$ &$d/T$ &274&1096&4384 & & 274&1096&4384 & & 274&1096&4384 \\
    \cline{3-5} \cline{7-9} \cline{11-13} \cline{17-19} \cline{21-23} \cline{25-27}
 & 50  & 57.4\% & 63.8\% & 82.9\%&   & 45.6\% & 51.9\% & 63.5\%&   & 37.8\% & 43.5\% & 48.4\%&  &   & 50  & 56.5\% & 59.0\% & 65.7\%&   & 43.8\% & 46.9\% & 50.0\%&   & 35.4\% & 37.8\% & 40.3\% \\
$0.5$ & 75  & 55.4\% & 59.3\% & 82.6\%&   & 42.4\% & 46.8\% & 64.6\%&   & 33.3\% & 37.5\% & 46.4\%&  &  $0.5$ & 75  & 55.1\% & 56.6\% & 63.5\%&   & 41.4\% & 42.9\% & 48.0\%&   & 32.2\% & 33.8\% & 36.8\% \\
 & 100  & 54.5\% & 56.2\% & 76.5\%&   & 40.5\% & 43.5\% & 60.8\%&   & 30.9\% & 34.0\% & 43.9\%&  &   & 100  & 54.1\% & 55.0\% & 60.0\%&   & 40.0\% & 41.2\% & 45.1\%&   & 30.2\% & 31.1\% & 34.2\% \\
 \\
 & 50  & 56.4\% & 59.9\% & 85.3\%&   & 43.7\% & 49.1\% & 74.4\%&   & 36.1\% & 42.0\% & 58.5\%&  &   & 50  & 55.9\% & 57.2\% & 66.0\%&   & 43.3\% & 44.7\% & 52.7\%&   & 34.7\% & 36.9\% & 41.8\% \\
$0.8$ & 75  & 54.7\% & 56.3\% & 70.1\%&   & 41.4\% & 43.6\% & 60.5\%&   & 32.3\% & 35.1\% & 49.3\%&  &  $0.8$ & 75  & 54.8\% & 55.6\% & 59.2\%&   & 41.1\% & 42.0\% & 45.8\%&   & 31.6\% & 32.6\% & 35.7\% \\
 & 100  & 54.1\% & 55.0\% & 60.7\%&   & 40.1\% & 41.3\% & 49.9\%&   & 30.3\% & 31.9\% & 40.4\%&  &   & 100  & 54.1\% & 54.2\% & 56.2\%&   & 40.0\% & 40.4\% & 42.5\%&   & 30.1\% & 30.4\% & 32.4\% \\
 \\
 & 50  & 56.2\% & 58.3\% & 78.0\%&   & 43.3\% & 46.7\% & 70.0\%&   & 35.6\% & 40.1\% & 60.4\%&  &   & 50  & 56.0\% & 56.8\% & 63.3\%&   & 43.1\% & 44.1\% & 50.5\%&   & 34.7\% & 36.1\% & 41.1\% \\
$1.0$ & 75  & 54.9\% & 55.4\% & 62.1\%&   & 41.0\% & 42.5\% & 52.8\%&   & 31.9\% & 33.8\% & 45.2\%&  &  $1.0$ & 75  & 54.8\% & 54.9\% & 57.4\%&   & 41.0\% & 41.6\% & 44.2\%&   & 31.5\% & 32.1\% & 34.7\% \\
 & 100  & 53.9\% & 54.4\% & 57.1\%&   & 40.2\% & 40.7\% & 45.2\%&   & 30.1\% & 31.0\% & 36.9\%&  &   & 100  & 54.1\% & 54.2\% & 55.4\%&   & 39.9\% & 40.3\% & 41.8\%&   & 30.0\% & 30.4\% & 31.7\% \\
 \\
    \hline
    \hline
  \end{tabular}%
}%
  \end{footnotesize}%
\smallskip
\begin{scriptsize}
\parbox{0.98\textwidth}{\emph{Note: } We simulate the process in \Cref{SBOUProc} with $\Delta_{\mathcal{P}_T=1/24}$ and interpret $T=1$ as one day so that we have hourly observations. The L\'evy driving noise is specified as \eqref{eqn:SimNoiseSpec} with independent entries. We simulate the process $R= 1,000$ times and apply K-means directly on the empirical correlation matrix. In this table, we present the mean of the share of correctly clustered nodes over this replications for for different group signals across varying parameters. Panels $A-B$ show results for the case where $b_{ii} = 0.5$ for all entries, while $b_{ij}$ varies across panels with values in $\{0.01, 0.05, 0.1, 0.2\}$. Specifically, Panel $C$ corresponds to the configuration where $b_{ij} = 0.1$.
}
\end{scriptsize}
\end{center}
\end{sidewaystable}

\begin{sidewaystable}[p] 
\setlength{ \tabcolsep}{0.1cm}
\begin{center}
\caption{Accuracy of $K$-means applied directly to the correlation matrix of the SBOU process from \Cref{SBOUProc} the L\'evy driving noise is specified as \eqref{eqn:SimNoiseSpec} with dependent noise}
\label{table:SimTable2_KMeans_covar}
\begin{footnotesize}
  \adjustbox{max width=\textheight}{%
\begin{tabular}{ccccccccccccccccccccccccccc}
    \hline
    \hline
 & &     \multicolumn{3}{c}{ k = 2} & &     \multicolumn{3}{c}{ k = 3} & &     \multicolumn{3}{c}{k = 5} & & & &    \multicolumn{3}{c}{k = 2} & &     \multicolumn{3}{c}{k = 3} & &     \multicolumn{3}{c}{k = 5} \\
    \cline{3-5} \cline{7-9} \cline{11-13} \cline{17-19} \cline{21-23} \cline{25-27}
 \\
    \multicolumn{2}{c}{Panel A}    && && && && &&& &&    \multicolumn{2}{c}{Panel B}   && && && && &&  \\
$\theta$ &$d/T$ &274&1096&4384& & 274&1096&4384 & & 274&1096&4384&&$\theta$ &$d/T$ &274&1096&4384 & & 274&1096&4384 & & 274&1096&4384 \\
    \cline{3-5} \cline{7-9} \cline{11-13} \cline{17-19} \cline{21-23} \cline{25-27}
 & 50  & 54.0\% & 53.5\% & 53.6\%&   & 40.7\% & 41.2\% & 42.9\%&   & 32.6\% & 35.1\% & 38.8\%&  &   & 50  & 54.0\% & 53.7\% & 53.4\%&   & 40.6\% & 41.0\% & 41.8\%&   & 32.5\% & 33.5\% & 35.5\% \\
$0.5$ & 75  & 53.5\% & 53.0\% & 52.8\%&   & 39.2\% & 39.3\% & 40.3\%&   & 29.9\% & 31.3\% & 34.2\%&  &  $0.5$ & 75  & 53.5\% & 53.0\% & 52.9\%&   & 39.5\% & 39.2\% & 39.7\%&   & 30.2\% & 30.3\% & 31.9\% \\
 & 100  & 53.1\% & 52.7\% & 52.6\%&   & 38.5\% & 38.4\% & 39.0\%&   & 28.4\% & 29.2\% & 31.2\%&  &   & 100  & 53.0\% & 52.8\% & 52.5\%&   & 38.7\% & 38.4\% & 38.5\%&   & 28.5\% & 28.5\% & 29.6\% \\
 \\
 & 50  & 53.8\% & 53.7\% & 53.3\%&   & 40.2\% & 40.3\% & 41.3\%&   & 31.7\% & 32.8\% & 36.6\%&  &   & 50  & 53.9\% & 53.4\% & 53.5\%&   & 40.5\% & 40.1\% & 40.7\%&   & 31.7\% & 32.1\% & 33.4\% \\
$0.8$ & 75  & 53.4\% & 53.1\% & 52.9\%&   & 39.1\% & 38.9\% & 39.1\%&   & 29.4\% & 29.5\% & 31.6\%&  &  $0.8$ & 75  & 53.7\% & 53.3\% & 52.9\%&   & 39.3\% & 39.0\% & 38.9\%&   & 29.7\% & 29.3\% & 29.9\% \\
 & 100  & 53.0\% & 52.7\% & 52.6\%&   & 38.5\% & 38.2\% & 38.1\%&   & 28.2\% & 27.9\% & 28.9\%&  &   & 100  & 53.0\% & 52.7\% & 52.6\%&   & 38.6\% & 38.2\% & 38.1\%&   & 28.3\% & 28.0\% & 28.2\% \\
 \\
 & 50  & 53.9\% & 53.6\% & 53.4\%&   & 40.5\% & 39.9\% & 40.4\%&   & 31.2\% & 31.5\% & 34.3\%&  &   & 50  & 54.3\% & 53.7\% & 53.4\%&   & 40.4\% & 40.1\% & 40.1\%&   & 31.5\% & 31.2\% & 31.9\% \\
$1.0$ & 75  & 53.4\% & 53.2\% & 53.0\%&   & 39.2\% & 38.8\% & 38.9\%&   & 29.2\% & 29.0\% & 30.0\%&  &  $1.0$ & 75  & 53.5\% & 53.2\% & 53.0\%&   & 39.2\% & 38.8\% & 38.6\%&   & 29.5\% & 29.0\% & 29.2\% \\
 & 100  & 53.1\% & 52.8\% & 52.6\%&   & 38.5\% & 38.1\% & 38.0\%&   & 27.9\% & 27.8\% & 28.1\%&  &   & 100  & 53.1\% & 52.8\% & 52.7\%&   & 38.6\% & 38.2\% & 38.0\%&   & 28.3\% & 27.9\% & 27.8\% \\
 \\
 \\
    \multicolumn{2}{c}{Panel C}    && && && && &&& &&    \multicolumn{2}{c}{Panel D}   && && && && &&  \\
$\theta$ &$d/T$ &274&1096&4384& & 274&1096&4384 & & 274&1096&4384&&$\theta$ &$d/T$ &274&1096&4384 & & 274&1096&4384 & & 274&1096&4384 \\
    \cline{3-5} \cline{7-9} \cline{11-13} \cline{17-19} \cline{21-23} \cline{25-27}
 & 50  & 54.3\% & 53.8\% & 53.5\%&   & 40.9\% & 40.9\% & 41.3\%&   & 32.8\% & 33.2\% & 33.9\%&  &   & 50  & 54.5\% & 54.2\% & 53.7\%&   & 41.6\% & 41.1\% & 41.1\%&   & 33.0\% & 33.1\% & 33.0\% \\
$0.5$ & 75  & 53.6\% & 53.3\% & 53.0\%&   & 39.6\% & 39.2\% & 39.4\%&   & 30.2\% & 30.1\% & 30.5\%&  &  $0.5$ & 75  & 53.8\% & 53.7\% & 53.3\%&   & 39.8\% & 39.6\% & 39.4\%&   & 30.7\% & 30.4\% & 30.3\% \\
 & 100  & 53.2\% & 52.8\% & 52.6\%&   & 38.8\% & 38.5\% & 38.4\%&   & 28.7\% & 28.6\% & 28.8\%&  &   & 100  & 53.4\% & 53.1\% & 52.8\%&   & 39.1\% & 38.9\% & 38.6\%&   & 29.3\% & 28.8\% & 28.7\% \\
 \\
 & 50  & 54.3\% & 53.8\% & 53.6\%&   & 40.6\% & 40.2\% & 40.3\%&   & 32.4\% & 31.8\% & 32.3\%&  &   & 50  & 54.6\% & 54.1\% & 53.7\%&   & 41.1\% & 40.8\% & 40.4\%&   & 32.8\% & 32.2\% & 32.1\% \\
$0.8$ & 75  & 53.6\% & 53.2\% & 53.0\%&   & 39.5\% & 39.1\% & 38.9\%&   & 30.0\% & 29.7\% & 29.5\%&  &  $0.8$ & 75  & 53.9\% & 53.5\% & 53.4\%&   & 40.1\% & 39.6\% & 39.1\%&   & 30.5\% & 30.1\% & 29.5\% \\
 & 100  & 53.2\% & 52.8\% & 52.7\%&   & 38.9\% & 38.5\% & 38.1\%&   & 28.6\% & 28.2\% & 28.1\%&  &   & 100  & 53.3\% & 53.0\% & 52.8\%&   & 39.1\% & 38.8\% & 38.4\%&   & 29.1\% & 28.7\% & 28.3\% \\
 \\
 & 50  & 54.2\% & 53.9\% & 53.5\%&   & 40.7\% & 40.3\% & 39.9\%&   & 32.2\% & 31.5\% & 31.2\%&  &   & 50  & 54.7\% & 54.2\% & 53.8\%&   & 41.3\% & 40.7\% & 40.1\%&   & 32.7\% & 32.3\% & 31.7\% \\
$1.0$ & 75  & 53.7\% & 53.3\% & 53.0\%&   & 39.4\% & 39.0\% & 38.7\%&   & 29.9\% & 29.4\% & 29.1\%&  &  $1.0$ & 75  & 53.9\% & 53.6\% & 53.2\%&   & 39.8\% & 39.6\% & 39.0\%&   & 30.5\% & 30.1\% & 29.6\% \\
 & 100  & 53.3\% & 52.9\% & 52.7\%&   & 38.9\% & 38.4\% & 38.1\%&   & 28.5\% & 28.2\% & 27.7\%&  &   & 100  & 53.3\% & 53.0\% & 52.8\%&   & 39.1\% & 38.7\% & 38.3\%&   & 29.1\% & 28.7\% & 28.2\% \\
 \\
    \hline
    \hline
  \end{tabular}%
}%
  \end{footnotesize}%
\smallskip
\begin{scriptsize}
\parbox{0.98\textwidth}{\emph{Note: } We simulate the process in \Cref{SBOUProc} with $\Delta_{\mathcal{P}_T=1/24}$ and interpret $T=1$ as one day so that we have hourly observations. The L\'evy driving noise is specified as \eqref{eqn:SimNoiseSpec} with  $\Sigma_{ii}=1$ and $\Sigma_{ij}=1/2$ for $i\ne j$. Further, $b_{ji} = 0.5$ for $j=i$ and $b_{ji} = 0.05$ for $j\ne i$. We simulate the process $R= 1,000$ times and apply K-means directly on the empirical correlation matrix. In this table, we present the mean of the share of correctly clustered nodes over this replications for for different group signals across varying parameters. Panels $A-B$ show results for the case where $b_{ii} = 0.5$ for all entries, while $b_{ij}$ varies across panels with values in $\{0.01, 0.05, 0.1, 0.2\}$. Specifically, Panel $C$ corresponds to the configuration where $b_{ij} = 0.1$.
}
\end{scriptsize}
\end{center}
\end{sidewaystable}

\begin{sidewaystable}[p] 
\setlength{ \tabcolsep}{0.1cm}
\begin{center}
\caption{Share of correctly clustered nodes from applying \Cref{alg:VARBlockbuster} to observations of \Cref{SBOUProc} where the L\'evy driving noise is specified as \eqref{eqn:SimNoiseSpec} with dependent noise }
\label{table:SimTable2}
\begin{footnotesize}
  \adjustbox{max width=\textheight}{%
\begin{tabular}{ccccccccccccccccccccccccccc}
    \hline
    \hline
 & &     \multicolumn{3}{c}{ k = 2} & &     \multicolumn{3}{c}{ k = 3} & &     \multicolumn{3}{c}{k = 5} & & & &    \multicolumn{3}{c}{k = 2} & &     \multicolumn{3}{c}{k = 3} & &     \multicolumn{3}{c}{k = 5} \\
    \cline{3-5} \cline{7-9} \cline{11-13} \cline{17-19} \cline{21-23} \cline{25-27}
 \\
    \multicolumn{2}{c}{Panel A}    && && && && &&& &&    \multicolumn{2}{c}{Panel B}   && && && && &&  \\
$\theta$ &$d/T$ &274&1096&4384& & 274&1096&4384 & & 274&1096&4384&&$\theta$ &$d/T$ &274&1096&4384 & & 274&1096&4384 & & 274&1096&4384 \\
    \cline{3-5} \cline{7-9} \cline{11-13} \cline{17-19} \cline{21-23} \cline{25-27}
 & 50  & 56.2\% & 88.6\% & 99.4\%&   & 50.3\% & 69.5\% & 90.2\%&   & 43.3\% & 52.6\% & 63.1\%&  &   & 50  & 55.8\% & 77.7\% & 96.4\%&   & 47.9\% & 58.9\% & 72.4\%&   & 39.7\% & 44.0\% & 48.3\% \\
$0.5$ & 75  & 55.1\% & 57.0\% & 99.8\%&   & 46.6\% & 61.3\% & 98.7\%&   & 40.0\% & 56.9\% & 78.1\%&  &  $0.5$ & 75  & 54.9\% & 55.2\% & 98.4\%&   & 44.7\% & 55.4\% & 88.3\%&   & 36.7\% & 44.4\% & 54.8\% \\
 & 100  & 54.5\% & 54.4\% & 99.6\%&   & 43.3\% & 59.7\% & 96.7\%&   & 35.8\% & 57.2\% & 84.4\%&  &   & 100  & 54.3\% & 54.3\% & 92.5\%&   & 42.2\% & 54.0\% & 77.2\%&   & 33.4\% & 44.5\% & 61.5\% \\
 \\
 & 50  & 56.4\% & 97.4\% & 100.0\%&   & 55.8\% & 92.8\% & 100.0\%&   & 53.8\% & 82.4\% & 98.1\%&  &   & 50  & 55.8\% & 88.2\% & 99.9\%&   & 50.7\% & 73.9\% & 99.3\%&   & 43.8\% & 59.4\% & 75.8\% \\
$0.8$ & 75  & 55.0\% & 58.1\% & 100.0\%&   & 48.5\% & 66.4\% & 100.0\%&   & 44.6\% & 80.2\% & 99.9\%&  &  $0.8$ & 75  & 54.9\% & 55.4\% & 99.9\%&   & 45.8\% & 62.1\% & 99.6\%&   & 38.3\% & 60.8\% & 88.1\% \\
 & 100  & 54.3\% & 54.2\% & 99.7\%&   & 44.1\% & 64.1\% & 99.6\%&   & 37.4\% & 76.0\% & 97.9\%&  &   & 100  & 54.3\% & 54.3\% & 95.1\%&   & 42.6\% & 58.8\% & 84.9\%&   & 33.8\% & 56.0\% & 82.6\% \\
 \\
 & 50  & 56.1\% & 98.4\% & 100.0\%&   & 57.0\% & 95.5\% & 100.0\%&   & 58.4\% & 90.5\% & 100.0\%&  &   & 50  & 55.8\% & 90.3\% & 100.0\%&   & 52.3\% & 77.9\% & 99.9\%&   & 44.9\% & 70.4\% & 94.7\% \\
$1.0$ & 75  & 55.0\% & 57.9\% & 100.0\%&   & 49.5\% & 67.4\% & 100.0\%&   & 46.1\% & 83.7\% & 100.0\%&  &  $1.0$ & 75  & 54.8\% & 55.0\% & 100.0\%&   & 46.0\% & 63.6\% & 100.0\%&   & 38.6\% & 68.2\% & 95.5\% \\
 & 100  & 54.3\% & 54.5\% & 99.9\%&   & 44.2\% & 64.6\% & 99.7\%&   & 38.0\% & 79.5\% & 99.0\%&  &   & 100  & 54.5\% & 54.5\% & 96.5\%&   & 42.8\% & 60.2\% & 86.5\%&   & 33.9\% & 60.3\% & 85.7\% \\
 \\
 \\
    \multicolumn{2}{c}{Panel C}    && && && && &&& &&    \multicolumn{2}{c}{Panel D}   && && && && &&  \\
$\theta$ &$d/T$ &274&1096&4384& & 274&1096&4384 & & 274&1096&4384&&$\theta$ &$d/T$ &274&1096&4384 & & 274&1096&4384 & & 274&1096&4384 \\
    \cline{3-5} \cline{7-9} \cline{11-13} \cline{17-19} \cline{21-23} \cline{25-27}
 & 50  & 55.9\% & 66.3\% & 87.3\%&   & 46.4\% & 51.2\% & 57.5\%&   & 37.5\% & 39.2\% & 41.2\%&  &   & 50  & 55.7\% & 57.3\% & 64.3\%&   & 44.2\% & 45.3\% & 46.9\%&   & 35.5\% & 35.8\% & 36.1\% \\
$0.5$ & 75  & 54.9\% & 54.6\% & 93.2\%&   & 43.2\% & 49.8\% & 66.3\%&   & 34.4\% & 38.1\% & 41.5\%&  &  $0.5$ & 75  & 54.8\% & 54.5\% & 64.1\%&   & 42.1\% & 44.2\% & 46.8\%&   & 33.0\% & 33.5\% & 33.9\% \\
 & 100  & 54.1\% & 54.2\% & 69.6\%&   & 41.2\% & 47.8\% & 60.0\%&   & 32.0\% & 37.0\% & 42.9\%&  &   & 100  & 54.1\% & 54.2\% & 53.8\%&   & 40.6\% & 42.5\% & 46.6\%&   & 30.6\% & 32.3\% & 32.9\% \\
 \\
 & 50  & 55.8\% & 72.1\% & 99.2\%&   & 48.0\% & 59.9\% & 91.9\%&   & 38.8\% & 45.1\% & 55.0\%&  &   & 50  & 55.6\% & 57.1\% & 87.2\%&   & 44.7\% & 48.1\% & 56.1\%&   & 35.7\% & 36.8\% & 38.7\% \\
$0.8$ & 75  & 54.8\% & 54.7\% & 99.0\%&   & 43.7\% & 55.9\% & 89.5\%&   & 35.0\% & 43.4\% & 61.8\%&  &  $0.8$ & 75  & 54.7\% & 54.6\% & 69.6\%&   & 42.1\% & 45.9\% & 55.2\%&   & 32.8\% & 34.4\% & 37.4\% \\
 & 100  & 54.3\% & 54.2\% & 70.1\%&   & 41.8\% & 52.0\% & 65.4\%&   & 32.1\% & 40.3\% & 66.3\%&  &   & 100  & 54.1\% & 54.1\% & 53.5\%&   & 40.7\% & 43.7\% & 54.0\%&   & 30.7\% & 32.8\% & 36.3\% \\
 \\
 & 50  & 55.9\% & 72.7\% & 99.8\%&   & 48.1\% & 63.1\% & 99.0\%&   & 39.2\% & 49.8\% & 68.8\%&  &   & 50  & 55.7\% & 56.4\% & 95.8\%&   & 44.9\% & 49.7\% & 63.8\%&   & 35.9\% & 37.3\% & 41.1\% \\
$1.0$ & 75  & 54.8\% & 54.7\% & 99.5\%&   & 43.9\% & 57.5\% & 92.4\%&   & 35.3\% & 46.5\% & 74.9\%&  &  $1.0$ & 75  & 54.6\% & 54.4\% & 70.7\%&   & 41.9\% & 46.4\% & 58.7\%&   & 32.9\% & 34.7\% & 40.1\% \\
 & 100  & 54.2\% & 54.0\% & 71.7\%&   & 41.5\% & 52.5\% & 67.0\%&   & 32.1\% & 42.5\% & 74.6\%&  &   & 100  & 54.0\% & 54.2\% & 53.7\%&   & 40.4\% & 43.4\% & 56.3\%&   & 30.8\% & 32.8\% & 38.4\% \\
 \\
    \hline
    \hline
  \end{tabular}%
}%
  \end{footnotesize}%
\smallskip
\begin{scriptsize}
\parbox{0.98\textwidth}{\emph{Note: } We simulate the process in \Cref{SBOUProc} with $\Delta_{\mathcal{P}_T=1/24}$ and interpret $T=1$ as one day so that we have hourly observations. The L\'evy driving noise is specified as \eqref{eqn:SimNoiseSpec} with  $\Sigma_{ii}=1$ and $\Sigma_{ij}=1/2$ for $i\ne j$. Further, $b_{ji} = 0.5$ for $j=i$ and $b_{ji} = 0.05$ for $j\ne i$. We simulate the process $R= 1,000$ times and apply K-means directly on the empirical correlation matrix. In this table, we present the mean of the share of correctly clustered nodes over this replications for for different group signals across varying parameters. Panels $A-B$ show results for the case where $b_{ii} = 0.5$ for all entries, while $b_{ij}$ varies across panels with values in $\{0.01, 0.05, 0.1, 0.2\}$. Specifically, Panel $C$ corresponds to the configuration where $b_{ij} = 0.1$.
}
\end{scriptsize}
\end{center}
\end{sidewaystable}
\FloatBarrier

\section{Implementation details for the empirical application}\label{App:EmpiricalImpl}

The empirical application in \Cref{Sec:Empirical} requires a number of implementation choices. We describe them here. The estimator in \citet{Ma2021} includes a tuning parameter that is particularly important for accurately detecting the case \(k=1\). That parameter is calibrated in simulation settings with dimensions \(d\in\{500,1000\}\). In our empirical application, however, it induces a pronounced tendency to select a single community. Further, we find that the procedure sometimes selects a relatively large number of groups, with a few of these groups containing only very few nodes, often just one. While this was not an issue in the simulation study, it becomes relevant in the empirical application and motivates imposing a minimum-group-size restriction.

To address this, we restrict attention to candidate values
\[
k\in\{2,3,\dots,K_{\max}\},
\]
so that the model-selection step is interpreted conditionally on the presence of at least two groups. Moreover, we impose a minimum-group-size restriction. This is consistent with \citet[Assumption 6]{Ma2021}, which requires the smallest group to contain more than \(\epsilon d\) observations for some \(\epsilon>0\). Since no specific value is prescribed in practice, we instead work with an integer-valued grid of admissible minimum group sizes,
\[
\mathcal G=\{g_1,\dots,g_m\}, \qquad g_1<g_2<\cdots<g_m,
\]
and in the empirical analysis we take \(m=6\) with \(\mathcal G=\{5,6,7,8,9,10\}\). For each \(g_i\in\mathcal G\), we estimate the number of groups under the constraint that every group in the resulting partition must contain at least \(g_i\) nodes. This yields a path of selected values \(\widehat K(g_1),\widehat K(g_2),\dots,\widehat K(g_m)\).

Because the same grid \(\mathcal G\) is used across datasets of different sizes, we allow the upper bound \(K_{\max}\) to depend on \(d\). Specifically, we set
\[
K_{\max}=\lfloor d/10\rfloor.
\]
The rationale is that, when \(d=50\), any candidate \(k>5\) would necessarily violate the requirement that each group contain at least 10 nodes, rendering such values redundant. Accordingly, for two-country subsets (\(d=50\)), we take \(K_{\max}=5\), while for the full sample (\(d=125\)), we take \(K_{\max}=12\).

To obtain a single estimate from the path \(\widehat K(g_i)\), we proceed as follows. For each candidate \(k\in\{2,\dots,K_{\max}\}\), let
\[
L(k)
=
\max_{1\leq a\leq b\leq m}
\Bigl\{
b-a+1 :
\widehat K(g_a)=\widehat K(g_{a+1})=\cdots=\widehat K(g_b)=k
\Bigr\},
\]
that is, \(L(k)\) is the length of the longest contiguous plateau at level \(k\) along the path of selected values as the minimum-group-size requirement varies over \(\mathcal G\). We then define
\[
\widehat K=\arg\max_{k\in\{2,\dots,K_{\max}\}} L(k).
\]
Thus, \(\widehat K\) is the value of \(k\) that remains selected over the longest contiguous range of admissible minimum-group-size requirements. We prefer this plateau-based criterion over alternatives such as the mode of \(\widehat K(g_i)\) because the grid \(\mathcal G\) is itself somewhat arbitrary, and we want to select a value of \(k\) that is insensitive to small perturbations in \(\mathcal G\). A long plateau at level \(k\) means that \(\widehat K(g_i)\) does not change as \(g_i\) moves through several adjacent grid points, which is precisely the stability we want. If multiple values of \(k\) achieve the longest plateau, we select the smallest, on grounds of parsimony.

Our final implementation choice concerns the estimator of the number of groups. We use the \(K_1\) estimator of \citet{Ma2021} rather than the \(K_2\) estimator. The latter has more appealing theoretical properties, but it introduces an additional tuning parameter by excluding candidate values of \(k\) whenever the associated likelihood-ratio statistic falls above a \(d\)-dependent threshold determined by simulation-based calibration. In our empirical application, this thresholding rule is difficult to justify, since no general guidance is available for choosing the tuning parameter outside the simulation settings considered in \citet{Ma2021}, and in a few instances none of the likelihood-ratio statistics fall below this threshold. By contrast, using the \(K_1\) estimator together with the restriction \(k\geq 2\), we avoid the introduction of any tuning parameters in the empirical analysis, which we prefer. Reassuringly, in our simulation study the \(K_1\) and \(K_2\) estimators selected the same number of groups in all the settings we consider, so this simplification does not appear to come at the cost of accuracy.

\section{Additional empirical plots}\label{app:EmpiricalClusteringSubGroups2and3}
This supplementary material collects the estimated cluster partitions for the two-country and three-country subset analyses introduced in \Cref{Sec:EmpiricalResults}. We report two complementary sets of figures. The first pair (\Cref{fig:clusters_2groups,fig:clusters_3groups}) shows the partitions obtained when the number of groups is chosen by our plateau-selection criterion, i.e.\ at the estimated $\widehat{K}$ reported in \Cref{tab:EstimatedGroupsSubsets}. The second pair (\Cref{fig:clusters_2groups_landscape,fig:clusters_3groups_landscape}) instead fixes the number of groups at the ground-truth value suggested by the country decomposition ($k = 2$ for two-country subsets and $k = 3$ for three-country subsets), providing a baseline for visual comparison.

\begin{landscape}
\begin{figure}[p]
    \centering
\includegraphics[width=1.5\textwidth]{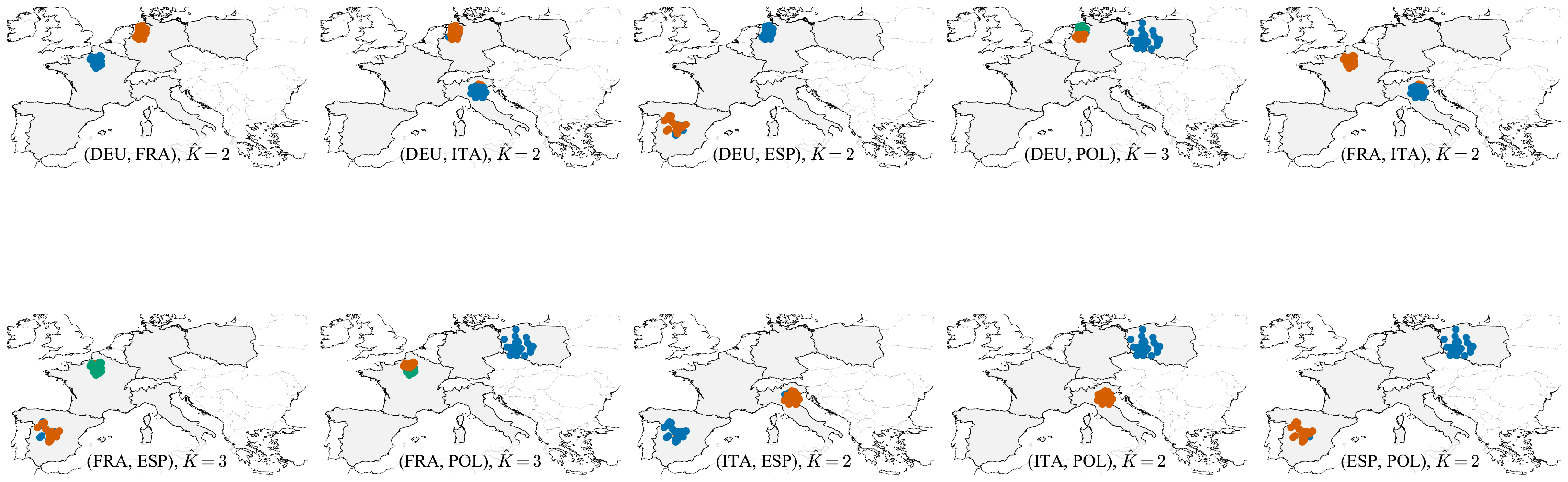}
    \caption{Estimated cluster partitions for all two-country subsets, with the number of groups selected by the plateau criterion, i.e.\ $k = \widehat{K}$.}
    \label{fig:clusters_2groups}
\end{figure}
\end{landscape}

\begin{landscape}
\begin{figure}[p]
    \centering
\includegraphics[width=1.5\textwidth]{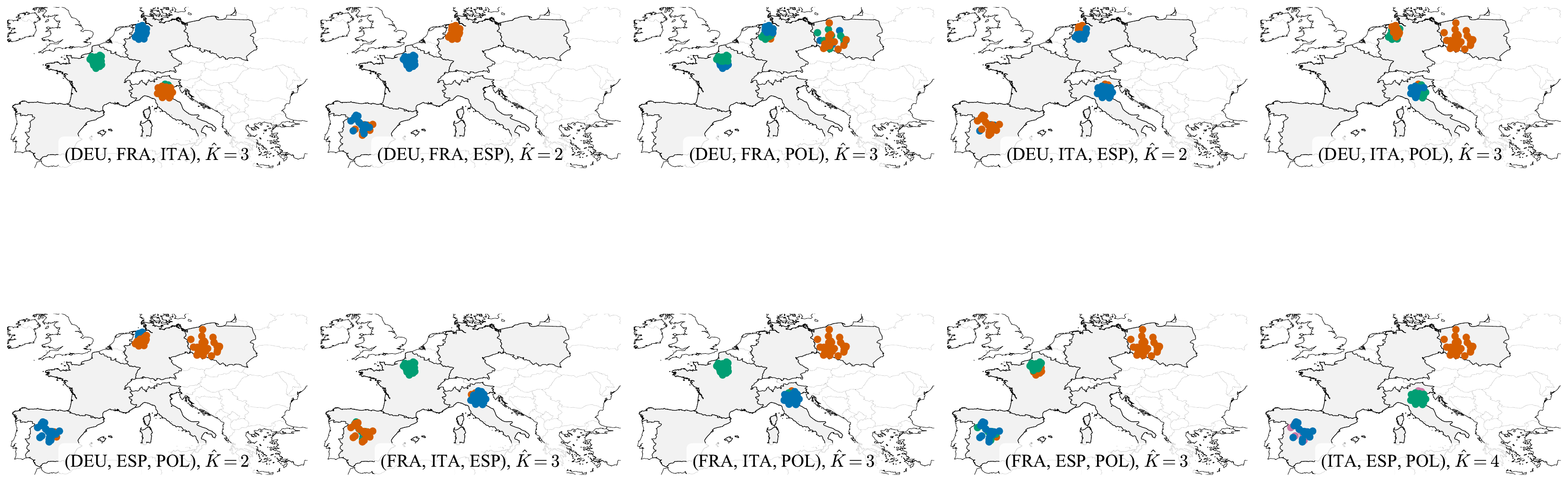}
    \caption{Estimated cluster partitions for all three-country subsets, with the number of groups selected by the plateau criterion, i.e.\ $k = \widehat{K}$.}
    \label{fig:clusters_3groups}
\end{figure}
\end{landscape}

\begin{landscape}
\begin{figure}[p]
    \centering
\includegraphics[width=1.5\textwidth]{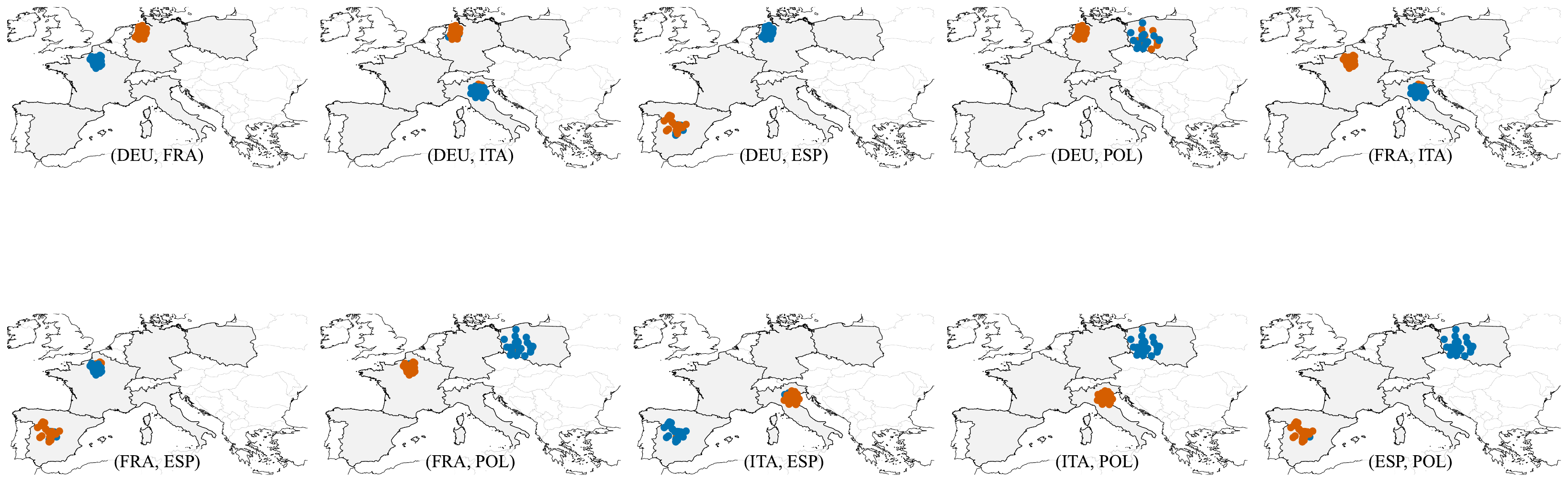}
    \caption{Cluster partitions for all two-country subsets under the ground-truth number of groups $k = 2$.}
    \label{fig:clusters_2groups_landscape}
\end{figure}
\end{landscape}

\begin{landscape}
\begin{figure}[p]
    \centering
\includegraphics[width=1.5\textwidth]{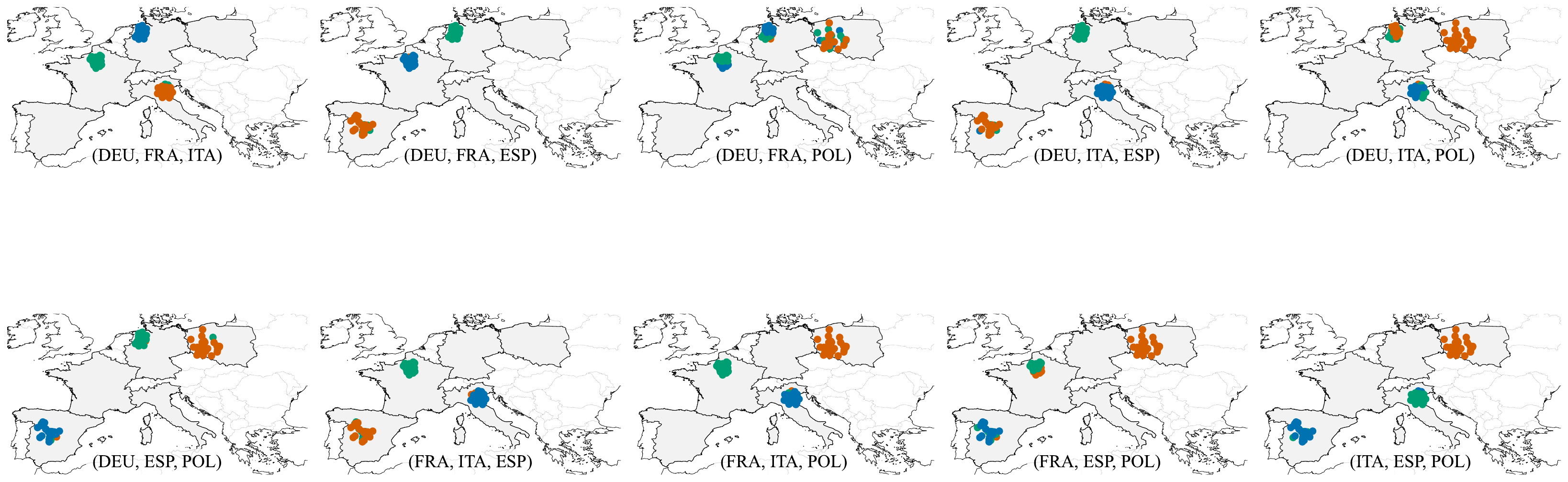}
    \caption{Cluster partitions for all three-country subsets under the ground-truth number of groups $k = 3$.}
    \label{fig:clusters_3groups_landscape}
\end{figure}
\end{landscape}

\end{document}